\DeclareMathAlphabet\mathbfcal{OMS}{cmsy}{b}{n}
\theoremstyle{definition}
\newtheorem{exmp}{Example}[section]
\newtheorem{theorem}{Theorem}[section]
\newtheorem{proposition}[theorem]{Proposition}
\newtheorem{corollary}[theorem]{Corollary}
\newtheorem{lemma}[theorem]{Lemma}
\newtheorem{definition}{Definition}[section]
\newtheorem{remark}{Remark}[section]
\newtheorem*{remark*}{Remark}
\newtheorem*{claim*}{Claim}
\algrenewcommand\algorithmicrequire{\textbf{Input:}}
\algrenewcommand\algorithmicensure{\textbf{Output:}}
\newcommand{\nats}{\mathbb{N}}
\newcommand{\natswith}{\nats_{0}}
\newcommand{\reals}{\mathbb{R}}
\newcommand{\realspos}{\reals_{>0}}
\newcommand{\realsnonneg}{\reals_{\geq 0}}
\newcommand{\states}{\mathcal{X}}
\newcommand{\paths}{\Omega}
\newcommand{\power}{\mathcal{E}(\paths)}
\newcommand{\nonemptypower}{\power_{\emptyset}}
\newcommand{\processes}{\mathbb{P}}
\newcommand{\mprocesses}{\processes^{\mathrm{M}}}
\newcommand{\hmprocesses}{\processes^{\mathrm{HM}}}
\newcommand{\wprocesses}{\processes^{\mathrm{W}}}
\newcommand{\wmprocesses}{\processes^{\mathrm{WM}}}
\newcommand{\whmprocesses}{\processes^{\mathrm{WHM}}}
\newcommand{\lt}{\underline{T}}
\newcommand{\lbound}{L}
\newcommand{\gambles}{\mathcal{L}}
\newcommand{\gamblesX}{\gambles(\states)} 
\newcommand{\ind}[1]{\mathbb{I}_{#1}}
\newcommand{\rateset}{\mathcal{Q}}
\newcommand{\lrate}{\underline{Q}}
\newcommand{\asa}{\Leftrightarrow}
\newcommand{\then}{\Rightarrow}
\newcommand{\norm}[1]{\left\lVert #1 \right\rVert}
\newcommand{\abs}[1]{\left\vert #1 \right\vert}
\newcommand{\coloneqq}{:\!=}
\newcommand{\exampleend}{\hfill$\Diamond$}
\newcommand{\ictmc}{{ICTMC}}
\newcommand{\placeref}[1]{\pageref{#1}}
\newcommand{\exampleproofref}{Appendix~\ref{app:example_proofs}}
\def\presuper#1#2%
\title{Imprecise Continuous-Time Markov Chains\raggedright}
\author[*]{\vspace{0.5cm}

Thomas Krak}
\author[$\dagger$]{Jasper De Bock}
\author[$\ddagger$]{Arno Siebes}
\affil[ ]{${}^*$\texttt{\large t.e.krak@uu.nl}\\
${}^\ddagger$\texttt{\large a.p.j.m.siebes@uu.nl}\vspace{2pt}}
\affil[ ]{Utrecht University\\
Department of Information and Computing Sciences\\ Princetonplein 5, De Uithof\\
3584 CC Utrecht\\
The Netherlands}
\affil[ ]{}
\affil[$\dagger$]{\texttt{\large jasper.debock@ugent.be}\vspace{2pt}}
\affil[ ]{Ghent University\\
Department of Electronics and Information Systems\\
Technologiepark -- Zwijnaarde 914\\
9052 Zwijnaarde\\ 
Belgium}
\begin{document}

\date{}
\maketitle

\begin{abstract}
Continuous-time Markov chains are mathematical models that are used to describe the state-evolution of dynamical systems under stochastic uncertainty, and have found widespread applications in various fields. In order to make these models computationally tractable, they rely on a number of assumptions that---as is well known---may not be realistic for the domain of application; in particular, the ability to provide exact numerical parameter assessments, and the applicability of time-homogeneity and the eponymous Markov property. In this work, we extend these models to \emph{imprecise continuous-time Markov chains} (\ictmc's), which are a robust generalisation that relaxes these assumptions while remaining computationally tractable.

More technically, an \ictmc~is a \emph{set} of ``precise'' continuous-time finite-state stochastic processes, and rather than computing expected values of functions, we seek to compute \emph{lower expectations}, which are tight lower bounds on the expectations that correspond to such a set of ``precise'' models. Note that, in contrast to e.g. Bayesian methods, all the elements of such a set are treated on equal grounds; we do not consider a distribution over this set. Together with the conjugate notion of \emph{upper expectation}, the bounds that we provide can then be intuitively interpreted as providing best- and worst-case scenarios with respect to all the models in our set of stochastic processes. 

The first part of this paper develops a formalism for describing continuous-time finite-state stochastic processes that does not require the aforementioned simplifying assumptions. Next, this formalism is used to characterise \ictmc's and to investigate their properties. The concept of lower expectation is then given an alternative operator-theoretic characterisation, by means of a \emph{lower transition operator}, and the properties of this operator are investigated as well. Finally, we use this lower transition operator to derive tractable algorithms (with polynomial runtime complexity w.r.t. the maximum numerical error) for computing the lower expectation of functions that depend on the state at any finite number of time points.
\end{abstract}

\noindent\emph{Keywords:} Continuous-Time Markov Chain; Imprecise Probability; Model Uncertainty; Lower and Upper Expectation; Lower Transition Operator

\newpage
\section{Introduction}\label{sec:introduction}

Continuous-time Markov chains are mathematical models that can describe the behaviour of dynamical systems under stochastic uncertainty. In particular, they describe the stochastic evolution of such a system through a discrete state space and over a continuous time-dimension. This class of models has found widespread applications in various fields, including queueing theory~\cite{asmussen2008applied,bolch2006queueing}, mathematical finance~\cite{elliott2013default, rolski2009stochastic,sass2004optimizing}, epidemiology~\cite{ duffy1995estimation,jackson2003multistate, lemey2009reconstructing}, system reliability analysis~\cite{besnard2010approach,gokhale2004analysis, wang2007reliability}, and many others~\cite{yin2012continuous}.

In order to model a problem by means of such a continuous-time Markov chain, quite a lot of assumptions need to be satisfied.
For example, it is common practice to assume that the user is able to specify an exact value for all the parameters of the model.
A second important assumption is the Markov condition, which states that the future behaviour of the system only depends on its current state, and not on its history. Other examples are homogeneity, which assumes that the dynamics of the system are independent of time, and some technical differentiability assumptions. 
As a result of all these assumptions, continuous-time Markov chains can be described by means of simple analytic expressions.

However, we would argue that in many cases, these assumptions are not realistic and are grounded more in pragmatism than in informed consideration of the underlying system. In those cases, despite the fact that such issues are to be expected in any modelling task, we think that it is best to try and avoid these assumptions. Of course, since they are typically imposed to obtain a tractable model, relaxing these assumptions while maintaining a workable model is not straightforward. 
Nevertheless, as we will see, this can be achieved by means of \emph{imprecise continuous-time Markov chains}~\cite{Skulj:2015cq,troffaes2015using} (ICTMC's). These ICMTC's are quite similar to continuous-time Markov chains: they model the same type of dynamical systems, and they therefore have the same fields of application. However, they do not impose the many simplifying assumptions that are traditionally adopted, and are therefore far more robust. Notably, as we will show in this work, these models allow us to relax these assumptions while remaining computationally tractable.

The following provides a motivating toy example. 
It is clearly too simple to be of any practical use, but it does allow us to illustrate the simplifying assumptions that are usually adopted, and to provide a basic idea of how we intend to relax them.

\vspace{10pt}

\begin{exmp}\label{ex:health_sick_exmp}
Consider a person periodically becoming sick, and recovering after some time. If we want to model this behaviour using a continuous-time Markov chain with a binary state space $\{\text{\tt healthy, sick}\}$, we need to specify a \emph{rate parameter} for each of the two possible state-transitions: from {\tt healthy} to {\tt sick}, and from {\tt sick} to {\tt healthy}. Loosely speaking, such a rate parameter characterises how quickly the corresponding state-transition happens. Technically, it specifies the derivative of a transition probability. For example, for the transition from {\tt healthy} to {\tt sick}, the corresponding rate parameter is the derivative of $P(X_s={\tt sick}\,\vert\,X_t={\tt healthy})$ with respect to $s$, for $s=t$, where $P(X_s={\tt sick}\,\vert\,X_t={\tt healthy})$ is the probability of a person being sick at time $s$, given that he or she is healthy at time $t$. Together with the initial probabilities of a person being {\tt sick} or {\tt healthy} at time zero, these rate parameters uniquely characterise a continuous-time Markov chain, which can then be used to answer various probabilistic queries of interest.
For instance, to compute the probability that a person will be sick in ten days, given that he or she is healthy today.

In this example, the defining assumptions of a continuous-time Markov chain impose rather strict conditions on our model. 
First of all: it is necessary to provide exact values---that is, point-estimates---for the initial probabilities and for the rate parameters. If these values are incorrect, this will affect the resulting conclusions.
Secondly, in order to be able to define the rate parameters, the transition probabilities of the model need to be differentiable.
Thirdly, the Markov assumption implies that for any time points $r<t<s$,
\begin{equation*}
P(X_s={\tt sick}\,\vert\, X_t={\tt healthy}, X_r={\tt sick}) = P(X_s={\tt sick}\,\vert\, X_t={\tt healthy}),
\end{equation*}
that is, once we know the person's health state at time~$t$, his or her probability of being sick at time~$s$ does not depend on whether he or she has ever been sick before time~$t$. If it is possible to develop immunity to the disease in question, then clearly, such an assumption is not realistic. Also, it implies that the rate parameters can only depend on the current state, and not on the previous ones.
Fourthly, the rate parameters are assumed to remain constant over time, which, for example, excludes the possibility of modelling seasonal variations. This fourth condition can easily be removed by considering a continuous-time Markov chain that is not homogeneous. However, in that case, the rate parameters become time-dependent, which requires us to specify (or learn from data) even more point-estimates. Similarly, although a more complex model would be able to account for history-dependence, for example by adding more states to the model, this would vastly increase the computational complexity of working with the model. 

In the imprecise continuous-time Markov chains that we consider, these four conditions are relaxed in the following way. First of all, instead of providing a point-estimate for the initial probabilities and the rate parameters, we allow ourselves to specify a set of values. For the purpose of this example, we can take these sets to be intervals. The other three conditions are then dropped completely, provided that they remain compatible with these intervals. For example, the rate parameters do not need to be constant, but can vary in time in an arbitrary way, as long as they remain within their interval. Similarly, the rate parameters are also allowed to depend on the history of the process, that is, the value of the previous states. In fact, the rate parameters---as defined above---do not even need to exist, since we do not require differentiability either.
\exampleend\vspace{10pt}
\end{exmp}

The idea of relaxing the defining assumptions of a continuous-time Markov chain is not new. Various variations on it have been developed over the years.
For example, there are plenty of results to be found that deal with non-homogeneous continuous-time Markov chains~\cite{rindos1995exact,aalen1978empirical,johnson1989nonhomogeneous}.
Dropping the Markov condition is less common, but nevertheless definitely possible~\cite{Harlamov:1320525}. However, the approaches that drop this Markov assumption will typically replace it by some other, weaker assumption, instead of dropping it altogether. 

A common property of all of these approaches is that they still require the parameters of the model to be specified exactly. Furthermore, since these models are typically more complex, the number of parameters that needs to be specified is a lot larger than before. Therefore, in practice, specifying such a generalised model is a lot more difficult than specifying a normal, homogeneous continuous-time Markov chain.
In contrast, our approach does not introduce a large number of new parameters, but simply considers imprecise versions of the existing parameters. In particular, we provide constraints on the traditional parameters of a continuous-time Markov chain, by requiring them to belong to some specified set of candidate parameters.
In this sense, our approach can be regarded as a type of sensitivity analysis on the parameters of a continuous-time Markov chain. However, instead of simply varying the parameters of a continuous-time Markov chain, as a more traditional sensitivity analysis would do, we also consider what happens when these parameters are allowed to be time- and history-dependent. Furthermore, a sensitivity analysis is typically interested in the effect of infinitesimal parameter variations, whereas we consider the effect of variations within some freely chosen set of candidate parameters.

Our approach should also not be confused with Bayesian methods~\cite{insua2012bayesian}. Although these methods also consider parameter uncertainty, they model this uncertainty by means of a prior distribution, thereby introducing even more (hyper)parameters. Furthermore, when integrating out this prior, a Bayesian method ends up with a single `averaged' stochastic process. In contrast, our approach considers set-valued parameter assessments, and does not provide a prior distribution over these sets. Every possible combination of parameter values gives rise to a different process, and we treat all of these processes on equal grounds, without averaging them out.

Among the many extensions of continuous-time Markov chains that are able to deal with complex parameter variations, the ones that resemble our approach the most are continuous-time Markov decision processes~\cite{Xianping:2009} and continuous-time controlled Markov chains~\cite{guo2003}. Similar to what we do, these models vary the parameters of a continuous-time Markov chain, and allow these variations to depend on the history of the process. However, their variations are more restrictive, because the parameters are assumed to remain constant in between two transitions, whereas we allow them to vary in more arbitrary ways. The most important difference with our approach though, is that in these models, the parameter changes are not at all uncertain. In fact, the parameters can be chosen freely, and the goal is to control the evolution of the process in some optimal way, by tuning its parameters as the process evolves.

Having situated our topic within the related literature, let us now take a closer look at what we actually mean by an imprecise continuous-time Markov chain.
In order to formalise this concept, we turn in this work to the field of \emph{imprecise probability}~\cite{Walley:1991vk,troffaes2013:lp,augustin2013:itip}. The basic premise of this field is that, whenever it is impossible or unrealistic to specify a single probabilistic model, say $P$, it is better to instead consider a \emph{set} of probabilistic models $\mathcal{P}$, and to then draw conclusions that are robust with respect to variations in this set. In the particular case of an imprecise continuous-time Markov chain, $\mathcal{P}$ will be a set of stochastic processes. Some of these processes are homogeneous continuous-time Markov chains. However, the majority of them are not. As explained in Example~\ref{ex:health_sick_exmp}, we also consider other, more general stochastic processes, which are not required to be homogeneous, do not need to satisfy the Markov condition, and do not even need to be differentiable. 

From a practical point of view, once a probabilistic model has been formulated and its parameters have been specified, one is typically interested in computing inferences, such as the probability of some event, or the expectation of some function. For example, as in Example~\ref{ex:health_sick_exmp}, we might want to know the probability that a person will be sick in ten days, given that he or she is healthy today.
Similarly, for expectations, one might for example like to know the expected utility of some financial strategy~\cite{sass2004optimizing}, the expected time until some component in a system breaks down~\cite{besnard2010approach} or the expected speed at which the clinical symptoms of a disease develop~\cite{duffy1995estimation}. However, these inferences might depend crucially on the estimated values of the model parameters, and on the defining assumptions of the model.
It is of interest, therefore, to study the robustness of such inferences with respect to parameter changes and relaxations of the defining assumptions of the model.

In our approach, we investigate this type of robustness as follows: we simply consider the probability or expectation of interest for each of the stochastic processes in $\mathcal{P}$, and then report the corresponding lower and upper bound. Intuitively, this can be regarded as providing best- and worst-case scenarios with respect to all the models in $\mathcal{P}$.
Of course, since the stochastic processes in $\mathcal{P}$ are not required to satisfy the Markov condition, a naive optimisation method will be highly inefficient, and in most cases not even possible. However, as we will see, it is possible to develop other, more efficient methods for computing the lower and upper bounds that we are after. If the lower and upper bounds that we obtain are similar, we can conclude that the corresponding inference is robust with respect to the variations that are represented by $\mathcal{P}$. If the lower and upper bound are substantially different, then the inference is clearly sensitive to these variations, and policy may then have to be adapted accordingly.

Readers that are familiar with the literature on imprecise continuous-time Markov chains~\cite{Skulj:2015cq,troffaes2015using} should recognise the main ideas behind the approach that we have just described, but will also notice that our presentation differs from the one that is adopted in References~\cite{Skulj:2015cq} and~\cite{troffaes2015using}. Indeed, the seminal work by {\v{S}}kulj~\cite{Skulj:2015cq}, which provided the first---and so far only---theoretical study of imprecise continuous-time Markov chains, characterised these models by means of the conditional lower and upper expectations of functions that depend on a single time point. In particular, this work defined such lower and upper expectations directly, through a generalisation of the well-known differential equation characterisation of ``normal'' continuous-time Markov chains. This approach allowed the author to focus on developing algorithms for solving this generalised differential equation, thereby yielding methods for the efficient computation of lower and upper expectations of functions that depend on a single time point. These results have since been successfully applied to conduct a robust analysis of failure-rates and repair-times in power-grid networks~\cite{troffaes2015using}.

However, due to its direct characterisation of lower and upper expectations, this pioneering work left open a number of questions about which sets of stochastic processes these quantities correspond to; for instance, the question of whether or not these sets also include non-Markov processes. Furthermore, due to the focus on functions that depend on a single time point, computational methods for more general functions do not follow straightforwardly from this earlier work.

In contrast, we will in this present paper address these issues directly, by characterising imprecise continuous-time Markov chains explicitly as sets of stochastic processes. There are a number of advantages to working with such sets of processes. First of all, it removes any ambiguity as to the elements of such a set, which allows us to state exactly which assumptions the model robustifies against. Secondly, this approach allows us to prove some properties of such a set's corresponding lower and upper expectations which, in turn, allow us to derive tractable algorithms to compute lower and upper expectations of functions that depend on any finite number of time points. Finally, our approach allows us to derive algorithms that, for the special case of functions that depend on a single time point, improve upon the computational complexity of the algorithm in Reference~\cite{Skulj:2015cq}.

In summary then, our aims with the present paper are threefold. First of all, to solidify the theoretical foundations of imprecise continuous-time Markov chains. Secondly, to extend and generalise existing methods for computing the corresponding lower expectations and, as a particular case, upper expectations and lower and upper probabilities. Thirdly, to provide analytical tools that can be used for future analysis of these models, and for the development of new algorithms.

Our main contributions can be summarised as follows.
\begin{enumerate}
\item We provide a unified framework for describing finite-state stochastic processes in continuous time, using the formalism of \emph{full conditional probabilities}. This framework covers the full range of (non-)homogeneous, (non-)Markovian, and (non-)differentiable stochastic processes.
\item We use this framework to formalise \emph{imprecise continuous-time Markov chains}: sets of stochastic processes that are in a specific sense consistent with user-specified set-valued assessments of the parameters of a continuous-time Markov chain. We conduct a thorough theoretical study of the properties of these sets of processes, and of the lower expectations that correspond to them. 
\item We introduce a \emph{lower transition operator} for imprecise continuous-time Markov chains, and show that this operator satisfies convenient algebraic properties such as homogeneity, differentiability, and Markovian-like factorisation---even if the underlying set of processes does not. Furthermore, and perhaps most importantly, we show that we can use this operator to \emph{compute lower expectations} of functions that depend on the state $X_t$ at an arbitrary but finite number of time points.
\end{enumerate}

To be upfront, we would like to conclude this introduction with a cautionary remark to practitioners: the main (computational) methods that we present do not enforce the homogeneity and---in some cases also---the Markovian independence assumptions of a traditional Markov chain, but allow these to be relaxed. Therefore, if one is convinced that the (true) system that is being modelled \emph{does} satisfy these properties, then it is in general quite likely that the lower- and upper bounds that are reported by our methodology will be conservative. This is not to say that the methods that we present lead to vacuous or non-informative conclusions---see, e.g., Reference~\cite{Rottondi2017DRCN} for a successful application of our approach in telecommunication. However, tighter bounds---i.e. more informative conclusions---might then be obtainable by methodologies that do enforce these properties, provided of course that such methods are available and tractable. If one is uncertain, however, about whether these properties hold for one's system of interest, then the methods that we present are exactly applicable, and the bounds that we derive will be tight with respect to this uncertainty.

\subsection{Finding Your Way Around in This Paper}

Given the substantial length of this paper, we provide in this section some suggestions as to what readers with various interests might wish to focus on. In principle, however, the paper is written to be read in chronological order, and is organised as follows.

First, after we  introduce our notation in Section~\ref{sec:prelim} and discuss some basic mathematical concepts that will be used throughout, Section~\ref{sec:systems} discusses some crucial algebraic notions that will allow us to describe stochastic processes. Section~\ref{sec:stochastic_processes} then goes on to formally introduce stochastic processes and provides some powerful tools for describing their dynamics. 

Next, once we have all our mathematical machinery in place, we shift the focus to imprecise continuous-time Markov chains. We start in Section~\ref{sec:cont_time_markov_chains} by considering the special case of---precise---continuous-time Markov chains and then, in Section~\ref{sec:iCTMC}, we finally formalise the imprecise version that is the topic of this paper, and prove some powerful theoretical properties of this model.

The next three sections discuss computational methods: Section~\ref{sec:lowertrans} introduces a lower transition operator for imprecise continuous-time Markov chains, and in Sections~\ref{sec:connections} and~\ref{sec:funcs_multi_time_points}, we use this operator to compute lower expectations of functions that depend on the state at an arbitrary but finite number of time points. 

The last two sections provide some additional context: Section~\ref{sec:prev_work} relates and compares our results to previous work on imprecise continuous-time Markov chains, and in
Section~\ref{sec:conclusions}, we conclude the paper and provide some ideas for future work. 


Finally, the proofs of our results are gathered in an appendix, where they are organised by section and ordered by chronological appearance. This appendix also contains some additional lemmas that may be of independent interest, a basic exposition of the gambling interpretation for coherent conditional probabilities, and proofs for some of the claims in our examples.

For readers with different interests, then, we recommend to focus on the following parts of the paper. First, if one is interested in the full mathematical theory, we strongly encourage the reader to go through the paper in chronological order. However, if one has a more passing interest in the mathematical formalism, and is content with a more conceptual understanding of what we mean by an imprecise continuous-time Markov chain, then he or she may wish to start in either Section~\ref{sec:cont_time_markov_chains} or~\ref{sec:iCTMC}. Finally, readers who are mainly interested in how to compute lower expectations for ICTMC's might want to focus on Sections~\ref{sec:connections} and~\ref{sec:funcs_multi_time_points}, referring back to Section~\ref{subsec:ictmc_types}---for details about the specific types of ICTMC's that we consider---and Section~\ref{sec:connections_rate}---for the requisite details about lower transition rate operators.

Of course, skipping parts of the paper may introduce some ambiguity about the meaning and definition of the many technical concepts that we refer to throughout. In order to alleviate this as much as possible, Table~\ref{table:glossary} provides a short glossary of the most important notation and terminology.

\begin{longtable}{ l p{.57\textwidth} l }
{\bf Symbol or term} & {\bf Explanation} & {\bf Page} \\[3pt]
\endhead
$u$ & \raggedright finite, ordered sequence of time points & \placeref{notation:sequenceset} \\
$\mathcal{U}$ & \raggedright set of all finite, ordered sequences of time points & \placeref{notation:sequenceset} \\
$\mathcal{U}_{<t}$ & \raggedright set of all finite, ordered sequences of time points before time $t$ & \placeref{notation:sequenceset_ineq} \\
$\mathcal{U}_{[t,s]}$ & \raggedright set of all finite, ordered sequences of time points that partition the time interval $[t,s]$ & \placeref{notation:sequenceset_partition} \\
$\states$ & \raggedright finite state space of a stochastic process & \placeref{notation:statespace} \\
$\states_u$ & \raggedright joint state space at time points $u\in\mathcal{U}$ & \placeref{notation:statespace} \\
$\gamblesX$ & \raggedright set of all functions $f: \states\to\reals$ & \placeref{notation:functionspace} \\
$\gambles(\states_u)$ & \raggedright set of all functions $f:\states_u\to\reals$ & \placeref{notation:functionspacemulti} \\
$\norm{\cdot}$ & \raggedright maximum ($L_\infty$) norm of a function, operator, or set of matrices & \placeref{notation:norm} \\
$\mathcal{T}$ & \raggedright transition matrix system: a family of transition matrices satisfying certain properties & \placeref{def:trans_mat_system} \\
$\mathcal{T}_Q$ & \raggedright exponential transition matrix system that corresponds to the transition rate matrix $Q$ & \placeref{def:systemfromQ} \\
$\mathcal{T}_P$ & \raggedright  transition matrix system that corresponds to the stochastic process $P$ & \placeref{def:trans_matrix} \\
$\mathcal{T}^\mathbf{I}$ & \raggedright restriction of the transition matrix system $\mathcal{T}$ to the closed time interval $\mathbf{I}\subseteq\realsnonneg$ & \placeref{notation:restricted_system} \\
$\otimes$ & \raggedright concatenation operator for two restricted transition matrix systems & \placeref{def:concatenation_system} \\
well-behaved & \raggedright loosely speaking, this means ``with bounded rate-of-change''; see Definitions~\ref{def:well_behaved_trans_mat_system} and~\ref{def:well-behaved} & \placeref{def:well_behaved_trans_mat_system}, \placeref{def:well-behaved} \\
$\processes$ & \raggedright set of all continuous-time stochastic processes & \placeref{def:stoch_process} \\
$\wprocesses$ & \raggedright set of all well-behaved continuous-time stochastic processes & \placeref{def:well-behaved} \\
$\wmprocesses$ & \raggedright set of all well-behaved continuous-time Markov chains & \placeref{def:markov_property} \\
$\whmprocesses$ & \raggedright set of all well-behaved, homogeneous continuous-time Markov chains & \placeref{def:homogeneousMarkov} \\
$\overline{\partial}_+T_{t,x_u}^t$, $\overline{\partial}_-T_{t,x_u}^t$, $\overline{\partial}T_{t,x_u}^t$ & \raggedright outer partial derivatives of the transition matrix 
of a stochastic process, for time $t$ and history $x_u$ & \placeref{def:outerpartialderivatives} \\
$\mathcal{R}$ & \raggedright set of all transition rate matrices & \placeref{def:rate_matrix} \\
$\rateset$ & \raggedright set of transition rate matrices & 
\placeref{notation:rate_set} \\
$\mathcal{M}$ & \raggedright initial set of probability mass functions & \placeref{notation:rate_set} \\
$\wprocesses_{\rateset,\mathcal{M}}$, $\wmprocesses_{\rateset,\mathcal{M}}$, $\whmprocesses_{\rateset,\mathcal{M}}$ & \raggedright three different types of ICTMC's; also see Definition~\ref{def:consistent_process_set} and $\wprocesses$, $\wmprocesses$ and $\whmprocesses$  & \placeref{def:process_sets} \\
$\underline{\mathbb{E}}^\mathrm{W}_{\rateset,\mathcal{M}}$, $\underline{\mathbb{E}}^\mathrm{WM}_{\rateset,\mathcal{M}}$, $\underline{\mathbb{E}}^\mathrm{WHM}_{\rateset,\mathcal{M}}$ & \raggedright lower expectation operators of the ICTMC's $\wprocesses_{\rateset,\mathcal{M}}$, $\wmprocesses_{\rateset,\mathcal{M}}$ and $\whmprocesses_{\rateset,\mathcal{M}}$ & \placeref{def:lower_exp} \\
$\lrate$ & \raggedright lower transition rate operator
& \placeref{def:coh_low_trans_rate}
\\
$L_t^s$ & \raggedright lower transition operator from time point $t$ to $s$, corresponding to some given $\lrate$
& \placeref{def:low_trans} \\
$\underline{\mathcal{T}}_{\lrate}$ & \raggedright family of lower transition operators $L_t^s$ corresponding to some given $\lrate$ & \placeref{def:low_trans_system}
\\\\

\caption{Glossary table of important notation and terminology.}
\label{table:glossary}
\end{longtable}

\section{Preliminaries}\label{sec:prelim}

We denote the reals as $\reals$, the non-negative reals as $\realsnonneg$, the positive reals as $\realspos$ and the negative reals as $\reals_{<0}$. For any $c\in\reals$, $\reals_{\geq c}$, $\reals_{>c}$ and $\reals_{<c}$ have a similar meaning. The natural numbers are denoted by $\nats$, and we also define $\nats_0\coloneqq\nats\cup\{0\}$. The rationals will be denoted by $\mathbb{Q}$.

Infinite sequences of quantities will be denoted $\{a_i\}_{i\in\nats}$, possibly with limit statements of the form $\{a_i\}_{i\in\nats}\to c$, which should be interpreted as $\lim_{i\to\infty}a_i=c$. If the elements of such a sequence belong to a space that is endowed with an ordering relation, we may write $\{a_i\}_{i\in\nats}\to c^+$ or $\{a_i\}_{i\in\nats}\to c^-$ if the limit is approached from above or below, respectively.

When working with suprema and infima, we will sometimes use the shorthand notation $\sup\{\cdot\}<+\infty$ to mean that there exists a $c\in\reals$ such that $\sup\{\cdot\}< c$, and similarly for $\inf\{\cdot\}>-\infty$.

For any set $A$ and any subset $C$ of $A$, we use $\ind{C}$ to denote the indicator of $C$, defined for all $a\in A$ by $\ind{C}(a)=1$ if $a\in C$ and $\ind{C}(a)=0$, otherwise. If $C$ is a singleton $C=\{c\}$, we may instead write $\ind{c}\coloneqq\ind{\{c\}}$.

\subsection{Sequences of Time Points}\label{subsec:sequencesoftimepoints}

We will make extensive use of finite sequences of time points. Such a sequence is of the form $u\coloneqq t_0,t_1,\ldots,t_n$, with $n\in\natswith$ and, for all $i\in\{0,\ldots,n\}$, $t_i\in\realsnonneg$. These sequences are taken to be ordered, meaning that for all $i,j\in\{0,\ldots,n\}$ with $i<j$, it holds that $t_i\leq t_j$. Let $\mathcal{U}$ denote the set of all such finite sequences that are \emph{non-degenerate}, meaning that for all $u\in\mathcal{U}$ with $u=t_0,\ldots,t_n$, it holds that $t_i\neq t_j$ for all $i,j\in\{0,\ldots,n\}$ such that $i\neq j$. Note that this does not prohibit \emph{empty} sequences. We therefore also define $\mathcal{U}_\emptyset\coloneqq \mathcal{U}\setminus\{\emptyset\}$. \label{notation:sequenceset}

For any finite sequence $u$ of time points, let $\max u\coloneqq \max\{t_i:i\in\{0,\ldots,n\}\}$. For any time point $t\in\realsnonneg$, we then write $t>u$ if $t>\max u$, and similarly for other inequalities. If $u=\emptyset$, then $t>u$ is taken to be trivially true, regardless of the value of $t$. We use $\mathcal{U}_{<t}$ to denote the subset of $\mathcal{U}$ that consists of those sequences $u\in\mathcal{U}$ for which $u<t$, and, again, similarly for other inequalities. \label{notation:sequenceset_ineq}

Since a sequence $u\in\mathcal{U}$ is a subset of $\realsnonneg$, we can use set-theoretic notation to operate on such sequences. The result of such operations is again taken to be ordered. For example, for any $u,v\in\mathcal{U}$, we use $u\cup v$ to denote the ordered union of $u$ and $v$. Similarly, for any $s\in\realsnonneg$ and any $u\in\mathcal{U}_{<s}$ with $u=t_0,\ldots,t_n$, we use $u\cup\{s\}$ to denote the sequence $t_0,\ldots,t_n,s$.

As a special case, we consider finite sequences of time points that partition a given time interval $[t,s]$, with $t,s\in\realsnonneg$ such that $t\leq s$. Such a sequence is taken to include the end-points of this interval.  Thus, the sequence is of the form $t=t_0< t_1<\cdots< t_n=s$. We denote the set of all such sequences by $\mathcal{U}_{[t,s]}$. Since these sequences are non-degenerate, it follows that $\mathcal{U}_{[t,t]}$ consists of a single sequence $u=t$. For any $u\in\mathcal{U}_{[t,s]}$ with $u=t_0,\ldots,t_n$, we also define the sequential differences $\Delta_i\coloneqq t_i-t_{i-1}$, for all $i\in\{1,\ldots,n\}$. We then use $\sigma(u)\coloneqq \max\{\Delta_i:i\in\{1,\ldots,n\}\}$ to denote the maximum such difference. \label{notation:sequenceset_partition}

\subsection{States and Functions}\label{sec:multivar_notation}

Throughout this work, we will consider some fixed finite \emph{state space} $\states$. A generic element of this set is called a state and will be denoted by $x$. Without loss of generality, we can assume the states to be ordered, and we can then identify $\states$ with the set $\{1,\dots,\abs{\states}\}$, where $\abs{\states}$ is the number of states in $\states$. \label{notation:statespace}

We use $\gamblesX$ to denote the set of all real-valued functions on $\states$. Because $\states$ is finite, a function $f\in\gamblesX$ can be interpreted as a vector in $\reals^{\abs{\states}}$. Hence, we will in the sequel use the terms `function' and `vector' interchangeably when referring to elements of $\gamblesX$. \label{notation:functionspace}

We will often find it convenient to explicitly indicate the time point $t$ that is being considered, in which case we write $\states_t\coloneqq\states$ to denote the state space at time $t$, and $x_t$ to denote a state at time $t$. This notational trick also allows us to introduce some notation for the joint state at (multiple) explicit time points. For any finite sequence of time points $u\in\mathcal{U}$ such that $u=t_0,\ldots,t_n$, we use
\begin{equation*}
\states_u \coloneqq \prod_{t\in u}\states_t
\end{equation*}
to denote the joint state space at the time points in $u$. A joint state $x_u\in\states_u$ is a tuple $(x_{t_0},\ldots,x_{t_n})\in\states_{t_0}\times\cdots\times\states_{t_n}$ that specifies a state $x_{t_k}$ for every time point $t_k$ in $u$. Note that if $u$ only contains a single time point $t$, then we simply have that $\states_u=\states_{\{t\}}=\states_t=\states$. If $u=\emptyset$, then $x_\emptyset\in\states_\emptyset$ is a ``dummy'' placeholder, which typically leads to statements that are vacuously true. 
For any $u\in\mathcal{U}_\emptyset$, we use $\gambles(\states_u)$ to denote the set of all real-valued functions on $\states_u$. \label{notation:functionspacemulti}

\subsection{Norms and Operators}\label{sec:func_oper_norm}

For any $u\in\mathcal{U}_\emptyset$ and any $f\in\gambles(\states_u)$, let the norm $\norm{f}$ be defined as
\begin{equation*}\label{notation:norm}
\norm{f} \coloneqq \norm{f}_{\infty} \coloneqq \max\{\abs{f(x_u)}\,:\,x_u\in\states_u\}\,.
\end{equation*}
As a special case, we then have for any $f\in\gamblesX$ that $\norm{f}=\max\{\abs{f(x)}\,:\,x\in\states\}$.

Linear maps from $\gamblesX$ to $\gamblesX$ will play an important role in this work, and will be represented by matrices. Because the state space $\states$ is fixed throughout, we will always consider square, $\abs{\states}\times\abs{\states}$, real-valued matrices.
 
As such, we will for the sake of brevity simply refer to them as `matrices'. If $A$ is such a matrix, we will index its elements as $A(x,y)$ for all $x,y\in\states$, where the indexing is understood to be row-major. Furthermore, $A(x,\cdot)$ will denote the $x$-th row of $A$, and $A(\cdot,y)$ will denote its $y$-th column. The symbol $I$ will be reserved throughout to refer to the $\abs{\states}\times\abs{\states}$ identity matrix.

Because we will also be interested in non-linear maps, we consider as a more general case operators that are \emph{non-negatively homogeneous}. An operator $A$ from $\gamblesX$ to $\gamblesX$ is non-negatively homogeneous if $A(\lambda f)=\lambda \left[Af\right]$ for all $f\in\gamblesX$ and all $\lambda\in\realsnonneg$. Note that this includes matrices as a special case.

For any non-negatively homogeneous operator $A$ from $\gamblesX$ to $\gamblesX$, we consider the induced operator norm
\begin{equation}\label{eq:operatornorm}
\norm{A}\coloneqq\sup\left\{\norm{Af}\colon f\in\gamblesX,\norm{f}=1\right\}.
\end{equation}
If $A$ is a matrix, it is easily verified that then
\begin{equation}\label{eq:normofmatrix}
\norm{A}
=
\max\left\{\sum_{y\in\states}\abs{A(x,y)}\colon x\in\states\right\}.
\end{equation}
\noindent
Finally, for any set $\mathcal{A}$ of matrices, we define $\norm{\mathcal{A}}\coloneqq\sup\{\norm{A}\colon A\in\mathcal{A}\}$.

These norms satisfy the following properties; Reference~\cite{DeBock:2016} provides a proof for the non-trivial ones.

\begin{proposition}\label{prop:norm_properties}
For all $f,g\in\gamblesX$, all $A,B$ from $\gamblesX$ to $\gamblesX$ that are non-negatively homogeneous, all $\lambda\in\reals$ and all $x\in\states$, we have that
\vspace{5pt}

\begin{multicols}{2}
\begin{enumerate}[label=N\arabic*:,ref=N\arabic*]
\item
$\norm{f}\geq0$
\item
$\norm{f}=0\asa f=0$
\item
$\norm{f+g}\leq\norm{f}+\norm{g}$
\item
$\norm{\lambda f}=\abs{\lambda}\norm{f}$
\item
$\abs{f(x)}\leq\norm{f}$ \\
\item
$\norm{A}\geq0$
\item
$\norm{A}=0\asa A=0$
\item
$\norm{A+B}\leq\norm{A}+\norm{B}$
\item\label{N:homogeneous}
$\norm{\lambda A}=\abs{\lambda}\norm{A}$
\item\label{N:normAB}
$\norm{AB}\leq\norm{A}\norm{B}$
\item\label{N:normAf}
$\norm{Af}\leq\norm{A}\norm{f}$
\end{enumerate}
\end{multicols}
\end{proposition}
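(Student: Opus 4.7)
The plan is to dispatch the function-norm claims N1--N5 first. Since $\norm{f} = \max_{x\in\states}\abs{f(x)}$ and $\states$ is finite, this coincides with the usual $\ell_\infty$ norm on $\reals^{\abs{\states}}$, and each of N1--N5 reduces to an elementary property of the maximum: N1 and N5 are immediate from the definition; N3 and N4 follow by applying the triangle inequality and the scaling property of $\abs{\cdot}$ pointwise and then maximising over $x$; and N2 uses the fact that the maximum of non-negative quantities vanishes only when all of them do.

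For the operator properties, I would prove N11 first, as it serves as the workhorse for the rest. If $f=0$ then $Af=0$---which follows from non-negative homogeneity, since $A0 = A(0\cdot g) = 0\cdot Ag = 0$ for any $g$---so both sides of N11 vanish. Otherwise $h\coloneqq f/\norm{f}$ satisfies $\norm{h}=1$, and non-negative homogeneity gives $Af = \norm{f}\,Ah$, whence $\norm{Af} = \norm{f}\,\norm{Ah} \leq \norm{f}\,\norm{A}$ by the defining supremum~\eqref{eq:operatornorm}. From here: N10 falls out of a double application, $\norm{(AB)f} = \norm{A(Bf)} \leq \norm{A}\,\norm{Bf} \leq \norm{A}\,\norm{B}\,\norm{f}$, followed by the sup over unit-norm $f$; N8 combines N11 with N3 pointwise, since $(A+B)f = Af + Bf$; and N9 uses that $(\lambda A)f \coloneqq \lambda(Af)$ together with N4 to give $\norm{(\lambda A)f} = \abs{\lambda}\norm{Af}$, after which pulling $\abs{\lambda}$ out of the supremum yields the equality. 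N6 is trivial, and for N7 the forward direction follows because $\norm{A}=0$ forces $Ag=0$ for every unit-norm $g$, hence for all nonzero $f$ by scaling, and $A0=0$ as above.

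For the matrix identity~\eqref{eq:normofmatrix} I would prove both directions separately. The bound $\norm{A}\leq \max_{x}\sum_y\abs{A(x,y)}$ follows from $\abs{(Af)(x)} \leq \sum_y \abs{A(x,y)}\,\abs{f(y)} \leq \norm{f}\sum_y\abs{A(x,y)}$, maximising over $x$, and then taking the sup over $\norm{f}=1$. For the reverse bound, I would fix any $x^*$ attaining the outer maximum and set $f^*(y)\coloneqq \mathrm{sign}(A(x^*,y))$ with $\mathrm{sign}(0)\coloneqq 0$; then $\norm{f^*}\leq 1$ and $(Af^*)(x^*) = \sum_y\abs{A(x^*,y)}$, which saturates the bound. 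No step here will be much of an obstacle---the whole proposition is a sequence of routine manipulations---but the one place where some care is required is N9, where one must verify that non-negative homogeneity (rather than full linearity) suffices despite $\lambda$ being allowed to be negative; this works out because only the non-negative scalar $\abs{\lambda}$ ever needs to be pulled through $A$ or through the supremum.
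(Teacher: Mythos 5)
Your proof is correct. It is worth noting that the paper never proves this proposition in-text: it simply remarks that Reference~\cite{DeBock:2016} ``provides a proof for the non-trivial ones'', so your self-contained argument supplies exactly the routine verification being delegated, and it follows the route one would expect --- N1--N5 as elementary properties of the maximum over the finite set $\states$, N11 established first by normalising $f$ and using non-negative homogeneity (including the observation that $A0=0$), and N6--N10 then derived from N11 together with the pointwise definitions of $A+B$, $\lambda A$ and $AB$; your care about N9 needing only $\abs{\lambda}\geq0$ to pass through the supremum is exactly the right point. Two small remarks: the identity~\eqref{eq:normofmatrix} is not actually part of the proposition (the paper states it separately as ``easily verified''), so that portion of your argument is a bonus; and in your reverse bound for it, the sign-vector $f^*$ satisfies $\norm{f^*}=1$ only when the maximising row is non-zero --- in the remaining case all row sums vanish, so $A=0$ and the equality is trivial --- a case worth mentioning explicitly, since the supremum in~\eqref{eq:operatornorm} is taken over functions of norm exactly one.
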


\section{Transition Matrix Systems}\label{sec:systems}

We provide in this section some definitions that will later be useful for characterising continuous-time Markov chains. Because we have not yet formally introduced the concept of a continuous-time Markov chain, for now, the definitions below can be taken to be purely algebraic constructs. Nevertheless, whenever possible, we will of course attempt to provide them with some intuition.

For the purposes of this section, it suffices to say that a continuous-time Markov chain is a process which at each time $t$ is in some state $x\in\states$. As time elapses, the process moves through the state space $\states$ in some stochastic fashion. We here define tools with which this stochastic behaviour can be conveniently described.

\subsection{Transition Matrices and Transition Rate Matrices}\label{sec:trans_rate_matrices}

A \emph{transition matrix} $T$ is a matrix that is row-stochastic, meaning that, for each $x\in\states$, the row $T(x,\cdot)$ is a probability mass function on $\states$.
\begin{definition}[Transition Matrix]\label{def:stoch_matrix}
A real-valued matrix $T$ is said to be a \emph{transition matrix} if
\vspace{5pt}
\begin{enumerate}[label=T\arabic*:,ref=T\arabic*]
\item\label{def:T:sumone}
$\sum_{y\in\states}T(x,y)=1$ for all $x\in\states$;\label{def:trans_matrix_is_stochastic}
\item\label{def:T:nonneg}
$T(x,y)\geq0$ for all $x,y\in\states$.
\end{enumerate}
\vspace{5pt}
\noindent We will use $\mathbb{T}$ to denote the set of all transition matrices.
\end{definition}

\begin{restatable}{proposition}{lemmacompositiontransitionmatrix}\label{lemma:compositiontransitionmatrix}
For any two transition matrices $T_1$ and $T_2$, their composition $T_1T_2$ is also a transition matrix.
\end{restatable}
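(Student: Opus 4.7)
The plan is to verify directly that the matrix product $T_1 T_2$ satisfies the two defining conditions \ref{def:T:sumone} and \ref{def:T:nonneg} of a transition matrix. Since the state space $\states$ is finite, the product $T_1 T_2$ is the standard matrix product, so for any $x,z \in \states$ we have
\begin{equation*}
(T_1 T_2)(x,z) = \sum_{y \in \states} T_1(x,y)\, T_2(y,z).
\end{equation*}

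First I would check non-negativity \ref{def:T:nonneg}. Because $T_1$ and $T_2$ both satisfy \ref{def:T:nonneg}, each factor $T_1(x,y) T_2(y,z)$ is a product of non-negative reals, hence non-negative, and therefore so is the finite sum displayed above.

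Second, I would check the row-stochasticity condition \ref{def:T:sumone}. Fix $x \in \states$ and compute
\begin{equation*}
\sum_{z \in \states} (T_1 T_2)(x,z) = \sum_{z \in \states} \sum_{y \in \states} T_1(x,y)\, T_2(y,z) = \sum_{y \in \states} T_1(x,y) \sum_{z \in \states} T_2(y,z),
\end{equation*}
where swapping the order of the two finite sums is justified because $\states$ is finite. Applying \ref{def:T:sumone} to $T_2$ gives $\sum_{z} T_2(y,z) = 1$ for every $y$, so the right-hand side reduces to $\sum_{y} T_1(x,y)$, which equals $1$ by applying \ref{def:T:sumone} to $T_1$.

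There is no real obstacle here; the proof is a direct computation that relies only on the finiteness of $\states$, non-negativity of the entries, and the row-sum identity. The only minor subtlety is making sure the two sums may be interchanged, which is immediate for finite sums, so the argument is essentially one line per condition.
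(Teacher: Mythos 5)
Your proof is correct and follows essentially the same route as the paper, which simply says to check each of the defining properties; your write-up just carries out that direct verification explicitly.
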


The interpretation in the context of Markov chains goes as follows. The elements $T(x,y)$ of a transition matrix $T$ describe the probability of the Markov chain ending up in state $y$ at the next time point, given that it is currently in state $x$. In other words, the row $T(x,\cdot)$ contains the state-transition probabilities, conditional on currently being in state $x$. We will make this connection more explicit when we formalise continuous-time stochastic processes in Section~\ref{sec:stochastic_processes}.

For now, we note that in a continuous-time setting, this notion of ``next'' time point is less obvious than in a discrete-time setting, because the state-transition probabilities are then continuously dependent on the evolution of time. To capture this aspect, the notion of \emph{transition rate matrices}~\cite{norris1998markov} is used.
\begin{definition}[Transition Rate Matrix]\label{def:rate_matrix}
A real-valued matrix $Q$ is said to be a \emph{transition rate matrix}, or sometimes simply \emph{rate matrix}, if

\vspace{5pt}
\begin{enumerate}[label=R\arabic*:,ref=R\arabic*]
\item\label{def:Q:sumzero}
$\sum_{y\in\states}Q(x,y)=0$ for all $x\in\states$;
\item\label{def:Q:nonnegoffdiagonal}
$Q(x,y)\geq0$ for all $x,y\in\states$ such that $x\neq y$.
\end{enumerate}
\noindent
We use $\mathcal{R}$ to denote the set of all transition rate matrices. 
\vspace{5pt}
\end{definition}

The connection between transition matrices and rate matrices is perhaps best illustrated as follows. Suppose that at some time point $t$, we want to describe for any state $x$ the probability of ending up in state $y$ at some time $s\geq t$. Let $T_t^s$ denote the transition matrix that contains all these probabilities. Note first of all that it is reasonable to assume that, if time does not evolve, then the system should not change. That is, if we are in state $x$ at time $t$, then the probability of still being in state $x$ at time $s=t$, should be one. Hence, we should have $T_t^t=I$, with $I$ the identity matrix. 

A rate matrix $Q$ is then used to describe the transition matrix $T_t^{t+\Delta}$ after a small period of time, $\Delta$, has elapsed. Specifically, the scaled matrix $\Delta Q$ serves as a linear approximation of the change from $T_t^t$ to $T_t^{t+\Delta}$. The following proposition states that, for small enough $\Delta$, this linear change still results in a transition matrix.

\begin{restatable}{proposition}{propstochasticfromratematrix}
\label{prop:stochastic_from_rate_matrix}
Consider any transition rate matrix $Q\in\mathcal{R}$, and any $\Delta\in\realsnonneg$ such that $\Delta \norm{Q}\leq 1$. Then the matrix $(I+\Delta Q)$ is a transition matrix.
\end{restatable}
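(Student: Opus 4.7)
The plan is straightforward verification of the two defining properties T\ref{def:T:sumone} and T\ref{def:T:nonneg} for the matrix $A \coloneqq I + \Delta Q$, writing $A(x,y) = I(x,y) + \Delta Q(x,y)$ with $I(x,y)=1$ if $x=y$ and $0$ otherwise. Property T\ref{def:T:sumone} falls out immediately from R\ref{def:Q:sumzero}: for each $x\in\states$,
\begin{equation*}
\sum_{y\in\states} A(x,y) \;=\; \sum_{y\in\states} I(x,y) + \Delta\sum_{y\in\states} Q(x,y) \;=\; 1 + \Delta\cdot 0 \;=\; 1.
\end{equation*}

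For T\ref{def:T:nonneg}, I would split into the off-diagonal and diagonal cases. When $x\neq y$, $A(x,y) = \Delta Q(x,y) \geq 0$ by R\ref{def:Q:nonnegoffdiagonal} together with $\Delta\geq 0$. The diagonal case $A(x,x) = 1 + \Delta Q(x,x)$ is the only part that actually uses the hypothesis $\Delta\norm{Q}\leq 1$, and is where I would spend my attention.

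The key observation for the diagonal is that R\ref{def:Q:sumzero} and R\ref{def:Q:nonnegoffdiagonal} together force $Q(x,x) = -\sum_{y\neq x} Q(x,y) \leq 0$ with $\abs{Q(x,x)} = \sum_{y\neq x} Q(x,y)$, so that the absolute-value row sum satisfies
\begin{equation*}
\sum_{y\in\states} \abs{Q(x,y)} \;=\; \abs{Q(x,x)} + \sum_{y\neq x} Q(x,y) \;=\; 2\abs{Q(x,x)}.
\end{equation*}
Comparing with the formula \eqref{eq:normofmatrix} for the matrix norm yields $2\abs{Q(x,x)} \leq \norm{Q}$, hence $\Delta\abs{Q(x,x)} \leq \tfrac{1}{2}\Delta\norm{Q} \leq \tfrac{1}{2}$, which gives $A(x,x) = 1 - \Delta\abs{Q(x,x)} \geq \tfrac{1}{2} \geq 0$, completing T\ref{def:T:nonneg}.

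There is no real obstacle here, since the argument is a three-line verification; the only subtle point is the diagonal bound, where it helps to notice that R\ref{def:Q:sumzero} implies each row of $Q$ has absolute-value sum equal to exactly twice the magnitude of its diagonal entry, so that $\abs{Q(x,x)}$ is automatically controlled by $\norm{Q}$. (One could even weaken the hypothesis to $\Delta\norm{Q}\leq 2$, but the stated bound is what is needed for later matrix-product estimates, so I would simply use the hypothesis as given.)
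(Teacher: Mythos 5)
Your proof is correct and follows essentially the same route as the paper's: direct verification of T\ref{def:T:sumone} via R\ref{def:Q:sumzero} and of T\ref{def:T:nonneg} by splitting into off-diagonal and diagonal entries. The only difference is that the paper handles the diagonal with the cruder bound $\Delta Q(x,x)\geq-\Delta\norm{Q}\geq-1$, whereas your factor-of-two observation $2\abs{Q(x,x)}\leq\norm{Q}$ is a harmless refinement (and your parenthetical remark that $\Delta\norm{Q}\leq2$ would suffice is accurate).
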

This also explains the terminology used; a rate matrix describes the ``rate of change'' of a (continuously) time-dependent transition matrix over a small period of time.

Of course, this notion can also be reversed; given a transition matrix $T_t^{t+\Delta}$, what is the change that it underwent compared to $T_t^t=I$? The following proposition states that such a change can always be described using a rate matrix.
\begin{restatable}{proposition}{propratefromstochasticmatrix}
\label{prop:rate_from_stochastic_matrix}
Consider any transition matrix $T$, and any $\Delta\in\realspos$. Then, the matrix $\nicefrac{1}{\Delta}(T-I)$ is a transition rate matrix.
\end{restatable}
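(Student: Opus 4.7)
The plan is a direct verification of the two defining properties R1 and R2 of a transition rate matrix, applied to the candidate matrix $Q \coloneqq \nicefrac{1}{\Delta}(T - I)$.

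First, I would verify R1. For any fixed $x \in \states$, I write
\begin{equation*}
\sum_{y\in\states} Q(x,y) = \frac{1}{\Delta}\sum_{y\in\states}\bigl(T(x,y) - I(x,y)\bigr) = \frac{1}{\Delta}\left(\sum_{y\in\states} T(x,y) - \sum_{y\in\states} I(x,y)\right).
\end{equation*}
By T1 applied to $T$, the first sum equals $1$, and by definition of the identity matrix the second sum also equals $1$. Hence the whole expression equals $\nicefrac{1}{\Delta}\cdot 0 = 0$, which is exactly R1.

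Next, I would verify R2. For any $x,y \in \states$ with $x \neq y$, we have $I(x,y) = 0$, so
\begin{equation*}
Q(x,y) = \frac{1}{\Delta}\bigl(T(x,y) - 0\bigr) = \frac{1}{\Delta}\, T(x,y).
\end{equation*}
Since $T(x,y) \geq 0$ by T2 and $\Delta \in \realspos$, we conclude $Q(x,y) \geq 0$, which is R2.

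There is no real obstacle here; the statement is essentially the algebraic inverse of Proposition~\ref{prop:stochastic_from_rate_matrix}, and both required conditions reduce to one-line computations using the defining properties of $T$ and of $I$. The only point worth flagging is that R2 only constrains off-diagonal entries, so the diagonal entries $Q(x,x) = \nicefrac{1}{\Delta}(T(x,x) - 1)$, which are in general non-positive, are permitted without issue.
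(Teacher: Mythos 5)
Your proof is correct and follows exactly the route the paper takes: the paper's own proof simply verifies the two properties of Definition~\ref{def:rate_matrix} directly (by analogy with the proof of Proposition~\ref{prop:stochastic_from_rate_matrix}), which is precisely your computation. Nothing further is needed.
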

Note that Proposition~\ref{prop:rate_from_stochastic_matrix} essentially states that the finite-difference $\nicefrac{1}{\Delta}(T_t^{t+\Delta} - T_t^t)$ is a rate matrix. Intuitively, if we now take the limit as this $\Delta$ goes to zero, this states that the derivative of a continuously time-dependent transition matrix is given by some rate matrix $Q\in\mathcal{R}$---assuming that this limit exists, of course. We will make this connection more explicit in Section~\ref{sec:stochastic_processes}.

We next introduce a function that is often seen in the context of continuous-time Markov chains: the \emph{matrix exponential}~\cite{van2006study} $e^{Q\Delta}$ of $Q \Delta$, with $Q$ a rate matrix and $\Delta\in\reals_{\geq0}$. 
There are various equivalent ways in which such a matrix exponential can be defined. We refer to~\cite{van2006study} for some examples, and will consider some specific definitions later on in this work.
For now, we restrict ourselves to stating the following well-known result.
\begin{proposition}\cite[Theorem 2.1.2]{norris1998markov}\label{prop:stochastic_from_exponential}
Consider a rate matrix $Q\in\mathcal{R}$ and any $\Delta\in\realsnonneg$. Then $e^{Q\Delta}$ is a transition matrix.
\end{proposition}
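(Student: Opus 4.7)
The plan is to prove this using the Lie product formula representation of the matrix exponential, combined with the previously established Propositions~\ref{lemma:compositiontransitionmatrix} and~\ref{prop:stochastic_from_rate_matrix}. Specifically, I would exploit the characterisation
\begin{equation*}
e^{Q\Delta} \;=\; \lim_{n\to\infty} \left(I + \tfrac{\Delta}{n} Q\right)^{\!n},
\end{equation*}
which is one of the standard definitions of the matrix exponential mentioned in Reference~\cite{van2006study}. The idea is to show that each term in the sequence is a transition matrix, and that transition matrices are closed under taking (entrywise) limits.

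First I would handle the trivial case $\Delta = 0$, where $e^{Q\cdot 0} = I$, which is clearly a transition matrix. For $\Delta > 0$, I would observe that for all sufficiently large $n \in \nats$ we have $\tfrac{\Delta}{n}\norm{Q} \leq 1$. Proposition~\ref{prop:stochastic_from_rate_matrix} then immediately yields that $(I + \tfrac{\Delta}{n}Q)$ is a transition matrix for all such $n$. A straightforward induction using Proposition~\ref{lemma:compositiontransitionmatrix} then shows that the $n$-fold product $(I + \tfrac{\Delta}{n}Q)^{n}$ is also a transition matrix for all such $n$.

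The final step is to take the limit. Since the defining properties \ref{def:T:sumone} and \ref{def:T:nonneg} of transition matrices are expressed as equalities and (non-strict) inequalities involving finitely many entries, they are preserved under entrywise convergence. Concretely, if $\{A_n\}_{n\in\nats}$ is a sequence of transition matrices converging entrywise to some matrix $A$, then for all $x,y\in\states$ we have $A(x,y) = \lim_{n\to\infty} A_n(x,y) \geq 0$, and for all $x\in\states$ we have $\sum_{y\in\states} A(x,y) = \lim_{n\to\infty} \sum_{y\in\states} A_n(x,y) = 1$, where the finiteness of $\states$ lets us swap sum and limit freely. Hence the limit $e^{Q\Delta}$ is itself a transition matrix.

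The main (minor) obstacle is not a deep one: it is simply the need to invoke the Lie-product-formula definition of $e^{Q\Delta}$, which was not explicitly singled out in the excerpt. If one preferred to work from the power series definition instead, one would have to argue somewhat more delicately (e.g.\ by writing $e^{Q\Delta} = e^{-c\Delta} e^{(Q+cI)\Delta}$ for $c \geq \max_x |Q(x,x)|$, so that $Q+cI$ has non-negative entries, making the power-series non-negativity manifest, and then using $\sum_y Q(x,y)=0$ to establish the row-sum property). Either route is routine, but the Lie-product approach is cleaner since it leverages the two propositions that were just proved.
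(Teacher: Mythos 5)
Your proof is correct, but note that the paper does not actually prove this proposition at all: it is stated with a citation to Theorem~2.1.2 of Norris's book, so there is no internal proof to match. Your argument is a clean, self-contained way of filling in that gap, and it has the pleasant feature of reusing exactly the two facts the paper has just established: Proposition~\ref{prop:stochastic_from_rate_matrix} gives that $I+\tfrac{\Delta}{n}Q$ is a transition matrix once $\tfrac{\Delta}{n}\norm{Q}\leq1$, Proposition~\ref{lemma:compositiontransitionmatrix} gives closure under products, and the finiteness of $\states$ makes the passage to the entrywise limit in \ref{def:T:sumone} and \ref{def:T:nonneg} immediate. The only external ingredient is the identity $e^{Q\Delta}=\lim_{n\to\infty}(I+\tfrac{\Delta}{n}Q)^n$, which is legitimate here since the paper explicitly leaves open which of the equivalent characterisations of the matrix exponential one uses (citing van Loan); if you wanted to stay even closer to the paper's own toolkit, the convergence could also be quantified via the bounds of Lemma~\ref{lemma:recursive} and Lemma~\ref{lemma:linearpartofexponential}, though invoking the standard limit formula is perfectly fine. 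For comparison, the textbook proof you are replacing (Norris) runs along essentially the same lines: it writes $e^{tQ}=(e^{tQ/n})^n$, uses $e^{tQ/n}=I+\tfrac{t}{n}Q+O(n^{-2})$ to get non-negativity of the factors for large $n$, and gets the unit row sums from $Q$ having zero row sums; your second sketch, shifting by $cI$ so that the power series has manifestly non-negative terms, is the other classical route. So there is no gap; your proof stands as written.
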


We conclude this section with some comments about \emph{sets} of rate matrices. First, note that the set of \emph{all} rate matrices, $\mathcal{R}$, is closed under finite sums and multiplication with non-negative scalars. Consider now any set $\rateset\subseteq\mathcal{R}$ of rate matrices. Then $\rateset$ is said to be \emph{non-empty} if $\rateset\neq\emptyset$ and $\rateset$ is said to be \emph{bounded} if $\norm{\rateset}<+\infty$. The following proposition provides a simple alternative characterisation of boundedness.

\begin{restatable}{proposition}{propalternativedefforbounded}
\label{prop:alternativedefforbounded}
A set of rate matrices $\rateset\subseteq\mathcal{R}$ is bounded if and only if
\begin{equation}\label{eq:alternative_bounded}
\inf\left\{Q(x,x)\colon Q\in\rateset\right\}>-\infty\text{~~for all $x\in\states$.}
\end{equation}
\end{restatable}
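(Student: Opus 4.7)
The plan is to reduce the operator norm of a rate matrix to a simple expression in its diagonal entries, and then use the finiteness of $\states$ to swap a minimum over states with an infimum over $\rateset$.

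First, I would use the defining properties R1 and R2 to observe that for every $Q\in\mathcal{R}$ the diagonal entries are non-positive: R1 gives $Q(x,x)=-\sum_{y\neq x}Q(x,y)$, and R2 says the off-diagonal terms on the right are non-negative, so $Q(x,x)\leq 0$ and $\abs{Q(x,x)} = \sum_{y\neq x}\abs{Q(x,y)}$. Combining with the matrix-norm formula~\eqref{eq:normofmatrix}, this yields the identity
\begin{equation*}
\norm{Q} \;=\; \max_{x\in\states}\sum_{y\in\states}\abs{Q(x,y)} \;=\; \max_{x\in\states}\bigl(-2\,Q(x,x)\bigr) \;=\; -2\min_{x\in\states}Q(x,x).
\end{equation*}
Taking the supremum over $Q\in\rateset$ then gives $\norm{\rateset}=-2\inf\bigl\{\min_{x\in\states}Q(x,x):Q\in\rateset\bigr\}$, so $\rateset$ is bounded precisely when $\inf\{\min_{x\in\states}Q(x,x):Q\in\rateset\}>-\infty$.

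It remains to show this condition is equivalent to~\eqref{eq:alternative_bounded}. For the forward direction, if there is a constant $c\in\reals$ with $\min_{x\in\states}Q(x,x)\geq c$ for every $Q\in\rateset$, then for each fixed $x\in\states$ and every $Q\in\rateset$ we have $Q(x,x)\geq c$, so the per-state infimum in~\eqref{eq:alternative_bounded} is at least $c$. Conversely, suppose that for each $x\in\states$ there exists $c_x\in\reals$ such that $Q(x,x)\geq c_x$ for all $Q\in\rateset$. Because $\states$ is finite, $c\coloneqq\min_{x\in\states}c_x$ is a real number, and then $\min_{x\in\states}Q(x,x)\geq c$ for every $Q\in\rateset$, so $\inf\{\min_{x\in\states}Q(x,x):Q\in\rateset\}\geq c>-\infty$.

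There is no serious obstacle here; the only subtle point is invoking the finiteness of $\states$ at the very end to ensure that the minimum of the finitely many per-state lower bounds is itself a real number rather than $-\infty$. Everything else is a direct computation that rests on the non-positivity of diagonal entries of rate matrices, which follows immediately from R1 and R2.
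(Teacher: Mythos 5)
Your proof is correct and follows essentially the same route as the paper's: both rest on the observation that, by R1 and R2, $\sum_{y\in\states}\abs{Q(x,y)}=-2\,Q(x,x)$, so that $\norm{\rateset}$ is controlled by the diagonal entries, combined with the finiteness of $\states$ to interchange the extremum over states with the one over $\rateset$. The only difference is organizational: you package everything into the single identity $\norm{\rateset}=-2\inf\left\{\min_{x\in\states}Q(x,x)\colon Q\in\rateset\right\}$ and read off both directions at once, whereas the paper proves the two implications separately, the second by contradiction.
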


\subsection{Transition Matrix Systems that are Well-Behaved}\label{subsec:well_behaved_systems}

In the previous section, we used the notation $T_t^s$ to refer to a transition matrix that contains the probabilities of moving from a state at time $t$, to a state at time $s$. We now consider \emph{families} of these transition matrices. Such a family $\mathcal{T}$ specifies a transition matrix $T_t^s$ for every $t,s\in\realsnonneg$ such that $t\leq s$. 

We already explained in the previous section that it is reasonable to assume that $T_t^t=I$. If the transition matrices of a family $\mathcal{T}$ satisfy this property, and if they furthermore satisfy the \emph{semi-group} property---see Equation~\ref{eq:transmatrixproduct} below---we call this family a \emph{transition matrix system}. We will use $\mathbfcal{T}$ to refer to the set of all transition matrix systems.

\begin{definition}[Transition Matrix System]\label{def:trans_mat_system}
A \emph{transition matrix system} $\mathcal{T}$ is a family of transition matrices $T_t^s$, defined for all $t,s\in\realsnonneg$ with $t\leq s$, such that for all $t,r,s\in\realsnonneg$ with $t\leq r\leq s$, it holds that
\begin{equation}\label{eq:transmatrixproduct}
T_t^s=T_t^r T_r^s\,,
\end{equation}
and for all $t\in\realsnonneg$, $T_t^t=I$.
\end{definition}
It will turn out that there is a strong connection between transition matrix systems and continuous-time Markov chains. We will return to this in Section~\ref{sec:cont_time_markov_chains}.

In the previous section, we have seen that for any transition matrix $T$ and any $\Delta\in\realspos$, the matrix $\nicefrac{1}{\Delta}(T - I)$ is a rate matrix, and therefore, in particular, that the finite difference $\nicefrac{1}{\Delta}(T_t^{t+\Delta} - I)$ is a rate matrix. We here note that this is also the case for the term $\nicefrac{1}{\Delta}(T_{t-\Delta}^t - I)$ whenever $(t-\Delta)\geq0$.

We now consider this property in the context of a transition matrix system $\mathcal{T}$. For all $t\in\realsnonneg$ and all $\Delta\in\realspos$, such a transition matrix system specifies a transition matrix $T_t^{t+\Delta}$ and---if $(t-\Delta)\geq0$---a transition matrix $T_{t-\Delta}^t$. We now consider the behaviour of these matrices for various values of $\Delta$. In particular, we look what happens to these finite differences if we take $\Delta$ to be increasingly smaller. 

For each $\Delta\in\realspos$, due to the property that we have just recalled, there will be a rate matrix that corresponds to these finite differences. If the norm of these rate matrices never diverges to $+\infty$ as we take $\Delta$ to zero, we call the family $\mathcal{T}$ \emph{well-behaved}.

\begin{definition}[Well-Behaved Transition Matrix System]\label{def:well_behaved_trans_mat_system}
A transition matrix system $\mathcal{T}$ is called \emph{well-behaved} if 
\begin{equation}\label{eq:wellbehavedtransitionmatrixsystem}
(\forall t\in\realsnonneg)~\limsup_{\Delta\to 0^{+}}\frac{1}{\Delta}\norm{T_{t}^{t+\Delta}-I}<+\infty\,
\text{~and~}
(\forall t\in\realspos)~\limsup_{\Delta\to 0^{+}}\frac{1}{\Delta}\norm{T_{t-\Delta}^t-I}<+\infty\,.
\end{equation}
\end{definition}

Observe that this notion of well-behavedness does not imply differentiability; the limit $\lim_{\Delta\to0^+}\nicefrac{1}{\Delta}(T_t^{t+\Delta}-I)$ need not exist. Rather, it implies that the rate of change of the transition matrices in $\mathcal{T}$ is bounded at all times. In this sense, this notion of well-behavedness is similar to a kind of local Lipschitz continuity. The locality stems from the fact that, although the rate of change must be bounded for each $t$, Equation~\eqref{eq:wellbehavedtransitionmatrixsystem} does not impose that it must be uniformly bounded (at all $t$) by a single ``Lipschitz constant''. That said, we do not stress this connection any further, because we will shortly consider a more involved notion of well-behavedness for which this connection is less immediate; see Section~\ref{sec:well_behaved}.

We finally consider an important special type of transition matrix systems. We have seen in the previous section that for any $Q\in\mathcal{R}$ and any $\Delta\in\realsnonneg$, the matrix exponential $e^{Q\Delta}$ is a transition matrix. We here consider for any $Q\in\mathcal{R}$ the family $\mathcal{T}_Q$ that is generated by such transition matrices.
\begin{definition}\label{def:systemfromQ}For any rate matrix $Q\in\mathcal{R}$, we use $\mathcal{T}_Q$ to denote the family of transition matrices that is defined by
\begin{equation*}
T_t^s=e^{Q(s-t)}
\text{~~for all $t,s\in\realsnonneg$ such that $t\leq s$.}
\end{equation*}
We call this family $\mathcal{T}_Q$ the \emph{exponential transition matrix system} corresponding to $Q$.
\end{definition}

\begin{restatable}{proposition}{propsystemQ}
\label{prop:systemQ}
For any $Q\in\mathcal{R}$, $\mathcal{T}_Q$ is a well-behaved transition matrix system.
\end{restatable}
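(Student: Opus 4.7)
The plan is to verify in turn each of the defining properties: that every member of $\mathcal{T}_Q$ is a transition matrix, that the identity and semi-group axioms of Definition~\ref{def:trans_mat_system} hold, and finally that the well-behavedness condition of Definition~\ref{def:well_behaved_trans_mat_system} is satisfied.

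First I would observe that each $T_t^s \coloneqq e^{Q(s-t)}$ is a transition matrix: since $s-t\in\realsnonneg$, this is a direct application of Proposition~\ref{prop:stochastic_from_exponential}. Next, the identity $T_t^t = e^{Q\cdot 0} = I$ follows from the standard fact that the matrix exponential of the zero matrix is the identity (no matter which series/limit definition of the matrix exponential from~\cite{van2006study} one adopts). For the semi-group property, fix $t\leq r \leq s$; since $Q(r-t)$ and $Q(s-r)$ are scalar multiples of the same matrix $Q$ they commute, so the classical identity $e^{A+B}=e^{A}e^{B}$ (valid for commuting matrices) gives
\begin{equation*}
T_t^r T_r^s = e^{Q(r-t)}e^{Q(s-r)} = e^{Q(r-t)+Q(s-r)} = e^{Q(s-t)} = T_t^s.
\end{equation*}
This establishes that $\mathcal{T}_Q$ is a transition matrix system.

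It remains to verify the two limit superior conditions in Equation~\eqref{eq:wellbehavedtransitionmatrixsystem}. The key observation is that for \emph{both} cases $T_t^{t+\Delta}$ and $T_{t-\Delta}^t$ the matrix is simply $e^{Q\Delta}$, because these transition matrices depend only on the length of the time interval. Hence both conditions reduce to showing that $\limsup_{\Delta\to 0^+} \tfrac{1}{\Delta}\|e^{Q\Delta}-I\| < +\infty$ uniformly in $t$. Using the power-series definition $e^{Q\Delta} = \sum_{k=0}^{\infty} \frac{(Q\Delta)^k}{k!}$, I would write
\begin{equation*}
e^{Q\Delta}-I \;=\; \sum_{k=1}^{\infty} \frac{(Q\Delta)^k}{k!} \;=\; \Delta Q + \sum_{k=2}^{\infty} \frac{(Q\Delta)^k}{k!},
\end{equation*}
and then apply the norm properties from Proposition~\ref{prop:norm_properties} (sub-multiplicativity~\ref{N:normAB} and absolute homogeneity~\ref{N:homogeneous}), together with the triangle inequality extended to the series, to bound
\begin{equation*}
\tfrac{1}{\Delta}\|e^{Q\Delta}-I\| \;\leq\; \|Q\| + \sum_{k=2}^{\infty} \frac{\Delta^{k-1}\|Q\|^k}{k!} \;=\; \tfrac{1}{\Delta}\bigl(e^{\|Q\|\Delta}-1\bigr).
\end{equation*}
As $\Delta\to 0^+$ the right-hand side tends to $\|Q\|$, which is finite since $Q$ is a fixed matrix. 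In particular the $\limsup$ is bounded by $\|Q\|<+\infty$, and this bound is independent of $t$, so both conditions of Equation~\eqref{eq:wellbehavedtransitionmatrixsystem} hold.

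I do not anticipate a genuine obstacle here; the only delicate point is making sure the exchange of norm and infinite sum in the last estimate is justified, which is immediate from absolute convergence of the exponential series in any matrix norm. All other steps are routine consequences of the stated propositions and basic properties of the matrix exponential.
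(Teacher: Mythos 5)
Your proof is correct and follows essentially the same route as the paper: each $e^{Q(s-t)}$ is a transition matrix by Proposition~\ref{prop:stochastic_from_exponential}, the identity and semi-group axioms come from standard matrix-exponential facts, and well-behavedness follows from the observation that $T_t^{t+\Delta}$ and $T_{t-\Delta}^t$ both equal $e^{Q\Delta}$, so the limit superior is bounded by $\norm{Q}$ uniformly in $t$. The only cosmetic difference is that where the paper invokes Lemma~\ref{lemma:deriv_exponential_trans} (the derivative of $e^{Q\Delta}$ at $\Delta=0$ equals $Q$) together with a triangle-inequality step, you obtain the same bound directly from the power series via $\frac{1}{\Delta}\norm{e^{Q\Delta}-I}\leq\frac{1}{\Delta}\bigl(e^{\norm{Q}\Delta}-1\bigr)\to\norm{Q}$, which is an equally valid, slightly more self-contained version of the same estimate.
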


This exponential transition matrix system corresponding to a $Q\in\mathcal{R}$ will turn out to play a large role in the context of continuous-time Markov chains. We return to this in Section~\ref{sec:cont_time_markov_chains}.

\subsection{Restricted Transition Matrix Systems}\label{sec:restricted}

We finally consider yet another construct that will be useful later: the restriction of a transition matrix system $\mathcal{T}$ to a closed interval $\mathbf{I}$ in the time-line $\realsnonneg$. \label{notation:restricted_system}

By a closed interval $\mathbf{I}$, we here mean a non-empty closed subset $\mathbf{I}\subseteq\realsnonneg$ that is connected, in the sense that for any $t,s\in\mathbf{I}$ such that $t\leq s$, and any $r\in[t,s]$, it holds that $r\in\mathbf{I}$. Note that for any $c\in\realsnonneg$, $[c,+\infty)$ is such a closed interval.

For any transition matrix system $\mathcal{T}$ and any such closed interval $\mathbf{I}\subseteq\realsnonneg$, we use $\mathcal{T}^\mathbf{I}$ to denote the restriction of $\mathcal{T}$ to $\mathbf{I}$. Such a restriction is a family of transition matrices $T_t^s$ that is defined for all $t,s\in\mathbf{I}$ such that $t\leq s$.
We call such a family $\mathcal{T}^{\mathbf{I}}$ a \emph{restricted transition matrix system} on $\mathbf{I}$. The set of all restricted transition matrix systems on $\mathbf{I}$ is denoted by $\mathbfcal{T}^{\mathbf{I}}$.

\begin{restatable}{proposition}{proprestrtransmatsystemifsemigroup}
\label{prop:restr_trans_mat_system_if_semigroup}
Consider any closed interval $\mathbf{I}\subseteq\realsnonneg$, and let $\mathcal{T}^{\mathbf{I}}$ be a family of transition matrices $T_t^s$ that is defined for all $t,s\in\mathbf{I}$ with $t\leq s$. Then $\mathcal{T}^{\mathbf{I}}$ is a restricted transition matrix system on $\mathbf{I}$ if and only if, for all $t,r,s\in\mathbf{I}$ with $t\leq r\leq s$, it holds that $T_t^s = T_t^rT_r^s$ and $T_t^t=I$.
\end{restatable}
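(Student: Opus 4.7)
The proposition is an ``if and only if'' statement, so the plan is to handle both directions separately. The forward direction (``only if'') is essentially tautological: if $\mathcal{T}^{\mathbf{I}}$ is a restricted transition matrix system on $\mathbf{I}$, then by definition it is the restriction to $\mathbf{I}$ of some transition matrix system $\mathcal{T}$ on $\realsnonneg$; but $\mathcal{T}$ satisfies the semigroup identity $T_t^s = T_t^r T_r^s$ for all $0\leq t\leq r\leq s$ and $T_t^t = I$ for all $t\in\realsnonneg$ by Definition~\ref{def:trans_mat_system}, so these properties are inherited by the restriction for $t,r,s\in\mathbf{I}$.

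The substantive direction is the converse. Here I would need to manufacture, from an $\mathbfcal{T}^{\mathbf{I}}$-like family satisfying the two algebraic conditions, a full transition matrix system $\mathcal{T}\in\mathbfcal{T}$ whose restriction to $\mathbf{I}$ is exactly $\mathcal{T}^{\mathbf{I}}$. My plan is to use the nearest-point projection of $\realsnonneg$ onto $\mathbf{I}$. Since $\mathbf{I}$ is closed and connected in $\realsnonneg$, it has the form $\{a\}$, $[a,b]$, or $[a,+\infty)$ for some $0\leq a\leq b$, and the projection
\begin{equation*}
\pi\colon\realsnonneg\to\mathbf{I},
\qquad
\pi(t)\coloneqq
\begin{cases} a & \text{if $t<a$,} \\ t & \text{if $t\in\mathbf{I}$,} \\ b & \text{if $\mathbf{I}=[a,b]$ and $t>b$,} \end{cases}
\end{equation*}
is well defined and (weakly) monotone. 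I then define the extension by $T_t^s \coloneqq T_{\pi(t)}^{\pi(s)}$ for all $t,s\in\realsnonneg$ with $t\leq s$, where the right-hand side is read off from $\mathcal{T}^{\mathbf{I}}$; this makes sense because monotonicity of $\pi$ gives $\pi(t)\leq\pi(s)$ in $\mathbf{I}$.

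To finish, I would verify three things. First, each $T_t^s$ is a transition matrix, which is automatic since it equals an element of $\mathcal{T}^{\mathbf{I}}$. Second, $T_t^t = T_{\pi(t)}^{\pi(t)} = I$ for all $t\in\realsnonneg$, using the hypothesised identity in $\mathcal{T}^{\mathbf{I}}$. Third, and the only mildly delicate point, the semigroup law: for $t\leq r\leq s$ in $\realsnonneg$, monotonicity gives $\pi(t)\leq\pi(r)\leq\pi(s)$ in $\mathbf{I}$, and then the assumed identity $T_{\pi(t)}^{\pi(s)} = T_{\pi(t)}^{\pi(r)}T_{\pi(r)}^{\pi(s)}$ is exactly the required $T_t^s = T_t^r T_r^s$. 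Finally, the restriction of the extension $\mathcal{T}$ to $\mathbf{I}$ coincides with $\mathcal{T}^{\mathbf{I}}$, because $\pi$ is the identity on $\mathbf{I}$. The main thing to get right is simply that $\pi$ is monotone and fixes $\mathbf{I}$; once that is in place the three verifications reduce to direct applications of the hypothesis, so no genuine obstacle is expected.
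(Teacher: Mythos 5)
Your proof is correct and is essentially the paper's own argument: the paper also proves the converse by extending $\mathcal{T}^{\mathbf{I}}$ to all of $\realsnonneg$ via a case-by-case definition (identity below $\min\mathbf{I}$ and above $\sup\mathbf{I}$, and $T_{\min\mathbf{I}}^{s}$, $T_t^{\sup\mathbf{I}}$, $T_{\min\mathbf{I}}^{\sup\mathbf{I}}$ in the mixed cases), which is exactly the family $T_{\pi(t)}^{\pi(s)}$ produced by your nearest-point projection. Your projection formulation just packages those cases compactly, and your verification of the semigroup law via monotonicity of $\pi$ is the ``straightforward verification'' the paper leaves implicit.
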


We call a restricted transition matrix system $\mathcal{T}^{\mathbf{I}}$ well-behaved if it is the restriction to $\mathbf{I}$ of a well-behaved transition matrix system.

\begin{restatable}{proposition}{propwellrestrtransmatsystemiflimsup}
\label{prop:well_restr_trans_mat_system_if_limsup}
Consider any closed interval $\mathbf{I}\subseteq\realsnonneg$, and let $\mathcal{T}^{\mathbf{I}}$ be a restricted transition matrix system on $\mathbf{I}$. Then $\mathcal{T}^{\mathbf{I}}$ is well-behaved if and only if
\begin{equation}\label{eq:wellbehavedrestrictedtransitionmatrixsystem}
(\forall t\in\mathbf{I}^+)~\limsup_{\Delta\to 0^{+}}\frac{1}{\Delta}\norm{T_{t}^{t+\Delta}-I}<+\infty\,
\text{~and~}
(\forall t\in\mathbf{I}^-)~\limsup_{\Delta\to 0^{+}}\frac{1}{\Delta}\norm{T_{t-\Delta}^t-I}<+\infty\,,
\end{equation}
where $\mathbf{I}^+\coloneqq\mathbf{I}\setminus\{\sup\mathbf{I}\}$ and $\mathbf{I}^-\coloneqq\mathbf{I}\setminus\{\min\mathbf{I}\}$.
\end{restatable}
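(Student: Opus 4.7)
My plan is to reduce both directions to the one-sided limsup conditions of Definition~\ref{def:well_behaved_trans_mat_system} by choosing an appropriate extension of $\mathcal{T}^{\mathbf{I}}$ from $\mathbf{I}$ to all of $\realsnonneg$.

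For the \emph{only-if} direction, I would simply observe that if $\mathcal{T}^{\mathbf{I}}$ is the restriction of a well-behaved transition matrix system $\mathcal{T}$, then Definition~\ref{def:well_behaved_trans_mat_system} applied to $\mathcal{T}$ already yields the required limsup conditions at every $t\in\realsnonneg$. In particular, whenever $t\in\mathbf{I}^+$, the closed-interval structure of $\mathbf{I}$ guarantees that $t+\Delta\in\mathbf{I}$ for all sufficiently small $\Delta>0$, so that $T_t^{t+\Delta}$ in the restricted system agrees with $T_t^{t+\Delta}$ in $\mathcal{T}$, and the forward limsup carries over verbatim. The backward case is symmetric, with $t\in\mathbf{I}^-$ ensuring $t-\Delta\in\mathbf{I}$ for small $\Delta>0$.

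For the \emph{if} direction, I would construct an explicit well-behaved extension. Let $\pi:\realsnonneg\to\mathbf{I}$ be the nearest-point projection onto $\mathbf{I}$: $\pi(t)=t$ for $t\in\mathbf{I}$, $\pi(t)=\min\mathbf{I}$ for $t<\min\mathbf{I}$, and (when $\mathbf{I}$ is bounded above) $\pi(t)=\max\mathbf{I}$ for $t>\max\mathbf{I}$. This map is monotone non-decreasing. I would then define
\[
T_t^s \coloneqq T_{\pi(t)}^{\pi(s)} \qquad \text{for all } t,s\in\realsnonneg \text{ with } t\leq s,
\]
with the right-hand side taken from $\mathcal{T}^{\mathbf{I}}$. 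That this collection $\mathcal{T}$ is a transition matrix system is straightforward: each $T_t^s$ is a transition matrix by construction; the identity $T_t^t=T_{\pi(t)}^{\pi(t)}=I$ is immediate; and the semi-group property follows from Proposition~\ref{prop:restr_trans_mat_system_if_semigroup} applied to the ordered triple $\pi(t)\leq\pi(r)\leq\pi(s)$ inside $\mathbf{I}$.

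To verify that $\mathcal{T}$ is well-behaved, I would split on the position of $t$. If $t\notin\mathbf{I}$, then $\pi$ is locally constant at $t$, so both $T_t^{t+\Delta}-I$ and (when meaningful) $T_{t-\Delta}^t-I$ vanish for all sufficiently small $\Delta>0$, and both limsups are trivially zero. If $t\in\mathbf{I}^+$, then $t+\Delta\in\mathbf{I}$ for small $\Delta>0$, so the forward limsup at $t$ for $\mathcal{T}$ coincides with the one assumed for $\mathcal{T}^{\mathbf{I}}$; at the excluded boundary $t=\max\mathbf{I}$ (if attained), we have $\pi(t+\Delta)=\max\mathbf{I}=\pi(t)$ for every $\Delta>0$, so $T_t^{t+\Delta}=I$ and the forward limsup is zero. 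The backward case is symmetric, using $t\in\mathbf{I}^-$ and the excluded boundary $t=\min\mathbf{I}$. The only mildly delicate point in the argument, and thus the main thing to get right, is this case analysis at the endpoints of $\mathbf{I}$: one has to confirm that $\min\mathbf{I}$ and $\sup\mathbf{I}$ (when attained) are precisely the points excluded from $\mathbf{I}^-$ and $\mathbf{I}^+$, so that no additional hypothesis is needed at these places and the projection $\pi$ absorbs the extension for free. Everything else is bookkeeping.
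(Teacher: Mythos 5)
Your proposal is correct and follows essentially the same route as the paper: the only-if direction is immediate from Definition~\ref{def:well_behaved_trans_mat_system}, and the if direction extends $\mathcal{T}^{\mathbf{I}}$ to all of $\realsnonneg$ exactly as in the proof of Proposition~\ref{prop:restr_trans_mat_system_if_semigroup} (your projection formula $T_t^s\coloneqq T_{\pi(t)}^{\pi(s)}$ reproduces that casewise extension), after which the limsup conditions are checked by the same endpoint case analysis, with the extension trivially well-behaved outside $\mathbf{I}^+$ and $\mathbf{I}^-$ because the relevant matrices equal $I$ for small $\Delta$.
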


Now, because these restricted transition matrix systems are only defined on some given closed interval, it will be useful to define a concatenation operator between two such systems. 

\begin{definition}[Concatenation Operator]\label{def:concatenation_system}
For any two closed intervals $\mathbf{I},\mathbf{J}\subseteq\realsnonneg$ such that $\max\mathbf{I}=\min\mathbf{J}$, and any two restricted transition matrix systems $\mathcal{T}^{\mathbf{I}}$ and $\mathcal{T}^{\mathbf{J}}$, the concatenation of $\mathcal{T}^{\mathbf{I}}$ and $\mathcal{T}^{\mathbf{J}}$ is denoted by $\mathcal{T}^\mathbf{I}\otimes\mathcal{T}^\mathbf{J}$, and defined as the family of transition matrices $T_t^s$ that is given by
\begin{equation*}
T_t^s \coloneqq \begin{cases}
\presuper{i}T_t^s & \text{if $t,s\in\mathbf{I}$} \\
\presuper{j}T_t^s & \text{if $t,s\in\mathbf{J}$} \\
\presuper{i}T_t^r\presuper{j}T_r^s & \text{if $t\in\mathbf{I}$ and $s\in\mathbf{J}$}
\end{cases}\text{~~~for all $t,s\in\mathbf{I}\cup\mathbf{J}$ such that $t\leq s$,}
\end{equation*}
where $r\coloneqq\max\mathbf{I}=\min\mathbf{J}$, and $\presuper{i}T_t^s$ and $\presuper{j}T_t^s$ denote the transition matrices corresponding to $\mathcal{T}^{\mathbf{I}}$ and $\mathcal{T}^{\mathbf{J}}$, respectively.
\end{definition}

\begin{restatable}{proposition}{propconcatrestrtransmatsystemsissystem}
\label{prop:concat_restr_trans_mat_systems_is_system}
Consider two closed intervals $\mathbf{I},\mathbf{J}\subseteq\realsnonneg$ such that $\max\mathbf{I}=\min\mathbf{J}$, and any two restricted transition matrix systems $\mathcal{T}^{\mathbf{I}}$ and $\mathcal{T}^{\mathbf{J}}$. Then their concatenation $\mathcal{T}^{\mathbf{I}\cup \mathbf{J}} \coloneqq \mathcal{T}^{\mathbf{I}}\otimes \mathcal{T}^{\mathbf{J}}$ is a restricted transition matrix system on $\mathbf{I}\cup\mathbf{J}$. Furthermore, if both $\mathcal{T}^{\mathbf{I}}$ and $\mathcal{T}^{\mathbf{J}}$ are well behaved, then $\mathcal{T}^{\mathbf{I}\cup\mathbf{J}}$ is also well-behaved.
\end{restatable}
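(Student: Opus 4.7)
The plan is to verify the defining properties of a restricted transition matrix system via the local characterisations provided by Propositions~\ref{prop:restr_trans_mat_system_if_semigroup} and~\ref{prop:well_restr_trans_mat_system_if_limsup}. Throughout, let $r_0 \coloneqq \max\mathbf{I} = \min\mathbf{J}$, and write $\presuper{i}T_t^s$ and $\presuper{j}T_t^s$ for the component matrices, as in Definition~\ref{def:concatenation_system}. Note that both $\presuper{i}T_{r_0}^{r_0}$ and $\presuper{j}T_{r_0}^{r_0}$ equal $I$, so the three cases in the definition agree on any overlap involving $r_0$, and the concatenation is well-defined.

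First I would check that each $T_t^s$ in $\mathcal{T}^{\mathbf{I}\cup\mathbf{J}}$ is a transition matrix: when $t,s$ lie in a single component this is immediate, and in the mixed case $t \in \mathbf{I}, s \in \mathbf{J}$ we have $T_t^s = \presuper{i}T_t^{r_0}\presuper{j}T_{r_0}^s$, which is a transition matrix by Lemma~\ref{lemma:compositiontransitionmatrix}. The identity condition $T_t^t=I$ is inherited from the component containing $t$. To establish the semigroup property for $t \leq r \leq s$ in $\mathbf{I}\cup\mathbf{J}$, I would split into cases according to which interval each of $t,r,s$ belongs to. The two cases where all three lie in a common component reduce directly to the semigroup property of that component (Proposition~\ref{prop:restr_trans_mat_system_if_semigroup}). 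For $t,r\in\mathbf{I}$ and $s\in\mathbf{J}$ one uses $\presuper{i}T_t^{r_0} = \presuper{i}T_t^r\presuper{i}T_r^{r_0}$ to rewrite $T_t^s = \presuper{i}T_t^r(\presuper{i}T_r^{r_0}\presuper{j}T_{r_0}^s) = T_t^r T_r^s$. The symmetric case $t\in\mathbf{I}$, $r,s\in\mathbf{J}$ is analogous via the semigroup of $\mathcal{T}^{\mathbf{J}}$. Boundary subcases where any of $t,r,s$ equals $r_0$ are absorbed consistently because $\presuper{i}T_{r_0}^{r_0}=\presuper{j}T_{r_0}^{r_0}=I$. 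By Proposition~\ref{prop:restr_trans_mat_system_if_semigroup} this establishes that $\mathcal{T}^{\mathbf{I}\cup\mathbf{J}}$ is a restricted transition matrix system on $\mathbf{I}\cup\mathbf{J}$.

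For the second claim, I would invoke Proposition~\ref{prop:well_restr_trans_mat_system_if_limsup} and bound the one-sided limsups at each $t$. At any $t \in (\mathbf{I}\cup\mathbf{J})^+$ with $t \neq r_0$, for all sufficiently small $\Delta > 0$ the point $t+\Delta$ lies in the same component as $t$, so $T_t^{t+\Delta}$ coincides with a single-component matrix and the limsup bound is inherited from the well-behavedness of that component. The only genuinely new point is $t = r_0$: for small enough $\Delta$ we have $t + \Delta \in \mathbf{J}$, hence $T_t^{t+\Delta} = \presuper{j}T_t^{t+\Delta}$, and since $r_0 = \min\mathbf{J} \in \mathbf{J}^+$, the well-behavedness of $\mathcal{T}^{\mathbf{J}}$ supplies the required bound. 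The corresponding argument for $(\mathbf{I}\cup\mathbf{J})^-$ works symmetrically, with $t=r_0$ handled via $T_{t-\Delta}^t = \presuper{i}T_{t-\Delta}^t$ and $r_0 = \max\mathbf{I} \in \mathbf{I}^-$.

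The main obstacle is the bookkeeping in the semigroup case analysis at the seam $r_0$, in particular making sure the three-case definition of $T_t^s$ yields the same value regardless of which of $\mathbf{I}$ and $\mathbf{J}$ one regards $r_0$ as belonging to. This collapses to the observation that $\presuper{i}T_{r_0}^{r_0} = \presuper{j}T_{r_0}^{r_0} = I$. A minor side issue is degenerate singleton components (say $\mathbf{I}=\{r_0\}$); in that case the relevant well-behavedness condition on $\mathcal{T}^{\mathbf{I}}$ is vacuous and the concatenation coincides with the non-trivial component, so all claims hold trivially.
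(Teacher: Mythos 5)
Your proposal is correct and follows essentially the same route as the paper's proof: transition-matrix and identity properties via the composition result, the semigroup property by the same case analysis through the seam point $r=\max\mathbf{I}=\min\mathbf{J}$ together with Proposition~\ref{prop:restr_trans_mat_system_if_semigroup}, and well-behavedness via Proposition~\ref{prop:well_restr_trans_mat_system_if_limsup} by reducing each one-sided limsup to the appropriate component (the paper compresses your seam-point discussion into the identity $(\mathbf{I}\cup\mathbf{J})^+=\mathbf{I}^+\cup\mathbf{J}^+$). No gaps.
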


\begin{exmp}\label{exmp:combinetwoexponentials}
Consider any two rate matrices $Q_1,Q_2\in\mathcal{R}$ such that $Q_1\neq Q_2$, and let $\mathcal{T}_{Q_1}$ and $\mathcal{T}_{Q_2}$ be their exponential transition matrix systems, which, as we know from Proposition~\ref{prop:systemQ}, are well-behaved. Now choose any $r\in\realsnonneg$ and define
\begin{equation*}
\mathcal{T} \coloneqq \mathcal{T}_{Q_1}^{[0,r]} \otimes \mathcal{T}_{Q_2}^{[r,+\infty)}\,.
\end{equation*}
It then follows from Proposition~\ref{prop:concat_restr_trans_mat_systems_is_system} that $\mathcal{T}$ is a well-behaved transition matrix system. Furthermore, for any $t,s\in\realsnonneg$ such that $t\leq r\leq s$, the transition matrix $T_t^s$ that corresponds to $\mathcal{T}$ is given by $T_t^s = T_t^rT_r^s = e^{Q_1(r-t)}e^{Q_2(s-r)}$.
\exampleend
\end{exmp}

We also introduce a metric $d$ between restricted transition matrix systems that are defined on the same interval $\mathbf{I}$. For any two such restricted transition matrix systems $\mathcal{T}^\mathbf{I}$ and $\mathcal{S}^\mathbf{I}$, we let
\begin{equation}\label{eq:trans_mat_system_metric}
d(\mathcal{T}^{\mathbf{I}},\mathcal{S}^{\mathbf{I}}) \coloneqq \sup\left\{\norm{T_t^s - S_t^s}\,:\,t,s\in\mathbf{I},\,t\leq s\right\},
\end{equation}
where, for all $t,s\in\mathbf{I}$, it is understood that $T_t^s$ corresponds to $\mathcal{T}^{\mathbf{I}}$ and $S_t^s$ to $\mathcal{S}^{\mathbf{I}}$. This metric allows us to state the following result.

\begin{restatable}{proposition}{lemmarestrictedtransmatsystemcauchyconverges}
\label{lemma:restricted_trans_mat_system_cauchy_converges}
Consider any interval $\mathbf{I}\subseteq\realsnonneg$ and let $d$ be the metric that is defined in Equation~\eqref{eq:trans_mat_system_metric}. The metric space $\smash{(\mathbfcal{T}^{\mathbf{I}},d)}$ is then complete.
\end{restatable}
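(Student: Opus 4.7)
The plan is to prove completeness by the standard pattern: take a Cauchy sequence, construct a candidate limit pointwise in $(t,s)$, verify it lies in the claimed space, and show convergence in the metric $d$.

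First I would fix a Cauchy sequence $\{\mathcal{T}^{\mathbf{I}}_n\}_{n\in\nats}$ in $(\mathbfcal{T}^{\mathbf{I}},d)$, and write $\presuper{n}T_t^s$ for the transition matrices of $\mathcal{T}^{\mathbf{I}}_n$. For every fixed pair $(t,s)\in\mathbf{I}^2$ with $t\leq s$, the inequality $\norm{\presuper{n}T_t^s-\presuper{m}T_t^s}\leq d(\mathcal{T}^{\mathbf{I}}_n,\mathcal{T}^{\mathbf{I}}_m)$ shows that $\{\presuper{n}T_t^s\}_{n\in\nats}$ is a Cauchy sequence of $\abs{\states}\times\abs{\states}$ matrices in the operator norm. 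Since the space of real $\abs{\states}\times\abs{\states}$ matrices is complete (being finite-dimensional), each such sequence converges to a matrix $T_t^s$, and this defines a candidate family $\mathcal{T}^{\mathbf{I}}\coloneqq\{T_t^s\colon t,s\in\mathbf{I},\,t\leq s\}$.

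Next I would verify that $\mathcal{T}^{\mathbf{I}}\in\mathbfcal{T}^{\mathbf{I}}$ by checking the conditions of Proposition~\ref{prop:restr_trans_mat_system_if_semigroup}. Each $T_t^s$ is a transition matrix because conditions \ref{def:T:sumone} and \ref{def:T:nonneg} are closed under entrywise convergence (equivalently, under operator-norm convergence by Equation~\eqref{eq:normofmatrix}). The identity $T_t^t=I$ follows immediately since $\presuper{n}T_t^t=I$ for every $n$. For the semigroup property, I would fix $t\leq r\leq s$ in $\mathbf{I}$ and pass to the limit in $\presuper{n}T_t^s=\presuper{n}T_t^r\presuper{n}T_r^s$; continuity of matrix multiplication in the operator norm---available via the bilinear bound implied by \ref{N:normAB} of Proposition~\ref{prop:norm_properties}---yields $T_t^s=T_t^rT_r^s$.

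Finally I would show convergence in $d$. Given $\epsilon>0$, choose $N$ such that $d(\mathcal{T}^{\mathbf{I}}_n,\mathcal{T}^{\mathbf{I}}_m)\leq\epsilon$ for all $n,m\geq N$; then for every $(t,s)$ with $t\leq s$ in $\mathbf{I}$, $\norm{\presuper{n}T_t^s-\presuper{m}T_t^s}\leq\epsilon$. Letting $m\to\infty$ and using continuity of the norm gives $\norm{\presuper{n}T_t^s-T_t^s}\leq\epsilon$ for all such $(t,s)$; taking the supremum over $(t,s)$ yields $d(\mathcal{T}^{\mathbf{I}}_n,\mathcal{T}^{\mathbf{I}})\leq\epsilon$ for all $n\geq N$, establishing convergence.

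The only real subtlety---what I would call the \emph{main obstacle}---is the semigroup step: one must make sure that the pointwise limits at the three pairs $(t,r)$, $(r,s)$ and $(t,s)$ can be combined through matrix multiplication. The finite-dimensional setting makes this routine (norm convergence coincides with entrywise convergence, and matrix multiplication is jointly continuous), but it is the only place where several independent limits interact. Everything else is standard metric-space bookkeeping, and note that well-behavedness plays no role here because the definition of $\mathbfcal{T}^{\mathbf{I}}$ does not require it.
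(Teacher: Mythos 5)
Your proposal is correct and follows essentially the same route as the paper's proof: pointwise Cauchy-ness of the matrices, completeness of the (finite-dimensional) space of transition matrices, verification of $T_t^t=I$ and the semigroup property in the limit via Proposition~\ref{prop:restr_trans_mat_system_if_semigroup}, and then the standard ``let $m\to\infty$ and take the supremum'' argument for convergence in $d$. The only cosmetic difference is at the semigroup step, where the paper bounds $\norm{T_t^s-T_t^rT_r^s}$ using the product inequality of Lemma~\ref{lemma:recursive}, whereas you invoke joint continuity of matrix multiplication via \ref{N:normAB}; both are immediate in this finite-dimensional setting.
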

Note that this result includes as a special case that the set $\mathbfcal{T}$ of all (unrestricted) transition matrix systems is complete. The following example illustrates how this result can be used.

\begin{exmp}\label{exmp:limit_trans_mat_system}
Consider some positive constant $c\in\realspos$ and let $\{Q_i\}_{i\in\nats_0}$ be a sequence of rate matrices such that, for all $i\in\nats$, $\norm{Q_i-Q_{i-1}}\leq c$.
We can then construct the following sequence. For $i=0$, we let $\smash{\mathcal{T}_{0}\coloneqq\mathcal{T}_{Q_0}}$, and for all $i\in\nats$, we let
\begin{equation}
\mathcal{T}_i\coloneqq
\mathcal{T}_{Q_i}^{[0,\delta_i]}\otimes \mathcal{T}_{i-1}^{[\delta_i,+\infty)}
\label{eq:def:sequenceinexample3}
\end{equation}
where, for all $i\in\nats_0$, $\delta_i\coloneqq 2^{-i}$. The resulting sequence $\smash{\{\mathcal{T}_i\}_{i\in\nats_0}}$ is then clearly a subset of $\mathbfcal{T}$ and, because of Proposition~\ref{prop:systemQ} and~\ref{prop:concat_restr_trans_mat_systems_is_system}, every transition matrix system in this sequence is well-behaved. Furthermore, as is proved in \exampleproofref, $\{\mathcal{T}_i\}_{i\in\nats_0}$ is a Cauchy sequence, which basically means that its elements become arbitrarily close to each other as the sequence progresses.

The reason why this is of interest to us is because in a complete metric space, every Cauchy sequence converges to a limit that belongs to the same space. Hence, since $\{\mathcal{T}_i\}_{i\in\nats_0}$ is Cauchy, Proposition~\ref{lemma:restricted_trans_mat_system_cauchy_converges} allows us to infer that $\{\mathcal{T}_i\}_{i\in\nats_0}$ converges to a limit $\mathcal{T}\coloneqq \lim_{i\to\infty}\mathcal{T}_i$ in $\mathbfcal{T}$.
\exampleend
\end{exmp}

As this example illustrates, Proposition~\ref{lemma:restricted_trans_mat_system_cauchy_converges} allows us to (a) establish the existence of limits of sequences of (restricted) transition matrix systems and (b) prove that these limits are restricted transition matrix systems themselves. 
In order to make this concept of a limit of transition matrix systems less abstract, we now provide, for a particular case of the sequence in Example~\ref{exmp:limit_trans_mat_system}, closed-form expressions for some of the transition matrices that correspond to its limit.

\begin{exmp}\label{exmp:limit_trans_mat_system_matrices}
Let $Q_1,Q_2\in\mathcal{R}$ be two commuting rate matrices. For example, let $Q_1\in\mathcal{R}$ be an arbitrary rate matrix and let $Q_2\coloneqq\alpha Q_1$, with $\alpha\in\realsnonneg$. 

Now let $\{Q_i\}_{i\in\nats_0}$ be defined by $Q_i\coloneqq Q_1$ if $i$ is odd and $Q_i\coloneqq Q_2$ if $i$ is even, define $\delta_i\coloneqq 2^{-i}$ for all $i\in\nats_0$, and consider the corresponding sequence of transition matrix systems $\{\mathcal{T}_i\}_{i\in\nats}$ that was defined in Example~\ref{exmp:limit_trans_mat_system}.  Since $\norm{Q_i-Q_{i-1}}=\norm{Q_1-Q_2}$ for all $i\in\nats$, the sequence $\{Q_i\}_{i\in\nats_0}$ clearly satisfies the conditions in Example~\ref{exmp:limit_trans_mat_system}---just choose $c=\norm{Q_1-Q_2}$---and therefore, as we have seen, $\{\mathcal{T}_i\}_{i\in\nats}$ converges to a limit $\mathcal{T}\coloneqq \lim_{i\to\infty}\mathcal{T}_i$ in $\mathbfcal{T}$. 

As proved in \exampleproofref, it then holds that
for any $t\in(0,1]$, the transition matrix from $0$ to $t$ that corresponds to the transition matrix system $\mathcal{T}$ is equal to
\vspace{4pt}
\begin{equation}\label{eq:ex4:def}
T_0^t=e^{Q_1\varphi_1(t)+Q_2\varphi_2(t)},
\end{equation}
with
\begin{equation}\label{eq:ex4:phi1}
\varphi_1(t)
\coloneqq
\begin{cases}
t-\nicefrac{2}{3}\delta_{i+1}
&\text{ if $\delta_{i+1}\leq t\leq\delta_{i}$ with $i$ odd}\\
\nicefrac{2}{3}\delta_{i+1}
&\text{ if $\delta_{i+1}\leq t\leq\delta_{i}$ with $i$ even} 
\end{cases}
\end{equation}
and
\begin{equation}\label{eq:ex4:phi2}
\varphi_2(t)
\coloneqq
\begin{cases}
\nicefrac{2}{3}\delta_{i+1}
&\text{ if $\delta_{i+1}\leq t\leq\delta_{i}$ with $i$ odd}\\
t-\nicefrac{2}{3}\delta_{i+1}
&\text{ if $\delta_{i+1}\leq t\leq\delta_{i}$ with $i$ even.} 
\end{cases}
\vspace{7pt}
\end{equation}
Furthermore, it can be shown that the transition matrix system $\mathcal{T}$ is well-behaved---again, see \exampleproofref~for a proof.
\exampleend
\end{exmp}

The transition matrix system $\mathcal{T}$ in our previous example was well-behaved, and was constructed as a limit of well-behaved transition matrix systems. Therefore, one might think that the former is implied by the latter. However, as our next example illustrates, this is not the case. A limit of well-behaved transition matrix systems need not be well-behaved itself.

\begin{exmp}\label{exmp:notwellbehavedMarkov}
Consider any rate matrix $Q\in\mathcal{R}$ such that $\norm{Q}=1$ and, for all $i\in\nats_0$, define $Q_i\coloneqq iQ$ and let $\mathcal{T}_i$ and $\delta_i$ be defined as in Example~\ref{exmp:limit_trans_mat_system}. Then since $\{Q_i\}_{i\in\nats_0}$ satisfies the conditions of Example~\ref{exmp:limit_trans_mat_system} with $c=1$, the sequence $\{\mathcal{T}_i\}_{i\in\nats_0}$ has a limit $\mathcal{T}\coloneqq \lim_{i\to\infty}\mathcal{T}_i$ in $\mathbfcal{T}$. 

However, despite the fact that we know from Example~\ref{exmp:limit_trans_mat_system} that each of the transition matrix systems $\mathcal{T}_i$, $i\in\nats_0$, is well-behaved, the limit $\mathcal{T}$ itself is not well-behaved; see~\exampleproofref~for a proof.
\exampleend
\end{exmp}

\section{Continuous-Time Stochastic Processes}\label{sec:stochastic_processes}

We will in this section formalise the notion of a continuous-time stochastic process. However, we do not adopt the classical---Kolmogorovian, measure-theoretic---setting, but will instead be using the framework of full conditional probabilities. Our reasons for doing so are the following.

First of all, our results on imprecise continuous-time Markov chains will be concerned with events or functions that depend on an finite number of time points only. Therefore, we do not require the use of typical measure-theoretic concepts such as $\sigma$-algebras, $\sigma$-additivity, measurability, etcetera. Instead, we will impose only the bare minimum of assumptions that are required for our results. The extra structural and continuity assumptions that are typically imposed in a measure-theoretic setting are regarded as optional.

Secondly, our approach does not suffer from some of the traditional issues with probability zero. In standard settings, conditional probabilities are usually derived from unconditional probabilities through Bayes's rule, which makes them undefined whenever the conditioning event has probability zero. Instead, we will regard conditional probabilities as primitive concepts. As a result, we can do away with some of the usual `almost surely' statements and replace them with statements that are certain.

Finally, and most importantly, in our `imprecise' setting, we will be working with a set of stochastic processes rather than with a single stochastic process. In this context, we will often need to prove that such a set contains a stochastic process that meets certain specific requirements. 
Full conditional probabilities provide a convenient framework for constructing such existence proofs, through the notion of coherence.

We start in Section~\ref{sec:cond_prob} by introducing full conditional probabilities and explaining their connection with coherence. In Section~\ref{sec:def_stochastic_processes}, we then use these concepts to formalise continuous-time stochastic processes. Section~\ref{sec:well_behaved} describes a specific subclass of stochastic processes, which we call well-behaved, and on which we will largely focus throughout this work. Finally, Section~\ref{sec:dynamics} provides some tools with which we can describe the dynamics of stochastic processes.

\subsection{Full and Coherent Conditional Probabilities}\label{sec:cond_prob}

Consider a variable $X$ that takes values $\omega$ in some non-empty---possibly infinite---\emph{outcome space} $\Omega$. The actual value of $X$ is taken to be uncertain, in the sense that it is unknown. This uncertainty may arise because $X$ is the outcome of a random experiment that has yet to be conducted, but it can also simply be a consequence of our lack of information about $X$. 
We call any subset $E$ of $\Omega$ an event, we use $\power$ to denote the set of all such events, and we let $\nonemptypower\coloneqq\power\setminus\{\emptyset\}$ be the set of all non-empty events. 
A subject's uncertainty about the value of $X$ can then be described by means of a full conditional probability~\cite{Dubins:1975ej}.

\begin{definition}[Full Conditional Probability]\label{def:cond_prob}
A full conditional probability $P$ is a real-valued map from $\power\times\nonemptypower$ to $\reals$ that satisfies the following axioms. For all $A,B\in\power$ and all \mbox{$C,D\in\nonemptypower$}:
\vspace{5pt}

\begin{enumerate}[label=F\arabic*:,ref=F\arabic*]
\item
$P(A\vert C)\geq 0$;\label{def:coh_prob_2}
\item
$P(A\vert C)=1$ if $C\subseteq A$;\label{def:coh_prob_1}
\item
$P(A\cup B\vert C)=P(A\vert C)+P(B\vert C)$ if $A\cap B=\emptyset$;\label{def:coh_prob_3}
\item
$P(A\cap D\vert C)=P(A\vert D\cap C)P(D\vert C)$ if $D\cap C\neq\emptyset$.\label{def:coh_prob_6}
\end{enumerate}
\vspace{5pt}

\noindent
For any $A\in\power$ and $C\in\nonemptypower$, we call $P(A\vert C)$ the probability of $A$ conditional on $C$. Also, for any $A\in\power$, we use the shorthand notation $P(A)\coloneqq P(A\vert\paths)$ and then call $P(A)$ the probability of $A$.
The following additional properties can easily be shown to follow from \ref{def:coh_prob_2}--\ref{def:coh_prob_3}; see Appendix~\ref{app:stoch_proc} for a proof. For all $A\in\power$ and all $C\in\nonemptypower$:
\vspace{5pt}
\begin{enumerate}[label=F\arabic*:,ref=F\arabic*]
\setcounter{enumi}{4}
\item
$0\leq P(A\vert C)\leq 1$;\label{def:coh_prob_2b}
\item
$P(A\vert C)=P(A\cap C\vert C)$;\label{def:coh_prob_7}
\item
$P(\emptyset\vert C)=0$;\label{def:coh_prob_8}
\item
$P(\Omega\vert C)=1$.\label{def:coh_prob_5}
\end{enumerate}
\vspace{2pt}
\end{definition}

Basically, \ref{def:coh_prob_2}--\ref{def:coh_prob_6} are just the standard rules of probability. However, there are four rather subtle differences with the more traditional approach. The first difference is that a full conditional probability takes conditional probabilities as its basic entities: $P(A\vert C)$ is well-defined even if $P(C)=0$. The second difference, which is related to the first, is that Bayes's rule---\ref{def:coh_prob_6}---is stated in a multiplicative form; it is not regarded as a definition of conditional probabilities, but rather as a property that connects conditional probabilities to unconditional ones. The third difference is that we consider all events, and do not restrict ourselves to some specific subset of events---such as a $\sigma$-algebra. The fourth difference, which is related to the third, is that we only require finite additivity---\ref{def:coh_prob_3}---and do not impose $\sigma$-additivity.

The `full' in full conditional probability refers to the fact that the domain of $P$ is the complete set $\power\times\nonemptypower$. At first sight, this might seem unimportant, and one might be inclined to introduce a similar definition for functions $P$ whose domain is some subset $\mathcal{C}$ of $\power\times\nonemptypower$. However, unfortunately, as our next example illustrates, such a definition would have the property that it does not guarantee the possibility of extending the function to a larger domain $\mathcal{C}^*$, with $\mathcal{C}\subseteq\mathcal{C}^*\subseteq\power\times\nonemptypower$.

\begin{exmp}\label{exmp:F1F4cannotbeextended}
Let $\Omega=\{1,2,3,4,5,6\}$ be the set of possible values for the throw of a---possibly unfair---die and let $\mathcal{C}\coloneqq\{(E_{\mathrm{o}},\Omega),(E_{\mathrm{e}},\Omega)\}\subseteq\power\times\nonemptypower$, where the events $E_{\mathrm{o}}=\{1,3,5\}$ and $E_{\mathrm{e}}=\{2,4,6\}$ correspond to an odd or even outcome of the die throw, respectively. The map $P\colon\mathcal{C}\to\reals$ that is defined by
\begin{equation*}
P(E_{\mathrm{o}})\coloneqq P(E_{\mathrm{o}}\vert\Omega)=\nicefrac{2}{3}
~\text{ and }~
P(E_{\mathrm{e}})\coloneqq P(E_{\mathrm{e}}\vert\Omega)=\nicefrac{2}{3}
\end{equation*}
then satisfies \ref{def:coh_prob_2}-\ref{def:coh_prob_6} on its domain. However, if we extend the domain by adding the trivial couple $(\Omega,\Omega)$, it becomes impossible to satisfy \ref{def:coh_prob_2}-\ref{def:coh_prob_6}, because~\ref{def:coh_prob_1} and~\ref{def:coh_prob_3} would then require that
\begin{equation*}
1=P(\Omega\vert\Omega)=P(E_{\mathrm{o}}\vert\Omega)+P(E_{\mathrm{e}}\vert\Omega)=\nicefrac{2}{3}+\nicefrac{2}{3}=\nicefrac{4}{3},
\end{equation*}
which is clearly a contradiction.
\exampleend
\end{exmp}

In order to avoid the situation in this example, that is, in order to guarantee the possibility of extending the domain of a conditional probability in a sensible way, we use the concept of coherence~\cite{berti1991coherent,DeFinetti:oT_PWtAE,regazzini1985finitely,williams1975,Williams:2007eu}.

\begin{definition}[Coherent conditional probability]\label{def:coherence}
Let $P$ be a real-valued map from $\mathcal{C}\subseteq\power\times\nonemptypower$ to $\reals$. Then $P$ is said to be a \emph{coherent conditional probability} on $\mathcal{C}$ if, for all $n\in\mathbb{N}$ and every choice of $(A_i,C_i)\in\mathcal{C}$ and $\lambda_i\in\reals$, $i\in\{1,\dots,n\}$,\footnote{Many authors replace the maximum in this expression by a supremum, and also impose an additional inequality, where the maximum---supremum---is replaced by a minimum---infimum---and where the inequality is reversed~\cite{berti2002coherent,berti1991coherent,regazzini1985finitely}. This is completely equivalent to our definition. First of all, if the maximum is replaced by a supremum, then since $n$ is finite and because, for every $i\in\{1,\dots,n\}$, $\ind{A_i}$ and $\ind{C_i}$ can only take two values---$0$ or $1$---it follows that this supremum is taken over a finite set of real numbers, which implies that it is actually a maximum. Secondly, replacing the maximum by a minimum and reversing the inequality is equivalent to replacing the $\lambda_i$ in our expression by their negation, which is clearly allowed because the coefficients $\lambda_i$ can take any arbitrary real value.}
\begin{equation*}
\max\left\{\sum_{i=1}^n\lambda_i\ind{C_i}(\omega)\bigl(P(A_i\vert C_i)-\ind{A_i}(\omega)\bigr)~\Bigg\vert~\omega\in C_0\right\}\geq0,
\end{equation*}
with $C_0\coloneqq\cup_{i=1}^nC_i$.
\end{definition}
The interested reader is invited to take a look at Appendix~\ref{app:coherence}, where we provide this abstract concept with an intuitive gambling interpretation. However, for our present purposes, this interpretation is not required. Instead, our motivation for introducing coherence stems from the following two results. First, if $\mathcal{C}=\power\times\nonemptypower$, then coherence is equivalent to the axioms of probability, that is, properties~\ref{def:coh_prob_2}--\ref{def:coh_prob_6}.
\begin{theorem}{\cite[Theorem 3]{regazzini1985finitely}}\label{theo:fullcoherent}
Let $P$ be a real-valued map from $\power\times\nonemptypower$ to $\reals$. Then $P$ is a coherent conditional probability if and only if it is a full conditional probability.
\end{theorem}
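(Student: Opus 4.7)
The plan is to prove the two implications separately.

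For the ``full $\Rightarrow$ coherent'' direction, suppose $P$ satisfies F1--F4 and fix a finite collection $(A_i,C_i)\in\power\times\nonemptypower$ and $\lambda_i\in\reals$, $i\in\{1,\dots,n\}$. Let $G(\omega)\coloneqq\sum_{i=1}^n\lambda_i\ind{C_i}(\omega)(P(A_i\vert C_i)-\ind{A_i}(\omega))$ and $C_0\coloneqq\cup_{i=1}^nC_i$. The key observation is that $C_i\subseteq C_0$ for every $i$, so F4 gives $P(A_i\cap C_i\vert C_0)=P(A_i\vert C_i\cap C_0)P(C_i\vert C_0)=P(A_i\vert C_i)P(C_i\vert C_0)$. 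Since $G$ depends on $\omega$ only through the finitely many indicators $\ind{A_i},\ind{C_i}$, it is constant on each cell of a finite partition $\{E_1,\dots,E_m\}$ of $C_0$, with values $g_1,\dots,g_m$. Writing $w_k\coloneqq P(E_k\vert C_0)$, the finitely additive weighted average $\sum_k w_k g_k$ equals
\begin{equation*}
\sum_{i=1}^n\lambda_i\bigl[P(A_i\vert C_i)P(C_i\vert C_0)-P(A_i\cap C_i\vert C_0)\bigr]=0.
\end{equation*}
If $\max_\omega G(\omega)<0$, then every non-empty $E_k$ would satisfy $g_k<0$, yielding $\sum_k w_k g_k<0$, a contradiction. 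Hence coherence holds.

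For ``coherent $\Rightarrow$ full'', each axiom follows from a carefully chosen finite collection that forces the gain $G$ to be constant on $C_0$. Axioms F1 and F2 follow from $n=1$, $(A_1,C_1)=(A,C)$, $\lambda_1=\pm 1$: coherence sandwiches $P(A\vert C)$ between $\min_{\omega\in C}\ind{A}(\omega)$ and $\max_{\omega\in C}\ind{A}(\omega)$, giving non-negativity and---when $C\subseteq A$---normalisation to $1$. For F3, take $n=3$ with pairs $(A\cup B,C)$, $(A,C)$, $(B,C)$ where $A\cap B=\emptyset$; signs $\lambda=(\mp 1,\pm 1,\pm 1)$ combined with $\ind{A\cup B}=\ind{A}+\ind{B}$ cause the indicator terms to cancel, leaving constant gain $\pm(P(A\vert C)+P(B\vert C)-P(A\cup B\vert C))$ on $C$; both sign choices yield non-negativity, hence equality.

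The main technical step is F4. Take $n=3$ with pairs $(A\cap D,C)$, $(A,D\cap C)$, $(D,C)$. For $\omega\in C_0=C$, using $\ind{A}(\omega)\ind{D}(\omega)=\ind{A\cap D}(\omega)$, the $\omega$-dependent part of $G(\omega)$ simplifies to $-(\lambda_1+\lambda_2)\ind{A\cap D}(\omega)+[\lambda_2 P(A\vert D\cap C)-\lambda_3]\ind{D}(\omega)$. Setting $\lambda_2=-\lambda_1$ and $\lambda_3=-\lambda_1 P(A\vert D\cap C)$ eliminates both indicator terms, leaving constant gain $\lambda_1[P(A\cap D\vert C)-P(A\vert D\cap C)P(D\vert C)]$. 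Applying coherence with $\lambda_1=\pm 1$ sandwiches the bracketed quantity around $0$, giving F4. The hypothesis $D\cap C\neq\emptyset$ is essential so that $(A,D\cap C)$ is an admissible conditional pair and $P(A\vert D\cap C)$ is defined. The unifying conceptual theme is that coherence is a ``no sure loss'' condition, and each probability axiom corresponds to a specific family of balanced bets whose gain is either constant or has zero weighted average---so identifying the right linear combination is the only real work.
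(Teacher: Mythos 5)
Your proposal is correct. Note, however, that the paper does not prove this statement at all: Theorem~\ref{theo:fullcoherent} is imported verbatim from Regazzini's work (the cited Theorem~3), so there is no in-paper proof to compare against. What you have written is essentially the standard argument from that literature: for the ``coherent $\Rightarrow$ full'' direction, the axioms~\ref{def:coh_prob_2}--\ref{def:coh_prob_6} are each extracted from a small, purpose-built system of bets whose gain is constant on $C_0$ (your choices of pairs and stakes for \ref{def:coh_prob_3} and \ref{def:coh_prob_6} are exactly the right ones, and you correctly flag that $D\cap C\neq\emptyset$ is what makes the pair $(A,D\cap C)$ admissible); for the converse, one restricts to the finite partition of $C_0$ generated by the $A_i$ and $C_i$ and shows the $P(\cdot\vert C_0)$-weighted average of the gain vanishes, using \ref{def:coh_prob_6} in the form $P(A_i\cap C_i\vert C_0)=P(A_i\vert C_i)P(C_i\vert C_0)$, which is legitimate precisely because the domain is all of $\power\times\nonemptypower$, so $P(\cdot\vert C_0)$ is available. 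The only step you leave implicit is that the cell weights $w_k=P(E_k\vert C_0)$ are non-negative and sum to one (by \ref{def:coh_prob_2}, \ref{def:coh_prob_1} and \ref{def:coh_prob_3}); this is what turns ``all cell values negative'' into ``weighted average negative'' and should be stated, but it is immediate and not a genuine gap.
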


Secondly, for coherent conditional probabilities on arbitrary domains, it is always possible to extend their domain while preserving coherence.

\begin{theorem}{\cite[Theorem 4]{regazzini1985finitely}}\label{theo:largerdomain}
Let $P$ be a coherent conditional probability on $\mathcal{C}\subseteq\power\times\nonemptypower$. Then for any $\mathcal{C}\subseteq\mathcal{C}^*\subseteq\power\times\nonemptypower$, $P$ can be extended to a coherent conditional probability on $\mathcal{C}^*$.
\end{theorem}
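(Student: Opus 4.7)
The plan is to prove this via a Hahn--Banach-style extension argument, reducing the general case to a one-step extension by Zorn's lemma.

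First, I would consider the collection $\mathfrak{E}$ of all pairs $(P', \mathcal{D})$ with $\mathcal{C} \subseteq \mathcal{D} \subseteq \mathcal{C}^*$ and $P'$ a coherent conditional probability on $\mathcal{D}$ that agrees with $P$ on $\mathcal{C}$. Partially order $\mathfrak{E}$ by extension: $(P_1, \mathcal{D}_1) \leq (P_2, \mathcal{D}_2)$ iff $\mathcal{D}_1 \subseteq \mathcal{D}_2$ and $P_2$ restricts to $P_1$ on $\mathcal{D}_1$. Every chain in $\mathfrak{E}$ has an upper bound, obtained by taking the union of the domains and the common extension of the maps; coherence of this union follows immediately from Definition~\ref{def:coherence}, since coherence is quantified over finite subcollections of the domain and every such finite subcollection already lies in some element of the chain. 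Zorn's lemma then yields a maximal element $(P^\star, \mathcal{D}^\star)$ of $\mathfrak{E}$.

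The heart of the proof is to show $\mathcal{D}^\star = \mathcal{C}^*$. Suppose for contradiction that there exists $(A_0, C_0) \in \mathcal{C}^* \setminus \mathcal{D}^\star$. I would show that $P^\star$ admits a coherent extension to $\mathcal{D}^\star \cup \{(A_0, C_0)\}$, contradicting maximality. For any candidate value $p \in [0,1]$ for $P^\star(A_0 \vert C_0)$, coherence of the extension is equivalent to a family of inequalities in $p$ obtained from Definition~\ref{def:coherence}: isolating the coefficient $\lambda_0$ attached to the new pair, each coherence constraint reduces to a condition of the form $\alpha p \leq \beta$ or $\alpha p \geq \beta$, with $\alpha, \beta \in \reals$ determined by the already-known values $P^\star(A_i \vert C_i)$ for pairs in $\mathcal{D}^\star$. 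The admissible set of $p$ is therefore an interval $[\underline{p}, \overline{p}]$, and a coherent extension exists as soon as this interval is non-empty.

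The main technical obstacle is establishing $\underline{p} \leq \overline{p}$. My strategy would be to argue by contradiction: if $\underline{p} > \overline{p}$, then one can pick a lower-bound inequality and an upper-bound inequality whose combination, after cancelling the $p$-term, produces a linear combination involving only pairs in $\mathcal{D}^\star$ that violates the coherence condition for $P^\star$ on $\mathcal{D}^\star$. This uses the fact that each constraint in the linear system corresponds to a finite subcollection of $\mathcal{D}^\star$ augmented by $(A_0, C_0)$, so adding two such constraints with opposite signs on the $\lambda_0$-term yields a witness violating the coherence of $P^\star$ on $\mathcal{D}^\star$ alone---the desired contradiction. Once $[\underline{p}, \overline{p}]$ is shown to be non-empty, choosing any $p$ in this interval defines the required coherent extension, contradicting the maximality of $(P^\star, \mathcal{D}^\star)$ and hence forcing $\mathcal{D}^\star = \mathcal{C}^*$. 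The combinatorial bookkeeping needed to construct the offending finite witness---handling the conditioning events $C_i$ and the indicator functions carefully---is the delicate part; everything else is essentially formal.
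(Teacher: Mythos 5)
The paper itself does not prove this statement: it is imported verbatim from the literature (Regazzini's Theorem~4), so there is no in-paper argument to compare yours against. Your proposal is, in essence, the standard proof of that cited result---Zorn's lemma (coherence is a finitary condition, so unions of chains are coherent) plus a one-step extension lemma showing that the set of coherent values for a single new pair $(A_0,C_0)$ is a non-empty closed interval---and the strategy is sound. The two points you flag as ``bookkeeping'' are indeed where all the content sits, and they do go through: (i) a coherence constraint involving the new pair with coefficient $\lambda_0\neq 0$ (normalised to $\pm1$) reduces to a half-line constraint on $p$ only in the case where the maximum of the $p$-free part of the gamble over the points of $\bigcup_i C_i$ lying \emph{outside} $C_0$ is strictly negative; otherwise the constraint is vacuous in $p$, so the admissible set is an intersection of closed half-lines, hence an interval; (ii) in the cancellation step, after adding a binding lower constraint (with $\lambda_0=1$) to a binding upper constraint (with $\lambda_0=-1$), one must verify that the resulting gamble---which involves only pairs of $\mathcal{D}^\star$---is strictly negative on the whole union of the old conditioning events, including its intersection with $C_0$; on that intersection the $\ind{A_0}$-terms cancel pointwise and the two suprema over $C_0$ whose sum is negative (this is exactly the assumption $\underline{p}>\overline{p}$) bound the combined gamble, while outside $C_0$ negativity follows from the bindingness conditions in (i) together with the fact that each gamble vanishes outside its own conditioning events. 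With these checks spelled out, the combined gamble witnesses a violation of coherence of $P^\star$ on $\mathcal{D}^\star$, so the interval is non-empty and maximality forces $\mathcal{D}^\star=\mathcal{C}^*$, as you claim.
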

In particular, it is therefore always possible to extend a coherent conditional probability $P$ on $\mathcal{C}$, to a coherent conditional probability on $\power\times\nonemptypower$. Due to Theorem~\ref{theo:fullcoherent}, this extension is a full conditional probability. The following makes this explicit.

\begin{restatable}{corollary}{corolcoherentextendable}
\label{corol:coherentextendable}
Let $P$ be a real-valued map from $\mathcal{C}\subseteq\power\times\nonemptypower$ to $\reals$. Then $P$ is a coherent conditional probability if and only if it can be extended to a full conditional probability.
\end{restatable}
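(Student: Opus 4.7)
The plan is to derive the corollary as an almost immediate consequence of the two preceding theorems, namely Theorem~\ref{theo:fullcoherent} (full conditional probabilities are exactly the coherent conditional probabilities on the full domain $\power\times\nonemptypower$) and Theorem~\ref{theo:largerdomain} (coherent conditional probabilities can always be extended to larger domains while preserving coherence).

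For the ``only if'' direction, suppose $P$ is a coherent conditional probability on $\mathcal{C}$. Applying Theorem~\ref{theo:largerdomain} with $\mathcal{C}^*\coloneqq\power\times\nonemptypower$ yields an extension $P^*$ of $P$ to $\power\times\nonemptypower$ that is still coherent. Theorem~\ref{theo:fullcoherent} then tells us that this $P^*$ is a full conditional probability, so $P$ has been extended to a full conditional probability, as required.

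For the ``if'' direction, suppose $P$ can be extended to a full conditional probability $P^*$ on $\power\times\nonemptypower$. By Theorem~\ref{theo:fullcoherent}, $P^*$ is a coherent conditional probability on $\power\times\nonemptypower$. I then need to check that coherence transfers to the restriction $P$ on the smaller domain $\mathcal{C}$. This is essentially a bookkeeping step: the coherence condition in Definition~\ref{def:coherence} is a statement about finitely many pairs $(A_i,C_i)$; if these pairs all lie in $\mathcal{C}\subseteq\power\times\nonemptypower$, and if $P(A_i\vert C_i)=P^*(A_i\vert C_i)$ for each $i$, then the inequality required of $P^*$ is literally the inequality required of $P$. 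Hence coherence of $P^*$ on the larger domain implies coherence of $P$ on $\mathcal{C}$.

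I do not expect any real obstacle here, since both directions are reductions to the two cited theorems plus the trivial observation that the coherence inequality only involves values of the map at pairs explicitly present in the argument. The only item that warrants even a sentence of proof is the restriction step in the ``if'' direction, and this is immediate from the definition.
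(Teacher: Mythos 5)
Your proof is correct and follows exactly the same route as the paper: Theorem~\ref{theo:largerdomain} plus Theorem~\ref{theo:fullcoherent} for the ``only if'' direction, and Theorem~\ref{theo:fullcoherent} together with the trivial restriction argument from Definition~\ref{def:coherence} for the ``if'' direction. Nothing further is needed.
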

Note, therefore, that if $P$ is a coherent conditional probability on $\mathcal{C}$, we can equivalently say that it is the restriction of a full conditional probability. Hence, any coherent conditional probability on $\mathcal{C}$ is guaranteed to satisfy properties~\ref{def:coh_prob_2}-\ref{def:coh_prob_6}. However, as was essentially already illustrated in Example~\ref{exmp:F1F4cannotbeextended}, and as our next example makes explicit, the converse is not true.

\begin{exmp}
Let $\Omega$, $\mathcal{C}$, $E_{\mathrm{o}}$, $E_{\mathrm{e}}$ and $P\colon\mathcal{C}\to\reals$ be defined as in Example~\ref{exmp:F1F4cannotbeextended}. Then as we have seen in that example, $P$ satisfies \ref{def:coh_prob_2}-\ref{def:coh_prob_6} on its domain $\mathcal{C}$. However, $P$ is not a coherent conditional probability on $\mathcal{C}$, because if it was, then according to Corollary~\ref{corol:coherentextendable}, $P$ could be extended to a full conditional probability. Since $\power\times\nonemptypower$ includes $(\Omega,\Omega)$, the argument at the end of Example~\ref{exmp:F1F4cannotbeextended} implies that this is impossible.
A similar conclusion can be reached by verifying Definition~\ref{def:coherence} directly; we leave this as an exercise.
\exampleend
\end{exmp}

\subsection{Stochastic Processes as a Special Case}\label{sec:def_stochastic_processes}

A (continuous-time) stochastic process is now simply a coherent conditional probability on a specific domain $\mathcal{C}^\mathrm{SP}$, or equivalently, the restriction of a full conditional probability to this domain $\mathcal{C}^\mathrm{SP}$---see Definition~\ref{def:stoch_process}. However, before we get to this definition, let us first provide some intuition.

Basically, a continuous-time stochastic process describes the behaviour of a system as it moves through the---finite---state space $\states$ over a continuous time line $\realsnonneg$. A single realisation of this movement is called a path or a trajectory. We are typically uncertain about the specific path that will be followed, and a stochastic process quantifies this uncertainty by means of a probabilistic model, which, in our case, will be a coherent conditional probability. These ideas are formalised as follows.

A \emph{path} $\omega$ is a function from $\realsnonneg$ to $\states$, and we denote with $\omega(t)$ the value of $\omega$ at time $t$. For any sequence of time points $u\in\mathcal{U}$ and any path $\omega$, we will write $\omega\vert_{u}$ to denote the restriction of $\omega$ to $u\subset\realsnonneg$. Using this notation, we write for any $x_u\in\states_u$ that $\omega\vert_u=x_u$ if, for all $t\in u$, it holds that $\omega(t)=x_{t}$.

The outcome space $\Omega$ of a stochastic process is a set of paths. Three commonly considered choices are to let $\Omega$ be the set of all paths, the set of all right-continuous paths~\cite{norris1998markov}, or the set of all cadlag paths (right-continuous paths with left-sided limits)~\cite{williams2000}. However, our results do not require such a specific choice. For the purposes of this paper, all that we require is that
\begin{equation}\label{eq:path_exists_for_finite_points}
(\forall u\in\mathcal{U}_\emptyset)(\forall x_u\in\states_u)(\exists \omega\in\Omega)~\omega\vert_u=x_u\,.
\end{equation}
Thus, $\Omega$ must be chosen in such a way that, for any non-empty finite sequence of time points $u\in\mathcal{U}_\emptyset$ and any state assignment $x_u\in\states_u$ on those time points, there is at least one path $\omega\in\Omega$ that agrees with $x_u$ on $u$. Essentially, 
this condition
guarantees that $\Omega$ is ``large enough to be interesting''. It serves as a nice exercise to check that the three specific sets $\Omega$ in the beginning of this paragraph each satisfy this condition.

For any set of events $\mathcal{E}\subseteq\power$, we use $\langle\mathcal{E}\rangle$ to denote the algebra that is generated by them. That is, $\langle\mathcal{E}\rangle$ is the smallest subset of $\power$ that contains all elements of $\mathcal{E}$, and that is furthermore closed under complements in $\Omega$ and finite unions, and therefore also under finite intersections. 
Furthermore, for any $t\in\realsnonneg$ and $x\in\states$, we define the elementary event
\begin{equation*}
(X_t=x)\coloneqq\{\omega\in\paths\colon\omega(t)=x\}\,,
\end{equation*}
and, for any $u\in\mathcal{U}$, we let
\begin{equation*}
\mathcal{E}_u \coloneqq \left\{
(X_t=x)
\colon
x\in\states,t\in u\cup\reals_{>u}
\right\}
\end{equation*}
be the set of elementary events whose time point is either preceded by or belongs to $u$, and we let $\mathcal{A}_u\coloneqq\langle\mathcal{E}_u\rangle$ be the algebra that is generated by this set of elementary events. 

Consider now any $u\in\mathcal{U}$. Then on the one hand, for any $A\in\mathcal{A}_u$, it clearly holds that $A\in\power$. On the other hand, for any $x_u\in\states_u$, the event
\begin{equation*}
(X_u=x_u) \coloneqq \{\omega\in\Omega\,:\,\omega\vert_u=x_u\}
\vspace{4pt}
\end{equation*}
belongs to $\nonemptypower$, because it follows from Equation~\eqref{eq:path_exists_for_finite_points} that this event is non-empty. 
Hence, for any $A\in\mathcal{A}_u$ and $x_u\in\states_u$, we find that $(A,X_u=x_u)\in\power\times\nonemptypower$. Since this is true for every $u\in\mathcal{U}$, it follows that
\begin{equation*}
\mathcal{C}^\mathrm{SP}\coloneqq\big\{
(A,X_u=x_u)
\colon
u\in\mathcal{U},~x_u\in\states_u,~A\in\mathcal{A}_u\big\}
\end{equation*}
is a subset of $\power\times\nonemptypower$. It is also worth noting that if $u=\emptyset$, then $\omega\vert_u=x_u$ is vacuously true, which implies that in that case, $(X_u=x_u)=\Omega$.

That being said, we can now finally formalise our definition of a (continuous-time) stochastic process.

\begin{definition}[Stochastic Process]\label{def:stoch_process}
A \emph{stochastic process} is a coherent conditional probability on $\mathcal{C}^\mathrm{SP}$. We denote the set of all stochastic processes by $\processes$.
\end{definition}

\begin{restatable}{corollary}{corolprocessiffrestriction}
\label{corol:processiffrestriction}
Let $P$ be a real-valued map from $\mathcal{C}^\mathrm{SP}$ to $\reals$. Then $P$ is a stochastic process if and only if it is the restriction of a full conditional probability.
\end{restatable}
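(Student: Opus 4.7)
The statement to be proved is essentially a restatement of Corollary~\ref{corol:coherentextendable} tailored to the specific domain $\mathcal{C}^\mathrm{SP}$, so my plan is simply to chain together the relevant prior results. By Definition~\ref{def:stoch_process}, the claim ``$P$ is a stochastic process'' is by definition equivalent to ``$P$ is a coherent conditional probability on $\mathcal{C}^\mathrm{SP}$''. Meanwhile, Corollary~\ref{corol:coherentextendable} asserts, for any $\mathcal{C}\subseteq\power\times\nonemptypower$, that a real-valued map on $\mathcal{C}$ is a coherent conditional probability if and only if it can be extended to a full conditional probability—that is, if and only if it is the restriction of a full conditional probability. Combining these two equivalences immediately yields the desired biconditional.

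The only non-trivial thing to verify is the hypothesis needed to invoke Corollary~\ref{corol:coherentextendable} with $\mathcal{C}=\mathcal{C}^\mathrm{SP}$, namely that $\mathcal{C}^\mathrm{SP}\subseteq\power\times\nonemptypower$. However, this inclusion is already established in the paragraph preceding Definition~\ref{def:stoch_process}: each $A\in\mathcal{A}_u$ lies in $\power$ by construction, and each conditioning event $(X_u=x_u)$ lies in $\nonemptypower$ because condition~\eqref{eq:path_exists_for_finite_points} guarantees the existence of at least one path $\omega\in\Omega$ with $\omega\vert_u=x_u$, so that $(X_u=x_u)\neq\emptyset$. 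Hence no new work is required on this point.

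In summary, the proof is a two-line argument: apply Definition~\ref{def:stoch_process} to rewrite ``$P$ is a stochastic process'' as ``$P$ is a coherent conditional probability on $\mathcal{C}^\mathrm{SP}$'', then apply Corollary~\ref{corol:coherentextendable} (whose applicability is ensured by the inclusion $\mathcal{C}^\mathrm{SP}\subseteq\power\times\nonemptypower$ noted above) to rewrite this as ``$P$ is the restriction of a full conditional probability''. There is no real obstacle; the statement is essentially bookkeeping that packages the general extension theorem in the form in which it will be repeatedly used for stochastic processes in the sequel.
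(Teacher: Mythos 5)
Your proposal is correct and follows essentially the same route as the paper, which proves this corollary as an immediate consequence of Corollary~\ref{corol:coherentextendable} together with Definition~\ref{def:stoch_process}; your additional check that $\mathcal{C}^\mathrm{SP}\subseteq\power\times\nonemptypower$ is a reasonable piece of bookkeeping that the paper handles in the text preceding the definition.
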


There are two reason why we restrict ourselves to the domain $\mathcal{C}^\mathrm{SP}$. The most important reason is simply that all of the results in this paper can be expressed using only this domain, because they are all concerned with events or functions that depend on a finite number of time points. The second reason is that this restriction will allow us to state  uniqueness results that do not extend to larger domains; see for example~Corollary~\ref{cor:rate_has_unique_homogen_markov_process}. However, it is important to realise that our restriction of the domain does not impose any real limitations, because, as we know from Theorem~\ref{theo:largerdomain}, the domain of a coherent conditional probability---and hence also a stochastic process---can always be extended (although not uniquely). In fact, as briefly touched upon in the conclusions of this paper, it is even possible to consider extensions that are $\sigma$-additive.

Finally, we would like to point out that it is also possible to use a different---yet equivalent---definition for stochastic processes. Indeed, due to Corollary~\ref{corol:processiffrestriction}, a stochastic process can also be defined as the restriction of a full conditional probability to $\mathcal{C}^{\mathrm{SP}}$. Of course, given this realisation, one may then start to wonder why we have gone through to the trouble of introducing coherence, because this alternative definition would not require any notion of coherence. The reason why we nevertheless need coherence, is because it allows us to establish the existence of stochastic processes that have certain properties, which will often be necessary in the proofs of our results.

For instance, by means of an example, suppose that we are given an arbitrary function $p:\realspos\to [0,1]$, and that we want to know if there is a stochastic process $P$ for which, for some $x,y\in\states$,
\begin{equation}\label{eq:extendptoP}
P(X_t=y\,\vert\,X_0=x) = p(t)
~\text{ for all $t\in\realspos$.}
\end{equation}
If we had defined a stochastic process as the restriction of a full conditional probability to $\mathcal{C}^\mathrm{SP}$, without introducing coherence, then answering this question would have been entirely non-trivial, because it would essentially require us to construct a full conditional probability that coincides with $p$ on the relevant part of its domain.

In contrast, as illustrated by the following example, the introduction of coherence takes care of most of the heavy lifting in such an existence proof.
\begin{exmp}\label{exmp:coherence_constructs_process}
Let $\states$ be a state space that contains at least two states, fix two---possibly equal---states $x,y\in\states$, and consider any function $p:\realspos\to[0,1]$. The aim of this example is to prove that there is a stochastic process $P$ that satisfies Equation~\eqref{eq:extendptoP}.

The crucial step of the proof is to consider a function $\tilde{P}$ that is defined by
\begin{equation}\label{exmp:coherence:eq:probdef}
\tilde{P}(X_t=y\vert X_0=x) \coloneqq p(t)
~\text{ for all $(X_t=y,X_0=x)\in\mathcal{C}$},
\end{equation}
and to prove that this function is a coherent conditional probability on
\begin{equation*}
\mathcal{C} \coloneqq \left\{ (X_t=y,X_0=x)\colon t\in\realspos \right\},
\end{equation*}
or equivalently, that it satisfies Definition~\ref{def:coherence}.

So consider any $n\in\nats$ and, for all $i\in\{1,\ldots,n\}$, some $(X_{t_i}=y,X_0=x)\in\mathcal{C}$ and $\lambda_i\in\reals$. According to Definition~\ref{def:coherence}, we now have to show that
\begin{equation}\label{exmp:coherence:eq:coherence}
\max\left\{ \sum_{i=1}^n \lambda_i\ind{x}(\omega(0))\left(\tilde{P}(X_{t_i}=y\vert X_0=x) - \ind{y}(\omega(t_i))\right)~\Bigg\vert~ \omega\in C_0 \right\} \geq 0,
\end{equation}
with $C_0\coloneqq \bigcup_{i=1}^n (X_0=x)=(X_0=x)$. 
While doing so, we can assume without loss of generality that $i\neq j$ implies $t_i\neq t_j$, because if $t_i=t_j$ for some $i\neq j$, then we can simply add the corresponding two summands in Equation~\eqref{exmp:coherence:eq:coherence}.

Let $z\in\states$ be any state such that $z\neq y$, let $u\coloneqq(0,t_1,\ldots,t_n)\in\mathcal{U}$, and let $x_u\in\states_u$ be the unique state assignment such that $x_{0}\coloneqq x$ and
\begin{equation*}
x_{t_i}\coloneqq
\begin{cases}
y & \text{if $\lambda_i<0$} \\
z & \text{if $\lambda_i\geq0$}
\end{cases}
~~\text{ for all $i\in\{1,\ldots,n\}$.}
\end{equation*}
Furthermore, let $N_{<0}\coloneqq\{i\in\{1,\dots,n\}\colon \lambda_i<0\}$ and $N_{\geq0}\coloneqq\{i\in\{1,\dots,n\}\colon \lambda_i\geq0\}$.
Since $n$ is finite, Equation~\eqref{eq:path_exists_for_finite_points} now guarantees that there is some $\omega\in\Omega$ such that $\omega\vert_{u}=x_u$. Evaluating the sum in Equation~\eqref{exmp:coherence:eq:coherence} using this $\omega$, we find that
\begin{multline*}
 \sum_{i=1}^n \lambda_{i}\ind{x}(\omega(0))\left(\tilde{P}(X_{t_i}=y\vert X_0=x) - \ind{y}(\omega(t_i))\right)\\[-4pt]
 \begin{aligned}
 &= \sum_{i=1}^n \lambda_{i}\left(\tilde{P}(X_{t_i}=y\vert X_0=x) - \ind{y}(\omega(t_i))\right) \\
 &= \sum_{i\in N_{<0}}\lambda_{i}\left(\tilde{P}(X_{t_i}=y\vert X_0=x) - 1\right) + \sum_{i\in N_{\geq0}}\lambda_{i}\tilde{P}(X_{t_i}=y\vert X_0=x) \\
 &\geq \sum_{i\in N_{<0}}\lambda_{i}\left(\tilde{P}(X_{t_i}=y\vert X_0=x) - 1\right) = \sum_{i\in N_{<0}}\abs{\lambda_{i}}\left(1 - \tilde{P}(X_{t_i}=y\vert X_0=x)\right) \geq 0,
 \end{aligned}
\end{multline*}
where the two inequalities follow from the fact that $\tilde{P}(X_{t_i}=y\vert X_0=x)=p(t_i)\in [0,1]$\,.
Furthermore, because $\omega(0)=x$, we also have that $\omega\in (X_0=x)=C_0$. Therefore, we find that Equation~\eqref{exmp:coherence:eq:coherence} indeed holds. Hence, we conclude that $\tilde{P}$ is a coherent conditional probability on $\mathcal{C}$.

The rest of the proof is now straightforward. Since $\tilde{P}$ is a coherent conditional probability on $\mathcal{C}$, and because $\mathcal{C}$ is a subset of $\mathcal{C}^{\mathrm{SP}}$, it follows from Theorem~\ref{theo:largerdomain} that $\tilde{P}$ can be extended to a coherent conditional probability $P$ on $\mathcal{C}^{\mathrm{SP}}$, or equivalently, to a stochastic process $P$. Since this stochastic process $P$ is an extension of $\tilde{P}$, Equation~\eqref{eq:extendptoP} is now an immediate consequence of Equation~\eqref{exmp:coherence:eq:probdef}.
\exampleend
\end{exmp}

\subsection{Well-behaved stochastic processes}\label{sec:well_behaved}

Stochastic processes, as they are defined in the previous section, can behave in rather extreme ways. For instance, Example~\ref{exmp:coherence_constructs_process} tells us that for any two states $x,y\in\mathcal{X}$, there exists a stochastic process $P$ such that $P(X_t=y\vert X_0=x)=\ind{\mathbb{Q}_{>0}}(t)$, where $\ind{\mathbb{Q}_{>0}}$ is the indicator of the positive rational numbers.

In order to avoid this kind of extreme behaviour, we will require that the rate of change of a stochastic processes remains bounded. We formalise this requirement through the notion of \emph{well-behavedness}. 

\begin{definition}[Well-Behaved Stochastic Process]
\label{def:well-behaved}
A stochastic process $P\in\processes$ is said to be \emph{well-behaved} if, for any---possibly empty---time sequence $u\in\mathcal{U}$, any $x_u\in\states_u$, any $x,y\in\states$ and any $t\in\reals_{\geq0}$ such that $t>u$:
\begin{equation}\label{eq:def:well-behaved:right}
\limsup_{\Delta\to 0^{+}}\frac{1}{\Delta}\abs{P(X_{t+\Delta}=y\vert X_t=x, X_u=x_u)-\ind{x}(y)}<+\infty
\end{equation}
and, if $t\neq0$,
\begin{equation}\label{eq:def:well-behaved:left}
\limsup_{\Delta\to 0^{+}}\frac{1}{\Delta}\abs{P(X_{t}=y\vert X_{t-\Delta}=x, X_u=x_u)-\ind{x}(y)}<+\infty.
\end{equation}
The set of all well-behaved stochastic processes is denoted by $\wprocesses$.
\end{definition}

This definition of well-behavedness is related to continuity and differentiability, but stronger than the former and weaker than the latter. Our next example provides some intuition on this.

\begin{exmp}\label{exmp:well-behaved}
Let $\states$ be a state space that contains at least two states, fix two states $x,y\in\states$ such that $x\neq y$, and consider any function $p:\reals_{>0}\to[0,1]$. Then as we know from Example~\ref{exmp:coherence_constructs_process}, there is a stochastic process $P$ such that
\begin{equation*}
P(X_\Delta=y\vert X_0=x)=p(\Delta)
~\text{ for all $\Delta\in\realspos$.}
\end{equation*}
Furthermore, since $x\neq y$, it follows from~\ref{def:coh_prob_7} and \ref{def:coh_prob_8} that $P(X_0=y\vert X_0=x)=0$. We now consider two specific choices for $p$.

If we let $p(\Delta)\coloneqq\sqrt\Delta$ for $\Delta\in(0,1]$ and $p(\Delta)\coloneqq 1$ for $\Delta\geq1$, then $P(X_\Delta=y\vert X_0=x)$ is continuous on $\reals_{\geq0}$ because $P(X_0=y\vert X_0=x)=0$. However, we also find that
\begin{equation*}
\limsup_{\Delta\to0^+}\frac{1}{\Delta}\abs{P(X_{\Delta}=y\vert X_{0}=x) - \ind{x}(y)}
=
\limsup_{\Delta\to0^+}\frac{1}{\Delta}\sqrt\Delta
=+\infty
\end{equation*}
and therefore, it follows from Equation~\eqref{eq:def:well-behaved:right}---with $t=0$ and $u=\emptyset$---that $P$ is not well-behaved.

On the other hand, if we let $p(\Delta)\coloneqq \Delta\abs{\sin(\nicefrac{1}{\Delta})}$ for $\Delta\in\realspos$, we find that
\begin{equation*}
\limsup_{\Delta\to0^+}\frac{1}{\Delta}\abs{P(X_{\Delta}=y\vert X_{0}=x) - \ind{x}(y)}
=
\limsup_{\Delta\to0^+}\abs{\sin(\nicefrac{1}{\Delta})}
=1.
\end{equation*}
In this case---at least for $t=0$ and $u=\emptyset$---$P$ does exhibit the behaviour that we associate with a well-behaved stochastic process. Furthermore, as we invite the reader to check, $P(X_\Delta=y\vert X_0=x)$ is again continuous on $\reals_{\geq0}$. However, $P(X_\Delta=y\vert X_0=x)$ is not differentiable in $\Delta=0$, because $\nicefrac{1}{\Delta}P(X_\Delta=y\vert X_0=x)=\abs{\sin(\nicefrac{1}{\Delta})}$ oscillates as $\Delta$ approaches zero.
\exampleend
\end{exmp}

Definition~\ref{def:well-behaved} is also very similar to the definition of a well-behaved transition matrix system, as introduced in Section~\ref{subsec:well_behaved_systems}. Furthermore, here too, one could say that well-behavedness is intuitively related to local Lipschitz continuity. However, note that this time, this locality not only depends on the time point $t$, but also on the given history $x_u$.

\subsection{Process Dynamics}\label{sec:dynamics}

We end this section by introducing some tools to describe the behaviour of stochastic processes. Rather than work with the individual probabilities, it will be convenient to jointly consider probabilities that are related by the same conditioning event. To this end, we will next introduce the notion of transition matrices corresponding to a given stochastic process.

\begin{definition}[Corresponding Transition Matrix]\label{def:trans_matrix}
Consider any stochastic process $P\in\processes$. Then, for any $t,s\in\realsnonneg$ such that $t\leq s$, the \emph{corresponding transition matrix} $T_t^s$ is a matrix that is defined by
\begin{equation*}
T_t^s(x_t, x_s) \coloneqq P(X_s=x_s\,\vert X_t=x_t)\quad\text{for all $x_s,x_t\in\states$}\,.
\end{equation*}
We denote this family of matrices by $\mathcal{T}_P$.
\end{definition}

Because we will also want to work with conditioning events that contain more than a single time point, we furthermore introduce the following generalisation.

\begin{definition}[History-Dependent Corresponding Transition Matrix]
Let $P\in\processes$ be any stochastic process. Then, for any $t,s\in\realsnonneg$ such that $t\leq s$, any sequence of time points $u\in\mathcal{U}_{<t}$, and any state assignment $x_u\in\states_u$, the corresponding \emph{history-dependent} transition matrix $T_{t,\,x_u}^s$ is a matrix that is defined by
\begin{equation*}
T^s_{t,\,x_u}(x_t,x_s)
\coloneqq
P(X_s=x_s\vert X_t=x_t, X_u=x_u)\quad\text{for all $x_s,x_t\in\states$}\,.
\end{equation*}
For notational convenience, we allow $u$ to be empty, in which case $T_{t,\,x_u}^s=T_t^s$.
\end{definition}

The following proposition establishes some simple properties of these corresponding (history-dependent) transition matrices.

\begin{restatable}{proposition}{propstochasticprocesssimpleproperties}
\label{prop:stochasticprocess:simpleproperties}
Let $P\in\processes$ be a stochastic process.  
Then, for any $t,s\in\realsnonneg$ such that $t\leq s$, any sequence of time points $u\in\mathcal{U}_{<t}$, and any state assignment $x_u\in\states_u$, the corresponding (history dependent) transition matrix $T_{t,\,x_u}^s$ is---as its name suggests---a transition matrix, and $T_{t,\,x_u}^t=I$. Furthermore, $P$ is well-behaved if and only if, for every---possibly empty---time sequence $u\in\mathcal{U}$, any $x_u\in\states_u$ and any $t\in\reals_{\geq0}$ such that $t>u$:
\begin{equation}\label{eq:def:well-behaved:right:matrix}
\limsup_{\Delta\to 0^{+}}\frac{1}{\Delta}\norm{T_{t,\,x_u}^{t+\Delta}-I}<+\infty
\end{equation}
and, if $t\neq0$,
\begin{equation}\label{eq:def:well-behaved:left:matrix}
\limsup_{\Delta\to 0^{+}}\frac{1}{\Delta}\norm{T_{t-\Delta,x_u}^{t}-I}<+\infty.
\end{equation}
\vspace{-4pt}
\end{restatable}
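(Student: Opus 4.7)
The plan is to verify the three claims in order, all following quickly from the axioms \ref{def:coh_prob_2}--\ref{def:coh_prob_5} and the norm identity in Equation~\eqref{eq:normofmatrix}.

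First, I would show that $T_{t,x_u}^s$ satisfies \ref{def:T:sumone} and \ref{def:T:nonneg}. Non-negativity is immediate from \ref{def:coh_prob_2}. For the row sums, note that since every path passes through some state at time $s$, we have $\Omega=\bigcup_{y\in\states}(X_s=y)$ and the events $(X_s=y)$ are pairwise disjoint; finite additivity \ref{def:coh_prob_3} applied $\abs{\states}-1$ times, together with \ref{def:coh_prob_5}, then gives
\begin{equation*}
\sum_{y\in\states}T_{t,x_u}^s(x_t,y)=\sum_{y\in\states}P(X_s=y\vert X_t=x_t,X_u=x_u)=1.
\end{equation*}
Here the conditioning event $(X_t=x_t)\cap(X_u=x_u)$ is nonempty by Equation~\eqref{eq:path_exists_for_finite_points}, so the conditional probabilities are well-defined elements of the domain $\mathcal{C}^{\mathrm{SP}}$.

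For the identity $T_{t,x_u}^t=I$: the diagonal entries satisfy $T_{t,x_u}^t(x,x)=P(X_t=x\vert X_t=x, X_u=x_u)=1$ by \ref{def:coh_prob_1}, since the conditioning event is a subset of the event $(X_t=x)$. Because the rows are nonnegative and sum to one, the off-diagonal entries $T_{t,x_u}^t(x,y)$ with $y\neq x$ must all vanish.

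For the well-behavedness equivalence, I would translate between individual conditional probabilities and the matrix norm via Equation~\eqref{eq:normofmatrix}:
\begin{equation*}
\norm{T_{t,x_u}^{t+\Delta}-I}=\max_{x\in\states}\sum_{y\in\states}\abs{P(X_{t+\Delta}=y\vert X_t=x,X_u=x_u)-\ind{x}(y)}.
\end{equation*}
For the \emph{only if} direction, assuming Definition~\ref{def:well-behaved}, each summand has finite $\limsup_{\Delta\to 0^+}\nicefrac{1}{\Delta}(\cdot)$; since $\states$ is finite, subadditivity of $\limsup$ (applicable because the individual limsups are finite) propagates this through the finite sum, and a finite maximum of such quantities is again finite, giving Equation~\eqref{eq:def:well-behaved:right:matrix}. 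Conversely, for any fixed $x,y\in\states$,
\begin{equation*}
\tfrac{1}{\Delta}\abs{P(X_{t+\Delta}=y\vert X_t=x,X_u=x_u)-\ind{x}(y)}\leq\tfrac{1}{\Delta}\norm{T_{t,x_u}^{t+\Delta}-I},
\end{equation*}
so finiteness of the limsup of the right-hand side (Equation~\eqref{eq:def:well-behaved:right:matrix}) implies the same for each entry (Equation~\eqref{eq:def:well-behaved:right}). The left-sided limits in Equations~\eqref{eq:def:well-behaved:left} and~\eqref{eq:def:well-behaved:left:matrix} are handled by the same argument with $T_{t-\Delta,x_u}^t$ in place of $T_{t,x_u}^{t+\Delta}$. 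No step is a real obstacle; the only mildly delicate point is invoking subadditivity of $\limsup$, which is valid precisely because all the individual limsups involved are assumed finite.
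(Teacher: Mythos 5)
Your proof is correct and follows the same route as the paper, whose own proof simply states that the first part follows from Corollary~\ref{corol:processiffrestriction} together with Definitions~\ref{def:stoch_matrix} and~\ref{def:cond_prob}, and the second from Definition~\ref{def:well-behaved} and Equation~\eqref{eq:normofmatrix}; you have merely spelled out those ``trivial'' details (finite additivity for the row sums, \ref{def:coh_prob_1} for the diagonal, and the entrywise/norm comparison with subadditivity of $\limsup$). No gaps.
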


\begin{remark}\label{remark:expectationT}
Note that for any $P\in\processes$, the corresponding transition matrix $T_{t, x_u}^s$ is a map from $\gamblesX$ to $\gamblesX$, that can therefore be applied to any $f\in\gamblesX$. Furthermore, for any $x_t\in\states$, we have that
\begin{align*}
\left[T_{t,x_u}^sf\right](x_t) &= \sum_{x_s\in\states}f(x_s)P(X_s=x_s\,\vert\,X_t=x_t,X_u=x_u)= \mathbb{E}_P\left[f(X_s)\,\vert\,X_t=x_t, X_u=x_u\right]\,,
\end{align*}
where the expectation is taken with respect to $P(X_s\,\vert\,X_t=x_t,X_u=x_u)$. This observation will be useful when we later focus on expectations with respect to stochastic processes; for functions $f\in\gamblesX$, their corresponding transition matrices serve as an alternative representation for the expectation operator. 
\exampleend
\end{remark}

Because a stochastic process is defined on a continuous time line, and because its corresponding transition matrices $T_{t,x_u}^s$ only describe the behaviour of this process on fixed points in time, we will furthermore require some tools to capture the dynamics of a stochastic process. That is, we will be interested in how their transition matrices change over time.

One seemingly obvious way to describe these dynamics is to use the derivatives of the transition matrices that correspond to stochastic processes. However, because we do not impose differentiability assumptions on these processes, such derivatives may not exist. We will therefore instead introduce \emph{outer partial derivatives} below. It will be instructive, however, to first consider ordinary \emph{directional partial derivatives}.

\begin{definition}[Directional Partial Derivatives]\label{def:direc_partial_deriv}
For any stochastic process $P\in\processes$, any $t\in\realsnonneg$, any sequence of time points $u\in\mathcal{U}_{<t}$, and any state assignment $x_u\in\states_u$, the \emph{right-sided partial derivative} of $T_{t,x_u}^t$ is defined by
\begin{equation*}
\partial_{+}{T_{t,\,x_u}^t}
\coloneqq
\lim_{\Delta\to 0^{+}}
\frac{1}{\Delta}
(T^{t+\Delta}_{t,\,x_u}-T^t_{t,\,x_u})
=
\lim_{\Delta\to 0^{+}}
\frac{1}{\Delta}
(T^{t+\Delta}_{t,\,x_u}-I)
\end{equation*}
and, if $t\neq0$, the \emph{left-sided partial derivative} of $T_{t,x_u}^t$ is defined by
\begin{equation*}
\partial_{-}{T_{t,\,x_u}^t}
\coloneqq
\lim_{\Delta\to 0^{+}}
\frac{1}{\Delta}
(T^{t}_{t-\Delta,\,x_u}-T^t_{t,\,x_u})
=
\lim_{\Delta\to 0^{+}}
\frac{1}{\Delta}
(T^{t}_{t-\Delta,\,x_u}-I).
\end{equation*}
If these partial derivatives exist, then because of Proposition~\ref{prop:rate_from_stochastic_matrix}, they are guaranteed to belong to the set of rate matrices  $\mathcal{R}$. If they both exist and coincide, we write $\partial{T_{t,\,x_u}^t}$ to denote their common value. If $t=0$, we let $\partial{T_{t,\,x_u}^t}\coloneqq\partial_{+}{T_{t,\,x_u}^t}$.
\end{definition}

The following example establishes that these directional partial derivatives need not exist. In particular, they need not exist even for well-behaved processes.
\begin{exmp}\label{exmp:well-behaved-no-deriv}
Let $Q_1,Q_2\in\mathcal{R}$ be two commuting rate matrices such that $Q_1\neq Q_2$---for example, let $Q_1\neq0$ be an arbitrary rate matrix and let $Q_2\coloneqq\alpha Q_1$, with $\alpha\in\realsnonneg\setminus\{1\}$---and consider a well-behaved stochastic process $P\in\wprocesses$ of which, for all $t\in(0,1]$, the transition matrix $T_0^t$ is given by Equation~\eqref{eq:ex4:def} in Example~\ref{exmp:limit_trans_mat_system_matrices}. 

For now, we simply assume that this is possible. A formal proof for the existence of such a process requires some additional machinery, and we therefore postpone it to Example~\ref{exmp:twoexamplesofMarkovchains}, where we construct a well-behaved continuous-time Markov chain that is compatible with Equation~\eqref{eq:ex4:def}.

The aim of the present example is to show that for any such process, the right-sided partial derivative $\partial_{+}{T_{0}^0}$---which corresponds to choosing $t=0$ and $u=\emptyset$ in Definition~\ref{def:direc_partial_deriv}---does not exist. The reason for this is that---as is proved in~\exampleproofref---for any $\lambda\in[\nicefrac{1}{3},\nicefrac{2}{3}]$, there is a sequence $\{\Delta_i\}_{i\in\nats_0}\to0^+$ such that
\begin{equation}\label{eq:exmp:well-behaved-no-deriv}
\lim_{i\to+\infty}
\frac{1}{\Delta_i}
(T^{\Delta_i}_{0}-I)=Q_\lambda
\end{equation}
with $Q_\lambda\coloneqq \lambda Q_1+(1-\lambda)Q_2$. The reason why this indeed implies that $\partial_{+}{T_{0}^0}$ does not exist, is because if it would exist, then Equation~\eqref{eq:exmp:well-behaved-no-deriv} would imply that $\partial_{+}{T_{0}^0}=Q_\lambda$ for all $\lambda\in[\nicefrac{1}{3},\nicefrac{2}{3}]$. The only way for this to be possible would be if $Q_1=Q_2$, but that case was excluded in the beginning of this example.
\exampleend
\end{exmp}

Observe, therefore, that the problem is essentially that the finite-difference expressions $\nicefrac{1}{\Delta}(T_{t,x_u}^{t+\Delta} - I)$ and $\nicefrac{1}{\Delta}(T_{t-\Delta,x_u}^{t} - I)$, parameterised in $\Delta$, can have multiple accumulation points as we take $\Delta$ to $0$. Therefore, it will be more convenient to instead work with what we call \emph{outer partial derivatives}. These can be seen as a kind of set-valued derivatives, containing all these accumulation points obtained as $\Delta\to0^+$.

\begin{definition}[Outer Partial Derivatives]\label{def:outerpartialderivatives}
For any stochastic process $P\in\processes$, any $t\in\realsnonneg$, any sequence of time points $u\in\mathcal{U}_{<t}$, and any state assignment $x_u\in\states_u$, the \emph{right-sided outer partial derivative} of $T_{t,x_u}^t$ is defined by
\begin{equation}
\overline{\partial}_{+}
{T^t_{t,\,x_u}}
\coloneqq
\left\{
Q\in\mathcal{R}
\colon
\left(\exists \,\{\Delta_i\}_{i\in\nats}\to0^+\,:\,
~
\lim_{i\to+\infty}
\frac{1}{\Delta_i}
(T^{t+\Delta_i}_{t,\,x_u}-I)
=Q
\right)
\right\}\label{eq:rightouterderivative}
\end{equation}
and, if $t\neq0$, the \emph{left-sided} \emph{outer partial derivative} of $T_{t,x_u}^t$ is defined by
\begin{equation}
\overline{\partial}_{-}
{T^t_{t,\,x_u}}
\coloneqq
\left\{
Q\in\mathcal{R}
\colon
\left(\exists\, \{\Delta_i\}_{i\in\nats}\to0^+\,:\,
~
\lim_{i\to+\infty}
\frac{1}{\Delta_i}
(T^{t}_{t-\Delta_i,\,x_u}-I)
=Q
\right)\label{eq:leftouterderivative}
\right\}.
\end{equation}
Furthermore, the \emph{outer partial derivative} of $T_{t,x_u}^t$ is defined as
\begin{equation*}
\overline{\partial}
{T^t_{t,\,x_u}}
\coloneqq
\overline{\partial}_{+}
{T^t_{t,\,x_u}}
\cup
\overline{\partial}_{-}
{T^t_{t,\,x_u}}
\text{ if $t>0$ and }
\overline{\partial}
{T^t_{t,\,x_u}}
\coloneqq
\overline{\partial}_{+}
{T^t_{t,\,x_u}}
\text{ if $t=0$}.
\end{equation*}\vspace{-10pt}
\end{definition}

For well-behaved processes $P\in\wprocesses$, as our next result shows, these outer partial derivatives are always non-empty, bounded and closed.

\begin{restatable}{proposition}{propboundednonemptyandclosed}
\label{prop:boundednon-emptyandclosed}
Consider any $P\in\wprocesses$. Then $\overline{\partial}_{+}
{T^t_{t,\,x_u}}$, $\overline{\partial}_{-}
{T^t_{t,\,x_u}}$ and $\overline{\partial}
{T^t_{t,\,x_u}}$ are non-empty, bounded and closed subsets of $\mathcal{R}$.
\end{restatable}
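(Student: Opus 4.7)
The plan is to handle the three properties (non-empty, bounded, closed) in a specific order that exploits the fact that each property feeds into the next, and to do so first for the one-sided outer derivatives $\overline{\partial}_{+}T_{t,x_u}^t$ and $\overline{\partial}_{-}T_{t,x_u}^t$; the claim for $\overline{\partial}T_{t,x_u}^t$ will then follow immediately because a finite union of non-empty (resp.~bounded, closed) sets is again non-empty (resp.~bounded, closed). I will write out the argument for $\overline{\partial}_{+}T_{t,x_u}^t$; the left-sided version is symmetric.

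First I would show boundedness. By Proposition~\ref{prop:stochasticprocess:simpleproperties}, well-behavedness of $P$ is equivalent to the matrix-norm version~\eqref{eq:def:well-behaved:right:matrix}, so there exist $M\in\realspos$ and $\delta\in\realspos$ such that $\frac{1}{\Delta}\norm{T_{t,x_u}^{t+\Delta}-I}\leq M$ for all $\Delta\in(0,\delta]$. For any $Q\in\overline{\partial}_{+}T_{t,x_u}^t$ there is a sequence $\{\Delta_i\}\to 0^+$ with $\frac{1}{\Delta_i}(T_{t,x_u}^{t+\Delta_i}-I)\to Q$, and so by continuity of the norm $\norm{Q}\leq M$. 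Hence $\overline{\partial}_{+}T_{t,x_u}^t$ is bounded (uniformly, by $M$).

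Next I would establish non-emptiness. By Proposition~\ref{prop:rate_from_stochastic_matrix}, each finite difference $\frac{1}{\Delta}(T_{t,x_u}^{t+\Delta}-I)$ is itself a rate matrix, i.e.~lies in $\mathcal{R}$. Picking any sequence $\{\Delta_i\}\to 0^+$ with $\Delta_i\leq\delta$, the resulting sequence $\{\frac{1}{\Delta_i}(T_{t,x_u}^{t+\Delta_i}-I)\}$ lies in the subset of $\mathcal{R}$ of norm at most $M$. This subset is a bounded, closed subset of the finite-dimensional space of real $\abs{\states}\times\abs{\states}$ matrices (closedness of $\mathcal{R}$ is immediate from Definition~\ref{def:rate_matrix}), hence compact. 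A convergent subsequence therefore exists; its limit belongs to $\mathcal{R}$ and is, by construction, an element of $\overline{\partial}_{+}T_{t,x_u}^t$.

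Finally, closedness will follow from a standard diagonalisation. Suppose $\{Q_n\}_{n\in\nats}\subseteq\overline{\partial}_{+}T_{t,x_u}^t$ with $Q_n\to Q$; I need to show $Q\in\overline{\partial}_{+}T_{t,x_u}^t$. For each $n$ there is a sequence $\{\Delta_i^{(n)}\}_{i\in\nats}\to 0^+$ with $\frac{1}{\Delta_i^{(n)}}(T_{t,x_u}^{t+\Delta_i^{(n)}}-I)\to Q_n$, so I may choose $\Delta_n\in(0,\nicefrac{1}{n})$ from this sequence satisfying $\norm{\frac{1}{\Delta_n}(T_{t,x_u}^{t+\Delta_n}-I)-Q_n}<\nicefrac{1}{n}$. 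Then $\{\Delta_n\}\to 0^+$ and, by the triangle inequality,
\begin{equation*}
\norm{\tfrac{1}{\Delta_n}(T_{t,x_u}^{t+\Delta_n}-I)-Q}\leq\tfrac{1}{n}+\norm{Q_n-Q}\to 0,
\end{equation*}
so $Q\in\overline{\partial}_{+}T_{t,x_u}^t$. Since $Q\in\mathcal{R}$ because $\mathcal{R}$ is closed and each $Q_n\in\mathcal{R}$, we are done. The only real subtlety---and the step that does any actual work---is this diagonal selection; boundedness and non-emptiness are straightforward consequences of well-behavedness and finite-dimensional compactness.
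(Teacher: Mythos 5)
Your proof is correct and follows essentially the same route as the paper's: boundedness from the well-behavedness bound of Proposition~\ref{prop:stochasticprocess:simpleproperties}, non-emptiness via Bolzano--Weierstrass applied to a bounded sequence of finite-difference rate matrices, and closedness via the same diagonal selection of $\Delta_n$ with $\norm{\frac{1}{\Delta_n}(T_{t,x_u}^{t+\Delta_n}-I)-Q_n}<\nicefrac{1}{n}$. The only cosmetic difference is that you invoke compactness of the norm-bounded part of $\mathcal{R}$ where the paper applies Bolzano--Weierstrass and then observes that a limit of rate matrices is a rate matrix, which is the same argument.
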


The following two examples provide this result with some intuition. Example~\ref{exmp:outerderivative} illustrates the validity of the result, while Example~\ref{exmp:emptyouterderivative} shows that the requirement that $P$ must be well-behaved is essential for the result to be true.

\begin{exmp}\label{exmp:outerderivative}
Consider again the well-behaved stochastic process $P\in\wprocesses$ from Example~\ref{exmp:well-behaved-no-deriv} of which, for all $t\in(0,1]$, the transition matrix $T_0^t$ is given by Equation~\eqref{eq:ex4:def}. 
As proved in~\exampleproofref, it holds for this particular process that
\begin{equation*}
\smash{\overline{\partial}}_+T_0^0=\big\{Q_\lambda\colon \lambda\in[\nicefrac{1}{3},\nicefrac{2}{3}]\big\},
\end{equation*}
where, for every $\lambda\in[\nicefrac{1}{3},\nicefrac{2}{3}]$, $Q_\lambda\coloneqq\lambda Q_1+(1-\lambda)Q_2$ as in Example~\ref{exmp:well-behaved-no-deriv}.
\exampleend
\end{exmp}

\begin{exmp}\label{exmp:emptyouterderivative}
Fix any rate matrix $Q\in\mathcal{R}$ such that $\norm{Q}=1$, let $\mathcal{T}$ be the transition matrix system of Example~\ref{exmp:notwellbehavedMarkov}, and consider any stochastic process $P\in\mathbb{P}$ of which the corresponding family of transition matrices $\mathcal{T}_P$ is equal to $\mathcal{T}$.

For now, we simply assume that such a process exists. A formal proof again requires some additional machinery---as in Example~\ref{exmp:well-behaved-no-deriv}---and we therefore postpone it to Example~\ref{exmp:twoexamplesofMarkovchains}, where we construct a continuous-time Markov chain whose family of transition matrices $\mathcal{T}_P$ is equal to the transition matrix system $\mathcal{T}$.

As we prove in~\exampleproofref, for such a stochastic process $P$, the right-sided outer partial derivative $\smash{\overline{\partial}}_+T_0^0$ is then empty.
\exampleend
\end{exmp}

We end this section with two additional properties of the outer partial derivatives of well-behaved stochastic processes. First, as we establish in our next result, they satisfy an $\epsilon-\delta$ expression that is similar to the limit expression of a partial derivative.

\begin{restatable}{proposition}{propouterderivativebehaveslikelimit}
\label{prop:outerderivativebehaveslikelimit}
Consider any well-behaved stochastic process $P\in\wprocesses$. Then, for any $t\in\realsnonneg$, any sequence of time points $u\in\mathcal{U}_{<t}$, any state assignment $x_u\in\states_u$, and any $\epsilon>0$, there is some $\delta>0$ such that, for all $0<\Delta<\delta$:
\begin{equation}
\label{eq:outerderivativebehaveslikelimit1}
(\exists Q\in\overline{\partial}_{+}
{T^t_{t,\,x_u}})
\norm{\frac{1}{\Delta}
(T^{t+\Delta}_{t,\,x_u}-I)-Q}<\epsilon
\end{equation}
and, if $t\neq0$,
\begin{equation}
\label{eq:outerderivativebehaveslikelimit2}
(\exists Q\in\overline{\partial}_{-}
{T^t_{t,\,x_u}})
\norm{\frac{1}{\Delta}
(T^{t}_{t-\Delta,\,x_u}-I)-Q}<\epsilon.
\end{equation}\vspace{-5pt}
\end{restatable}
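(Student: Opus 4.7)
The plan is to prove both Equation~\eqref{eq:outerderivativebehaveslikelimit1} and Equation~\eqref{eq:outerderivativebehaveslikelimit2} by contradiction, using a standard compactness argument. The two cases are structurally identical, so I focus on the right-sided case; the left-sided case follows by replacing $T^{t+\Delta}_{t,\,x_u}$ with $T^{t}_{t-\Delta,\,x_u}$ throughout.

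Suppose the claim fails. Then there is some $\epsilon > 0$ such that, for every $\delta > 0$, one can find $0 < \Delta < \delta$ with
\begin{equation*}
\norm{\tfrac{1}{\Delta}(T^{t+\Delta}_{t,\,x_u}-I)-Q}\geq\epsilon \quad \text{for every } Q\in\overline{\partial}_{+}T^t_{t,\,x_u}.
\end{equation*}
By choosing $\delta = 1/i$ for each $i\in\nats$, I obtain a sequence $\{\Delta_i\}_{i\in\nats}\to0^+$ such that, writing $A_i \coloneqq \tfrac{1}{\Delta_i}(T^{t+\Delta_i}_{t,\,x_u}-I)$, we have $\norm{A_i - Q}\geq\epsilon$ for every $i\in\nats$ and every $Q\in\overline{\partial}_{+}T^t_{t,\,x_u}$.

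Next, I would invoke well-behavedness of $P$: by Proposition~\ref{prop:stochasticprocess:simpleproperties} (specifically Equation~\eqref{eq:def:well-behaved:right:matrix}), $\limsup_{\Delta\to 0^+}\norm{A_i\text{-style expression}} < +\infty$, so after discarding finitely many terms we may assume the sequence $\{\norm{A_i}\}_{i\in\nats}$ is uniformly bounded. Moreover, each $A_i$ is a rate matrix by Proposition~\ref{prop:rate_from_stochastic_matrix}. Since $\{A_i\}_{i\in\nats}$ lives in a bounded subset of the finite-dimensional space of $\abs{\states}\times\abs{\states}$ matrices, the Bolzano--Weierstrass theorem yields a subsequence $\{A_{i_k}\}_{k\in\nats}$ converging to some matrix $Q^*$. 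Because $\mathcal{R}$ is closed (it is defined by the finitely many closed conditions \ref{def:Q:sumzero} and \ref{def:Q:nonnegoffdiagonal}), we have $Q^*\in\mathcal{R}$.

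Now the corresponding subsequence $\{\Delta_{i_k}\}_{k\in\nats}\to 0^+$ witnesses, via the defining expression in Equation~\eqref{eq:rightouterderivative}, that $Q^*\in\overline{\partial}_{+}T^t_{t,\,x_u}$. But then $\norm{A_{i_k} - Q^*} \to 0$ as $k\to\infty$, which contradicts $\norm{A_i - Q}\geq\epsilon$ for every $i$ and every $Q\in\overline{\partial}_{+}T^t_{t,\,x_u}$, taking $Q = Q^*$ and $i = i_k$ for large $k$.

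The main obstacle is simply verifying that the argument uses the hypothesis of well-behavedness in the right place: without it, the sequence $\{A_i\}$ need not be bounded, and Example~\ref{exmp:emptyouterderivative} shows that in that case even the set $\overline{\partial}_{+}T^t_{t,\,x_u}$ can be empty, so the statement would be vacuously false. Everything else is standard compactness in finite dimensions. No additional machinery beyond Propositions~\ref{prop:rate_from_stochastic_matrix}, \ref{prop:stochasticprocess:simpleproperties}, and the definition of $\overline{\partial}_{+}$ is needed.
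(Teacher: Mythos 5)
Your proof is correct and follows essentially the same route as the paper: a contradiction argument producing a sequence of finite differences bounded away from the outer partial derivative set, then using well-behavedness plus Bolzano--Weierstrass (the same facts the paper imports from the proof of Proposition~\ref{prop:boundednon-emptyandclosed}) to extract a limit rate matrix that must itself belong to $\overline{\partial}_{+}T^t_{t,\,x_u}$, yielding the contradiction. The only cosmetic omission is that, since the two one-sided statements each give their own threshold, one should finish by taking $\delta$ to be the minimum of the two, as the paper does.
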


Secondly, these outer partial derivatives are a proper generalisation of directional partial derivatives. In particular, if the latter exist, their values correspond exactly to the single element of the former.

\begin{restatable}{corollary}{coroloutersingleton}
\label{corol:outersingleton}
Consider any $P\in\wprocesses$. Then $\smash{\overline{\partial}_{+}
{T^t_{t,\,x_u}}}$ is a singleton if and only if $\smash{\partial_{+}
{T^t_{t,\,x_u}}}$ exists and, in that case, $\smash{\overline{\partial}_{+}
{T^t_{t,\,x_u}}}=\{\partial_{+}
{T^t_{t,\,x_u}}\}$. Analogous results hold for $\smash{\overline{\partial}_{-}
{T^t_{t,\,x_u}}}$ and $\smash{\partial_{-}
{T^t_{t,\,x_u}}}$, and for $\smash{\overline{\partial}
{T^t_{t,\,x_u}}}$ and $\smash{\partial
{T^t_{t,\,x_u}}}$.
\end{restatable}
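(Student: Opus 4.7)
The plan is to handle the right-sided case in detail, and then observe that the left-sided and two-sided cases follow by symmetry and straightforward bookkeeping. The whole argument is essentially a packaging of Proposition~\ref{prop:outerderivativebehaveslikelimit}, which provides the uniform approximation property that makes a singleton outer derivative behave like a genuine limit.

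For the forward direction of the right-sided claim, I would assume $\partial_{+}T^t_{t,x_u}$ exists, and call its value $Q$. Then for every sequence $\{\Delta_i\}_{i\in\nats}\to 0^+$, we have $\tfrac{1}{\Delta_i}(T^{t+\Delta_i}_{t,x_u}-I)\to Q$ by definition of the directional derivative, which shows both that $Q\in\overline{\partial}_{+}T^t_{t,x_u}$ and that any element of $\overline{\partial}_{+}T^t_{t,x_u}$ (being a limit of some such sequence) must equal $Q$. Hence $\overline{\partial}_{+}T^t_{t,x_u}=\{Q\}=\{\partial_{+}T^t_{t,x_u}\}$.

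For the converse, I would assume $\overline{\partial}_{+}T^t_{t,x_u}=\{Q\}$ for some single $Q\in\mathcal{R}$. Fix $\epsilon>0$. Since $P\in\wprocesses$, Proposition~\ref{prop:outerderivativebehaveslikelimit} yields some $\delta>0$ such that for every $0<\Delta<\delta$ there is $Q'\in\overline{\partial}_{+}T^t_{t,x_u}$ with $\norm{\tfrac{1}{\Delta}(T^{t+\Delta}_{t,x_u}-I)-Q'}<\epsilon$. Since the set is the singleton $\{Q\}$, necessarily $Q'=Q$, so $\norm{\tfrac{1}{\Delta}(T^{t+\Delta}_{t,x_u}-I)-Q}<\epsilon$ for all $0<\Delta<\delta$. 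This is exactly the $\epsilon$--$\delta$ formulation of $\lim_{\Delta\to 0^+}\tfrac{1}{\Delta}(T^{t+\Delta}_{t,x_u}-I)=Q$, so $\partial_{+}T^t_{t,x_u}$ exists and equals $Q$. The left-sided claim follows by the same argument applied to Equation~\eqref{eq:outerderivativebehaveslikelimit2} in place of Equation~\eqref{eq:outerderivativebehaveslikelimit1}.

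For the two-sided result, the case $t=0$ is immediate since both $\overline{\partial}T^t_{t,x_u}$ and $\partial T^t_{t,x_u}$ are defined to coincide with their right-sided counterparts. For $t>0$, I would note that $\overline{\partial}T^t_{t,x_u}=\overline{\partial}_{+}T^t_{t,x_u}\cup\overline{\partial}_{-}T^t_{t,x_u}$ is a singleton if and only if both $\overline{\partial}_{+}T^t_{t,x_u}$ and $\overline{\partial}_{-}T^t_{t,x_u}$ are singletons containing the same element—using Proposition~\ref{prop:boundednon-emptyandclosed} to guarantee that both are non-empty—and that, by the already established one-sided equivalences, this happens exactly when $\partial_{+}T^t_{t,x_u}$ and $\partial_{-}T^t_{t,x_u}$ both exist and take the common value, which is precisely when $\partial T^t_{t,x_u}$ exists. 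The only mild obstacle is this last bit of bookkeeping around the two-sided case, but once Proposition~\ref{prop:outerderivativebehaveslikelimit} and non-emptiness are in hand there is no real content beyond careful statement-chasing.
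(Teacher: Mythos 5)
Your proposal is correct and matches the paper's intended argument: the paper proves this corollary by simply invoking Proposition~\ref{prop:outerderivativebehaveslikelimit}, which is exactly the uniform $\epsilon$--$\delta$ approximation you use for the converse direction, while the forward direction and the two-sided bookkeeping (with non-emptiness from Proposition~\ref{prop:boundednon-emptyandclosed}) are the routine details the paper leaves implicit. No gaps.
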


\section{Continuous-Time Markov Chains}\label{sec:cont_time_markov_chains}

Having introduced continuous-time stochastic processes in Section~\ref{sec:stochastic_processes}, we will in this section focus on a specific class of such processes: \emph{continuous-time Markov chains}.

\begin{definition}[Markov Property, Markov Chain]\label{def:markov_property}
A stochastic process $P\in\processes$ satisfies the \emph{Markov property} if for any $t,s\in\realsnonneg$ such that $t\leq s$, any time sequence $u\in\mathcal{U}_{<t}$, any $x_u\in\states_u$, and any states $x,y\in\states$:
\begin{equation*}
P(X_s=y\,\vert\,X_t=x,X_u=x_u) = P(X_s=y\,\vert\, X_{t}=x)\,.
\end{equation*}
A stochastic process that satisfies this property is called a \emph{Markov chain}. We denote the set of all Markov chains by $\mprocesses$ and use $\wmprocesses$ to refer to the subset that only contains the well-behaved Markov chains.
\end{definition}

We already know from Proposition~\ref{prop:stochasticprocess:simpleproperties} that the transition matrices of a stochastic process---and therefore also, in particular, of a Markov chain---satisfy some simple properties. For the specific case of a Markov chain $P\in\mprocesses$, the family of transition matrices $\mathcal{T}_P$ also satisfies an additional property. In particular, for any $t,r,s\in\realsnonneg$ such that $t\leq r\leq s$, these transition matrices satisfy
\begin{equation}\label{eq:markovintermsofmatrices}
T_t^s = T_t^rT_r^s.
\end{equation}
In this context, this property is known as the \emph{Chapman-Kolmogorov equation} or the semi-group property~\cite{liggett2010continuous}. Indeed, this is the same semi-group property that we defined in Section~\ref{sec:systems} to hold for transition matrix systems $\mathcal{T}$. The following result should therefore not be surprising.

\begin{restatable}{proposition}{propMarkovhassystem}
\label{prop:Markovhassystem}
Consider a Markov chain $P\in\mprocesses$ and let $\mathcal{T}_P$ be the corresponding family of transition matrices. Then $\mathcal{T}_P$ is a transition matrix system. Furthermore, $\mathcal{T}_P$ is well-behaved if and only if $P$ is well-behaved.
\end{restatable}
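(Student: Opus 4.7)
The plan is to verify the two parts of the statement separately, each by reducing to properties that we already have in hand from Proposition~\ref{prop:stochasticprocess:simpleproperties} together with the Markov property.

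For the first part, showing that $\mathcal{T}_P$ is a transition matrix system, I need to check three things: each $T_t^s$ is a transition matrix, $T_t^t = I$, and the semi-group identity $T_t^s = T_t^r T_r^s$ holds for $t \leq r \leq s$. The first two follow immediately from Proposition~\ref{prop:stochasticprocess:simpleproperties}, which was stated for arbitrary stochastic processes. For the semi-group property, I would fix $t \leq r \leq s$ and arbitrary $x, z \in \states$, and expand $T_t^s(x, z) = P(X_s = z \mid X_t = x)$. Using~\ref{def:coh_prob_3} applied to the disjoint decomposition $\bigcup_{y \in \states}(X_s = z, X_r = y)$, then~\ref{def:coh_prob_6} to factor each summand as $P(X_s = z \mid X_r = y, X_t = x)\,P(X_r = y \mid X_t = x)$, and finally the Markov property to drop the $X_t = x$ from the inner conditioning, I obtain
\begin{equation*}
T_t^s(x, z) = \sum_{y \in \states} T_t^r(x, y)\, T_r^s(y, z) = [T_t^r T_r^s](x, z).
\end{equation*}
A minor care is needed to ensure that the conditioning events in~\ref{def:coh_prob_6} are non-empty, which is guaranteed by~\eqref{eq:path_exists_for_finite_points}.

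For the second part, the key observation is that the Markov property makes the history-dependent transition matrices collapse onto their history-free counterparts: for any $u \in \mathcal{U}_{<t}$, any $x_u \in \states_u$, and any $s \geq t$, the Markov property together with Definition~\ref{def:trans_matrix} gives $T_{t, x_u}^s = T_t^s$. For the left-sided setting, given $u < t$, choose $\Delta < t - \max u$ so that $u < t - \Delta$; then again $T_{t-\Delta, x_u}^t = T_{t-\Delta}^t$. This identification reduces each history-dependent limsup appearing in Proposition~\ref{prop:stochasticprocess:simpleproperties} to the history-free limsup appearing in Definition~\ref{def:well_behaved_trans_mat_system}.

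The equivalence then follows almost tautologically. If $P$ is well-behaved, then the limsup conditions~\eqref{eq:def:well-behaved:right:matrix} and~\eqref{eq:def:well-behaved:left:matrix} of Proposition~\ref{prop:stochasticprocess:simpleproperties} hold in particular for $u = \emptyset$, and these are exactly the conditions defining well-behavedness of $\mathcal{T}_P$ as a system. Conversely, if $\mathcal{T}_P$ is well-behaved, then for any non-empty history $u, x_u$ with $u < t$, the above Markov-property identification yields $T_{t, x_u}^{t+\Delta} = T_t^{t+\Delta}$ for all $\Delta > 0$ and $T_{t-\Delta, x_u}^t = T_{t-\Delta}^t$ for all sufficiently small $\Delta > 0$, so the relevant history-dependent limsups equal the history-free limsups, which are finite by assumption.

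The only genuine obstacle, and it is a mild one, is the bookkeeping in the left-sided case: I must be sure that restricting $\Delta$ to lie below $t - \max u$ does not affect the value of the $\limsup$ as $\Delta \to 0^+$. This is clear because $\limsup$ depends only on the behaviour of the expression on an arbitrarily small right neighbourhood of $0$, and $t - \max u > 0$ since $u < t$ and $u$ is finite.
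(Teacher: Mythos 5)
Your proof is correct and follows essentially the same route as the paper's: the semi-group identity is obtained via \ref{def:coh_prob_3}, \ref{def:coh_prob_6} and the Markov property over an intermediate time point, and the well-behavedness equivalence via the collapse $T_{t,x_u}^{t+\Delta}=T_t^{t+\Delta}$ and $T_{t-\Delta,x_u}^{t}=T_{t-\Delta}^{t}$ combined with Proposition~\ref{prop:stochasticprocess:simpleproperties}. The only small point is that for $t=r$ (or $r=s$) some conditioning events $(X_r=y,X_t=x)$ are empty, so Equation~\eqref{eq:path_exists_for_finite_points} does not cover them; these degenerate cases should simply be dispatched separately using $T_t^t=I$, as the paper does.
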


At this point we know that every (well-behaved) Markov chain has a corresponding (well-behaved) transition matrix system. Our next result establishes that the converse is true as well: every (well-behaved) transition matrix system has a corresponding (well-behaved) Markov chain, and for a given initial distribution, this Markov chain is even unique.

\begin{restatable}{theorem}{theouniqueMarkovchain}
\label{theo:uniqueMarkovchain}
Let $p$ be any probability mass function on $\states$ and let $\mathcal{T}$ be a transition matrix system. Then there is a unique Markov chain $P\in\mprocesses$ such that $\mathcal{T}_P=\mathcal{T}$ and, for all $y\in\states$, $P(X_0=y)=p(y)$. Furthermore, $P$ is well-behaved if and only if $\mathcal{T}$ is well-behaved.
\end{restatable}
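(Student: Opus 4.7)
The plan is to establish existence by explicitly constructing a Markov chain from the data $(p,\mathcal{T})$, obtain uniqueness from the Markov factorisation, and then deduce well-behavedness from Proposition~\ref{prop:Markovhassystem}.

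For existence, I would first define a candidate set-function on the joint cylinder events. Concretely, for every $u\in\mathcal{U}_\emptyset$ with $u=t_0,\ldots,t_n$ (ordered) and every $x_u\in\states_u$, set
\begin{equation*}
\tilde{P}(X_u=x_u)\coloneqq p(x_{t_0})\prod_{i=1}^{n}T_{t_{i-1}}^{t_i}(x_{t_{i-1}},x_{t_i}),
\end{equation*}
where the $T_{t_{i-1}}^{t_i}$ are the transition matrices from $\mathcal{T}$, and set $\tilde{P}(\Omega)\coloneqq 1$. The semi-group property $T_t^s=T_t^r T_r^s$ together with $T_t^t=I$ makes these assignments consistent under inserting or deleting time points, so they induce, via summation over unspecified coordinates, a well-defined non-negative, finitely additive, normalised set-function on the algebra $\mathcal{A}_\emptyset$ of all finite cylinders. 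From this, I would then read off conditional probabilities through the multiplicative rule~\ref{def:coh_prob_6} whenever the conditioning event is a cylinder $(X_u=x_u)$ with $\tilde{P}(X_u=x_u)>0$; for cylinders that have $\tilde{P}$-probability zero, the same product formula can still be used to \emph{define} the conditional probability directly, since it only involves matrix entries and not the zero-weight prefix.

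The crucial step is then to verify that the resulting map $P$ on $\mathcal{C}^{\mathrm{SP}}$ is a coherent conditional probability. I expect this to be the main obstacle. I would handle it by explicitly exhibiting, for any finite sub-collection $\{(A_i,X_{u_i}=x_{u_i})\}_{i=1}^n$ appearing in Definition~\ref{def:coherence}, a path $\omega\in C_0$ that realises the required non-negativity, using Equation~\eqref{eq:path_exists_for_finite_points} to guarantee such paths exist over the (finite) union $u\coloneqq\bigcup_i u_i$. Equivalently, one may short-circuit the combinatorics by noting that on $\mathcal{A}_u$ (for each fixed $u$) the restriction of $P(\cdot\vert X_v=x_v)$ is an honest finitely-additive probability measure---because it is built by rescaling the product formula above---and then appealing to Theorem~\ref{theo:largerdomain} iteratively to glue these compatible pieces into a coherent conditional probability on the whole of $\mathcal{C}^{\mathrm{SP}}$. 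Either approach gives a stochastic process $P\in\processes$, and direct inspection of the definition shows both the Markov property (so $P\in\mprocesses$) and $\mathcal{T}_P=\mathcal{T}$, and $P(X_0=y)=p(y)$.

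For uniqueness, suppose $P'\in\mprocesses$ also satisfies the stated conditions. By the Markov property and repeated application of~\ref{def:coh_prob_6}, for every $u=t_0,\ldots,t_n$ and every $x_u\in\states_u$,
\begin{equation*}
P'(X_u=x_u)=P'(X_{t_0}=x_{t_0})\prod_{i=1}^{n}P'(X_{t_i}=x_{t_i}\vert X_{t_{i-1}}=x_{t_{i-1}}),
\end{equation*}
and the factors on the right are fixed by $P'(X_0=\cdot)=p$ and $\mathcal{T}_{P'}=\mathcal{T}$ (the first factor by applying the semi-group/transition-matrix identity from $0$ to $t_0$). Thus $P'$ coincides with $P$ on all joint cylinders, and then on every conditional event of $\mathcal{C}^{\mathrm{SP}}$ by the multiplicative axiom, whenever the conditioning cylinder has positive probability; when it has probability zero, coherence on $\mathcal{C}^{\mathrm{SP}}$ combined with the fact that elements of $\mathcal{A}_u$ depend on only finitely many time points forces $P'(A\vert X_u=x_u)$ to be the value prescribed by the same product formula (this is exactly where restricting the domain to $\mathcal{C}^{\mathrm{SP}}$, as noted in the paper, delivers uniqueness).

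Finally, the well-behavedness equivalence is immediate from Proposition~\ref{prop:Markovhassystem}, which states that $P\in\mprocesses$ is well-behaved iff $\mathcal{T}_P$ is, together with $\mathcal{T}_P=\mathcal{T}$. So $P\in\wmprocesses$ iff $\mathcal{T}$ is well-behaved, completing the proof.
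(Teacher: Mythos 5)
Your overall architecture is the same as the paper's---specify the conditional probabilities directly, verify coherence, extend with Theorem~\ref{theo:largerdomain}, and get the well-behavedness equivalence from Proposition~\ref{prop:Markovhassystem} (that last step is exactly the paper's argument and is fine)---but the two load-bearing steps are not actually secured. First, a repairable slip: your product formula sets $\tilde{P}(X_u=x_u)=p(x_{t_0})\prod_{i=1}^{n}T_{t_{i-1}}^{t_i}(x_{t_{i-1}},x_{t_i})$ with $t_0$ the \emph{earliest point of $u$}, which is only correct when $t_0=0$; for $0\notin u$ the marginal at $t_0$ must be $\sum_{x_0\in\states}p(x_0)T_0^{t_0}(x_0,x_{t_0})$, and with your formula the claimed consistency fails precisely when the earliest time point is deleted (your own uniqueness computation uses the correct marginal, so this is a slip, but the consistency claim as written is false). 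Second, and more seriously, the coherence verification---the crux of the whole existence proof---is asserted rather than carried out, and your proposed shortcut is invalid: Theorem~\ref{theo:largerdomain} takes as input a map that is \emph{already} coherent on its entire domain; it cannot be used to glue separately specified conditional measures $P(\cdot\vert X_v=x_v)$, each finitely additive on its own $\mathcal{A}_v$, into a coherent whole. Example~\ref{exmp:F1F4cannotbeextended} is exactly the warning that piecewise satisfaction of the probability axioms does not imply coherence. Your first route (exhibiting a witnessing path in Definition~\ref{def:coherence}) is the right one and is what the paper does, but the content lies in \emph{how} the path is chosen: the paper's Lemma~\ref{lemma:simplechaincoherence} argues by induction over the time grid and picks the state at the last time point so as to minimise the aggregated stakes, exploiting that each one-step conditional row is a probability mass function; merely invoking Equation~\eqref{eq:path_exists_for_finite_points} to say that \emph{some} path exists does not yield the required non-negativity. (The paper additionally needs Lemma~\ref{lemma:simplechainextend} to recover the skip-level conditionals $T_{\max u}^s$ from the grid specification.)

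The uniqueness argument also has a genuine gap. Going through unconditional cylinder probabilities and \ref{def:coh_prob_6} determines $P'(A\vert X_u=x_u)$ only when $P'(X_u=x_u)>0$, and your claim that coherence then ``forces'' the value on null cylinders is false: coherent conditional probabilities are in general \emph{not} determined on null conditioning events by their unconditional values---that is precisely why the paper works with conditionals as primitive objects throughout. What does pin the values down is that $P'$ is a Markov chain with $\mathcal{T}_{P'}=\mathcal{T}$: these hypotheses fix every one-step-ahead conditional $P'(X_{v_i}=z_{v_i}\vert X_u=x_u,X_{v_1}=z_{v_1},\dots,X_{v_{i-1}}=z_{v_{i-1}})=T_{v_{i-1}}^{v_i}(z_{v_{i-1}},z_{v_i})$ irrespective of any probabilities vanishing, and \ref{def:coh_prob_3} together with \ref{def:coh_prob_6} then factorises $P'(A\vert X_u=x_u)$, for each $A\in\mathcal{A}_u$, into such factors (plus a sum over $X_0$ when $u=\emptyset$); this is the paper's Lemma~\ref{lemma:samepandTissameP}. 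With that repair---arguing at the level of conditionals rather than detouring through unconditional probabilities and Bayes---your uniqueness conclusion, and hence the theorem, goes through.
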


Hence, Markov chains---and well-behaved Markov chains in particular---are completely characterised by their transition matrices and their initial distribution. Our next example uses this result to formally establish the existence of the Markov chains that were used in Examples~\ref{exmp:well-behaved-no-deriv} and~\ref{exmp:emptyouterderivative}. Furthermore, it also illustrates that not every Markov chain is well-behaved.

\begin{exmp}\label{exmp:twoexamplesofMarkovchains}
For any transition matrix system~$\mathcal{T}$, it follows from Theorem~\ref{theo:uniqueMarkovchain}---with $p$ chosen arbitrarily---that there exists a continuous-time Markov chain $P\in\mprocesses\subseteq\processes$ such that $\mathcal{T}_P=\mathcal{T}$ and, furthermore, that $P$ is well-behaved if and only if $\mathcal{T}$ is well-behaved.

For example, for any rate matrix $Q\in\mathcal{R}$ such that $\norm{Q}=1$, if we let $\mathcal{T}$ be the transition matrix system of Example~\ref{exmp:notwellbehavedMarkov}, we find---as already claimed in Example~\ref{exmp:emptyouterderivative}---that there is a continuous-time Markov chain $P\in\mprocesses\subseteq\processes$ such that $\mathcal{T}_P=\mathcal{T}$. Furthermore, since we know from Example~\ref{exmp:notwellbehavedMarkov} that $\mathcal{T}$ is not well-behaved, it follows that $P$ is not well-behaved either.

As another example, for any two commuting rate matrices $Q_1,Q_2\in\mathcal{R}$, if we let $\mathcal{T}$ be the well-behaved transition matrix system of Example~\ref{exmp:limit_trans_mat_system_matrices}, we find---as already claimed in Example~\ref{exmp:well-behaved-no-deriv}---that there is a well-behaved continuous-time Markov chain $P\in\wmprocesses\subseteq\mprocesses\subseteq\processes$ such that, for all $t\in(0,1]$, the transition matrix $T_0^t$ is given by Equation~\eqref{eq:ex4:def} in Example~\ref{exmp:limit_trans_mat_system_matrices}.
\exampleend
\end{exmp}

As a final note, observe that not only does the Markov property simplify the conditional probabilities of a Markov chain, it also simplifies its dynamics. In particular, for any $t\in\realsnonneg$, any $u\in\mathcal{U}_{<t}$, and any $x_u\in\states_u$, it holds that $\smash{\overline{\partial}_-}T_{t,\,x_u}^t=\smash{\overline{\partial}_-}T_{t}^t$, $\smash{\overline{\partial}_+}T_{t,\,x_u}^t=\smash{\overline{\partial}_+}T_{t}^t$ and $\smash{\overline{\partial}}T_{t,\,x_u}^t=\smash{\overline{\partial}}T_{t}^t$. We now focus on a number of special cases.

\subsection{Homogeneous Markov chains}\label{sec:homogen_markov_chain}

\begin{definition}[Homogeneous Markov chain]\label{def:homogeneousMarkov}
A Markov chain $P\in\mprocesses$ is called \emph{time-homogeneous}, or simply \emph{homogeneous}, if its transition matrices $T_t^s$ do not depend on the absolute value of $t$ and $s$, but only on the time-difference $s-t$:
\begin{equation}\label{eq:homogeneousMarkov}
T_t^s=T_0^{s-t}
\text{~~for all $t,s\in\realsnonneg$ such that $t\leq s$.}
\end{equation}
We denote the set of all homogeneous Markov chains by $\hmprocesses$ and use $\whmprocesses$ to refer to the subset that consists of the well-behaved homogeneous Markov chains.
\end{definition}

Recall now from Section~\ref{sec:systems} the exponential transition matrix system $\mathcal{T}_Q$ corresponding to some $Q\in\mathcal{R}$. As we have seen, the transition matrices of such a system were defined by $T_t^s = e^{Q(s-t)}$. This family $\mathcal{T}_Q$ therefore clearly satisfies Equation~\eqref{eq:homogeneousMarkov}. Furthermore, by Proposition~\ref{prop:systemQ}, $\mathcal{T}_Q$ is well-behaved. Hence, we have the following result.

\begin{restatable}{corollary}{corratehasuniquehomogenmarkovprocess}
\label{cor:rate_has_unique_homogen_markov_process}
Consider any rate matrix $Q\in\mathcal{R}$ and let $p$ be an arbitrary probability mass function on $\states$. Then there is a unique Markov chain $P\in\mprocesses$ such that $\mathcal{T}_P=\mathcal{T}_Q$ and, for all $y\in\mathcal{X}$, $P(X_0=y)=p(y)$. Furthermore, this unique Markov chain is well-behaved and homogeneous.
\end{restatable}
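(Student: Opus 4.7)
The plan is essentially to package together several results that have already been proved in the excerpt, namely Proposition~\ref{prop:systemQ} and Theorem~\ref{theo:uniqueMarkovchain}, and then verify the homogeneity claim directly from the definition of $\mathcal{T}_Q$.

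First, I would observe that by Proposition~\ref{prop:systemQ}, $\mathcal{T}_Q$ is a well-behaved transition matrix system. Hence Theorem~\ref{theo:uniqueMarkovchain}, applied with this transition matrix system and the given initial probability mass function $p$, immediately yields a unique Markov chain $P\in\mprocesses$ such that $\mathcal{T}_P=\mathcal{T}_Q$ and $P(X_0=y)=p(y)$ for every $y\in\states$. The ``furthermore'' clause of that theorem simultaneously gives that $P$ is well-behaved, since $\mathcal{T}_Q$ is.

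It remains to verify homogeneity. For any $t,s\in\realsnonneg$ with $t\leq s$, the transition matrix of $P$ at $(t,s)$ is, by $\mathcal{T}_P=\mathcal{T}_Q$ and Definition~\ref{def:systemfromQ}, given by
\begin{equation*}
T_t^s = e^{Q(s-t)} = e^{Q((s-t)-0)} = T_0^{s-t}\,,
\end{equation*}
which is precisely the homogeneity condition in Equation~\eqref{eq:homogeneousMarkov}. Hence $P\in\whmprocesses$, as required.

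There is no real obstacle here: the heavy lifting has been done earlier. The existence and uniqueness are supplied by Theorem~\ref{theo:uniqueMarkovchain}, well-behavedness of $\mathcal{T}_Q$ by Proposition~\ref{prop:systemQ}, and homogeneity is an immediate algebraic consequence of the exponential form of the transition matrices in $\mathcal{T}_Q$. The corollary is thus really just an invocation of Theorem~\ref{theo:uniqueMarkovchain} in the special case where the transition matrix system happens to be an exponential one.
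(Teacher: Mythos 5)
Your proof is correct and follows essentially the same route as the paper's own: invoke Proposition~\ref{prop:systemQ} to get well-behavedness of $\mathcal{T}_Q$, apply Theorem~\ref{theo:uniqueMarkovchain} for existence, uniqueness and well-behavedness of $P$, and note that homogeneity follows immediately from the exponential form $T_t^s=e^{Q(s-t)}=T_0^{s-t}$ in Definition~\ref{def:systemfromQ} together with Definition~\ref{def:homogeneousMarkov}. No gaps.
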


Our next result strengthens this connection between well-behaved homogeneous Markov chains and exponential transition matrix systems.

\begin{restatable}{theorem}{theohomogeneoushasQ}
\label{theo:homogeneoushasQ}
For any well-behaved homogeneous Markov chain $P\in\whmprocesses$, there is a unique rate matrix $Q\in\mathcal{R}$ such that $\mathcal{T}_P=\mathcal{T}_Q$.\footnote{Although our proof for this result starts from scratch, this result is essentially well known. Our version of it should be regarded as a (re)formulation that is adapted to our terminology and notation and, in particular, to our use of coherent and/or full conditional probabilities.}
\end{restatable}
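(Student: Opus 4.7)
The plan is to reduce everything to a single object, namely $\phi(t) \coloneqq T_0^t$, and then extract the rate matrix $Q$ as the natural ``logarithmic derivative'' of this matrix-valued semigroup. Concretely, by Proposition~\ref{prop:Markovhassystem} the family $\mathcal{T}_P$ is a well-behaved transition matrix system, and by homogeneity together with the semi-group property we get $\phi(t+s)=\phi(t)\phi(s)$ for all $t,s\in\realsnonneg$, with $\phi(0)=I$. In particular, for every $t\in\realsnonneg$ and every $u\in\mathcal{U}_{<t}$ we have $T_{t,x_u}^{t+\Delta}=\phi(\Delta)$ for small $\Delta>0$ (using the Markov property on top of homogeneity), so the outer partial derivatives at every time point reduce to $\overline{\partial}_+T_0^0$. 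By Proposition~\ref{prop:boundednon-emptyandclosed} this set is non-empty, bounded and closed, so I may pick some $Q\in\overline{\partial}_+T_0^0\subseteq\mathcal{R}$; by definition there is a sequence $\{\Delta_i\}_{i\in\nats}\to 0^+$ with $\nicefrac{1}{\Delta_i}(\phi(\Delta_i)-I)\to Q$.

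The bulk of the argument is to show that this particular choice satisfies $\phi(t)=e^{Qt}$ for all $t\in\realsnonneg$. First I would record that $\phi$ is continuous on $\realsnonneg$: for $\Delta>0$ small, the semigroup identity gives $\phi(t+\Delta)-\phi(t)=\phi(t)(\phi(\Delta)-I)$ and analogously $\phi(t)-\phi(t-\Delta)=\phi(t-\Delta)(\phi(\Delta)-I)$, so using $\norm{\phi(t)}=1$ for transition matrices (by Equation~\eqref{eq:normofmatrix} and \ref{def:T:sumone}--\ref{def:T:nonneg}) together with the well-behavedness bound $\limsup_{\Delta\to 0^+}\nicefrac{1}{\Delta}\norm{\phi(\Delta)-I}<+\infty$, one obtains the required continuity. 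Next, fix $t>0$ and set $n_i\coloneqq \lfloor t/\Delta_i\rfloor$, so that $n_i\Delta_i\to t$. Writing $\phi(\Delta_i)=I+\Delta_i(Q+R_i)$ with $R_i\to 0$, the semi-group property yields
\begin{equation*}
\phi(n_i\Delta_i)=\phi(\Delta_i)^{n_i}=\bigl(I+\Delta_i(Q+R_i)\bigr)^{n_i}.
\end{equation*}
Using the standard bound $\norm{e^{A}-(I+A)}\leq \tfrac{1}{2}\norm{A}^2 e^{\norm{A}}$ together with a telescoping estimate for $\norm{(I+A)^{n}-(e^{A})^{n}}$, I would show that this expression converges to $e^{tQ}$, while by continuity the left-hand side converges to $\phi(t)$. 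This proves $\phi(t)=e^{Qt}$ and hence $\mathcal{T}_P=\mathcal{T}_Q$, which is existence.

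For uniqueness, suppose $Q_1,Q_2\in\mathcal{R}$ both satisfy $\mathcal{T}_P=\mathcal{T}_{Q_i}$. Then $e^{tQ_1}=e^{tQ_2}$ for every $t\in\realsnonneg$, and dividing $e^{tQ_1}-e^{tQ_2}=0$ by $t$ and letting $t\to 0^+$ (which is valid since $t\mapsto e^{tQ_i}$ is differentiable at $0$ with derivative $Q_i$, as can be read off from the power series) yields $Q_1=Q_2$. Equivalently, one can note that both $Q_1$ and $Q_2$ must lie in $\overline{\partial}_+T_0^0$, and the first part of the argument shows that any element of this set gives rise to the same semigroup via the exponential map, forcing that set to be a singleton.

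The main obstacle I expect is the approximation in the middle paragraph: well-behavedness only yields a $\limsup$ bound rather than a genuine derivative, so I cannot immediately write $\phi(\Delta_i)=I+\Delta_i Q+o(\Delta_i)$ in the usual sense. The delicate point is to control $\bigl(I+\Delta_i(Q+R_i)\bigr)^{n_i}-e^{n_i\Delta_i(Q+R_i)}$ uniformly in $i$; concretely, one needs $n_i\norm{\Delta_i(Q+R_i)}^2\to 0$ along with the uniform bound $(1+\norm{\Delta_i(Q+R_i)})^{n_i}\leq e^{(n_i\Delta_i)\norm{Q+R_i}}\leq e^{(t+1)(\norm{Q}+1)}$. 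Once both estimates are in place the conclusion follows quickly, and the uniqueness argument is then essentially free.
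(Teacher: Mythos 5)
Your proof is correct, and its skeleton coincides with the paper's: pick some $Q$ in the non-empty set $\smash{\overline{\partial}_+T_0^0}$ (Proposition~\ref{prop:boundednon-emptyandclosed}), show that $T_0^c=e^{Qc}$ by comparing powers of $T_0^{\Delta}$ with powers of $e^{Q\Delta}$, and obtain uniqueness from the derivative of the matrix exponential at zero (Lemma~\ref{lemma:deriv_exponential_trans}). Where you genuinely deviate is in how the leftover piece of $[0,t]$ is handled. The paper fixes a single sufficiently small $\Delta$, writes $T_0^c=(T_0^\Delta)^nT_0^d$ with $d=c-n\Delta<\Delta$, and controls the tail factor $T_0^d$ via the $\epsilon$--$\delta$ characterisation of the outer derivative (Proposition~\ref{prop:outerderivativebehaveslikelimit}), which supplies an auxiliary $Q^*\in\smash{\overline{\partial}_+T_0^0}$; it never needs continuity of $t\mapsto T_0^t$. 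You instead work only with the sequence $\{\Delta_i\}_{i\in\nats}\to0^+$ from the definition of the outer derivative, take $n_i=\lfloor t/\Delta_i\rfloor$ so that $n_i\Delta_i\to t$, and absorb the gap $t-n_i\Delta_i$ by first proving that $\phi(t)=T_0^t$ is (locally Lipschitz) continuous---a short but genuine extra lemma, which does follow as you sketch from well-behavedness at $0$, homogeneity, the semigroup identity and $\norm{\phi(t)}=1$, and which you should state and prove explicitly since the paper nowhere does. Both ways of closing the estimate are sound; yours buys freedom from Proposition~\ref{prop:outerderivativebehaveslikelimit} at the price of that continuity lemma. Finally, the ``main obstacle'' you flag is less delicate than you fear: since $\nicefrac{1}{\Delta_i}(T_0^{\Delta_i}-I)$ is a rate matrix (Proposition~\ref{prop:rate_from_stochastic_matrix}), both $I+A_i=T_0^{\Delta_i}$ and $e^{A_i}$ are transition matrices, so Lemma~\ref{lemma:recursive} combined with Lemma~\ref{lemma:linearpartofexponential} gives $\norm{(I+A_i)^{n_i}-e^{n_iA_i}}\le n_i\norm{A_i}^2\le t\,\Delta_i\norm{Q+R_i}^2\to0$ directly, with no exponential growth factor to control, and $e^{n_iA_i}\to e^{tQ}$ then follows from Lemma~\ref{lemma:differencebetweenexponentials} together with continuity of $s\mapsto e^{sQ}$.
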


Hence, any well-behaved homogeneous Markov chain $P\in\whmprocesses$ is completely characterised by its initial distribution and a rate matrix $Q\in\mathcal{R}$. We will denote this rate matrix by $Q_P$.

The dynamic behaviour of well-behaved homogeneous Markov chains is furthermore particularly easy to describe, as shown by the next result.
\begin{restatable}{proposition}{propQissingletonderivforhomogen}
\label{prop:Q_is_singleton_deriv_for_homogen}
Consider any well-behaved homogeneous Markov chain $P\in\whmprocesses$ and let $Q_P\in\mathcal{R}$ be its corresponding rate matrix. Then $\partial T_t^t=\partial_+T_t^t=\partial_-T_t^t=Q_P$ and $\smash{\overline{\partial}}T_t^t=\smash{\overline{\partial}_+}T_t^t=\smash{\overline{\partial}_-}T_t^t=\{Q_P\}$.
\end{restatable}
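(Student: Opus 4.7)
The plan is to reduce everything to the classical fact that the derivative of the matrix exponential $e^{Q\Delta}$ at $\Delta = 0$ equals $Q$, and then invoke Corollary~\ref{corol:outersingleton} to upgrade this to a statement about outer partial derivatives.

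First, I would apply Theorem~\ref{theo:homogeneoushasQ} to $P \in \whmprocesses$ in order to obtain a unique $Q_P \in \mathcal{R}$ with $\mathcal{T}_P = \mathcal{T}_{Q_P}$; that is, $T_t^s = e^{Q_P(s-t)}$ for all $t \leq s$. In particular, for every $t \in \realsnonneg$ and every $\Delta > 0$ (with $\Delta \leq t$ in the left-sided case) we have $T_t^{t+\Delta} = e^{Q_P \Delta}$ and $T_{t-\Delta}^{t} = e^{Q_P \Delta}$, both independent of $t$.

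Next, I would compute the directional partial derivatives. Using any standard series representation of the matrix exponential, $e^{Q_P\Delta} = I + Q_P \Delta + R(\Delta)$, where the remainder satisfies
\begin{equation*}
\norm{R(\Delta)} \;=\; \norm{\sum_{k=2}^{\infty}\frac{(Q_P\Delta)^k}{k!}} \;\leq\; \sum_{k=2}^{\infty}\frac{(\norm{Q_P}\Delta)^k}{k!} \;=\; e^{\norm{Q_P}\Delta} - 1 - \norm{Q_P}\Delta.
\end{equation*}
Hence $\tfrac{1}{\Delta}\norm{R(\Delta)} \to 0$ as $\Delta \to 0^+$, which gives
\begin{equation*}
\lim_{\Delta \to 0^+} \frac{1}{\Delta}\bigl(T_t^{t+\Delta} - I\bigr) \;=\; Q_P \;=\; \lim_{\Delta \to 0^+} \frac{1}{\Delta}\bigl(T_{t-\Delta}^{t} - I\bigr),
\end{equation*}
so by Definition~\ref{def:direc_partial_deriv} we have $\partial_+ T_t^t = \partial_- T_t^t = Q_P$, and therefore also $\partial T_t^t = Q_P$ (with the boundary convention at $t=0$).

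Finally, since the one-sided directional partial derivatives exist and both equal $Q_P$, Corollary~\ref{corol:outersingleton} immediately yields $\overline{\partial}_+ T_t^t = \overline{\partial}_- T_t^t = \{Q_P\}$, and the union in Definition~\ref{def:outerpartialderivatives} then gives $\overline{\partial} T_t^t = \{Q_P\}$. The argument is essentially routine; the only step requiring mild care is the remainder estimate for the matrix exponential, which is clean once we use submultiplicativity of the operator norm (Proposition~\ref{prop:norm_properties}, \ref{N:normAB}) together with any standard definition of $e^{Q_P\Delta}$ as a norm-convergent power series. The homogeneity inherited from Theorem~\ref{theo:homogeneoushasQ} is what lets us drop the dependence on $t$ and reduce both one-sided derivatives to the same limit in $\Delta$.
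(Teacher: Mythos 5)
Your proof is correct and follows essentially the same route as the paper: both rest on $T_t^{t+\Delta}=T_{t-\Delta}^t=e^{Q_P\Delta}$ (via Theorem~\ref{theo:homogeneoushasQ} and homogeneity), the fact that the derivative of the matrix exponential at $\Delta=0$ equals $Q_P$, and Corollary~\ref{corol:outersingleton} to pass to the outer partial derivatives. The only cosmetic difference is that you reprove the exponential derivative by a power-series remainder estimate, whereas the paper simply cites this as Lemma~\ref{lemma:deriv_exponential_trans}.
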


\subsection{Non-homogeneous Markov chains}\label{sec:nonhomogen_markov}

In contrast to homogeneous Markov chains, a Markov chain for which Equation~\eqref{eq:homogeneousMarkov} does not hold is called---rather obviously---\emph{non-homogeneous}. While we know from Theorem~\ref{theo:homogeneoushasQ} that well-behaved homogeneous Markov chains can be characterised (up to an initial distribution) by a fixed rate matrix $Q\in\mathcal{R}$, this is not the case for well-behaved non-homogeneous Markov chains. 

Instead, such systems are typically described by a function $Q_t$ that gives for each time point $t\in\realsnonneg$ a rate matrix $Q_t\in\mathcal{R}$. For any such function $Q_t$, the existence and uniqueness of a corresponding non-homogeneous Markov chain then depend on the specific properties of $Q_t$. Rather than attempt to treat all these different cases here, we instead refer to some examples from the literature. 

Typically, some kind of continuity of $Q_t$ in terms of $t$ is assumed. The specifics of these assumptions may then depend on the intended generality of the results, computational considerations, the domain of application, and so forth. For example, Reference~\cite{aalen1978empirical} assumes that $Q_t$ is left-continuous and has bounded right-hand limits. As a stronger restriction, Reference~\cite{johnson1989nonhomogeneous} uses a collection $Q_1,\ldots,Q_n$ of commuting rate matrices, and defines $Q_t$ as a weighted linear combination of these component rate matrices wherein the weights vary continuously with $t$. In Reference~\cite{rindos1995exact}, a right-continuous and piecewise-constant $Q_t$ is used, meaning that $Q_t$ takes different values on various (half-open) intervals of $\realsnonneg$, but fixed values within those intervals.

This idea of using a time-dependent rate matrix $Q_t$ has the advantage of being rather intuitive, but it is rather difficult to formalise. Essentially, the problem with this approach is that it does not allow us to distinguish between left and right derivatives. Intuitively, $Q_t$ is supposed to be `the' derivative. However, this is impossible if $Q_t$ is discontinuous---for example in the piecewise constant case. Therefore, in our present work, instead of using a function $Q_t$, we will characterise non-homogeneous Markov chains by means of their transition matrix system and their initial distribution, making use of the results in Proposition~\ref{prop:Markovhassystem} and Theorem~\ref{theo:uniqueMarkovchain}.

One technique for constructing transition matrix systems that is particularly important for our work, and especially in our proofs, is to combine restrictions of exponential transition matrix systems to form a new transition matrix system that is, loosely speaking, piecewise constant. Example~\ref{exmp:combinetwoexponentials} provided a simple illustration of this technique. More generally, these transition matrix systems will be of the form
\begin{equation}\label{eq:nonhomogen_in_process_set_system_composition}
\mathcal{T}_{Q_0}^{[0,t_0]}\otimes \mathcal{T}_{Q_1}^{[t_0,t_1]} \otimes \cdots \otimes \mathcal{T}_{Q_n}^{[t_{n-1},t_n]} \otimes \mathcal{T}_{Q_{n+1}}^{[t_n,\infty)}.
\vspace{4pt}
\end{equation}
For example, the transition matrix systems $\mathcal{T}_i$, $i\in\nats_0$, that we defined in Equation~\eqref{eq:def:sequenceinexample3} are all of this form.
As we know from Propositions~\ref{prop:systemQ} and~\ref{prop:concat_restr_trans_mat_systems_is_system}, transition matrix systems that are of this form are always well-behaved. The following result is therefore a trivial consequence of Theorem~\ref{theo:uniqueMarkovchain}.

\begin{restatable}{proposition}{propfinitedifferentratematrixhasprocess}
\label{prop:finite_different_rate_matrix_has_process}
Let $p$ be an arbitrary probability mass function on $\states$, let $u=t_0,\ldots,t_n$ be a finite sequence of time points in $\mathcal{U}_\emptyset$, and let $Q_0,\ldots,Q_{n+1}\in\rateset$ be a collection of rate matrices. Then there is a well-behaved continuous-time Markov chain $P\in\wmprocesses$ such that $P(X_0=y)=p(y)$ for all $y\in\mathcal{X}$ and such that $\mathcal{T}_P$ is given by Equation~\eqref{eq:nonhomogen_in_process_set_system_composition}.
\end{restatable}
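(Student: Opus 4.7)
The plan is straightforward, as the author hints: the statement follows essentially by combining Proposition~\ref{prop:systemQ}, Proposition~\ref{prop:concat_restr_trans_mat_systems_is_system}, and Theorem~\ref{theo:uniqueMarkovchain}. I would first construct a transition matrix system $\mathcal{T}$ corresponding to Equation~\eqref{eq:nonhomogen_in_process_set_system_composition}, then verify it is well-behaved, and finally invoke Theorem~\ref{theo:uniqueMarkovchain} to obtain the Markov chain $P$.

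More precisely, for each $i\in\{0,1,\ldots,n+1\}$, Proposition~\ref{prop:systemQ} guarantees that the exponential transition matrix system $\mathcal{T}_{Q_i}$ is a well-behaved transition matrix system. By definition of well-behavedness for restricted transition matrix systems (i.e.\ being the restriction of a well-behaved transition matrix system), each restriction $\mathcal{T}_{Q_0}^{[0,t_0]}$, $\mathcal{T}_{Q_i}^{[t_{i-1},t_i]}$ for $i\in\{1,\ldots,n\}$, and $\mathcal{T}_{Q_{n+1}}^{[t_n,+\infty)}$ is then a well-behaved restricted transition matrix system on its respective (non-empty, closed, connected) interval. Here I use that the $t_i$ are strictly increasing, since $u\in\mathcal{U}_\emptyset$ is non-degenerate, so each interval is well-defined and the endpoint matching conditions of Definition~\ref{def:concatenation_system} are satisfied between consecutive factors.

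Next, I would apply Proposition~\ref{prop:concat_restr_trans_mat_systems_is_system} iteratively $n+1$ times: at each step, the concatenation of two well-behaved restricted transition matrix systems whose intervals meet at a single point yields a well-behaved restricted transition matrix system on the union. The result is a well-behaved restricted transition matrix system on $[0,+\infty)=\realsnonneg$, which is simply a well-behaved (unrestricted) transition matrix system $\mathcal{T}$ as in Equation~\eqref{eq:nonhomogen_in_process_set_system_composition}.

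Finally, Theorem~\ref{theo:uniqueMarkovchain} applied to the initial distribution $p$ and this $\mathcal{T}$ yields a unique Markov chain $P\in\mprocesses$ with $\mathcal{T}_P=\mathcal{T}$ and $P(X_0=y)=p(y)$ for all $y\in\states$, and moreover $P$ is well-behaved because $\mathcal{T}$ is. Hence $P\in\wmprocesses$ with the required properties. There is no real obstacle here; the only thing to be careful about is the bookkeeping of the intervals and the iterated application of the concatenation proposition, which is purely mechanical.
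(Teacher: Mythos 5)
Your proposal is correct and matches the paper's own argument: the paper states this result is a trivial consequence of Theorem~\ref{theo:uniqueMarkovchain}, given that Propositions~\ref{prop:systemQ} and~\ref{prop:concat_restr_trans_mat_systems_is_system} guarantee that transition matrix systems of the form in Equation~\eqref{eq:nonhomogen_in_process_set_system_composition} are well-behaved. Your iterated-concatenation bookkeeping is exactly the intended (and only) content of the proof.
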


\section{Imprecise Continuous-Time Markov chains}
\label{sec:iCTMC}

In Sections~\ref{sec:stochastic_processes} and~\ref{sec:cont_time_markov_chains}, we formalised stochastic processes and provided ways to characterise them. We now turn to the field of \emph{imprecise probability}~\cite{augustin2013:itip,Walley:1991vk} to formalise the notion of an \emph{imprecise continuous-time Markov chain}. Basically, rather than look at a single stochastic process $P\in\processes$, we instead consider jointly some \emph{set} of processes $\mathcal{P}\subseteq\processes$. We start by looking at how such sets can be described.

\subsection{Sets of Consistent Stochastic Processes}

Recall from Section~\ref{sec:dynamics} that for a given stochastic process $P\in\processes$, its dynamics can be described by means of the outer partial derivatives $\smash{\overline{\partial}}T_{t,\,x_u}^t$ of its transition matrices, which can depend both on the time $t\in\realsnonneg$ and on the history $x_u\in\states_u$. 
 Furthermore, we also found that---at least for well-behaved processes---these outer partial derivatives are non-empty bounded sets of rate matrices. If all the partial derivatives of a process belong to the same non-empty bounded set of rate matrices $\rateset$, we call this process consistent with $\rateset$.\label{notation:rate_set}

\begin{definition}[Consistency with a set of rate matrices]\label{def:consistent_process}
Consider a non-empty bounded set of rate matrices $\rateset$ and a stochastic process $P\in\processes$. Then $P$ is said to be \emph{consistent} with $\rateset$ if
\begin{equation*}
(\forall t\in\realsnonneg)(\forall u\in\mathcal{U}_{<t})(\forall x_u\in\states_u)\,:\, \smash{\overline{\partial}}T_{t,x_u}^t \subseteq \rateset.
\vspace{4pt}
\end{equation*}
If $P$ is consistent with $\rateset$, we will write $P\sim\rateset$.
\end{definition}

Thus, when a process is consistent with a set of rate matrices $\rateset$, we know that its dynamics can always be described using the rate matrices in that set. However, we do not know which of these rate matrices $Q\in\rateset$ describe the dynamics at any given time $t\in\realsnonneg$ or for any given history $x_u\in\states_u$. 
Furthermore, consistency of a process with a set of rate matrices $\rateset$ does not tell us anything about the initial distribution of the process. 
Therefore, we also introduce the concept of consistency with a set of initial distributions~$\mathcal{M}$.\label{notation:initial_set}

\begin{definition}[Consistency with a set of initial distributions]\label{def:consistent_process_initialdistribution}
Consider any non-empty bounded set $\mathcal{M}$ of probability mass functions on $\states$ and any stochastic process $P\in\processes$. We then say that $P$ is \emph{consistent} with $\mathcal{M}$, and write $P\sim\mathcal{M}$, if $P(X_0)\in\mathcal{M}$.
\end{definition}

In the remainder of this paper, we will focus on sets of processes that are jointly consistent with some given $\rateset$ and $\mathcal{M}$. However, rather than look at the set of \emph{all} processes consistent with some $\rateset$ and $\mathcal{M}$, we will instead consider the consistent subset of some given set of processes $\mathcal{P}\subseteq\processes$.

\begin{definition}[Consistent subset of processes]\label{def:consistent_process_set}
Consider a non-empty bounded set of rate matrices $\rateset$ and a non-empty set $\mathcal{M}$ of probability mass functions on $\states$ and a set of stochastic processes $\mathcal{P}\subseteq\processes$. Then, the \emph{subset of $\mathcal{P}$ consistent with} $\rateset$ \emph{and} $\mathcal{M}$ is denoted by $\mathcal{P}_{\rateset,\mathcal{M}}$, and defined as
\begin{equation*}
\mathcal{P}_{\rateset,\mathcal{M}} \coloneqq \left\{P\in\mathcal{P}\,:\,P\sim\rateset,\,P\sim\mathcal{M}\right\}\,.
\end{equation*}
When $\mathcal{M}$ is the set of \emph{all} probability mass functions on $\states$, we will write $\mathcal{P}_{\rateset}$ for the sake of brevity.
\end{definition}
For some fixed $\mathcal{Q}$ and $\mathcal{M}$, different choices for $\mathcal{P}$ will result in different sets of consistent processes $\mathcal{P}_{\rateset,\mathcal{M}}$. Three specific choices of $\mathcal{P}$ will be particularly important in this paper because, as we will now show, they lead to three different types of imprecise continuous-time Markov chains.

\subsection{Types of Imprecise Continuous-Time Markov Chains}\label{subsec:types_ictmc}

In Sections~\ref{sec:stochastic_processes} and~\ref{sec:cont_time_markov_chains}, we introduced three sets $\wprocesses$, $\wmprocesses$ and $\whmprocesses$ of well-behaved stochastic processes with different qualitative properties. $\wprocesses$ is the set of all well-behaved stochastic processes, $\wmprocesses$ consists of the processes in $\wprocesses$ that are continuous-time Markov chains, and $\whmprocesses$ is the set of all homogeneous Markov chains that are well-behaved, which is therefore a subset of $\wmprocesses$. We now use Definition~\ref{def:consistent_process_set} to define three sets of consistent processes that also have these respective qualitative properties. 

\begin{definition}[Imprecise continuous-time Markov chain]\label{def:process_sets}
For any non-empty bounded set of rate matrices $\rateset$, and any non-empty set $\mathcal{M}$ of probability mass functions on $\states$, we define the following three sets of stochastic processes that are jointly consistent with $\rateset$ and $\mathcal{M}$:
\begin{itemize}
\item $\wprocesses_{\rateset,\mathcal{M}}$ is the consistent set of all well-behaved stochastic processes;
\item $\wmprocesses_{\rateset,\mathcal{M}}$ is the consistent set of all well-behaved Markov chains;
\item $\whmprocesses_{\rateset,\mathcal{M}}$ is the consistent set of all well-behaved homogeneous Markov chains.
\end{itemize}
We call each of these three sets 
 an \emph{imprecise continuous-time Markov chain}, and abbreviate this as \ictmc.\footnote{For the set $\wprocesses_{\rateset,\mathcal{M}}$, one might wonder why we choose to call it an imprecise \emph{Markov} chain, since it contains processes that do not satisfy the Markov property. As we will see in Section~\ref{sec:single_var_lower_exp}, this choice of terminology is motivated by that fact that the set $\wprocesses_{\rateset,\mathcal{M}}$ itself---rather than its elements---satisfies a so-called \emph{imprecise Markov property}.} 
Following Definition~\ref{def:consistent_process_set}, we will write $\wprocesses_{\rateset}$ when we take $\mathcal{M}$ to be the set of \emph{all} probability mass functions on $\states$, and similarly for $\wmprocesses_{\rateset}$ and $\whmprocesses_{\rateset}$.
\end{definition}

 Since the sets $\whmprocesses$, $\wmprocesses$ and $\wprocesses$
 are nested, it should be clear that this also true for the corresponding types of \ictmc's.

\begin{restatable}{proposition}{propmarkovsetsubsetofnonmarkovset}
\label{prop:markov_set_subset_of_nonmarkov_set}
Consider any bounded set of rate matrices $\rateset$, and any non-empty set $\mathcal{M}$ of probability mass functions on $\states$. Then,
\begin{equation*}
\whmprocesses_{\rateset,\mathcal{M}} \subseteq \wmprocesses_{\rateset,\mathcal{M}} \subseteq \wprocesses_{\rateset,\mathcal{M}}\,.
\end{equation*}
\end{restatable}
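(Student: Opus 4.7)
The plan is to show that both inclusions follow immediately from the monotonicity of the consistency-subset operation of Definition~\ref{def:consistent_process_set}, together with the ambient inclusions $\whmprocesses\subseteq\wmprocesses\subseteq\wprocesses$ that were recorded when these three classes were introduced.

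First I would observe the general fact: for any two sets of stochastic processes $\mathcal{P}_1,\mathcal{P}_2\subseteq\processes$ with $\mathcal{P}_1\subseteq\mathcal{P}_2$, and for any non-empty bounded set of rate matrices $\rateset$ and any non-empty set $\mathcal{M}$ of probability mass functions on $\states$, the definition
\[
\mathcal{P}_{i,\rateset,\mathcal{M}}=\{P\in\mathcal{P}_i:P\sim\rateset,\,P\sim\mathcal{M}\}\quad(i=1,2)
\]
immediately yields $\mathcal{P}_{1,\rateset,\mathcal{M}}\subseteq\mathcal{P}_{2,\rateset,\mathcal{M}}$, because the two membership conditions $P\sim\rateset$ and $P\sim\mathcal{M}$ do not depend on the ambient class $\mathcal{P}_i$.

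Next I would invoke the nesting of the three ambient classes. By Definition~\ref{def:homogeneousMarkov}, $\whmprocesses$ is explicitly defined as the subset of $\wmprocesses$ consisting of those well-behaved Markov chains that are homogeneous, so $\whmprocesses\subseteq\wmprocesses$. Similarly, by Definition~\ref{def:markov_property}, $\wmprocesses$ is the subset of $\wprocesses$ whose elements additionally satisfy the Markov property, so $\wmprocesses\subseteq\wprocesses$.

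Applying the monotonicity observation of the first paragraph to these two inclusions yields $\whmprocesses_{\rateset,\mathcal{M}}\subseteq\wmprocesses_{\rateset,\mathcal{M}}$ and $\wmprocesses_{\rateset,\mathcal{M}}\subseteq\wprocesses_{\rateset,\mathcal{M}}$, which chain together into the desired statement. There is no real obstacle here — the result is essentially definitional — so the only thing to be careful about is making the syntactic match between Definition~\ref{def:consistent_process_set} and Definition~\ref{def:process_sets} explicit, so that the reader sees that the three \ictmc's are obtained by applying one and the same operator to three nested argument sets.
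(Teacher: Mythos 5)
Your proposal is correct and follows essentially the same route as the paper, whose proof simply notes that the result is immediate from Definitions~\ref{def:consistent_process_set} and~\ref{def:process_sets} together with the nesting $\whmprocesses\subseteq\wmprocesses\subseteq\wprocesses$. You merely spell out the monotonicity of the consistency-subset operation explicitly, which the paper leaves implicit.
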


Observe furthermore that for $\whmprocesses_{\rateset,\mathcal{M}}$ and $\wmprocesses_{\rateset,\mathcal{M}}$, the extra properties of their elements allow us to simplify the notion of consistency in Definition~\ref{def:consistent_process}, which leads to the following alternative characterisations:
\vspace{3pt}
\begin{equation}\label{eq:consistentwmprocessesalternative}
\wmprocesses_{\rateset,\mathcal{M}} = \left\{P\in\wmprocesses\,:\,(\forall t\in\realsnonneg)~~ \smash{\overline{\partial}}T_t^t\subseteq\rateset,\,P(X_0)\in\mathcal{M}\right\},
\end{equation}
and
\begin{equation*}
\whmprocesses_{\rateset,\mathcal{M}} = \left\{P\in\whmprocesses\,:\,Q_P\in\rateset,\,P(X_0)\in\mathcal{M}\right\}.
\vspace{5pt}
\end{equation*}
This first equality follows from the Markov property of the elements of $\wmprocesses$, which ensures that $\smash{\overline{\partial}}T_{t,x_u}^t=\smash{\overline{\partial}}T_t^t$. 
The second equality follows from the Markov property and the homogeneity of the processes $\smash{P\in\whmprocesses}$, which, by Proposition~\ref{prop:Q_is_singleton_deriv_for_homogen}, ensures that $\smash{\overline{\partial}}T_t^t=\{Q_P\}$ for all $t\in\realsnonneg$.

The following example further illustrates the difference between the three types of \ictmc's that we consider.

\begin{exmp}\label{example:rateset_not_singleton}
Let $Q_1$ and $Q_2$ be two different transition rate matrices and consider the set $\rateset\coloneqq \{Q_1, Q_2\}$. Let furthermore $\mathcal{M}\coloneqq\{p\}$, with $p$ an arbitrary probability mass function on $\states$.

The set $\whmprocesses_{\rateset,\mathcal{M}}$ then contains exactly two stochastic processes $P_1$ and $P_2$, each of which is a well-behaved homogeneous Markov chain. They both have $p$ as their initial distribution, in the sense that $P_1(X_0)=P_2(X_0)=p$, but their transition rate matrices $Q_{P_i}$, $i\in\{1,2\}$---whose existence is guaranteed by Theorem~\ref{theo:homogeneoushasQ}---are different: $Q_{P_1}=Q_1$, and $Q_{P_2}=Q_2$. 
The transition matrix systems of the Markov chains $P_1$ and $P_2$ are given by the exponential transition matrix systems $\mathcal{T}_{Q_1}$ and $\mathcal{T}_{Q_2}$, in that order, and the transition matrices of $P_1$ and $P_2$ are therefore given by $\presuper{1}T_t^s=e^{Q_1(s-t)}$ and $\presuper{2}T_t^s=e^{Q_2(s-t)}$, respectively. 

Since $\whmprocesses_{\rateset,\mathcal{M}}$ is a subset of $\wmprocesses_{\rateset,\mathcal{M}}$, each of the two homogeneous Markov chains $P_1$ and $P_2$ belongs to $\smash{\wmprocesses_{\rateset,\mathcal{M}}}$ as well. However, $\smash{\wmprocesses_{\rateset,\mathcal{M}}}$ also contains additional processes, which are not homogeneous and therefore do not belong to $\whmprocesses_{\rateset,\mathcal{M}}$. 
For instance, for any $r>0$, it follows from Proposition~\ref{prop:nonhomogeneous_in_process_set} further on that there is a well-behaved continuous-time Markov chain $\smash{P\in\wmprocesses_{\rateset,\,\mathcal{M}}}$ that has $\smash{\mathcal{T}_{Q_1}^{[0,r]}\otimes\mathcal{T}_{Q_2}^{[r,+\infty)}}$ as its transition matrix system. This Markov chain is clearly not homogeneous because $T_0^r=e^{Q_1r}$ is different from $T_r^{2r}=e^{Q_2r}$, and therefore it does not belong to $\whmprocesses_{\rateset,\mathcal{M}}$.

Since $\wmprocesses_{\rateset,\mathcal{M}}$ is a subset of $\wprocesses_{\rateset,\mathcal{M}}$, each of the processes that we have considered so far belong to the set $\smash{\wprocesses_{\rateset,\mathcal{M}}}$ as well. However this latter set contains more complicated processes still. For instance, for any $u\in\mathcal{U}$ and any $x_u,y_u\in\states_u$ such that $x_u\neq y_u$, $\wprocesses_{\rateset,\mathcal{M}}$ will for example contain a stochastic process $P$ such that, for all $t>u$, $\smash{\overline{\partial}}T_{t,x_u}^t=\{Q_1\}$ and $\smash{\overline{\partial}}T_{t,y_u}^t=\{Q_2\}$. For all $s>t>u$, the history-dependent transition matrices $\smash{T_{t,x_u}^s}$ and $T_{t,y_u}^s$ of this process $P$ will be given by $\smash{T_{t,x_u}^s=e^{Q_1(s-t)}}$ and $\smash{T_{t,y_u}^s=e^{Q_2(s-t)}}$, which implies that this process $P$ does not satisfy the Markov property, and therefore, that it does not belong to $\wmprocesses_{\rateset,\mathcal{M}}$.
\exampleend
\end{exmp}

\begin{restatable}{proposition}{propnonhomogeneousinprocessset}
\label{prop:nonhomogeneous_in_process_set}
Consider any non-empty bounded set of rate matrices $\rateset$ and let $\mathcal{M}$ be any non-empty set of probability mass functions on $\states$. Then for any $p\in\mathcal{M}$, any ordered finite sequence of time points $u=t_0,\ldots,t_n$ in $\mathcal{U}_\emptyset$ and any collection of rate matrices $Q_0,\ldots,Q_{n+1}\in\rateset$, there is a well-behaved continuous-time Markov chain $P\in\wmprocesses_{\rateset,\,\mathcal{M}}$ such that $P(X_0=y)=p(y)$ for all $y\in\mathcal{X}$ and such that $\mathcal{T}_P$ is given by Equation~\eqref{eq:nonhomogen_in_process_set_system_composition}.
\end{restatable}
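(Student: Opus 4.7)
The plan is to reduce the proposition to an existence statement plus a direct verification of consistency, relying on the previously established Proposition~\ref{prop:finite_different_rate_matrix_has_process} and on a careful case analysis of the outer partial derivatives.

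First, I would invoke Proposition~\ref{prop:finite_different_rate_matrix_has_process} directly: this gives me a well-behaved continuous-time Markov chain $P\in\wmprocesses$ with $P(X_0=y)=p(y)$ for all $y\in\states$ and with corresponding transition matrix system $\mathcal{T}_P$ equal to the concatenation in Equation~\eqref{eq:nonhomogen_in_process_set_system_composition}. What remains, then, is to show that this $P$ actually belongs to $\wmprocesses_{\rateset,\mathcal{M}}$, i.e.\ that $P\sim\mathcal{M}$ and $P\sim\rateset$. Consistency with $\mathcal{M}$ is immediate because $p\in\mathcal{M}$ by assumption. For consistency with $\rateset$, since $P$ is a Markov chain, I can use the simplification in Equation~\eqref{eq:consistentwmprocessesalternative}: I only need to check that $\smash{\overline{\partial}}T_t^t\subseteq\rateset$ for every $t\in\realsnonneg$.

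Next comes the main technical step: explicit computation of $\smash{\overline{\partial}}T_t^t$ for every $t$. Writing $t_{-1}\coloneqq0$ for convenience and setting the ``boundary'' points as $t_{-1},t_0,\dots,t_n$, every $t\in\realsnonneg$ either lies strictly inside one of the sub-intervals or coincides with one of these boundary points. I would split the analysis as follows. If $t$ lies in the interior of $[t_{i-1},t_i]$ (or in $(t_n,+\infty)$), then for all sufficiently small $\Delta>0$ one has $T_t^{t+\Delta}=e^{Q_i(t+\Delta-t)}=e^{Q_i\Delta}$ by the definition of $\otimes$ in Definition~\ref{def:concatenation_system} and of $\mathcal{T}_{Q_i}$ in Definition~\ref{def:systemfromQ}, and similarly $T_{t-\Delta}^{t}=e^{Q_i\Delta}$. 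A standard computation with the matrix exponential gives $\tfrac{1}{\Delta}(e^{Q_i\Delta}-I)\to Q_i$ as $\Delta\to 0^+$, so by Corollary~\ref{corol:outersingleton} both outer partial derivatives equal $\{Q_i\}$. If instead $t=t_j$ for some $j\in\{0,\dots,n\}$, then for all sufficiently small $\Delta>0$, the concatenation formula yields $T_t^{t+\Delta}=e^{Q_{j+1}\Delta}$ and, when $j\geq 0$ and $t>0$, $T_{t-\Delta}^{t}=e^{Q_j\Delta}$; hence $\smash{\overline{\partial}_{+}}T_t^t=\{Q_{j+1}\}$ and $\smash{\overline{\partial}_{-}}T_t^t=\{Q_j\}$ by the same argument. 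In every case, $\smash{\overline{\partial}}T_t^t\subseteq\{Q_0,\dots,Q_{n+1}\}\subseteq\rateset$, which is exactly what we need.

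Combining these two ingredients yields $P\in\wmprocesses_{\rateset,\mathcal{M}}$, as desired. The hard part will not really be conceptual but rather bookkeeping: one has to be a bit careful at the boundary points to ensure the right $\Delta$-windows are used (small enough so that $t+\Delta$ stays inside the appropriate sub-interval, and $t-\Delta\geq t_{j-1}$), and to treat the edge cases $t=0$ (where only the right derivative is defined) and $t>t_n$ (where the system is eventually given by $\mathcal{T}_{Q_{n+1}}$) separately. Once that bookkeeping is out of the way, the result follows cleanly from the combination of Proposition~\ref{prop:finite_different_rate_matrix_has_process}, Definition~\ref{def:concatenation_system}, Definition~\ref{def:systemfromQ} and Corollary~\ref{corol:outersingleton}.
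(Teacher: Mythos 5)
Your proposal is correct and follows essentially the same route as the paper's own proof: invoke Proposition~\ref{prop:finite_different_rate_matrix_has_process} for existence, use $p\in\mathcal{M}$ and Equation~\eqref{eq:consistentwmprocessesalternative} to reduce the claim to $\smash{\overline{\partial}}T_t^t\subseteq\rateset$, and then verify this by a case analysis on whether $t$ lies inside a subinterval or at a partition point. The only cosmetic difference is that you compute the exponential derivatives directly and invoke Corollary~\ref{corol:outersingleton}, where the paper cites Proposition~\ref{prop:Q_is_singleton_deriv_for_homogen}; these amount to the same argument.
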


We conclude this section with some notes about closure properties of the different types of \ictmc's that we consider, which are particularly useful for existence proofs. In particular, we focus on closure properties under recombination of known elements---colloquially, the ``piecing together'' of two or more processes to construct a new process that belongs to the same \ictmc.

The example above already suggested how to do this for $\smash{\wmprocesses_{\rateset,\mathcal{M}}}$, by combining two well-behaved homogeneous Markov chains $P_1,P_2\in\whmprocesses$ to form a new process $P\in\wmprocesses$. Similarly, but more generally, for any two processes $P_1,P_2\in\wmprocesses_{\rateset,\mathcal{M}}$, we can combine their transition matrix systems $\mathcal{T}_{P_1}$ and $\mathcal{T}_{P_2}$ to construct a new transition matrix system $\smash{\mathcal{T}\coloneqq \mathcal{T}_{P_1}^{[0,r]}\otimes \mathcal{T}_{P_2}^{[r,+\infty)}}$, with $r>0$ chosen arbitrarily. Theorem~\ref{theo:uniqueMarkovchain} then guarantees that there exists a Markov chain $P$ such that $\mathcal{T}_P=\mathcal{T}$ and $P(X_0)=P_1(X_0)$. It is straightforward to verify that, because $P_1,P_2\in\wmprocesses_{\rateset,\mathcal{M}}$, also $P\in\wmprocesses_{\rateset,\mathcal{M}}$; we leave this as an exercise for the reader.

Clearly, a similar procedure is impossible for $\whmprocesses_{\rateset,\mathcal{M}}$, because the combination of two processes would make the resultant one lose the homogeneity property, which is required to be an element of $\whmprocesses_{\rateset,\mathcal{M}}$.

However, for our most general type of \ictmc, which is $\wprocesses_{\rateset,\mathcal{M}}$, it turns out to be possible to recombine elements in an even more general, history-dependent way. That is, if $\rateset$ is convex, then it is possible, for fixed time points $u\in\mathcal{U}$, to choose for every history $x_u\in\states_u$ a different process $P_{x_u}\in\wprocesses_{\rateset,\mathcal{M}}$, and to recombine these into a process $P$ that agrees with these $P_{x_u}$ conditional on the specific history $x_u$. Furthermore, the distribution on the time points $u$ can be chosen to agree with any element $\smash{P_{\emptyset}\in \wprocesses_{\rateset,\mathcal{M}}}$. The following result guarantees that this new process $P$ will again belong to $\wprocesses_{\rateset,\mathcal{M}}$.

\begin{restatable}{theorem}{theoaanelkaarplakken}
\label{theo:aanelkaarplakken}
Consider a non-empty convex set of rate matrices $\rateset\subseteq\mathcal{R}$, and any non-empty set $\mathcal{M}$ of probability mass functions on $\states$.
Fix a finite sequence of time points $u\in\mathcal{U}$. Choose any $P_\emptyset\in\wprocesses_{\rateset,\,\mathcal{M}}$ and, for all $x_u\in\states_u$, choose some $\smash{P_{x_u}\in\wprocesses_{\rateset,\,\mathcal{M}}}$. Then there is a stochastic process $\smash{P\in\wprocesses_{\rateset,\,\mathcal{M}}}$ such that, for all $u_1,u_2\subseteq u$ such that $u_1<u_2$, all $x_u\in\states_u$ and all $A\in\mathcal{A}_u$:
\begin{equation}\label{eq:theo:aanelkaarplakken:equalsfirst}
P(X_{u_2}=x_{u_2}\vert X_{u_1}=x_{u_1})=P_{\emptyset}(X_{u_2}=x_{u_2}\vert X_{u_1}=x_{u_1})
\vspace{-7pt}
\end{equation}
and
\begin{equation}\label{eq:theo:aanelkaarplakken:equalssecond}
P(A\vert X_u=x_u)=P_{x_u}(A\vert X_u=x_u).
\vspace{7pt}
\end{equation}
\end{restatable}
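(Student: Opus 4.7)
The plan is to construct $P$ by gluing $P_\emptyset$ (supplying the joint distribution on time points at or before $\max u$) to the family $\{P_{x_u}\}_{x_u\in\states_u}$ (each supplying the conditional distribution on time points strictly after $\max u$, given $X_u=x_u$). Concretely, for any finite sequence $w\in\mathcal{U}$ with $u\subseteq w$, partition $w=w_L\cup w_R$ with $w_L\coloneqq w\cap\reals_{\leq\max u}$ and $w_R\coloneqq w\cap\reals_{>\max u}$ (so $u\subseteq w_L$), and set
\begin{equation*}
\mu_w(y_w)\coloneqq P_\emptyset(X_{w_L}=y_{w_L})\cdot P_{y_u}(X_{w_R}=y_{w_R}\vert X_u=y_u),
\end{equation*}
where $y_u$ denotes the restriction of $y_{w_L}$ to $u$. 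The family $\{\mu_w\}$ is consistent under marginalisation—a direct consequence of the coherence of each factor—and extends to arbitrary $w\in\mathcal{U}$ by marginalising the distribution on $w\cup u$.

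From $\{\mu_w\}$ I would specify $P$ on $\mathcal{C}^{\mathrm{SP}}$ via the usual ratio formula whenever the conditioning event has positive $\mu$-measure, and then invoke Theorem~\ref{theo:largerdomain} to obtain a coherent extension to all of $\mathcal{C}^{\mathrm{SP}}$. The coherence of the ratio-formula specification (required to apply Theorem~\ref{theo:largerdomain}) reduces via Definition~\ref{def:coherence} to showing, for any finite collection of events, that a single sufficiently large $\mu_w$ simultaneously realises all the prescribed conditional probabilities; this is automatic because each such $\mu_w$ is a genuine probability distribution satisfying Bayes's rule, which in turn follows from the coherence of the underlying $P_\emptyset$ and each $P_{x_u}$. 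Equations~\eqref{eq:theo:aanelkaarplakken:equalsfirst} and~\eqref{eq:theo:aanelkaarplakken:equalssecond} are then essentially immediate from the construction: the first because for $u_1,u_2\subseteq u\subseteq w_L$ the marginal of $\mu_u$ on $u$ coincides with $P_\emptyset(X_u=\cdot)$; the second because conditioning on $X_u=x_u$ cancels the $P_\emptyset(X_u=x_u)$ factor against the denominator and leaves exactly $P_{x_u}(A\vert X_u=x_u)$.

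Membership $P\in\wprocesses_{\rateset,\mathcal{M}}$ then requires checking well-behavedness and $\smash{\overline{\partial}}T_{t,x_v}^t\subseteq\rateset$ for every $t\in\realsnonneg$, $v\in\mathcal{U}_{<t}$ and $x_v\in\states_v$ (the initial-distribution condition $P(X_0)\in\mathcal{M}$ is immediate since $P(X_0)=P_\emptyset(X_0)$). For times $t\leq\max u$, every relevant probability $P(X_{t+\Delta}=y\vert X_t=x,X_v=x_v)$ involves only time points $\leq\max u$ once $\Delta$ is small enough, so it agrees with the corresponding one for $P_\emptyset$; both properties then transfer. For $t>\max u$, extend the history to $v\cup u<t$ and apply the law of total probability:
\begin{equation*}
P(X_{t+\Delta}=y\vert X_t=x,X_v=x_v)=\sum_{x_u\in\states_u}\alpha_{x_u}(x)\cdot P(X_{t+\Delta}=y\vert X_t=x,X_v=x_v,X_u=x_u),
\end{equation*}
where $\alpha_{x_u}(x)\coloneqq P(X_u=x_u\vert X_t=x,X_v=x_v)$ is independent of $\Delta$ and each inner conditional probability equals the corresponding one for $P_{x_u}$ by the gluing. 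Well-behavedness then inherits from the well-behavedness of each $P_{x_u}$, and after a diagonal extraction over the finite set $\states_u$ any subsequential limit of $\nicefrac{1}{\Delta}(T_{t,x_v}^{t+\Delta}-I)$ is a (row-wise) convex combination of rate matrices in $\rateset$; convexity of $\rateset$ then places this limit in $\rateset$. The case $t=\max u$ combines this right-sided argument with the $P_\emptyset$-based one on the left.

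The main obstacle will be arranging the coherent extension furnished by Theorem~\ref{theo:largerdomain} so that the resulting $P$ retains well-behavedness and $\rateset$-consistency at histories of zero $\mu$-measure; a direct construction that defines $P$ on $\mathcal{C}^{\mathrm{SP}}$ without appealing to an abstract extension, though more laborious, is likely to be cleaner. Secondary care is required for the mixture step at $t>\max u$, where convexity of $\rateset$ is essential and where one must argue that the row-wise combination still yields a matrix in $\rateset$.
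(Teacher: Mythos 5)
There is a genuine gap, and it sits exactly at the obstacle you flag but do not resolve: zero-probability conditioning events. Your construction pins $P$ down only through unconditional finite-dimensional distributions $\mu_w$ plus Bayes's ratio, and then appeals to Theorem~\ref{theo:largerdomain} for an abstract coherent extension to $\mathcal{C}^{\mathrm{SP}}$. But that extension is completely uncontrolled wherever the conditioning event has $\mu$-measure zero, while the theorem's conclusions must hold there too: Equation~\eqref{eq:theo:aanelkaarplakken:equalssecond} is required for \emph{every} $x_u\in\states_u$, including those with $P_\emptyset(X_u=x_u)=0$ (and similarly Equation~\eqref{eq:theo:aanelkaarplakken:equalsfirst} for zero-probability $X_{u_1}=x_{u_1}$), and membership in $\wprocesses_{\rateset,\mathcal{M}}$ demands well-behavedness and $\overline{\partial}T_{t,x_v}^t\subseteq\rateset$ for \emph{all} histories $x_v$, not just $\mu$-positive ones. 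The same defect re-enters your mixture step for $t>\max u$: the identity $P(X_{t+\Delta}=y\vert X_t=x,X_v=x_v,X_u=x_u)=P_{x_u}(\cdots)$ ``by the gluing'' is only guaranteed where the ratio formula applied, so at zero-probability histories the inner conditionals, and hence the convex-combination argument, are not justified. In short, the heart of the theorem is precisely the part your plan delegates to an arbitrary extension.

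The paper avoids this by never passing through unconditional distributions at all. It defines a map $\tilde{P}$ \emph{directly as a conditional assessment} on a domain $\mathcal{C}\subset\mathcal{C}^{\mathrm{SP}}$ that already contains all the needed conditional pairs: pairs $(A,X_v=x_v)$ with $v<\max u$ and $A$ depending only on $[0,\max u]$, on which $\tilde{P}$ copies $P_\emptyset$, and pairs conditioned on $X_u=x_u$ (plus later time points), on which it copies $P_{x_u}$ --- these values are well defined for every $x_v$ and $x_u$ because $P_\emptyset$ and the $P_{x_u}$ are themselves coherent conditional probabilities. The real technical work is then a direct verification of Definition~\ref{def:coherence} for $\tilde{P}$ (constructing, for any finite collection of gambles, a suitable path by splicing an optimal path for the $P_\emptyset$-part before $\max u$ with one for the relevant $P_{x_u}$-part after), after which Theorem~\ref{theo:largerdomain} is invoked only to extend a specification that already fixes all the conditional probabilities appearing in the statement and in the consistency check. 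The subsequent membership argument (decomposition of $T_{t,x_v}^{t+\Delta}$ and $T_{t-\Delta,x_v}^t$ as convex combinations with $\Delta$-independent weights taken from a full conditional extension, Bolzano--Weierstrass over the finitely many components, and convexity of $\rateset$) is close in spirit to your sketch, but it only goes through because the gluing identities hold at every history by construction. To repair your proof you would have to carry out essentially this direct conditional construction and coherence verification, which is the content your proposal currently omits.
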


\subsection{Lower Expectations for \ictmc's}\label{subsec:ictmc_types}

From a practical point of view, after having specified a (precise) stochastic process, one is typically interested in the expected value of some function of interest, or the probability of some event. Similarly, in this work, our main objects of consideration will be the \emph{lower and upper expectations} and \emph{lower and upper probabilities} that correspond to the \ictmc's that we introduced in the previous section.

\begin{definition}[Lower Expectation]\label{def:lower_exp}
For any non-empty set of stochastic processes $\mathcal{P}\subseteq\processes$, the \emph{(conditional) lower expectation with respect to $\mathcal{P}$} is defined as
\begin{equation}\label{eq:genericlowerexpectation}
\underline{\mathbb{E}}[\cdot\,\vert\,\cdot] \coloneqq \inf\left\{\mathbb{E}_P[\cdot\,\vert\,\cdot]\,:\,P\in\mathcal{P}\right\},
\end{equation}
where $\mathbb{E}_P[\cdot\,\vert\,\cdot]$ denotes the (conditional) expectation taken with respect to $P$.

In particular, for any non-empty bounded set of rate matrices $\rateset$ and any non-empty set $\mathcal{M}$ of probability mass functions on $\states$, we let
\begin{equation}\label{eq:lowerexp3}
\underline{\mathbb{E}}_{\rateset,\mathcal{M}}^{\mathrm{W}}[\cdot\,\vert\,\cdot] \coloneqq \inf\left\{\mathbb{E}_P[\cdot\,\vert\,\cdot]\,:\,P\in\mathbb{P}_{\rateset,\mathcal{M}}^{\mathrm{W}}\right\},
\end{equation}
and similarly for $\smash{\underline{\mathbb{E}}_{\rateset,\mathcal{M}}^{\mathrm{WM}}}$ and $\smash{\underline{\mathbb{E}}_{\rateset,\mathcal{M}}^{\mathrm{WHM}}}$.
If $\mathcal{M}$ is the set of \emph{all} probability mass functions on $\states$, then as in Definition~\ref{def:consistent_process_set}, we will write $\underline{\mathbb{E}}_{\rateset}^{\mathrm{W}}$ instead of $\underline{\mathbb{E}}_{\rateset,\mathcal{M}}^{\mathrm{W}}$, and similarly for $\underline{\mathbb{E}}_{\rateset}^{\mathrm{WM}}$ and $\underline{\mathbb{E}}_{\rateset}^{\mathrm{WHM}}$.
\end{definition}

Upper expectations can be defined analogously, simply by replacing the infimum by a supremum: $\smash{\overline{\mathbb{E}}[\cdot\,\vert\,\cdot]\coloneqq \sup\{\mathbb{E}_P[\cdot\,\vert\,\cdot]\,:\,P\in\mathcal{P}\}}$.
However, there is no need to study these upper expectations separately, because they are in one-to-one correspondence to lower expectations through the conjugacy relation $\overline{\mathbb{E}}[\cdot\,\vert\,\cdot]=-\underline{\mathbb{E}}[-\cdot\,\vert\,\cdot]$. 
Lower and upper probabilities also have analogous definitions. However, these too do not need to be studied separately, because they correspond to special cases of lower and upper expectations. For example, for any $u\in\mathcal{U}$, $x_u\in\states_u$ and $A\in\mathcal{A}_u$, we have that
\begin{align}
\underline{P}(A\vert X_u=x_u)
\coloneqq&\inf\{P(A\vert X_u=x_u)\colon P\in\mathcal{P}\}\notag\\
=&\inf\{\mathbb{E}_P[\ind{A}\vert X_u=x_u]\colon P\in\mathcal{P}\}
=\underline{\mathbb{E}}[\ind{A}\vert X_u=x_u],
\label{eq:lowerprobaslowerexp}
\end{align}
and similarly, we also have that $\overline{P}(A\vert X_u=x_u)=\overline{\mathbb{E}}[\ind{A}\vert X_u=x_u]$, which, due to conjugacy, implies that $\overline{P}(A\vert X_u=x_u)=-\underline{\mathbb{E}}[-\ind{A}\vert X_u=x_u]$. Hence, from a computational point of view, it clearly suffices to focus on lower expectations, which is what we will do in the remainder of this work.

In particular, one of the main aims of this paper is to provide methods to compute lower expectations for the different types of \ictmc's that we have introduced in Section~\ref{subsec:types_ictmc}; however, as we will see at the end of this section, doing this for the set $\whmprocesses_{\rateset,\mathcal{M}}$ is particularly difficult. We will therefore largely focus on performing these computations for the ICTMCs $\wprocesses_{\rateset,\mathcal{M}}$ and $\wmprocesses_{\rateset,\mathcal{M}}$. We start by giving some useful properties of the three types of lower expectations that we are interested in.

First of all, it can be shown that if $\rateset$ is a non-empty, bounded, convex and closed set of rate matrices, then the conditional lower expectations $\underline{\mathbb{E}}_{\rateset}^\mathrm{W}$, $\underline{\mathbb{E}}_{\rateset}^\mathrm{WM}$ and $\underline{\mathbb{E}}_{\rateset}^\mathrm{WHM}$ are actually minima---rather than infima---because they are always reached by some element of their corresponding \ictmc. However, the proof of this claim is rather involved and requires some technical machinery that is outside of the scope of this paper; we intend to publish these and related results separately in future work.

Moving on, we know from Section~\ref{subsec:types_ictmc} that the sets $\whmprocesses_{\rateset,\mathcal{M}}$, $\wmprocesses_{\rateset,\mathcal{M}}$ and $\wprocesses_{\rateset,\mathcal{M}}$ are nested subsets of each other. As an immediate consequence, their corresponding lower expectations provide (lower) bounds for each other.
\begin{restatable}{proposition}{proplowerexpmarkovboundedbynonmarkov}
\label{prop:lower_exp_markov_bounded_by_nonmarkov}
Consider any non-empty bounded set of rate matrices $\rateset$, and any non-empty set $\mathcal{M}$ of probability mass functions on $\states$. Then,
\begin{equation*}
\underline{\mathbb{E}}_{\rateset,\mathcal{M}}^\mathrm{W}[\cdot\,\vert\,\cdot] \leq
\underline{\mathbb{E}}_{\rateset,\mathcal{M}}^\mathrm{WM}[\cdot\,\vert\,\cdot] \leq
\underline{\mathbb{E}}_{\rateset,\mathcal{M}}^\mathrm{WHM}[\cdot\,\vert\,\cdot]\,.
\end{equation*}
\end{restatable}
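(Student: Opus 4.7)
The plan is to reduce this proposition to the elementary fact that the infimum of a function over a larger set is at most the infimum over a subset, combined with the inclusion relation established in Proposition~\ref{prop:markov_set_subset_of_nonmarkov_set}. Specifically, since
\begin{equation*}
\whmprocesses_{\rateset,\mathcal{M}} \subseteq \wmprocesses_{\rateset,\mathcal{M}} \subseteq \wprocesses_{\rateset,\mathcal{M}},
\end{equation*}
the same inclusions hold for the corresponding sets of conditional expectations $\{\mathbb{E}_P[\,\cdot\,\vert\,\cdot\,]\colon P\in\cdot\,\}$ indexed by the three families of processes. Taking infima preserves the reverse ordering: a larger set of real numbers can only have a smaller (or equal) infimum. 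Applying this observation to the defining equations of $\underline{\mathbb{E}}_{\rateset,\mathcal{M}}^\mathrm{W}$, $\underline{\mathbb{E}}_{\rateset,\mathcal{M}}^\mathrm{WM}$ and $\underline{\mathbb{E}}_{\rateset,\mathcal{M}}^\mathrm{WHM}$ in Definition~\ref{def:lower_exp} then yields the chain of inequalities directly.

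The only subtlety I would need to address is non-vacuousness: if any of the three sets were empty, the corresponding infimum over the empty set would be $+\infty$, which would still make the inequality trivially true on that side but could make the chain misleading. To handle this cleanly, I would first verify that $\whmprocesses_{\rateset,\mathcal{M}}$---the smallest of the three sets---is non-empty, from which the non-emptiness of the other two follows by Proposition~\ref{prop:markov_set_subset_of_nonmarkov_set}. This is immediate from the hypotheses: $\rateset$ is non-empty so we can pick some $Q\in\rateset$, $\mathcal{M}$ is non-empty so we can pick some $p\in\mathcal{M}$, and then Corollary~\ref{cor:rate_has_unique_homogen_markov_process} supplies a well-behaved homogeneous Markov chain $P\in\whmprocesses$ with $\mathcal{T}_P=\mathcal{T}_Q$ and initial distribution $p$. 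By construction $Q_P=Q\in\rateset$ and $P(X_0)=p\in\mathcal{M}$, so $P\in\whmprocesses_{\rateset,\mathcal{M}}$ using the simplified characterisation from Equation~\eqref{eq:consistentwmprocessesalternative} and the analogous one for $\whmprocesses_{\rateset,\mathcal{M}}$.

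There is no genuine obstacle here; the result is essentially a direct corollary of Proposition~\ref{prop:markov_set_subset_of_nonmarkov_set} together with the monotonicity of the infimum operator under set inclusion. The proof should be only a few lines once both ingredients are in place.
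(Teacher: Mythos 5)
Your proof is correct and follows essentially the same route as the paper, which also derives the result immediately from Proposition~\ref{prop:markov_set_subset_of_nonmarkov_set} and Definition~\ref{def:lower_exp} via monotonicity of the infimum under set inclusion. Your additional verification that $\whmprocesses_{\rateset,\mathcal{M}}$ is non-empty (via Corollary~\ref{cor:rate_has_unique_homogen_markov_process}) is a harmless extra that the paper does not bother with, since the inequalities hold regardless.
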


Generally speaking, the inequalities in this proposition can be---and often are---strict; for the first inequality, we will illustrate this further on in this section, in Figure~\ref{fig:homogeneousCounterExample}, whereas for the second inequality, this can be seen by comparing Examples~\ref{exmp:num_multivar_func_nonmarkov} and \ref{exmp:num_counterexample_markov} in Section~\ref{sec:decomposition}.

For now, we first provide a useful property of the lower expectation $\smash{\underline{\mathbb{E}}_{\rateset,\mathcal{M}}^\mathrm{W}}$ that corresponds to $\wprocesses_{\rateset,\mathcal{M}}$. To this end, consider a function $f\in\gambles(\states_{u\cup v\cup w})$ defined on the union $u\cup v\cup w$ of three finite sets of time points $u,v,w\in\mathcal{U}$ such that $u<v<w$. It then follows from the basic properties of expectations that for any stochastic process $P$, the corresponding expectation of $f$, conditional on $X_u$, decomposes as follows:
\begin{equation}\label{eq:lawofiteratedexpectation}
\mathbb{E}_P[f(X_u,X_v,X_w)\,\vert\,X_u] = \mathbb{E}_P\bigl[\mathbb{E}_P[f(X_u,X_v,X_w)\,\vert\,X_u,X_v]\,\big\vert\,X_u\bigr].
\end{equation}
This equality is well-known, and is called the \emph{law of iterated expectation}. Rather remarkably, if $\rateset$ is convex, then the lower expectation $\underline{\mathbb{E}}^{\mathrm{W}}_{\rateset,\,\mathcal{M}}$ satisfies a similar property. The proof for this so-called law of iterated \emph{lower} expectation is based on Theorem~\ref{theo:aanelkaarplakken}.

\begin{restatable}{theorem}{theoremdecompositionmultivar}
\label{theorem:decomposition_multivar}
Let $\rateset$ be an arbitrary non-empty, bounded, and convex set of rate matrices, and consider any non-empty set $\mathcal{M}$ of probability mass functions on $\states$. Then for any $u,v,w\in\mathcal{U}$ such that $u<v<w$ and any $f\in\gambles(\states_{u\cup v\cup w})$:
\begin{equation}\label{eq:lower_exp_factorizes}
\underline{\mathbb{E}}^{\mathrm{W}}_{\rateset,\,\mathcal{M}}\left[f(X_u,X_v,X_w)\,\vert\,X_u\right] = \underline{\mathbb{E}}^{\mathrm{W}}_{\rateset,\,\mathcal{M}}\Bigl[\underline{\mathbb{E}}^{\mathrm{W}}_{\rateset,\,\mathcal{M}}\left[f(X_u,X_v,X_w)\,\vert\,X_u,X_v\right] \Big\vert\,X_u\Bigr]\,. 
\end{equation}
\end{restatable}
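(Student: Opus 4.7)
The plan is to establish the two inequalities in Equation~\eqref{eq:lower_exp_factorizes} separately.

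\emph{The easy direction ($\geq$).} Fix any $P \in \wprocesses_{\rateset,\mathcal{M}}$. Since $P$ is the restriction of a full conditional probability (Corollary~\ref{corol:processiffrestriction}) and since $\states$ and the time set $u \cup v \cup w$ are finite, axiom~\ref{def:coh_prob_6} yields the classical law of iterated expectation $\mathbb{E}_P[f|X_u] = \mathbb{E}_P[\mathbb{E}_P[f|X_u,X_v]|X_u]$. Because $\mathbb{E}_P[f|X_u=x_u,X_v=x_v] \geq \underline{\mathbb{E}}^{\mathrm{W}}_{\rateset,\mathcal{M}}[f|X_u=x_u,X_v=x_v]$ for every $(x_u,x_v)$, monotonicity of $\mathbb{E}_P[\cdot|X_u]$ gives $\mathbb{E}_P[f|X_u] \geq \mathbb{E}_P\bigl[\underline{\mathbb{E}}^{\mathrm{W}}_{\rateset,\mathcal{M}}[f|X_u,X_v]\,\big|\,X_u\bigr] \geq \underline{\mathbb{E}}^{\mathrm{W}}_{\rateset,\mathcal{M}}\bigl[\underline{\mathbb{E}}^{\mathrm{W}}_{\rateset,\mathcal{M}}[f|X_u,X_v]\,\big|\,X_u\bigr]$. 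Taking the infimum over $P$ yields the $\geq$ inequality.

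\emph{The hard direction ($\leq$).} This is where convexity of $\rateset$ enters, through Theorem~\ref{theo:aanelkaarplakken}. Write $\tilde{u} \coloneqq u \cup v$ and set $g(x_u, x_v) \coloneqq \underline{\mathbb{E}}^{\mathrm{W}}_{\rateset,\mathcal{M}}[f|X_u=x_u,X_v=x_v]$. Fix $x_u \in \states_u$ and $\epsilon > 0$. By the infimum characterisation of the lower expectation, pick $P_\emptyset \in \wprocesses_{\rateset,\mathcal{M}}$ with $\mathbb{E}_{P_\emptyset}[g(X_u,X_v)|X_u=x_u] < \underline{\mathbb{E}}^{\mathrm{W}}_{\rateset,\mathcal{M}}[g|X_u=x_u] + \epsilon/2$; and for every $x_{\tilde u}=(x_u',x_v) \in \states_{\tilde u}$, pick $P_{x_{\tilde u}} \in \wprocesses_{\rateset,\mathcal{M}}$ with $\mathbb{E}_{P_{x_{\tilde u}}}[f|X_{\tilde u}=x_{\tilde u}] < g(x_u',x_v) + \epsilon/2$. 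Theorem~\ref{theo:aanelkaarplakken}, applied with time sequence $\tilde u$ and the collection $\{P_\emptyset,\{P_{x_{\tilde u}}\}\}$, produces some $P \in \wprocesses_{\rateset,\mathcal{M}}$ satisfying~\eqref{eq:theo:aanelkaarplakken:equalsfirst} and~\eqref{eq:theo:aanelkaarplakken:equalssecond}.

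For this pasted $P$, Equation~\eqref{eq:theo:aanelkaarplakken:equalsfirst} with $u_1=u,u_2=v$ gives $P(X_v=x_v|X_u=x_u)=P_\emptyset(X_v=x_v|X_u=x_u)$, while Equation~\eqref{eq:theo:aanelkaarplakken:equalssecond} applied to the events $\{X_w=x_w\}\in\mathcal{A}_{\tilde u}$---valid since $w>\tilde u$---yields $\mathbb{E}_P[f|X_{\tilde u}=x_{\tilde u}]=\mathbb{E}_{P_{x_{\tilde u}}}[f|X_{\tilde u}=x_{\tilde u}]$ for each $x_{\tilde u}$. Combining these identifications with the classical law of iterated expectation for $P$:
\begin{align*}
\mathbb{E}_P[f|X_u=x_u] &= \sum_{x_v \in \states_v} P_\emptyset(X_v=x_v|X_u=x_u)\,\mathbb{E}_{P_{(x_u,x_v)}}[f|X_{\tilde u}=(x_u,x_v)] \\
&< \mathbb{E}_{P_\emptyset}[g(X_u,X_v)|X_u=x_u] + \epsilon/2 < \underline{\mathbb{E}}^{\mathrm{W}}_{\rateset,\mathcal{M}}[g|X_u=x_u] + \epsilon.
\end{align*}
Since $\underline{\mathbb{E}}^{\mathrm{W}}_{\rateset,\mathcal{M}}[f|X_u=x_u] \leq \mathbb{E}_P[f|X_u=x_u]$ and $\epsilon>0$ is arbitrary, the $\leq$ inequality follows.

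The principal obstacle is the $\leq$ direction, which requires stitching together a whole family of candidate processes---one per joint history $(x_u,x_v)$---into a single stochastic process in $\wprocesses_{\rateset,\mathcal{M}}$ that simultaneously realises the pointwise near-infima of the inner lower expectation. Theorem~\ref{theo:aanelkaarplakken} is precisely the pasting tool that supplies such a process, and the convexity assumption on $\rateset$ is what licenses its application. Everything else is a direct imprecise analogue of the classical derivation of the law of iterated expectation.
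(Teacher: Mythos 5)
Your proposal is correct and follows essentially the same route as the paper's own proof: the $\geq$ direction via the classical law of iterated expectation and monotonicity for each individual process, and the $\leq$ direction by choosing $\epsilon/2$-optimal processes $P_\emptyset$ and $P_{x_u,x_v}$ and pasting them into a single element of $\wprocesses_{\rateset,\mathcal{M}}$ via Theorem~\ref{theo:aanelkaarplakken}, which is exactly where the convexity of $\rateset$ is used. The only (harmless) difference is that you explicitly select a process for every joint history $(x_u,x_v)$ when invoking the pasting theorem, whereas the paper fixes $x_u$ and indexes only by $x_v$; this is just a matter of bookkeeping, not of substance.
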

The reason why this result is useful, is because it essentially allows us to compute lower expectations recursively. In particular, instead of computing a lower expectation for all time points simultaneously, we can focus on each of the time points separately, and eliminate them one by one. We will revisit this idea in Section~\ref{sec:funcs_multi_time_points}, where we will use it to develop efficient algorithms.

For now, we would like to draw attention to the fact that Theorem~\ref{theorem:decomposition_multivar} relies heavily on the fact that the individual elements of $\smash{\wprocesses_{\rateset,\mathcal{M}}}$ do not need to satisfy the Markov property, in the sense that they can have history-dependent probabilities. Therefore, unfortunately, the result in Theorem~\ref{theorem:decomposition_multivar} does not extend to $\smash{\underline{\mathbb{E}}_{\rateset,\mathcal{M}}^\mathrm{WM}}$ or $\smash{\underline{\mathbb{E}}_{\rateset,\mathcal{M}}^\mathrm{WHM}}$. Since Theorem~\ref{theorem:decomposition_multivar} simplifies considerably the process of computing the lower expectation $\underline{\mathbb{E}}_{\rateset,\mathcal{M}}^\mathrm{W}$, this suggests that the qualitative differences between the three types of \ictmc's that we consider make some of their corresponding lower expectations harder to compute than others. In fact, we will show in Section~\ref{sec:funcs_multi_time_points} that $\wprocesses_{\rateset\,,\mathcal{M}}$ is in some sense the easiest set to do this for, exactly because of Theorem~\ref{theorem:decomposition_multivar}.

Furthermore, rather ironically, computing lower expectations for the set $\whmprocesses_{\rateset\,,\mathcal{M}}$---which, intuitively, is the simplest of our three types of \ictmc's---seems to be much harder than for the sets $\smash{\wmprocesses_{\rateset\,,\mathcal{M}}}$ or $\smash{\wprocesses_{\rateset\,,\mathcal{M}}}$.
The problem is, essentially, that while homogeneous Markov chains are easy to work with numerically, the set $\whmprocesses_{\rateset\,,\mathcal{M}}$ does not provide enough ``degrees of freedom'' to easily solve the optimisation problem that is involved in computing $\underline{\mathbb{E}}^\mathrm{WHM}_{\rateset\,,\mathcal{M}}$.
The following serves as an illustration.

Suppose that we want to compute the lower expectation $\underline{\mathbb{E}}_{\rateset,\,\mathcal{M}}^{\mathrm{WHM}}[f(X_t)\,\vert\,X_0=x_0]$ of some function $f\in\gamblesX$ at time $t$, conditional on the information that the state $X_0$ at time $0$ takes the value $x_0\in\states_0$. It then follows from Definition~\ref{def:lower_exp},  Remark~\ref{remark:expectationT} and Proposition~\ref{prop:Q_is_singleton_deriv_for_homogen} that
\begin{align*}
\underline{\mathbb{E}}_{\rateset,\,\mathcal{M}}^{\mathrm{WHM}}[f(X_t)\,\vert\,X_0=x_0] &= \inf\left\{ \mathbb{E}_P[f(X_t)\,\vert\,X_0=x_0]\,:\,P\in\whmprocesses_{\rateset,\,\mathcal{M}} \right\} \\
 &= \inf\left\{ [e^{Q_P t}f](x_0)\,:\,P\in\whmprocesses_{\rateset,\,\mathcal{M}} \right\} \\
 &= \inf\left\{ [e^{Q t}f](x_0)\,:\,Q\in\rateset\right\}.
 \vspace{6pt}
\end{align*}
Therefore, computing $\smash{\underline{\mathbb{E}}_{\rateset,\,\mathcal{M}}^{\mathrm{WHM}}[f(X_t)\,\vert\,X_0=x_0]}$ is at its core a non-linear, constrained optimisation problem over the set $\rateset$, where the non-linearity stems from the term $e^{Qt}$, and the specific form of the constraints depends on the choice of $\rateset$.  The following example illustrates the non-linearity of this type of optimisation problem in a simple case.

\begin{figure}[t]
\begin{tikzpicture}
      \node[anchor=south west,inner sep=0] (plot) at (0,0) {\includegraphics[width=106mm]{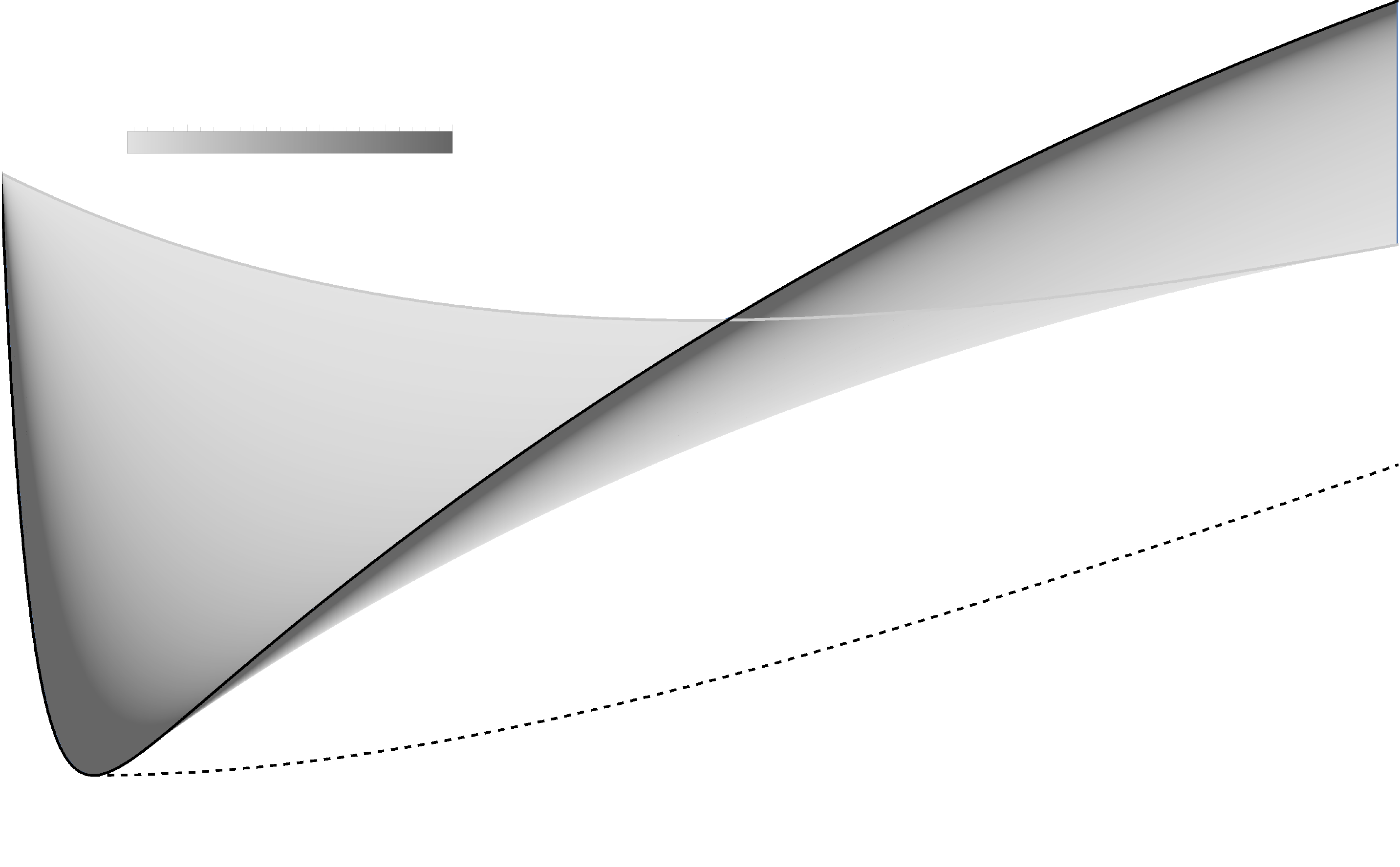}};
      \begin{scope}[x={(plot.south east)},y={(plot.north west)}]
        \draw[->] (0,0) -- (0,1.08) ;
        \node[left=0pt] at (0,1.04) {$q$};
        \draw[->] (0,0) -- (1.04,0) ;
        \node[below] at (1.06,0) {$t$}; 
        \draw (0,0) -- (0,-0.02);
        \node[below=3pt] at (0,0) {\small$0$};
        \draw (0.1,0) -- (0.1,-0.02);
        \node[below=3pt] at (0.1,0) {\small$10$};
        \draw (0.2,0) -- (0.2,-0.02);
        \node[below=3pt] at (0.2,0) {\small$20$};
        \draw (0.3,0) -- (0.3,-0.02);
        \node[below=3pt] at (0.3,0) {\small$30$};
        \draw (0.4,0) -- (0.4,-0.02);
        \node[below=3pt] at (0.4,0) {\small$40$}; 
        \draw (0.5,0) -- (0.5,-0.02);
        \node[below=3pt] at (0.5,0) {\small$50$}; 
        \draw (0.6,0) -- (0.6,-0.02);
        \node[below=3pt] at (0.6,0) {\small$60$}; 
        \draw (0.7,0) -- (0.7,-0.02);
        \node[below=3pt] at (0.7,0) {\small$70$}; 
        \draw (0.8,0) -- (0.8,-0.02);
        \node[below=3pt] at (0.8,0) {\small$80$}; 
        \draw (0.9,0) -- (0.9,-0.02);
        \node[below=3pt] at (0.9,0) {\small$90$}; 
        \draw (1,0) -- (1,-0.02);
        \node[below=3pt] at (1,0) {\small$100$}; 

        \draw (0,0) -- (-0.012,0);
        \node[left=3pt] at (0,0) {\small$0$};
        \draw (0,0.08) -- (-0.012,0.08);
        \node[left=3pt] at (0,0.08) {\small$0.1$};
        \draw (0,0.16) -- (-0.012,0.16);
        \node[left=3pt] at (0,0.16) {\small$0.2$};
        \draw (0,0.24) -- (-0.012,0.24);
        \node[left=3pt] at (0,0.24) {\small$0.3$};
        \draw (0,0.32) -- (-0.012,0.32);
        \node[left=3pt] at (0,0.32) {\small$0.4$};
        \draw (0,0.4) -- (-0.012,0.4);
        \node[left=3pt] at (0,0.4) {\small$0.5$};
        \draw (0,0.48) -- (-0.012,0.48);
        \node[left=3pt] at (0,0.48) {\small$0.6$};
        \draw (0,0.56) -- (-0.012,0.56);
        \node[left=3pt] at (0,0.56) {\small$0.7$};
        \draw (0,0.64) -- (-0.012,0.64);
        \node[left=3pt] at (0,0.64) {\small$0.8$};
        \draw (0,0.72) -- (-0.012,0.72);
        \node[left=3pt] at (0,0.72) {\small$0.9$};
        \draw (0,0.8) -- (-0.012,0.8);
        \node[left=3pt] at (0,0.8) {\small$1$};
        \draw (0,0.88) -- (-0.012,0.88);
        \node[left=3pt] at (0,0.88) {\small$1.1$};
        \draw (0,0.96) -- (-0.012,0.96);
        \node[left=3pt] at (0,0.96) {\small$1.2$};

		\node at (0.21,0.943) {$\lambda$};
        \draw (0.134,0.848) -- (0.134,0.86);
        \node at (0.134,0.887) {\scriptsize$0.1$};
        \draw (0.1812,0.848) -- (0.1812,0.86);
        \node at (0.1812,0.887) {\scriptsize$0.2$};
        \draw (0.2284,0.848) -- (0.2284,0.86);
        \node at (0.2284,0.887) {\scriptsize$0.3$};
        \draw (0.2756,0.848) -- (0.2756,0.86);
        \node at (0.2756,0.887) {\scriptsize$0.4$};
        \draw (0.3224,0.848) -- (0.3224,0.86);
        \node at (0.3224,0.887) {\scriptsize$0.5$};


        \node at (0.7,0.47) {$\underline{\mathbb{E}}_\rateset^\mathrm{WHM}[f(X_t)\,\vert\,X_0=a]$};
        \node at (0.82,0.23) {$\underline{\mathbb{E}}_\rateset^\mathrm{WM}[f(X_t)\,\vert\,X_0=a]$};
        \node at (0.87,0.15) {$~~~~=\underline{\mathbb{E}}_\rateset^\mathrm{W}[f(X_t)\,\vert\,X_0=a]$};
      \end{scope}
    \end{tikzpicture}
\caption{Plot of the induced set of expected values $\mathbb{E}_P[f(X_t)\,\vert\,X_0=a]$, for time points $t\in[0,100]$, corresponding to all $P\in\whmprocesses_{\rateset}$ obtained as we vary $\lambda\in[0.01,0.5]$. Observe that the lower expectation with respect to this set is initially reached by choosing $\lambda=0.5$. This changes around the time point $t\approx 6.6$, after which the minimising value of $\lambda$ becomes a (changing) internal point of the interval $[0.01,0.5]$. The dashed line corresponds to the lower expectation with respect to the sets $\wmprocesses_\rateset$ and $\wprocesses_\rateset$, which also include non-homogeneous Markov chains and, for $\wprocesses_\rateset$, even more general processes. Note that the lower expectations with respect to $\whmprocesses_{\rateset}$, $\wmprocesses_{\rateset}$ and $\wprocesses_{\rateset}$ are all equal for $t<6.6$. However, as time evolves further, the lower expectation with respect to $\whmprocesses_{\rateset}$ starts to diverge from the other two.}
\label{fig:homogeneousCounterExample}
\end{figure}

\begin{exmp}\label{ex:homogeneousexample}
Consider an ordered ternary state space $\states\coloneqq \{a,b,c\}$,  let $f\in\gamblesX$ be defined as $f\coloneqq [1~0~2]^\top$---in the sense that $f(a)\coloneqq1$, $f(b)\coloneqq0$ and $f(c)\coloneqq2$---and consider the set of transition rate matrices
\vspace{4pt}
\begin{equation*}
\rateset \coloneqq \left\{ \left[\begin{array}{rrr}
-\lambda & \lambda & 0 \\
0 & -0.01 & 0.01 \\
0 & 0 & 0
\end{array}\right] \,:\,\lambda\in[0.01,0.5]\right\}.
\vspace{4pt}
\end{equation*}
Every process $P$ in the imprecise Markov chain $\whmprocesses_{\rateset,\,\mathcal{M}}$ is then a homogeneous Markov chain of which the unique transition rate matrix $Q$ is completely determined by some $\lambda$ in $[0.01,0.5]$. Furthermore, as we know from Remark~\ref{remark:expectationT} and Theorem~\ref{theo:homogeneoushasQ}, the conditional expectation $\mathbb{E}_P[f(X_t)\vert X_0=a]$ that corresponds to this homogeneous Markov chain is equal to $[e^{Q t}f](a)$. Due to this equality, it is a matter of applying some basic---yet cumbersome---algebra to find that 
\begin{align*}
\mathbb{E}_P[f(X_t)\vert X_0=a]
=
\begin{cases}
\displaystyle
2+e^{-\lambda t}+2\frac{e^{-\lambda t}-100\lambda e^{-\nicefrac{t}{100}}}{100\lambda-1}&\text{ if $\lambda\in(0.01,0.5]$}\\[8pt]
\displaystyle
2-e^{-\nicefrac{t}{100}}-\frac{1}{50}te^{-\nicefrac{t}{100}}&\text{ if $\lambda=0.01$.}
\end{cases}\\[-10pt]
\end{align*}
Obtaining the value of $\underline{\mathbb{E}}_{\rateset,\,\mathcal{M}}^{\mathrm{WHM}}[f(X_t)\,\vert\,X_0=a]$ now corresponds to minimising this expression as $\lambda$ ranges over the interval $[0.01,0.5]$. Figure~\ref{fig:homogeneousCounterExample} illustrates that, even in this simple ternary case, this minimisation problem is already non-trivial, because---depending on the value of $t$---the minimum is not guaranteed to be obtained for one of the end points of $[0.01,0.5]$, but may 
only be achieved by an internal point. 

Nevertheless, in this simple case, $\underline{\mathbb{E}}_{\rateset,\,\mathcal{M}}^{\mathrm{WHM}}[f(X_t)\,\vert\,X_0=a]$ can of course be closely approximated by taking a very fine discretisation of the interval $[0.01, 0.5]$, and performing an exhaustive search through this discretised parameter space to estimate the value of the lower expectation and the range of (precise) expectations that can be obtained within the set; this leads to the solution that is depicted in Figure~\ref{fig:homogeneousCounterExample}. 

Figure~\ref{fig:homogeneousCounterExample} also depicts $\underline{\mathbb{E}}_{\rateset,\,\mathcal{M}}^{\mathrm{WM}}[f(X_t)\,\vert\,X_0=a]$ and $\underline{\mathbb{E}}_{\rateset,\,\mathcal{M}}^{\mathrm{W}}[f(X_t)\,\vert\,X_0=a]$, which happen to coincide. The reason why they coincide in this case, and the method by which we have computed them, will be explained in Section~\ref{sec:connections}. For now, it suffices to notice that for large enough values of $t$, their common value differs from $\underline{\mathbb{E}}_{\rateset,\,\mathcal{M}}^{\mathrm{WHM}}[f(X_t)\,\vert\,X_0=a]$, thereby illustrating that the first inequality in Proposition~\ref{prop:lower_exp_markov_bounded_by_nonmarkov} can indeed be strict.
\exampleend
\end{exmp}

While this example shows that for sufficiently ``nice'' sets $\rateset$ and in low dimensions, a ``discretise and exhaustive-search'' method allows us to numerically solve the type of non-linear optimisation problems that are associated with $\smash{\underline{\mathbb{E}}^\mathrm{WHM}_{\rateset\,,\mathcal{M}}}$, the computational complexity of such an approach will in general quickly explode as the size of $\states$ increases.\footnote{In the sense that in general, the dimensionality of the parameter-space grows quadratically in the number of states---since the number of elements of a rate matrix grows quadratically in this number, and we are considering sets of rate matrices---and hence the size of any discretisation of this parameter space (for a given precision) scales exponentially in $\abs{\states}^2$. Clearly, performing an exhaustive search through such a discretised parameter-space will quickly become infeasible.}
Since we are not aware of other computational methods, this suggests that computing lower expectations for \ictmc's that are of the type $\smash{\whmprocesses_{\rateset,\,\mathcal{M}}}$ is typically very difficult, and it will therefore often be necessary to resort to approximation methods. For the particular case where $\rateset$ corresponds to an interval matrix, such methods have been explored in, for example, References \cite{Goldsztejn2014} and~\cite{oppenheimer1988}.

In the remainder of this paper, we will not consider \ictmc's that are of the type $\smash{\whmprocesses_{\rateset,\,\mathcal{M}}}$. Instead, we will focus on the lower expectations $\smash{\underline{\mathbb{E}}_{\rateset,\mathcal{M}}^\mathrm{WM}}$ and $\smash{\underline{\mathbb{E}}_{\rateset,\mathcal{M}}^\mathrm{W}}$ that correspond to \ictmc's that are of the type $\smash{\wmprocesses_{\rateset,\,\mathcal{M}}}$ or $\smash{\wprocesses_{\rateset,\,\mathcal{M}}}$, and we will develop efficient methods for computing them. In order to do that, we start by introducing the notion of a lower transition operator.

\section{Towards Lower Transition Operators for \ictmc's}
\label{sec:lowertrans}

As explained in Remark~\ref{remark:expectationT}, the transition matrix $T_t^s$ of a stochastic process $P$ serves as an alternative representation of the expectation operator $\mathbb{E}_P[f(X_s)\,\vert\,X_t]$, in the sense that
\begin{equation}\label{eq:transitionmatrixasexpectation}
[T_t^sf](x_t)=\mathbb{E}_P[f(X_s)\,\vert\,X_t=x_t]
\text{ for all $f\in\gamblesX$ and $x_t\in\states$.}
\end{equation}
Furthermore, if $P$ is a well-behaved homogeneous Markov chain, then as explained in Section~\ref{sec:homogen_markov_chain}, $T_t^s$ is completely determined by a unique transition rate matrix $Q$, in the sense that $T_t^s=e^{Q(s-t)}$.

We will in this section introduce a generalisation of these transition matrices and transition rate matrices, called \emph{lower transition operators} and \emph{lower transition rate operators}, respectively. Furthermore, and most importantly, we will introduce the notion of a \emph{corresponding lower transition operator}, which, much like in the precise homogeneous case, will be completely determined by some given lower transition rate operator. 

Further on in this paper, we will then  show that this type of lower transition operator serves as an alternative representation for the lower expectations of imprecise continuous-time Markov chains, thereby establishing an analogy with Equation~\eqref{eq:transitionmatrixasexpectation}. However, for now, in this section, we focus on introducing the relevant concepts, and on deriving this operator of interest. We end in Section~\ref{sec:properties_lower_trans} by showing that this operator satisfies a number of convenient properties.

\subsection{Lower Transition Operators}\label{subsec:lowertrans_rate}

The central concept that we will be interested in throughout this section is that of a lower transition operator $\lt$.

\begin{definition}[Lower Transition Operator]\label{def:coh_low_trans}
A map $\lt$ from $\gamblesX$ to $\gamblesX$ is called a \emph{lower transition operator} if, for all $f,g\in\gamblesX$, all $\lambda\in\realsnonneg$, and all $x\in\states$:
\begin{enumerate}[label=LT\arabic*:,ref=LT\arabic*]
\item
$\left[\lt\,f\right](x)\geq\min\left\{f(y)\,\colon\,y\in\states\right\}$; \label{LT:bounded_min}
\item
$\left[\lt(f+g)\right](x)\geq \left[\lt\,f\right](x)+\left[\lt\,g\right](x)$; \label{LT:super_additive}
\item
$\left[\lt(\lambda f)\right](x)=\lambda\left[\lt\,f\right](x)$. \label{LT:homo}
\end{enumerate}
\noindent We will use $\underline{\mathbb{T}}$ to denote the set of all lower transition operators.

Such lower transition operators furthermore satisfy the following properties---see Reference~\cite{DeBock:2016} for a proof. For any lower transition operator $\lt$, any $f,f_1,f_2\in\gamblesX$, any $\mu\in\reals$ and any two non-negatively homogeneous operators $A,B$ from $\gamblesX$ to $\gamblesX$:
\begin{enumerate}[label=LT\arabic*:,ref=LT\arabic*,start=4]
\item
$\norm{\lt} \leq 1$; \label{LT:norm_at_most_one}
\item
$f_1\geq f_2~\then~\lt f_1\geq\lt f_2$;\label{LT:monotonicity}
\item
$\lt(f+\mu)=\lt(f)+\mu$;\label{LT:constantadditivity}
\item
$\norm{\lt A - \lt B} \leq \norm{A - B}$. \label{LT:differencenorm}
\end{enumerate}
\vspace{0pt}
\end{definition}

\begin{restatable}{proposition}{lemmacompositioncoherence}
\label{lemma:compositioncoherence}
For any two lower transition operators $\lt,\underline{S}\in\underline{\mathbb{T}}$, their composition $\lt\,\underline{S}$ is again a lower transition operator.
\end{restatable}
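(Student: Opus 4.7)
The plan is to verify the three defining properties \ref{LT:bounded_min}--\ref{LT:homo} of a lower transition operator for the composition $\lt\,\underline{S}$, using the fact that each of $\lt$ and $\underline{S}$ individually satisfies them, together with the derived properties \ref{LT:monotonicity} and \ref{LT:constantadditivity} that are already available.

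First I would dispose of the easy ones. For \ref{LT:homo}, non-negative homogeneity of the composition is immediate: for any $f\in\gamblesX$ and $\lambda\in\realsnonneg$, applying \ref{LT:homo} first to $\underline{S}$ and then to $\lt$ yields $\lt\underline{S}(\lambda f) = \lt(\lambda\underline{S}f)=\lambda\,\lt\,\underline{S}f$. For \ref{LT:super_additive}, I would chain two inequalities: \ref{LT:super_additive} applied to $\underline{S}$ gives $\underline{S}(f+g)\geq \underline{S}f+\underline{S}g$ pointwise, and then monotonicity \ref{LT:monotonicity} of $\lt$ lifts this inequality through $\lt$, after which \ref{LT:super_additive} applied to $\lt$ separates the sum, producing $\lt\,\underline{S}(f+g)\geq \lt\,\underline{S}f+\lt\,\underline{S}g$.

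The only mildly delicate step is \ref{LT:bounded_min}. Here, set $c\coloneqq \min\{f(y)\colon y\in\states\}$. Property \ref{LT:bounded_min} for $\underline{S}$ gives that $\underline{S}f\geq c$ as a function, i.e.\ $\underline{S}f$ dominates the constant function $c$ pointwise. Applying \ref{LT:monotonicity} to $\lt$ then gives $\lt\,\underline{S}f\geq \lt(c)$. It remains to identify $\lt(c)$ with the constant $c$. This follows from \ref{LT:constantadditivity} with $\mu=c$ and the zero function, combined with the observation that $\lt(0)=0$ by \ref{LT:homo} (take $\lambda=0$). Hence $\lt(c)=c$, and therefore $[\lt\,\underline{S}f](x)\geq c$ for every $x\in\states$, which is exactly \ref{LT:bounded_min}.

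There is no real obstacle here; the main thing to be careful about is that one needs \ref{LT:monotonicity} and \ref{LT:constantadditivity} for the last step, but these are already stated (and referenced as proved) in Definition~\ref{def:coh_low_trans}, so they may be invoked freely. Putting the three verifications together yields that $\lt\,\underline{S}\in\underline{\mathbb{T}}$.
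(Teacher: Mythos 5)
Your proof is correct and takes essentially the same route as the paper: a direct verification of \ref{LT:bounded_min}--\ref{LT:homo} for the composition, with \ref{LT:homo} handled by chaining homogeneity and \ref{LT:super_additive} by combining super-additivity of $\underline{S}$ with \ref{LT:monotonicity} and super-additivity of $\lt$. The only (harmless) difference is in \ref{LT:bounded_min}, where the paper simply applies \ref{LT:bounded_min} of $\lt$ to the function $\underline{S}f$ and then \ref{LT:bounded_min} of $\underline{S}$, whereas you pass through \ref{LT:monotonicity}, \ref{LT:constantadditivity} and $\lt 0=0$; both arguments are valid.
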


The first thing to note is that any transition matrix $T$ will also satisfy properties~\ref{LT:bounded_min}-\ref{LT:homo}, and hence is also a lower transition operator. It is therefore clear that lower transition operators are a generalisation of transition matrices.

A first way to motivate this specific generalisation is to note the following. For any lower transition operator $\lt$ and any $x\in\states$, consider the map $\lt_x:\gamblesX\to\reals$, defined for all $f\in\gamblesX$ as
\begin{equation}\label{eq:lowerprevisionfromlt}
\lt_xf \coloneqq \left[\lt f\right](x)\,.
\end{equation}
Due to properties~\ref{LT:bounded_min}-\ref{LT:homo}, it then follows that $\lt_x$ is a map from $\gamblesX$ to $\reals$ that is super-additive, non-negatively homogeneous, and bounded below by the minimum operator. By definition~\cite[Definition~2.3.3]{Walley:1991vk}, that means that $\lt_x$ is a \emph{coherent lower prevision} on $\gamblesX$. Note that the term ``prevision'' here is a synonym for ``expectation''---be it with a different interpretation attached to it---so this essentially states that $\lt_x$ is a ``coherent lower expectation'' on $\gamblesX$.

For the reader that is unfamiliar with this notion of coherent lower previsions, this is perhaps best clarified as follows. Consider some arbitrary set $\mathcal{M}$ of probability mass functions on $\states$, and consider a map $\underline{\mathbb{E}}:\gamblesX\to\reals$, defined for all $f\in\gamblesX$ as
\begin{equation*}
\underline{\mathbb{E}}f \coloneqq \inf\left\{\sum_{x\in\states} p(x)f(x)\,:\,p\in\mathcal{M}\right\}\,.
\end{equation*}
We call this map $\underline{\mathbb{E}}$ the \emph{lower envelope} of $\mathcal{M}$, and it should be clear that this map computes a lower expectation with respect to $\mathcal{M}$. Furthermore, note that for any $p\in\mathcal{M}$, the quantity $\sum_{x\in\states}p(x)f(x)$ is bounded from below by $\min\{f(y):y\in\states\}$, and since this is true for any $p\in\mathcal{M}$, it follows that $\underline{\mathbb{E}}f$ is also bounded below by this minimum. Similarly, it is easily verified that $\underline{\mathbb{E}}$ is super-additive and non-negatively homogeneous, due to the properties of the $\inf$ operator. Hence, by the definition cited above, the lower expectation operator $\underline{\mathbb{E}}$ that corresponds to $\mathcal{M}$ is a coherent lower prevision on $\gamblesX$. 

Our consideration of the coherent lower prevision $\lt_x$  above is essentially the same idea, but \emph{without} considering explicitly any link to some set of probability mass functions $\mathcal{M}_x$. Suffice it to say that, for any coherent lower prevision $\lt_x$, such a set $\mathcal{M}_x$ will always exist~\cite[Section 10.2]{Huber:1981ch}. It should furthermore be noted that the qualifier ``coherent'' here has connections to the notion of coherence that appeared in Section~\ref{sec:cond_prob}, 
and that it can be given a direct gambling interpretation that resembles the one in Appendix~\ref{app:coherence}; we refer to~\cite{troffaes2013:lp,Walley:1991vk} for further discussion on this.

In any case, for our present purposes, it suffices to realise that, because $\lt_x$ is a coherent lower prevision for any $x\in\states$, the lower transition operator $\lt$ can be seen as a vector of coherent lower previsions. Therefore, and because each of these coherent lower previsions $\lt_x$ has some set $\mathcal{M}_x$ of probability mass functions of which $\lt_x$ computes the lower envelope, we can combine these sets of probability mass functions to form a set of transition matrices
\begin{equation*}
\mathbb{T}_{\lt}=\{T\in\mathbb{T}\colon (\forall x\in\states)~T(x,\cdot)\in\mathcal{M}_x\}
\end{equation*}
of which $\lt$ is the lower envelope, in the sense that
\begin{equation*}
[\lt f](x) = \inf\{ [Tf](x)\,:\, T\in\mathbb{T}_{\lt} \}~
\text{ for all $f\in\gamblesX$ and $x\in\states$.}
\end{equation*}
Due to the correspondence between transition matrices and conditional expectation operators---see Remark~\ref{remark:expectationT}---it should therefore be clear that lower transition operators are an intuitive starting point to try and find alternative characterisations for the lower expectations that correspond to an \ictmc. What remains is to find the specific lower transition operator $\lt$ whose set of dominating transition matrices $\mathbb{T}_{\lt}$ corresponds to the set of transition matrices that is induced by a given \ictmc; the remainder of this section will provide the machinery required to do this.

We conclude with the following result about the set $\underline{\mathbb{T}}$ of all lower transition operators, which states that this set is a complete metric space with respect to our usual norm.

\begin{restatable}{proposition}{lemmacompletemetricspace}
\label{lemma:completemetricspace}
The metric space $(\underline{\mathbb{T}},d)$ is complete with respect to the metric $d$ that is induced by our usual norm $\norm{\cdot}$.
\end{restatable}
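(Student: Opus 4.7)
The plan is to follow the standard template for showing completeness of a function space: take an arbitrary Cauchy sequence, construct a candidate limit pointwise, verify the candidate lies in the space, and then upgrade pointwise convergence to convergence in the relevant norm.

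Concretely, let $\{\lt_n\}_{n\in\nats}$ be a Cauchy sequence in $\underline{\mathbb{T}}$ with respect to $d$. First I would fix $f\in\gamblesX$ and observe, using property~\ref{N:normAf}, that
\[
\norm{\lt_n f - \lt_m f} = \norm{(\lt_n-\lt_m)f} \leq \norm{\lt_n-\lt_m}\norm{f},
\]
noting that $\lt_n-\lt_m$ is non-negatively homogeneous because each $\lt_n$ and $\lt_m$ is (by \ref{LT:homo}), so its norm is well-defined. Hence $\{\lt_n f\}_{n\in\nats}$ is Cauchy in $\gamblesX$, which is finite dimensional and therefore complete under $\norm{\cdot}$. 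I would then define $\lt f \coloneqq \lim_{n\to\infty}\lt_n f$ for every $f\in\gamblesX$, giving a map $\lt\colon\gamblesX\to\gamblesX$.

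Next I would verify that $\lt\in\underline{\mathbb{T}}$ by checking \ref{LT:bounded_min}--\ref{LT:homo} pointwise. For any $f,g\in\gamblesX$, $\lambda\in\realsnonneg$ and $x\in\states$, the value $[\lt f](x)$ is the limit of $[\lt_n f](x)$, and each $\lt_n$ satisfies the defining inequalities/equalities, so passing to the limit gives $[\lt f](x)\geq\min_y f(y)$, $[\lt(f+g)](x)\geq[\lt f](x)+[\lt g](x)$, and $[\lt(\lambda f)](x)=\lambda[\lt f](x)$; the first two use that non-strict inequalities are preserved under limits, and the last uses continuity of scalar multiplication. Hence $\lt$ is a lower transition operator.

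Finally I would promote pointwise convergence to convergence in the metric $d$. Given $\epsilon>0$, choose $N$ such that $\norm{\lt_n-\lt_m}<\epsilon$ for all $n,m\geq N$. Then for every $f\in\gamblesX$ with $\norm{f}=1$ and every $n,m\geq N$, we have $\norm{\lt_n f-\lt_m f}<\epsilon$; letting $m\to\infty$ yields $\norm{\lt_n f-\lt f}\leq\epsilon$. Since this bound is uniform in such $f$, and since $\lt_n-\lt$ is non-negatively homogeneous (because both summands are, by \ref{LT:homo} for $\lt$ just established), the definition of the operator norm in Equation~\eqref{eq:operatornorm} gives $\norm{\lt_n-\lt}\leq\epsilon$ for all $n\geq N$, i.e. $\lt_n\to\lt$ in $(\underline{\mathbb{T}},d)$. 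The only mildly delicate point is this last step—ensuring that pointwise convergence genuinely upgrades to norm convergence—but it drops out because the Cauchy estimate is already uniform in $f$ on the unit sphere, so no compactness argument on $\gamblesX$ is needed.
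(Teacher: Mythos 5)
Your proof is correct and follows essentially the same route as the paper: construct the candidate limit pointwise via the bound $\norm{\lt_n f-\lt_m f}\leq\norm{\lt_n-\lt_m}\norm{f}$, verify it is a lower transition operator, and conclude norm convergence. The only difference is that you verify \ref{LT:bounded_min}--\ref{LT:homo} under pointwise limits and the upgrade to norm convergence directly, whereas the paper delegates exactly these two steps to cited results from Reference~\cite{DeBock:2016} (its Lemmas~\ref{lemma:lowertrans_if_pointwise_limit} and~\ref{lemma:lowertrans_limit_iff_pointwise_limit}).
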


\subsection{Lower Transition Rate Operators}\label{sec:connections_rate}

We next focus on the generalisation of transition rate matrices $Q$ to lower transition rate operators $\lrate$, as follows.
\begin{definition}[Lower Transition Rate Operator]\label{def:coh_low_trans_rate}
A map $\lrate$ from $\gamblesX$ to $\gamblesX$ is called a \emph{lower transition rate operator} if, for all $f,g\in\gamblesX$, all $\lambda\in\realsnonneg$, all constant functions $\mu\in\gamblesX$, and all $x\in\states$:

\begin{enumerate}[label=LR\arabic*:,ref=LR\arabic*]
\item\label{LR:constantzero}
$\left[\lrate\mu\right](x)=0$;
\item\label{LR:nondiagpos}
$\left[\lrate\ind{y}\right](x)\geq0$ for all $y\in\states$ such that $x\neq y$;
\item\label{LR:subadditive}
$\left[\lrate(f+g)\right](x)\geq\left[\lrate f\right](x)+\left[\lrate g\right](x)$;
\item\label{LR:homo}
$\left[\lrate(\lambda f)\right](x)= \lambda\left[\lrate f\right](x)$.
\end{enumerate}
Such lower transition rate operators furthermore satisfy the following properties---see Reference~\cite{DeBock:2016} for a proof. For any lower transition rate operator $\lrate$ and any two non-negatively homogeneous operators $A,B$ from $\gamblesX$ to $\gamblesX$:
\begin{enumerate}[label=LR\arabic*:,ref=LR\arabic*,start=5]
\item
$\norm{\lrate}\leq 2\max\big\{\abs{[\lrate\ind{x}](x)}\colon x\in\states\big\} < +\infty$ \label{LR:normlratefinite};
\item
$\norm{\lrate A - \lrate B} \leq 2\norm{\lrate}\norm{A - B}.$ \label{LR:differenceofnorm}
\end{enumerate}
\vspace{0pt}
\end{definition}

Note that properties~\ref{LR:constantzero} and~\ref{LR:nondiagpos} essentially preserve properties~\ref{def:Q:sumzero} and~\ref{def:Q:nonnegoffdiagonal} from Definition~\ref{def:rate_matrix}. The main difference lies in the fact that a rate matrix $Q$ is a linear map, whereas properties~\ref{LR:subadditive} and~\ref{LR:homo} merely require that a lower transition rate operator $\lrate$ is super-additive and non-negatively homogeneous. Therefore, every rate matrix is clearly a lower transition rate operator, with the latter concept providing a generalisation of the former.

A first reason why this specific generalisation is of interest, is because it preserves the relation between transition matrices and rate matrices that was established in Propositions~\ref{prop:stochastic_from_rate_matrix} and~\ref{prop:rate_from_stochastic_matrix}. Indeed, the following two results generalise these relations to our current setting.

\begin{proposition}[Reference {\cite[Proposition 5]{DeBock:2016}}]\label{lemma:normQsmallenough}
Consider any lower transition rate operator $\lrate$, and any $\Delta\in\realsnonneg$ such that $\Delta\norm{\lrate}\leq 1$. Then the operator $(I+\Delta\lrate)$ is a lower transition operator.
\end{proposition}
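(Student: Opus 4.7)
The plan is to verify the three defining properties of a lower transition operator directly for the map $I+\Delta\lrate$. Two of these are essentially immediate. For LT3, non-negative homogeneity follows in one line from LR4 and linearity of $I$, since $[(I+\Delta\lrate)(\lambda f)](x)=\lambda f(x)+\Delta\lambda[\lrate f](x)=\lambda[(I+\Delta\lrate)f](x)$, and LT2 follows in the same way from LR3. The entire substance of the proposition therefore lies in LT1, where the norm assumption $\Delta\norm{\lrate}\leq 1$ must finally come into play.

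For LT1, I would fix $f\in\gamblesX$ and $x\in\states$, set $m\coloneqq\min\{f(y):y\in\states\}$, and aim to show $f(x)+\Delta[\lrate f](x)\geq m$. The first step is to reduce to a non-negative function: combining LR1 (which sends constants to zero) with LR3 yields $[\lrate f](x)\geq[\lrate(f-m)](x)$. Next, decompose $f-m=\sum_{y\in\states}(f(y)-m)\ind{y}$ into non-negatively scaled indicators, and apply LR3 iteratively together with LR4 to push each non-negative scalar $f(y)-m$ outside, giving the pointwise lower bound $[\lrate(f-m)](x)\geq\sum_{y\in\states}(f(y)-m)[\lrate\ind{y}](x)$. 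The off-diagonal terms are non-negative by LR2 and can be discarded, leaving the concrete inequality $[\lrate f](x)\geq(f(x)-m)[\lrate\ind{x}](x)$.

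What remains is to control the diagonal quantity $[\lrate\ind{x}](x)$. Applying LR1 and iterated LR3 to the identity $1=\ind{x}+\sum_{y\neq x}\ind{y}$, together with LR2, yields $[\lrate\ind{x}](x)\leq 0$. On the other hand, the operator norm inequality N11 gives $|[\lrate\ind{x}](x)|\leq\norm{\lrate\ind{x}}\leq\norm{\lrate}\norm{\ind{x}}=\norm{\lrate}$, so the hypothesis $\Delta\norm{\lrate}\leq 1$ forces $1+\Delta[\lrate\ind{x}](x)\geq 0$. The conclusion then follows from a short algebraic rearrangement:
\begin{equation*}
f(x)+\Delta(f(x)-m)[\lrate\ind{x}](x)=f(x)\bigl(1+\Delta[\lrate\ind{x}](x)\bigr)-\Delta m[\lrate\ind{x}](x)\geq m,
\end{equation*}
where the inequality uses $f(x)\geq m$ multiplied by the non-negative factor $1+\Delta[\lrate\ind{x}](x)$, after which the two $[\lrate\ind{x}](x)$ terms cancel. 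The main obstacle is keeping track of signs in precisely this final step: the whole argument hinges on using $\Delta\norm{\lrate}\leq 1$ in exactly the right direction to guarantee that $1+\Delta[\lrate\ind{x}](x)$ remains non-negative, which is what allows the inequality $f(x)\geq m$ to be multiplied through without reversing.
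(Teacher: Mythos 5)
Your proof is correct: LT2 and LT3 do follow immediately from LR3 and LR4, and your verification of LT1 is sound --- the reduction $[\lrate f](x)\geq[\lrate(f-m)](x)$ via LR1 and LR3, the indicator decomposition with LR3/LR4, discarding the off-diagonal terms by LR2, the bound $[\lrate\ind{x}](x)\leq0$ obtained from $\lrate$ applied to the constant function $1$, the norm bound $\abs{[\lrate\ind{x}](x)}\leq\norm{\lrate}$, and the final rearrangement all check out, and the hypothesis $\Delta\norm{\lrate}\leq1$ enters exactly where it must, to make $1+\Delta[\lrate\ind{x}](x)\geq0$. Note, however, that the paper itself gives no proof of this proposition: it is imported verbatim from Reference~\cite{DeBock:2016} (Proposition~5 there), so there is no internal argument to compare against. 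The closest thing in the paper is the proof of the matrix analogue, Proposition~\ref{prop:stochastic_from_rate_matrix}, which verifies row-stochasticity entrywise using $Q(x,y)\geq0$ off the diagonal and $1+\Delta Q(x,x)\geq1-\Delta\norm{Q}\geq0$ on the diagonal; your argument is the natural generalisation of that proof, with the entrywise manipulations replaced by super-additivity and non-negative homogeneity applied to the decomposition $f-m=\sum_{y\in\states}(f(y)-m)\ind{y}$, at the modest cost of only obtaining inequalities rather than the exact row sums available in the linear case --- which is all that LT1 requires.
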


\begin{proposition}[Reference {\cite[Proposition 6]{DeBock:2016}}]\label{lemma:lower_trans_to_lower_rate}
Consider any lower transition operator $\lt$, and any $\Delta\in\realspos$. Then the operator $\nicefrac{1}{\Delta}(\lt - I)$ is a lower transition rate operator.
\end{proposition}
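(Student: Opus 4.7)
The plan is to verify directly that the operator $\lrate \coloneqq \frac{1}{\Delta}(\lt - I)$ satisfies each of the four defining axioms \ref{LR:constantzero}--\ref{LR:homo} of a lower transition rate operator. Since $\Delta > 0$ is fixed, this amounts to translating each property of $\lt$ (via the properties \ref{LT:bounded_min}--\ref{LT:constantadditivity} listed in Definition~\ref{def:coh_low_trans}) into the corresponding property for $\lrate$, which in each case should be essentially one short calculation.

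For \ref{LR:constantzero}, I would first observe that for any constant function $\mu\in\gamblesX$, $\lt\mu = \mu$. This follows by combining \ref{LT:constantadditivity} with $f=0$ together with $\lt 0 = 0$, which itself follows from \ref{LT:homo} applied with $\lambda=0$. Then $[\lrate\mu](x) = \tfrac{1}{\Delta}(\mu(x)-\mu(x)) = 0$. For \ref{LR:nondiagpos}, fix $x\neq y$ and compute $[\lrate\ind{y}](x) = \tfrac{1}{\Delta}([\lt\ind{y}](x) - \ind{y}(x)) = \tfrac{1}{\Delta}[\lt\ind{y}](x)$; the conclusion then follows from \ref{LT:bounded_min}, because $\min\{\ind{y}(z)\colon z\in\states\} = 0$.

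For \ref{LR:subadditive}, I would just expand the definition and apply super-additivity of $\lt$ from \ref{LT:super_additive}:
\begin{equation*}
[\lrate(f+g)](x) = \tfrac{1}{\Delta}\bigl([\lt(f+g)](x) - f(x) - g(x)\bigr) \geq \tfrac{1}{\Delta}\bigl([\lt f](x) + [\lt g](x) - f(x) - g(x)\bigr) = [\lrate f](x) + [\lrate g](x).
\end{equation*}
Finally, for \ref{LR:homo}, non-negative homogeneity of $\lt$ from \ref{LT:homo} immediately gives $[\lrate(\lambda f)](x) = \tfrac{1}{\Delta}(\lambda[\lt f](x) - \lambda f(x)) = \lambda[\lrate f](x)$ for $\lambda\in\realsnonneg$.

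I do not anticipate any real obstacle here: each axiom for $\lrate$ is the corresponding axiom for $\lt$ divided by $\Delta$, with the identity term either cancelling (as for constant functions and for the off-diagonal indicator at $x\neq y$) or being subtracted linearly on both sides (as in the super-additivity and homogeneity calculations). The only small subtlety to flag is the preliminary observation that $\lt$ fixes constant functions, which is used in \ref{LR:constantzero} and which is not among the defining properties \ref{LT:bounded_min}--\ref{LT:homo} but is an immediate consequence of them via \ref{LT:constantadditivity}.
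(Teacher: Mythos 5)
Your verification is correct: each of \ref{LR:constantzero}--\ref{LR:homo} does follow from \ref{LT:bounded_min}--\ref{LT:homo} (plus \ref{LT:constantadditivity} for the constant case) exactly as you compute, with $\Delta>0$ preserving the inequalities. The paper itself gives no proof of this statement—it imports it from Reference~\cite[Proposition 6]{DeBock:2016}—and your direct axiom-by-axiom check is precisely the standard argument one would expect there; the only cosmetic remark is that $\lt\mu=\mu$ can also be obtained from \ref{LT:bounded_min}--\ref{LT:homo} alone (via $0=\lt 0\geq\lt\mu+\lt(-\mu)\geq\mu-\mu$), so the appeal to the derived property \ref{LT:constantadditivity} is convenient but not essential.
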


Now, in our discussion of lower transition operators in Section~\ref{subsec:lowertrans_rate}, we mentioned that lower transition operators can be interpreted as the lower envelope of a set of transition matrices. As we are about to show, there is a similar connection between lower transition rate operators and sets of rate matrices.

So consider any non-empty bounded set $\rateset\subseteq\mathcal{R}$ of rate matrices. Then for any $f\in\gamblesX$, if we let
\begin{equation}\label{eq:correspondinglowertrans}
[\lrate f](x)\coloneqq\inf\{[Qf](x)\colon Q\in\rateset\}
\text{ for all $x\in\states$},
\end{equation}
the resulting function $\lrate f$ is again an element of $\gamblesX$,\footnote{Since $\rateset$ is bounded,~\ref{N:normAf} implies that, for all $Q\in\rateset$, $\norm{Qf}\leq\norm{Q}\norm{f}\leq\norm{\rateset}\norm{f}<+\infty$. Therefore, and since $\rateset$ is non-empty, the components of $\lrate f$ are bounded below by $-\norm{\rateset}\norm{f}$, which implies that $\lrate f$ is a real-valued function on $\states$.}
and therefore, $\lrate$ is a map from $\gamblesX$ to $\gamblesX$. We call this operator $\lrate$, as defined by Equation~\eqref{eq:correspondinglowertrans}, the \emph{lower envelope} of $\rateset$. It is a matter of straightforward verification to see that $\lrate$ is a lower transition rate operator.

\begin{restatable}{proposition}{proplowerenvelopeislowertrans}
\label{prop:lowerenvelopeislowertrans}
For any non-empty bounded set $\rateset\subseteq\mathcal{R}$ of rate matrices, the corresponding operator $\lrate\colon\gamblesX\to\gamblesX$, as defined by Equation~\eqref{eq:correspondinglowertrans}, is a lower transition rate operator.
\end{restatable}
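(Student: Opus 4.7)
The plan is to verify the four defining properties \ref{LR:constantzero}--\ref{LR:homo} of a lower transition rate operator one by one, each time reducing the claim about $\lrate$ to the analogous property of an individual rate matrix $Q\in\rateset$ (as recorded in Definition~\ref{def:rate_matrix}) together with an elementary property of the infimum. Since $\rateset$ is non-empty, each infimum is taken over a non-empty set, and since $\rateset$ is bounded, the footnote of the proposition already establishes that $\lrate f \in \gamblesX$, so the operator is well-defined as a map from $\gamblesX$ to $\gamblesX$; no additional technical work is required here.

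For \ref{LR:constantzero}, I would use R\ref{def:Q:sumzero}: for any constant function $\mu$ and any $Q\in\mathcal{R}$, $[Q\mu](x)=\mu\sum_{y\in\states}Q(x,y)=0$, and hence $[\lrate\mu](x)=\inf\{0\colon Q\in\rateset\}=0$. For \ref{LR:nondiagpos}, I would invoke R\ref{def:Q:nonnegoffdiagonal}: for $y\neq x$ and any $Q\in\rateset$, $[Q\ind{y}](x)=Q(x,y)\geq 0$, so the infimum is likewise non-negative.

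Property \ref{LR:subadditive} follows from the linearity of $Q$ together with the standard superadditivity of the infimum: for any $x\in\states$,
\begin{equation*}
[\lrate(f+g)](x)=\inf_{Q\in\rateset}\bigl([Qf](x)+[Qg](x)\bigr)\geq\inf_{Q\in\rateset}[Qf](x)+\inf_{Q\in\rateset}[Qg](x)=[\lrate f](x)+[\lrate g](x).
\end{equation*}
Property \ref{LR:homo} similarly follows by pulling the non-negative scalar $\lambda$ out of the infimum: $[\lrate(\lambda f)](x)=\inf_{Q\in\rateset}\lambda[Qf](x)=\lambda\inf_{Q\in\rateset}[Qf](x)=\lambda[\lrate f](x)$, where the second equality uses $\lambda\geq 0$.

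I do not anticipate any real obstacle: the argument is essentially bookkeeping, and the only subtlety worth flagging is the well-definedness of $\lrate f$ as a real-valued function, which is precisely why the hypotheses that $\rateset$ be non-empty and bounded are needed (and which, as noted, is already handled by the footnote accompanying Equation~\eqref{eq:correspondinglowertrans}).
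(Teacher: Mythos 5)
Your proof is correct and follows essentially the same route as the paper, which simply observes that every $Q\in\rateset$ satisfies \ref{LR:constantzero}--\ref{LR:homo} and that each of these properties is preserved under taking lower envelopes; your write-up merely spells out these envelope-preservation steps property by property. The point you flag about well-definedness of $\lrate f$ is indeed already covered by the footnote to Equation~\eqref{eq:correspondinglowertrans}, so nothing is missing.
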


\noindent
Inspired by this result, we will also refer to the lower envelope of $\rateset$ as the \emph{lower transition rate operator that corresponds to $\rateset$}.  
However, this correspondence is not one-to-one. As the following example establishes, different non-empty bounded sets of rate matrices may have the same corresponding lower transition rate operator.

\begin{exmp}\label{example:different_sets_same_lower_rate}
For the sake of simplicity, we assume that the state space $\states$ has only two elements, which allows us to work with $2\times 2$ matrices. Consider now two rate matrices
\begin{equation*}
A\coloneqq\left[\begin{array}{rr}-1 & 1 \\2 & -2\end{array}\right]\quad\text{and}\quad
B\coloneqq\left[\begin{array}{rr}-3 & 3 \\1 & -1\end{array}\right]\,,
\vspace{7pt}
\end{equation*}
let $C\coloneqq \nicefrac{1}{2}(A+B)$ be their convex mixture, which is clearly also a rate matrix, and use these matrices to define the sets $\rateset_1\coloneqq\{A,B\}$ and $\rateset_2\coloneqq\{A,B,C\}$. Then clearly, $\rateset_1$ and $\rateset_2$ are two different, non-empty and bounded sets of rate matrices. Let $\lrate_1$ and $\lrate_2$ be the lower transition rate operators that correspond to $\rateset_1$ and $\rateset_2$, respectively. Then as we prove in~\exampleproofref, these lower transition rate operators are identical, i.e. $\lrate_1=\lrate_2$.
\exampleend
\end{exmp}
Of course, this should not really be surprising---Example~\ref{example:different_sets_same_lower_rate} essentially establishes that different sets can have the same infimum. However, it does lead to a natural question: what do these different sets have in common?

Therefore, we next consider some fixed lower transition rate operator $\lrate$.
All the non-empty bounded sets $\rateset$ of rate matrices that have $\lrate$ as their lower envelope then share a common property: they consist of rate matrices $Q$ that dominate $\lrate$, in the sense that $Qf\geq\lrate f$ for all $f\in\gamblesX$. Therefore, each of these sets $\rateset$ is contained in the following set of dominating rate matrices:
\begin{equation}\label{eq:dominatingratematrices}
\rateset_{\lrate}\coloneqq
\left\{
Q\in\mathcal{R}
\colon
Qf\geq\lrate f\text{ for all $f\in\gamblesX$}
\right\}.
\end{equation}
As our next result shows, this set $\rateset_{\lrate}$ is non-empty and bounded, and has $\lrate$ as its lower envelope. Even stronger, the infimum in Equation~\eqref{eq:correspondinglowertrans} is reached---can be replaced by a minimum.

\begin{restatable}{proposition}{propdominatingnonemptybounded}
\label{prop:dominating_nonempty_bounded}
Consider a lower transition rate operator $\lrate$ and let $\rateset_{\lrate}$ be the corresponding set of dominating rate matrices, as defined by Equation~\eqref{eq:dominatingratematrices}. Then $\rateset_{\lrate}$ is non-empty and bounded and, for all $f\in\gamblesX$, there is some $Q\in\rateset_{\lrate}$ such that $\lrate f=Qf$.
\end{restatable}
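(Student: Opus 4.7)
The plan is to prove the statement by establishing the strongest claim first---that for every $f\in\gamblesX$ some dominating $Q\in\rateset_{\lrate}$ attains $Qf=\lrate f$---from which non-emptiness is immediate, and then to handle boundedness as a short separate argument. I would fix $f\in\gamblesX$ and construct the desired $Q$ row by row. For each $x\in\states$, introduce the scalar-valued functional $\lrate_x\colon\gamblesX\to\reals$ defined by $\lrate_xg\coloneqq[\lrate g](x)$. Properties \ref{LR:subadditive} and \ref{LR:homo} make $\lrate_x$ super-additive and positively homogeneous, so $\lrate_x$ is concave and real-valued on the finite-dimensional space $\gamblesX$.

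The key step is a Hahn-Banach argument to find, for each $x\in\states$, a \emph{linear} functional $L_x\colon\gamblesX\to\reals$ with $L_xg\geq\lrate_xg$ for all $g\in\gamblesX$ and $L_xf=\lrate_xf$. Explicitly, on the one-dimensional subspace $V\coloneqq\{\alpha f:\alpha\in\reals\}$ set $\ell_0(\alpha f)\coloneqq-\alpha\lrate_xf$. For $\alpha\geq 0$ positive homogeneity gives $\ell_0(\alpha f)=-\lrate_x(\alpha f)$, and for $\alpha<0$ the inequality $\ell_0(\alpha f)\leq-\lrate_x(\alpha f)$ reduces, after dividing by $|\alpha|$ and using positive homogeneity, to $\lrate_xf+\lrate_x(-f)\leq 0$; this in turn follows from super-additivity and \ref{LR:constantzero}, since $\lrate_xf+\lrate_x(-f)\leq\lrate_x(0)=0$. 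Hence $\ell_0\leq-\lrate_x$ on $V$, and Hahn-Banach extends $\ell_0$ to $\ell\colon\gamblesX\to\reals$ with $\ell\leq-\lrate_x$ everywhere; setting $L_x\coloneqq-\ell$ gives the required linear functional.

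Next I would assemble these $L_x$ into a matrix. Each $L_x$ is represented in the basis $\{\ind{y}:y\in\states\}$ by a vector $Q(x,\cdot)$ via $L_xg=\sum_{y\in\states}Q(x,y)g(y)$, and together these rows define a matrix $Q$. To check $Q\in\mathcal{R}$: applying $L_x$ to the constant functions $1$ and $-1$ and using \ref{LR:constantzero} gives $L_x(1)\geq 0$ and $L_x(-1)\geq 0$, hence $\sum_yQ(x,y)=L_x(1)=0$, which is \ref{def:Q:sumzero}; for $y\neq x$, $Q(x,y)=L_x(\ind{y})\geq\lrate_x(\ind{y})\geq 0$ by \ref{LR:nondiagpos}, which is \ref{def:Q:nonnegoffdiagonal}. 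Domination $Q\in\rateset_{\lrate}$ and the equality $Qf=\lrate f$ then hold row-wise by construction of the $L_x$.

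Finally, for boundedness, I would exploit linearity of rate matrices: for any $Q\in\rateset_{\lrate}$ and any $y\in\states$, applying $Qg\geq\lrate g$ to both $g=\ind{y}$ and $g=-\ind{y}$ yields the pointwise sandwich
\begin{equation*}
\lrate\ind{y}\leq Q\ind{y}\leq-\lrate(-\ind{y})\,.
\end{equation*}
Combined with \ref{LR:normlratefinite}, which bounds $\norm{\lrate}$ and hence all values $[\lrate\ind{y}](x)$ and $[\lrate(-\ind{y})](x)$, this gives a uniform bound on every entry $Q(x,y)$, and Equation~\eqref{eq:normofmatrix} then bounds $\norm{Q}$ uniformly over $\rateset_{\lrate}$. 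The main obstacle in the proof is the Hahn-Banach step, but in the finite-dimensional setting this amounts to the standard existence of a supporting affine functional for the concave function $\lrate_x$ at the point $f$; the rest is bookkeeping to verify that the supporting linear functional is in fact the row of a rate matrix.
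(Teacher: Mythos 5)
Your proof is correct, and it takes a more self-contained route than the paper's. The paper first rescales: it picks $\Delta>0$ with $\Delta\norm{\lrate}\leq1$, notes that $\lt\coloneqq I+\Delta\lrate$ is a lower transition operator (Proposition~\ref{lemma:normQsmallenough}), invokes Walley's theorem on coherent lower previsions to obtain, for each $x\in\states$, a dominating linear prevision that agrees with $\lt_x$ at $f$, assembles these into a transition matrix $T$ with $Tg\geq\lt g$ and $Tf=\lt f$, and then recovers $Q\coloneqq\nicefrac{1}{\Delta}(T-I)\in\rateset_{\lrate}$ with $Qf=\lrate f$ via Proposition~\ref{prop:rate_from_stochastic_matrix}. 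You instead apply Hahn--Banach directly to the superlinear functionals $\lrate_x$ (dominating sublinear function $-\lrate_x$), which is essentially the same supporting-functional fact that underlies Walley's theorem, and then verify \ref{def:Q:sumzero} and \ref{def:Q:nonnegoffdiagonal} by hand from \ref{LR:constantzero} and \ref{LR:nondiagpos}; this avoids the detour through lower transition operators at the cost of redoing a small amount of standard machinery, and your one-dimensional-subspace construction and the verification $\lrate_xf+\lrate_x(-f)\leq0$ are carried out correctly. For boundedness the two arguments also differ slightly but are both valid: the paper bounds only the diagonal entries from below, $Q(x,x)\geq[\lrate\ind{x}](x)$, and appeals to the characterisation in Proposition~\ref{prop:alternativedefforbounded}, whereas you sandwich every entry, $\lrate\ind{y}\leq Q\ind{y}\leq-\lrate(-\ind{y})$, and conclude via \ref{LR:normlratefinite} and Equation~\eqref{eq:normofmatrix}; your bound is marginally more explicit (roughly $\abs{\states}\norm{\lrate}$), the paper's is shorter given the lemma it already has available.
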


\noindent
Because of this result, and since---as discussed above---every non-empty bounded set of rate matrices that has $\lrate$ as its lower envelope is a subset of $\rateset_{\lrate}$, it follows that $\rateset_{\lrate}$ is the largest non-empty bounded set of rate matrices that has $\lrate$ as its lower envelope.
Furthermore, as we show in Proposition~\ref{prop:dominatingproperties} below, this set $\rateset_{\lrate}$ is also closed and convex, and has what we call \emph{separately specified rows}. 

Intuitively, we say that a set $\rateset$ of rate matrices has separately specified rows if it is closed under taking arbitrary combinations of rows from its elements. More formally, if for every $x\in\states$ we let $\rateset_x\coloneqq\{Q(x,\cdot):Q\in\rateset\}$ denote the set of $x$-rows of the matrices in $\rateset$, then we say that $\rateset$ has separately specified rows if $\rateset$ contains every matrix $Q$ that can be constructed by selecting, for all $x\in\states$, an arbitrary row $Q(x,\cdot)$ from $\rateset_x$.

\begin{definition}\label{def:separatelyspecifiedrows}
A set of rate matrices $\rateset\subseteq\mathcal{R}$ has separately specified rows if
\begin{equation*}
\rateset=\left\{
Q\in\mathcal{R}
\colon
(\forall x\in\states)~Q(x,\cdot)\in\rateset_x\right\},
\end{equation*}
where, for every $x\in\states$, $\rateset_x\coloneqq\{Q(x,\cdot)\colon Q\in\rateset\}$ is some given set of rows from which the $x$-row $Q(x,\cdot)$ of the rate matrices $Q$ in $\rateset$ is selected, independently of the other rows.\footnote{This concept is related to the separately specified local models that are often used in credal networks ---see, e.g., Reference~\cite{daRocha:2002uf}---and, for readers that are familiar with the literature on this latter subject, can be regarded as the continuous-time analogue of this notion.}
\end{definition}

\begin{restatable}{proposition}{propdominatingproperties}
\label{prop:dominatingproperties}
Consider a lower transition rate operator $\lrate$ and let $\rateset_{\lrate}$ be the corresponding set of dominating rate matrices, as defined by Equation~\eqref{eq:dominatingratematrices}. Then $\rateset_{\lrate}$ is closed and convex, and has separately specified rows.
\end{restatable}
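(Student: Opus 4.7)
The plan is to tackle the three properties separately, since each follows from a different basic observation about how $\lrate$ interacts with matrix arithmetic. Throughout, I will use the fact established in the previous result (Proposition~\ref{prop:dominating_nonempty_bounded}) that $\rateset_{\lrate}$ is non-empty, so there is no vacuity to worry about.

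For \emph{closedness}, I would take a convergent sequence $\{Q_n\}_{n\in\nats}\subseteq\rateset_{\lrate}$ with limit $Q$. First I would argue $Q\in\mathcal{R}$: this is straightforward because the defining conditions \ref{def:Q:sumzero} and \ref{def:Q:nonnegoffdiagonal} are closed under (entry-wise, or equivalently norm-) limits. Then, for each $f\in\gamblesX$, continuity of the map $A\mapsto Af$ in the operator norm (via property \ref{N:normAf}) gives $Q_n f\to Qf$, and since $Q_n f\geq\lrate f$ for every $n$, taking the limit preserves this inequality. Hence $Q\in\rateset_{\lrate}$.

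For \emph{convexity}, given $Q_1,Q_2\in\rateset_{\lrate}$ and $\lambda\in[0,1]$, let $Q\coloneqq\lambda Q_1+(1-\lambda)Q_2$. The set $\mathcal{R}$ is trivially convex from \ref{def:Q:sumzero} and \ref{def:Q:nonnegoffdiagonal}, so $Q\in\mathcal{R}$. For any $f\in\gamblesX$, linearity of matrix application gives $Qf=\lambda Q_1 f+(1-\lambda)Q_2 f\geq\lambda\lrate f+(1-\lambda)\lrate f=\lrate f$, so $Q\in\rateset_{\lrate}$.

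For the \emph{separately specified rows} property, the crucial observation is that for any matrix $A$, any $f\in\gamblesX$, and any $x\in\states$, the value $[Af](x)=\sum_{y\in\states}A(x,y)f(y)$ depends on $A$ only through its $x$-th row $A(x,\cdot)$. So suppose $Q\in\mathcal{R}$ satisfies $Q(x,\cdot)\in\rateset_{\lrate,x}$ for every $x\in\states$; then for each $x$ there exists $Q_x\in\rateset_{\lrate}$ with $Q(x,\cdot)=Q_x(x,\cdot)$. For any $f\in\gamblesX$ and any $x\in\states$, the observation yields $[Qf](x)=[Q_x f](x)\geq[\lrate f](x)$, and since this holds for every $x$, we get $Qf\geq\lrate f$, whence $Q\in\rateset_{\lrate}$. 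The converse inclusion is immediate from the definition of $\rateset_{\lrate,x}$, so equality holds, which is exactly Definition~\ref{def:separatelyspecifiedrows}.

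I do not expect any real obstacles here: all three parts reduce to the fact that the constraint defining $\rateset_{\lrate}$ is a conjunction (over $f$ and over $x$) of linear inequalities on the entries of $Q$, and such constraints are automatically preserved under limits, convex combinations, and row-wise recombination. The only thing to double-check carefully is that no convergence or integrability subtleties arise, but since $\states$ is finite and $\gamblesX$ is finite-dimensional, everything is purely algebraic.
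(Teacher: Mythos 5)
Your proof is correct and follows essentially the same route as the paper's: rate-matrix properties and the dominance inequalities are preserved under limits, convex combinations, and row-wise recombination (using that $[Qf](x)$ depends only on the $x$-row of $Q$). The only cosmetic difference is that the paper phrases the closedness and separately-specified-rows steps as \emph{ex absurdo} arguments, whereas you take the limits and row-wise bounds directly, which is equivalent.
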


\noindent
These additional properties characterise $\rateset_{\lrate}$ completely, in the sense that no other set satisfies them.

\begin{restatable}{proposition}{propdominatinguniquecharacterization}
\label{prop:dominating_unique_characterization}
Consider any non-empty, bounded, closed and convex set of rate matrices $\rateset\subseteq\mathcal{R}$ with separately specified rows that has $\lrate$ as its lower envelope. Then $\rateset=\rateset_{\lrate}$.
\end{restatable}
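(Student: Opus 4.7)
The plan is to prove the two inclusions $\rateset\subseteq\rateset_{\lrate}$ and $\rateset_{\lrate}\subseteq\rateset$ separately. The first one is essentially immediate: since $\lrate$ is the lower envelope of $\rateset$, every $Q\in\rateset$ satisfies $[Qf](x)\geq[\lrate f](x)$ for all $f\in\gamblesX$ and $x\in\states$, so $Q\in\rateset_{\lrate}$ by the definition in Equation~\eqref{eq:dominatingratematrices}.

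For the non-trivial inclusion $\rateset_{\lrate}\subseteq\rateset$, my strategy is to exploit the separately-specified-rows assumption to reduce the problem to a row-by-row geometric argument and then invoke a separating hyperplane. Concretely, for each $x\in\states$ let $\rateset_x\coloneqq\{Q(x,\cdot)\colon Q\in\rateset\}\subseteq\reals^{\abs{\states}}$. The first step is to check that $\rateset_x$ is non-empty, convex, and closed. Non-emptiness and convexity are immediate from the corresponding properties of $\rateset$, since the projection $Q\mapsto Q(x,\cdot)$ is linear. For closedness, I would use that $\rateset$ is bounded and closed in a finite-dimensional space, hence compact, and the continuous image of a compact set is compact, hence closed. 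Because $\rateset$ has separately specified rows, we furthermore have the key identity
\begin{equation*}
[\lrate f](x)
=\inf\{[Qf](x)\colon Q\in\rateset\}
=\inf\bigl\{\textstyle\sum_{y\in\states}r(y)f(y)\colon r\in\rateset_x\bigr\}\,,
\end{equation*}
i.e.\ the infimum decouples row-wise.

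Now suppose, for contradiction, that there exists some $Q\in\rateset_{\lrate}$ with $Q\notin\rateset$. By the separately-specified-rows characterisation in Definition~\ref{def:separatelyspecifiedrows}, this means there must be some $x\in\states$ such that the vector $Q(x,\cdot)\in\reals^{\abs{\states}}$ does not belong to $\rateset_x$. Since $\rateset_x$ is a non-empty, closed, and convex subset of a finite-dimensional Euclidean space, the standard strict separating hyperplane theorem applies and yields a vector $f\in\reals^{\abs{\states}}=\gamblesX$ and a scalar $c\in\reals$ such that
\begin{equation*}
\textstyle\sum_{y\in\states}Q(x,y)f(y)<c\leq \sum_{y\in\states}r(y)f(y)
\quad\text{for all $r\in\rateset_x$.}
\end{equation*}
Taking the infimum over $r\in\rateset_x$ on the right and using the row-wise decoupling identity above yields $[Qf](x)<c\leq[\lrate f](x)$. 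This contradicts $Q\in\rateset_{\lrate}$, which by Equation~\eqref{eq:dominatingratematrices} forces $[Qf](x)\geq[\lrate f](x)$. Hence no such $Q$ exists and $\rateset_{\lrate}\subseteq\rateset$.

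The main obstacle I anticipate is bookkeeping around the hypotheses rather than any deep issue: one must carefully transfer the closedness, boundedness, and convexity of $\rateset$ to the row projections $\rateset_x$, and then be precise about the fact that separately specified rows turns membership in $\rateset$ into an independent conjunction of row conditions (so that violation of $Q\in\rateset$ is witnessed by a single row lying outside $\rateset_x$). Once these structural reductions are in place, the separating hyperplane argument is routine, and the row-wise decoupling of the lower envelope---which is itself a direct consequence of separately specified rows---closes the argument cleanly.
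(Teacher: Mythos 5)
Your proof is correct and follows essentially the same route as the paper: the easy inclusion from the lower-envelope property, then a row-wise reduction via separately specified rows, the non-emptiness/boundedness/closedness/convexity of the row sets $\rateset_x$ (the paper isolates this as Lemma~\ref{lemma:rows_nonempty_bounded_closed_convex}), and a separating hyperplane applied to a row $Q(x,\cdot)\notin\rateset_x$ to contradict $Q\in\rateset_{\lrate}$. The only cosmetic difference is that the paper phrases the separating functional abstractly as $\psi$ and recovers $f$ via $f(y)=\psi(\ind{y})$, and handles the infimum with an $\epsilon$-argument, while you work directly with the vector $f$ and the row-wise decoupling of the infimum (which, incidentally, holds by the definition of the lower envelope alone, without invoking separately specified rows).
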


\begin{exmp}\label{ex:dominatingset}
Let $\rateset_1$ and $\rateset_2$ be constructed as in Example~\ref{example:different_sets_same_lower_rate}, and let $\lrate\coloneqq\lrate_1=\lrate_2$ be their common lower transition rate operator. As we are about to show, the corresponding set of dominating rate matrices $\rateset_{\lrate}$ is then equal to
\begin{equation}\label{eq:ex:dominatingset:globaldef}
\rateset^* \coloneqq \{Q\in\mathcal{R}\,:\,(\forall x\in\states)\, Q(x,\cdot)\in\rateset_x\},
\end{equation}
where, for all $x\in\states$, $\rateset_x$ is given by
\begin{equation}\label{eq:ex:dominatingset:localdef}
\rateset_x \coloneqq \left\{\lambda A(x,\cdot)+(1-\lambda)B(x,\cdot)\,:\,\lambda\in[0,1]\right\}.
\end{equation}

First of all, as we shown in~\exampleproofref, $\lrate$ is also the lower transition operator corresponding to $\rateset^*$. Furthermore, $\rateset^*$ is clearly non-empty, bounded, closed, convex, has separately specified rows and, as we have just mentioned, has $\lrate$ as its lower transition rate operator. Therefore, by Proposition~\ref{prop:dominating_unique_characterization}, it follows that $\rateset^*=\rateset_{\lrate}$.
\exampleend
\end{exmp}

We conclude from all of this that non-empty bounded sets of rate matrices are more informative than lower transition rate operators, in the following sense. Different non-empty bounded sets of rate matrices $\rateset$ may have the same lower transition rate operator $\lrate$ and therefore, in general, knowledge of $\lrate$ does not suffice to reconstruct $\rateset$; we can only reconstruct an outer approximation $\rateset_{\lrate}$, which is guaranteed to include $\rateset$. This changes if, besides non-empty and bounded, $\rateset$ is also closed and convex and has separately specified rows. In that case, $\lrate$ serves as an alternative representation for $\rateset$ because, since $\rateset=\rateset_{\lrate}$, we can use $\lrate$ to reconstruct $\rateset$. In other words: there is a one-to-one correspondence between lower transition rate operators and non-empty, bounded, closed and convex sets of rate matrices that have separately specified rows.

\subsection{Corresponding Lower Transition Operators}

Proposition~\ref{lemma:normQsmallenough} already established that we can fairly easily construct a lower transition operator from a given lower transition rate operator $\lrate$: if $\Delta\geq0$ is sufficiently small, then $I+\Delta\lrate$ will be a lower transition operator. In this section, we construct a somewhat more complicated lower transition operator from a given lower transition rate operator, and it is this specific lower transition operator on which we will focus for the remainder of this work. In particular, we will introduce the \emph{lower transition operator corresponding to a given lower transition rate operator}.

To this end, we will assume here that we are given some arbitrary lower transition rate operator $\lrate$, and any two time points $t,s\in\realsnonneg$ such that $t\leq s$. For any $u\in\mathcal{U}_{[t,s]}$ such that $u=t_0,\ldots,t_n$, we then define the auxiliary operator
\begin{equation}\label{eq:aux_lower_trans}
\Phi_u\coloneqq\prod_{i=1}^n(I+\Delta_i\lrate)\,,
\end{equation}
where, as in Section~\ref{subsec:sequencesoftimepoints}, for every $i\in\{1,\ldots,n\}$, $\Delta_i= t_i-t_{i-1}$ denotes the difference between two consecutive time points in $u$, and $\sigma(u)\coloneqq \max\{\Delta_i:i\in\{1,\ldots,n\}\}$ is the maximum such difference. Clearly, if $\sigma(u)$ is small enough, Proposition~\ref{lemma:normQsmallenough} guarantees that each of the terms $I+\Delta_i\lrate$ is a lower transition operator, and it then follows from Proposition~\ref{lemma:compositioncoherence} that $\Phi_u$---since it is a composition of lower transition operators---is also a lower transition operator. The so-called \emph{lower transition operator corresponding to} $\lrate$ will be defined below as the limit of these lower transition operators $\Phi_u$, obtained as we take $u$ to be an increasingly finer partition of the interval $[t,s]$.

However, before we can do that, we first need to establish that this limit indeed exists. To this end, we start by providing a bound on the distance between two operators $\Phi_u$ and $\Phi_{u*}$.

\begin{restatable}{proposition}{propdifferencebetweenu}
\label{prop:differencebetweenu}
Consider any $t,s\in\realsnonneg$ with $t\leq s$, any $\delta\in\realspos$ such that $\delta\norm{\lrate}\leq1$, and any $u,u^*\in\mathcal{U}_{[t,s]}$ such that $\sigma(u)\leq\delta$ and $\sigma(u^*)\leq\delta$. Let $C\coloneqq s-t$. Then
\begin{equation*}
\norm{\Phi_u-\Phi_{u^*}}\leq 2\delta C\norm{\lrate}^2\,.
\end{equation*}\\[-27pt]
\end{restatable}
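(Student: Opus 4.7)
The plan is to replace the direct comparison of $\Phi_u$ and $\Phi_{u^*}$ by a comparison through the common refinement $v\coloneqq u\cup u^*\in\mathcal{U}_{[t,s]}$ and then invoke the triangle inequality $\norm{\Phi_u-\Phi_{u^*}}\leq\norm{\Phi_u-\Phi_v}+\norm{\Phi_v-\Phi_{u^*}}$. First I observe that $\sigma(v)\leq\min\{\sigma(u),\sigma(u^*)\}\leq\delta$, so every subdivision length $\delta'$ appearing in any of $\Phi_u$, $\Phi_{u^*}$, $\Phi_v$ satisfies $\delta'\norm{\lrate}\leq1$; hence all factors $(I+\delta'\lrate)$ are lower transition operators by Proposition~\ref{lemma:normQsmallenough}, and so are the three full products by Proposition~\ref{lemma:compositioncoherence}.

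For the bound on $\norm{\Phi_u-\Phi_v}$, write $u=t_0,\dots,t_n$ and, for each $i\in\{1,\dots,n\}$, let $\Psi_i$ denote the product of those factors of $\Phi_v$ whose subintervals lie inside $[t_{i-1},t_i]$, so that $\Phi_v=\Psi_1\cdots\Psi_n$ and the subdivision lengths of $\Psi_i$ sum to $\Delta_i$. A standard telescope
\[
\Phi_u-\Phi_v=\sum_{k=1}^{n}\Psi_1\cdots\Psi_{k-1}\,\bigl[(I+\Delta_k\lrate)-\Psi_k\bigr]\,(I+\Delta_{k+1}\lrate)\cdots(I+\Delta_n\lrate),
\]
combined with \ref{LT:differencenorm} (applied to the lower transition operator $\Psi_1\cdots\Psi_{k-1}$) and the elementary bound $\norm{AR}\leq\norm{A}\norm{R}$ for non-negatively homogeneous $A$ and any operator $R$, then gives
\[
\norm{\Phi_u-\Phi_v}\leq\sum_{k=1}^{n}\bigl\lVert(I+\Delta_k\lrate)-\Psi_k\bigr\rVert,
\]
since the trailing product is a lower transition operator and hence has norm at most one by \ref{LT:norm_at_most_one}.

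The core estimate is now local: for any $\delta_1,\dots,\delta_m\in\realsnonneg$ with $\sum_j\delta_j=\Delta$ and $\Delta\norm{\lrate}\leq1$,
\[
\norm{(I+\Delta\lrate)-\prod_{j=1}^{m}(I+\delta_j\lrate)}\leq\norm{\lrate}^2\sum_{1\leq i<j\leq m}\delta_i\delta_j\leq\tfrac{1}{2}\Delta^2\norm{\lrate}^2.
\]
I would prove this by induction on $m$. The case $m=1$ is trivial. For $m=2$, a direct expansion gives $(I+\Delta\lrate)f-(I+\delta_1\lrate)(I+\delta_2\lrate)f=\delta_1\bigl[\lrate f-\lrate(f+\delta_2\lrate f)\bigr]$; combining super-additivity \ref{LR:subadditive} applied symmetrically between two functions $h$ and $h'$ with non-negative homogeneity \ref{LR:homo} yields the Lipschitz bound $\norm{\lrate h-\lrate h'}\leq\norm{\lrate}\norm{h-h'}$, from which the $m=2$ estimate $\delta_1\delta_2\norm{\lrate}^2$ follows. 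The inductive step writes $\prod_{j=1}^{m}(I+\delta_j\lrate)=P_{m-1}(I+\delta_m\lrate)$ with $\Delta'=\Delta-\delta_m$, inserts $(I+\Delta'\lrate)(I+\delta_m\lrate)$ as an intermediate, and applies the $m=2$ bound to the first piece and the induction hypothesis (together with sub-multiplicativity) to the second; the remaining cross-term $\Delta'\delta_m$ exactly completes the sum $\sum_{i<j}\delta_i\delta_j$.

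Substituting this estimate back into the telescope and using $\sum_i\Delta_i^2\leq\sigma(u)\sum_i\Delta_i=\sigma(u)\,C\leq\delta C$ yields $\norm{\Phi_u-\Phi_v}\leq\tfrac{1}{2}\delta C\norm{\lrate}^2$, and the symmetric argument gives the same bound for $\norm{\Phi_v-\Phi_{u^*}}$; the triangle inequality then delivers the claimed bound (in fact, with a spare factor of two). The main obstacle, I expect, is the care required to handle non-linearity throughout: the telescoping must respect the composition order, and the Lipschitz estimate for $\lrate$ itself is not immediate from the defining properties but has to be coaxed out of super-additivity applied in both directions.
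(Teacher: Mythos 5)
Your proposal is correct and follows essentially the same route as the paper's proof: pass to the common refinement, apply the triangle inequality, bound each half by summing over the coarse steps the distance between the coarse factor $(I+\Delta_k\lrate)$ and the product of the fine factors it contains, and control compositions through~\ref{LT:differencenorm} and~\ref{LT:norm_at_most_one}---which is exactly the content of the paper's Lemmas~\ref{lemma:recursive_lower_trans} and~\ref{lemma:differencebetweennested} and Corollary~\ref{corol:differencebetweennestedintermsofu}, except that your local estimate sharpens Lemma~\ref{lemma:justthelinearpart} by a factor of two, using the function-level Lipschitz bound $\norm{\lrate h-\lrate h'}\leq\norm{\lrate}\norm{h-h'}$ (correctly derived from~\ref{LR:subadditive} and~\ref{LR:homo}) in place of~\ref{LR:differenceofnorm}. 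The only point to polish is that your displayed telescoping ``identity'' is not literally valid for non-linear operators, since composition does not distribute over subtraction on the left; but because you bound each consecutive difference via~\ref{LT:differencenorm} applied to the leading composition $\Psi_1\cdots\Psi_{k-1}$ and~\ref{LT:norm_at_most_one} for the trailing product---precisely the mechanism of Lemma~\ref{lemma:recursive_lower_trans}---the resulting inequality, and hence your bound $\delta C\norm{\lrate}^2\leq 2\delta C\norm{\lrate}^2$, stands.
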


Note, therefore, that the distance $\norm{\Phi_u - \Phi_{u^*}}$ vanishes as we make $\sigma(u)$ and $\sigma(u^*)$ smaller and smaller. This allows us to state the following result.

\begin{restatable}{corollary}{corolcauchy}
\label{corol:cauchy}
For every sequence $\{u_i\}_{i\in\nats}$ in $\mathcal{U}_{[t,s]}$ such that $\lim_{i\to\infty}\sigma(u_i)=0$, the corresponding sequence $\{\Phi_{u_i}\}_{i\in\nats}$ is Cauchy.
\end{restatable}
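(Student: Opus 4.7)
The plan is to derive the result as an almost immediate consequence of Proposition~\ref{prop:differencebetweenu}. Fix a sequence $\{u_i\}_{i\in\nats}$ in $\mathcal{U}_{[t,s]}$ with $\sigma(u_i)\to 0$, set $C\coloneqq s-t$, and let $\epsilon>0$ be arbitrary. We need to produce an $N\in\nats$ such that $\norm{\Phi_{u_i}-\Phi_{u_j}}<\epsilon$ for all $i,j\geq N$.

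The idea is to choose a single threshold $\delta>0$ that makes Proposition~\ref{prop:differencebetweenu} applicable and, at the same time, drives its upper bound below $\epsilon$. Concretely, I would first dispose of the trivial cases where $C=0$ (in which case every $u\in\mathcal{U}_{[t,s]}$ is the single-point sequence $u=t$, so $\Phi_u=I$ and the sequence is Cauchy) or $\norm{\lrate}=0$ (in which case $\Phi_u=I$ for every $u$, by Equation~\eqref{eq:aux_lower_trans}, and the conclusion is again trivial). In the remaining nontrivial case, pick
\[
\delta\coloneqq\min\!\left\{\frac{1}{\norm{\lrate}},\,\frac{\epsilon}{2C\norm{\lrate}^2+1}\right\}>0,
\]
so that $\delta\norm{\lrate}\leq 1$ and $2\delta C\norm{\lrate}^2<\epsilon$ hold simultaneously.

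Since $\sigma(u_i)\to 0$, the hypothesis of the corollary yields some $N\in\nats$ with $\sigma(u_i)\leq\delta$ for all $i\geq N$. For any two indices $i,j\geq N$, both $u_i$ and $u_j$ meet the hypothesis of Proposition~\ref{prop:differencebetweenu} with the chosen $\delta$, whence
\[
\norm{\Phi_{u_i}-\Phi_{u_j}}\leq 2\delta C\norm{\lrate}^2<\epsilon.
\]
This is exactly the Cauchy criterion (using the operator norm introduced in Section~\ref{sec:func_oper_norm}), so the proof is complete.

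I do not anticipate any real obstacle: all the heavy lifting has already been absorbed into Proposition~\ref{prop:differencebetweenu}, and what remains is a routine $\epsilon$--$\delta$ argument. The only step that requires a moment of care is ensuring that the chosen $\delta$ simultaneously satisfies the admissibility condition $\delta\norm{\lrate}\leq 1$ required to invoke Proposition~\ref{prop:differencebetweenu} and makes the resulting bound $2\delta C\norm{\lrate}^2$ smaller than $\epsilon$; handling the degenerate cases $C=0$ and $\norm{\lrate}=0$ separately (or, equivalently, noting that $\Phi_u=I$ there) sidesteps any division-by-zero concerns.
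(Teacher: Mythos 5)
Your proof is correct and follows essentially the same route as the paper's: fix $\epsilon$, pick a single $\delta$ with $\delta\norm{\lrate}\leq 1$ and $2\delta(s-t)\norm{\lrate}^2<\epsilon$, use $\sigma(u_i)\to0$ to get a tail where all partitions are $\delta$-fine, and apply Proposition~\ref{prop:differencebetweenu}. The only difference is that you spell out an explicit formula for $\delta$ and treat the degenerate cases $s=t$ and $\norm{\lrate}=0$ separately, whereas the paper simply notes that such a $\delta$ always exists; both are fine.
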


Since we already know that, for partitions $u$ that are sufficiently fine, $\Phi_u$ is a lower transition operator, Proposition~\ref{lemma:completemetricspace} now implies that this Cauchy sequence converges to a limit, and that this limit is again a lower transition operator.

\begin{restatable}{corollary}{corollimitexistsandiscoherent}
\label{corol:limitexistsandiscoherent}
For every sequence $\{u_i\}_{i\in\nats}$ in $\mathcal{U}_{[t,s]}$ such that $\lim_{i\to\infty}\sigma(u_i)=0$, the corresponding sequence $\{\Phi_{u_i}\}_{i\in\nats}$ converges to a lower transition operator.
\end{restatable}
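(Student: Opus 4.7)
The plan is to combine three ingredients that are already on the table: Corollary~\ref{corol:cauchy} gives that the sequence $\{\Phi_{u_i}\}_{i\in\nats}$ is Cauchy; Proposition~\ref{lemma:completemetricspace} tells us that the metric space $(\underline{\mathbb{T}},d)$ is complete; and Propositions~\ref{lemma:normQsmallenough} and~\ref{lemma:compositioncoherence} tell us that once $\sigma(u_i)$ is small enough, each $\Phi_{u_i}$ is itself a lower transition operator. Putting these three pieces together then yields both the existence of the limit and the fact that it lies in $\underline{\mathbb{T}}$.

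First I would note that since $\lim_{i\to\infty}\sigma(u_i)=0$, there is some $N\in\nats$ such that $\sigma(u_i)\norm{\lrate}\leq 1$ for all $i\geq N$. Fix any such $i$ and write $u_i = t_0,\ldots,t_n$. Then for every $j\in\{1,\ldots,n\}$ we have $\Delta_j\norm{\lrate}\leq\sigma(u_i)\norm{\lrate}\leq 1$, so Proposition~\ref{lemma:normQsmallenough} implies that $(I+\Delta_j\lrate)$ is a lower transition operator. By repeated application of Proposition~\ref{lemma:compositioncoherence}, the composition $\Phi_{u_i}=\prod_{j=1}^n(I+\Delta_j\lrate)$ is a lower transition operator as well. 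Hence $\Phi_{u_i}\in\underline{\mathbb{T}}$ for every $i\geq N$.

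Next, Corollary~\ref{corol:cauchy} tells us that $\{\Phi_{u_i}\}_{i\in\nats}$ is Cauchy, and this property is clearly inherited by the tail $\{\Phi_{u_i}\}_{i\geq N}$, which is a Cauchy sequence that lies entirely in $\underline{\mathbb{T}}$. Applying Proposition~\ref{lemma:completemetricspace}, this tail converges to some limit $\lt\in\underline{\mathbb{T}}$. Since convergence of a sequence is determined by its tail, the original sequence $\{\Phi_{u_i}\}_{i\in\nats}$ also converges to this same $\lt$, which completes the proof.

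There is no real obstacle here; the only mildly subtle point is the interplay between Cauchyness, which is given by Corollary~\ref{corol:cauchy} for the whole sequence, and membership in $\underline{\mathbb{T}}$, which is only guaranteed from some index $N$ onwards. Passing to the tail cleanly sidesteps this, since completeness of $\underline{\mathbb{T}}$ then hands us a limit inside $\underline{\mathbb{T}}$, and the prefix $\Phi_{u_1},\ldots,\Phi_{u_{N-1}}$ does not affect the limit.
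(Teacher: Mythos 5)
Your proof is correct and follows essentially the same route as the paper: establish that $\Phi_{u_i}$ is a lower transition operator for all sufficiently large $i$ via Propositions~\ref{lemma:normQsmallenough} and~\ref{lemma:compositioncoherence}, invoke Corollary~\ref{corol:cauchy} for Cauchyness, and apply completeness of $(\underline{\mathbb{T}},d)$ to the tail, noting that the initial terms do not affect the limit. The only cosmetic difference is that the paper explicitly cites~\ref{LR:normlratefinite} to justify that $\norm{\lrate}$ is finite when choosing the index $N$, which your argument uses implicitly.
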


Finally, as our next result establishes, this limit is unique, in the sense that it is independent of the choice of $\{u_i\}_{i\in\nats}$.

\begin{restatable}{theorem}{theoconvergencelowerbound}
\label{theo:convergencelowerbound}
For any $t,s\in\realsnonneg$ such that $t\leq s$ and any lower transition rate operator $\lrate$, there is a unique lower transition operator $\lt\in\underline{\mathbb{T}}$ such that 
\begin{equation}\label{eq:theo:convergencelowerbound}
(\forall\epsilon>0)\,
(\exists\delta>0)\,
(\forall u\in\mathcal{U}_{[t,s]}\colon\sigma(u)\leq\delta)~\norm{\lt - \Phi_u}\leq\epsilon.
\end{equation}
\end{restatable}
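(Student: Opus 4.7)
The plan is to build $\lt$ as the limit of $\{\Phi_{u_i}\}$ for a suitably chosen sequence, then verify the $\varepsilon$-$\delta$ condition, and finally prove uniqueness. The case $t=s$ is trivial since $\mathcal{U}_{[t,t]}$ contains only $u=t$ for which $\Phi_u=I$, so assume $t<s$ and set $C\coloneqq s-t>0$.

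First, for existence, I would pick any sequence $\{u_i\}_{i\in\nats}\subseteq\mathcal{U}_{[t,s]}$ with $\sigma(u_i)\to0$ (for instance, the uniform partitions with $2^i$ subintervals). By Corollary~\ref{corol:limitexistsandiscoherent}, the associated sequence $\{\Phi_{u_i}\}_{i\in\nats}$ converges, in the complete metric space of lower transition operators, to some $\lt\in\underline{\mathbb{T}}$. This $\lt$ is the candidate operator.

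Next, I would verify that $\lt$ satisfies Equation~\eqref{eq:theo:convergencelowerbound}. Fix $\varepsilon>0$. If $\norm{\lrate}=0$ then $\Phi_u=I$ for every $u$, so $\lt=I$ and the claim is immediate; otherwise choose
\begin{equation*}
\delta\coloneqq\min\!\left\{\tfrac{1}{\norm{\lrate}},\,\tfrac{\varepsilon}{4C\norm{\lrate}^2}\right\}>0,
\end{equation*}
so that $\delta\norm{\lrate}\leq 1$ and $2\delta C\norm{\lrate}^2\leq\varepsilon/2$. Since $\sigma(u_i)\to0$ and $\Phi_{u_i}\to\lt$, there is an index $i$ with $\sigma(u_i)\leq\delta$ and $\norm{\lt-\Phi_{u_i}}\leq\varepsilon/2$. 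For any $u\in\mathcal{U}_{[t,s]}$ with $\sigma(u)\leq\delta$, the triangle inequality together with Proposition~\ref{prop:differencebetweenu} applied to $u$ and $u_i$ (both of which have $\sigma\leq\delta$) yields
\begin{equation*}
\norm{\lt-\Phi_u}\leq\norm{\lt-\Phi_{u_i}}+\norm{\Phi_{u_i}-\Phi_u}\leq\tfrac{\varepsilon}{2}+2\delta C\norm{\lrate}^2\leq\varepsilon,
\end{equation*}
as required.

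Finally, for uniqueness, I would suppose that $\lt'\in\underline{\mathbb{T}}$ also satisfies Equation~\eqref{eq:theo:convergencelowerbound} and show $\lt=\lt'$. Given any $\varepsilon>0$, the property yields $\delta_1,\delta_2>0$ such that $\norm{\lt-\Phi_u}\leq\varepsilon/2$ for all $u$ with $\sigma(u)\leq\delta_1$ and $\norm{\lt'-\Phi_u}\leq\varepsilon/2$ for all $u$ with $\sigma(u)\leq\delta_2$. Any uniform partition $u\in\mathcal{U}_{[t,s]}$ with $\sigma(u)\leq\min\{\delta_1,\delta_2\}$ then satisfies both bounds, so by the triangle inequality $\norm{\lt-\lt'}\leq\varepsilon$. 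Since $\varepsilon>0$ is arbitrary, $\lt=\lt'$. The main obstacle is essentially bookkeeping---coordinating the three scales ($\delta$, the chosen index $i$, and the output $\varepsilon$)---but Proposition~\ref{prop:differencebetweenu} already packages the quantitative estimate that drives everything, so no additional ingredient is needed.
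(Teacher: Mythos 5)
Your proposal is correct and follows essentially the same route as the paper's proof: construct $\lt$ as the limit of $\{\Phi_{u_i}\}$ via Corollary~\ref{corol:limitexistsandiscoherent}, verify Equation~\eqref{eq:theo:convergencelowerbound} by combining the triangle inequality with the quantitative bound of Proposition~\ref{prop:differencebetweenu}, and obtain uniqueness by comparing two candidates through a common fine partition. The only differences are cosmetic (explicit handling of $t=s$ and $\norm{\lrate}=0$, and an explicit formula for $\delta$ where the paper just asserts such a $\delta$ exists).
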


Note that the $\epsilon-\delta$ expression in Theorem~\ref{theo:convergencelowerbound} is a limit statement. Specifically, it is a limit of operators $\Phi_{u}$ corresponding to increasingly finer partitions $u$ of the interval $[t,s]$. In the sequel, whenever such a unique limit exists and equals some lower transition operator $\lt$, we will denote it as
\begin{equation}\label{eq:net_limit_lower_trans}
\lim_{\sigma(u)\to0}\left\{\Phi_u\,\colon\,u\in\mathcal{U}_{[t,s]}\right\} = \lt\,.
\end{equation}
Here, the notation is understood to indicate that the limit of these operators $\Phi_{u}$ is independent of the exact choice of $\{u_i\}_{i\in\nats}$ in $\mathcal{U}_{[t,s]}$, so long as $\lim_{i\to\infty}\sigma(u_i)=0$.

We are now ready to define the \emph{lower transition operator corresponding to $\lrate$}, which is the operator in which we will be interested for the remainder of this work.

\begin{definition}[Corresponding Lower Transition Operator]\label{def:low_trans}
Consider any $t,s\in\realsnonneg$ such that $t\leq s$ and let $\lrate$ be an arbitrary lower transition rate operator. The \emph{corresponding lower transition operator} $\lbound_t^s$ is a map from $\gamblesX$ to $\gamblesX$, defined by
\begin{equation*}
\lbound_t^s\coloneqq\lim_{\sigma(u)\to0}\left\{ \Phi_u\,\colon\,u\in\mathcal{U}_{[t,s]}\right\},
\end{equation*}
where the limit is understood as in Equation~\eqref{eq:net_limit_lower_trans}.
\end{definition}

\subsection{Properties of Corresponding Lower Transition Operators}\label{sec:properties_lower_trans}

We will next establish that this operator $L_t^s$ satisfies a number of convenient properties. In particular, we will focus on the family $\underline{\mathcal{T}}_{\lrate}$ of lower transition operators corresponding to a given lower transition rate operator $\lrate$.

\begin{definition}[Lower Transition Operator System]\label{def:low_trans_system}
Let $\lrate$ be an arbitrary lower transition rate operator. Then, the \emph{lower transition operator system} corresponding to $\lrate$ is the family $\underline{\mathcal{T}}_{\lrate}$ of lower transition operators $L_t^s$ corresponding to $\lrate$, defined for all $t,s\in\realsnonneg$, with $t\leq s$, as in Definition~\ref{def:low_trans}.
\end{definition}

Our first result is that this family $\smash{\underline{\mathcal{T}}_{\lrate}}$ satisfies the same semi-group property that was found to hold for the transition matrix system $\mathcal{T}_P$ of a Markov chain $P\in\mprocesses$ in Section~\ref{sec:cont_time_markov_chains}.

\begin{restatable}{proposition}{proplowertranssystemissystem}
\label{prop:lower_trans_system_is_system}
Let $\lrate$ be an arbitrary lower transition rate operator, and let $\underline{\mathcal{T}}_{\lrate}$ be the corresponding lower transition operator system. Then, for all $t,r,s\in\realsnonneg$ such that $t\leq r\leq s$, it holds that
\begin{equation*}
L_t^s = L_t^rL_r^s.
\vspace{6pt}
\end{equation*}
Furthermore, for all $t\in\realsnonneg$, we have that $L_t^t=I$.
\end{restatable}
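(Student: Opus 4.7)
The plan is to begin with the easy identity $L_t^t=I$. By Definition~\ref{def:low_trans}, $L_t^t$ is the limit of $\Phi_u$ over partitions $u\in\mathcal{U}_{[t,t]}$, but this set of partitions is a singleton containing only $u=t$, for which the product in Equation~\eqref{eq:aux_lower_trans} is empty and therefore equals $I$. Hence $L_t^t=I$. For the semi-group property $L_t^s = L_t^r L_r^s$, I would reduce to the non-trivial case $t<r<s$ (the cases $t=r$ or $r=s$ follow from $L_t^t=I$).

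The central idea is to exploit the multiplicative structure of $\Phi_u$. I would pick two sequences $\{u_1^{(i)}\}_{i\in\nats}$ in $\mathcal{U}_{[t,r]}$ and $\{u_2^{(i)}\}_{i\in\nats}$ in $\mathcal{U}_{[r,s]}$ such that $\sigma(u_1^{(i)})\to0$ and $\sigma(u_2^{(i)})\to0$, and then set $u^{(i)}\coloneqq u_1^{(i)}\cup u_2^{(i)}\in\mathcal{U}_{[t,s]}$; note that $u^{(i)}$ is non-degenerate because both sub-sequences share the common endpoint $r$, and $\sigma(u^{(i)})=\max(\sigma(u_1^{(i)}),\sigma(u_2^{(i)}))\to0$. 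Because the list of sub-interval lengths of $u^{(i)}$ is the concatenation of those of $u_1^{(i)}$ and $u_2^{(i)}$, the product in Equation~\eqref{eq:aux_lower_trans} factors exactly as
\begin{equation*}
\Phi_{u^{(i)}} = \Phi_{u_1^{(i)}}\,\Phi_{u_2^{(i)}}.
\end{equation*}
By Theorem~\ref{theo:convergencelowerbound}, the left-hand side converges to $L_t^s$ in operator norm, while $\Phi_{u_1^{(i)}}\to L_t^r$ and $\Phi_{u_2^{(i)}}\to L_r^s$. If I can show that the right-hand side converges to $L_t^r L_r^s$, the desired identity will follow by uniqueness of limits.

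To control the composition in the limit, I would apply the triangle inequality
\begin{equation*}
\norm{\Phi_{u_1^{(i)}}\Phi_{u_2^{(i)}} - L_t^r L_r^s}
\leq \norm{(\Phi_{u_1^{(i)}} - L_t^r)\,\Phi_{u_2^{(i)}}} + \norm{L_t^r(\Phi_{u_2^{(i)}} - L_r^s)}.
\end{equation*}
The first term is handled by observing that, for all sufficiently large $i$, $\Phi_{u_1^{(i)}}-L_t^r$ is non-negatively homogeneous (as the difference of two lower transition operators, each of which is non-negatively homogeneous by~\ref{LT:homo}), and that $\Phi_{u_2^{(i)}}$ is a lower transition operator with $\norm{\Phi_{u_2^{(i)}}}\leq 1$ by~\ref{LT:norm_at_most_one}; applying~\ref{N:normAf} then bounds this term by $\norm{\Phi_{u_1^{(i)}} - L_t^r}$, which vanishes. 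The second term is bounded by $\norm{\Phi_{u_2^{(i)}} - L_r^s}$ via property~\ref{LT:differencenorm} of the lower transition operator $L_t^r$, and likewise vanishes. The main subtlety to get right will be this point about non-negative homogeneity of the differences of lower transition operators, which is what legitimises speaking of the operator norm defined in Equation~\eqref{eq:operatornorm} for these particular differences; once this is in place, the rest of the argument is routine.
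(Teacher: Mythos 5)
Your proof is correct and follows essentially the same route as the paper: both arguments rest on the factorisation $\Phi_{u_1\cup u_2}=\Phi_{u_1}\Phi_{u_2}$ for a partition of $[t,s]$ through $r$, the convergence of $\Phi_{u_1}$, $\Phi_{u_2}$ and $\Phi_{u_1\cup u_2}$ to $L_t^r$, $L_r^s$ and $L_t^s$ respectively, and a stability estimate for the composed operators. The only difference is presentational: the paper cites its quantitative Lemmas~\ref{lemma:limitboundonL} and~\ref{lemma:recursive_lower_trans} to get an explicit bound $\norm{L_t^s-L_t^rL_r^s}\leq2\delta(s-t)\norm{\lrate}^2$ for arbitrary $\delta$, whereas you argue via uniqueness of limits and reprove, using \ref{LT:norm_at_most_one}, \ref{N:normAf} and \ref{LT:differencenorm}, what is in effect the $n=2$ case of Lemma~\ref{lemma:recursive_lower_trans}.
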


Our next result is that this family $\underline{\mathcal{T}}_{\lrate}$ is time-homogeneous:

\begin{restatable}{proposition}{proplowertransitionishomogeneous}
\label{prop:lower_transition_is_homogeneous}
Let $\lrate$ be an arbitrary lower transition rate operator, and let $\smash{\underline{\mathcal{T}}_{\lrate}}$ be the corresponding lower transition operator system. Then, for all $t,s\in\realsnonneg$ such that $t\leq s$, we have that $L_t^s=L_0^{s-t}$.
\end{restatable}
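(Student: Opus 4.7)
The plan is to exploit the observation that the auxiliary operator $\Phi_u$ defined in Equation~\eqref{eq:aux_lower_trans} depends only on the sequential differences $\Delta_i$ between consecutive time points in $u$, and not on the absolute values of those time points themselves. Since a translation of a partition by a constant leaves all sequential differences unchanged, it leaves $\Phi_u$ unchanged as well, and this will immediately propagate to the limit.

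More concretely, I would first set up the translation map $\tau\colon\mathcal{U}_{[t,s]}\to\mathcal{U}_{[0,s-t]}$ defined by $\tau(u)\coloneqq\{r-t\colon r\in u\}$. This is clearly a bijection with inverse $u'\mapsto\{r+t\colon r\in u'\}$. If $u=t_0,\ldots,t_n\in\mathcal{U}_{[t,s]}$ with $t_0=t$ and $t_n=s$, then $\tau(u)=0,t_1-t,\ldots,s-t\in\mathcal{U}_{[0,s-t]}$, and for every $i\in\{1,\ldots,n\}$ the sequential difference is preserved, since $(t_i-t)-(t_{i-1}-t)=t_i-t_{i-1}=\Delta_i$. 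Consequently $\sigma(u)=\sigma(\tau(u))$ and, by Equation~\eqref{eq:aux_lower_trans},
\begin{equation*}
\Phi_u=\prod_{i=1}^n(I+\Delta_i\lrate)=\Phi_{\tau(u)}.
\end{equation*}

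Next, I would invoke Theorem~\ref{theo:convergencelowerbound} applied at the pair $(0,s-t)$: it yields that $L_0^{s-t}$ is the unique lower transition operator satisfying
\begin{equation*}
(\forall\epsilon>0)\,(\exists\delta>0)\,(\forall u'\in\mathcal{U}_{[0,s-t]}\colon\sigma(u')\leq\delta)~\norm{L_0^{s-t}-\Phi_{u'}}\leq\epsilon.
\end{equation*}
Using the bijection $\tau$, for any $\epsilon>0$ and the associated $\delta>0$ above, every $u\in\mathcal{U}_{[t,s]}$ with $\sigma(u)\leq\delta$ corresponds to some $u'=\tau(u)\in\mathcal{U}_{[0,s-t]}$ with $\sigma(u')\leq\delta$ and $\Phi_u=\Phi_{u'}$; hence $\norm{L_0^{s-t}-\Phi_u}\leq\epsilon$ for all such $u$. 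In other words, $L_0^{s-t}$ satisfies the defining property~\eqref{eq:theo:convergencelowerbound} for $L_t^s$. By the uniqueness asserted in Theorem~\ref{theo:convergencelowerbound}, we conclude $L_t^s=L_0^{s-t}$.

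There is no real obstacle here: the argument is essentially a translation-invariance observation about $\Phi_u$, and the heavy lifting (existence of the limit, uniqueness, and convergence being independent of the choice of refining sequence) has already been done in Theorem~\ref{theo:convergencelowerbound}. The only minor point to get right is to phrase the argument via the $\epsilon$-$\delta$ characterisation rather than via sequences, so that the uniqueness clause of Theorem~\ref{theo:convergencelowerbound} can be applied directly without any further Cauchy-sequence bookkeeping.
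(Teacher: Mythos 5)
Your proof is correct and follows essentially the same route as the paper's: both rest on the observation that $\Phi_u$ depends only on the sequential differences $\Delta_i$, so it is invariant under translating the partition, and both then transfer this to the limit via Theorem~\ref{theo:convergencelowerbound}. The only difference is cosmetic—the paper translates a sequence of partitions with $\sigma(u_i)\to0$ and identifies the two limits, whereas you invoke the $\epsilon$-$\delta$ uniqueness clause directly—so no further comment is needed.
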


Finally, we find that the derivatives of these lower transition operators always exist, and that they furthermore satisfy the following simple equalities.

\begin{restatable}{proposition}{proplowertransitionhasderiv}
\label{prop:lower_transition_has_deriv}
Let $\lrate$ be an arbitrary lower transition rate operator, and let $\underline{\mathcal{T}}_{\lrate}$ be the corresponding lower transition operator system. Then, for all $t,s\in\realsnonneg$ such that $t\leq s$, it holds that\footnote{If $0=t<s$, the derivative with respect to $t$ is taken to be a right derivative. If $t=s$, the derivative with respect to $s$ is taken to be a right derivative and the derivative with respect to $t$ is taken to be a left derivative (or becomes meaningless if $t=0$).}
\begin{equation*}
\frac{\partial}{\partial t}\lbound_t^s=-\lrate\lbound_t^s\,\quad\text{and}\quad\frac{\partial}{\partial s}\lbound_t^s=\lrate\lbound_t^s.\vspace{7pt}
\end{equation*}
\end{restatable}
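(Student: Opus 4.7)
The plan is to derive all four one-sided partial derivatives from a single local expansion of $L_0^\epsilon$, combined with a careful use of the semi-group property and homogeneity that always places the small-increment operator on the \emph{outside} of a composition.

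First, I would establish the key estimate that $L_0^\epsilon$ differs from $I + \epsilon \lrate$ by at most $2\epsilon^2 \norm{\lrate}^2$ in operator norm, valid for all $\epsilon \in (0, 1/\norm{\lrate}]$. This follows directly from Proposition~\ref{prop:differencebetweenu}: for the two-point partition $u^* = (0,\epsilon) \in \mathcal{U}_{[0,\epsilon]}$ one has $\Phi_{u^*} = I + \epsilon \lrate$ and $\sigma(u^*) = \epsilon$, so choosing $\delta = \epsilon$ yields $\norm{\Phi_u - (I + \epsilon \lrate)} \leq 2\epsilon^2 \norm{\lrate}^2$ for every finer partition $u$. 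Definition~\ref{def:low_trans} then lets us pass to the limit $\sigma(u)\to 0$ to obtain $\norm{L_0^\epsilon - (I+\epsilon\lrate)}\leq 2\epsilon^2\norm{\lrate}^2$.

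Next, using the semi-group property of Proposition~\ref{prop:lower_trans_system_is_system} together with the homogeneity of Proposition~\ref{prop:lower_transition_is_homogeneous}, I would rewrite each of the four one-sided increments so as to place $L_0^\epsilon$ on the outside of the composition, namely
\begin{equation*}
L_t^{s+\epsilon} = L_0^\epsilon\, L_t^s, \quad L_{t-\epsilon}^s = L_0^\epsilon\, L_t^s, \quad L_t^s = L_0^\epsilon\, L_t^{s-\epsilon}, \quad L_t^s = L_0^\epsilon\, L_{t+\epsilon}^s.
\end{equation*}
Setting $E_\epsilon\coloneqq L_0^\epsilon - I - \epsilon\lrate$ and substituting, each case yields a leading term of the form $\epsilon\,\lrate A$ applied to an adjacent lower transition operator $A$, plus a remainder $E_\epsilon A$. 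Because $E_\epsilon$ is non-negatively homogeneous, submultiplicativity (property~\ref{N:normAB}) together with $\norm{A}\leq 1$ (property~\ref{LT:norm_at_most_one}) bound $\norm{E_\epsilon A}$ by $2\epsilon^2\norm{\lrate}^2$; dividing by $\epsilon$ and letting $\epsilon\to 0^+$ makes this remainder vanish.

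In two of the four cases (the right derivative in $s$ and the left derivative in $t$), the leading term is already $\lrate L_t^s$, so nothing further is required. In the other two cases, the leading term is $\lrate L_t^{s-\epsilon}$ or $\lrate L_{t+\epsilon}^s$, and one must pass to the limit inside $\lrate$. For this I would invoke property~\ref{LR:differenceofnorm} of $\lrate$ together with the norm continuity $\norm{L_t^{s} - L_t^{s-\epsilon}} \leq \epsilon\norm{\lrate} + 2\epsilon^2\norm{\lrate}^2$, which is itself an immediate byproduct of the same rewriting. The special sub-cases singled out in the footnote reduce to restricting attention to whichever one-sided derivative is applicable.

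The key idea—and the only real obstacle—is in the second step: the naive semi-group decompositions $L_t^{s+\epsilon} = L_t^s L_s^{s+\epsilon}$ and $L_{t-\epsilon}^s = L_{t-\epsilon}^t L_t^s$ place the small operator $L_0^\epsilon$ \emph{inside} the composition, where the non-linearity of lower transition operators obstructs any clean expansion. Homogeneity is precisely what allows one to flip each composition so that $L_0^\epsilon$ is the outer factor; after this flip, the pointwise linearity of composition in its outer argument makes the algebraic identity $L_0^\epsilon A = A + \epsilon\,\lrate A + E_\epsilon A$ entirely unproblematic, and everything else reduces to norm estimates already available in the paper.
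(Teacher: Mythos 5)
Your proof is correct and follows essentially the same route as the paper's: the same quadratic estimate $\norm{L_0^\epsilon-(I+\epsilon\lrate)}=O(\epsilon^2)$ on the short-time operator, the same structural trick of placing that operator as the \emph{outer} factor of a semi-group decomposition so that right-distributivity applies, and the same use of \ref{LR:differenceofnorm} and \ref{LT:norm_at_most_one} to kill the remainder and to replace $\lrate L_t^{s-\epsilon}$ (resp.\ $\lrate L_{t+\epsilon}^s$) by $\lrate L_t^s$. The only difference is organisational: the paper proves the $t$-derivative first (where the semi-group property alone already puts the small factor outside) and then obtains the $s$-derivative by shifting the whole interval via homogeneity, whereas you invoke homogeneity up front to flip all four one-sided increments into the form $L_0^\epsilon A$ — both amount to the same computation.
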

We would like to point out here that the derivatives in this result are not taken pointwise, but are taken with respect to the operator norm. For example, for $t=0$ and $s>0$, Proposition~\ref{prop:lower_transition_has_deriv} does not state that
\vspace{3pt}
\begin{equation}\label{eq:pointwisedifferential}
\frac{d}{d s}\lbound_0^sf=\lrate\lbound_0^sf
~\text{ for all $f\in\gamblesX$,}
\end{equation}
but rather that
\begin{equation}\label{eq:uniformdifferential}
\lim_{\Delta\to0}
\norm{\frac{L_0^{s+\Delta}-L_0^s}{\Delta}-\lrate L_0^s}=0.\vspace{9pt}
\end{equation}
Of these two statements, the latter is the strongest one, in the sense that it trivially implies the former. Hence, although from an intuitive point of view, the reader may wish to interpret the results in Proposition~\ref{prop:lower_transition_has_deriv} as in Equation~\eqref{eq:pointwisedifferential}---which would be correct---one should keep in mind that from a technical point of view, the result is in fact stronger, and is intended to be read as in Equation~\eqref{eq:uniformdifferential}.

It is also worth noting that, as a consequence of Propositions~\ref{prop:lower_transition_has_deriv} and~\ref{prop:lower_trans_system_is_system}, the operator $L_t^s$ satisfies the following differential equation:
\begin{equation*}
\frac{\partial}{\partial s}L_t^s=\lrate L_t^s\,,\quad\quad L_t^t=I\,.
\end{equation*}
Observe, therefore, the strong correspondence between the operator $L_t^s$ corresponding to some $\lrate$, and the matrix exponential $e^{Q(s-t)}$ of a rate matrix $Q\in\mathcal{R}$. In particular, as is very well known~\cite[Equation 4.4]{van2006study}, this matrix exponential is the unique solution of the differential equation
\begin{equation*}
\frac{\partial}{\partial s}e^{Q(s-t)}=Qe^{Q(s-t)}\,,\quad\quad e^{Q(t-t)}=I\,.
\end{equation*}
Now, recall that any rate matrix $Q$ is also a lower transition rate operator. It then follows from the above that the lower transition operator $L_t^s$ that corresponds to this $\lrate=Q$ is given by $\smash{L_t^s=e^{Q(s-t)}}$. 
Hence, more generally, $L_t^s$ can be regarded as a generalised version of the matrix exponential of a transition rate matrix, and---with some slight abuse of terminology---can be considered to be the `matrix exponential' of the lower transition rate operator $\lrate$.

Another closely related observation is that, if $\lrate=Q$ for some $Q\in\mathcal{R}$, then the family $\smash{\underline{\mathcal{T}}_{\lrate}}$ is equal to $\mathcal{T}_Q$, which is the exponential transition matrix system from Definition~\ref{def:systemfromQ} that, by Corollary~\ref{cor:rate_has_unique_homogen_markov_process}, is known to correspond to a well-behaved homogeneous Markov chain $P\in\whmprocesses$. 
Interestingly, then, the family $\underline{\mathcal{T}}_{\lrate}$ maintains the convenient properties of differentiability, time-homogeneity, and ``Markovian-like'' factorisation, when instead of some rate matrix $Q$ we replace it by a lower transition rate operator~$\lrate$.

Mathematical niceties aside, we are of course not really interested in the trivial case where $\lrate=Q$. Instead, we wish to use the lower transition operator $L_t^s$ to compute lower expectations for imprecise continuous-time Markov chains. We will show in the next section that this is indeed possible.

\section{Connecting \ictmc's and Lower Transition Operators}\label{sec:connections}

As we know from Section~\ref{subsec:lowertrans_rate}, lower transition operators are essentially just lower envelopes of transition matrices. Combined with the fact that transition matrices are a convenient tool for representing and computing expectations in a Markov chain, it seems intuitive to expect that, similarly, lower transition operators can be used to represent and compute lower expectations in an imprecise Markov chain. We will show in this section that this is indeed the case. 

In particular, we establish in this section that for \ictmc's that are of the type $\smash{\wmprocesses_{\rateset,\mathcal{M}}}$ or $\smash{\wprocesses_{\rateset,\mathcal{M}}}$, with $\rateset$ having separately specified rows, we can use the lower transition operator $L_t^s$ to represent and compute conditional lower expectations of functions $f(X_s)$ that depend on the state $X_s$ at a single time point $s$ in the future. The treatment of more general functions is deferred to Section~\ref{sec:funcs_multi_time_points}.

\subsection{Lower Transition Operators as a Representational Tool}\label{sec:single_var_lower_exp}

In order to establish a connection between the operator $L_t^s$ and the lower expectations that correspond to an \ictmc, it is important to realise that the latter is derived from a set $\rateset$ of transition rate matrices---as in Definition~\ref{def:process_sets}---whereas the former is derived from a lower transition rate operator $\lrate$---as in Definition~\ref{def:low_trans}. Therefore, we clearly need to start by creating a link between $\rateset$ and $\lrate$. Fortunately, we have already seen in Section~\ref{sec:connections_rate} that there is a strong connection between sets of rate matrices $\rateset$ and lower transition rate operators $\lrate$. In particular, any set $\rateset$ has a corresponding lower transition rate operator $\lrate$, which computes the lower envelope with respect to $\rateset$, as in Equation~\eqref{eq:correspondinglowertrans}. It is exactly this connection between sets of transition rate matrices and lower transition rate operators that we will use here to establish a connection between the operator $L_t^s$ and the lower expectations that correspond to an \ictmc.

To start with, as the following result shows, for any lower transition rate operator $\lrate$, the corresponding lower transition operator $L_t^s$ provides a lower bound on the conditional expectations $\mathbb{E}_P[f(X_s)\,\vert\,X_t=x_t,X_u=x_u]$ of any well-behaved stochastic process $\smash{P\in\wprocesses_\rateset}$ that is consistent with a set of rate matrices $\rateset$ that has $\lrate$ as its the lower envelope.

\begin{restatable}{proposition}{theoremnonmarkovsinglevarlowerbounded}
\label{theorem:nonmarkov_single_var_lower_bounded}
Consider a non-empty bounded set of rate matrices $\rateset$ whose corresponding lower transition rate operator is $\lrate$, and let $\smash{\underline{\mathcal{T}}_{\lrate}}$ be the corresponding lower transition operator system. Then, for any $\smash{P\in\wprocesses_\rateset}$, any $t,s\in\realsnonneg$ such that $t\leq s$, any $u\in\mathcal{U}_{<t}$, any $x_t\in\states$ and $x_u\in\states_u$, and any $f\in\gamblesX$:
\begin{equation*}
 \mathbb{E}_P[f(X_s)\,\vert\,X_t=x_t,X_u=x_u]\geq[L_{t}^s f](x_t).
\end{equation*}
\end{restatable}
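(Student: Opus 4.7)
The plan is to translate the claim into operator form and then verify a stepwise one-sided domination. By Remark~\ref{remark:expectationT}, the left-hand side equals $[T_{t,x_u}^s f](x_t)$, so the goal reduces to showing $T_{t,x_u}^s f\geq L_t^s f$ pointwise at $x_t$. First I would fix a partition $u^{*}=t_0,t_1,\ldots,t_n\in\mathcal{U}_{[t,s]}$ with $t_0=t$ and $t_n=s$. The law of iterated expectation for $P$ writes $[T_{t,x_u}^s f](x_t)$ as $g_0(x_t)$, where $g_n(x_{t_0},\ldots,x_{t_n})\coloneqq f(x_{t_n})$ and, for each $i\in\{1,\ldots,n\}$,
\begin{equation*}
g_{i-1}(x_{t_0},\ldots,x_{t_{i-1}})\coloneqq\bigl[T^{t_i}_{t_{i-1},\,x_u\cup\{x_{t_0},\ldots,x_{t_{i-2}}\}}\,g_i(x_{t_0},\ldots,x_{t_{i-1}},\cdot)\bigr](x_{t_{i-1}}).
\end{equation*}
Mirroring this, I would define the lower-envelope comparison sequence $h_n\coloneqq f$ and $h_{i-1}\coloneqq(I+\Delta_i\lrate)h_i$, so that $h_0=\Phi_{u^{*}}f$ in the notation of~\eqref{eq:aux_lower_trans}.

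Next I would compare $g_i$ and $h_i$ by downward induction on $i$, proving that $g_i(x_{t_0},\ldots,x_{t_i})\geq h_i(x_{t_i})-\epsilon_i$ for every history, with $\epsilon_n=0$ and $\epsilon_{i-1}\coloneqq\epsilon_i+\Delta_i\epsilon'\|f\|$ for a freely chosen tolerance $\epsilon'>0$. Two ingredients drive the inductive step. First, since $P\in\wprocesses_{\rateset}$, we have $\overline{\partial}_+ T^{t_{i-1}}_{t_{i-1},\tilde x}\subseteq\rateset$ for every relevant history $\tilde x$, so Proposition~\ref{prop:outerderivativebehaveslikelimit} yields, for $\Delta_i$ sufficiently small, some $Q_{i-1,\tilde x}\in\rateset$ with $\smash{\|(T^{t_i}_{t_{i-1},\tilde x}-I)/\Delta_i-Q_{i-1,\tilde x}\|<\epsilon'}$. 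Because $\lrate$ is the lower envelope of $\rateset$, $Q_{i-1,\tilde x}h\geq\lrate h$ for every $h\in\gamblesX$, and combining these two facts with property~\ref{N:normAf} yields the one-step key inequality
\begin{equation*}
T^{t_i}_{t_{i-1},\tilde x}\,h\;\geq\;(I+\Delta_i\lrate)h\,-\,\Delta_i\epsilon'\|h\|\,\mathbf{1}.
\end{equation*}
Second, as long as $\sigma(u^{*})\|\lrate\|\leq1$, each $I+\Delta_j\lrate$ is a lower transition operator by Proposition~\ref{lemma:normQsmallenough}, hence has operator norm at most one by~\ref{LT:norm_at_most_one} and~\ref{N:normAf}, so the recursion for $h_i$ gives $\|h_i\|\leq\|f\|$ uniformly in $i$. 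Applied together with the monotonicity of each transition matrix (its rows are non-negative and sum to one, so it preserves pointwise inequalities and fixes the constant $\mathbf{1}$), the induction then yields $g_0(x_t)\geq[\Phi_{u^{*}}f](x_t)-\epsilon'(s-t)\|f\|$.

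To conclude, I would let $\sigma(u^{*})\to0$ and invoke Theorem~\ref{theo:convergencelowerbound} to obtain $\Phi_{u^{*}}f\to L_t^s f$, and then let $\epsilon'\to0$, which delivers $[T_{t,x_u}^s f](x_t)\geq[L_t^s f](x_t)$, as desired. The hard part is arranging that the ``$\Delta_i$ sufficiently small'' hypothesis of Proposition~\ref{prop:outerderivativebehaveslikelimit} can be met simultaneously at every step of a single partition $u^{*}$: the threshold $\delta(\epsilon',t_{i-1},\tilde x)$ depends on both the time point $t_{i-1}$ and the specific history $\tilde x$. For any \emph{fixed} $u^{*}$ there are only finitely many histories and finitely many time points, so the infimum of the relevant thresholds is positive; I would build $u^{*}$ iteratively from left to right, at each step taking a $\Delta_i$ strictly below the (finitely-many) minimum of the thresholds corresponding to histories compatible with $u$ and $(x_{t_0},\ldots,x_{t_{i-2}})$, below $1/\|\lrate\|$, and below a target bound on $\sigma(u^{*})$. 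Ensuring that this greedy construction actually reaches $s$ in finitely many steps---or, equivalently, showing by some additional uniformity or compactness argument on $[t,s]$ that partitions with arbitrarily small $\sigma(u^{*})$ and the required simultaneous approximation property do exist---is the delicate remaining point of the argument.
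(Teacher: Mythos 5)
Your overall strategy is the same as the paper's---discretise $[t,s]$, approximate each step of the process by $I+\Delta_iQ_i$ with $Q_i\in\rateset$, dominate these by $I+\Delta_i\lrate$, and pass to the limit via Theorem~\ref{theo:convergencelowerbound}---and the induction you set up is sound as far as it goes. But the point you yourself flag as ``delicate'' is a genuine gap, and the greedy left-to-right construction you sketch does not close it. The threshold $\delta(\epsilon',t_{i-1},\tilde x)$ from Proposition~\ref{prop:outerderivativebehaveslikelimit} is only available \emph{after} the point $t_{i-1}$ has been fixed, and there is no uniform positive lower bound on these thresholds as the current time point ranges over $[t,s]$; consequently the step sizes chosen greedily may shrink so fast that $\sum_i\Delta_i$ converges strictly below $s-t$, and the construction never terminates. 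The paper explicitly notes this failure mode and devotes a separate result, Lemma~\ref{lemma:bound_on_linear_approx_partition}, to establishing the existence of a single \emph{finite} partition with arbitrarily small mesh on which the approximation holds at every step. Its proof is a Heine--Borel finite-subcover argument on (a slightly shrunk copy of) $[t,s]$, and it crucially uses \emph{both} one-sided outer partial derivatives supplied by well-behavedness: around each point $r$ one gets an open interval on which both the forward approximation at $r$ and the backward approximation into $r$ are valid, and the finite cover is then stitched together through midpoints between consecutive cover centres. Your argument only invokes the right-sided approximation, so even repaired it would miss the ingredient that makes the covering argument work.

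A second, related difference: you decompose $\mathbb{E}_P[f(X_s)\,\vert\,X_t=x_t,X_u=x_u]$ with the ordinary tower property, so the transition matrices at step $i$ are conditioned on the \emph{growing} history $x_u,x_{t_0},\dots,x_{t_{i-2}}$. This is not wrong, but it makes any compactness repair awkward, because the set of histories over which you would need uniformity depends on the very partition you are trying to construct. The paper sidesteps this with Lemma~\ref{lemma:weirddecomposition}, which shows that $T_{t,x_u}^s$ factorises into matrices $T_{t_{i-1},\,x_{u\cup\{t\}}}^{t_i}$ all conditioned on the \emph{same fixed} history $x_{u\cup\{t\}}$; the covering lemma then only has to be proved for one fixed history, after which the domination step (your one-step inequality, the paper's Lemma~\ref{lemma:productofQsdominatesproductoflrates} together with Lemma~\ref{lemma:recursive}) and the limit via Theorem~\ref{theo:convergencelowerbound} go through exactly as you describe. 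So the missing ideas are the fixed-history decomposition and, above all, the compactness argument exploiting both sides of Definition~\ref{def:well-behaved}; without them the proof is incomplete.
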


Notice that this result is stated for stochastic processes $P$ in $\smash{\wprocesses_{\rateset}}$, whose initial distributions $P(X_0)$ are not required to belong to some given set of initial distributions $\mathcal{M}$. However, of course, since $\wprocesses_{\rateset,\,\mathcal{M}}$ is a clearly a subset of $\wprocesses_{\rateset}$, the same result also holds for any choice of such $\mathcal{M}$.

Our next result establishes that the bound in Proposition~\ref{theorem:nonmarkov_single_var_lower_bounded} is tight if $\rateset$ has separately specified rows. Specifically, we show that $L_t^sf$ can then be approximated to arbitrary precision by carefully choosing a Markov process $P$ from the set $\wmprocesses_{\rateset,\,\mathcal{M}}$.

\begin{restatable}{proposition}{theoremlowermarkovboundistight}
\label{theorem:lower_markov_bound_is_tight}
Let $\mathcal{M}$ be a non-empty set of probability mass functions on $\states$, let $\rateset$ be a non-empty bounded set of rate matrices that has separately specified rows, with corresponding lower transition rate operator $\lrate$, and let $\smash{\underline{\mathcal{T}}_{\lrate}}$ be the corresponding lower transition operator system. Then for all $t,s\in\realsnonneg$ such that $t\leq s$, all $f\in\gamblesX$, and all $\epsilon\in\realspos$, there is a well-behaved Markov chain $P\in\wmprocesses_{\rateset,\,\mathcal{M}}$ such that
\begin{equation*}
\abs{\mathbb{E}_P[f(X_s)\,\vert\,X_t=x_t]-[\lbound_t^sf](x_t)} < \epsilon
~\text{ for all $x_t\in\states$.}
\end{equation*}
\end{restatable}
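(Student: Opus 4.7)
The plan is to combine three ingredients: the definition of $L_t^s$ as the limit of the product operators $\Phi_u$ from Equation~\eqref{eq:aux_lower_trans}, the separately-specified-rows property which lets us pick rate matrices in $\rateset$ that attain $\lrate$ to within any tolerance on any finite collection of ``test'' vectors, and Proposition~\ref{prop:nonhomogeneous_in_process_set} which lets us build a Markov chain whose rate matrix is piecewise constant, taking prescribed values from $\rateset$.

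First I would invoke Theorem~\ref{theo:convergencelowerbound} to choose a partition $u=t_0<\cdots<t_n$ of $[t,s]$ with $\sigma(u)$ so small that $\norm{L_t^s - \Phi_u} < \epsilon/3$ and moreover $\sigma(u)\norm{\lrate}\leq 1$ (so that $I+\Delta_i \lrate$ is a lower transition operator and each $\Delta_i\norm{Q}\leq 1$ uniformly for $Q\in\rateset$, using the bound $\norm{\rateset}<\infty$ from Proposition~\ref{prop:alternativedefforbounded}). Next, define the backward sequence of intermediate vectors $g_n\coloneqq f$ and $g_{i-1}\coloneqq (I+\Delta_i\lrate)g_i$, so that $\Phi_u f=g_0$; note that $\norm{g_i}\leq\norm{f}$ for all $i$ since each $(I+\Delta_i\lrate)$ is a lower transition operator and hence satisfies~\ref{LT:norm_at_most_one}.

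The key step is then to choose, for each $i\in\{1,\dots,n\}$ and each $x\in\states$, a row $Q_i(x,\cdot)$ from the set $\rateset_x$ of $x$-rows of $\rateset$ such that $[Q_ig_i](x)\leq[\lrate g_i](x)+\delta$; the existence of such rows follows from the definition of $\lrate$ as the lower envelope~\eqref{eq:correspondinglowertrans}. Because $\rateset$ has separately specified rows (Definition~\ref{def:separatelyspecifiedrows}), these rows may be combined into a single rate matrix $Q_i\in\rateset$. Since $Q_i\in\rateset$ also implies $Q_ig_i\geq\lrate g_i$, we obtain $\norm{Q_ig_i-\lrate g_i}\leq\delta$, where $\delta>0$ will be fixed later. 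Applying Proposition~\ref{prop:nonhomogeneous_in_process_set} with the sequence $u$, some arbitrarily chosen $p\in\mathcal{M}$, and the collection $Q_0,Q_1,\ldots,Q_n,Q_{n+1}$ (with $Q_0,Q_{n+1}$ any elements of $\rateset$) then yields a well-behaved Markov chain $P\in\wmprocesses_{\rateset,\mathcal{M}}$ whose corresponding transition matrix is
\begin{equation*}
T_t^s = e^{Q_1\Delta_1}e^{Q_2\Delta_2}\cdots e^{Q_n\Delta_n}.
\end{equation*}

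It remains to bound $\norm{T_t^sf - \Phi_u f}$. I would introduce the forward sequence $h_n\coloneqq f$ and $h_{i-1}\coloneqq e^{Q_i\Delta_i}h_i$, so that $T_t^sf = h_0$, and then estimate $\norm{h_{i-1}-g_{i-1}}$ recursively by splitting
\begin{equation*}
\norm{e^{Q_i\Delta_i}h_i - (I+\Delta_i\lrate)g_i}
\leq \norm{e^{Q_i\Delta_i}}\,\norm{h_i-g_i} + \norm{e^{Q_i\Delta_i}-(I+\Delta_iQ_i)}\norm{g_i} + \Delta_i\norm{Q_ig_i-\lrate g_i}.
\end{equation*}
Using that $e^{Q_i\Delta_i}$ is a transition matrix (hence has norm $1$), the Taylor remainder bound $\norm{e^{Q_i\Delta_i}-I-\Delta_iQ_i}\leq \tfrac{1}{2}\Delta_i^2\norm{\rateset}^2 e^{\Delta_i\norm{\rateset}}$, and $\norm{g_i}\leq\norm{f}$, telescoping yields
\begin{equation*}
\norm{T_t^sf-\Phi_u f}
\leq C\,\sigma(u)\norm{\rateset}^2\norm{f}(s-t) + \delta(s-t),
\end{equation*}
for an absolute constant $C$. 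Choosing $\sigma(u)$ small enough (refining the partition from step one if needed) and then $\delta$ small enough, both terms are below $\epsilon/3$; combining with $\norm{L_t^sf-\Phi_u f}<\epsilon/3$ gives the desired bound on $\abs{\mathbb{E}_P[f(X_s)\vert X_t=x_t]-[L_t^sf](x_t)}$ uniformly in $x_t$ via~\ref{N:normAf}.

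The main obstacle is the bookkeeping in the error propagation: making the ``one-step'' error small enough that after at most $n = O(1/\sigma(u))$ compositions it still sums to $O(\epsilon)$, which forces a careful ordering in which we first fix $\sigma(u)$ small (to control both the $\Phi_u\to L_t^s$ approximation and the quadratic exponential-remainder term) and only then choose the pointwise tolerance $\delta$ for the rate-matrix approximations, since $\delta$ depends on the now-fixed intermediate vectors $g_i$. The separately-specified-rows hypothesis enters exactly to ensure that the row-wise choices actually assemble into a legitimate element of $\rateset$ — without it, $\lrate$ alone would not suffice to produce a dominating rate matrix $Q_i\in\rateset$ satisfying $Q_ig_i\approx\lrate g_i$ simultaneously in every row.
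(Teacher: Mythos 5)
Your proposal is correct and follows essentially the same route as the paper's proof: partition $[t,s]$, exploit the separately-specified-rows property (the paper's Lemma~\ref{lemma:rateset_has_arginf}) to select per-step rate matrices $Q_i\in\rateset$ nearly attaining $\lrate$ on the intermediate vectors, build the piecewise-exponential Markov chain via Proposition~\ref{prop:nonhomogeneous_in_process_set}, and telescope the one-step errors (quadratic exponential remainders plus the $\delta$-tolerances, summing to $O(\sigma(u))+O(\delta)$ over the interval). The only cosmetic difference is that you anchor the intermediate vectors to the Euler products $\Phi_u$ and control $\norm{L_t^s-\Phi_u}$ as a separate term, whereas the paper takes $g_i=L_{t_i}^{s}f$ and telescopes $L_t^sf$ against $T_t^sf$ directly through the semigroup property, comparing each $L_{t_{i-1}}^{t_i}$ with $e^{Q_i\Delta_i}$ via Lemmas~\ref{lemma:quadraticboundonL} and~\ref{lemma:linearpartofexponential}.
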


Together, Propositions~\ref{theorem:nonmarkov_single_var_lower_bounded} and~\ref{theorem:lower_markov_bound_is_tight} establish a strong connection between the operator $L_t^s$ and the lower expectations that correspond to $\smash{\wmprocesses_{\rateset,\mathcal{M}}}$ or $\smash{\wprocesses_{\rateset,\mathcal{M}}}$. In particular, for $\rateset$ with separately specified rows, and for functions $f(X_s)$ that depend on the state $X_s$ at a single time point $s$ in the future, these three objects end up being identical.

\begin{restatable}{corollary}{corloweroperatorisinfimum}
\label{cor:lower_operator_is_infimum}
Let $\mathcal{M}$ be a non-empty set of probability mass functions on $\states$, let $\rateset$ be a non-empty bounded set of rate matrices that has separately specified rows, with corresponding lower transition rate operator $\lrate$, and let $\underline{\mathcal{T}}_{\lrate}$ be the corresponding lower transition operator system. Then, for all $t,s\in\realsnonneg$ such that $t\leq s$, all $u\in\mathcal{U}_{<t}$, $x_u\in\states_u$ and $x_t\in\states$, and all $f\in\gamblesX$:
\begin{align*}
\underline{\mathbb{E}}^{\mathrm{W}}_{\,\rateset,\,\mathcal{M}}[f(X_s)\,\vert\,X_t=x_t,X_u=x_u]=\underline{\mathbb{E}}^{\mathrm{WM}}_{\,\rateset,\,\mathcal{M}}[f(X_s)\,\vert\,X_t=x_t,X_u=x_u] =\left[L_t^sf\right](x_t).
\end{align*}\\[-25pt]
\end{restatable}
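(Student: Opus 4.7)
The plan is to prove this corollary as a straightforward sandwich argument, combining Propositions~\ref{theorem:nonmarkov_single_var_lower_bounded} and~\ref{theorem:lower_markov_bound_is_tight} with the inclusions among the three types of ICTMCs. Concretely, I will show the chain of inequalities
\[
[L_t^sf](x_t)\,\leq\,\underline{\mathbb{E}}^{\mathrm{W}}_{\rateset,\mathcal{M}}[f(X_s)\,\vert\,X_t=x_t,X_u=x_u]\,\leq\,\underline{\mathbb{E}}^{\mathrm{WM}}_{\rateset,\mathcal{M}}[f(X_s)\,\vert\,X_t=x_t,X_u=x_u]\,\leq\,[L_t^sf](x_t),
\]
which forces all three quantities to coincide.

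For the first inequality, I would observe that $\wprocesses_{\rateset,\mathcal{M}}\subseteq\wprocesses_{\rateset}$, so every $P$ over which the infimum defining $\underline{\mathbb{E}}^{\mathrm{W}}_{\rateset,\mathcal{M}}$ is taken satisfies the hypotheses of Proposition~\ref{theorem:nonmarkov_single_var_lower_bounded}. That proposition gives $\mathbb{E}_P[f(X_s)\vert X_t=x_t,X_u=x_u]\geq[L_t^sf](x_t)$ pointwise in $P$, and taking the infimum over $P\in\wprocesses_{\rateset,\mathcal{M}}$ preserves the bound. The second (middle) inequality is an immediate consequence of Proposition~\ref{prop:lower_exp_markov_bounded_by_nonmarkov}, since $\wmprocesses_{\rateset,\mathcal{M}}\subseteq\wprocesses_{\rateset,\mathcal{M}}$ (Proposition~\ref{prop:markov_set_subset_of_nonmarkov_set}) and taking an infimum over a smaller set gives a larger value.

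The third inequality is the crucial one and is where I would invoke Proposition~\ref{theorem:lower_markov_bound_is_tight}, which requires the separately-specified-rows assumption on $\rateset$. Fix an arbitrary $\epsilon>0$. The proposition supplies a well-behaved Markov chain $P_\epsilon\in\wmprocesses_{\rateset,\mathcal{M}}$ with $\mathbb{E}_{P_\epsilon}[f(X_s)\vert X_t=x_t]<[L_t^sf](x_t)+\epsilon$. The one subtlety is that Proposition~\ref{theorem:lower_markov_bound_is_tight} only conditions on the single state $X_t=x_t$, whereas we need the expectation conditional on $X_t=x_t,X_u=x_u$. This is handled by the Markov property of $P_\epsilon$, which yields $\mathbb{E}_{P_\epsilon}[f(X_s)\vert X_t=x_t,X_u=x_u]=\mathbb{E}_{P_\epsilon}[f(X_s)\vert X_t=x_t]$ (this identity follows immediately from Definition~\ref{def:markov_property} applied term-by-term in the sum that expresses the expectation). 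Hence $\underline{\mathbb{E}}^{\mathrm{WM}}_{\rateset,\mathcal{M}}[f(X_s)\vert X_t=x_t,X_u=x_u]<[L_t^sf](x_t)+\epsilon$, and letting $\epsilon\to0$ completes the chain.

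The whole argument is essentially bookkeeping, so there is no genuine obstacle — all the analytic work has been done in the two propositions cited. The only point that requires care is the mismatch in the conditioning events between Proposition~\ref{theorem:lower_markov_bound_is_tight} (which conditions only on $X_t$) and the statement of the corollary (which conditions on $X_t$ and the history $X_u$); this is resolved cleanly by the Markov property of the approximating process $P_\epsilon$, which is available precisely because $P_\epsilon$ is chosen from $\wmprocesses_{\rateset,\mathcal{M}}$ rather than from $\wprocesses_{\rateset,\mathcal{M}}$.
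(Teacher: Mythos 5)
Your proposal is correct and follows essentially the same route as the paper's own proof: the lower bound via Proposition~\ref{theorem:nonmarkov_single_var_lower_bounded} (noting $\wprocesses_{\rateset,\mathcal{M}}\subseteq\wprocesses_{\rateset}$), the middle inequality via Proposition~\ref{prop:lower_exp_markov_bounded_by_nonmarkov}, and the upper bound via Proposition~\ref{theorem:lower_markov_bound_is_tight} combined with the Markov property of the approximating chain to bridge the conditioning on $(X_t,X_u)$ versus $X_t$ alone. The paper's proof is the same sandwich argument, merely presented in the reverse order.
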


Hence, we find that there is indeed a correspondence between the operator $L_t^s$ and the lower expectations that correspond to \ictmc's.

This result also helps to clarify why we choose to call $\smash{\wprocesses_{\rateset,\mathcal{M}}}$ an imprecise Markov chain, despite the fact that it contains processes that do not satisfy the Markov property. In order to see that, observe that Corollary~\ref{cor:lower_operator_is_infimum} holds for \emph{all} histories $x_u\in\states_u$ and \emph{any} sequence of time points $u\in\mathcal{U}_{<t}$. Therefore, and because the definition of $L_t^s$ does not depend on this choice of $u$ and $x_u$, it follows that for $\rateset$ with separately specified rows:
\vspace{3pt}
\begin{equation}\label{eq:impreciseMarkov1}
\underline{\mathbb{E}}_{\rateset,\mathcal{M}}^\mathrm{WM}[f(X_s)\,\vert\,X_t=x_t,X_u=x_u] = \underline{\mathbb{E}}_{\rateset,\mathcal{M}}^\mathrm{WM}[f(X_s)\,\vert\,X_t=x_t]\,\,\,
\end{equation}
and
\begin{equation}\label{eq:impreciseMarkov2}
\underline{\mathbb{E}}_{\rateset,\mathcal{M}}^\mathrm{W}[f(X_s)\,\vert\,X_t=x_t,X_u=x_u] = \underline{\mathbb{E}}_{\rateset,\mathcal{M}}^\mathrm{W}[f(X_s)\,\vert\,X_t=x_t].
\vspace{5pt}
\end{equation}
In other words, the conditional lower expectations $\underline{\mathbb{E}}_{\rateset,\mathcal{M}}^\mathrm{WM}$ and $\underline{\mathbb{E}}_{\rateset,\mathcal{M}}^\mathrm{W}$ satisfy an \emph{imprecise Markov property}: conditional on the state at time $t$, the lower expectation of a function $f(X_s)$ at a future time point $s$ is functionally independent of the states at time points $u$ that precede $t$.
For the lower expectation $\smash{\underline{\mathbb{E}}_{\rateset,\mathcal{M}}^\mathrm{WM}}$, this is of course to be expected: since $\underline{\mathbb{E}}_{\rateset,\mathcal{M}}^\mathrm{WM}$ is the lower envelope of the set of Markov processes $\smash{\wmprocesses_{\rateset,\mathcal{M}}}$, it is not surprising that this lower envelope itself satisfies a Markov property as well. In fact, for this reason, Equation~\eqref{eq:impreciseMarkov1} is clearly also true if $\rateset$ does not have separately specified rows. The most important message here though is that $\smash{\underline{\mathbb{E}}_{\rateset,\mathcal{M}}^\mathrm{W}}$ also satisfies such an imprecise Markov property. In this case, this result is far from trivial, because the individual processes in $\smash{\wprocesses_{\rateset,\mathcal{M}}}$ are not required to---and usually do not---satisfy a Markov property. It remains an open question at this point whether Equation~\eqref{eq:impreciseMarkov2} also holds if $\rateset$ does not have separately specified rows.

That being said, Corollary~\ref{cor:lower_operator_is_infimum} also establishes that the correspondence between $\smash{L_t^s}$ and \ictmc's is not one-to-one. For starters, $L_t^s$ computes the lower expectation for two different sets of processes: $\wmprocesses_\rateset$ and $\wprocesses_\rateset$. Furthermore, we know from Section~\ref{sec:connections_rate} that different sets $\rateset_1$ and $\rateset_2$ may have the same corresponding lower transition rate operator $\lrate$. Hence, whenever this is the case, $L_t^s$ will---assuming the conditions in Corollary~\ref{cor:lower_operator_is_infimum} are met by both $\rateset_1$ and $\rateset_2$---compute the lower expectation with respect to the sets of stochastic processes $\wmprocesses_{\rateset_1}$, $\wmprocesses_{\rateset_2}$, $\wprocesses_{\rateset_1}$ and $\wprocesses_{\rateset_2}$.

A particularly interesting special case corresponds to the situation where $\rateset_1$ is a non-empty bounded set of rate matrices $\rateset$ that has separately specified rows, and $\rateset_2$ is its closed convex hull, which, because of Proposition~\ref{prop:dominating_unique_characterization}, is equal to $\rateset_{\lrate}$, where $\lrate$ is the lower transition rate operator that corresponds to $\rateset$. The two sets of transition rate matrices $\rateset$ and $\rateset_{\lrate}$ then clearly (i) have the same lower corresponding lower transition rate operator $\lrate$ and (ii) satisfy the conditions in Corollary~\ref{cor:lower_operator_is_infimum}. Therefore, it follows from the preceding argument that the resulting lower expectations are identical and, in particular, that
\begin{equation*}
 \underline{\mathbb{E}}_{\,\rateset}^{\mathrm{WM}}[f(X_s)\,\vert\,X_{t}=x_t,X_u=x_u] = \underline{\mathbb{E}}_{\,\rateset_{\lrate}}^{\mathrm{W}}[f(X_s)\,\vert\,X_t=x_t,X_u=x_u]=[L_{t}^sf](x_t),
\end{equation*}
which in turn immediately implies that for any set of stochastic processes $\mathcal{P}$ such that $\smash{\wmprocesses_\rateset \subseteq \mathcal{P} \subseteq \wprocesses_{\rateset_{\lrate}}}$:
\begin{equation}\label{eq:EequalsLformathcalP}
 \underline{\mathbb{E}}[f(X_s)\,\vert\,X_{t}=x_t,X_u=x_u] =[L_{t}^sf](x_t),
\end{equation}
where $\underline{\mathbb{E}}$ is the lower expectation with respect to $\mathcal{P}$, as defined in Equation~\eqref{eq:genericlowerexpectation}.

A common feature of these sets of stochastic processes $\mathcal{P}$, is that each of their elements $P$ is well-behaved and consistent with $\rateset_{\lrate}$. An obvious question, then, is whether this feature is necessary in order for Equation~\eqref{eq:EequalsLformathcalP} to hold. The following result establishes that this is indeed the case. 

\begin{restatable}{theorem}{theodominatingrateprocessesmaxset}
\label{theo:dominating_rate_processes_max_set}
Let $\lrate$ be an arbitrary lower transition rate operator, with $\rateset_{\lrate}$ its set of dominating rate matrices, and let $\smash{\underline{\mathcal{T}}_{\lrate}}$ be the corresponding lower transition operator system. Then the largest set of stochastic processes $\mathcal{P}$ for which the corresponding conditional lower expectation operator $\underline{\mathbb{E}}[\cdot\,\vert\,\cdot]$---as defined in Equation~\eqref{eq:genericlowerexpectation}---satisfies
\begin{equation*}
\underline{\mathbb{E}}[f(X_s)\,\vert\,X_t=x_t,X_u=x_u]=[L_t^sf](x_t)
\end{equation*}
for all $t,s\in\realsnonneg$ such that $t\leq s$, all $u\in\mathcal{U}_{<t}$, all $x_t\in\states$ and $x_u\in\states_u$, and every $f\in\gamblesX$, is the set $\wprocesses_{\rateset_{\lrate}}$.
\end{restatable}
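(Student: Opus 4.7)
The plan is to decompose the claim into two directions: (A) that $\wprocesses_{\rateset_{\lrate}}$ itself satisfies the stated equality, and (B) that every set $\mathcal{P}$ satisfying the equality is contained in $\wprocesses_{\rateset_{\lrate}}$. The notion of a \emph{largest} such set is well-posed since the infimum over a union equals the minimum of the infima, so the union of any family of sets satisfying the equality again satisfies it; thus (A)+(B) together identify the unique maximum.

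For (A), I would combine Propositions~\ref{prop:dominating_nonempty_bounded} and~\ref{prop:dominatingproperties} to conclude that $\rateset_{\lrate}$ is non-empty, bounded, closed, convex, has separately specified rows, and has $\lrate$ as its corresponding lower transition rate operator. Applying Corollary~\ref{cor:lower_operator_is_infimum} with $\rateset = \rateset_{\lrate}$, and with $\mathcal{M}$ chosen as the set of \emph{all} probability mass functions on $\states$, immediately gives the desired equality for $\mathcal{P} = \wprocesses_{\rateset_{\lrate}}$.

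For (B), let $\mathcal{P}$ satisfy the equality and fix any $P \in \mathcal{P}$. Since $P$ belongs to the set over which the infimum is taken, one has $\mathbb{E}_{P}[f(X_s)\,\vert\,X_t = x_t, X_u = x_u] \geq [L_t^s f](x_t)$ for every admissible choice of arguments. It remains to deduce that $P$ is well-behaved and that each outer partial derivative $\overline{\partial} T_{t,x_u}^t$ lies in $\rateset_{\lrate}$. For the consistency half, pick any $Q \in \overline{\partial}_{+} T_{t,x_u}^t$ together with a witnessing sequence $\{\Delta_i\}_{i\in\nats}\to 0^+$ satisfying $\frac{1}{\Delta_i}(T_{t,x_u}^{t+\Delta_i} - I) \to Q$, as in Definition~\ref{def:outerpartialderivatives}. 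Using $[T_{t,x_u}^{t+\Delta_i} f](x_t) = \mathbb{E}_{P}[f(X_{t+\Delta_i})\,\vert\,X_t = x_t, X_u = x_u]$, the hypothesis yields $T_{t,x_u}^{t+\Delta_i} f \geq L_t^{t+\Delta_i} f$ for every $f \in \gamblesX$. Subtracting $f$, dividing by $\Delta_i$, and taking the limit---using Proposition~\ref{prop:lower_transition_has_deriv} to guarantee $\frac{1}{\Delta}(L_t^{t+\Delta} - I) \to \lrate$ in operator norm (and hence pointwise on $f$)---gives $Qf \geq \lrate f$. Since $f$ was arbitrary, $Q \in \rateset_{\lrate}$ by definition~\eqref{eq:dominatingratematrices}; the case $Q \in \overline{\partial}_{-} T_{t,x_u}^t$ is handled analogously, invoking homogeneity (Proposition~\ref{prop:lower_transition_is_homogeneous}) to obtain $\frac{1}{\Delta}(L_{t-\Delta}^t - I) = \frac{1}{\Delta}(L_0^{\Delta} - I) \to \lrate$ as well.

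For the well-behavedness half, I would exploit that $T_{t,x_u}^{t+\Delta}$ is row-stochastic, so Equation~\eqref{eq:normofmatrix} gives $\norm{T_{t,x_u}^{t+\Delta} - I} = 2 \max_{x \in \states}(1 - T_{t,x_u}^{t+\Delta}(x,x))$. Applying the lower bound at $f = \ind{x}$ yields $T_{t,x_u}^{t+\Delta}(x,x) = [T_{t,x_u}^{t+\Delta} \ind{x}](x) \geq [L_t^{t+\Delta} \ind{x}](x)$, so
\begin{equation*}
\frac{1}{\Delta}\left(1 - T_{t,x_u}^{t+\Delta}(x,x)\right) \leq -\frac{1}{\Delta}\left([L_t^{t+\Delta} \ind{x}](x) - \ind{x}(x)\right),
\end{equation*}
whose right-hand side tends, as $\Delta \to 0^+$, to $-[\lrate \ind{x}](x)$---a finite non-negative quantity. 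Hence $\limsup_{\Delta\to 0^+} \frac{1}{\Delta}\norm{T_{t,x_u}^{t+\Delta} - I} \leq 2\max_{x\in\states}(-[\lrate\ind{x}](x)) < +\infty$; the left-sided analogue follows by homogeneity of $\underline{\mathcal{T}}_{\lrate}$ as before. Combined with Proposition~\ref{prop:stochasticprocess:simpleproperties} this shows $P$ is well-behaved, and together with consistency, that $P \in \wprocesses_{\rateset_{\lrate}}$. The main obstacle I anticipate is precisely this well-behavedness step: the hypothesis provides only one-sided (lower) control, so it is a priori unclear how to bound a two-sided quantity like $\norm{T_{t,x_u}^{t+\Delta} - I}$; the trick is that for row-stochastic matrices this norm reduces to a function of the diagonal entries alone, which \emph{are} controlled from below by $L_t^{t+\Delta}\ind{x}$, circumventing the need for a matching upper bound.
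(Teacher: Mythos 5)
Your proof is correct, and it is in essence the contrapositive of the paper's argument, with one genuinely different ingredient. The paper proves maximality by showing that any $P\in\processes$ that is either not well-behaved or not consistent with $\rateset_{\lrate}$ admits some choice of $t,s,u,x_u,x_t,f$ with $\mathbb{E}_P[f(X_s)\,\vert\,X_t=x_t,X_u=x_u]<[L_t^sf](x_t)$, so that the infimum over any set containing such a $P$ drops strictly below $L_t^s$; you instead run the implication forwards, deducing membership in $\wprocesses_{\rateset_{\lrate}}$ from the one-sided domination $\mathbb{E}_P[\cdot\,\vert\,\cdot]\geq [L_t^s\,\cdot\,](\cdot)$. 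For the consistency half the two computations are mirror images: you pass the inequality $T_{t,x_u}^{t+\Delta_i}f\geq L_t^{t+\Delta_i}f$ through the limit $\tfrac{1}{\Delta}(L_t^{t+\Delta}-I)\to\lrate$ to conclude $Qf\geq\lrate f$ for every outer-derivative element $Q$, whereas the paper starts from some $Q^*\notin\rateset_{\lrate}$ and manufactures a strict violation from the same approximation (Lemma~\ref{lemma:quadraticboundonL}). For the well-behavedness half your argument is genuinely different and arguably cleaner: the paper picks a norm-one $f$ on which the finite-difference matrix is very negative and compares against $\norm{\lrate}$, while you exploit row-stochasticity to write $\norm{T_{t,x_u}^{t+\Delta}-I}=2\max_{x\in\states}\bigl(1-T_{t,x_u}^{t+\Delta}(x,x)\bigr)$ and control the diagonal entries from below by $[L_t^{t+\Delta}\ind{x}](x)$, so that the purely one-sided hypothesis suffices and yields the uniform bound $2\max_{x\in\states}\bigl(-[\lrate\ind{x}](x)\bigr)\leq2\norm{\lrate}$ directly. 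Your explicit treatment of the easy direction (via Propositions~\ref{prop:dominating_nonempty_bounded} and~\ref{prop:dominatingproperties} together with Corollary~\ref{cor:lower_operator_is_infimum}) and of the well-posedness of ``largest'' is material the paper compresses into ``clearly, it suffices to prove\dots''; what the paper's formulation buys in exchange is an explicit witness of failure for every process outside $\wprocesses_{\rateset_{\lrate}}$.
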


We regard this result as a vindication for our choice to focus on \emph{well-behaved} stochastic processes---instead of more restricted ones, such as, say, continuous or differentiable stochastic processes. Since our aim here is to use $L_t^s$ as a representational and computational tool for lower expectations, it follows from this result that in order to be able to do this, it is indeed necessary to impose this minimal property of well-behavedness.

\subsection{Lower Transition Operators as a Computational Tool}\label{subsec:compute_single_var}

An important consequence of the fact that the lower expectations $\underline{\mathbb{E}}_{\,\rateset,\mathcal{M}}^{\mathrm{WM}}$ and $\underline{\mathbb{E}}_{\,\rateset,\mathcal{M}}^{\mathrm{W}}$ can be conveniently represented by the operator $L_t^s$---at least for functions of the state at a single time point in the future---is that we can focus our computational efforts on evaluating this operator $L_t^s$, thereby abstracting away all the technicalities of dealing with lower expectations with respect to sets of stochastic process. Of course, in practice, we are only interested in a finite precision approximation of $L_t^sf$, and we will therefore focus on computing $L_t^sf$ within some guaranteed $\epsilon$-bound.

The construction of the operator $L_t^s$ in Section~\ref{sec:lowertrans} already suggests how we can do this; namely, by using a finite-precision approximation of $L_t^s$ using the auxiliary operator $\Phi_u\coloneqq \prod_{i=1}^n(I+\Delta_i\lrate)$. Recall from Section~\ref{sec:lowertrans} that the approximation of $L_t^s$ by $\Phi_u$ becomes better as we take $u\in\mathcal{U}_{[t,s]}$ to be an increasingly finer partition of the interval $[t,s]$. The following result tells us exactly how fine this partition needs to be for a specific function $f\in\gamblesX$, in order to guarantee an $\epsilon$-error bound on $L_t^sf$.

\begin{restatable}{proposition}{propapproximationerrorbound}
\label{prop:approximation_error_bound}
Let $\lrate$ be a lower transition rate operator, choose any $t,s\in\realsnonneg$ such that $t\leq s$, and let $L_t^s$ be the lower transition operator corresponding to $\lrate$. Then for any $f\in\gamblesX$ and $\epsilon\in\realspos$, if we choose any $n\in\nats$ such that
\begin{equation*}
n \geq\max\left\{
(s-t)\norm{\lrate},
\frac{1}{2\epsilon}(s-t)^2\norm{\lrate}^2\norm{f}_\mathrm{v}
\right\},
\end{equation*}
with $\norm{f}_\mathrm{v}\coloneqq\max f-\min f$,
we are guaranteed that
\begin{equation*}
\norm{L_t^sf - \prod_{i=1}^n(I + \Delta\lrate)f} \leq \epsilon,
\end{equation*}
with $\Delta\coloneqq \nicefrac{(s-t)}{n}$.
\end{restatable}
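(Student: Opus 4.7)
The plan is to use Proposition~\ref{prop:differencebetweenu} together with the constant-additivity of lower transition operators, after first matching the approximant to a particular uniform partition of $[t,s]$. Concretely, let $u=(t_0,\ldots,t_n)\in\mathcal{U}_{[t,s]}$ be the uniform partition with $t_i=t+i\Delta$ and $\Delta=(s-t)/n$, so that $\Phi_u=\prod_{i=1}^n(I+\Delta\lrate)$ is exactly the operator appearing on the right-hand side. The first condition $n\geq(s-t)\norm{\lrate}$ ensures $\Delta\norm{\lrate}\leq 1$, so each factor $I+\Delta\lrate$ is a lower transition operator by Proposition~\ref{lemma:normQsmallenough}, and hence $\Phi_u$ is one too by Proposition~\ref{lemma:compositioncoherence}; this will matter for invoking property~\ref{LT:constantadditivity} later.

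Next I would bound $\norm{L_t^s-\Phi_u}$. By Proposition~\ref{prop:differencebetweenu} applied with $\delta=\Delta$, for every $u^*\in\mathcal{U}_{[t,s]}$ with $\sigma(u^*)\leq\Delta$, $\norm{\Phi_u-\Phi_{u^*}}\leq 2\Delta(s-t)\norm{\lrate}^2$. Choosing a sequence $\{u^*_k\}$ with $\sigma(u^*_k)\to 0$, Definition~\ref{def:low_trans} gives $\Phi_{u^*_k}\to L_t^s$ in operator norm, so passing to the limit yields $\norm{L_t^s-\Phi_u}\leq 2\Delta(s-t)\norm{\lrate}^2=2(s-t)^2\norm{\lrate}^2/n$.

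Now, to get $\norm{f}_{\mathrm{v}}$ in the final bound rather than $\norm{f}$, I would exploit the constant-additivity~\ref{LT:constantadditivity} of both $L_t^s$ and $\Phi_u$: for every constant $\mu\in\reals$, $L_t^sf-\Phi_uf=L_t^s(f+\mu)-\Phi_u(f+\mu)$. Taking $\mu=-(\max f+\min f)/2$ yields $\norm{f+\mu}=\norm{f}_{\mathrm{v}}/2$, so
\[
\norm{L_t^sf-\Phi_uf}\leq\norm{L_t^s-\Phi_u}\cdot\frac{\norm{f}_{\mathrm{v}}}{2}\leq\frac{(s-t)^2\norm{\lrate}^2\norm{f}_{\mathrm{v}}}{n}.
\]
Requiring this to be at most $\epsilon$ recovers (the essence of) the second lower bound on $n$.

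The main obstacle is squeezing out the exact factor of two that separates the bound I just sketched from the stated $n\geq\tfrac{1}{2\epsilon}(s-t)^2\norm{\lrate}^2\norm{f}_{\mathrm{v}}$. My suspicion is that the factor of $2$ inside Proposition~\ref{prop:differencebetweenu} arises from a triangle inequality through a common refinement of $u$ and $u^*$, and that one can shave it off in the limit by restricting to sequences $u^*_k\supseteq u$, so only one partition contributes to the $\sigma$-term. A cleaner alternative would bypass Proposition~\ref{prop:differencebetweenu} entirely: using Propositions~\ref{prop:lower_trans_system_is_system} and~\ref{prop:lower_transition_is_homogeneous} one has $L_t^s=(L_0^\Delta)^{\circ n}$, so a telescoping estimate combined with the $1$-Lipschitz property~\ref{LT:differencenorm} reduces the global error to $n$ times a local error $\norm{L_0^\Delta g-(I+\Delta\lrate)g\,}$, which Proposition~\ref{prop:lower_transition_has_deriv} (together with constant-additivity, applied to $g=f+\mu$) bounds by $\tfrac{1}{2}\Delta^2\norm{\lrate}^2\norm{f}_{\mathrm{v}}$; summing gives exactly $\tfrac{1}{2n}(s-t)^2\norm{\lrate}^2\norm{f}_{\mathrm{v}}$, which is at most $\epsilon$ under the stated hypothesis.
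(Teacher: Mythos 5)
Your proposal is essentially correct, and your ``suspicion'' about the factor of two is precisely how the paper closes the gap: the paper's Lemma~\ref{lemma:limitboundonL} establishes $\norm{L_t^s-\Phi_u}\leq\sigma(u)(s-t)\norm{\lrate}^2$ exactly by comparing $\Phi_u$ only with refinements $u'\supseteq u$ (Corollary~\ref{corol:differencebetweennestedintermsofu}) and then letting the residual $\norm{L_t^s-\Phi_{u'}}$ vanish via Definition~\ref{def:low_trans}, so only one partition contributes a $\sigma$-term and the factor $2$ of Proposition~\ref{prop:differencebetweenu} disappears. Your constant-shift step is also the paper's move: it works with $h\coloneqq f-\min f-\nicefrac{1}{2}\norm{f}_\mathrm{v}$ (the same function as your $f+\mu$ up to the sign convention), notes $\norm{h}=\nicefrac{1}{2}\norm{f}_\mathrm{v}$, bounds $\norm{L_t^sh-\Phi_uh}\leq\norm{L_t^s-\Phi_u}\norm{h}$, and transfers the bound back to $f$ using~\ref{LT:constantadditivity}, for which the condition $n\geq(s-t)\norm{\lrate}$ is needed so that $\Phi_u$ is a lower transition operator---all points you identified. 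Your alternative telescoping route via $L_t^s=(L_0^\Delta)^n$ (Propositions~\ref{prop:lower_trans_system_is_system} and~\ref{prop:lower_transition_is_homogeneous}) also works and yields the same constant, with one small caveat: the quantitative local estimate you need is $\norm{L_0^{\Delta}-(I+\Delta\lrate)}\leq\Delta^2\norm{\lrate}^2$, which is an explicit quadratic bound (the paper's Lemma~\ref{lemma:quadraticboundonL}) rather than a consequence of the differentiability statement in Proposition~\ref{prop:lower_transition_has_deriv} alone, and the telescoping step is cleanest via the composition inequality for lower transition operators (Lemma~\ref{lemma:recursive_lower_trans}) rather than~\ref{LT:differencenorm} applied naively; with those substitutions your second argument is a complete and correct, essentially equivalent, proof.
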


Simply put, this result tells us that if we can compute $\lrate g$ for all $g\in\gamblesX$, then we can also approximate the quantity $L_t^sf$ to arbitrary precision, for any given $f\in\gamblesX$. Therefore, and because of our results in Section~\ref{sec:single_var_lower_exp}, if $\lrate$ is the lower transition rate operator that corresponds to a given set $\rateset$ that is non-empty, bounded and has separately specified rows, the non-linear optimization problem of computing $\underline{\mathbb{E}}_\rateset^\mathrm{WM}[f(X_s)\vert X_t=x_t,X_u=x_u]$ or $\underline{\mathbb{E}}_\rateset^\mathrm{W}[f(X_s)\vert X_t=x_t,X_u=x_u]$ reduces to the problem of computing $\lrate g_i$ for $n$ different functions $g_i\in\gamblesX$, with $n\in\nats$ as in Proposition~\ref{prop:approximation_error_bound}.

In the remainder of this section, we will numerically illustrate this method of computing the conditional lower expectation of a given function $f\in\gamblesX$. In order to make this illustration less abstract, we consider the context of the simple disease model that was put forward in Example~\ref{ex:health_sick_exmp} in Section~\ref{sec:introduction}. To this end, we first construct a parameter set $\rateset$ that we will use in the examples to come.

\begin{exmp}\label{exmp:example_rateset_simple_model}
Consider again the binary-state disease model from Example~\ref{ex:health_sick_exmp}, modelling a person periodically becoming sick and recovering after some time. The state space here is of the form $\states=\{\text{{\tt healthy}},\text{{\tt sick}}\}$, and we wish to specify numerical values for the rate at which the two possible transitions occur; that is, the transitions from {\tt healthy} to {\tt sick}, and from {\tt sick} to {\tt healthy}.

If we were using a precise homogeneous Markov chain $P\in\whmprocesses$, it would suffice to select a single rate matrix $Q\in\mathcal{R}$, which, in this binary case, would be of the form
\begin{equation*}
Q = \left[ \begin{array}{rr}
-a & a \\
b & -b
\end{array}\right] \text{ for some } a,b\in\reals_{\geq0}.
\end{equation*}
The parameter $a$ here is the rate at which a healthy person becomes sick.
Technically, this means that if a person is healthy at time $t$, the probability that he or she will be sick at time $t+\Delta$, for small $\Delta$, is very close to $\Delta a$. More intuitively, if we take the time unit to be one week, it means that he or she will, on average, become sick after $\nicefrac{1}{a}$ weeks. The parameter $b$ is the rate at which a sick person becomes healthy again, and has a similar interpretation. In other words, if we would for example say that $a=\nicefrac{1}{52}$ and $b=1$, then this would mean that---on average---we expect that a healthy person will stay healthy for about one year---52 weeks---and that a sick person will remain sick for one week.

In practice, assessing the exact values of $a$ and $b$ may be difficult, especially if we want them to be reliable. In those cases, instead of modelling our problem by means of a precise continuous-time Markov chain, we can consider an \ictmc. We then no longer need to assess a single transition rate matrix $Q$, but only need to specify a set of rate matrices $\rateset$ to which we think $Q$ might belong. Suppose for example that we feel confident in saying that the average time for a healthy person to become sick lies somewhere in between four months and a year, and that the average time for a sick person to recover is situated somewhere in between half a week and two weeks. The corresponding set of transition rate matrices is then given by
\vspace{3pt}
\begin{equation}\label{eq:num_example_rateset_params}
\rateset \coloneqq \left\{\left[\begin{array}{rr}
-a & a \\
b & -b
\end{array}\right]\,:\,a\in\left[\frac{1}{52},\frac{3}{52}\right], b\in\left[\frac{1}{2},2\right]\right\},
\vspace{3pt}
\end{equation}
with the time unit again being one week. Note that $\rateset$ here is clearly non-empty, bounded, closed, convex, and has separately specified rows. As such, this set will satisfy all the preconditions necessary to be used in the examples that follow.
\exampleend
\end{exmp}

As mentioned above, for a given set $\rateset$, Proposition~\ref{prop:approximation_error_bound} allows us to reduce the problem of computing conditional lower expectations to the problem of evaluating the lower transition operator $\lrate$ of $\rateset$, as defined in Equation~\eqref{eq:correspondinglowertrans}. In theory, the latter could still be a difficult problem, since $\rateset$ can be very complicated---we only require that it is non-empty and has separately specified rows. However, in practice, $\rateset$ will typically be described by means of linear constraints, and computing $\lrate g$ is then a standard constrained linear optimisation problem, which is easily solved by means of linear programming techniques. 
We will henceforth assume that the optimisation problem in Equation~\eqref{eq:correspondinglowertrans} is solvable. However, as we have just explained, the complexity of this problem will depend on the exact form of $\rateset$.

For the set $\rateset$ of Example~\ref{exmp:example_rateset_simple_model}, as defined by Equation~\eqref{eq:num_example_rateset_params}, the constraints that we are dealing with are simple box-constraints. Furthermore, because the state space $\states$ contains only two states in this specific example model, computing $\lrate g$ is particularly easy, regardless of the choice of $g\in\gamblesX$. This is made explicit in the following example, where we also compute the quantity $\norm{\lrate}$ that is required for the numerical computations that follow.

\begin{exmp}\label{exmp:numerical_lrate}
Consider the set $\rateset$ of Example~\ref{exmp:example_rateset_simple_model}, as defined by Equation~\eqref{eq:num_example_rateset_params}, let $\lrate$ be the corresponding lower transition rate operator, and consider any $g\in\gamblesX$. Furthermore, for the sake of notational convenience, let us abbreviate {\tt healthy} as {\tt h} and {\tt sick} as {\tt s}. It then follows from Equation~\eqref{eq:correspondinglowertrans} that
\begin{align*}
[\lrate g](\text{{\tt h}})
\coloneqq
\inf\{[Qg](\text{{\tt h}})\,:\,Q\in\rateset\}
=&
\inf\left\{\left[\begin{array}{rr}-a & a \end{array}\right]\left[\begin{array}{l} g(\text{{\tt h}}) \\ g(\text{{\tt s}}) \end{array}\right]\,:\,a\in\left[\frac{1}{52}, \frac{3}{52} \right]\right\} \\[2pt]
=&\inf\left\{a\big(g(\text{{\tt s}})-g(\text{{\tt h}})\big)\,:\,a\in\left[\frac{1}{52}, \frac{3}{52} \right]\right\},
\vspace{-6pt}
\end{align*}
and therefore, that
\begin{equation}\label{eq:lratehealthexample:h}
[\lrate g](\text{{\tt h}})
=
\begin{cases}
\nicefrac{1}{52}\big(g(\text{{\tt s}})-g(\text{{\tt h}})\big)
&\text{ if $g(\text{{\tt s}})\geq g(\text{{\tt h}})$}\\
\nicefrac{3}{52}\big(g(\text{{\tt s}})-g(\text{{\tt h}})\big)
&\text{ if $g(\text{{\tt s}})\leq g(\text{{\tt h}})$}.
\end{cases}
\end{equation}
In a completely analogous way, we also find that
\begin{equation}\label{eq:lratehealthexample:s}
[\lrate g](\text{{\tt s}})
=
\begin{cases}
\nicefrac{1}{2}\big(g(\text{{\tt h}})-g(\text{{\tt s}})\big)
&\text{ if $g(\text{{\tt s}})\leq g(\text{{\tt h}})$}\\
2\big(g(\text{{\tt h}})-g(\text{{\tt s}})\big)
&\text{ if $g(\text{{\tt s}})\geq g(\text{{\tt h}})$}.
\end{cases}
\end{equation}

Using these observations, we can now compute $\norm{\lrate}$. If $g\in\gamblesX$ is such that $\norm{g}=1$, then clearly, it follows from Equation~\eqref{eq:lratehealthexample:h} that the highest possible value for $\abs{[\lrate g]({\tt h})}$ is $\nicefrac{3}{52}$---for $g({\tt s})=-1$ and $g({\tt h})=1$---and similarly, using Equation~\eqref{eq:lratehealthexample:s}, we find that the highest possible value for $\abs{[\lrate g]({\tt s})}$ is $4$---for $g({\tt s})=1$ and $g({\tt h})=-1$. Since
\vspace{-2pt}
\begin{align*}
\norm{\lrate} &= \sup\left\{ \norm{\lrate g}\,:\,g\in\gamblesX,\, \norm{g}=1 \right\} \\
 &= \sup\left\{ \max\left\{\abs{\left[\lrate g\right](x)}\,:\,x\in\states\right\}\,:\,g\in\gamblesX,\, \norm{g}=1 \right\},
\end{align*}\\[-4pt]
this implies that $\norm{\lrate}=4$.
\exampleend
\end{exmp}

We now have everything in place to demonstrate the use of Proposition~\ref{prop:approximation_error_bound} for numerically computing conditional lower expectations. 

\begin{exmp}\label{exmp:single_time_numerical}
Consider again the set of rate matrices $\rateset$ from Example~\ref{exmp:example_rateset_simple_model}, which, as we have seen there, expresses that the rate $a$ at which a healthy person becomes sick belongs to the interval $[\nicefrac{1}{52},\nicefrac{3}{52}]$, and the rate $b$ at which a sick person becomes healthy belongs to the interval $[\nicefrac{1}{2},2]$.

Depending on the extra assumptions that we impose, we can now consider three different \ictmc's. If we believe that the rates $a$ and $b$ remain constant over time, and interpret the intervals $[\nicefrac{1}{52},\nicefrac{3}{52}]$ and $[\nicefrac{1}{2},2]$ as bounds on these unknown but constant rates, then we can model our problem by means of the \ictmc~$\mathbb{P}_{\rateset,\mathcal{M}}^{\mathrm{WHM}}$. If, on the other hand, we think that the rates $a$ and $b$ might vary within these bounds as time progresses, then we should use the \ictmc's $\mathbb{P}_{\rateset,\mathcal{M}}^{\mathrm{WM}}$ or $\mathbb{P}_{\rateset,\mathcal{M}}^{\mathrm{W}}$ instead, where the former assumes that the time-dependent rates $a$ and $b$ cannot be influenced by the value of the past states of the process, whereas the latter does not make such an assumption. The choice of $\mathcal{M}$ does not matter, because it will not effect any of the computations in this example; we therefore drop it from our notation.

Suppose now that we are interested in the lower probability that a person is sick one week from now, given that he or she is currently also sick. That is, we want to compute the conditional lower probability $\underline{P}(X_1 = \text{{\tt sick}}\,\vert\,X_0=\text{{\tt sick}})$, which---using the abbreviation of Example~\ref{exmp:numerical_lrate}---we will denote by $\underline{P}(X_1 = \text{{\tt s}}\,\vert\,X_0=\text{{\tt s}})$. Then as we know from Equation~\eqref{eq:lowerprobaslowerexp}, this is equivalent to computing $\underline{\mathbb{E}}[\ind{\text{{\tt s}}}(X_1)\,\vert\,X_0=\text{{\tt s}}]$.

The result of this computation will, generally speaking, depend on our choice of \ictmc. In this example, we consider $\smash{\mathbb{P}_{\rateset}^{\mathrm{WM}}}$ or $\smash{\mathbb{P}_{\rateset}^{\mathrm{W}}}$. The choice between these two does not make any difference, because it follows from Corollary~\ref{cor:lower_operator_is_infimum} that\footnote{Actually, in this particular---binary---case, it can be shown that $\mathbb{P}_{\rateset}^{\mathrm{WHM}}$ would lead to the exact same result. However, in general, as we have seen in Example~\ref{ex:homogeneousexample}, this will not be the case.}
\begin{equation*}
\underline{\mathbb{E}}_{\rateset}^\mathrm{W}[\ind{\text{{\tt s}}}(X_1)\,\vert\,X_0=\text{{\tt s}}]
=
\underline{\mathbb{E}}_{\rateset}^\mathrm{WM}[\ind{\text{{\tt s}}}(X_1)\,\vert\,X_0=\text{{\tt s}}]=[L_0^1\ind{\text{{\tt s}}}](\text{\tt s}).
\end{equation*}
Hence, our problem has now been reduced to the task of computing $[L_0^1\ind{\text{{\tt s}}}](\text{\tt s})$.

Proposition~\ref{prop:approximation_error_bound} tells us that we can approximate this quantity by using a fine enough partition of the time interval $[0,1]$. We will be using a maximum error for this computation of $\epsilon\coloneqq 10^{-3}$. Furthermore, in this case, the length of the time interval is equal to $(s-t)=1$, the difference between the maximum and minimum of $\ind{\text{s}}$ is $\norm{\ind{\text{s}}}_\mathrm{v}=1$, and due to our choice of $\rateset$, we have $\norm{\lrate} = 4$, as in Example~\ref{exmp:numerical_lrate}. Hence, if we subdivide the time interval into $n$ steps, with
\begin{equation*}
n=\max\left\{
(s-t)\norm{\lrate},
\frac{1}{2\epsilon}(s-t)^2\norm{\lrate}^2\norm{f}_\mathrm{v}
\right\}
=\max\left\{4,8000\right\}=8000,
\end{equation*}
resulting in a step size of $\Delta=\nicefrac{(s-t)}{n}=1.25\times 10^{-4}$, then Proposition~\ref{prop:approximation_error_bound} guarantees that we can approximate $[L_0^1\ind{\text{{\tt s}}}]$---with a maximum error of $\epsilon$---by
\begin{equation*}
\prod_{i=1}^n(I + \Delta\lrate)\ind{\text{{\tt s}}} = \prod_{i=1}^{8000}(I + \Delta\lrate)\ind{\text{{\tt s}}} = \left(\prod_{i=1}^{7999}(I + \Delta\lrate)\right)\left((I+\Delta\lrate)\ind{\text{{\tt s}}}\right)\,.
\end{equation*}
We start by computing the right-most factor on the right-hand side of this equation. Using Equations~\eqref{eq:lratehealthexample:h} and~\eqref{eq:lratehealthexample:s} in Example~\ref{exmp:numerical_lrate}, we find that
\begin{equation*}
[\lrate\ind{\text{{\tt s}}}](x)
=
\begin{cases}
\nicefrac{1}{52}
&\text{ if $x=\text{\tt h}$}\\
-2
&\text{ if $x=\text{\tt s}$}
\end{cases}
~~~\text{for all $x\in\states$},
\end{equation*}
and therefore, it follows that
\begin{align*}
g_1(x)\coloneqq[(I + \Delta\lrate)\ind{\text{{\tt s}}}](x)
&= [\ind{\text{{\tt s}}}](x) + \Delta[\lrate\ind{\text{{\tt s}}}](x)\\
&=
\begin{cases}
2.4038\times 10^{-5}
&\text{ if $x=\text{\tt h}$}\\
0.9975
&\text{ if $x=\text{\tt s}$}
\end{cases}
~~~\text{for all $x\in\states$.}\\
&
\end{align*}
Having completed this first step, we now have that
\begin{equation*}
\prod_{i=1}^n(I + \Delta\lrate)\ind{\text{{\tt s}}} = \prod_{i=1}^{7999}(I + \Delta\lrate)g_1 = \left(\prod_{i=1}^{7998}(I + \Delta\lrate)\right)\left((I+\Delta\lrate)g_1\right)\,.
\end{equation*}
At this point, we simply keep on repeating this process, computing $g_2\coloneqq(I+\Delta\lrate)g_1$, then $g_3\coloneqq(I+\Delta\lrate)g_2$, and so on. In this way, after $j$ steps, we find that
\begin{equation*}
\prod_{i=1}^n(I + \Delta\lrate)\ind{\text{{\tt s}}} = \prod_{i=1}^{8000-j}(I + \Delta\lrate)g_j\
\end{equation*}
and after completing all $n$ steps, we eventually find that
\begin{equation*}
\prod_{i=1}^n(I + \Delta\lrate)\ind{\text{{\tt s}}} = g_{n}
=
\begin{cases}
0.0083
&\text{ if $x=\text{\tt h}$}\\
0.1410
&\text{ if $x=\text{\tt s}$}
\end{cases}
~~~\text{for all $x\in\states$,}
\end{equation*}
from which we conclude that
\begin{align*}
\underline{P}_{\rateset}^{\mathrm{W}}(X_1 = \text{{\tt s}}\,\vert\,X_0=\text{{\tt s}}) = \underline{P}_{\rateset}^{\mathrm{WM}}(X_1 = \text{{\tt s}}\,\vert\,X_0=\text{{\tt s}})
&=
\left[L_0^1\ind{\text{{\tt s}}}\right](\text{{\tt s}})\\
&= g_{n}(\text{{\tt s}}) \pm \epsilon = 0.141 \pm 0.001,
\end{align*}
where the third equality\footnote{Formally, we should write this equality as $[L_0^1\ind{\text{{\tt s}}}](\text{{\tt s}})\in [g_n(\text{{\tt s}})-\epsilon, g_n(\text{{\tt s}})+\epsilon]$ or, completely equivalently, as $g_n(\text{{\tt s}}) \in \bigl[[L_0^1\ind{\text{{\tt s}}}](\text{{\tt s}}) - \epsilon, [L_0^1\ind{\text{{\tt s}}}](\text{{\tt s}}) + \epsilon\bigr]$. However, we find the shorthand $[L_0^1\ind{\text{{\tt s}}}](\text{{\tt s}}) = g_n(\text{{\tt s}})\pm \epsilon$ equally clear. Furthermore, we will later consider similar statements for functions, which we can write much more succinctly with this minor abuse of notation---with the understanding that the above inclusion should then be taken point-wise.} follows from the error bound that is guaranteed by Proposition~\ref{prop:approximation_error_bound}.
\exampleend
\end{exmp}

This iterative procedure for computing $L_t^sf$, as illustrated in the above example, is outlined in Algorithm~\ref{alg:compute_singlevar}. The algorithm first finds the number of steps required to reach the given precision $\epsilon$ (Line 2), and computes from this the corresponding step size $\Delta$ (Line 3). Starting with the function $g_0\coloneqq f$ (Line 4), the algorithm iteratively computes the function $g_i\coloneqq (I+\Delta\lrate)g_{(i-1)}$ (Line 6). After repeating this for $n$ steps (Line 5), the returned function $g_n$ (Line 8) corresponds to $L_t^sf\pm\epsilon$, due to Proposition~\ref{prop:approximation_error_bound}. The algorithm takes for granted that $\smash{\norm{\lrate}}$ is known and/or can be derived from $\lrate$; if this is not the case, then the algorithm should be adapted accordingly, by replacing the norm $\smash{\norm{\lrate}}$ with an upper bound, such as the one that is provided in \ref{LR:normlratefinite}.

\begin{algorithm}[htb]
  \caption{Numerically compute $L_t^sf$ for any $f\in\gamblesX$.}
    \label{alg:compute_singlevar}
  \begin{algorithmic}[1]
  \vspace{4pt}
    \Require{A lower transition rate operator $\lrate$, two time points $t,s\in\realsnonneg$ such that $t\leq s$, a function $f\in\gamblesX$ and a maximum numerical error $\epsilon\in\realspos$.}\vspace{4pt}    
\Ensure{A function $L_t^sf\pm \epsilon$ in $\gamblesX$.}\vspace{-5pt}
    \Statex
    \Function{ComputeLf}{$\lrate, t,s, f,\epsilon$}\vspace{4pt}
      \State $n\gets \left\lceil\max\left\{\nicefrac{\left((s-t)^2\norm{\lrate}^2\norm{f}_\mathrm{v}\right)}{2\epsilon},\, (s-t)\norm{\lrate}\right\}\right\rceil$\vspace{3pt}
		\State $\Delta\gets \nicefrac{(s-t)}{n}$
		\State $g_0 \gets f$
		\For{$i\in\{1,\ldots,n\}$}
		\State $g_i\gets g_{(i-1)} + \Delta\lrate g_{(i-1)}$
		\EndFor
      \State \Return{$g_n$}
    \EndFunction
    \vspace{4pt}
  \end{algorithmic}
\end{algorithm}

\section{A General Framework for Computing Lower Expectations}\label{sec:funcs_multi_time_points}

Having shown in the previous Section~\ref{sec:single_var_lower_exp} that the operator $L_t^s$ allows us to compute conditional lower expectations for functions $f\in\gamblesX$ that are defined on the state at a single future time point, we will now turn our attention to functions defined on multiple time points. 

We will start by considering conditional lower expectations, where the conditioning is done with respect to the states $x_u$ at a finite sequence of time points $u$. In this context, we distinguish between two different classes of functions. 
We begin in Section~\ref{sec:function_single_future_multiple_past} by considering functions $f\in\gambles(\states_{u\cup s})$ and lower expectations of the form $\underline{\mathbb{E}}[f(X_u,X_s)\,\vert\,X_u=x_u]$, with $s$ a single future time point. Thus, although the function $f$ depends on multiple time points, all but one of these time points coincide with the time points on which we are conditioning. This is then generalised in Section~\ref{sec:decomposition} to functions $f\in\gambles(\states_{u\cup v})$ and lower expectations $\underline{\mathbb{E}}[f(X_u,X_v)\,\vert\,X_u=x_u]$, where $v$ is now a finite sequence of future time points. We end in Section~\ref{sec:marginal_lower_exp} by showing how to compute unconditional lower expectations of the form $\underline{\mathbb{E}}[f(X_u)]$, where $u$ is again a finite sequence of time points.

\subsection{Multi-Variable Functions on a Single Future Time Point}\label{sec:function_single_future_multiple_past}

Consider a non-empty sequence of time points $u=t_0,\ldots,t_n\in\mathcal{U}_\emptyset$ and a single future time point $s>u$. Let $f\in\gambles(\states_{u\cup s})$ be a function that depends on the states at these time points. Then for any stochastic process $P$ and any history $x_u\in\states_u$, the corresponding conditional expectation of such a function is well known to satisfy the following simple property:
\vspace{-2pt}
\begin{equation*}
\mathbb{E}_P[f(X_u,X_s)\,\vert\,X_u=x_u] = \mathbb{E}_P[f(x_u,X_s)\,\vert\,X_u=x_u].
\end{equation*}
Therefore, and because a lower expectation is an infimum over expectations, we find that also
\vspace{-2pt}
\begin{equation}\label{eq:fixxu}
\underline{\mathbb{E}}[f(X_u,X_s)\,\vert\,X_u=x_u] = \underline{\mathbb{E}}[f(x_u,X_s)\,\vert\,X_u=x_u].
\end{equation}\\[-18pt]

While writing down these equations, we have implicitly introduced a notational convention that should be intuitively clear: we interpret the function $f(x_u,X_s)$ as the restriction of $f(X_u,X_s)$ to the state $\states_s$, for the fixed state assignment $x_u$. A more formal---but completely equivalent---way of doing this, is to identify $f(x_u,X_s)$ with a function $f_{x_u}\in\gambles(\states_s)=\gamblesX$, defined by $f_{x_u}(x_s)\coloneqq f(x_u,x_s)$ for all $x_s\in\states$.
With this notation, the equation above then turns into
\begin{equation*}
\underline{\mathbb{E}}[f(X_u,X_s)\,\vert\,X_u=x_u] = \underline{\mathbb{E}}[f_{x_u}(X_s)\,\vert\,X_u=x_u],
\end{equation*}\\[-14pt]
and in this way, it becomes clear that the problem of computing $\underline{\mathbb{E}}[f(X_u,X_s)\,\vert\,X_u=x_u]$ is completely equivalent to the problem that we have considered in the previous section, which is to compute lower expectations of functions that depend on the state at a single future time point.
In particular, under the conditions of Corollary~\ref{cor:lower_operator_is_infimum}, we find that
\begin{equation*}
\underline{\mathbb{E}}_{\rateset,\,\mathcal{M}}^{\mathrm{W}}[f(X_u,X_s)\,\vert\,X_u=x_u] = \underline{\mathbb{E}}_{\rateset,\,\mathcal{M}}^{\mathrm{WM}}[f(X_u,X_s)\,\vert\,X_u=x_u]=\left[L_{t_n}^sf_{x_u}\right](x_{t_n}).
\end{equation*}
However, note that the quantity $[L_{t_n}^sf_{x_u}](x_{t_n})$ still depends on the \emph{entire} state assignment $x_u\in\states_u$, and not just on the state $x_{t_n}$.

In order to unify our notation, we therefore stipulate the following convention. For any $s\in\realspos$ and any $u\in\mathcal{U}_{<s}$ with $u\neq\emptyset$ such that $u=t_0,\ldots,t_n$, we allow $L_{t_n}^s$ to be applied to any $f\in\gambles(\states_{u\cup s})$, by applying it to the restriction of $f$ to the \emph{latest} time point at which it is defined---the time point $s$, in this case. Because this restriction depends on the state assignment $x_u$ at the other time points, the result is a function $[L_{t_n}^sf]\in\gambles(\states_u)$. In short, we stipulate for any $f\in\gambles(\states_{u\cup s})$ that
\begin{equation}\label{eq:applyLtolargerfunctions}
[L_{t_n}^sf](x_u) \coloneqq [L_{t_n}^sf_{x_u}](x_{t_n}) \coloneqq [L_{t_n}^sf(x_u,X_s)](x_{t_n})
~~\text{for all $x_u\in\states_u$}.
\end{equation}

Using this notational convention, the following corollary formalises the fact that our previous results also apply to functions in $\gambles(\states_{u\cup s})$.

\begin{restatable}{corollary}{corinfworksforsinglefuturevar}
\label{cor:inf_works_for_single_future_var}
Let $\mathcal{M}$ be an arbitrary non-empty set of probability mass functions on $\states$, let $\rateset$ be an arbitrary non-empty bounded set of rate matrices that has separately specified rows, with corresponding lower transition rate operator $\lrate$, and let $\underline{\mathcal{T}}_{\lrate}$ be the corresponding lower transition operator system. Then, for all $s\in\realspos$, all $u\in\mathcal{U}_{<s}$ such that $u\neq\emptyset$ and all $f\in\gambles(\states_{u\cup s})$:
\begin{equation*}
\underline{\mathbb{E}}^\mathrm{W}_{\,\rateset,\,\mathcal{M}}[f(X_u,X_s)\,\vert\,X_u]=\underline{\mathbb{E}}^{\mathrm{WM}}_{\,\rateset,\,\mathcal{M}}[f(X_u,X_s)\,\vert\,X_u]=\left[L_{t_n}^s f\right](X_u).\vspace{3pt}
\end{equation*}
\end{restatable}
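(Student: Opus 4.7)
The plan is to reduce this corollary directly to Corollary~\ref{cor:lower_operator_is_infimum} by fixing the history state and treating the remaining function as a function of the single future time point $s$. The key observation is notational: once we condition on $X_u = x_u$, the function $f(X_u, X_s)$ becomes, pathwise on the conditioning event, identical to the ``slice'' $f_{x_u}(X_s) \in \gamblesX$ introduced just before Equation~\eqref{eq:applyLtolargerfunctions}.

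First I would fix an arbitrary $x_u \in \states_u$ and note that for every stochastic process $P$ and every $P$-event in $\mathcal{C}^{\mathrm{SP}}$ with nonzero probability, we have
\begin{equation*}
\mathbb{E}_P[f(X_u, X_s) \,\vert\, X_u = x_u] = \mathbb{E}_P[f_{x_u}(X_s) \,\vert\, X_u = x_u],
\end{equation*}
because on the event $(X_u = x_u)$ the random variable $f(X_u, X_s)$ coincides almost surely with $f_{x_u}(X_s)$. Taking infima over $P \in \wprocesses_{\rateset,\mathcal{M}}$ or $P \in \wmprocesses_{\rateset,\mathcal{M}}$, this yields
\begin{equation*}
\underline{\mathbb{E}}^{\mathrm{W}}_{\rateset,\mathcal{M}}[f(X_u, X_s) \,\vert\, X_u = x_u] = \underline{\mathbb{E}}^{\mathrm{W}}_{\rateset,\mathcal{M}}[f_{x_u}(X_s) \,\vert\, X_u = x_u],
\end{equation*}
and analogously for $\underline{\mathbb{E}}^{\mathrm{WM}}_{\rateset,\mathcal{M}}$.

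Next I would split the conditioning event. Writing $u = t_0, \ldots, t_n$, let $u' \coloneqq t_0, \ldots, t_{n-1}$ (which may be empty) and $x_{u'}$ the corresponding restriction of $x_u$. Then $(X_u = x_u) = (X_{t_n} = x_{t_n}) \cap (X_{u'} = x_{u'})$, and since $f_{x_u}$ depends only on the state at time $s$, Corollary~\ref{cor:lower_operator_is_infimum}, applied with the history time sequence $u'$ and the current time $t_n$, gives
\begin{equation*}
\underline{\mathbb{E}}^{\mathrm{W}}_{\rateset,\mathcal{M}}[f_{x_u}(X_s) \,\vert\, X_{t_n} = x_{t_n}, X_{u'} = x_{u'}] = \underline{\mathbb{E}}^{\mathrm{WM}}_{\rateset,\mathcal{M}}[f_{x_u}(X_s) \,\vert\, X_{t_n} = x_{t_n}, X_{u'} = x_{u'}] = [L_{t_n}^s f_{x_u}](x_{t_n}).
\end{equation*}
Finally, the convention in Equation~\eqref{eq:applyLtolargerfunctions} identifies $[L_{t_n}^s f_{x_u}](x_{t_n})$ with $[L_{t_n}^s f](x_u)$, and since $x_u$ was arbitrary, the claimed equality holds as functions of $X_u$.

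I do not expect a real obstacle here: the result is essentially a bookkeeping statement that lifts Corollary~\ref{cor:lower_operator_is_infimum} through the notational convention of Equation~\eqref{eq:applyLtolargerfunctions}. The only point requiring a modicum of care is that when $u = \{t_n\}$ (so $u' = \emptyset$), Corollary~\ref{cor:lower_operator_is_infimum} must be invoked with the vacuous history, which is explicitly allowed there; and one must verify that taking the infimum inside the fixing of $x_u$ is legitimate, which follows from the fact that the equality $\mathbb{E}_P[f(X_u, X_s) \,\vert\, X_u = x_u] = \mathbb{E}_P[f_{x_u}(X_s) \,\vert\, X_u = x_u]$ holds process-by-process before taking the infimum.
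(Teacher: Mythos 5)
Your proposal is correct and follows essentially the same route as the paper, whose proof simply invokes Equation~\eqref{eq:fixxu} (the process-by-process reduction of $f(X_u,X_s)$ to the slice $f_{x_u}\in\gamblesX$ on the conditioning event, then taking infima) together with the notational convention of Equation~\eqref{eq:applyLtolargerfunctions} and Corollary~\ref{cor:lower_operator_is_infimum} applied with current time $t_n$ and history $u'=t_0,\ldots,t_{n-1}$. The only cosmetic remark is that your appeal to ``nonzero probability'' and ``almost surely'' is unnecessary in this framework: the pathwise identity $\ind{(X_u=x_u)}f(X_u,X_s)=\ind{(X_u=x_u)}f_{x_u}(X_s)$ and property~\ref{def:coh_prob_7} already give the equality of conditional expectations for every process, with no caveat about the probability of the conditioning event.
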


Now, since $L_{t_n}^sf$ must be read as a separate application of the operator $L_{t_n}^s$ to every $f_{x_u}$, for $x_u\in\states_u$, it follows that we can compute $L_{t_n}^sf$ by applying Algorithm~\ref{alg:compute_singlevar} multiple times.
The method for this computation is outlined in Algorithm~\ref{alg:compute_singlevar_multiple_past}, which can be read as follows. 

The algorithm starts by allocating space for a new function $g\in\gambles(\states_u)$ (Line 2), which will be the result of the algorithm. Then, for each state assignment $x_u\in\states_u$ (Line 3), it computes the value of $g(x_u)$ (Lines 4-6), as follows. First, we take the restriction of $f\in\gambles(\states_{u\cup s})$ to the state $\states_s$, for a specific state assignment $x_u$ (Line 4). Since the resulting function $f_{x_u}\coloneqq f(x_u,X_s)$ belongs to $\gamblesX$, we can then approximate $L_{t_n}^sf_{x_u}$ using Algorithm~\ref{alg:compute_singlevar} (Line 5), where $t_n$ is the last time point in $u$. The resulting function $\smash{\hat{f}_{x_u}=L_{t_n}^sf_{x_u}}\pm\epsilon$ is then a function in $\gambles(\states_{t_n})$, which, as we know from Equation~\eqref{eq:applyLtolargerfunctions}, can be evaluated in $x_{t_n}$---the restriction of $x_u$ to the time point $t_n$---to obtain the value $g(x_u)=[L_{t_n}^sf](x_u)\pm\epsilon$ of $g$ in $x_u$ (Line 6). Therefore, up to a maximum error of $\epsilon$, the returned function $g$ (Line 8) will be equal to $L_{t_n}^sf$.

\begin{algorithm}[htb]
  \caption{Numerically compute $L_{t_n}^sf$ for any $f\in\gambles(\states_{u\cup s})$.}
    \label{alg:compute_singlevar_multiple_past}
  \begin{algorithmic}[1]
  \vspace{4pt}
    \Require{A lower transition rate operator $\lrate$, a time point $s\in\realsnonneg$, a sequence of time points $u\in\mathcal{U}_{<s}$ with $u=t_0,\ldots,t_n$, a function $f\in\gambles(\states_{u\cup s})$, and a maximum numerical error $\epsilon\in\realspos$.}
    \vspace{4pt}  
\Ensure{A function $L_{t_n}^sf\pm \epsilon$ in $\gambles(\states_u)$.}
\vspace{-5pt}
    \Statex
    \Function{ComputeLuf}{$\lrate, u,s, f,\epsilon$}
      \State $g\gets g\in\gambles(\states_u)$ \Comment{Allocate space for $g\in\gambles(\states_u)$}
		\For{$x_u\in\states_u$} \Comment{Compute $g(x_u)$ for all $x_u\in\states_u$} 
		\State $f_{x_u}\gets f(x_u,X_s)$ 
		\State $\hat{f}_{x_u} \gets$ {\textsc{ComputeLf}($\lrate, t_n,s,f_{x_u},\epsilon$)} \Comment{Run Agorithm~\ref{alg:compute_singlevar}}
		\State $g(x_u)\gets \hat{f}_{x_u}(x_{t_n})$ 
		\EndFor
      \State \Return{$g$}
    \EndFunction
    \vspace{4pt}
  \end{algorithmic}
\end{algorithm}

\subsection{Multi-Variable Functions on Multiple Future Time Points}\label{sec:decomposition}

We next consider functions $f\in\gambles(\states_{u\cup v})$, where $u=t_0,\ldots,t_n$ is a sequence of time points in a process's history, on which we intend to condition, and $v=s_0,\ldots,s_m$ is a sequence of time points in a process's future; hence, we have that $s_0>t_n$. For functions of this kind, it---surprisingly---turns out that computing the conditional lower expectation with respect to $\wprocesses_{\rateset,\,\mathcal{M}}$ is easier than for $\wmprocesses_{\rateset,\,\mathcal{M}}$. We start in this section by showing how to do this for the former set, that is, how to compute the conditional lower expectation
\begin{equation}\label{eq:lowerexpmultipletimepoints}
\underline{\mathbb{E}}_{\rateset,\,\mathcal{M}}^{\mathrm{W}}[f(X_u,X_v)\,\vert\,X_u],
\end{equation}
and then go on to provide a counter example to illustrate that, unfortunately, this method does not work for the latter set, that is, for lower expectations with respect to $\wmprocesses_{\rateset,\,\mathcal{M}}$.

As we have seen in the previous sections, we can use the operator $L_t^s$ to compute lower expectations of functions that depend on the state at a single future time point, provided that $\rateset$ is non-empty, bounded, and has separately specified rows. We here show that if $\rateset$ is additionally convex, then we can do the same for functions that depend on multiple future time points. 
The reason why this is the case is Theorem~\ref{theorem:decomposition_multivar}. In particular, because of that theorem, if $\rateset$ is convex, we can decompose the lower expectation in Equation~\eqref{eq:lowerexpmultipletimepoints} as follows:
\begin{equation}\label{eq:nested_lower_exp_single_step}
\underline{\mathbb{E}}_{\rateset,\,\mathcal{M}}^{\mathrm{W}}[f(X_u,X_v)\,\vert\,X_u] = \underline{\mathbb{E}}_{\rateset,\,\mathcal{M}}^{\mathrm{W}}\bigl[ \underline{\mathbb{E}}_{\rateset,\,\mathcal{M}}^{\mathrm{W}}[f(X_u,X_v)\,\vert\,X_u,X_{v\setminus\{s_m\}}] \,\big\vert\,X_u\bigr]\,,
\end{equation}
where $s_m$ is the last time point in $v=s_1,\dots,s_m$. The essential feature of this decomposition is that the inner lower expectation on the right-hand side of the equality is conditioned on all the time points in $u$ and $v\setminus\{s_m\}$, or equivalently, that this lower expectation is taken with respect to a single future time point $s_m$. Therefore, it follows from the discussion in Section~\ref{sec:function_single_future_multiple_past}, and from Corollary~\ref{cor:inf_works_for_single_future_var} in particular, that
\begin{equation*}
\underline{\mathbb{E}}_{\rateset,\,\mathcal{M}}^{\mathrm{W}}[f(X_u,X_v)\,\vert\,X_u,X_{v\setminus\{s_m\}}]=\big[L_{s_{m-1}}^{s_m}f\big]\left(X_u,X_{v\setminus\{s_m\}}\right),
\end{equation*}
which, by substituting the inner lower expectation in Equation~\eqref{eq:nested_lower_exp_single_step}, implies that
\begin{equation*}
\underline{\mathbb{E}}_{\rateset,\,\mathcal{M}}^{\mathrm{W}}[f(X_u,X_v)\,\vert\,X_u] = \underline{\mathbb{E}}_{\rateset,\,\mathcal{M}}^{\mathrm{W}}\left[ \big[L_{s_{m-1}}^{s_m}f\big]\left(X_u,X_{v\setminus\{s_m\}}\right) \,\big\vert\,X_u\right].
\end{equation*}
In this way, our original problem, which was to compute $\underline{\mathbb{E}}_{\rateset,\,\mathcal{M}}^{\mathrm{W}}[f(X_u,X_v)\,\vert\,X_u]$, has been reduced to a completely analogous---but smaller-sized---problem; the only difference is that $f$ has been replaced by a new function, which no longer depends on $X_u$ and $X_{v}$, but only on $X_u$ and $X_{v\setminus s_m}$.

We can now simply apply this step over and over again, by repeatedly removing the last remaining time point $s_{m-1}, s_{m-2},\ldots,s_{0}$ and, at each step, replacing the inner lower expectation with the operator $L_{s_{m-2}}^{s_{m-1}},L_{s_{m-3}}^{s_{m-2}},\ldots,L_{t_n}^{s_0}$. Because $v$ is finite, this process eventually stops, and we then obtain the following result.

\begin{restatable}{corollary}{corcompositionlowertrans}
\label{cor:composition_lower_trans}
Let $\mathcal{M}$ be a non-empty set of probability mass functions on $\states$, let $\rateset$ be a non-empty, bounded and convex set of rate matrices that has separately specified rows, with lower transition rate operator $\lrate$, and let $\smash{\underline{\mathcal{T}}_{\lrate}}$ be the corresponding lower transition operator system. Then, for any $u=t_0,\ldots,t_n$ and $v={s_0,\ldots,s_m}$ in $\mathcal{U}_\emptyset$ such that $u<v$, and any $f\in\gambles(\states_{u\cup v})$:
\vspace{2pt}
\begin{equation}\label{eq:composition_lower_trans}
\underline{\mathbb{E}}^{\mathrm{W}}_{\,\rateset,\,\mathcal{M}}[f(X_u,X_v)\,\vert\,X_u]=L_{t_n}^{s_0}L_{s_0}^{s_1}\cdots L_{s_{m-1}}^{s_m}f.\\[5pt]
\end{equation}
\end{restatable}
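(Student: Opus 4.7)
The plan is to prove this by induction on $m$, the number of future time points in $v$, using the law of iterated lower expectation from Theorem~\ref{theorem:decomposition_multivar} to peel off one future time point at a time, and repeatedly invoking Corollary~\ref{cor:inf_works_for_single_future_var} to collapse the resulting single-future-variable inner lower expectation into an application of $L_{s_{m-1}}^{s_m}$. The hypotheses fit perfectly: convexity of $\rateset$ enables Theorem~\ref{theorem:decomposition_multivar}, while separately specified rows enable Corollary~\ref{cor:inf_works_for_single_future_var}.

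The base case is $m=0$, so that $v=\{s_0\}$ is a single future time point and the claim $\underline{\mathbb{E}}^{\mathrm{W}}_{\rateset,\mathcal{M}}[f(X_u,X_{s_0})\mid X_u]=L_{t_n}^{s_0}f$ is exactly the content of Corollary~\ref{cor:inf_works_for_single_future_var}, applied with $s=s_0$. For the inductive step, assume the result for sequences of length $m$, and consider $v=s_0,\ldots,s_m$ of length $m+1$. Setting $v'\coloneqq v\setminus\{s_m\}$, we have $u<v'<\{s_m\}$, so Theorem~\ref{theorem:decomposition_multivar} gives
\begin{equation*}
\underline{\mathbb{E}}^{\mathrm{W}}_{\rateset,\mathcal{M}}\bigl[f(X_u,X_{v'},X_{s_m})\bigm\vert X_u\bigr]
=
\underline{\mathbb{E}}^{\mathrm{W}}_{\rateset,\mathcal{M}}\Bigl[\underline{\mathbb{E}}^{\mathrm{W}}_{\rateset,\mathcal{M}}\bigl[f(X_u,X_{v'},X_{s_m})\bigm\vert X_u,X_{v'}\bigr]\Bigm\vert X_u\Bigr].
\end{equation*}
The inner lower expectation conditions on all time points in $u\cup v'$ and concerns a single future time point $s_m$. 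Corollary~\ref{cor:inf_works_for_single_future_var}, together with the notational convention in Equation~\eqref{eq:applyLtolargerfunctions}, identifies it with $[L_{s_{m-1}}^{s_m}f](X_u,X_{v'})$, viewed as a function in $\gambles(\states_{u\cup v'})$. Applying the induction hypothesis to this new function (of length $m$ in its ``future'' component) yields $L_{t_n}^{s_0}L_{s_0}^{s_1}\cdots L_{s_{m-2}}^{s_{m-1}}L_{s_{m-1}}^{s_m}f$, closing the induction.

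The main delicate point will be handling the notational convention of Equation~\eqref{eq:applyLtolargerfunctions} cleanly: one must verify that when $L_{s_{m-1}}^{s_m}$ is applied to a function on $\states_{u\cup v}$ via a restriction to $\states_{s_m}$ indexed by the remaining coordinates, the result is genuinely a function on $\states_{u\cup v'}$ that has the history sequence $u\cup v'$---with last time point $s_{m-1}$---in exactly the form required to reapply Corollary~\ref{cor:inf_works_for_single_future_var} and the induction hypothesis. A minor subtlety is the base case $m=0$ with $v'=\emptyset$, where the ``history'' in the inner lower expectation is still just $X_u$; this causes no issue because Corollary~\ref{cor:inf_works_for_single_future_var} permits the full sequence $u$ as the conditioning history. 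Once these bookkeeping details are written out carefully, the induction is essentially mechanical, since every ingredient---the decomposition theorem and the single-future-variable corollary---has already been established in the paper.
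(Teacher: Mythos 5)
Your proposal is correct and follows essentially the same route as the paper: the paper's proof of Corollary~\ref{cor:composition_lower_trans} simply points to the argument in Section~\ref{sec:decomposition}, which is exactly your repeated application of Theorem~\ref{theorem:decomposition_multivar} to strip off the last time point and of Corollary~\ref{cor:inf_works_for_single_future_var} (via the convention in Equation~\eqref{eq:applyLtolargerfunctions}) to replace the inner lower expectation by $L_{s_{m-1}}^{s_m}$. You merely package the paper's iterative ``remove $s_m, s_{m-1},\ldots,s_0$ one by one'' argument as a formal induction on the number of future time points, which is a fine and faithful formalisation.
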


The following example numerically illustrates the use of Corollary~\ref{cor:composition_lower_trans} for computing the conditional lower expectation of a function that depends on the state at multiple time points.

\begin{exmp}\label{exmp:num_multivar_func_nonmarkov}
Consider again the set of rate matrices $\rateset$ from Example~\ref{exmp:example_rateset_simple_model}, and suppose that we are interested in the lower probability $\underline{P}(X_1 = X_2\,\vert\, X_0)$. That is, given that we start in some initial state $X_0$, we want to know the lower probability that $X_1$ and $X_2$ will have identical values. 
As we know from Equation~\eqref{eq:lowerprobaslowerexp}, this lower probability is given by
\begin{equation*}
\underline{P}(X_1 = X_2\,\vert\, X_0) = \underline{\mathbb{E}}[
\ind{X_1=X_2}(X_1,X_2)\,\vert\,X_0],\vspace{4pt}
\end{equation*}
with $\ind{X_1=X_2}$ the indicator of the event $X_1=X_2$, as defined by
\begin{equation*}
\ind{X_1=X_2}(x_1,x_2) \coloneqq
\begin{cases}
1 & \text{if $x_1 = x_2$} \\
0 & \text{if $x_1 \neq x_2$}
\end{cases}
~~~\text{ for all $x_1,x_2\in\states$.}
\end{equation*}
As explained in Example~\ref{exmp:single_time_numerical}, the value of such a lower expectation can depend on the particular type of {\ictmc} that we choose to use, and in fact, in this particular case, as we will see in Example~\ref{exmp:num_counterexample_markov}, this choice does indeed make a difference. 

For now, in this example, we use the \ictmc~$\smash{\mathbb{P}_{\rateset}^{\mathrm{W}}}$. The aim is therefore to compute the conditional lower expectation $\underline{\mathbb{E}}_{\rateset}^{\mathrm{W}}[\ind{X_1=X_2}(X_1,X_2)\,\vert\,X_0]$. Technically speaking, we cannot do this by means of Corollary~\ref{cor:composition_lower_trans}, because $\ind{X_1=X_2}$ does not formally depend on $X_0$. However, in practice, this is of course not a problem, because we can trivially regard $\ind{X_1=X_2}$ as a function of $X_0$, $X_1$ and $X_2$ whose value remains constant on $X_0$. In order to formalise this, we introduce the function $f(X_0,X_1,X_2)$, defined by
\begin{equation}\label{eq:fforexample}
f(x_0,x_1,x_2)\coloneqq \ind{X_1=X_2}(x_1,x_2)
=
\begin{cases}
1 & \text{if $x_1 = x_2$} \\
0 & \text{if $x_1 \neq x_2$}
\end{cases}
~~~\text{ for all $x_0,x_1,x_2\in\states$.}
\end{equation}
Since $f(X_0,X_1,X_2)$ is clearly identical to $\ind{X_1=X_2}(X_1,X_2)$, our problem now consists of computing $\underline{\mathbb{E}}_{\rateset}^{\mathrm{W}}[f(X_0,X_1,X_2)\,\vert\,X_0]$, which, because of Corollary~\ref{cor:composition_lower_trans}, is given by
\begin{equation}\label{eq:num_example_composition}
\underline{\mathbb{E}}_{\rateset}^\mathrm{W}[f(X_0,X_1,X_2)\,\vert\,X_0] = \left[L_0^1L_1^2f\right](X_0).
\end{equation}
In order to compute the left hand side of this equality, we will resolve the composition of operators $L_0^1L_1^2$ by starting from the latest time point, and working back to the earliest time point. Hence, we start by looking at $L_1^2f$. 

The function $f$ depends on multiple---in this case three---time points, and we will therefore use Algorithm~\ref{alg:compute_singlevar_multiple_past} to compute $L_1^2f$, which consists in applying Algorithm~\ref{alg:compute_singlevar} four times. In particular, for every $x_0\in\states=\{{\tt h},{\tt s}\}$ and $x_1\in\states=\{{\tt h},{\tt s}\}$, we use Algorithm~\ref{alg:compute_singlevar} to compute
\begin{align*}
g(x_0,x_1)
&\coloneqq
[L_1^2f](x_0,x_1)\\[-7pt]
&\coloneqq
[L_1^2f(x_0,x_1,X_2)](x_1)
=
\begin{cases}
\left[L_1^2\ind{{\tt h}}\right](\text{{\tt h}})=0.956\pm\epsilon
&\text{ if $x_1={\tt h}$}\\
\left[L_1^2\ind{{\tt s}}\right](\text{{\tt s}})=0.141\pm\epsilon
&\text{ if $x_1={\tt s}$}
\end{cases}
\end{align*}
up to some desired maximum error $\epsilon$, which we have here chosen to be $\epsilon=10^{-3}$. Note that the value of $\left[L_1^2\ind{{\tt s}}\right](\text{{\tt s}})$ is identical to the value that we had obtained for $\left[L_0^1\ind{{\tt s}}\right](\text{{\tt s}})$ in Example~\ref{exmp:single_time_numerical}; this is to be expected, as it follows from the time-homogeneity property in Proposition~\ref{prop:lower_transition_is_homogeneous}.

Next, we apply the operator $L_0^1$ to the function $L_1^2f$, or equivalently, to the function~$g$. The approach is completely similar. Again, as in Algorithm~\ref{alg:compute_singlevar_multiple_past}, we simply apply Algorithm~\ref{alg:compute_singlevar} multiple times. In particular, for every $x_0\in\states=\{{\tt h},{\tt s}\}$, we use Algorithm~\ref{alg:compute_singlevar} to compute
\begin{equation}\label{eq:secondpass}
[L_0^1L_1^2f](x_0)
=[L_0^1g](x_0)
\coloneqq
[L_0^1 g(x_0,X_1)](x_0)
=
\begin{cases}
0.920\pm2\epsilon
&\text{ if $x_0={\tt h}$}\\
0.453\pm2\epsilon
&\text{ if $x_0={\tt s}$},
\end{cases}
\end{equation}
where the maximum error is now $2\epsilon=2\cdot10^{-3}$, which is the sum of our previous error bound on $L_1^2f$, which was $\epsilon$, and the maximum extra error that may have arisen while computing Equation~\eqref{eq:secondpass}; the additivity of the two error bounds is a result of~\ref{LT:monotonicity} and~\ref{LT:constantadditivity} and the fact that $L_t^s$ is a lower transition operator.

Finally, by combining Equations~\eqref{eq:num_example_composition} and~\eqref{eq:secondpass}, we conclude that
\begin{align*}
\underline{P}_{\rateset}^{\mathrm{W}}(X_1 = X_2\,\vert\,X_0=\text{{\tt h}})=0.920\pm0.002~~\text{and}~~\underline{P}_{\rateset}^{\mathrm{W}}(X_1 = X_2\,\vert\,X_0=\text{{\tt s}})=0.453\pm0.002.
\end{align*}
\exampleend
\end{exmp}

A general method for computing the right hand side of Equation~\eqref{eq:composition_lower_trans} is outlined in Algorithm~\ref{alg:compute_multivar}. This algorithm starts by constructing a buffer $w\coloneqq v=s_0,\ldots,s_m$ of the time points for which the operators $L_{t_n}^{s_0},L_{s_1}^{s_2},\ldots,L_{s_{m-1}}^{s_m}$ have not yet been resolved, which initially corresponds to just $v$ (Line 2). We rename the function $f$ to $g_m$ for indexing purposes (Line 3). The algorithm then iteratively resolves the composition of operators in a backward fashion, working from the last time point in $v$ back to the first time point in $v$ (Line 4). One step here amounts to removing the current latest remaining time point $s_i$ from the buffer $w$ (Line 5), and then computing $L_{s_{i-1}}^{s_i}g_i$ using Algorithm~\ref{alg:compute_singlevar_multiple_past}. The outcome of this computation is a function $g_{(i-1)}\in\gambles(\states_{u\cup w})$.
After all time points $s_m,s_{m-1},\ldots,s_0$ have been resolved, it follows from Proposition~\ref{prop:approximation_error_bound} that the returned function $g_{(-1)}$ (Line 8) is equal to $L_{t_n}^{s_0}L_{s_0}^{s_1}\cdots L_{s_{m-1}}^{s_m}f\pm\epsilon$.

Note that the error bound of $\pm\epsilon$ holds because we compute each step using a maximum error of $\nicefrac{\epsilon}{m+1}$ (Line 6). That is, the finite-precision approximation error scales linearly in the number of time points that have to be computed. The reason why the accumulated error bounds can simply be added, is because $L_{s_{i-1}}^{s_i}$ is a lower transition operator and therefore satisfies~\ref{LT:monotonicity} and~\ref{LT:constantadditivity}. In particular, using these two properties, it is easily verified that for any $\mu>0$, $L_{s_{i-1}}^{s_i}(g_i\pm\mu)=L_{s_{i-1}}^{s_i}g_i\pm\mu$, where, in this case, we choose $\mu=\nicefrac{\epsilon}{m+1}$.

\begin{algorithm}[htb]
  \caption{Numerically compute $L_{t_n}^{s_0}L_{s_0}^{s_1}\cdots L_{s_{m-1}}^{s_m}f$ for any $f\in\gambles(\states_{u\cup v})$.}
    \label{alg:compute_multivar}
  \begin{algorithmic}[1]
  \vspace{4pt}
    \Require{A lower transition rate operator $\lrate$, two sequences of time points $u=t_0,\ldots,t_n$ and $v=s_0,\ldots,s_n$ in $\mathcal{U}_\emptyset$ such that $u<v$, a function $f\in\gambles(\states_{u\cup v})$, and a maximum numerical error $\epsilon\in\realspos$.}\vspace{4pt}
\Ensure{A function $L_{t_n}^{s_0}L_{s_0}^{s_1}\cdots L_{s_{m-1}}^{s_m}f\pm\epsilon$ in $\gambles(\states_u)$.}
\vspace{-5pt}
	 \Statex
    \Function{ComputeLuvf}{$\lrate, u,v, f,\epsilon$}
		\State $w \gets v$ 
		\State $g_{m}\gets f$\vspace{2pt} 
		\For{$i\in\{m,m-1,\ldots,0\}$}	\Comment{Iterate backward over $v=s_0,\ldots,s_m$}
			\State $w \gets w\setminus s_i$ \Comment{Remove last point of $w$}
			\State $g_{(i-1)}\gets ${\textsc{ComputeLuf}}($\lrate,u\cup w,s_i,g_i,\nicefrac{\epsilon}{m+1}$)  \Comment{Run Algorithm~\ref{alg:compute_singlevar_multiple_past}}\vspace{3pt}
		\EndFor
		\State \Return{$g_{(-1)}$}
		\vspace{2pt}
    \EndFunction
    \vspace{4pt}
  \end{algorithmic}
\end{algorithm}

As we already claimed in the beginning of this section, we cannot use Corollary~\ref{cor:composition_lower_trans}---nor, therefore, Algorithm~\ref{alg:compute_multivar}---to compute lower expectations of functions $f\in\gambles(\states_{u\cup v})$ with respect to a set of Markov chains $\wmprocesses_{\rateset,\mathcal{M}}$. In the remainder of this section, we provide this claim with some intuition and illustrate it by means of a numerical counterexample.

The core of the problem is that for a function $f\in\gambles(\states_{u\cup s})$, the conditional lower expectation $\underline{\mathbb{E}}_\rateset^\mathrm{WM}[f(X_u,X_s)\vert X_u]$ is not necessarily obtained by a single Markov process. 
That is, for every $\epsilon>0$ and every $x_u\in\states_u$, it follows from Equation~\eqref{eq:genericlowerexpectation} that there is a Markov chain $P_{x_u}\in\wmprocesses_{\rateset,\mathcal{M}}$ such that
\begin{equation*}
\underline{\mathbb{E}}_\rateset^\mathrm{WM}[f(X_u,X_s)\vert X_u=x_u]
\leq
\mathbb{E}_{P_{x_u}}[f(X_u,X_s)\vert X_u=x_u]\leq\underline{\mathbb{E}}_\rateset^\mathrm{WM}[f(X_u,X_s)\vert X_u=x_u]+\epsilon,
\end{equation*}
but there may \emph{not} be a single Markov chain $P\in\wmprocesses_{\rateset,\mathcal{M}}$ that does this simultaneously for \emph{all} histories $x_u\in\states_u$.

For example, if we were to assume that
\begin{equation*}
\underline{\mathbb{E}}_{\rateset,\,\mathcal{M}}^{\mathrm{WM}}[f(X_u,X_t,X_s)\,\vert\,X_u] = \underline{\mathbb{E}}_{\rateset,\,\mathcal{M}}^{\mathrm{WM}}\bigl[ \underline{\mathbb{E}}_{\rateset,\,\mathcal{M}}^{\mathrm{WM}}[f(X_u,X_t,X_s)\,\vert\,X_u,X_t] \,\big\vert\,X_u\bigr],
\end{equation*}
thereby mimicking Equation~\eqref{eq:nested_lower_exp_single_step} for $v=\{t,s\}$, we would essentially be assuming that the individual Markov chains $P_{x_u,\,x_t}\in\wmprocesses_{\rateset,\mathcal{M}}$ for which the conditional lower expectations
\begin{equation*}
\underline{\mathbb{E}}_{\rateset,\,\mathcal{M}}^{\mathrm{WM}}[f(X_u,X_t,X_s)\,\vert\,X_u=x_u,X_t=x_t]\vspace{5pt}
\end{equation*}
are obtained---or rather, are approximated from above up to some arbitrary $\epsilon$---can be combined into a single new Markov chain $P\in\wmprocesses_{\rateset,\mathcal{M}}$ such that
\begin{equation}\label{eq:Markovfailure}
P(X_s\vert X_u=x_u,X_t=x_t)=P_{x_u,x_t}(X_s\vert X_u=x_u,X_t=x_t)=P_{x_u,x_t}(X_s\vert X_t=x_t)
\end{equation}
for all $x_u\in\states_u$ and $x_t\in\states$. However, in most cases, this will not be possible. The issue is that for a stochastic process $P$ that satisfies Equation~\eqref{eq:Markovfailure}, the Markov property will---generally speaking---fail, because the right hand side of the expression still depends on $x_u$.

All of this changes if we replace $\wmprocesses_{\rateset,\mathcal{M}}$ with $\wprocesses_{\rateset,\mathcal{M}}$. In that case, we are no longer forced to obey the Markov property, and it then does become possible to combine different stochastic processes in a history-dependent way, as in Equation~\eqref{eq:Markovfailure}---using Theorem~\ref{theo:aanelkaarplakken}. Essentially, this is the main reason why we were able to decompose the lower expectations $\smash{\underline{\mathbb{E}}_{\rateset,\,\mathcal{M}}^{\mathrm{W}}}$---as we did in Theorem~\ref{theorem:decomposition_multivar} and Equation~\eqref{eq:nested_lower_exp_single_step}---and ultimately, why we could use the operator $L_t^s$ to compute them.\footnote{For readers that are closely familiar with the theory of credal networks~\cite{Cozman:2000ug}, it might be helpful to compare the difference between $\wmprocesses_{\rateset,\mathcal{M}}$ and $\wprocesses_{\rateset,\mathcal{M}}$ with the difference between credal networks under epistemic irrelevance~\cite{de2015credal,deCooman:2010gd} and credal networks under strong independence~\cite{Antonucci:2014ty}. There too, dropping the restrictions that are imposed by the Markov property---as is done for credal networks under epistemic irrelevance---allows for lower expectations to be decomposed and, as such, for efficient recursive computational methods to be developed.}

In the example below, we reconsider the problem of Example~\ref{exmp:num_multivar_func_nonmarkov}, but for a different {\ictmc}: we now use $\wmprocesses_{\rateset,\mathcal{M}}$ instead of $\wprocesses_{\rateset,\mathcal{M}}$. The result ends up being different, and thereby illustrates---by means of a counterexample---that Algorithm~\ref{alg:compute_multivar} is not in general applicable when working with sets of Markov chains such as $\wmprocesses_{\rateset,\mathcal{M}}$.

\begin{exmp} \label{exmp:num_counterexample_markov}
Consider again the set of rate matrices $\rateset$ from Example~\ref{exmp:example_rateset_simple_model} and the problem of computing $\underline{P}(X_1 = X_2\,\vert\, X_0)$. However, this time, we will compute this quantity with respect to the {\ictmc} $\wmprocesses_{\rateset}$ instead of $\wprocesses_{\rateset}$. Using an argument that is analogous to that in Example~\ref{exmp:example_rateset_simple_model}, we find that this problem is equivalent to computing $\underline{\mathbb{E}}_{\rateset}^{\mathrm{WM}}[f(X_0,X_1,X_2)\,\vert\,X_0]$, with $f$ as in Equation~\eqref{eq:fforexample}. 

The issue, however, is that we do not have a general method for computing such lower expectations for sets of Markov chains. Fortunately, in the very specific case of a binary state space $\mathcal{X}$---as we use here in this example---and for functions $f$ that depend on a small number of states $X_t$---only three in this case---it is possible to numerically solve the optimisation problem given by the right-hand side of
\begin{equation*}
\underline{\mathbb{E}}_{\rateset}^{\mathrm{WM}}[f(X_0,X_1,X_2)\,\vert\,X_0] = \inf\left\{ \mathbb{E}_P[f(X_1,X_2)\,\vert\,X_0]\,:\,P\in\wmprocesses_\rateset\right\}
\end{equation*}
by combining brute-force optimisation techniques with some clever tricks. We will not report on this method here, because it only works in very specific cases such as the one we consider here, and does not scale well to larger problems. For our present purposes, it suffices to know that this method yields the following results:
\begin{align*}
\underline{P}_{\rateset}^{\mathrm{WM}}(X_1 = X_2\,\vert\,X_0=\text{{\tt h}}) \approx 0.939~~\text{and}~~\underline{P}_{\rateset}^{\mathrm{WM}}(X_1 = X_2\,\vert\,X_0=\text{{\tt s}}) \approx 0.467.
\end{align*}

Comparing to the results for $\wprocesses_\rateset$ in Example~\ref{exmp:num_multivar_func_nonmarkov}, we see that the obtained lower probabilities are indeed different for these two sets of processes. Furthermore, as guaranteed by Proposition~\ref{prop:lower_exp_markov_bounded_by_nonmarkov}, the lower probabilities with respect to $\wprocesses_\rateset$---which, as we know from Section~\ref{subsec:ictmc_types}, is just a special type of lower expectation---provides a lower bound on the lower probabilities with respect to $\wmprocesses_\rateset$.
\exampleend
\end{exmp}

We end with some remarks about the tractability of the algorithms that we have presented. In particular, observe that Algorithm~\ref{alg:compute_singlevar_multiple_past} and~\ref{alg:compute_multivar} have a runtime complexity that is exponential in the number of time points. For example, for the computation of the lower expectation of a function $f\in\gambles(\states_{u\cup s})$, Algorithm~\ref{alg:compute_singlevar_multiple_past} requires a separate execution of Algorithm~\ref{alg:compute_singlevar} for every $x_{u}\in\states_{u}$. If we write $u=t_0,\ldots,t_n$ and let $\abs{\states}$ denote the number of states in $\states$, then, clearly, this requires $\abs{\states}^{n+1}$ executions of Algorithm~\ref{alg:compute_singlevar}. Similarly, for the computation of the lower expectation of a function $f\in\gambles(\states_{u\cup v})$, with $v=s_0,\ldots,s_m$, Algorithm~\ref{alg:compute_multivar} requires a number of executions of Algorithm~\ref{alg:compute_singlevar} that is of the order $\smash{\abs{\states}^{n+m+2}}$.
Of course, this should not really be surprising---even the simple enumeration of the different values of $f$ takes this many steps, so we cannot in general expect the computation of its lower expectation to be of a lower complexity. What is important though---but beyond the scope of this paper---is that if $f$ has some specific structure that reduces the complexity of its specification, then the complexity of our algorithms will scale down accordingly; we discuss these and other avenues for future research in our conclusions.

\subsection{Unconditional Lower Expectations}\label{sec:marginal_lower_exp}

Having shown in the previous sections how to compute conditional lower expectations, we now shift our attention to unconditional ones. In particular, we consider a function $f\in\gambles(\states_u)$, with $u\in\mathcal{U}_{\emptyset}$, and are interested in computing $\underline{\mathbb{E}}[f(X_u)]$. 

As we will see, the difference between unconditional and conditional lower expectations is mainly related to the role of the set of initial distributions $\mathcal{M}$. In fact, as an astute reader may have noticed, all our previous results about computing conditional lower expectations with the operator $L_t^s$ did not depend on the particular choice of this set $\mathcal{M}$.
In contrast, and rather unsurprisingly, unconditional lower expectations do depend on this choice.

In particular, the choice of $\mathcal{M}$ will influence our computations through the corresponding lower expectation operator $\underline{\mathbb{E}}_{\mathcal{M}}$, defined by
\begin{equation}\label{eq:initiallowerexp}
\underline{\mathbb{E}}_{\mathcal{M}}[f]
\coloneqq
\inf
\left\{\sum_{x\in\states}p(x)f(x)\colon p\in\mathcal{M}\right\}
~~\text{for all $f\in\gamblesX$.}
\end{equation}
In principle, evaluating this operator can be difficult, because the only restriction we impose on the set $\mathcal{M}$ is that it should be non-empty. However, in practice, $\mathcal{M}$ will typically have a rather simple  structure, and computing $\underline{\mathbb{E}}_\mathcal{M}[f]$ will then be straightforward. For example, if $\mathcal{M}$ consists of a finite number of probability mass functions---or is equal to their convex hull---then evaluating $\underline{\mathbb{E}}_\mathcal{M}[f]$ is just a matter of minimising over that finite set of probability mass functions.
Another typical situation is when $\mathcal{M}$ is specified by means of linear constraints. In that case, $\underline{\mathbb{E}}_\mathcal{M}[f]$ can be computed by means of linear programming techniques. We will proceed under the assumption that the optimisation problem in Equation~\eqref{eq:initiallowerexp} is solvable.

Regardless of the type of {\ictmc} that we consider, $\underline{\mathbb{E}}_{\mathcal{M}}$ allows us to compute the lower expectation of functions that depend on the initial state $X_0$.

\begin{restatable}{proposition}{propcomputeinitial}
\label{prop:computeinitial}
Let $\mathcal{M}$ be a non-empty set of probability mass functions on $\states$ and let $\rateset$ be a non-empty bounded set of rate matrices. Then for all $f\in\gamblesX$:
\begin{equation*}
\underline{\mathbb{E}}^\mathrm{W}_{\rateset,\mathcal{M}}[f(X_0)]
=
\underline{\mathbb{E}}^\mathrm{WM}_{\rateset,\mathcal{M}}[f(X_0)]
=
\underline{\mathbb{E}}^\mathrm{WHM}_{\rateset,\mathcal{M}}[f(X_0)]
=
\underline{\mathbb{E}}_{\mathcal{M}}[f].\vspace{4pt}
\end{equation*}
\end{restatable}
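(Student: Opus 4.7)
The plan is to sandwich all four quantities using two simple inequalities together with the nested inclusions established in Proposition~\ref{prop:lower_exp_markov_bounded_by_nonmarkov}. Since that proposition gives
\[
\underline{\mathbb{E}}^\mathrm{W}_{\rateset,\mathcal{M}}[f(X_0)]\leq
\underline{\mathbb{E}}^\mathrm{WM}_{\rateset,\mathcal{M}}[f(X_0)]\leq
\underline{\mathbb{E}}^\mathrm{WHM}_{\rateset,\mathcal{M}}[f(X_0)],
\]
it suffices to prove (i) $\underline{\mathbb{E}}^\mathrm{W}_{\rateset,\mathcal{M}}[f(X_0)]\geq \underline{\mathbb{E}}_{\mathcal{M}}[f]$ and (ii) $\underline{\mathbb{E}}^\mathrm{WHM}_{\rateset,\mathcal{M}}[f(X_0)]\leq \underline{\mathbb{E}}_{\mathcal{M}}[f]$.

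For (i), I would fix any $P\in\wprocesses_{\rateset,\mathcal{M}}$ and observe that by Definition~\ref{def:consistent_process_initialdistribution} its initial distribution $p\coloneqq P(X_0)$ belongs to $\mathcal{M}$. Hence
\[
\mathbb{E}_P[f(X_0)]=\sum_{x\in\states}p(x)f(x)\geq\underline{\mathbb{E}}_{\mathcal{M}}[f],
\]
and taking the infimum over $P\in\wprocesses_{\rateset,\mathcal{M}}$ yields (i).

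For (ii), I would fix an arbitrary $p\in\mathcal{M}$ and any $Q\in\rateset$ (which is possible because $\rateset$ is non-empty). Corollary~\ref{cor:rate_has_unique_homogen_markov_process} then produces a well-behaved homogeneous Markov chain $P\in\whmprocesses$ with $\mathcal{T}_P=\mathcal{T}_Q$ and $P(X_0)=p$. Because $Q_P=Q\in\rateset$ and $P(X_0)=p\in\mathcal{M}$, the alternative characterisation of $\whmprocesses_{\rateset,\mathcal{M}}$ given right after Equation~\eqref{eq:consistentwmprocessesalternative} shows $P\in\whmprocesses_{\rateset,\mathcal{M}}$. Therefore
\[
\underline{\mathbb{E}}^\mathrm{WHM}_{\rateset,\mathcal{M}}[f(X_0)]\leq\mathbb{E}_P[f(X_0)]=\sum_{x\in\states}p(x)f(x),
\]
and taking the infimum over $p\in\mathcal{M}$ gives (ii).

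The argument is essentially routine once the right existence result is identified; the main (small) obstacle is simply making sure that $\whmprocesses_{\rateset,\mathcal{M}}$ is non-empty and contains a process with any prescribed initial distribution $p\in\mathcal{M}$, which is handled by invoking Corollary~\ref{cor:rate_has_unique_homogen_markov_process} with an arbitrary $Q\in\rateset$ (non-emptiness of $\rateset$ being part of the hypothesis). Combining (i), (ii) and the chain of inequalities from Proposition~\ref{prop:lower_exp_markov_bounded_by_nonmarkov} forces all four quantities to coincide with $\underline{\mathbb{E}}_{\mathcal{M}}[f]$.
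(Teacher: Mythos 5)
Your proposal is correct and follows essentially the same route as the paper's proof: bound $\underline{\mathbb{E}}^\mathrm{W}_{\rateset,\mathcal{M}}[f(X_0)]$ from below by $\underline{\mathbb{E}}_{\mathcal{M}}[f]$ using consistency with $\mathcal{M}$, bound $\underline{\mathbb{E}}^\mathrm{WHM}_{\rateset,\mathcal{M}}[f(X_0)]$ from above via Corollary~\ref{cor:rate_has_unique_homogen_markov_process} applied to an arbitrary $Q\in\rateset$ and $p\in\mathcal{M}$, and close the sandwich with Proposition~\ref{prop:lower_exp_markov_bounded_by_nonmarkov}. The only (immaterial) difference is that the paper phrases the two bounds via $\epsilon$-approximations of the infima, whereas you take infima directly.
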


For functions that depend on some non-initial state $X_s$, with $s>0$, the operator $\underline{\mathbb{E}}_{\mathcal{M}}$ still allows us to compute their lower expectation, by combining it with Algorithm~\ref{alg:compute_singlevar}. However, this only works for {\ictmc}'s that are of the type $\wprocesses_{\rateset,\mathcal{M}}$ or $\wmprocesses_{\rateset,\mathcal{M}}$.

\begin{restatable}{proposition}{propcomputemarginal}
\label{prop:computemarginal}
Let $\mathcal{M}$ be a non-empty set of probability mass functions on $\states$ and let $\rateset$ be a non-empty bounded set of rate matrices that has separately specified rows, with lower transition rate operator $\lrate$. Then for any $s\in\realsnonneg$ and any $f\in\gamblesX$:
\begin{equation*}
\underline{\mathbb{E}}^\mathrm{W}_{\rateset,\mathcal{M}}[f(X_s)]
=
\underline{\mathbb{E}}^\mathrm{WM}_{\rateset,\mathcal{M}}[f(X_s)]
=
\underline{\mathbb{E}}_{\mathcal{M}}[L_0^s f].\vspace{5pt}
\end{equation*}
\end{restatable}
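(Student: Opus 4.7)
I would prove the double equality by a sandwich: first establish $\underline{\mathbb{E}}^\mathrm{W}_{\rateset,\mathcal{M}}[f(X_s)]\geq\underline{\mathbb{E}}_\mathcal{M}[L_0^sf]$ for the most general of the two \ictmc's, then establish $\underline{\mathbb{E}}^\mathrm{WM}_{\rateset,\mathcal{M}}[f(X_s)]\leq\underline{\mathbb{E}}_\mathcal{M}[L_0^sf]$ for the smaller one. Combined with Proposition~\ref{prop:lower_exp_markov_bounded_by_nonmarkov}, which already guarantees $\underline{\mathbb{E}}^\mathrm{W}_{\rateset,\mathcal{M}}[f(X_s)] \leq \underline{\mathbb{E}}^\mathrm{WM}_{\rateset,\mathcal{M}}[f(X_s)]$, this pins all three quantities to a common value. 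The case $s=0$ reduces to Proposition~\ref{prop:computeinitial} using $L_0^0=I$, so one may assume $s>0$.

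For the lower bound, pick any $P\in\wprocesses_{\rateset,\mathcal{M}}$. Axiom \ref{def:coh_prob_6} together with finite additivity yields the marginalisation identity $\mathbb{E}_P[f(X_s)]=\sum_{x_0\in\states}P(X_0=x_0)\,\mathbb{E}_P[f(X_s)\vert X_0=x_0]$, after which Corollary~\ref{cor:lower_operator_is_infimum} bounds each conditional expectation from below by $[L_0^sf](x_0)$. Since $P(X_0)\in\mathcal{M}$ by Definition~\ref{def:consistent_process_initialdistribution}, combining these observations gives $\mathbb{E}_P[f(X_s)]\geq\underline{\mathbb{E}}_\mathcal{M}[L_0^sf]$. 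Taking the infimum over $P$ delivers the desired inequality.

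For the upper bound, fix $\epsilon>0$ and first choose $p\in\mathcal{M}$ with $\sum_{x_0}p(x_0)[L_0^sf](x_0)\leq\underline{\mathbb{E}}_\mathcal{M}[L_0^sf]+\nicefrac{\epsilon}{2}$. Next apply Proposition~\ref{theorem:lower_markov_bound_is_tight}, with $t=0$ and tolerance $\nicefrac{\epsilon}{2}$, to obtain a well-behaved Markov chain $P^*\in\wmprocesses_{\rateset,\mathcal{M}}$ satisfying $\abs{\mathbb{E}_{P^*}[f(X_s)\vert X_0=x_0]-[L_0^sf](x_0)}<\nicefrac{\epsilon}{2}$ for every $x_0\in\states$. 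The initial distribution of $P^*$ need not equal $p$, so the crucial move is to \emph{swap} it: by Theorem~\ref{theo:uniqueMarkovchain} there is a unique Markov chain $P$ with transition matrix system $\mathcal{T}_P=\mathcal{T}_{P^*}$ and initial distribution $p$. Since the transition matrix system is unchanged, $P$ inherits well-behavedness from $P^*$; its outer partial derivatives coincide with those of $P^*$ and therefore still lie in $\rateset$; and $p\in\mathcal{M}$, so $P\in\wmprocesses_{\rateset,\mathcal{M}}$ by the characterisation in Equation~\eqref{eq:consistentwmprocessesalternative}. Because conditional expectations given $X_0$ are determined entirely by $T_0^s$, we have $\mathbb{E}_P[f(X_s)\vert X_0=x_0]=\mathbb{E}_{P^*}[f(X_s)\vert X_0=x_0]$ for every $x_0$, and applying the marginalisation identity to $P$ yields
\[
\mathbb{E}_P[f(X_s)]=\sum_{x_0}p(x_0)\mathbb{E}_{P^*}[f(X_s)\vert X_0=x_0]\leq\sum_{x_0}p(x_0)[L_0^sf](x_0)+\tfrac{\epsilon}{2}\leq\underline{\mathbb{E}}_\mathcal{M}[L_0^sf]+\epsilon.
\]
Since $\epsilon$ was arbitrary, $\underline{\mathbb{E}}^\mathrm{WM}_{\rateset,\mathcal{M}}[f(X_s)]\leq\underline{\mathbb{E}}_\mathcal{M}[L_0^sf]$, completing the sandwich.

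The main obstacle is precisely this ``swap of initial distributions'': Proposition~\ref{theorem:lower_markov_bound_is_tight} optimises the conditional expectations but leaves the initial distribution beyond our control, whereas we need to simultaneously approximate the minimiser over $\mathcal{M}$. The resolution relies on the fact that well-behaved Markov chains factorise cleanly into a transition matrix system plus an initial distribution (Theorem~\ref{theo:uniqueMarkovchain}) and that consistency with $\rateset$, as reformulated in Equation~\eqref{eq:consistentwmprocessesalternative}, depends only on the transition matrix system, so the swap preserves membership in $\wmprocesses_{\rateset,\mathcal{M}}$. I do not expect any further technical difficulties.
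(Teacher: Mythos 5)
Your proof is correct and follows essentially the same route as the paper: the lower bound via the law of iterated expectation together with Corollary~\ref{cor:lower_operator_is_infimum}, the upper bound via Proposition~\ref{theorem:lower_markov_bound_is_tight}, and the sandwich closed by Proposition~\ref{prop:lower_exp_markov_bounded_by_nonmarkov}. The only difference is that your ``swap'' of the initial distribution via Theorem~\ref{theo:uniqueMarkovchain} (which is valid) is avoided in the paper's version, which simply applies Proposition~\ref{theorem:lower_markov_bound_is_tight} to the singleton set $\mathcal{M}^*\coloneqq\{p\}$, so that the resulting Markov chain in $\wmprocesses_{\rateset,\mathcal{M}^*}\subseteq\wmprocesses_{\rateset,\mathcal{M}}$ already has $p$ as its initial distribution.
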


For {\ictmc}'s that are of the type $\wprocesses_{\rateset,\mathcal{M}}$, this result generalises to functions that depend on multiple time points. Computing the lower expectation of such a function can then be done by combining $\underline{\mathbb{E}}_{\mathcal{M}}$ with Algorithm~\ref{alg:compute_multivar}.

\begin{restatable}{proposition}{propcomputeunconditional}
\label{prop:computeunconditional}
Let $\mathcal{M}$ be a non-empty set of probability mass functions on $\states$ and let $\rateset$ be a non-empty, bounded, convex set of rate matrices that has separately specified rows, with lower transition rate operator $\lrate$. Then for any $u=t_0,\dots,t_n$ in $\mathcal{U}_{\emptyset}$ such that $t_0=0$, and any $f\in\gambles(\states_u)$:
\begin{equation*}
\underline{\mathbb{E}}_{\rateset,\,\mathcal{M}}^\mathrm{W}[f(X_u)]
=
\underline{\mathbb{E}}_{\mathcal{M}}[L_{t_0}^{t_1}L_{t_1}^{t_2}\cdots L_{t_{n-1}}^{t_n}f].\vspace{5pt}
\end{equation*}
\end{restatable}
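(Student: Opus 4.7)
The plan is to prove the two inequalities separately, after first disposing of the trivial case $n=0$, which reduces to Proposition~\ref{prop:computeinitial} once one adopts the convention that an empty product of operators equals the identity. For $n\geq1$, let $v\coloneqq t_1,\dots,t_n$, so $u=\{t_0\}\cup v$ with $\{t_0\}<v$. The key reduction is to invoke Corollary~\ref{cor:composition_lower_trans} with conditioning sequence $\{t_0\}$, which is permissible because $\rateset$ is non-empty, bounded, convex and has separately specified rows. This gives, for every $x_0\in\states$,
\begin{equation*}
\underline{\mathbb{E}}^{\mathrm{W}}_{\rateset,\mathcal{M}}[f(X_0,X_v)\,\vert\,X_0=x_0]=[L_{t_0}^{t_1}L_{t_1}^{t_2}\cdots L_{t_{n-1}}^{t_n}f](x_0),
\end{equation*}
so that computing the unconditional lower expectation is essentially a question of how the outer expectation over $X_0$ interacts with this inner quantity.

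For the inequality $\geq$, I would pick any $P\in\wprocesses_{\rateset,\mathcal{M}}$ and apply the ordinary law of iterated expectation to write $\mathbb{E}_P[f(X_u)]=\sum_{x_0\in\states}P(X_0=x_0)\,\mathbb{E}_P[f(X_u)\,\vert\,X_0=x_0]$. Each inner precise conditional expectation is bounded below by the corresponding lower expectation, which by the display above equals $[L_{t_0}^{t_1}\cdots L_{t_{n-1}}^{t_n}f](x_0)$. Because $P(X_0)\in\mathcal{M}$, applying the definition of $\underline{\mathbb{E}}_{\mathcal{M}}$ from Equation~\eqref{eq:initiallowerexp} then gives $\mathbb{E}_P[f(X_u)]\geq\underline{\mathbb{E}}_{\mathcal{M}}[L_{t_0}^{t_1}\cdots L_{t_{n-1}}^{t_n}f]$. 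Taking the infimum over $P\in\wprocesses_{\rateset,\mathcal{M}}$ settles this direction.

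For the inequality $\leq$, I would fix $\epsilon>0$ and construct a single process $P\in\wprocesses_{\rateset,\mathcal{M}}$ whose expectation of $f(X_u)$ is within $\epsilon$ of the right-hand side. First, choose $p^{*}\in\mathcal{M}$ so that $\sum_{x_0}p^{*}(x_0)\,[L_{t_0}^{t_1}\cdots L_{t_{n-1}}^{t_n}f](x_0)\leq\underline{\mathbb{E}}_{\mathcal{M}}[L_{t_0}^{t_1}\cdots L_{t_{n-1}}^{t_n}f]+\nicefrac{\epsilon}{2}$. Second, for each $x_0\in\states$, pick $P_{x_0}\in\wprocesses_{\rateset,\mathcal{M}}$ with $\mathbb{E}_{P_{x_0}}[f(X_u)\,\vert\,X_0=x_0]\leq[L_{t_0}^{t_1}\cdots L_{t_{n-1}}^{t_n}f](x_0)+\nicefrac{\epsilon}{2}$, using the definition of the conditional lower expectation together with the display above. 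Third, pick any $P_{\emptyset}\in\wprocesses_{\rateset,\mathcal{M}}$ whose initial distribution is $p^{*}$; Proposition~\ref{prop:finite_different_rate_matrix_has_process} with any fixed $Q\in\rateset$ delivers such a $P_{\emptyset}$. The crucial step is then to invoke Theorem~\ref{theo:aanelkaarplakken} with time sequence $\{t_0\}=\{0\}$ to glue these ingredients into a single $P\in\wprocesses_{\rateset,\mathcal{M}}$ satisfying $P(X_0=x_0)=p^{*}(x_0)$ and $P(A\,\vert\,X_0=x_0)=P_{x_0}(A\,\vert\,X_0=x_0)$ for every $x_0\in\states$ and every $A\in\mathcal{A}_{\{0\}}$; since $\mathcal{A}_{\{0\}}$ contains every event determined by $X_{t_1},\dots,X_{t_n}$, the latter identity in particular transfers to conditional expectations of $f(X_u)$. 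A one-line computation yields $\mathbb{E}_P[f(X_u)]=\sum_{x_0}p^{*}(x_0)\,\mathbb{E}_{P_{x_0}}[f(X_u)\,\vert\,X_0=x_0]\leq\underline{\mathbb{E}}_{\mathcal{M}}[L_{t_0}^{t_1}\cdots L_{t_{n-1}}^{t_n}f]+\epsilon$, and letting $\epsilon\to0^{+}$ concludes the proof.

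The hard part is this gluing step: one must verify that the hypotheses of Theorem~\ref{theo:aanelkaarplakken} are met (in particular, that convexity of $\rateset$ carries through and that $p^{*}\in\mathcal{M}$ so a witness $P_{\emptyset}$ indeed exists), and then check that its conclusions simultaneously give both the desired $p^{*}$-marginal at time $0$ and the desired conditional distributions $P_{x_0}$ for every $x_0$, without ``cross-contamination'' between different histories. Everything else---the two applications of iterated expectation and the algebra tying $\underline{\mathbb{E}}_{\mathcal{M}}$ to the infimum in Equation~\eqref{eq:initiallowerexp}---is essentially bookkeeping.
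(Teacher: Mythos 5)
Your proof is correct. Conceptually it follows the same route as the paper---condition on the initial state $X_0$ and use Corollary~\ref{cor:composition_lower_trans} to identify $\underline{\mathbb{E}}^{\mathrm{W}}_{\rateset,\mathcal{M}}[f(X_u)\,\vert\,X_0]$ with $L_{t_0}^{t_1}\cdots L_{t_{n-1}}^{t_n}f$---but it is implemented at a lower level of abstraction. The paper's own proof is a three-step citation chain: Corollary~\ref{cor:composition_lower_trans}, then Theorem~\ref{theorem:decomposition_multivar} applied with an empty conditioning sequence to write $\underline{\mathbb{E}}^{\mathrm{W}}_{\rateset,\mathcal{M}}[f(X_u)]=\underline{\mathbb{E}}^{\mathrm{W}}_{\rateset,\mathcal{M}}[\hat{f}(X_0)]$ with $\hat{f}\coloneqq L_{t_0}^{t_1}\cdots L_{t_{n-1}}^{t_n}f$, and finally Proposition~\ref{prop:computeinitial} to replace the outer lower expectation by $\underline{\mathbb{E}}_{\mathcal{M}}$. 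You instead replace the last two citations by a direct two-inequality $\epsilon$-argument: the $\geq$ direction is the easy envelope bound combined with Equation~\eqref{eq:initiallowerexp}, and the $\leq$ direction re-derives the needed special case of the law of iterated lower expectation by gluing via Theorem~\ref{theo:aanelkaarplakken} with time sequence $\{0\}$, folding the choice of the near-optimal initial distribution $p^*$ and the near-optimal conditional processes $P_{x_0}$ into one construction. Your reading of conclusion~\eqref{eq:theo:aanelkaarplakken:equalsfirst} with $u_1=\emptyset$, $u_2=\{0\}$ to obtain $P(X_0)=p^*$, and the observation that $\mathcal{A}_{\{0\}}$ contains every event determined by $X_{t_1},\dots,X_{t_n}$, are exactly how the paper itself exploits Theorem~\ref{theo:aanelkaarplakken} in the proof of Theorem~\ref{theorem:decomposition_multivar}; so what your route buys is a self-contained argument that makes explicit where convexity enters, at the price of essentially reproducing the proofs of Theorem~\ref{theorem:decomposition_multivar} and Proposition~\ref{prop:computeinitial} that the paper simply invokes. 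One minor citation point: for the witness $P_\emptyset$ with initial distribution $p^*$ you should cite Proposition~\ref{prop:nonhomogeneous_in_process_set} rather than Proposition~\ref{prop:finite_different_rate_matrix_has_process}, since only the former asserts consistency with $\rateset$ (and hence membership in $\wprocesses_{\rateset,\mathcal{M}}$); the latter alone yields a well-behaved Markov chain and leaves that check implicit.
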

\noindent
In this result, the requirement $t_0=0$ is a purely formal one, and imposes no actual restrictions. Whenever $0\notin u$, it suffices to first replace $f(X_u)$ by its trivial extension $f^*(X_0,X_u)$ to $\states_{0\cup u}$, defined by $f^*(x_0,x_u)\coloneqq f(x_u)$ for all $x_0\in\states$ and $x_u\in\states_u$. Since $f^*(X_0,X_u)$ is then identical to $f(X_u)$, this addition of $X_0$ to the domain is merely a notational trick, and will clearly not influence the result of the computations.

\section{Relation to Previous Work}\label{sec:prev_work}

To the best of our knowledge, the concept of an imprecise continuous-time Markov chain was first introduced in the literature by the seminal work of {\v{S}}kulj~\cite{Skulj:2015cq}. There, $\lrate$ was defined as the lower envelope of a non-empty, bounded, closed, and convex set of rate matrices $\rateset$ with separately specified rows, and the concept of an imprecise continuous-time Markov chain was then introduced in two different ways.

In the main body of his paper, {\v{S}kulj} characterises this concept in the following way. For any $f\in\gamblesX$, he considers a time-dependent function $f_s$, which---using our notation---is defined as $f_s\coloneqq T_0^s f$, and then requires that $f_0=f$,
\begin{equation}\label{eq:constraintsDamjan}
\lrate f_s
\leq
D^+f_s\coloneqq
\liminf_{\Delta\to0^+}
\frac{f_{s+\Delta}-f_s}{\Delta}
~~~\text{and}~~~
\limsup_{\Delta\to0^+}
\frac{f_{s+\Delta}-f_s}{\Delta}
=\hspace{-1.5pt}\colon D_+f_s
\leq\overline{Q} f_s,
\end{equation}
where $\overline{Q}$ is the upper envelope of $\rateset$, or equivalently,  the conjugate upper transition rate operator of $\lrate$, defined by $\overline{Q}g\coloneqq-\lrate(-g)$ for all $g\in\gamblesX$, and where $D^+f_s$ and $D_-f_s$ are called Dini derivatives. In this way, he generalises the usual differential equation characterisation of a precise homogeneous Markov chain, where the Dini derivatives would be replaced by derivatives, where the operator $\lrate$ would be replaced by a transition rate matrix $Q$, and where the inequalities would become equalities. 

Since this generalisation considers inequalities, the solution $f_s$ is not uniquely determined, and the problem is therefore to find lower and upper bounds for $f_s$. {\v{S}kulj} solves this problem, by showing that $\underline{f}_s\leq f_s\leq\overline{f}_s$, where $\underline{f}_s$ and $\overline{f}_s$ are the unique solutions of the differential equations
\begin{equation}\label{eq:boundsdifferentialDamjan}
\frac{d \underline{f}_{\,s}}{d s}=\lrate\,\underline{f}_{\,s}
~~~\text{and}~~~
\frac{d \overline{f}_{\,s}}{d s}=\overline{Q}\,\overline{f}_{\,s},
\end{equation}
with $\underline{f}_0=\overline{f}_0=f$. In practice, it suffices to restrict attention to either one of these two bounds, because they are related by conjugacy, in the sense that $\smash{\underline{f}_0=-\overline{f}_0}$ implies that $\smash{\underline{f}_s=-\overline{f}_s}$.

Using this characterisation, {\v{S}kulj} focusses on computing $\overline{f}_s$. Since, for all $x\in\states$, $\overline{f}_s(x)$ is a tight upper bound on $f_s(x)\coloneqq [T_0^sf](x)=\mathbb{E}[f(X_s)\vert X_0=x]$, this can be interpreted as computing a conditional upper expectation. In particular, for all $x\in\states$, we can interpret $\overline{f}_s(x)$ as
\begin{equation*}
\overline{f}_s(x)
=
\overline{\mathbb{E}}[f(X_s)\vert X_0=x]=-\underline{\mathbb{E}}[-f(X_s)\vert X_0=x]
=-\underline{[-f]}_s(x),
\end{equation*}
where the final two equalities follow from conjugacy.

The above characterisation leaves some ambiguity about the set of stochastic processes $\mathcal{P}$ with respect to which these lower and upper expectations are taken. 
However, as mentioned above, {\v{S}kulj} also provides another definition, which in his paper preceeds the rigorous characterisation in terms of lower- and upper bounds that we have summarised above. There, he defines an imprecise continuous-time Markov chain as a random process $X\colon\reals\to\states$ whose transition rate matrix is an unspecified function $Q_s\in\rateset$ \cite[Definition~1]{Skulj:2015cq}. However, he does not provide a definition for $Q_s$---as being a derivative or some other representation of the infinitesimal behaviour of the process' transition probabilities---nor for the notion of a random process, and in that sense, this definition is rather informal.

On the one hand, this other definition of an imprecise continuous-time Markov chain clearly suggests that $\mathcal{P}$ is the set of all random processes whose time-dependent transition rate matrix $Q_s$ takes values in $\rateset$. Since $Q_s$ is typically defined as a derivative, this suggests that $T_0^s$ should be differentiable in some way---or, at the very least, that $Q_s$ should be integrable. Furthermore, the fact that $Q_s$ is indexed only by the time $s$ suggests that $Q_s$ cannot be history-dependent---thereby implying that the corresponding random process satisfies the Markov property. On the other hand, {\smash{\v{S}kulj}}'s main definition suggests a different set of processes $\mathcal{P}$, because Equation~\eqref{eq:constraintsDamjan} does not impose a Markov condition, and because the use of the Dini derivatives suggests that $T_0^s$ is not required to be differentiable. In summary then, {\smash{\v{S}kulj}} clearly intends for $\mathcal{P}$ to be a set of stochastic processes of which the infinitesimal differences are compatible with $\rateset$, but leaves open the question of whether or not these processes are Markov chains, and whether or not their transition operators $T_0^s$ should be differentiable.

Our approach, in contrast, removes this ambiguity, by making it formally explicit which set of stochastic processes $\mathcal{P}$ is being considered. Interestingly, but perhaps unsurprisingly, our results end up being very closely related to the work of {\v{S}kulj}.
In particular, by comparing Equations~\eqref{eq:pointwisedifferential} and~\eqref{eq:boundsdifferentialDamjan}, we see that $\underline{f}_s=L_0^sf$, and therefore, it follows from Corollary~\ref{cor:lower_operator_is_infimum} that
\begin{equation*}
\underline{f}_s(x)
=
\underline{\mathbb{E}}_{\rateset,\mathcal{M}}^{\mathrm{W}}[f(X_s)\vert X_0=x]
=
\underline{\mathbb{E}}_{\rateset,\mathcal{M}}^{\mathrm{WM}}[f(X_s)\vert X_0=x],
\end{equation*}
thereby providing the function $\underline{f}_s$ with a clear interpretation in terms of sets of stochastic processes. Due to this connection, many of the results in Section~\ref{sec:lowertrans} can be interpreted as properties of $\underline{f}_s$. For example, Definition~\ref{def:low_trans} provides an alternative characterisation for $\underline{f}_s$, and Proposition~\ref{prop:lower_transition_has_deriv} shows that the defining differential equation for $\underline{f}_s$---Equation~\eqref{eq:boundsdifferentialDamjan}---can be stated uniformly, that is, with respect to the operator norm instead of pointwise.

Besides the differences that are related to the role of the set of processes $\mathcal{P}$, an other difference between our work and that of {\smash{\v{S}kulj}} is the type of functions that are considered. Due to its focus on $\overline{f}_s$ (and, by conjugacy, $\underline{f}_s$), the work of {\v{S}kulj} applies to functions $f$ that depend on the state $X_s$ at a single time point $s$. In contrast, our work also considers functions that depend on the state at any finite number of time points. In this more general context, as we have seen, it actually becomes crucial to be aware of the exact set of stochastic processes with respect to which the lower expectation is taken, because this choice influences the value of the lower expectation, as well as whether or not it can be computed efficiently.

A final difference between our work and that of {\v{S}kulj} are the computational methods that are presented. 
In both cases, these methods are concerned with approximating $\smash{\underline{f}_s=L_0^sf}$ up to some desired finite precision $\epsilon$. However, the particular approximation methods differ. {\v{S}kulj} considers three distinct methods: a uniform grid discretisation, an adaptive grid discretisation, and a combination of both. We consider only one method, which corresponds to Algorithm~\ref{alg:compute_singlevar}.

{\v{S}kulj}'s uniform grid discretisation method works similarly to our Algorithm~\ref{alg:compute_singlevar}, in that it considers a uniform partition $u=t_0,t_1,\ldots,t_n$ of the time interval $[0,s]$ and, at each time point $t_i$, requires solving a local optimisation problem. In our case, this local computation consists in evaluating $\smash{(I+\Delta\lrate)g=g+\Delta\lrate g}$ for some $g\in\gamblesX$, with $\Delta=\nicefrac{s}{n}$. In the uniform grid discretisation method of {\v{S}kulj}, this local computation consists in evaluating $e^{\Delta Q}g$, where $Q\in\rateset$ needs to be selected in such a way that $Qg=\lrate g$. 

It turns out that among these two methods, our Algorithm~\ref{alg:compute_singlevar} is much more efficient, for the following two reasons. First, it does not require the computation of a matrix exponential, which, for large state spaces, can be computationally demanding. Secondly, the number of steps $n$ that we require is much smaller. For instance, for the numerical example presented at the end of~\cite[Section 4.1]{Skulj:2015cq}, {\v{S}}kulj finds that his partition requires more than $n=6000$ steps. In contrast, Algorithm~\ref{alg:compute_singlevar} requires only $n=50$ steps for that particular example. Similarly, in our Example~\ref{exmp:single_time_numerical} we found that we required $n=8000$ steps. Applying the method from~\cite{Skulj:2015cq}, by comparison, requires that $n\approx 2.3\times 10^9$. 
These observations confirm empirical findings in Reference~\cite{troffaes2015using}, where the discretisation method of Algorithm~\ref{alg:compute_singlevar} was applied in a practical problem. The theoretical guarantee that is provided by Corollary~\ref{prop:approximation_error_bound} was not available at the time, but the authors of Reference~\cite{troffaes2015using} did observe that the number of steps required to obtain empirical convergence was much lower than {\v{S}kulj}'s theoretical upper bound in Reference~\cite{Skulj:2015cq}.

In order to avoid having to use a too fine grid, as his uniform discretisation method tends to require, \v{S}kulj also considers another computational method, which he calls adaptive grid discretisation. In this method, the size of the discretisation steps is not forced to be constant, but is instead varied in order to try and reduce the total number of steps required. This second method is computationally more efficient, but has the disadvantage that it can only be applied to certain cases. Therefore, \v{S}kulj also considers a third method, which consists in combining his first two methods in such a way as to profit from their respective advantages. 

Since the limit in Definition~\ref{def:low_trans} does not require the partition $u$ to be uniform, our computational methods could, in principle, consider non-uniform `adaptive grid' discretisations as well, thereby mimicking \v{S}kulj's second and third method. However, since we did not investigate the---theoretical and empirical---efficiency of such an approach, we are unable to compare it with the methods of \v{S}kulj. As discussed in our conclusions below, we consider this to be a promising avenue for future research.

\section{Conclusions \& Future Work}\label{sec:conclusions}

In this work, we formalised the concept of \emph{imprecise continuous-time Markov chains}; a generalisation of continuous-time Markov chains that robustifies this model class with respect to often-made simplifying assumptions. 
In particular, it allows one to relax the requirement of having to specify exact values---point-estimates---for the numerical parameters of a continuous-time Markov chain, and furthermore relaxes the assumptions of time-homogeneity and Markovian probabilistic independence. Since these assumptions are often made pragmatically to ensure a tractable model class, it should not be obvious that this generalisation results in a practically workable model class. Nevertheless, we were still able to derive polynomial runtime-complexity algorithms for computing quantities of interest.

In the remainder of this section, given the length of this paper, we start by providing a brief summary of our most important technical contributions. We then finally close by discussing some future lines of research that we consider to be promising.

The starting point of this work was to formally define \emph{continuous-time stochastic processes} within the framework of full conditional probabilities, using the notion of coherent conditional probabilities. After restricting ourselves to \emph{well-behaved} stochastic processes, we then introduced a way to characterise the dynamics of such stochastic processes by means of their \emph{outer partial derivatives}---sets of \emph{rate matrices}, which we have shown to be non-empty, bounded and closed, and which describe the infinitesimal state-transition rates of these processes. In particular, these outer partial derivatives allow us to describe this behaviour without imposing differentiability assumptions on the conditional state-transition probabilities. Next, by imposing the Markov property on such stochastic processes, we obtained continuous-time Markov chains as a special case. Finally, by additionally assuming the time-homogeneity of the state-transition probabilities, we have re-derived, within the framework of full conditional probabilities, the well known homogeneous continuous-time Markov chains, which are those stochastic processes whose matrix of conditional state-transition probabilities $T_t^s$ is given by the matrix exponential $e^{Q(s-t)}$, for some rate matrix $Q$.

Using these definitions, we have defined \emph{imprecise continuous-time Markov chains}, which we abbreviate as {\ictmc}'s, as sets of well-behaved stochastic processes whose dynamics are \emph{consistent} with a given set of rate matrices $\rateset$ and a given set of initial distributions $\mathcal{M}$. In particular, an \ictmc~is a set of stochastic processes whose outer partial derivatives are contained within $\rateset$ and whose initial distribution belongs to $\mathcal{M}$. We distinguished between three different {\ictmc}'s, some of which impose  additional conditions. The {\ictmc} $\smash{\wprocesses_{\rateset,\mathcal{M}}}$ imposes no additional conditions. The {\ictmc} $\smash{\wmprocesses_{\rateset,\mathcal{M}}}$ restricts attention to Markov processes, and the {\ictmc} $\smash{\whmprocesses_{\rateset,\mathcal{M}}}$ only considers Markov processes that are time-homogeneous. We have investigated the closure-properties of these three different {\ictmc}'s and, in particular, have studied their closure under specific types of recombinations of their elements.

The \emph{lower expectation} of an {\ictmc} was defined as the infimum of the expectations that correspond to the different stochastic processes in the \ictmc, that is, the highest lower bound on these expectations. Upper expectations and lower and upper probabilities were defined similarly, but since they correspond to special cases, we focussed on lower expectations. The difficulty of computing such lower expectations was shown to vary between the sets $\smash{\wprocesses_{\rateset,\mathcal{M}}}$, $\smash{\wmprocesses_{\rateset,\mathcal{M}}}$, and $\smash{\whmprocesses_{\rateset,\mathcal{M}}}$. For example, we have seen that the law of iterated lower expectation applies only to the set $\wprocesses_{\rateset,\mathcal{M}}$.

In order to make the computation of lower expectations tractable, we introduced the notion of a lower transition rate operator $\lrate$, and used it to define the corresponding \emph{lower transition operator} $L_t^s$. 
First, we showed that $\lrate$ can be regarded as the lower envelope of a non-empty bounded set of rate matrices $\rateset$, and that there is a one-to-one correspondence between lower transition rate operators $\lrate$ and sets of rate matrices $\rateset$ that are non-empty, bounded, closed and convex, and have \emph{separately specified rows}. Next, we showed that the operator $L_t^s$ satisfies various convenient algebraic properties, including time-homogeneity, differentiability and the semi-group property, and that it can be regarded as a generalisation of the well-known matrix exponential. Finally, we established that the operator $L_t^s$ corresponds to a uniform---rather than pointwise---solution of the differential equation~\eqref{eq:pointwisedifferential}, which was previously proposed as a definition for the conditional lower expectation $\underline{\mathbb{E}}[f(X_s)\vert X_t]$ of an {\ictmc} in Reference~\cite{Skulj:2015cq}---thereby bypassing the role of sets of stochastic processes.

For the \ictmc's $\smash{\wprocesses_{\rateset,\mathcal{M}}}$ and $\smash{\wmprocesses_{\rateset,\mathcal{M}}}$, we have shown that the corresponding lower expectations satisfy a property that is analogous to the definition in Reference~\cite{Skulj:2015cq}. In particular, if the lower transition rate operator $\lrate$ is the lower envelope of $\rateset$, and if $\rateset$ has separately specified rows, then $\underline{\mathbb{E}}[f(X_s)\vert X_t, X_u]$ is equal to $[L_t^sf](X_t)$, where the lower expectation $\underline{\mathbb{E}}[\cdot\vert\cdot]$ can be taken with respect to either $\wprocesses_{\rateset,\mathcal{M}}$ or $\wmprocesses_{\rateset,\mathcal{M}}$. Moreover, we have seen that the largest set of stochastic processes for which this is the case, is the set of all well-behaved stochastic processes that are consistent with $\rateset_{\lrate}$, where $\rateset_{\lrate}$ is the unique largest set of rate matrices that dominates $\lrate$.

Using this connection between the operator $L_t^s$ and the lower expectations of $\smash{\wprocesses_{\rateset,\mathcal{M}}}$ and $\smash{\wmprocesses_{\rateset,\mathcal{M}}}$, we then went on to develop algorithms for the numerical computation of these lower expectations, for functions that depend on the state at a finite number of time points. For $\smash{\wmprocesses_{\rateset,\mathcal{M}}}$, we showed that this is---in general---only possible if all but one of these time points is situated in the past. For $\smash{\wprocesses_{\rateset,\mathcal{M}}}$, such a restriction was not necessary, and we presented algorithms that are able to deal with functions that depend on the state at multiple future time points, provided that $\rateset$ is convex. Our algorithms apply to conditional as well as unconditional lower expectations, and are able to compute these quantities within a guaranteed $\epsilon$-error bound.

In summary, then, we have provided in this work a generalisation of continuous-time Markov chains to imprecise continuous-time Markov chains, which robustifies these models with respect to both numerical parameter assessments and the simplifying assumptions of time-homogeneity, Markovian probabilistic independence and differentiability of the state-transition probabilities. Notably, our lower transition operator provides a convenient analytical tool to represent such a model's lower expectations, and these quantities can be tractably computed using the algorithms that we presented. Therefore, this work can be seen as providing the required rigorous foundations for working with \ictmc's, both theoretically and practically. As such, we expect that it will provide plenty of avenues for future research.

On a technical level, a first important line of future research would be to extend our results to functions that are allowed to depend on the state $X_t$ at all time points $t\in\realsnonneg$---rather than only a finite number of time points. Being able to compute the lower and upper expectation of such functions would allow for the study of some practically important aspects that our current framework cannot tackle yet, such as the time until the system reaches some desired state, the average time that is spent in a state, etcetera. 
Our framework is well-suited to be extended to such a setting, because the coherence of the stochastic processes that we consider ensures that their domain can be extended to include events that depend on the state at all time points. Furthermore---should this be desired---the results in Reference~\cite{berti2002coherent} allow for such an extension to be established in a $\sigma$-additive way. 

A second technical line of future research would be to drop the assumption that $\states$ should be finite, and to exend our framework to the case where $\states$ is countably---or perhaps even uncountably---infinite.

Thirdly, still on the technical level, it would be interesting to investigate whether the requirement that $\rateset$ should be convex---which we currently need in order to guarantee that Algorithm~\ref{alg:compute_multivar} yields a correct result---is really necessary. In fact, we conjecture that it is not. Basically, the reason why Algorithm~\ref{alg:compute_multivar} currently requires $\rateset$ to be convex, is because it relies on Theorem~\ref{theorem:decomposition_multivar}, the proof of which in turn relies on Theorem~\ref{theo:aanelkaarplakken}. While we believe that the convexity of $\rateset$ is necessary in order for Theorem~\ref{theo:aanelkaarplakken} to hold, we think that this is not the case for Theorem~\ref{theorem:decomposition_multivar}, and we think that it should be possible to provide the latter with an alternative proof that does not require $\rateset$ to be convex. 

Yet another interesting line of future research would be to generalise the models that we consider by allowing the set of rate matrices $\rateset$ to be time-dependent, that is, to consider a set-valued function $\rateset_t$. As one anonymous reviewer pointed out, this could be used for example to model the fact that over time, we become less and less certain about what rate matrix to use. The subtlety will then lie in choosing the kind of continuity assumptions that should be imposed on such a function $\rateset_t$. If it is piecewise-constant, then we expect that our methods should translate fairly naturally to such a generalised model. For more complex time-variations, however, technical and computational difficulties may need to be overcome. 

As far as the computational aspects of our work are concerned, there is also still quite a lot of room for future work. In particular, we believe that it should be possible to improve upon the efficiency of our methods for computing $L_t^s$. On the one hand, we did not yet study the effect of allowing for non-uniform discretisations---as \v{S}kulj does in his adaptive discretisation method~\cite{Skulj:2015cq}. On the other hand, our approach---as well as that of \v{S}kulj---does not yet exploit the fact that $L_t^s=L_0^{s-t}$ is guaranteed to converge to a limit as $s-t$ approaches infinity~\cite{DeBock:2016}; for large values of $s-t$, this feature should surely make it easier to compute $L_t^s$, whereas our---and \v{S}kulj's---methods become less efficient as $s-t$ increases. Some initial results in this direction can already be found in Reference~\cite{Krak:2017}.

Finally, on an algorithmic level, a practically important future line of research would be to develop specialized versions of our algorithms, which are tailored to deal with specific functions. The reason why we consider this to be important is because, for functions that depend on the state at a finite number of time points, the runtime-complexity of our algorithms is currently exponential in this number of time points. This is not problematic---and is in fact to be expected---because for general functions, even simply specifying the function has such a complexity. However, in many practical applications, there will typically be some underlying structure of the functions that we are interested in. For instance, for time points $v=s_0,\ldots,s_m$, there may be functions $g_i\in\gambles(\states_{s_i})$, $i\in\{0,\ldots,m\}$, such that $f\in\gambles(\states_v)$ is of the form $f(x_v) = \sum_{i=0}^m g_i(x_{s_i})$ or $f(x_v) = \prod_{i=0}^m g_i(x_{s_i})$.
In the discrete time case, that is, for imprecise discrete time Markov chains, it has been shown that for these specific types of functions, inference algorithms such as Algorithm~\ref{alg:compute_multivar} simplify considerably, to the extent that their runtime complexity becomes linear in the number of time points~\cite{de2015credal,Lopatatzidis2015}. We believe that it is possible---and even relatively straightforward---to extend these techniques to the continuous time framework of this paper. Furthermore, since similar techniques also lie at the heart of existing algorithms for imprecise discrete-time \emph{hidden} Markov chains~\cite{Benavoli:2011ei,DeBock:2014ts}, we believe that this line of research could also serve as a first step towards the development of imprecise continuous-time \emph{hidden} Markov chains---where the states are observed indirectly through noisy outputs---and in particular, towards the development of efficient algorithms for problems such as filtering and smoothing. In fact, some first steps in this direction have already been taken in Reference~\cite{Krak:2017}.

\subsection*{Acknowledgements}

The authors wish to express their sincere gratitude to three anonymous reviewers who, despite the substantial length of this manuscript, provided timely, valuable and detailed feedback. We also thank the area editor, Marco Zaffalon, for finding these reviewers; surely, given the length of the paper, this must not have been easy. Jasper De Bock is a Postdoctoral Fellow of the Research Foundation - Flanders (FWO) and wishes to acknowledge its financial support; he would also like to thank Alexander Erreygers for some stimulating discussions about the computational aspects of imprecise continuous-time Markov chains.

\bibliographystyle{plain} 
\bibliography{general}

\appendix

\section{Proofs and Lemmas for the results in Section~\ref{sec:systems}}\label{app:systems}

\lemmacompositiontransitionmatrix*
\begin{proof}
Simply check each of the properties.
\end{proof}

\propstochasticfromratematrix*
\begin{proof}
\ref{def:T:sumone} follows from \ref{def:Q:sumzero}: for all $x\in\states$, \ref{def:Q:sumzero} implies that
\begin{equation*}
\sum_{y\in\states} [I + \Delta Q](x,y) = \sum_{y\in\states}I(x,y) + \Delta \sum_{y\in\states}Q(x,y) = 1+\Delta 0=1.
\end{equation*}
\ref{def:T:nonneg} follows from~\ref{def:Q:nonnegoffdiagonal} and because $0\leq \Delta\norm{Q} \leq 1$: for all $x,y\in\states$ such that $x\neq y$, $0\leq\Delta\norm{Q} \leq 1$ implies that $[I+\Delta Q](x,x)=1+\Delta Q(x,x)\geq 1-\Delta\norm{Q}\geq0$, and \ref{def:Q:nonnegoffdiagonal} and $\Delta\geq0$ imply that $[I+\Delta Q](x,y)=\Delta Q(x,y)\geq0$.
\end{proof}

\propratefromstochasticmatrix*
\begin{proof}
This proof is analogous to that of Proposition~\ref{prop:stochastic_from_rate_matrix}; simply verify each of the properties in Definition~\ref{def:rate_matrix}.
\end{proof}

\propalternativedefforbounded*
\begin{proof}
We start by proving that Equation~\eqref{eq:alternative_bounded} implies $\norm{\rateset}<+\infty$. 
 To this end, assume that Equation~\eqref{eq:alternative_bounded} holds. Since $\states$ is finite, it then follows that
\begin{align*}
\norm{\rateset} = \sup\{\norm{Q}\,:\,Q\in\rateset\}
 &= \sup\left\{\max\left\{\sum_{y\in\states}\abs{Q(x,y)}\,:\,x\in\states\right\}\,:\,Q\in\rateset\right\} \\
 &= \max\left\{\sup\left\{\sum_{y\in\states}\abs{Q(x,y)}\,:\,Q\in\rateset\right\} \,:\,x\in\states\right\}.
\end{align*}Hence, there is some $x'\in\states$ such that
\begin{align*}
\norm{\rateset} = \sup_{Q\in\rateset}\sum_{y\in\states}\abs{Q(x',y)}
&=\sup_{Q\in\rateset}\left(2\abs{Q(x',x')}\right)
 = 2\sup_{Q\in\rateset}\abs{Q(x',x')}\\
 &= -2\inf_{Q\in\rateset}\left(-\abs{Q(x',x')}\right)
 = -2\inf_{Q\in\rateset}Q(x',x')< +\infty\,,
\end{align*}
where the second and last equality follows from Definition~\ref{def:rate_matrix}, and the final inequality follows from Equation~\eqref{eq:alternative_bounded}.

We next show that $\norm{\rateset}<+\infty$ implies Equation~\eqref{eq:alternative_bounded}. To this end, consider any set of rate matrices $\rateset\subseteq\mathcal{R}$ such that $\norm{\rateset}<+\infty$, and assume \emph{ex absurdo} that Equation~\eqref{eq:alternative_bounded} is not true. Then clearly, there is some $x'\in\states$ and $Q'\in\rateset$ such that $Q'(x,x)<-\norm{\rateset}$. However, this implies that $\norm{\rateset}\geq\norm{Q'}\geq\abs{Q'(x,x)}>\norm{\rateset}$, which is a contradiction. Hence, it follows that Equation~\eqref{eq:alternative_bounded} must be true.
\end{proof}

\begin{lemma}{\cite[Theorem 2.1.1]{norris1998markov}}\label{lemma:deriv_exponential_trans}
For any $Q\in\mathcal{R}$, we have that
\begin{equation*}
{\frac{d}{d \Delta}e^{Q\Delta}}\big\vert_{\Delta=0} \coloneqq \lim_{\Delta\to 0}\frac{1}{\Delta}(e^{Q\Delta}-I)= Q.
\end{equation*}
\end{lemma}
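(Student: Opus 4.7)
The plan is to prove this by directly manipulating the power-series definition of the matrix exponential, which, for any square real-valued matrix $A$, is given by $e^A \coloneqq \sum_{k=0}^\infty \nicefrac{A^k}{k!}$, with $A^0 \coloneqq I$. Since we are working over the finite-dimensional space of $\abs{\states}\times\abs{\states}$ matrices with the operator norm $\norm{\cdot}$, standard results (which we would cite from Reference~\cite{van2006study}) guarantee that this series converges absolutely with respect to $\norm{\cdot}$ whenever $\norm{A} < +\infty$, and in particular for $A = Q\Delta$.

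First, I would substitute the series definition to write
\begin{equation*}
\frac{1}{\Delta}(e^{Q\Delta} - I) = \frac{1}{\Delta}\sum_{k=1}^\infty \frac{Q^k \Delta^k}{k!} = Q + \sum_{k=2}^\infty \frac{Q^k \Delta^{k-1}}{k!},
\end{equation*}
which is valid for any $\Delta \in \realspos$. It then follows that
\begin{equation*}
\frac{1}{\Delta}(e^{Q\Delta} - I) - Q = \sum_{k=2}^\infty \frac{Q^k \Delta^{k-1}}{k!}.
\end{equation*}

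Next, using properties~\ref{N:homogeneous} and~\ref{N:normAB} from Proposition~\ref{prop:norm_properties}, together with the triangle inequality extended to absolutely convergent series, I would bound the operator norm of this remainder. Specifically, for any $\Delta \in (0,1]$, we have
\begin{equation*}
\norm{\frac{1}{\Delta}(e^{Q\Delta} - I) - Q} \leq \sum_{k=2}^\infty \frac{\norm{Q}^k \Delta^{k-1}}{k!} = \Delta\sum_{k=2}^\infty \frac{\norm{Q}^k \Delta^{k-2}}{k!} \leq \Delta\sum_{k=2}^\infty \frac{\norm{Q}^k}{k!} \leq \Delta \, e^{\norm{Q}}.
\end{equation*}
Since $\norm{Q} < +\infty$ because $Q$ is a real-valued matrix on a finite state space, the factor $e^{\norm{Q}}$ is a finite constant, and therefore the right-hand side vanishes as $\Delta \to 0^+$. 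This establishes the limit from the right.

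For the limit from the left, an entirely analogous bound works after replacing $\Delta$ by $-\Delta$ with $\Delta \in (0,1]$; the same series manipulation shows that $\nicefrac{1}{\Delta}(e^{-Q\Delta} - I) \to -Q$ as $\Delta \to 0^+$, which, after a sign change, gives the desired two-sided limit. The main obstacle, if any, is really just the bookkeeping around absolute convergence of the matrix power series and the justification for pulling the norm inside the infinite sum; both are standard facts about matrix exponentials on finite-dimensional spaces and would be handled by a brief appeal to the completeness of this space under $\norm{\cdot}$ together with the comparison test.
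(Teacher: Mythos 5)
Your proof is correct: the paper gives no proof of this lemma at all, deferring to Norris~\cite[Theorem 2.1.1]{norris1998markov}, and your power-series expansion with the remainder bound $\norm{\frac{1}{\Delta}(e^{Q\Delta}-I)-Q}\leq\Delta\, e^{\norm{Q}}$ is precisely the standard argument underlying that citation. The only minor remark is that the paper only ever uses $e^{Q\Delta}$ for $\Delta\in\realsnonneg$, so a right-sided limit would already suffice; your treatment of negative $\Delta$ is harmless extra work and also correct, since the series argument nowhere needs $Q$ to be a rate matrix.
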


\propsystemQ*
\begin{proof}
We start by showing that $\mathcal{T}_Q$ is a transition matrix system. Because of Proposition~\ref{prop:stochastic_from_exponential}, $\mathcal{T}_Q$ is clearly a family of transition matrices. Consider now any $t,r,s\in\realsnonneg$ such that $t\leq r\leq s$. It then follows from the definition of $\mathcal{T}_Q$ and~\cite[Theorem 2.1.1]{norris1998markov} that $T_t^s=T_t^rT_r^s$, and $T_t^t=I$. Because the $t,r,s$ are arbitrary, it follows from Definition~\ref{def:trans_mat_system} that $\mathcal{T}_Q$ is a transition matrix system.

To prove that $\mathcal{T}_Q$ is well-behaved, note that for any $t\in\realsnonneg$, because of Definition~\ref{def:systemfromQ},
\begin{align*}
\limsup_{\Delta\to 0^+}\frac{1}{\Delta}\norm{T_t^{t+\Delta}-I} 
&=\limsup_{\Delta\to 0^+}\norm{\frac{1}{\Delta}(T_t^{t+\Delta}-I)-Q+Q}\\
&\leq\limsup_{\Delta\to 0^+}\norm{\frac{1}{\Delta}(T_t^{t+\Delta}-I)-Q}+\norm{Q}  
= \norm{Q} < \infty\,,
\end{align*}
where the first inequality follows from Proposition~\ref{prop:norm_properties}, the second equality follows from Lemma~\ref{lemma:deriv_exponential_trans} and the final inequality follows from the fact that $Q$ is real-valued. Because this holds for any $t\in\realsnonneg$, the first condition in Equation~\eqref{eq:wellbehavedtransitionmatrixsystem} is satisfied. A similar argument shows that also the second condition is satisfied for all $t\in\realspos$, and hence $\mathcal{T}_Q$ is well-behaved.
\end{proof}

\proprestrtransmatsystemifsemigroup*
\begin{proof}
If $\mathcal{T}^{\mathbf{I}}$ is a restricted transition matrix system, then, by definition, it is the restriction to $\mathbf{I}$ of some transition matrix system $\mathcal{T}$. Therefore, the `only if' part of this result follows trivially from Definition~\ref{def:trans_mat_system}.

For the `if' part, we need to prove that for any family of transition matrices $\mathcal{T}^{\mathbf{I}}$ such that, for all $t,r,s\in\mathbf{I}$ with $t\leq r\leq s$, it holds that $T_t^s = T_t^rT_r^s$ and $T_t^t=I$, there is a transition matrix system $\mathcal{T}$ that coincides with $\mathcal{T}^{\mathbf{I}}$ on $\mathbf{I}$. In order to prove this, it suffices to show that the unique family of transition matrices $\mathcal{T}$ that coincides with $\mathcal{T}^{\mathbf{I}}$ on $\mathbf{I}$ and that is otherwise defined by
\begin{equation}\label{eq:prop:restr_trans_mat_system_if_semigroup}
T_t^s
\coloneqq
\begin{cases}
I &\text{ if $s<\min \mathbf{I}$}\\
T_{\min\textbf{I}}^s &\text{ if $t<\min\textbf{I}$ and $s\in\textbf{I}$}\\
T_{\min\textbf{I}}^{\sup\textbf{I}} &\text{ if $t<\min\textbf{I}$ and $\sup\textbf{I}<s$}\\
T_t^{\sup\textbf{I}}
&\text{ if $t\in\textbf{I}$ and $\sup\textbf{I}<s$}\\
I &\text{ if $\sup\mathbf{I}<t$}
\end{cases}
~~~\text{ for $t,s\in\realsnonneg$ with $t\leq s$ and $[t,s]\not\subseteq\mathbf{I}$,}
\end{equation}
is a transition matrix system. This is a matter of straightforward verification.
\end{proof}

\propwellrestrtransmatsystemiflimsup*
\begin{proof}
If $\mathcal{T}^{\mathbf{I}}$ is well-behaved, then, by definition, it is the restriction to $\mathbf{I}$ of a well-behaved transition matrix system $\mathcal{T}$. Therefore, the `only if' part of this result follows trivially from Definition~\ref{def:well_behaved_trans_mat_system}.

For the `if' part, we need to show that for any restricted transition matrix system $\mathcal{T}^\mathbf{I}$ on $\mathbf{I}$ that satisfies Equation~\eqref{eq:wellbehavedrestrictedtransitionmatrixsystem}, there is a well-behaved transition matrix system $\mathcal{T}$ that coincides with $\mathcal{T}^\mathbf{I}$ on $\mathbf{I}$. Let $\mathcal{T}$ be constructed as in the proof of Proposition~\ref{prop:restr_trans_mat_system_if_semigroup}. Then, as explained in that proof, $\mathcal{T}$ is a transition matrix system that coincides with $\mathcal{T}^{\mathbf{I}}$ on $\mathbf{I}$. Therefore, it suffices to prove that $\mathcal{T}$ is well-behaved. We start by proving the first part of Equation~\eqref{eq:wellbehavedtransitionmatrixsystem}. So consider any $t\in\realsnonneg$. If $t\in\mathbf{I}^+$, then the desired inequality follows from Equation~\eqref{eq:wellbehavedrestrictedtransitionmatrixsystem}. If $t\notin\mathbf{I}^+$, then either $t<\min\mathbf{I}$ or $t\geq\sup\mathbf{I}$, and therefore, for sufficiently small $\Delta>0$, it follows from Equation~\eqref{eq:prop:restr_trans_mat_system_if_semigroup} that $T_t^{t+\Delta}=I$, thereby making the desired inequality trivially true. The second part of Equation~\eqref{eq:wellbehavedtransitionmatrixsystem} can be proved similarly.
\end{proof}

\propconcatrestrtransmatsystemsissystem*
\begin{proof}
For all $t,s\in\mathbf{I}\cup\mathbf{J}$ such that $t\leq s$, it follows from Proposition~\ref{lemma:compositiontransitionmatrix} that the matrix $T_t^s$ that corresponds to $\mathcal{T}^{\mathbf{I}\cup\mathbf{J}}$ is a transition matrix. Furthermore, for all $t\in\mathbf{I}\cup\mathbf{J}$, we have that either $t\in\mathbf{I}$ or $t\in\mathbf{J}$. In either case, we have that $T_t^t=I$, because either $T_t^t=\presuper{i}T_t^t=I$ or $T_t^t=\presuper{j}T_t^t=I$, with $\presuper{i}T_t^t$ and $\presuper{j}T_t^t$ corresponding to $\mathcal{T}^{\mathbf{I}}$ and $\mathcal{T}^{\mathbf{J}}$, respectively. Next, we show that for all $t,q,s\in\mathbf{I}\cup\mathbf{J}$ with $t\leq q\leq s$, it holds that
\begin{equation*}
T_t^s = T_t^qT_q^s\,.
\end{equation*}
If both $t,s\in\mathbf{I}$ or if both $t,s\in\mathbf{J}$, this clearly holds. Therefore, we may assume that $t\in\mathbf{I}$ and $s\in\mathbf{J}$. Suppose furthermore that $q\in\mathbf{I}$. Then, from the definition of the concatenation operator $\otimes$, we have that $T_q^s=T_q^rT_r^s$, with $r=\max\mathbf{I}=\min\mathbf{J}$. Because $t,q,r\in\mathbf{I}$, we know that $T_t^qT_q^r=T_t^r$, and hence, by the definition of the concatenation operator,
\begin{equation*}
T_t^qT_q^s = T_t^qT_q^rT_r^s = T_t^rT_r^s = T_t^s\,.
\end{equation*}
An exactly analogous argument proves the case for $q\in\mathbf{J}$. Therefore, it follows from Proposition~\ref{prop:restr_trans_mat_system_if_semigroup} that $\mathcal{T}^{\mathbf{I}\cup\mathbf{J}}$ is a restricted transition matrix system.

It remains to prove that if $\mathcal{T}^{\mathbf{I}}$ and $\mathcal{T}^{\mathbf{J}}$ are both well-behaved, then $\mathcal{T}^{\mathbf{I}\cup\mathbf{J}}$ is also well-behaved. Due to proposition~\ref{prop:well_restr_trans_mat_system_if_limsup}, it suffices to prove Equation~\eqref{eq:wellbehavedrestrictedtransitionmatrixsystem}. We only prove the left part of this equation, that is, we prove that
\begin{equation*}
\left(\forall t\in(\mathbf{I}\cup\mathbf{J})^+\right)~\limsup_{\Delta\to0^+}\norm{\frac{1}{\Delta}\left(T_t^{t+\Delta} - I\right)} < +\infty.
\end{equation*}
The proof for the right part of Equation~\eqref{eq:wellbehavedrestrictedtransitionmatrixsystem} is completely analogous. So consider any $t\in(\mathbf{I}\cup\mathbf{J})^+$. Since $\sup\mathbf{I}=\max\mathbf{I}=\min\mathbf{J}$, it follows that
\begin{equation*}
(\mathbf{I}\cup\mathbf{J})^+
\coloneqq
(\mathbf{I}\cup\mathbf{J})\setminus\{\sup(\mathbf{I}\cup\mathbf{J})\}=(\mathbf{I}\cup\mathbf{J})\setminus\{\sup\mathbf{J}\}
=(\mathbf{I}\setminus\sup\mathbf{I})\cup(\mathbf{J}\setminus\sup\mathbf{J})=\mathbf{I}^+\cup\mathbf{J}^+.
\end{equation*}
Therefore, without loss of generality, we may assume that $t\in\mathbf{I}^+$. The desired result now follows by applying Proposition~\ref{prop:well_restr_trans_mat_system_if_limsup} to the well-behaved restricted transition matrix system $\mathcal{T}^\mathbf{I}$.
\end{proof}

The following lemma states a very useful norm inequality that we will use repeatedly in the proofs in this work. This result states that the distance between two composed transition matrices $T_1T_2\cdots T_n$ and $S_1S_2\cdots S_n$ is bounded from above by the sum of the distances $\norm{T_i - S_i}$ of their component transition matrices.

\begin{lemma}\label{lemma:recursive}
Let $T_1,\ldots,T_n$ and $S_1,\ldots,S_n$ be two finite sequences of transition matrices. Then
\begin{equation*}
\norm{\prod_{i=1}^nT_i - \prod_{i=1}^nS_i} \leq \sum_{i=1}^n \norm{T_i - S_i}.
\end{equation*}
\end{lemma}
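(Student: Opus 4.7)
The plan is to prove this by induction on $n$, using the fact that every transition matrix has operator norm equal to $1$. This last fact follows immediately from Equation~\eqref{eq:normofmatrix} together with~\ref{def:T:sumone} and~\ref{def:T:nonneg}: for any transition matrix $T$, each row sums to $1$ and all entries are non-negative, so $\sum_{y\in\states}\abs{T(x,y)}=\sum_{y\in\states}T(x,y)=1$ for every $x\in\states$, hence $\norm{T}=1$.

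For the base case $n=1$ the statement is trivial. For the induction step, suppose the result holds for $n-1$. Write $A\coloneqq\prod_{i=1}^{n-1}T_i$ and $B\coloneqq\prod_{i=1}^{n-1}S_i$. By Proposition~\ref{lemma:compositiontransitionmatrix}, $A$ and $B$ are again transition matrices, so $\norm{A}=\norm{S_n}=1$. The key algebraic step is the standard telescoping identity
\begin{equation*}
AT_n - BS_n = A(T_n-S_n) + (A-B)S_n,
\end{equation*}
after which the triangle inequality (N3) and submultiplicativity~\ref{N:normAB} yield
\begin{equation*}
\norm{AT_n-BS_n}\leq\norm{A}\norm{T_n-S_n}+\norm{A-B}\norm{S_n}=\norm{T_n-S_n}+\norm{A-B}.
\end{equation*}
Applying the induction hypothesis to $\norm{A-B}$ then gives the desired bound $\sum_{i=1}^n\norm{T_i-S_i}$.

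There is no real obstacle here; the proof is routine once one has observed that transition matrices have unit operator norm and that their products remain transition matrices. The only subtlety worth stating explicitly in the write-up is the justification for $\norm{T}=1$ for a transition matrix $T$, since this is the ingredient that makes the submultiplicative bound collapse cleanly at every step of the telescope.
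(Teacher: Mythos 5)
Your proof is correct. The telescoping identity $AT_n-BS_n=A(T_n-S_n)+(A-B)S_n$, together with Proposition~\ref{lemma:compositiontransitionmatrix}, the operator triangle inequality (property N8 rather than N3, which is stated for functions), submultiplicativity~\ref{N:normAB}, and the observation that $\norm{T}=1$ for every transition matrix $T$ by Equation~\eqref{eq:normofmatrix}, yields the bound by a routine induction. The paper, however, does not prove this lemma directly: it declares it a special case of Lemma~\ref{lemma:recursive_lower_trans}, the analogous statement for \emph{lower} transition operators, and proves that more general version with the same telescoping induction. The interesting difference is in how the first telescoped term is handled: your step $\norm{A(T_n-S_n)}\leq\norm{A}\norm{T_n-S_n}$ relies on the linearity of matrices to factor $AT_n-AS_n=A(T_n-S_n)$, and this is precisely what fails for the non-linear lower transition operators; the paper's general proof replaces it by the non-expansiveness property~\ref{LT:differencenorm}, $\norm{\lt A-\lt B}\leq\norm{A-B}$, and handles the second term with~\ref{N:normAB} and~\ref{LT:norm_at_most_one}. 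So your argument is the more elementary and self-contained one for the matrix statement as it appears here (it needs only Section~\ref{sec:prelim} material), while the paper's route buys the general operator version, which it needs later anyway for the results about $\Phi_u$ and $L_t^s$, and thereby avoids proving essentially the same inequality twice.
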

\begin{proof}
This is a special case of Lemma~\ref{lemma:recursive_lower_trans}, which states a more general version. We have included this version separately because Lemma~\ref{lemma:recursive_lower_trans} uses concepts that, on a chronological reading of this paper, would be undefined at this point.
\end{proof}

\lemmarestrictedtransmatsystemcauchyconverges*
\begin{proof}
Consider any sequence $\{\mathcal{T}_i^{\mathbf{I}}\}_{i\in\nats}$ in $\mathbfcal{T}^{\mathbf{I}}$ that is Cauchy. We will prove that this sequence converges to a limit that belongs to $\mathbfcal{T}^{\mathbf{I}}$. For all $i\in\nats$ and any $t,s\in\mathbf{I}$ such that $t\leq s$, we will use $\presuper{i}T_t^s$ to denote the transition matrix that corresponds to $\mathcal{T}_i^{\mathbf{I}}$. 

Since $\{\mathcal{T}_i^{\mathbf{I}}\}_{i\in\nats}$ is Cauchy, 
it follows from Equation~\eqref{eq:trans_mat_system_metric} that
\begin{equation}\label{eq:lemma:restricted_trans_mat_system_cauchy_converges:consequenceofCauchy}
(\forall\epsilon\in\realspos)\,(\exists n_\epsilon\in\nats)\,(\forall k,\ell > n_\epsilon)\,(\forall t,s\in\mathbf{I}\colon t\leq s)
~\norm{\presuper{k}T_t^s-\presuper{\ell}T_t^s} < \epsilon.
\end{equation}
Clearly, for any $t,s\in\mathbf{I}$ such that $t\leq s$, this implies that the sequence $\{\presuper{i}T_t^s\}_{i\in\nats}$ is Cauchy. Since the set of all transition matrices---of the same finite dimension---is trivially complete, this implies that the sequence $\{\presuper{i}T_t^s\}_{i\in\nats}$ has a limit $T_t^s$, and that this limit is a transition matrix. We use $\mathcal{T}^{\mathbf{I}}$ to denote the family of transition matrices that consists of these limits. 

Fix any $t,r,s\in \mathbf{I}$ such that $t\leq r\leq s$. Then for any $i\in\nats$, because $\mathcal{T}_i^{\mathbf{I}}$ is a restricted transition matrix system, we know  that $\presuper{i}T_t^t=I$ and $\presuper{i}T_t^s=\presuper{i}T_t^r\presuper{i}T_r^s$, which implies that $\norm{T_t^t-I}=\norm{T_t^t-\presuper{i}T_t^t}$ and, due to Lemma~\ref{lemma:recursive}, that
\begin{align*}
\norm{T_t^s-T_t^rT_r^s}
&\leq
\norm{T_t^s-\presuper{i}T_t^s}+\norm{\presuper{i}T_t^r\presuper{i}T_r^s-T_t^rT_r^s}\\
&\leq
\norm{T_t^s-\presuper{i}T_t^s}
+
\norm{\presuper{i}T_t^r-T_t^r}
+
\norm{\presuper{i}T_r^s-T_r^s}.
\end{align*}
Since we also know that $\lim_{i\to+\infty}\presuper{i}T_t^t=T_t^t$, $\lim_{i\to+\infty}\presuper{i}T_t^s=T_t^s$, $\lim_{i\to+\infty}\presuper{i}T_t^r=T_t^r$ and $\lim_{i\to+\infty}\presuper{i}T_r^s=T_r^s$, this implies that $\norm{T_t^t-I}=0$ and $\norm{T_t^s-T_t^rT_r^s}=0$, or equivalently, that $T_t^t=I$ and $T_t^s=T_t^rT_r^s$. Since this is true for any $t,r,s\in I$ such that $t\leq r\leq s$, and because we already know that that the family $\mathcal{T}^{\mathbf{I}}$ consists of transition matrices, it follows from Proposition~\ref{prop:restr_trans_mat_system_if_semigroup} that $\mathcal{T}^{\mathbf{I}}$ is a restricted transition matrix system.
In the remainder of this proof, we will show that $\mathcal{T}^{\mathbf{I}}=\lim_{i\to\infty}\mathcal{T}_i^{\mathbf{I}}$. 

Fix any $\epsilon>0$ and consider the corresponding $n_\epsilon\in\nats$ whose existence is guaranteed by Equation~\eqref{eq:lemma:restricted_trans_mat_system_cauchy_converges:consequenceofCauchy}. Fix any $k>n_\epsilon$. For any $t,s\in\mathbf{I}$ such that $t\leq s$, it then follows from Equation~\eqref{eq:lemma:restricted_trans_mat_system_cauchy_converges:consequenceofCauchy} that, for all $\ell>n_\epsilon$:
\begin{equation*}
\norm{\presuper{k}T_t^s-T_t^s}
\leq
\norm{\presuper{k}T_t^s-\presuper{\ell}T_t^s}
+
\norm{\presuper{\ell}T_t^s-T_t^s} < \epsilon+\norm{\presuper{\ell}T_t^s-T_t^s}.
\end{equation*}
Since $\lim_{\ell\to+\infty}\presuper{\ell}T_t^s=T_t^s$, this implies that $\norm{\presuper{k}T_t^s-T_t^s}\leq\epsilon$. Since this is true for all $t,s\in\mathbf{I}$ such that $t\leq s$, it follows from Equation~\eqref{eq:trans_mat_system_metric} that $d(\mathcal{T}_k^{\mathbf{I}},\mathcal{T}^{\mathbf{I}})\leq\epsilon$. Since $\epsilon>0$ was arbitrary, we conclude that
\begin{equation*}
(\forall\epsilon\in\realspos)\,(\exists n_\epsilon\in\nats)\,(\forall k > n_\epsilon)~d(\mathcal{T}_k^{\mathbf{I}},\mathcal{T}^{\mathbf{I}})\leq\epsilon,
\end{equation*}
which implies that $\mathcal{T}^{\mathbf{I}}=\lim_{i\to\infty}\mathcal{T}_i^{\mathbf{I}}$.
\end{proof}

\begin{lemma}\label{lemma:linearpartofexponential}
Consider any $Q\in\mathcal{R}$ and any $\Delta\geq0$. Then,
\begin{equation*}
\norm{e^{Q\Delta}-(I+\Delta Q)}\leq
\Delta^2\norm{Q}^2.
\end{equation*}
\end{lemma}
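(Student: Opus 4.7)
The plan is to express the difference $e^{Q\Delta}-(I+\Delta Q)$ via a double integral representation that naturally yields the $\Delta^2 \norm{Q}^2$ bound by exploiting the fact that $e^{Qr}$ is a transition matrix and hence has norm~$1$. A naive Taylor-series bound $\norm{e^{Q\Delta}-(I+\Delta Q)} \leq \sum_{k=2}^\infty \frac{(\Delta\norm{Q})^k}{k!} = e^{\Delta\norm{Q}}-1-\Delta\norm{Q}$ is \emph{not} enough on its own, since $e^{x}-1-x$ exceeds $x^2$ once $x \approx 1.79$. The Duhamel-style integral identity, in contrast, gives a uniform bound for all $\Delta \geq 0$.

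The key identity I would establish is
\begin{equation*}
e^{Q\Delta} - I - \Delta Q = Q^2 \int_0^\Delta\!\int_0^s e^{Qr}\,dr\,ds.
\end{equation*}
To derive it, I start from $\tfrac{d}{ds}e^{Qs} = Qe^{Qs}$ (a standard property of the matrix exponential; see Reference~\cite{norris1998markov}), which by the fundamental theorem of calculus for matrix-valued functions gives $e^{Q\Delta}-I = \int_0^\Delta Qe^{Qs}\,ds$. Writing $\Delta Q = \int_0^\Delta Q\,ds$ and subtracting yields $e^{Q\Delta} - I - \Delta Q = Q\int_0^\Delta (e^{Qs}-I)\,ds$. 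Applying the same identity once more to $e^{Qs}-I = \int_0^s Qe^{Qr}\,dr$ and factoring $Q$ out (which is constant in $r$ and $s$) produces the double-integral identity above.

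Next, I take norms. Using property~\ref{N:normAB} twice to pull out the factor $Q^2$, together with the fact that the norm of an integral of a continuous matrix-valued function is bounded by the integral of the norms, I obtain
\begin{equation*}
\norm{e^{Q\Delta} - (I+\Delta Q)} \leq \norm{Q}^2 \int_0^\Delta\!\int_0^s \norm{e^{Qr}}\,dr\,ds.
\end{equation*}
By Proposition~\ref{prop:stochastic_from_exponential}, the matrix $e^{Qr}$ is a transition matrix for every $r \geq 0$, and hence by Equation~\eqref{eq:normofmatrix} combined with \ref{def:T:sumone} and \ref{def:T:nonneg} it satisfies $\norm{e^{Qr}} = 1$. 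Consequently the double integral evaluates to $\Delta^2/2$, giving $\norm{e^{Q\Delta}-(I+\Delta Q)} \leq \tfrac{1}{2}\Delta^2\norm{Q}^2 \leq \Delta^2\norm{Q}^2$.

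The main obstacle is the rigorous justification of the integral manipulations for matrix-valued functions. This is, however, entirely standard: since the series $e^{Qs} = \sum_{k=0}^\infty (Qs)^k/k!$ converges absolutely and uniformly on any compact interval, term-by-term differentiation and integration are valid, which justifies both the identity $\tfrac{d}{ds}e^{Qs}=Qe^{Qs}$ and the Fubini-type swap implicit in the double integral. Once this is in place, the remainder of the argument is a direct application of the norm properties in Proposition~\ref{prop:norm_properties} together with the stochasticity of $e^{Qr}$.
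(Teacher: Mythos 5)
Your proof is correct, and it takes a genuinely different route from the paper. The paper does not prove Lemma~\ref{lemma:linearpartofexponential} directly: it obtains it as the special case $\lrate=Q$ of the more general Lemma~\ref{lemma:quadraticboundonL}, whose proof runs through the discrete approximation machinery---the induction in Lemma~\ref{lemma:justthelinearpart} bounding $\norm{\prod_{i=1}^n(I+\Delta_i\lrate)-(I+\Delta\lrate)}\leq\Delta^2\norm{\lrate}^2$, followed by passing to the limit over increasingly fine partitions in Definition~\ref{def:low_trans}, using only the sublinear structure (\ref{LR:differenceofnorm}, \ref{LT:norm_at_most_one}, Lemma~\ref{lemma:recursive_lower_trans}) of lower transition (rate) operators. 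Your argument instead exploits the full linearity of $Q$: the Duhamel-type identity $e^{Q\Delta}-I-\Delta Q=Q^2\int_0^\Delta\!\int_0^s e^{Qr}\,dr\,ds$, combined with $\norm{e^{Qr}}=1$ (Proposition~\ref{prop:stochastic_from_exponential} and Equation~\eqref{eq:normofmatrix}) and \ref{N:normAB}, which yields the sharper constant $\tfrac{1}{2}\Delta^2\norm{Q}^2$; your observation that the crude Taylor-tail bound $e^{\Delta\norm{Q}}-1-\Delta\norm{Q}$ would not suffice uniformly in $\Delta$ is also well taken. What each approach buys: yours is more elementary and self-contained for the matrix case and gives a better constant, but it does not extend to the paper's general setting, since a lower transition rate operator $\lrate$ is merely super-additive and non-negatively homogeneous, so neither the integral representation nor the factorisation $Q^2\int\!\int e^{Qr}$ has an analogue there; the paper's product-and-limit argument is precisely what survives that loss of linearity, which is why the paper states the matrix lemma as a corollary of the operator one rather than proving it on its own.
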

\begin{proof}
This is a special case of Lemma~\ref{lemma:quadraticboundonL}, which states a more general version. We have included this version separately because Lemma~\ref{lemma:quadraticboundonL} uses concepts that, on a chronological reading of this paper, would be undefined at this point.
\end{proof}

\begin{lemma}\label{lemma:linearboundonexponential}
Consider any $Q\in\mathcal{R}$ and any $\Delta\geq0$. Then,
\begin{equation*}
\norm{e^{Q\Delta}-I}\leq
\Delta\norm{Q}.
\end{equation*}
\end{lemma}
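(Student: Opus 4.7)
My plan is to exploit the fact that $e^{Qs}$ is a transition matrix for every $s\geq 0$ (by Proposition~\ref{prop:stochastic_from_exponential}) and that transition matrices have norm one by Equation~\eqref{eq:normofmatrix}. The case $\Delta=0$ is trivial since then $e^{Q\Delta}-I=0$, so I would focus on $\Delta>0$.

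The cleanest route I see is to combine the semi-group property of $\mathcal{T}_Q$ with Lemma~\ref{lemma:recursive}. Fix any $n\in\nats$. Because $\mathcal{T}_Q$ is a transition matrix system (Proposition~\ref{prop:systemQ}), the $n$-fold semi-group identity gives
\begin{equation*}
e^{Q\Delta}=\underbrace{e^{Q(\Delta/n)}\cdots e^{Q(\Delta/n)}}_{n\text{ times}},
\end{equation*}
and trivially $I=I\cdots I$ ($n$ times). Since $e^{Q(\Delta/n)}$ and $I$ are both transition matrices, applying Lemma~\ref{lemma:recursive} yields
\begin{equation*}
\norm{e^{Q\Delta}-I}\leq n\norm{e^{Q(\Delta/n)}-I}=\Delta\,\norm{\frac{n}{\Delta}\bigl(e^{Q(\Delta/n)}-I\bigr)}.
\end{equation*}

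The final step is to let $n\to\infty$. By Lemma~\ref{lemma:deriv_exponential_trans}, the expression $\frac{n}{\Delta}(e^{Q(\Delta/n)}-I)$ converges to $Q$ (this is just the defining derivative of $\Delta\mapsto e^{Q\Delta}$ at $\Delta=0$, evaluated along the sequence $\Delta/n\to 0^+$). Continuity of the norm then gives $\norm{\tfrac{n}{\Delta}(e^{Q(\Delta/n)}-I)}\to\norm{Q}$, so taking the limit on the right-hand side above delivers the claimed bound $\norm{e^{Q\Delta}-I}\leq\Delta\norm{Q}$.

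I do not anticipate any real obstacle: every tool invoked (the semi-group identity, Lemma~\ref{lemma:recursive}, Lemma~\ref{lemma:deriv_exponential_trans}, and continuity of $\norm{\cdot}$) is already established in the paper. The only mild subtlety is keeping the inequality from the finite-$n$ bound through the limit, which is standard since the left-hand side is independent of $n$ and the right-hand side converges to a finite value.
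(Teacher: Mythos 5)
Your proof is correct and follows essentially the same route as the paper: the paper establishes this lemma as a special case of the general bound $\norm{L_t^{t+\Delta}-I}\leq\Delta\norm{\lrate}$ (Lemma~\ref{lemma:linearboundonL}), whose proof uses exactly your skeleton---write the operator as an $n$-fold composition via the semi-group property, telescope with the recursive norm inequality, and let $n\to\infty$. The only cosmetic difference is the final limiting step: the paper controls the small-step term through the quadratic estimate $\norm{L_0^{\Delta/n}-(I+\tfrac{\Delta}{n}\lrate)}\leq\tfrac{\Delta^2}{n^2}\norm{\lrate}^2$, whereas you invoke Lemma~\ref{lemma:deriv_exponential_trans} directly, which is perfectly valid in this matrix-exponential special case.
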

\begin{proof}
This is a special case of Lemma~\ref{lemma:linearboundonL}, which states a more general version. We have included this version separately because Lemma~\ref{lemma:linearboundonL} uses concepts that, on a chronological reading of this paper, would be undefined at this point.
\end{proof}

\begin{lemma}\label{lemma:differencebetweenexponentials}
If $Q_1,Q_2\in\mathcal{R}$ and $\Delta\in\realsnonneg$, then $\norm{e^{Q_1\Delta}-e^{Q_2\Delta}}\leq\Delta\norm{Q_1-Q_2}$.
\end{lemma}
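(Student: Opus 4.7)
The plan is to approximate each matrix exponential by a product of $n$ transition matrices and then exploit the telescoping bound provided by Lemma~\ref{lemma:recursive}. Concretely, for any $n \in \nats$ large enough that $\nicefrac{\Delta}{n} \max\{\norm{Q_1},\norm{Q_2}\} \leq 1$, Proposition~\ref{prop:stochastic_from_rate_matrix} ensures that $T_{i,n} \coloneqq I + \nicefrac{\Delta}{n}\, Q_i$ is a transition matrix for $i \in \{1,2\}$. Then I would invoke Lemma~\ref{lemma:recursive} with the constant sequences $T_{1,n},\ldots,T_{1,n}$ and $T_{2,n},\ldots,T_{2,n}$ to obtain
\begin{equation*}
\norm{T_{1,n}^n - T_{2,n}^n}
\leq
n\,\norm{T_{1,n} - T_{2,n}}
=
n \cdot \frac{\Delta}{n}\,\norm{Q_1 - Q_2}
=
\Delta\,\norm{Q_1-Q_2}.
\end{equation*}

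Next, I would invoke the standard identity $\lim_{n\to\infty}\bigl(I + \nicefrac{\Delta}{n}\, Q\bigr)^n = e^{Q\Delta}$, valid for any matrix $Q$ (and in particular for every rate matrix). Since the norm $\norm{\cdot}$ is continuous, passing to the limit in the inequality above yields
\begin{equation*}
\norm{e^{Q_1 \Delta} - e^{Q_2 \Delta}}
=
\lim_{n\to\infty}\norm{T_{1,n}^n - T_{2,n}^n}
\leq
\Delta\,\norm{Q_1-Q_2},
\end{equation*}
which is exactly the claim. The case $\Delta = 0$ is trivial since both sides equal $0$.

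I do not expect any real obstacle here. The one subtlety is that Lemma~\ref{lemma:recursive} is stated for transition matrices, so it is important to choose $n$ large enough that each factor $I + \nicefrac{\Delta}{n}\, Q_i$ actually is a transition matrix in the sense of Definition~\ref{def:stoch_matrix}; once that threshold is crossed, the rest is a direct telescoping argument followed by a continuity limit. The standard convergence $\bigl(I + \nicefrac{\Delta}{n}\, Q\bigr)^n \to e^{Q\Delta}$ is used as a black box (it can be found in any reference on matrix exponentials, e.g.~\cite{van2006study}), so the proof will be short.
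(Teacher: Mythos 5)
Your proof is correct, and it takes a route that differs from the paper's in where the approximation error is handled. The paper decomposes the exponentials themselves, writing $e^{Q_i\Delta}=\prod_{k=1}^n e^{Q_i\nicefrac{\Delta}{n}}$ (each factor a transition matrix by Proposition~\ref{prop:stochastic_from_exponential}), applies Lemma~\ref{lemma:recursive} to telescope, and then bounds each single-factor gap via Lemma~\ref{lemma:linearpartofexponential}, which yields the finite-$n$ inequality $\norm{e^{Q_1\Delta}-e^{Q_2\Delta}}\leq\Delta\norm{Q_1-Q_2}+\nicefrac{\Delta^2}{n}\bigl(\norm{Q_1}^2+\norm{Q_2}^2\bigr)$; the conclusion follows by letting $n\to\infty$ in this scalar inequality. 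You instead telescope over the linear factors $I+\nicefrac{\Delta}{n}Q_i$ (transition matrices by Proposition~\ref{prop:stochastic_from_rate_matrix} once $n$ is large enough), which gives the exact bound $\Delta\norm{Q_1-Q_2}$ already at finite $n$, and you then pass to the limit of the matrices themselves via the standard identity $\bigl(I+\nicefrac{\Delta}{n}Q\bigr)^n\to e^{Q\Delta}$. Both arguments are sound; yours is a bit shorter and cleaner at the finite-$n$ stage, but it imports that convergence identity as an external black box, whereas the paper's version stays entirely within its own toolkit (Lemmas~\ref{lemma:recursive} and~\ref{lemma:linearpartofexponential}) and only needs a limit of real numbers. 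Note that your black box could in fact be re-derived from those same two lemmas, since $\norm{e^{Q\Delta}-\bigl(I+\nicefrac{\Delta}{n}Q\bigr)^n}\leq\nicefrac{\Delta^2}{n}\norm{Q}^2\to0$, so if self-containedness matters you can close that dependency with one extra line.
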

\begin{proof}
Consider any $n\in\nats$. It then follows from Lemma~\ref{lemma:recursive} that
\begin{equation*}
\norm{e^{Q_1\Delta}-e^{Q_2\Delta}}
=\norm{\prod_{k=1}^n e^{Q_1\frac{\Delta}{n}}-\prod_{k=1}^n e^{Q_2\frac{\Delta}{n}}}
\leq\sum_{k=1}^n\norm{e^{Q_1\frac{\Delta}{n}}-e^{Q_2\frac{\Delta}{n}}}
=
n\norm{e^{Q_1\frac{\Delta}{n}}-e^{Q_2\frac{\Delta}{n}}}.
\end{equation*}
which, since we know from Lemma~\ref{lemma:linearpartofexponential} that
\begin{align*}
\norm{e^{Q_1\frac{\Delta}{n}}-e^{Q_2\frac{\Delta}{n}}}
&\leq
\norm{e^{Q_1\frac{\Delta}{n}}-(I+\frac{\Delta}{n}Q_1)}
+
\norm{\frac{\Delta}{n}(Q_1-Q_2)}
+\norm{(I+\frac{\Delta}{n}Q_2)-e^{Q_2\frac{\Delta}{n}}}\\
&\leq\frac{\Delta^2}{n^2}\norm{Q_1}^2
+
\frac{\Delta}{n}\norm{Q_1-Q_2}
+
\frac{\Delta^2}{n^2}\norm{Q_2}^2,
\end{align*}
implies that
\begin{equation*}
\norm{e^{Q_1\Delta}-e^{Q_2\Delta}}
\leq
\frac{\Delta^2}{n}\norm{Q_1}^2
+
\Delta\norm{Q_1-Q_2}
+
\frac{\Delta^2}{n}\norm{Q_2}^2.
\end{equation*}
The result now follows by taking the limit for $n$ going to infinity.
\end{proof}

\section{Proofs for the results in Section~\ref{sec:stochastic_processes}}\label{app:stoch_proc}

\begin{proof}[Proof of \ref{def:coh_prob_2b}-\ref{def:coh_prob_5}]
Consider any $A\in\power$ and $C\in\nonemptypower$. It then follows from \ref{def:coh_prob_1} and \ref{def:coh_prob_3} that
\vspace{-7pt}
\begin{equation}\label{eq:extracohprop1}
P(A\vert C)
=P(A\cup C\vert C)-P(C\setminus A\vert C)
=1-P(C\setminus A\vert C)
\end{equation}\\[-20pt]
and
\begin{equation}\label{eq:extracohprop2}
P(A\cap C\vert C)
=P(C\vert C)-P(C\setminus A\vert C)
=1-P(C\setminus A\vert C).
\vspace{6pt}
\end{equation}
\ref{def:coh_prob_2b} follows from Equation~\eqref{eq:extracohprop1} and~\ref{def:coh_prob_2}. \ref{def:coh_prob_7} follows from Equations~\eqref{eq:extracohprop1} and~\eqref{eq:extracohprop2}. \ref{def:coh_prob_8} follows from \ref{def:coh_prob_3}, by letting $B\coloneqq\emptyset$. \ref{def:coh_prob_5} follows trivially from \ref{def:coh_prob_1}.
\end{proof}

\corolcoherentextendable*
\begin{proof}
First assume that $P$ can be extended to a full conditional probability $P^*$. Theorem~\ref{theo:fullcoherent} then implies that $P^*$ is a coherent conditional probability, and therefore, since $P$ is the restriction of $P^*$ to $\mathcal{C}$, it clearly follows from Definition~\ref{def:coherence} that $P$ is a coherent conditional probability.

Conversely, if $P$ is a coherent conditional probability on $\mathcal{C}$, it follows from Theorem~\ref{theo:largerdomain} that $P$ can be extended to a coherent conditional probability $P^*$ on $\power\times\nonemptypower$, which, because of Theorem~\ref{theo:fullcoherent}, is a full conditional probability.
\end{proof}

\corolprocessiffrestriction*
\begin{proof}
Trivial consequence of Corollary~\ref{corol:coherentextendable}.
\end{proof}

\propstochasticprocesssimpleproperties*
\begin{proof}
The first part of the statement follows trivially from Corollary~\ref{corol:processiffrestriction} and Definitions~\ref{def:stoch_matrix} and \ref{def:cond_prob}. The second part is an immediate consequence of Definition~\ref{def:well-behaved} and Equation~\eqref{eq:normofmatrix}.
\end{proof}

\propboundednonemptyandclosed*
\begin{proof}
We only give the proof for $\smash{\overline{\partial}_{+}
{T^t_{t,\,x_u}}}$. The proof for $\smash{\overline{\partial}_{-}
{T^t_{t,\,x_u}}}$ is completely analogous. The proof for $\smash{\overline{\partial}
{T^t_{t,\,x_u}}}$ then follows trivially because a union of two bounded, non-empty and closed sets is always bounded, non-empty and closed itself.

We start by establishing the boundedness of $\smash{\overline{\partial}_{+}
{T^t_{t,\,x_u}}}$. Since $P$ is well-behaved, it follows from Proposition~\ref{prop:stochasticprocess:simpleproperties} that there is some $B>0$ and $\delta>0$ such that
\begin{equation}\label{eq:boundedbyB}
(\forall 0<\Delta<\delta)
~
\norm{\frac{1}{\Delta}
(T^{t+\Delta}_{t,\,x_u}-I)}
=
\frac{1}{\Delta}
\norm{
(T^{t+\Delta}_{t,\,x_u}-I)}
\leq B.
\end{equation}
Consider now any $Q\in\smash{\overline{\partial}_{+}
{T^t_{t,\,x_u}}}$. Because of Equation~\eqref{eq:rightouterderivative}, $Q$ is the limit of a sequence of matrices $\{Q_k\}_{k\in\nats}$, defined by
\begin{equation}\label{eq:sequenceofQsinproof}
Q_k\coloneqq\frac{1}{\Delta_k}
(T^{t+\Delta_k}_{t,\,x_u}-I)
\text{~~for all $k\in\nats$}.
\end{equation}
Because of Equation~\eqref{eq:boundedbyB}, the norms $\norm{Q_k}$ of these matrices are eventually (for large enough $k$) bounded above by $B$. Hence, it follows that $\norm{Q}\leq B$. Since this is true for any $Q\in\smash{\overline{\partial}_{+}
{T^t_{t,\,x_u}}}$, we find that $\smash{\overline{\partial}_{+}
{T^t_{t,\,x_u}}}$ is bounded.

In order to prove that $\smash{\overline{\partial}_{+}
{T^t_{t,\,x_u}}}$ is non-empty, we consider any sequence $\{\Delta_k\}_{k\in\nats}\to0^+$. The corresponding sequence of matrices $\{Q_k\}_{k\in\nats}$, as defined by Equation~\eqref{eq:sequenceofQsinproof}, is then bounded because $P$ is well-behaved---see Proposition~\ref{prop:stochasticprocess:simpleproperties}---and therefore, it follows from the Bolzano-Weierstrass theorem that it has a convergent subsequence $\{Q_{k_i}\}_{i\in\nats}$ of which we denote the limit by $Q^*$. Hence, we have found a sequence $\{\Delta_{k_i}\}_{i\in\nats}\to0^+$ such that $\{Q_{k_i}\}_{i\in\nats}\to Q^*$.
Since we know from Lemma~\ref{prop:rate_from_stochastic_matrix} that each of the matrices in $\{Q_{k_i}\}_{i\in\nats}$ is a rate matrix, the limit $Q^*$ is also a rate matrix, which therefore clearly belongs to $\smash{\overline{\partial}_{+}
{T^t_{t,\,x_u}}}$.

We end by showing that $\smash{\overline{\partial}_{+}
{T^t_{t,\,x_u}}}$ is closed, or equivalently, that for any converging sequence $\{Q^*_k\}_{k\in\nats}$, of rate matrices in $\smash{\overline{\partial}_{+}
{T^t_{t,\,x_u}}}$, the limit point $Q^*\coloneqq\lim_{k\to+\infty}Q^*_k$ is again an element of $\smash{\overline{\partial}_{+}
{T^t_{t,\,x_u}}}$. The argument goes as follows. First, since each of the rate matrices $Q^*_k$ belongs to the bounded set $\smash{\overline{\partial}_{+}
{T^t_{t,\,x_u}}}$, their limit $Q^*$ is a (real-valued) rate matrix. Next, for any $k\in\nats$, since $Q_k^*\in\smash{\overline{\partial}_{+}
{T^t_{t,\,x_u}}}$, it follows from Equation~\eqref{eq:rightouterderivative} that there is some $0<\Delta_k<\nicefrac{1}{k}$ such that $\norm{Q_k-Q^*_k}\leq\nicefrac{1}{k}$, with $Q_k$ defined as in Equation~\eqref{eq:sequenceofQsinproof}.
Consider now the sequences $\{Q_k\}_{k\in\nats}$ and $\{\Delta_k\}_{k\in\nats}$. Then on the one hand, we find that 
\begin{align*}
0
\leq
\limsup_{k\to+\infty}\norm{Q^*-Q_k}
&\leq
\limsup_{k\to+\infty}\norm{Q^*-Q^*_k}+\limsup_{k\to+\infty}\norm{Q^*_k-Q_k}\\
&=\limsup_{k\to+\infty}\norm{Q^*-Q^*_k}+\lim_{k\to+\infty}\nicefrac{1}{k}
=\limsup_{k\to+\infty}\norm{Q^*-Q^*_k}
=0,
\end{align*}
which implies that sequence $\{Q_k\}_{k\in\nats}$ converges to $Q^*$. On the other hand, we have that $\lim_{k\to+\infty}\Delta_k=0$. Hence, because of Definition~\ref{def:outerpartialderivatives}, it follows that $Q^*\in\smash{\overline{\partial}_{+}
{T^t_{t,\,x_u}}}$.
\end{proof}

\propouterderivativebehaveslikelimit*
\begin{proof}
Fix any $\epsilon>0$.
Assume \emph{ex absurdo} that
\begin{equation*}
(\forall\delta>0)(\exists0<\Delta<\delta)
(\forall Q\in\overline{\partial}_{+}
{T^t_{t,\,x_u}})
\norm{\frac{1}{\Delta}
(T^{t+\Delta}_{t,\,x_u}-I)-Q}\geq\epsilon.
\end{equation*}
Clearly, this implies the existence of a sequence $\{\Delta_k\}_{k\in\nats}\to0^+$ such that
\begin{equation}\label{eq:boundednonelement}
\norm{Q_k-Q}\geq\epsilon
\text{~~for all $k\in\nats$ and all $Q\in\overline{\partial}_{+}
{T^t_{t,\,x_u}}$},
\end{equation}
with $Q_k$ defined as in Equation~\eqref{eq:sequenceofQsinproof}. As we know from the proof of Proposition~\ref{prop:boundednon-emptyandclosed}, the sequence $\{Q_k\}_{k\in\nats}$ has a convergent subsequence $\{Q_{k_i}\}_{i\in\nats}$ of which the limit $Q^*$ belongs to $\overline{\partial}_{+}
{T^t_{t,\,x_u}}$. On the one hand, since $\lim_{i\to+\infty}Q_{k_i}=Q^*$, we now have that $\lim_{i\to+\infty}\norm{Q_{k_i}-Q^*}=0$. On the other hand, since $Q^*\in\overline{\partial}_{+}
{T^t_{t,\,x_u}}$, it follows from Equation~\eqref{eq:boundednonelement} that $\lim_{i\to+\infty}\norm{Q_{k_i}-Q^*}\geq\epsilon>0$. From this contradiction, it follows that there must be some $\delta_1>0$ such that Equation~\eqref{eq:outerderivativebehaveslikelimit1} holds for all $0<\Delta<\delta_1$. Similarly, using a completely analogous argument, we infer that if $t\neq0$, there must be some $\delta_2>0$ such that Equation~\eqref{eq:outerderivativebehaveslikelimit2} holds for all $0<\Delta<\delta_2$. Now let $\delta\coloneqq\min\{\delta_1,\delta_2\}$ if $t\neq0$ and let $\delta\coloneqq\delta_1$ if $t=0$.
\end{proof}

\coroloutersingleton*
\begin{proof}
This follows trivially from Proposition~\ref{prop:outerderivativebehaveslikelimit}.
\end{proof}

\section{Proofs and Lemmas for the results in Section~\ref{sec:cont_time_markov_chains}}

\propMarkovhassystem*
\begin{proof}
Consider any Markov chain $P\in\mprocesses$, with $\mathcal{T}_P$ its corresponding family of transition matrices. Then, because $P$ is a stochastic process, it follows from Proposition~\ref{prop:stochasticprocess:simpleproperties} that $T_t^t=I$ for all $t\in\realsnonneg$. 

Consider now any $t,r,s\in\realsnonneg$ with $t\leq r\leq s$. We need to show that $T_t^s=T_t^rT_r^s$. If $t=r$, we have that $T_t^s=T_t^rT_r^s=IT_r^s=T_t^s$, and hence the result follows trivially. Similarly, the claim is trivial for $r=s$. Hence, it remains to show that the claim holds for $t < r < s$. It follows from Definition~\ref{def:markov_property} that for all $x_t,x_r,x_s\in\states$,
\begin{equation*}
P(X_s=x_s\,\vert\,X_r=x_r,X_t=x_t) = P(X_s=x_s\,\vert\,X_r=x_r)\,.
\end{equation*}
Furthermore, because $P$ is a stochastic process, it follows from Corollary~\ref{corol:processiffrestriction} that $P$ satisfies~\ref{def:coh_prob_6} and~\ref{def:coh_prob_3} on its domain. From~\ref{def:coh_prob_6}, we infer that
\begin{align*}
P(X_s=x_s,X_r=x_r\,\vert\,X_t=x_t) &= P(X_s=x_s\,\vert\,X_r=x_r,X_t=x_t)P(X_r=x_r\,\vert\,X_t=x_t) \\
 &= P(X_s=x_s\,\vert\,X_r=x_r)P(X_r=x_r\,\vert\,X_t=x_t)\,,
\end{align*}
where the second equality used the Markov property. From~\ref{def:coh_prob_3}, we infer that
\begin{equation*}
P(X_s=x_s\,\vert\,X_t=x_t) = \sum_{x_r\in\states} P(X_s=x_s,X_r=x_r\,\vert\,X_t=x_t)\,.
\end{equation*}
From the definition of $\mathcal{T}_P$ in Definition~\ref{def:trans_matrix}, it now follows that, for any $x_t,x_s\in\states$,
\begin{align*}
T_t^s(x_t,x_s) &= P(X_s=x_s\,\vert\,X_t=x_t) = \sum_{x_r\in\states} P(X_s=x_s,X_r=x_r\,\vert\,X_t=x_t) \\
 &= \sum_{x_r\in\states} P(X_s=x_s\,\vert\,X_r=x_r)P(X_r=x_r\,\vert\,X_t=x_t) = \sum_{x_r\in\states} T_t^r(x_t,x_r) T_r^s(x_r,x_s)\,,
\end{align*}
and hence, by the rules of matrix multiplication, we find that $T_t^s=T_t^rT_r^s$. Therefore, and because the $t,r,s\in\realsnonneg$ were arbitrary, $\mathcal{T}_P$ is a transition matrix system.

The fact that $\mathcal{T}_P$ is well-behaved if and only if $P$ is well-behaved follows immediately from Definition~\ref{def:well_behaved_trans_mat_system} and Proposition~\ref{prop:stochasticprocess:simpleproperties} because the Markov property implies that $T_{t,\,x_u}^{t+\Delta}=T_{t}^{t+\Delta}$ and $T_{t-\Delta,\,x_u}^{t}=T_{t-\Delta}^{t}$.
\end{proof}

\theouniqueMarkovchain*
\begin{proof}
Let
\begin{multline*}
\mathcal{C}\coloneqq\{
(X_s=y,X_u=x_u)\in\mathcal{C}^{\mathrm{SP}}
\colon 
u\in\mathcal{U}_\emptyset,~s>u,~x_u\in\states_u,~y\in\states
\}\\
\cup
\{
(X_0=y,X_\emptyset=x_\emptyset)\in\mathcal{C}^{\mathrm{SP}}\colon y\in\states
\}
\end{multline*}
and consider a real-valued function $\tilde{P}$ on $\mathcal{C}$ that is defined by 
\begin{equation}\label{eq:theo:uniqueMarkovchain:prob_func}
\tilde{P}(X_s=y\vert X_u=x_u)
\coloneqq
\begin{cases}
p(y)&\text{~if $u=\emptyset$}\\
T_{\max u}^s(x_{\max u},y)&\text{~otherwise}
\end{cases}
\text{~~~for all $(X_s=y,X_u=x_u)\in\mathcal{C}$.}
\end{equation}

We first prove that $\tilde{P}$ is a coherent conditional probability on $\mathcal{C}$. So consider any $n\in\nats$ and, for all $i\in\{1,\dots,n\}$, choose $(A_i,C_i)=(X_{s_i}=y_i,X_{u_i}=x_{u_i})\in\mathcal{C}$ and $\lambda_i\in\reals$. We need to show that
\begin{equation}\label{eq:theo:uniqueMarkovchain:coh1}
\max\left\{\sum_{i=1}^n\lambda_i\ind{C_i}(\omega)\bigl(\tilde{P}(A_i\vert C_i)-\ind{A_i}(\omega)\bigr)~\Bigg\vert~\omega\in C_0\right\}\geq0,
\end{equation}
with $C_0\coloneqq\cup_{i=1}^nC_i$.
Since every sequence $u_i$ is finite, there is some finite set $w=\{w_0,w_1,\dots,w_m\}\subset\reals_{\geq0}$ of time points, with $m\in\nats$, such that $0=w_0<w_1<\dots<w_m$ and, for all $i\in\{1,\dots,n\}$, $u_i\subseteq w$ and $s_i\in w$.
Let $P_w$ be the restriction of $\tilde{P}$ to $\mathcal{C}_w$, with $\mathcal{C}_w$ defined as in Lemma~\ref{lemma:simplechaincoherence}. Then since $P_w$ clearly satisfies the conditions of Lemma~\ref{lemma:simplechaincoherence}, it follows that $P_w$ is a coherent conditional probability. Because of Theorem~\ref{theo:largerdomain}, this implies that $P_w$ can be extended to a coherent conditional probability $\tilde{P}_w$ on $\power\times\nonemptypower$, which, because of Theorem~\ref{theo:fullcoherent}, is also a full conditional probability. Since $\tilde{P}_w$ is a coherent conditional probability, it now follows from Definition~\ref{def:coherence} that
\begin{equation}\label{eq:theo:uniqueMarkovchain:coh2}
\max\left\{\sum_{i=1}^n\lambda_i\ind{C_i}(\omega)\bigl(\tilde{P}_w(A_i\vert C_i)-\ind{A_i}(\omega)\bigr)~\Bigg\vert~\omega\in C_0\right\}\geq0.
\end{equation}
By comparing Equations~\eqref{eq:theo:uniqueMarkovchain:coh1} and~\eqref{eq:theo:uniqueMarkovchain:coh2}, we see that in order to prove that $\tilde{P}$ is coherent, it suffices to show that, for all $i\in\{1,\dots,n\}$, $\tilde{P}_w(A_i\vert C_i)=\tilde{P}(A_i\vert C_i)$. So fix any $i\in\{1,\dots,n\}$. If $u_i=\emptyset$, then $s_i=0=w_0$ and therefore $(A_i,C_i)\in\mathcal{C}_w$, which implies that $\tilde{P}_w(A_i\vert C_i)=P_w(A_i\vert C_i)=\tilde{P}(A_i\vert C_i)$. If $u_i\neq\emptyset$, then since $u_i\subseteq w$, $s_i\in w$ and $s_i>u_i$, it follows from Lemma~\ref{lemma:simplechainextend} that $\tilde{P}_w(A_i\vert C_i)=\tilde{P}(A_i\vert C_i)$. Hence, $\tilde{P}$ is a coherent conditional probability on $\mathcal{C}$.

Therefore, due to Theorem~\ref{theo:largerdomain}, and because $\mathcal{C}\subseteq\mathcal{C}^\mathrm{SP}$, $\tilde{P}$ can be extended to a coherent conditional probability $P$ on $\mathcal{C}^\mathrm{SP}$, which, according to Definition~\ref{def:stoch_process}, is a stochastic process. Due to Equation~\eqref{eq:theo:uniqueMarkovchain:prob_func}, this implies that $P$ is a Markov chain such that $\mathcal{T}_P=\mathcal{T}$ and, for all $y\in\states$, $P(X_0=y)=p(y)$. Lemma~\ref{lemma:samepandTissameP} implies that this Markov chain is unique and, since $\mathcal{T}_P=\mathcal{T}$, Proposition~\ref{prop:Markovhassystem} implies that $\mathcal{T}$ is well-behaved if and only if $P$ is well-behaved.
\end{proof}

\begin{lemma}\label{lemma:simplechaincoherence}
Let $w=\{w_0,w_1,\dots,w_m\}\subset\reals_{\geq0}$ be a finite set of time points, with $m\in\nats_0$, such that $w_0<w_1<\dots<w_m$.
Let $P_w$ be a real-valued function on
\begin{equation*}
\mathcal{C}_w\coloneqq
\left\{
(X_{w_j}=y,X_u=x_u)
\colon 
j\in\{0,\dots,m\},~
u=\{w_0,\dots,w_{j-1}\},~
y\in\states,~
x_u\in\states_u
\right\}
\end{equation*}
such that, for any $j\in\{0,\dots,m\}$, $u=\{w_0,\dots,w_{j-1}\}$ and $x_u\in\states_u$, $P_w(X_{w_j}=x\,\vert X_u=x_u)$, as a function of $x\in\states$, is a probability mass function on $\states$. Then $P_w$ is a coherent conditional probability.
\end{lemma}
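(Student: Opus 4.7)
The plan is to prove coherence of $P_w$ by exhibiting a full conditional probability on $\mathcal{C}^{\mathrm{SP}}$ whose restriction to $\mathcal{C}_w$ agrees with $P_w$; coherence will then follow from Corollary~\ref{corol:coherentextendable}. The construction has two stages: first, I assemble a full conditional probability on the finite product space $\states_w$ out of the given transition kernels $\{P_w(X_{w_j}=\cdot\vert X_u=x_u)\}$; second, I lift the result to $\Omega$ through the natural identification of path-events with subsets of $\states_w$, and then extend to the full domain $\mathcal{C}^{\mathrm{SP}}$ by invoking Theorem~\ref{theo:largerdomain}.

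For the first stage, I would define a joint probability mass function on $\states_w$ by the chain rule
\[
\mu(x_w) \coloneqq \prod_{j=0}^m P_w(X_{w_j}=x_{w_j}\vert X_{w_0}=x_{w_0},\ldots,X_{w_{j-1}}=x_{w_{j-1}}),
\]
where the $j=0$ factor is interpreted via the empty conditioning event. A short induction on $m$, using that each factor sums to one in its free first argument, confirms that $\mu$ is a probability mass function. On conditioning events $X_u=x_u$ with $\mu(X_u=x_u)>0$, Bayes' rule then yields a conditional mass function on $\states_w$ that, by the telescoping structure of the chain-rule product, restricts to the prescribed values $P_w(X_{w_j}=\cdot\vert X_u=x_u)$. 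For conditioning events with $\mu(X_u=x_u)=0$, I would restart the chain rule at depth $|u|$: take the level-$|u|$ mass function to be the prescribed $P_w(X_{w_{|u|}}=\cdot\vert X_u=x_u)$, and chain it with the downstream kernels as above. Iterating this stratified construction over all conditioning events of the form $X_u=x_u$ with $u\subseteq w$ produces a full conditional probability $\mu^*$ on $\mathcal{P}(\states_w)\times(\mathcal{P}(\states_w)\setminus\{\emptyset\})$.

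For the second stage, I identify each event $X_u=x_u$ in $\mathcal{C}_w$ with the subset $\{x_w\in\states_w\colon x_w\vert_u=x_u\}$ of $\states_w$. Because the indicator functions appearing in Definition~\ref{def:coherence} for any pair in the sub-domain of $\mathcal{C}^{\mathrm{SP}}$ whose events and conditioning events depend only on times in $w$ are entirely determined by $\omega\vert_w$, and because each such $X_u=x_u$ is non-empty by Equation~\eqref{eq:path_exists_for_finite_points}, this identification turns $\mu^*$ into a coherent conditional probability on that sub-domain. Theorem~\ref{theo:largerdomain} then extends it to a coherent conditional probability on all of $\mathcal{C}^{\mathrm{SP}}$, whose restriction to $\mathcal{C}_w$ equals $P_w$ by construction, completing the argument.

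The main obstacle I anticipate is making the stratified construction of $\mu^*$ genuinely rigorous, in particular verifying that the family of conditional mass functions obtained by restarting the chain rule at different depths assembles into a bona fide full conditional probability on $\mathcal{P}(\states_w)$ rather than a family that merely satisfies \ref{def:coh_prob_2}--\ref{def:coh_prob_6} on the restricted class of conditioning events of the form $X_u=x_u$. Should this step prove awkward, a viable alternative is to verify Definition~\ref{def:coherence} for $P_w$ directly: for any finite collection $\{(A_i,C_i,\lambda_i)\}_{i=1}^n$ with $C_0=\bigcup_iC_i$, group the indices by the depth $j_i$ of $C_i$ and, processing the layers from deepest to shallowest, exploit the fact that each $P_w(X_{w_{j_i}}=\cdot\vert C_i)$ is itself a coherent lower prevision to select, layer by layer, a state assignment $x_w\in\states_w$ whose contribution to the sum is non-negative. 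Equation~\eqref{eq:path_exists_for_finite_points} then supplies a witnessing path $\omega\in C_0$ realising this $x_w$.
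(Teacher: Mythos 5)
Your primary route has a genuine gap at exactly the decisive step. Everything hinges on the claim that the stratified ``restart the chain rule on null cylinders'' construction ``produces a full conditional probability $\mu^*$ on $\mathcal{P}(\states_w)\times(\mathcal{P}(\states_w)\setminus\{\emptyset\})$'', and that claim is asserted, not proved---you yourself flag it as the main obstacle. The difficulty is not cosmetic: your recipe only prescribes conditional mass functions for conditioning events of the form $X_u=x_u$ with $u$ an initial segment of $w$, whereas a full conditional probability must assign $P(A\vert C)$ coherently for \emph{every} non-empty $C\subseteq\states_w$ (including, e.g., unions of cylinders with different histories), with \ref{def:coh_prob_6} linking all nested conditioning events. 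As Example~\ref{exmp:F1F4cannotbeextended} stresses, satisfying \ref{def:coh_prob_2}--\ref{def:coh_prob_6} on a partial domain does not by itself guarantee extendability, so you cannot invoke Corollary~\ref{corol:coherentextendable} or Theorem~\ref{theo:largerdomain} until coherence (or a genuine full conditional probability) is actually in hand. Such an extension does exist---one can build a lexicographic hierarchy of measures whose layers are suitable mixtures of your restarted chain measures---but that construction, with the verification that conditioning each null cylinder on the first layer charging it returns the prescribed kernel, is precisely the non-trivial content of the lemma, and within the paper's toolkit the natural way to obtain it is to prove coherence directly, which would make your main route circular as structured.

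Your fallback sketch is essentially the paper's actual proof: verify Definition~\ref{def:coherence} directly, handle one time layer at a time using the fact that each prescribed conditional is a probability mass function (so some state gives the bettor a non-negative net gain for the bets conditioned on the realised history), and finish with Equation~\eqref{eq:path_exists_for_finite_points}. Two points need tightening, though. First, the processing order: you cannot literally fix the value at the deepest time point first, because which depth-$m$ conditioning events are active depends on the shallower values; either process shallow-to-deep (the paper does this via induction on $m$, obtaining $\omega^*$ from the induction hypothesis and then choosing the value at $w_m$ for the single active history $C^*$), or phrase your deep choices as functions of the yet-to-be-chosen prefix. Second, the requirement $\omega\in C_0$ in Definition~\ref{def:coherence} is not automatic and you merely assert it; the paper needs a separate argument, namely that membership in $C_0$ depends only on the coordinates $w_0,\dots,w_{m-1}$ (since no conditioning event constrains $w_m$), so that the modified witness still lies in $C_0$, with the degenerate cases $S^*=\emptyset$ and $S^{**}=\emptyset$ treated explicitly.
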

\begin{proof}
We provide a proof by induction. Assume that this statement is true for any $m'<m$---this is trivially the case for $m=0$. We will show that this implies that it is also true for $m$.

Consider any $n\in\nats$ and, for all $i\in\{1,\dots,n\}$, choose $(A_i,C_i)\in\mathcal{C}_w$ and $\lambda_i\in\reals$. We need to show that
\begin{equation}\label{eq:lemma:simplechaincoherence:TB}
\max\left\{\sum_{i=1}^n\lambda_i\ind{C_i}(\omega)\bigl(P_w(A_i\vert C_i)-\ind{A_i}(\omega)\bigr)~\Bigg\vert~\omega\in C_0\right\}\geq0,
\end{equation}
with $C_0\coloneqq\cup_{i=1}^nC_i$. 

For any $i\in\{1,\dots,n\}$, since $(A_i,C_i)\in\mathcal{C}_w$, there is some $j_i\in\{0,\dots,m\}$ and, for all $\ell\in\{0,\dots,j_i\}$, some $z_{\ell,i}\in\states$ such that
\begin{equation*}
A_i=(X_{w_{j_i}}=z_{j_i,i})
\text{~~and~~}
C_i=(X_{w_{0}}=z_{0,i}, \dots, X_{w_{j_i-1}}=z_{j_i-1,i}).
\end{equation*}
Let $S=\left\{i\in\{1,\dots,n\}\colon j_i<m\right\}$. If $S\neq\emptyset$, then by the induction hypothesis, we know that
\begin{equation*}
\max\left\{\sum_{i\in S}\lambda_i\ind{C_i}(\omega)\bigl(P_w(A_i\vert C_i)-\ind{A_i}(\omega)\bigr)~\Bigg\vert~\omega\in C_0^*\right\}\geq0,
\end{equation*}
with $C_0^*\coloneqq\cup_{i\in S}C_i$. It follows that there is some $\omega^*\in C_0^*\subseteq C_0$ such that
\begin{equation}\label{eq:lemma:simplechaincoherence:supremumreached}
\sum_{i\in S}\lambda_i\ind{C_i}(\omega^*)\bigl(P_w(A_i\vert C_i)-\ind{A_i}(\omega^*)\bigr)\geq0.
\end{equation}
If $S=\emptyset$, then let $\omega^*$ be any element of $C_0$. Equation~\eqref{eq:lemma:simplechaincoherence:supremumreached} is then trivially satisfied. Hence, in all cases, we have found some $\omega^*\in C_0$ that satisfies Equation~\eqref{eq:lemma:simplechaincoherence:supremumreached}.

Let $C^*\coloneqq\cap_{1\leq \ell<m}(X_{w_\ell}=\omega^*(w_\ell))$ and $S^*\coloneqq\{i\in\{1,\dots,n\}\colon C_i=C^*\}$. Then by the assumptions of this lemma, there is some probability mass function $p$ on $\states$ such that, for all $x\in\states$, $P_w(X_{w_m}=x\vert C^*)=p(x)$. 
For all $x\in\states$, let $\lambda_x\coloneqq\sum_{\{i\in S^*\colon z_{m,i}=x\}}\lambda_i$.
Since $p$ is a probability mass function, it then follows that
\begin{equation*}
\sum_{i\in S^*}\lambda_i P_w(A_i\vert C^*)
=
\sum_{x\in\states}
\lambda_x p(x)
\geq
\sum_{x\in\states} \left(\min_{y\in\states}\lambda_y\right) p(x)
=
\left(\min_{y\in\states}\lambda_y\right)\sum_{x\in\states}p(x)
=\min_{y\in\states}\lambda_y.
\end{equation*}
Now let $y^*$ be any element of $\states$ such that $\min_{y\in\states}\lambda_y=\lambda_{y^*}$ (since $\states$ is finite, this is always possible). Let $\omega^{**}$ be any path in $\Omega$ such that $\omega^{**}\in C^*$ and $\omega^{**}(w_m)=y^*$; Equation~\eqref{eq:path_exists_for_finite_points} guarantees that this $\omega^{**}\in\Omega$ exists. Then
\begin{equation*}
\sum_{i\in S^*}\lambda_i\bigl(P_w(A_i\vert C^*)-\ind{A_i}(\omega^{**})\bigr)
\geq
\min_{y\in\states}\lambda_y
-\sum_{i\in S^*}\lambda_i\ind{A_i}(\omega^{**})
=\lambda_{y^*}-\lambda_{y^*}=0,
\end{equation*}
where the first equality holds because, for every $i\in S^*$, $A_i=(X_{w_m}=z_{m,i})$.

Let $S^{**}\coloneqq\{1,\dots,n\}\setminus(S\cup S^*)$. Since $\omega^{**}\in C^*$, we find that $\ind{C_i}(\omega^{**})=\ind{C_i}(\omega^{*})$ and $\ind{A_i}(\omega^{**})=\ind{A_i}(\omega^{*})$ for all $i\in S$, that $\ind{C_i}(\omega^{**})=1$ for all $i\in S^{*}$, and that $\ind{C_i}(\omega^{**})=0$ for all $i\in S^{**}$. Hence, it follows from Equation~\eqref{eq:lemma:simplechaincoherence:supremumreached} that
\begin{equation*}
\sum_{i=1}^n\lambda_i\ind{C_i}(\omega^{**})\bigl(P_w(A_i\vert C_i)-\ind{A_i}(\omega^{**})\bigr)
\geq
\sum_{i\in S^*}\lambda_i\bigl(P_w(A_i\vert C^*)-\ind{A_i}(\omega^{**})\bigr).
\end{equation*}
By combining this inequality with the previous one, we find that in order to show that Equation~\eqref{eq:lemma:simplechaincoherence:TB} holds, it suffices to prove that $\omega^{**}\in C_0$. 

In order to prove this, it suffices to notice that the question of whether or not a path $\omega\in\Omega$ belongs to $C_0$, only depends on the values $\omega(t)$ of $\omega$ at time points $t\in\{w_0,\dots,w_{m-1}\}$. Indeed, since we infer from $\omega^{**}\in C^*$ that the value of $\omega^*$ and $\omega^{**}$ at these time points is the same, and because $\omega^*\in C_0$, this implies that $\omega^{**}\in C_0$.
\end{proof}

\begin{lemma}\label{lemma:simplechainextend}
Let $w=\{w_0,w_1,\dots,w_m\}\subset\reals_{\geq0}$ be a finite set of time points, with $m\in\nats_0$, such that $w_0<w_1<\dots<w_m$. Let $\mathcal{T}$ be a transition matrix system and let $\tilde{P}_w$ be any full conditional probability such that for all $j\in\{1,\dots,m\}$ and $x_{w_{\ell}}\in\states$, $\ell\in\{0,\dots,j\}$:
\begin{equation*}
\tilde{P}_w(X_{w_j}=x_{w_j}\vert X_{w_0}=x_{w_0},\dots,X_{w_{j-1}}=x_{w_{j-1}})=T_{w_{j-1}}^{w_j}(x_{w_{j-1}},x_{w_j}).
\end{equation*}
Then for any $s\in w$ and $u\subseteq w$ such that $s>u$ and $u\neq\emptyset$, any $y\in\states$ and any $x_u\in\states_u$, we have that
\begin{equation*}
\tilde{P}_w(X_s=y\vert X_u=x_u)
=T_{\max u}^s(x_{\max u},y).
\end{equation*}
\end{lemma}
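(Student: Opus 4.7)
The plan is an induction on the gap $g := j - i_k$, where $s = w_j$ and $\max u = w_{i_k}$. The chain hypothesis directly gives the identity whenever the conditioning is on the full past $\{w_0,\ldots,w_{j-1}\}$, so the work consists of progressively loosening the conditioning set while preserving the transition-matrix structure.

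First I would establish the base case $g=1$, equivalently the statement that whenever $w_{j-1}\in u$, the identity holds with right-hand side $T_{w_{j-1}}^{w_j}(x_{w_{j-1}},y)$. Setting $v := \{w_0,\ldots,w_{j-1}\}\setminus u$ (possibly empty), I would expand via \ref{def:coh_prob_3}
\[
\tilde{P}_w(X_{w_j}=y\vert X_u=x_u) = \sum_{x_v\in\states_v}\tilde{P}_w(X_{w_j}=y,X_v=x_v\vert X_u=x_u),
\]
and factor each summand via \ref{def:coh_prob_6} as $\tilde{P}_w(X_{w_j}=y\vert X_{u\cup v}=x_{u\cup v})\,\tilde{P}_w(X_v=x_v\vert X_u=x_u)$, where the nonemptiness of the conditioning events required by \ref{def:coh_prob_6} is guaranteed by Equation~\eqref{eq:path_exists_for_finite_points}. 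The chain hypothesis collapses the first factor to $T_{w_{j-1}}^{w_j}(x_{w_{j-1}},y)$, which is constant in $x_v$ because $w_{j-1}\in u$ fixes $x_{w_{j-1}}$; it pulls out of the sum, and the remaining weights sum to $1$ by \ref{def:coh_prob_3} and \ref{def:coh_prob_5}.

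For the inductive step $g>1$, I would marginalize over the intermediate state at $w_{j-1}$ in exactly the same way:
\[
\tilde{P}_w(X_{w_j}=y\vert X_u=x_u) = \sum_{x_{w_{j-1}}\in\states}\tilde{P}_w(X_{w_j}=y\vert X_{w_{j-1}}=x_{w_{j-1}},X_u=x_u)\,\tilde{P}_w(X_{w_{j-1}}=x_{w_{j-1}}\vert X_u=x_u).
\]
The base case applied to the enlarged conditioning set $u\cup\{w_{j-1}\}$ reduces the first factor to $T_{w_{j-1}}^{w_j}(x_{w_{j-1}},y)$, and the inductive hypothesis with target $s'=w_{j-1}$ (gap $g-1$) reduces the second to $T_{w_{i_k}}^{w_{j-1}}(x_{w_{i_k}},x_{w_{j-1}})$. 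The semi-group property of $\mathcal{T}$ then collapses $T_{w_{i_k}}^{w_{j-1}}T_{w_{j-1}}^{w_j} = T_{w_{i_k}}^{w_j}$, yielding the desired $T_{\max u}^s(x_{\max u},y)$.

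The main obstacle I anticipate is purely bookkeeping: verifying in each application of \ref{def:coh_prob_6} that the relevant conditioning events are nonempty (which always reduces to Equation~\eqref{eq:path_exists_for_finite_points}, since those events fix finitely many time points to specific states) and checking disjointness in each expansion by \ref{def:coh_prob_3} (which is immediate, as distinct state assignments at a fixed time point are mutually exclusive). Beyond these routine checks, the argument is just the chain hypothesis combined with the semi-group property of $\mathcal{T}$, assembled through finite-additivity manipulations.
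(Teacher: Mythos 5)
Your proof is correct and follows essentially the same route as the paper's: expand via \ref{def:coh_prob_3} and \ref{def:coh_prob_6}, invoke the chain hypothesis on the fully conditioned factor, apply the induction hypothesis to the remaining transition probability, and collapse via the semi-group property of $\mathcal{T}$. The only difference is cosmetic packaging---you induct on the gap $j-i_k$ with the ``immediate predecessor in $u$'' case as the base case, whereas the paper inducts on $j$ and splits the inductive step into the cases $\max u = w_j$ and $\max u < w_j$---but the underlying computations are identical.
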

\begin{proof}
Since $\emptyset\neq u\subseteq w$, $s\in w$ and $s>u$, it follows that there is some $j\in\{1,\dots,m\}$ such that $s=w_j$ and $u\subseteq\{w_0,\dots,w_{j-1}\}$.

We provide a proof by induction. If $s=w_1$, then $u=\{w_0\}$, and therefore, the result follows trivially from the assumptions in this lemma. Assume now that the result is true for $s=w_j$, with $1\leq j<m$. We will prove that this implies that it is also true for $s=w_{j+1}$. We consider two cases: $\max u=w_j$ and $\max u<w_j$.

If $\max u=w_j$, then with $v\coloneqq\{w_0,\dots,w_j\}\setminus u$:
\begin{align*}
\tilde{P}_w(X_{w_{j+1}}=y\vert X_u=x_u)
&=\sum_{z_{v}\in\states_{v}}\tilde{P}_w(X_{w_{j+1}}=y, X_{v}=z_{v}\vert X_u=x_u)\\
&=\sum_{z_{v}\in\states_{v}}
\tilde{P}_w(X_{w_{j+1}}=y\vert X_u=x_u, X_{v}=z_{v})
\tilde{P}_w(X_{v}=z_{v}\vert X_u=x_u)\\
&=\sum_{z_{v}\in\states_{v}}
T_{\max u}^{w_{j+1}}(x_{\max u},y)
\tilde{P}_w(X_{v}=z_{v}\vert X_u=x_u)\\[-1mm]
&\quad\quad\quad\quad\quad\quad
=T_{\max u}^{w_{j+1}}(x_{\max u},y)
\sum_{z_{v}\in\states_{v}}
\tilde{P}_w(X_{v}=z_{v}\vert X_u=x_u)\\[-1mm]
&\quad\quad\quad\quad\quad\quad\quad\quad\quad\quad\quad\quad\quad\quad\quad\quad~~~
=T_{\max u}^{w_{j+1}}(x_{\max u},y),
\end{align*}
where the first equality follows from~\ref{def:coh_prob_3}, the second equality follows from \ref{def:coh_prob_6}, the third equality follows from the assumptions in this lemma and the fact that $\max u=w_j$, and the last equality follows from \ref{def:coh_prob_3} and~\ref{def:coh_prob_5}.

If $\max u<w_j$, then with $v\coloneqq\{w_0,\dots,w_{j-1}\}\setminus u$:
\begin{align*}
&\tilde{P}_w(X_{w_{j+1}}=y\vert X_u=x_u)\\[1,5mm]
&=\sum_{z_{w_j}\in\states}
\sum_{z_{v}\in\states_{v}}
\tilde{P}_w(X_{w_{j+1}}=y, X_{w_j}=z_{w_j}, X_v=z_v\vert X_u=x_u)\\
&=\sum_{z_{w_j}\in\states}
\sum_{z_{v}\in\states_{v}}
\tilde{P}_w(X_{w_{j+1}}=y\vert X_u=x_u, X_{w_j}=z_{w_j}, X_v=z_v)\\[-4mm]
&\quad\quad\quad\quad\quad\quad\quad\quad\quad\quad\quad~\,
\tilde{P}_w(X_v=z_v\vert X_u=x_u, X_{w_j}=z_{w_j})
\tilde{P}_w(X_{w_j}=z_{w_j}\vert X_u=x_u)\\[4mm]
&=\sum_{z_{w_j}\in\states}
\sum_{z_{v}\in\states_{v}}
T_{w_j}^{w_{j+1}}(z_{w_j},y)
\tilde{P}_w(X_v=z_v\vert X_u=x_u, X_{w_j}=z_{w_j})
\tilde{P}_w(X_{w_j}=z_{w_j}\vert X_u=x_u)\\
&=\sum_{z_{w_j}\in\states}
\sum_{z_{v}\in\states_{v}}
T_{w_j}^{w_{j+1}}(z_{w_j},y)
\tilde{P}_w(X_v=z_v\vert X_u=x_u, X_{w_j}=z_{w_j})
T_{\max u}^{w_{j}}(x_{\max u},z_{w_j})\\
&=\sum_{z_{w_j}\in\states}
T_{w_j}^{w_{j+1}}(z_{w_j},y)
T_{\max u}^{w_{j}}(x_{\max u},z_{w_j})
\sum_{z_{v}\in\states_{v}}
\tilde{P}_w(X_v=z_v\vert X_u=x_u, X_{w_j}=z_{w_j})
\\
&=\sum_{z_{w_j}\in\states}
T_{w_j}^{w_{j+1}}(z_{w_j},y)
T_{\max u}^{w_{j}}(x_{\max u},z_{w_j})
=T_{\max u}^{w_{j+1}}(x_{\max u},y),
\end{align*}
where the first equality follows from~\ref{def:coh_prob_3}, the second equality follows from \ref{def:coh_prob_6}, the third equality follows from the assumptions in this lemma, the fourth equality follows from the induction hypothesis, the sixth equality follows from \ref{def:coh_prob_3} and~\ref{def:coh_prob_5}, and the last equality follows from Equation~\eqref{eq:transmatrixproduct}.
\end{proof}

\begin{lemma}\label{lemma:samepandTissameP}
Consider two Markov chains $P_1,P_2\in\mprocesses$ such that $\mathcal{T}_{P_1}=\mathcal{T}_{P_2}$ and, for all $y\in\states$, $P_1(X_0=y)=P_2(X_0=y)$. Then $P_1=P_2$.
\end{lemma}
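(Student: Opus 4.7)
The plan is to show that the two Markov chains $P_1$ and $P_2$ agree on every element of their common domain $\mathcal{C}^{\mathrm{SP}}$, by first establishing agreement on joint probabilities at finite collections of time points, and then extending this to the full conditional domain using the algebraic structure of $\mathcal{A}_u$.

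First, I would show that for every finite sequence of time points $w \in \mathcal{U}_\emptyset$ with $w = w_0,\ldots,w_k$, and every $x_w \in \states_w$, we have $P_1(X_w = x_w) = P_2(X_w = x_w)$. The argument proceeds by induction on $k$. The base case $k=0$ is handled by the assumption $P_1(X_0 = y) = P_2(X_0 = y)$ combined with time-shift considerations: for an arbitrary single time point $w_0$, writing $P_i(X_{w_0} = y) = \sum_{z \in \states} P_i(X_{w_0} = y \mid X_0 = z) P_i(X_0 = z) = \sum_{z} T_0^{w_0}(z,y) P_i(X_0 = z)$ via \ref{def:coh_prob_3} and \ref{def:coh_prob_6}, where $T_0^{w_0}$ is the shared transition matrix. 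The inductive step uses the Markov property and \ref{def:coh_prob_6} to factorise
\[
P_i(X_w = x_w) = P_i(X_{w_k} = x_{w_k} \mid X_{w_{k-1}} = x_{w_{k-1}}) \, P_i(X_{w_0,\ldots,w_{k-1}} = x_{w_0,\ldots,w_{k-1}}) = T_{w_{k-1}}^{w_k}(x_{w_{k-1}}, x_{w_k}) \cdot P_i(X_{w_0,\ldots,w_{k-1}} = x_{w_0,\ldots,w_{k-1}}),
\]
which, by the induction hypothesis and $\mathcal{T}_{P_1} = \mathcal{T}_{P_2}$, is the same for $i=1$ and $i=2$.

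Next, I would lift this to conditional probabilities on cylinder events. For any $(A, X_u = x_u) \in \mathcal{C}^{\mathrm{SP}}$ with $A \in \mathcal{A}_u$, the event $A$ belongs to the algebra generated by elementary events at time points in $u \cup \reals_{>u}$, and since algebras are closed only under finite operations, $A$ depends on only finitely many time points. Hence there is a finite $w \in \mathcal{U}_\emptyset$ with $u \subseteq w$ and a subset $S \subseteq \states_w$ such that $A = \bigcup_{x_w \in S}(X_w = x_w)$ (a disjoint union, since the cylinders are disjoint for distinct $x_w$). Applying \ref{def:coh_prob_3} and \ref{def:coh_prob_6}, together with the fact that $P_i(X_u = x_u) > 0$ whenever it appears in a denominator (and otherwise handling the case via \ref{def:coh_prob_6} directly, which does not require division), I can write
\[
P_i(A \mid X_u = x_u) = \sum_{x_w \in S \,:\, x_w|_u = x_u} \frac{P_i(X_w = x_w)}{P_i(X_u = x_u)}\,,
\]
and conclude from the first step that both sides coincide for $i=1$ and $i=2$.

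The main subtlety I expect is the conditioning-on-probability-zero issue, since \ref{def:coh_prob_6} is stated multiplicatively and does not require positive denominators. To handle this cleanly, I would work directly with the multiplicative form: rather than dividing, I would show by induction on the length of $w \setminus u$ that $P_i(X_w = x_w \mid X_u = x_u) = \prod_j T_{w_{j-1}}^{w_j}(\cdot,\cdot)$ along the chain connecting $u$ to $w$, using the Markov property and \ref{def:coh_prob_6} repeatedly; this avoids any division and determines $P_i(A \mid X_u = x_u)$ via \ref{def:coh_prob_3} purely from quantities shared by $P_1$ and $P_2$. The only remaining bookkeeping is to ensure that when $u$ and $w$ interleave with time points at which $A$ is defined, the Markov-property identity $P_i(X_s = y \mid X_t = x, X_{u'} = x_{u'}) = T_t^s(x,y)$ from Definition~\ref{def:markov_property} combined with Definition~\ref{def:trans_matrix} gives the same factor for both processes; this is straightforward given $\mathcal{T}_{P_1} = \mathcal{T}_{P_2}$.
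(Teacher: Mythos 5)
Your proposal is correct and follows essentially the same route as the paper's proof: both decompose any event $A\in\mathcal{A}_u$ into finitely many cylinder events, apply finite additivity (\ref{def:coh_prob_3}) and the multiplicative chain rule (\ref{def:coh_prob_6})---thereby avoiding any division by possibly-zero probabilities---and use the Markov property to identify each factor with an entry of the shared transition matrix system (or, for the unconditional/$u=\emptyset$ case, with a quantity determined by the shared initial distribution and $T_0^s$). The paper merely packages the single-future-time-point conditionals as a common restriction $\tilde{P}^*$ borrowed from the proof of Theorem~\ref{theo:uniqueMarkovchain}, and your preliminary step establishing agreement of all unconditional joint probabilities is slightly more than is needed but harmless.
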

\begin{proof}
Let $\mathcal{T}\coloneqq\mathcal{T}_{P_1}=\mathcal{T}_{P_2}$ be the common transition matrix system of $P_1$ and $P_2$ and let $p$ be their common initial probability mass function, as defined by $p(y)\coloneqq P_1(X_0=y)=P_2(X_0=y)$ for all $y\in\states$. Let $\tilde{P}$ be a real-valued function on $\mathcal{C}$, with $\mathcal{C}$ and $\tilde{P}$ defined as in the proof of Theorem~\ref{theo:uniqueMarkovchain}. It then follows from Definition~\ref{def:markov_property} that the restriction of $P_1$ and $P_2$ to $\mathcal{C}$ is equal to $\tilde{P}$. Furthermore, for any $s>0$, $y\in\states$ and $j\in\{1,2\}$, we find that
\begin{align*}
P_j(X_s=y)
=\sum_{x\in\states}P_j(X_s=y, X_0=x)
&=\sum_{x\in\states}P_j(X_s=y\vert X_0=x)P_j(X_0=x)\\
&=\sum_{x\in\states}\tilde{P}(X_s=y\vert X_0=x)\tilde{P}(X_0=x).
\end{align*}
Hence, the restrictions of $P_1$ and $P_2$ to
\begin{align*}
\mathcal{C}^*
\coloneqq&\mathcal{C}\cup
\{
(X_s=y,X_\emptyset=x_\emptyset)
\colon 
s\in\reals_{>0},~y\in\states
\}\\
=&\{
(X_s=y,X_u=x_u)
\colon 
u\in\mathcal{U},~s\in\reals_{\geq0},~s>u,~x_u\in\states_u,~y\in\states
\}
\end{align*}
are identical. We denote this common restriction by $\tilde{P}^*$.

Consider now any $(A,X_u=x_u)\in\mathcal{C}^\mathrm{SP}$. Then since $A\in\mathcal{A}_u$, there is some finite set of time points $v=\{v_1,v_2,\dots,v_n\}\subseteq\reals_{\geq0}$, with $n\in\nats$, such that $\max u<v_1<v_2<\dots<v_n$, and some set $S\subseteq\states_{u\cup v}$ such that $A=\cup_{z_{u\cup v}\in S}(X_{u\cup v}=z_{u\cup v})$. 
Let $S_v\coloneqq\{z_v\in\states_v\colon (x_u,z_v)\in S\}$.
For any $j\in\{1,2\}$, we then find that
\begin{align*}
P_j(A\vert X_u=x_u)
&=\sum_{z_{u\cup v}\in S}
P_j(X_{u\cup v}=z_{u\cup v}\vert X_u=x_u)\\
&=\sum_{z_{v}\in S_v}
P_j(X_{v_1}=z_{v_1}, X_{v_2}=z_{v_2}, \dots, X_{v_n}=z_{v_n}\vert X_u=x_u)\\[-1mm]
&=\sum_{z_{v}\in S_v}
\,\,
\prod_{i=1}^n
\,\,
P_j(X_{v_i}=z_{v_i}\vert X_u=x_u, X_{v_1}=z_{v_1}, \dots, X_{v_{i-1}}=z_{v_{i-1}})\\
&=\sum_{z_{v}\in S_v}
\,\,
\prod_{i=1}^n
\,\,
\tilde{P}^*(X_{v_i}=z_{v_i}\vert X_u=x_u, X_{v_1}=z_{v_1}, \dots, X_{v_{i-1}}=z_{v_{i-1}}),
\end{align*}
which implies that $P_1(A\vert X_u=x_u)=P_2(A\vert X_u=x_u)$. Since this is the case for any $(A,X_u=x_u)\in\mathcal{C}^\mathrm{SP}$, it follows that $P_1=P_2$.
\end{proof}

\corratehasuniquehomogenmarkovprocess*
\begin{proof}
Since we know from Proposition~\ref{prop:systemQ} that $\mathcal{T}_Q$ is a well-behaved transition matrix system, it follows from Theorem~\ref{theo:uniqueMarkovchain} that there is a unique Markov chain $P\in\mprocesses$ such that $\mathcal{T}_P=\mathcal{T}_Q$ and, for all $y\in\mathcal{X}$, $P(X_0=y)=p(y)$, and that this Markov chain is furthermore well-behaved. Since it---trivially---follows from Definition~\ref{def:systemfromQ} that $\mathcal{T}_Q$ satisfies Equation~\eqref{eq:homogeneousMarkov}, Definition~\ref{def:homogeneousMarkov} implies that $P$ is homogeneous.
\end{proof}

\theohomogeneoushasQ*
\begin{proof}
Because of Proposition~\ref{prop:boundednon-emptyandclosed}, we know that $\overline{\partial}_{+}
{T^0_{0}}$ is a non-empty bounded set of rate matrices, which implies that there is some real $B>0$ such that $\norm{Q'}\leq B$ for all $Q'\in\overline{\partial}_{+}
{T^0_{0}}$. Let $Q$ be any element of $\overline{\partial}_{+}
{T^0_{0}}$.

Fix any $c\geq0$, $\epsilon>0$ and $\delta>0$. 
It then follows from Proposition~\ref{prop:outerderivativebehaveslikelimit} and~\ref{N:homogeneous} that there is some $\delta^*>0$ such that
\begin{equation}
\label{eq:homogeneoushasQ1}
(\forall 0<\Delta^*<\delta^*)
~
(\exists Q^*\in\overline{\partial}_{+}
{T^0_{0}})
~
\norm{T_0^{\Delta^*}-(I+\Delta^*Q^*)}<\Delta^*\epsilon.
\end{equation}
Furthermore, because of Equation~\eqref{eq:rightouterderivative} and~\ref{N:homogeneous}, there is some $0<\Delta<\min\{\delta,\delta^*\}$ such that
\begin{equation}
\label{eq:homogeneoushasQ2}
\norm{T^{\Delta}_{0}-(I+\Delta Q)}<\Delta\epsilon.
\end{equation}
If we now define $n\coloneqq\lfloor\nicefrac{c}{\Delta}\rfloor$ and $d\coloneqq c-n\Delta$, then $n\Delta\leq c<(n+1)\Delta$ and therefore also $0\leq d<\Delta$. Because of Proposition~\ref{prop:Markovhassystem}, Equation~\eqref{eq:transmatrixproduct} and Definition~\ref{def:homogeneousMarkov}, we know that
\begin{equation*}
T_0^c=\left(
\prod_{j=1}^{n}
T_{(j-1)\Delta}^{j\Delta}
\right)
T_{n\Delta}^c
=\left(T_0^{\Delta}\right)^{n}
T_0^{d}
\end{equation*}
and therefore, it follows from Lemma~\ref{lemma:recursive} that
\begin{equation}
\label{eq:homogeneoushasQ3}
\norm{
	e^{Qc}-T_0^c
}
=
\norm{
\left(T_0^{\Delta}\right)^{n}
T_0^{d}
-
\left(
e^{Q\Delta}
\right)^{n}
e^{Qd}
}
\leq
n\norm{T_0^{\Delta}-e^{Q\Delta}}
+\norm{T_0^{d}-e^{Qd}}.
\end{equation}
From Equation~\eqref{eq:homogeneoushasQ2} and Lemma~\ref{lemma:linearpartofexponential}, we infer that
\begin{equation}
\label{eq:homogeneoushasQ4}
\norm{T_0^{\Delta}-e^{Q\Delta}}
\leq
\norm{T_0^{\Delta}-(I+\Delta Q)}
+
\norm{(I+\Delta Q)-e^{Q\Delta}}
\leq
\Delta\epsilon
+
\Delta^2\norm{Q}^2.
\end{equation}
Since $d<\Delta<\delta^*$, we infer from Equation~\eqref{eq:homogeneoushasQ1} that there is some $Q^*\in\overline{\partial}_{+}
{T^0_{0}}$ such that $\norm{T_0^{d}-(I+d Q^*)}<d\epsilon$. Hence, also using Lemma~\ref{lemma:linearpartofexponential}, we find that
\begin{align}
\norm{T_0^{d}-e^{Qd}}
&\leq
\norm{T_0^{d}-(I+d Q^*)}
+
\norm{(I+d Q^*)-(I+d Q)}
+
\norm{(I+d Q)-e^{Qd}}\notag\\
&\leq
d\epsilon+d\norm{Q^*-Q}
+d^2\norm{Q}^2
\leq
d\epsilon+d\norm{Q^*}
+d\norm{Q}
+d^2\norm{Q}^2.\label{eq:homogeneoushasQ5}
\end{align}
By combining Equations~\eqref{eq:homogeneoushasQ3}, \eqref{eq:homogeneoushasQ4} and~\eqref{eq:homogeneoushasQ5}, it follows that
\begin{equation*}
\norm{
	e^{Qc}-T_0^c
}
\leq
n\Delta\epsilon
+
n\Delta^2\norm{Q}^2
+
d\epsilon
+d\norm{Q^*}
+d\norm{Q}
+d^2\norm{Q}^2.
\end{equation*}
Taking into account that $\norm{Q}\leq B$, $\norm{Q^*}\leq B$, $n\Delta\leq c$ and $d<\Delta<\delta$, this implies that
\begin{equation*}
\norm{
	e^{Qc}-T_0^c
}
\leq
c\epsilon
+
c\delta B^2
+
\delta\epsilon
+2\delta B
+\delta^2 B^2.
\end{equation*}
Since this is true for any $\epsilon>0$ and $\delta>0$, it follows that $\norm{e^{Qc}-T_0^c}\leq0$, which implies that $T_0^c=e^{Qc}$. Since this is true for all $c\geq0$, it follows from Definition~\ref{def:homogeneousMarkov} that
\begin{equation}\label{eq:homogeneoushasQ6}
T_t^s=T_0^{s-t}=e^{Q(s-t)}
\text{~~for all $0\leq t\leq s$,}
\end{equation}
or equivalently, that $\mathcal{T}_P=\mathcal{T}_Q$.

Finally, we prove that $Q$ is unique. Assume \emph{ex absurdo} that this is not the case, or equivalently, that there are rate matrices $Q_1$ and $Q_2$, with $Q_1\neq Q_2$, such that $\mathcal{T}_P=\mathcal{T}_{Q_1}$ and $\mathcal{T}_P=\mathcal{T}_{Q_2}$. For all $\Delta>0$, we then have that $T_0^\Delta=e^{Q_1\Delta}=e^{Q_2\Delta}$, and therefore, it follows from Lemma~\ref{lemma:deriv_exponential_trans} that $\partial_+T_0^0=Q_1$ and $\partial_+T_0^0=Q_2$, which implies that $Q_1=Q_2$. From this contradiction, it follows that $Q$ is indeed unique.
\end{proof}

\propQissingletonderivforhomogen*
\begin{proof}
The result about the partial derivatives is an immediate consequence of Lemma~\ref{lemma:deriv_exponential_trans} and the fact that $T^{t+\Delta}_{t}=T^{t}_{t-\Delta}=e^{Q\Delta}$. The result about the outer partial derivatives then follows from Corollary~\ref{corol:outersingleton}.
\end{proof}

\section{Proofs for the results in Section~\ref{sec:iCTMC}}

\propmarkovsetsubsetofnonmarkovset*
\begin{proof}This is immediate from Definitions~\ref{def:consistent_process_set} and~\ref{def:process_sets} and the fact that $\whmprocesses\subseteq\wmprocesses\subseteq\wprocesses$.
\end{proof}

\propnonhomogeneousinprocessset*
\begin{proof}
Proposition~\ref{prop:finite_different_rate_matrix_has_process} implies the existence of a process $P\in\wmprocesses$ such that $P(X_0=y)$ for all $y\in\mathcal{X}$ and such that $\mathcal{T}_P$ satisfies Equation~\eqref{eq:nonhomogen_in_process_set_system_composition}. It remains to show that $P\in\wmprocesses_{\rateset,\,\mathcal{M}}$. Because of Equation~\eqref{eq:consistentwmprocessesalternative}, and since we already know that $p\in\mathcal{M}$, this means that we have to show that $\smash{\overline{\partial}}T_t^t\subseteq\rateset$ for all $t\in\realsnonneg$. To this end, consider any $t\in\realsnonneg$.

We consider several cases. If $t<t_0$, then $\smash{\overline{\partial}}T_t^t$ corresponds to $\mathcal{T}_{Q_0}^{[0,t_0]}$, and it then follows from Proposition~\ref{prop:Q_is_singleton_deriv_for_homogen} that $\smash{\overline{\partial}}T_t^t=\{Q_0\}\subseteq\rateset$. If $t>t_n$, then $\smash{\overline{\partial}}T_t^t$ corresponds to $\smash{\mathcal{T}_{Q_{n+1}}^{[t_n,\infty)}}$, in which case $\smash{\overline{\partial}}T_t^t=\{Q_{n+1}\}\subseteq\rateset$. Similarly, if there is some $i\in\{1,\ldots,n\}$ such that $t\in(t_{i-1},t_i)$, then $\smash{\overline{\partial}}T_t^t$ corresponds to $\smash{\mathcal{T}_{Q_i}^{[t_{i-1},t_i]}}$, and therefore $\smash{\overline{\partial}}T_t^t=\{Q_i\}\subseteq\rateset$. 
The only remaining case is when $t=t_i\neq0$ for some $i\in\{0,\ldots,n\}$. In this case, we have that $\smash{\overline{\partial}}_+T_t^t=\{Q_{i+1}\}$ and, if $t\neq0$, that $\smash{\overline{\partial}}_-T_t^t=\{Q_i\}$, and therefore, it follows from Definition~\ref{def:direc_partial_deriv} that $\smash{\overline{\partial}}T_t^t\subseteq\rateset$.
\end{proof}

\theoaanelkaarplakken*
\begin{proof}
This proof is rather lengthy, and consists of two parts. First, we will show that there exists a stochastic process $P$ that satisfies Equations~\eqref{eq:theo:aanelkaarplakken:equalsfirst} and~\eqref{eq:theo:aanelkaarplakken:equalssecond}, by constructing it as the extension of a coherent conditional probability on a set of events $\mathcal{C}\subset\mathcal{C}^\mathrm{SP}$. Next, we will finish the proof by showing that $P\in\wprocesses_{\rateset,\,\mathcal{M}}$, as desired.

Let $\mathcal{C}\coloneqq\mathcal{C}_\emptyset\cup(\bigcup_{x_u\in\states_u}\mathcal{C}_{x_u})$, with
\begin{multline}\label{eq:theo:aanelkaarplakken:firstpartofdomain}
\mathcal{C}_\emptyset\coloneqq
\{(A,X_v=x_v)\in\mathcal{C}^{\mathrm{SP}}\colon v\in\mathcal{U}_{< \max u}\text{~and~}\\A\in\left\langle
\left\{
(X_t=x)
\colon
x\in\states,t\in[0,\max u]
\right\}
\right\rangle\}
\end{multline}
and, for all $x_u\in\states_u$,
\begin{equation}\label{eq:theo:aanelkaarplakken:secondpartofdomain}
\mathcal{C}_{x_u}\coloneqq\{
(A,X_v=x_v)\in\mathcal{C}^\mathrm{SP}
\colon
u\subseteq v\in\mathcal{U},\,
x_{v\setminus u}\in\states_{v\setminus u},\,
 A\in\mathcal{A}_{u\cup(v\setminus[0,\max u])}
\}
\end{equation}
Consider a real-valued function $\tilde{P}$ on $\mathcal{C}$ that is defined, for all $(A,X_v=x_v)\in\mathcal{C}$, by
\begin{equation}\label{eq:theo:aanelkaarplakken:defPtilde}
\tilde{P}(A\vert X_v=x_v)
\coloneqq
\begin{cases}
P_\emptyset(A\vert X_v=x_v)&\text{~if $(A,X_v=x_v)\in\mathcal{C}_\emptyset$}\\
P_{x_u}(A\vert 
X_{u\cup(v\setminus[0,\max u])}=x_{u\cup(v\setminus[0,\max u])})&\text{~if $(A,X_v=x_v)\in\mathcal{C}_{x_u}$}\,.
\end{cases}
\end{equation}

We first prove that $\tilde{P}$ is a coherent conditional probability on $\mathcal{C}$. So consider any $n\in\nats$ and, for all $i\in\{1,\dots,n\}$, choose $(A_i,C_i)\in\mathcal{C}$ and $\lambda_i\in\reals$. We need to show that
\begin{equation}\label{eq:theo:aanelkaarplakken:coh}
\max\left\{\sum_{i=1}^n\lambda_i\ind{C_i}(\omega)\bigl(\tilde{P}(A_i\vert C_i)-\ind{A_i}(\omega)\bigr)~\Bigg\vert~\omega\in C_0\right\}\geq0,
\end{equation}
with $C_0\coloneqq\cup_{i=1}^nC_i$.

Let $S^*\coloneqq\left\{i\in\{1,\dots,n\}\colon(A_i,C_i)\in\mathcal{C}_\emptyset\right\}$. We first consider the case $S^*\neq\emptyset$. Then since $P_\emptyset$ is a stochastic process, it follows from Equation~\eqref{eq:theo:aanelkaarplakken:defPtilde} and Definitions~\ref{def:stoch_process} and~\ref{def:coherence} that
\begin{equation*}
\max\left\{\sum_{i\in S^*}\lambda_i\ind{C_i}(\omega)\bigl(\tilde{P}(A_i\vert C_i)-\ind{A_i}(\omega)\bigr)~\Bigg\vert~\omega\in C^*\right\}\geq0,
\end{equation*}
with $C^*\coloneqq\cup_{i\in S^*}C_i$. Therefore, there is some $\omega^*\in C^*$ such that
\begin{equation}\label{eq:theo:aanelkaarplakken:geqfirstpart}
\sum_{i\in S^*}\lambda_i\ind{C_i}(\omega^*)\bigl(\tilde{P}(A_i\vert C_i)-\ind{A_i}(\omega^*)\bigr)\geq0.
\end{equation}
If $S^*=\emptyset$, we let $\omega^*$ be any element of $C_0$ (this is always possible, because $C_0\neq\emptyset$). Clearly, this path $\omega^*$ will then also satisfy Equation~\eqref{eq:theo:aanelkaarplakken:geqfirstpart}---because the left-hand side is a sum over an empty set and therefore zero.

Now let $x_u^*\in\states_u$ be defined by $x_u^*\coloneqq \omega^*\vert_u$.
Then for all $i\in\{1,\dots,n\}$ such that $(A_i,C_i)\in\mathcal{C}_{x_u^*}$, we know from Equation~\eqref{eq:theo:aanelkaarplakken:secondpartofdomain} that there are $u\subseteq v_i\in\mathcal{U}$ and $x_{v_i\setminus u}\in\states_{v_i\setminus u}$ such that
\begin{equation}\label{eq:theo:aanelkaarplakken:Cisplit}
C_i=(X_u=x_u^*)\cap(X_{v_i\setminus u}=x_{v_i\setminus u})=C_i^*\cap C_i^{**},\vspace{2mm}
\end{equation}
with $C_i^{*}\coloneqq
(X_{(v_i\setminus u)\cap [0,\max u]}=x_{(v_i\setminus u)\cap [0,\max u]})$, and
\begin{equation}\label{eq:theo:aanelkaarplakken:Cistarstar}
C_i^{**}\coloneqq(X_u=x_u^*)\cap(X_{v_i\setminus [0,\max u]}=x_{v_i\setminus [0,\max u]})\,.
\end{equation}
Using this notation, we define
\begin{equation}\label{eq:theo:aanelkaarplakken:Sstarstardef}
S^{**}\coloneqq
\{
i\in\{1,\dots,n\}
\colon
(A_i,C_i)\in\mathcal{C}_{x_u^*}
\text{~and~}
\ind{C_i^*}(\omega^*)=1
\}.
\end{equation}

We first consider the case $S^{**}\neq\emptyset$. Since $P_{x_u^*}$ is a stochastic process, it then follows from Definitions~\ref{def:stoch_process} and~\ref{def:coherence} that
\begin{equation*}
\max\left\{\sum_{i\in S^{**}}\lambda_i\ind{C_i^{**}}(\omega)\bigl(P_{x_u^*}(A_i\vert C_i^{**})-\ind{A_i}(\omega)\bigr)~\Bigg\vert~\omega\in C^{**}\right\}\geq0,
\end{equation*}
with $C^{**}\coloneqq\cup_{i\in S^{**}}C_i^{**}$. Because of Equation~\eqref{eq:theo:aanelkaarplakken:defPtilde}, this implies that
\begin{equation*}
\max\left\{\sum_{i\in S^{**}}\lambda_i\ind{C_i^{**}}(\omega)\bigl(\tilde{P}(A_i\vert C_i)-\ind{A_i}(\omega)\bigr)~\Bigg\vert~\omega\in C^{**}\right\}\geq0,
\end{equation*}
which allows us to infer that there is some $\omega^{**}\in C^{**}$ such that
\begin{equation}\label{eq:theo:aanelkaarplakken:geqsecondpart}
\sum_{i\in S^{**}}\lambda_i\ind{C_i^{**}}(\omega^{**})\bigl(\tilde{P}(A_i\vert C_i)-\ind{A_i}(\omega^{**})\bigr)\geq0.
\end{equation}
Furthermore, since $\omega^{**}\in C^{**}$, Equation~\eqref{eq:theo:aanelkaarplakken:Cistarstar} implies that
\begin{equation}\label{eq:theo:aanelkaarplakken:starstarniceonu}
\omega^{**}\vert_u=x_u^* = \omega^{*}\vert_u\,.
\end{equation}
If $S^{**}=\emptyset$, we let $\omega^{**}=\omega^{*}$. Clearly, also in this case, $\omega^{**}$ satisfies Equations~\eqref{eq:theo:aanelkaarplakken:geqsecondpart} and~\eqref{eq:theo:aanelkaarplakken:starstarniceonu}.

For any $i\in\{1,\ldots,n\}$, because $(A_i,C_i)\in\mathcal{C}$, there exists some finite sequence of time points $w_{C_i}\in\mathcal{U}$ such that $C_i$ only depends on the time points in $w_{C_i}$. Furthermore, it follows from Equations~\eqref{eq:theo:aanelkaarplakken:firstpartofdomain} and~\eqref{eq:theo:aanelkaarplakken:secondpartofdomain} that $A_i$ is an element of some algebra $\mathcal{A}$ that is generated by a set of events that only depend on a finite number of time points. Therefore, there is also some finite sequence of time points $w_{A_i}\in\mathcal{U}$, such that $A_i$ only depends on the time points in $w_{A_i}$. If we now let $w_i\coloneqq w_{A_i}\cup w_{C_i}$, then $(A_i,C_i)$ only depends on the (finite) sequence of time points $w_i$.

Because this holds for any $i\in\{1,\ldots,n\}$, this implies the existence of some finite sequence $w\in\mathcal{U}$ such that $w_i\subseteq w$ for all $i\in\{1,\ldots,n\}$.

Now let $\omega^{***}\in\Omega$ be any path such that, for all $s\in w$,
\begin{equation*}
\omega^{***}(s)\coloneqq
\begin{cases}
\omega^{*}(s) & \text{if $s<\max u$}\\
\omega^{**}(s) & \text{if $s\geq \max u$}
\end{cases}
\end{equation*}
Equation~\eqref{eq:path_exists_for_finite_points} guarantees that this $\omega^{***}\in\Omega$ exists. Furthermore, because of Equation~\eqref{eq:theo:aanelkaarplakken:starstarniceonu}, we know that, for all $s\in w$,
\begin{equation}\label{eq:theo:aanelkaarplakken:triplestarpartone}
\omega^{***}(s)=\omega^*(s)
\text{~~if $s\in [0,\max u]$}\vspace{-3mm}
\end{equation}
and
\begin{equation}\label{eq:theo:aanelkaarplakken:triplestarparttwo}
\omega^{***}(s)=\omega^{**}(s)
\text{~~if $s\in u\cup [\max u,+\infty)$}
\vspace{2mm}
\end{equation}
and therefore, it follows from Equation~\eqref{eq:theo:aanelkaarplakken:Cisplit} that
\begin{equation}\label{eq:theo:aanelkaarplakken:triplestarequivalence}
\omega^{***}\in C_i
\Leftrightarrow
(\omega^{***}\in C_i^*
\text{~and~}
\omega^{***}\in C_i^{**})
\Leftrightarrow
(\omega^{*}\in C_i^*
\text{~and~}
\omega^{**}\in C_i^{**})
\end{equation}
for all $i\in\{1,\dots,n\}$ such that $(A_i,C_i)\in\mathcal{C}_{x_u^*}$.

Next, for any $i\in S^*$, we infer from Equation~\eqref{eq:theo:aanelkaarplakken:firstpartofdomain} that the value of $\ind{A_i}(\omega^{***})$ and $\ind{C_i}(\omega^{***})$ is completely determined by $\omega^{***}(t)$, $t\in(w\cap[0,\max u])$. Therefore, it follows from Equations~\eqref{eq:theo:aanelkaarplakken:geqfirstpart} and ~\eqref{eq:theo:aanelkaarplakken:triplestarpartone} that 
\begin{equation}\label{eq:theo:aanelkaarplakken:geqfirstparttriplestar}
\sum_{i\in S^*}\lambda_i\ind{C_i}(\omega^{***})\bigl(\tilde{P}(A_i\vert C_i)-\ind{A_i}(\omega^{***})\bigr)
\geq0.
\end{equation}

Similarly, for any $i\in S^{**}$, 
Equations~\eqref{eq:theo:aanelkaarplakken:triplestarequivalence} and~\eqref{eq:theo:aanelkaarplakken:Sstarstardef} imply that $\ind{C_i}(\omega^{***})=\ind{C_i^{**}}(\omega^{**})$,
and Equations~\eqref{eq:theo:aanelkaarplakken:secondpartofdomain} and~\eqref{eq:theo:aanelkaarplakken:triplestarparttwo} imply that $\ind{A_i}(\omega^{***})=\ind{A_i}(\omega^{**})$. Therefore, it follows from Equation~\eqref{eq:theo:aanelkaarplakken:geqsecondpart} that
\begin{equation}\label{eq:theo:aanelkaarplakken:geqsecondparttriplestar}
\sum_{i\in S^{**}}\lambda_i\ind{C_i}(\omega^{***})\bigl(\tilde{P}(A_i\vert C_i)-\ind{A_i}(\omega^{***})\bigr)
\geq0.
\end{equation}

Consider now any $i\in\{1,\dots,n\}$ such that $i\notin S^*$ and $i\notin S^{**}$. Since $i\notin S^*$, there is some $x_u\in\states_u$ such that $(A_i,C_i)\in\mathcal{C}_{x_u}$. If $x_u= x_u^*$, then since $i\notin S^{**}$, it follows from Equation~\eqref{eq:theo:aanelkaarplakken:Sstarstardef} that $\ind{C_i^*}(\omega^*)=0$, and therefore, Equation~\eqref{eq:theo:aanelkaarplakken:triplestarequivalence} implies that $\ind{C_i}(\omega^{***})=0$. 
If $x_u\neq x_u^*$, then $(X_u=x_u)\cap(X_u=x_u^*)=\emptyset$, and therefore, since $(A_i,C_i)\in\mathcal{C}_{x_u}$ implies that $C_i\subseteq (X_u=x_u)$, it follows that $C_i\cap (X_u=x_u^*)=\emptyset$. Since it follows from Equations~\eqref{eq:theo:aanelkaarplakken:starstarniceonu} and~\eqref{eq:theo:aanelkaarplakken:triplestarparttwo} that $\omega^{***}(t)=x_t^*$ for all $t\in u$, this implies that $\omega^{***}\notin C_i$, and therefore, we find that $\ind{C_i}(\omega^{***})=0$.
Hence, in all cases, we find that $\ind{C_i}(\omega^{***})=0$. Since this is true for any $i\in\{1,\dots,n\}$ such that $i\notin S^*$ and $i\notin S^{**}$, it follows from Equations~\eqref{eq:theo:aanelkaarplakken:geqfirstparttriplestar} and~\eqref{eq:theo:aanelkaarplakken:geqsecondparttriplestar} that
\begin{equation}\label{eq:theo:aanelkaarplakken:geqtotal}
\sum_{i=1}^n\lambda_i\ind{C_i}(\omega^{***})\bigl(\tilde{P}(A_i\vert C_i)-\ind{A_i}(\omega^{***})\bigr)\geq0.
\end{equation}

We will now prove that $\omega^{***}\in C_0$. We consider two cases: $S^*\neq\emptyset$ and $S^*=\emptyset$. First assume that $S^*\neq\emptyset$. In that case, we have that $\omega^*\in C^*$, which implies that there is some $i\in S^*$ such that $\omega^*\in C_i$. It then follows from Equations~\eqref{eq:theo:aanelkaarplakken:firstpartofdomain} and~\eqref{eq:theo:aanelkaarplakken:triplestarpartone} that $\omega^{***}\in C_i\subseteq C_0$. 
Next, assume that $S^*=\emptyset$. In that case, we have that $\omega^*\in C_0$, which implies that there is some $i\in\{1,\dots,n\}$ such that $\omega^*\in C_i$. Since $(A_i,C_i)\in\mathcal{C}$ and $S^*=\emptyset$, there is some $x_u\in\states_u$ such that $(A_i,C_i)\in\mathcal{C}_{x_u}$ and, since Equation~\eqref{eq:theo:aanelkaarplakken:secondpartofdomain} implies that $x_t=\omega^*(t)$ for all $t\in u$, it follows that $x_u=x_u^*$. We conclude from this that $(A_i,C_i)\in\mathcal{C}_{x_u^*}$. Furthermore, since $\omega^*\in C_i\subseteq C_i^*$, we know that $\ind{C_i^*}(\omega^*)=1$. Therefore, it follows from Equation~\eqref{eq:theo:aanelkaarplakken:Sstarstardef} that $S^{**}\neq\emptyset$, which implies that $\omega^{**}\in C^{**}$. Hence, there is some $j\in S^{**}$ such that $\omega^{**}\in C_j^{**}$ and, since $j\in S^{**}$, we also know that $\ind{C_j^*}(\omega^*)=1$, or equivalently, that $\omega^*\in C_j^*$. By combining this with Equation~\eqref{eq:theo:aanelkaarplakken:triplestarequivalence}, it follows that $\omega^{***}\in C_j\subseteq C_0$.
So, in all cases, we find that $\omega^{***}\in C_0$. By combining this with Equation~\eqref{eq:theo:aanelkaarplakken:geqtotal}, it follows that Equation~\eqref{eq:theo:aanelkaarplakken:coh} holds, and therefore, that $\tilde{P}$ is coherent.

Since $\tilde{P}$ is coherent, and because $\mathcal{C}\subseteq\mathcal{C}^\mathrm{SP}$, it now follows from Theorem~\ref{theo:largerdomain} and Definition~\ref{def:stoch_process} that $\tilde{P}$ can be extended to a stochastic process $P$. Furthermore, since $P$ coincides with $\tilde{P}$ on $\mathcal{C}$, it follows from Equation~\eqref{eq:theo:aanelkaarplakken:defPtilde} that $P$ satisfies Equations~\eqref{eq:theo:aanelkaarplakken:equalsfirst} and~\eqref{eq:theo:aanelkaarplakken:equalssecond}. This concludes the first part of this proof.

In the remainder of this proof, we will show that $P\in\wprocesses_{\rateset,\,\mathcal{M}}$, as desired. To this end, let $P^*$ be any full conditional probability that coincides with $P$ on $\smash{\mathcal{C}^{\mathrm{SP}}}$; Corollary~\ref{corol:processiffrestriction} implies that such a full conditional probability always exists.

Now observe that due to Equation~\eqref{eq:theo:aanelkaarplakken:firstpartofdomain}, we have for all $x\in\states$ that $(X_0=y,X_{\emptyset}=x_{\emptyset})\in\mathcal{C}_{\emptyset}$. Therefore, and because of Equation~\eqref{eq:theo:aanelkaarplakken:defPtilde}, we find that
\begin{equation*}
P(X_0=y)=P(X_0=y\,\vert\,X_{\emptyset}=x_{\emptyset})=P_{\emptyset}(X_0=y\,\vert\,X_{\emptyset}=x_{\emptyset})=P_{\emptyset}(X_0=y)
\text{ for all $y\in\states$,}
\end{equation*}
which together with the fact that $P_{\emptyset}\in\wprocesses_{\rateset,\,\mathcal{M}}$, implies that $P(X_0)\in\mathcal{M}$. Hence, in order to prove that $\smash{P\in\wprocesses_{\rateset,\,\mathcal{M}}}$, it remains to show that $P$ is well-behaved as well as consistent with $\rateset$.

In order to do this, we start by establishing an important equality.
Consider any $w\in\mathcal{U}$ and $s\in\reals_{\geq0}$ such that $w<s$ and $u<s$. Then for all $x_w\in\states_w$ and $y\in\states$, we have that
\begin{align}\label{eq:theo:aanelkaarplakken:convexcombo}
P&(X_s=y\vert X_w=x_w)
=P^*(X_s=y\vert X_w=x_w)\notag\\
&=
\sum_{x_{u\setminus w}\in\states_{u\setminus w}}
P^*(X_s=y,
X_{u\setminus w}=x_{u\setminus w}
\vert X_w=x_w)\notag\\
&=
\sum_{x_{u\setminus w}\in\states_{u\setminus w}}
P^*(X_s=y\vert
X_{u\setminus w}=x_{u\setminus w}, X_w=x_w)
P^*(X_{u\setminus w}=x_{u\setminus w}
\vert X_w=x_w)\notag\\
&=
\sum_{x_{u\setminus w}\in\states_{u\setminus w}}
P^*(X_s=y\vert
X_{u}=x_{u}, X_{w\setminus u}=x_{w\setminus u})
P^*(X_{u\setminus w}=x_{u\setminus w}
\vert X_w=x_w)\notag\\
&=
\sum_{x_{u\setminus w}\in\states_{u\setminus w}}
P(X_s=y\vert
X_{u}=x_{u}, X_{w\setminus u}=x_{w\setminus u})
P^*(X_{u\setminus w}=x_{u\setminus w}
\vert X_w=x_w)\notag\\
&=
\sum_{x_{u\setminus w}\in\states_{u\setminus w}}
\tilde{P}(X_s=y\vert
X_{u}=x_{u}, X_{w\setminus u}=x_{w\setminus u})
P^*(X_{u\setminus w}=x_{u\setminus w}
\vert X_w=x_w)\notag\\
&=
\sum_{x_{u\setminus w}\in\states_{u\setminus w}}
P_{x_u}(X_s=y\vert
X_{u}=x_{u}, X_{w\setminus [0,\max u]}=x_{w\setminus [0,\max u]})
P^*(X_{u\setminus w}=x_{u\setminus w}
\vert X_w=x_w)\notag\\[-3mm]
\end{align}

Using this equality, we will next show that for small enough $\Delta\in\realspos$, the transition matrices $T_{t,x_v}^{t+\Delta}$ and $T_{t-\Delta,x_v}^t$ corresponding to $P$ can each be written as a (different) convex combination of transition matrices corresponding to processes in $\smash{\wprocesses_{\rateset,\,\mathcal{M}}}$.

Formally, we will show that for any $t\geq0$, $v\in\mathcal{U}_{<t}$ and $x_v\in\states_v$, there is some finite index set $\mathcal{I}$, some $v^*\in\mathcal{U}_{<t}$, a set of non-negative coefficients $(\lambda_i)_{i\in \mathcal{I}}$ that sum to one, a set of stochastic processes $({}^iP\in\wprocesses_{\rateset,\,\mathcal{M}})_{i\in \mathcal{I}}$ and a set of state instantiations $({}^ix_{v^*}\in\states_{v^*})_{i\in \mathcal{I}}$ such that
\begin{equation}\label{eq:theo:aanelkaarplakken:convexTright}
(\exists \delta>0)~(\forall 0<\Delta<\delta)~~
T_{t,x_v}^{t+\Delta}
=\sum_{i\in \mathcal{I}}\lambda_i
{}^iT_{t,{}^ix_{v^*}}^{t+\Delta}
\end{equation}
and, similarly, that for any $t>0$, $v\in\mathcal{U}_{<t}$ and $x_v\in\states_v$, there is some finite index set $\mathcal{I}$, some $v^*\in\mathcal{U}_{<t}$, a set of non-negative coefficients $(\lambda_i)_{i\in \mathcal{I}}$ that sum to one, a set of stochastic processes $({}^iP\in\wprocesses_{\rateset,\,\mathcal{M}})_{i\in \mathcal{I}}$ and a set of state instantiations $({}^ix_{v^*}\in\states_{v^*})_{i\in \mathcal{I}}$ such that
\begin{equation}\label{eq:theo:aanelkaarplakken:convexTleft}
(\exists \delta>0)~(\forall 0<\Delta<\delta)~~
T_{t-\Delta,x_v}^{t}
=\sum_{i\in \mathcal{I}}\lambda_i
{}^iT_{t-\Delta,{}^ix_{v^*}}^{t}.
\end{equation}

We start by constructing the convex combination that satisfies Equation~\eqref{eq:theo:aanelkaarplakken:convexTright}. So consider any $t\geq0$, $v\in\mathcal{U}_{<t}$ and $x_v\in\states_v$.
We distinguish between two cases: $t<\max u$ and $t\geq\max u$. If $t<\max u$, then for all $\Delta\in(0,\max u-\max v)$ and $x,y\in\states$, we see that $(X_{t+\Delta}=y,(X_t=x, X_v=x_v))\in\mathcal{C}_\emptyset$, and therefore, since $P$ is an extension of $\tilde{P}$, it follows from Equation~\eqref{eq:theo:aanelkaarplakken:defPtilde} that
\begin{equation*}
P(X_{t+\Delta}=y\vert X_t=x, X_v=x_v)
=P_\emptyset(X_{t+\Delta}=y\vert X_t=x, X_v=x_v).
\end{equation*}
Hence, if we let $\mathcal{I}\coloneqq\{i\}$, $v^*\coloneqq v$, $\lambda_i\coloneqq 1$, ${}^iP\coloneqq P_\emptyset$ and ${}^ix_{v^*}\coloneqq x_v$, Equation~\eqref{eq:theo:aanelkaarplakken:convexTright} is satisfied by choosing $\delta\coloneqq \max u-\max v$.
If $t\geq \max u$, then for all $\Delta>0$, it follows from Equation~\eqref{eq:theo:aanelkaarplakken:convexcombo} (with $s\coloneqq t+\Delta$ and $w\coloneqq v\cup t$) that
\begin{align*}
&P(X_{t+\Delta}=y\vert X_t=x, X_v=x_v)\\
&~~~~=
\sum_{x_{u\setminus(v\cup t)}\in\states_{u\setminus(v\cup t)}}
P_{x_u}(X_{t+\Delta}=y\vert X_t=x, 
X_{(u\setminus t)\cup(v\setminus [0,\max u])}= 
x_{(u\setminus t)\cup(v\setminus [0,\max u])})\\[-4mm]
&\quad\quad\quad\quad\quad\quad\quad\quad\quad\quad\quad\quad\quad\quad\quad\quad\quad\quad
P^*(X_{u\setminus(v\cup t)}=x_{u\setminus(v\cup t)}
\vert X_t=x, X_v=x_v).
\end{align*}
Therefore, if we let $\mathcal{I}\coloneqq\states_{u\setminus(v\cup t)}$, $v^*\coloneqq (u\setminus t)\cup(v\setminus [0,\max u])$ and, for all $x_{u\setminus(v\cup t)}\in \mathcal{I}$,
\begin{equation*}
\lambda_{x_{u\setminus(v\cup t)}}
\coloneqq P^*(X_{u\setminus(v\cup t)}=x_{u\setminus(v\cup t)}
\vert X_t=x, X_v=x_v),
\end{equation*}
${}^{x_{u\setminus(v\cup t)}}P=P_{x_u}$ and ${}^{x_{u\setminus(v\cup t)}}x_{v^*}\coloneqq
x_{(u\setminus t)\cup(v\setminus [0,\max u])}$, Equation~\eqref{eq:theo:aanelkaarplakken:convexTright} is satisfied for any $\delta>0$.
Hence, Equation~\eqref{eq:theo:aanelkaarplakken:convexTright} can be satisfied both when $t<\max u$ and when $t\geq \max u$.

We will next construct the convex combination that satisfies Equation~\eqref{eq:theo:aanelkaarplakken:convexTleft}. So, consider any $t>0$, $v\in\mathcal{U}_{<t}$ and $x_v\in\states_v$.
We again distinguish between two cases: $t\leq\max u$ and $t>\max u$. If $t\leq\max u$, then for all $\Delta\in(0,t-\max v)$ and $x,y\in\states$, we see that $(X_{t}=y,(X_{t-\Delta}=x, X_v=x_v))\in\mathcal{C}_\emptyset$, and therefore, since $P$ is an extension of $\tilde{P}$, it follows from Equation~\eqref{eq:theo:aanelkaarplakken:defPtilde} that
\begin{equation*}
P(X_{t}=y\vert X_{t-\Delta}=x, X_v=x_v)
=P_\emptyset(X_{t}=y\vert X_{t-\Delta}=x, X_v=x_v).
\end{equation*}
Hence, if we let $\mathcal{I}\coloneqq\{i\}$, $v^*\coloneqq v$, $\lambda_i\coloneqq 1$, ${}^iP\coloneqq P_\emptyset$ and ${}^ix_{v^*}\coloneqq x_v$, Equation~\eqref{eq:theo:aanelkaarplakken:convexTleft} is satisfied by choosing $\delta\coloneqq t-\max v$.
If $t>\max u$, then for all $\Delta\in(0,t-\max(v\cup u))$, it follows from Equation~\eqref{eq:theo:aanelkaarplakken:convexcombo} (with $s\coloneqq t$ and $w\coloneqq v\cup {t-\Delta}$) that
\begin{align*}
&P(X_{t}=y\vert X_{t-\Delta}=x, X_v=x_v)\\
&~~~~=
\sum_{x_{u\setminus v}\in\states_{u\setminus v}}
P_{x_u}(X_{t}=y\vert X_{t-\Delta}=x, 
X_{u\cup(v\setminus [0,\max u])}= 
x_{u\cup(v\setminus [0,\max u])})\\[-4mm]
&\quad\quad\quad\quad\quad\quad\quad\quad\quad\quad\quad\quad\quad\quad\quad\quad\quad\quad
P^*(X_{u\setminus v}=x_{u\setminus v}
\vert X_{t-\Delta}=x, X_v=x_v).
\end{align*}
Therefore, if we let $\mathcal{I}\coloneqq\states_{u\setminus v}$, $v^*\coloneqq u\cup(v\setminus [0,\max u])$ and, for all $x_{u\setminus v}\in \mathcal{I}$,
\begin{equation*}
\lambda_{x_{u\setminus v}}
\coloneqq P^*(X_{u\setminus v}=x_{u\setminus v}
\vert X_{t-\Delta}=x, X_v=x_v),
\end{equation*}
${}^{x_{u\setminus v}}P=P_{x_u}$ and ${}^{x_{u\setminus v}}x_{v^*}\coloneqq
x_{ u\cup(v\setminus [0,\max u])}$, Equation~\eqref{eq:theo:aanelkaarplakken:convexTleft} is satisfied by choosing $\delta\coloneqq t-\max(v\cup u)$.
Hence, Equation~\eqref{eq:theo:aanelkaarplakken:convexTleft} can be satisfied both when $t\leq \max u$ and when $t>\max u$.

Therefore, indeed, as claimed before, both $T_{t,x_v}^{t+\Delta}$ and $T_{t-\Delta,x_v}^t$ can be written as a convex combination of transition matrices corresponding to elements of $\smash{\wprocesses_{\rateset,\,\mathcal{M}}}$---assuming that $\Delta$ is small enough.
We will now use this fact to prove that $P$ is well-behaved and consistent with $\mathcal{Q}$.

We start by proving that $P$ is well-behaved. First consider any $t\geq0$, $v\in\mathcal{U}_{<t}$ and $x_v\in\states_v$, and consider the indexed set $\{{}^iP\in\wprocesses_{\rateset,\,\mathcal{M}}\}_{i\in \mathcal{I}}$ of stochastic processes whose transition matrices appear in the convex combination that satisfies  Equation~\eqref{eq:theo:aanelkaarplakken:convexTright}. Then,
\begin{equation}\label{eq:theo:aanelkaarplakken:wellbehavedTright}
\begin{aligned}
\limsup_{\Delta\to 0^{+}}\frac{1}{\Delta}\norm{T_{t,x_v}^{t+\Delta}-I} &= \limsup_{\Delta\to 0^{+}}\frac{1}{\Delta}\norm{\sum_{i\in \mathcal{I}}\lambda_i{}^iT_{t,{}^ix_{v^*}}^{t+\Delta}-I} \\
 & \leq\sum_{i\in \mathcal{I}}\lambda_i\limsup_{\Delta\to 0^{+}}\frac{1}{\Delta}\norm{{}^iT_{t,{}^ix_{v^*}}^{t+\Delta}-I}
<+\infty\,,
\end{aligned}
\end{equation}
where the last inequality follows from Proposition~\ref{prop:stochasticprocess:simpleproperties} and the fact that every process ${}^iP$ that appears in this combination is well-behaved.

Similarly, for any $t>0$, $v\in\mathcal{U}_{<t}$ and $x_v\in\states_v$, Equation~\eqref{eq:theo:aanelkaarplakken:convexTleft} can be satisfied with a convex combination of transition matrices corresponding to well-behaved processes, which implies that
\begin{equation}\label{eq:theo:aanelkaarplakken:wellbehavedTleft}
\begin{aligned}
\limsup_{\Delta\to 0^{+}}\frac{1}{\Delta}\norm{T_{t-\Delta,x_v}^t-I} &= \limsup_{\Delta\to 0^{+}}\frac{1}{\Delta}\norm{\sum_{i\in \mathcal{I}}\lambda_i{}^iT_{t-\Delta,{}^ix_{v^*}}^{t}-I} \\
 &\leq\sum_{i\in \mathcal{I}}\lambda_i\limsup_{\Delta\to 0^{+}}\frac{1}{\Delta}\norm{{}^iT_{t-\Delta,{}^ix_{v^*}}^t-I}
<+\infty.
\end{aligned}
\end{equation}
Because the $t\in\realsnonneg$, $v\in\mathcal{U}_{<t}$ and $x_v\in\states_v$ in Equation~\eqref{eq:theo:aanelkaarplakken:wellbehavedTright}, and the $t\in\realspos$, $v\in\mathcal{U}_{<t}$ and $x_v\in\states_v$ in Equation~\eqref{eq:theo:aanelkaarplakken:wellbehavedTleft} are arbitrary, by invoking Proposition~\ref{prop:stochasticprocess:simpleproperties}, it follows that $P$ is well-behaved.

We end by proving that $P$ is consistent with $\rateset$. Assume \emph{ex absurdo} that $P$ is not consistent with $\rateset$, or equivalently, that there are $t\geq0$, $v\in\mathcal{U}_{<t}$, $x_v\in\states_v$ and $Q^*\in\mathcal{R}$ such that $Q^*\in\overline{\partial}
{T^t_{t,\,x_v}}$ and $Q^*\notin\rateset$. We will show that this leads to a contradiction. 
We consider two (possibly overlapping) cases: $\smash{Q^*\in\overline{\partial}_{+}
{T^t_{t,\,x_v}}}$ and $\smash{Q^*\in\overline{\partial}_{-}
{T^t_{t,\,x_v}}}$.

If $\smash{Q^*\in\overline{\partial}_{+}
{T^t_{t,\,x_v}}}$, it follows from Equation~\eqref{eq:rightouterderivative} that there is a sequence $\{\Delta_j\}_{j\in\nats}\to0^+$ such that
\begin{equation}\label{eq:theo:aanelkaarplakken:Qstar}
\lim_{j\to+\infty}
\frac{1}{\Delta_j}
(T^{t+\Delta_j}_{t,\,x_v}-I)
=Q^*
\end{equation}
Consider the $\mathcal{I}$, $v^*\in\mathcal{U}_{<t}$, $\{\lambda_i\}_{i\in \mathcal{I}}$, $\{{}^iP\in\wprocesses_{\rateset,\,\mathcal{M}}\}_{i\in \mathcal{I}}$, $\{{}^ix_{v^*}\}_{i\in \mathcal{I}}$ and $\delta$ that satisfy Equation~\eqref{eq:theo:aanelkaarplakken:convexTright}. Fix any $i\in \mathcal{I}$. Since ${}^iP$ is well-behaved, the sequence
\begin{equation*}
\left\{\frac{1}{\Delta_j}
({}^iT^{t+\Delta_j}_{t,\,{}^ix_{v^*}}-I)\right\}_{j\in\nats}
\end{equation*}
is bounded, and therefore, the Bolzano-Weierstra{\ss} theorem implies that it has a convergent subsequence, of which we denote the limit by $Q_i$. Hence, without loss of generality---simply remove the indexes $j$ that do not correspond to the subsequence---we may assume that
\begin{equation}\label{eq:theo:aanelkaarplakken:fixingsubsequence}
\lim_{j\to+\infty}\frac{1}{\Delta_j}
({}^iT^{t+\Delta_j}_{t,\,{}^ix_{v^*}}-I)=Q_i.
\end{equation}
Since we know from Proposition~\ref{prop:rate_from_stochastic_matrix} that $Q_i$ is a limit of rate matrices, $Q_i$ is also a rate matrix, and therefore, it follows from Equation~\eqref{eq:rightouterderivative} that $Q_i\in\smash{\overline{\partial}_{+}
{{}^iT^t_{t,\,{}^ix_{v^*}}}}$, which, since ${}^iP$ is consistent with $\rateset$, implies that $Q_i\in\rateset$. By repeating this argument for every other $i\in \mathcal{I}$, we obtain a set of rate matrices $\{Q_i\in\rateset\}_{i\in \mathcal{I}}$ such that, without loss of generality, Equation~\eqref{eq:theo:aanelkaarplakken:fixingsubsequence} holds for every $i\in \mathcal{I}$. Additionally, since $\lim_{j\to\infty}\Delta_j=0$, we may assume without loss of generality that $0<\Delta_j<\delta$ for all $j\in\nats$. Equations~\eqref{eq:theo:aanelkaarplakken:convexTright} and~\eqref{eq:theo:aanelkaarplakken:fixingsubsequence} now imply that
\begin{equation*}
\lim_{j\to+\infty}
\frac{1}{\Delta_j}
(T_{t,x_v}^{t+\Delta_j}-I)
=\lim_{j\to+\infty}
\frac{1}{\Delta_j}
(\sum_{i\in \mathcal{I}}\lambda_i
{}^iT_{t,{}^ix_{v^*}}^{t+\Delta_j}-I)
=\sum_{i\in I}\lambda_i Q_i,
\end{equation*}
which, because of Equation~\eqref{eq:theo:aanelkaarplakken:Qstar}, implies that $Q^*=\sum_{i\in \mathcal{I}}\lambda_i Q_i$. Since $\rateset$ is convex, this implies that $Q^*\in\rateset$, a contradiction. Recall that this contradiction was derived from the assumption that $Q^*\in\overline{\partial}_{+}
{T^t_{t,\,x_v}}$.

If instead $\smash{Q^*\in\overline{\partial}_{-}
{T^t_{t,\,x_v}}}$, a completely analogous argument leads to the same contradiction: simply use Equations~\eqref{eq:leftouterderivative} and~\eqref{eq:theo:aanelkaarplakken:convexTleft} instead of Equations~\eqref{eq:rightouterderivative} and~\eqref{eq:theo:aanelkaarplakken:convexTright}, respectively, and adapt the rest of the argument accordingly. Since the two cases $Q^*\in\overline{\partial}_{+}
{T^t_{t,\,x_v}}$ and $\smash{Q^*\in\overline{\partial}_{-}
{T^t_{t,\,x_v}}}$ both lead to a contradiction, we conclude that our ex absurdo assumption must be false, or equivalently, that $P$ is consistent with $\rateset$. 

Hence, since we already know that $P$ is a well-behaved stochastic process such that $P(X_0)\in\mathcal{M}$, it follows that $P\in\wprocesses_{\rateset,\,\mathcal{M}}$.
\end{proof}

\proplowerexpmarkovboundedbynonmarkov*
\begin{proof}
This result is an immediate consequence of Proposition \ref{prop:markov_set_subset_of_nonmarkov_set} and Definition~\ref{def:lower_exp}.
\end{proof}

\theoremdecompositionmultivar*
\begin{proof}
Let $g(X_u,X_v)\coloneqq\underline{\mathbb{E}}^{\mathrm{W}}_{\rateset,\,\mathcal{M}}\left[f(X_u,X_v,X_w)\vert\,X_u,X_v\right]$ and fix any $x_u\in\states_u$.
For any $P\in\wprocesses_{\rateset,\,\mathcal{M}}$, it then follows from the basic properties of expectations (which follow from~\ref{def:coh_prob_2}--\ref{def:coh_prob_6}) and Definition~\ref{def:lower_exp} that
\begin{align*}
\mathbb{E}_P\left[f(X_u,X_v,X_w)\vert\,x_u\right] = \mathbb{E}_P\bigl[\mathbb{E}_P\left[f(X_u,X_v,X_w)\vert\,X_u,X_v\right]\vert\,x_u\bigr] 
 &\geq \mathbb{E}_P\bigl[g(X_u,X_v)\vert\,x_u\bigr] \\
 &\geq \underline{\mathbb{E}}^{\mathrm{W}}_{\rateset,\,\mathcal{M}}\bigl[g(X_u,X_v)\vert\,x_u\bigr].
\end{align*}\\[-20pt]
Since $P\in\wprocesses_{\rateset,\,\mathcal{M}}$ is arbitrary, this implies that
\begin{equation}\label{eq:theorem:decomposition_multivar:easyinequality}
\underline{\mathbb{E}}^{\mathrm{W}}_{\rateset,\,\mathcal{M}}\left[f(X_u,X_v,X_w)\vert\,x_u\right] \geq \underline{\mathbb{E}}^{\mathrm{W}}_{\rateset,\,\mathcal{M}}\bigl[g(X_u,X_v)\vert\,x_u\bigr].
\end{equation}

\noindent
Now fix any $\epsilon\in\realspos$.
Then due to Definition~\ref{def:lower_exp}, there is some $P_\emptyset\in\wprocesses_{\rateset,\,\mathcal{M}}$ such that
\begin{equation}\label{eq:theorem:decomposition_multivar:firsthalfprob}
\mathbb{E}_{P_\emptyset}\bigl[g(x_u,X_v)\vert\,x_u\bigr]
=\mathbb{E}_{P_\emptyset}\bigl[g(X_u,X_v)\vert\,x_u\bigr] < \underline{\mathbb{E}}^{\mathrm{W}}_{\rateset,\,\mathcal{M}}\bigl[g(X_u,X_v)\big\vert\,x_u\bigr]
+\nicefrac{\epsilon}{2},
\end{equation}
where the first equality follows from the basic properties of expectations. Similarly, for all $x_{v}\in\states_v$, there is some $P_{x_v}\in\wprocesses_{\rateset,\,\mathcal{M}}$ such that
\begin{equation}\label{eq:theorem:decomposition_multivar:secondhalfprob}
\mathbb{E}_{P_{x_v}}\left[f(X_u,X_v,X_w)\vert\,x_u,x_v\right] < \underline{\mathbb{E}}^{\mathrm{W}}_{\rateset,\,\mathcal{M}}\left[f(X_u,X_v,X_w)\vert\,x_u,x_v\right]+\nicefrac{\epsilon}{2}=g(x_u,x_v)+\nicefrac{\epsilon}{2}.
\end{equation}
Since $\rateset$ is non-empty, bounded, and convex, Theorem~\ref{theo:aanelkaarplakken} now implies the existence of a process $P\in\wprocesses_{\rateset,\,\mathcal{M}}$ such that for all $x_v\in\states_v$ and $x_w\in\states_w$:
\begin{equation*}
P(X_v=x_v\,\vert\,X_u=x_u) = P_\emptyset(X_v=x_v\,\vert\,X_u=x_u)
\end{equation*}\\[-20pt]
and
\begin{equation*}
P(X_w=x_w\,\vert\,X_u=x_u,X_v=x_v) = P_{x_v}(X_w=x_w\,\vert\,X_u=x_u,X_v=x_v)\vspace{6pt}
\end{equation*}
and therefore also, due to Equations~\eqref{eq:theorem:decomposition_multivar:firsthalfprob} and~\eqref{eq:theorem:decomposition_multivar:secondhalfprob},
\begin{equation*}
\mathbb{E}_P[g(x_u,X_v)\vert\,x_u]
=\mathbb{E}_{P_\emptyset}[g(x_u,X_v)\vert\,x_u]
< \underline{\mathbb{E}}^{\mathrm{W}}_{\rateset,\,\mathcal{M}}\bigl[g(X_u,X_v)\big\vert\,x_u\bigr]
+\nicefrac{\epsilon}{2}
\end{equation*}\\[-20pt]
and
\begin{equation*}
\mathbb{E}_P[f(X_u,X_v,X_w)\vert\,x_u,x_v]
=\mathbb{E}_{P_{x_v}}[f(X_u,X_v,X_w)\vert\, x_u,x_v]
<g(x_u,x_v)+\nicefrac{\epsilon}{2}.\vspace{6pt}
\end{equation*}
Hence, we find that
\begin{align*}
\mathbb{E}_P\bigl[\mathbb{E}_P\left[f(X_u,X_v,X_w)\vert\,x_u,X_v\right]\vert\,x_u\bigr]
\leq
\mathbb{E}_P\bigl[g(x_u,X_v)\,\big\vert\,x_u\bigr]+\nicefrac{\epsilon}{2}
< \underline{\mathbb{E}}^{\mathrm{W}}_{\rateset,\,\mathcal{M}}\bigl[g(X_u,X_v)\big\vert\,x_u\bigr]
+\epsilon.
\end{align*}
Since $\epsilon$ was arbitrary, and because
\begin{equation*}
\underline{\mathbb{E}}^{\mathrm{W}}_{\rateset,\,\mathcal{M}}\left[f(X_u,X_v,X_w)\vert\,x_u\right]
\leq
\mathbb{E}_P\left[f(X_u,X_v,X_w)\vert\,x_u\right] = \mathbb{E}_P\bigl[\mathbb{E}_P\left[f(X_u,X_v,X_w)\vert\,x_u,X_v\right]\vert\,x_u\bigr],
\end{equation*}
this implies that
$\underline{\mathbb{E}}^{\mathrm{W}}_{\rateset,\,\mathcal{M}}\left[f(X_u,X_v,X_w)\vert\,x_u\right]\leq\underline{\mathbb{E}}^{\mathrm{W}}_{\rateset,\,\mathcal{M}}\bigl[g(X_u,X_v)\big\vert\,x_u\bigr]$ and therefore, because of Equation~\eqref{eq:theorem:decomposition_multivar:easyinequality}, that $\underline{\mathbb{E}}^{\mathrm{W}}_{\rateset,\,\mathcal{M}}\left[f(X_u,X_v,X_w)\vert\,x_u\right]=\underline{\mathbb{E}}^{\mathrm{W}}_{\rateset,\,\mathcal{M}}\bigl[g(X_u,X_v)\big\vert\,x_u\bigr]$. Since $x_u$ was arbitrary, this implies that $\underline{\mathbb{E}}^{\mathrm{W}}_{\rateset,\,\mathcal{M}}\left[f(X_u,X_v,X_w)\vert\,X_u\right]=\underline{\mathbb{E}}^{\mathrm{W}}_{\rateset,\,\mathcal{M}}\bigl[g(X_u,X_v)\big\vert\,X_u\bigr]$. The result is now immediate because $g(X_u,X_v)=\underline{\mathbb{E}}^{\mathrm{W}}_{\rateset,\,\mathcal{M}}\left[f(X_u,X_v,X_w)\vert\,X_u,X_v\right]$. 
\end{proof}

\section{Proofs and Lemmas for the results in Section~\ref{sec:lowertrans}}

\lemmacompositioncoherence*
\begin{proof}
Consider any $f,g\in\gamblesX$, $\lambda\in\realsnonneg$ and $x\in\states$. Since $\underline{T}$ and $\underline{S}$ are lower transition operators, they both satisfy~\ref{LT:bounded_min}, and therefore, we find that
\begin{align*}
[\underline{T}\underline{S}f](x)
&\geq
\min\{[\underline{S}f](y)\colon y\in\states\}\\
&\geq
\min\{\min\{f(z)\colon z\in\states\}\colon y\in\states\}
=
\min\{f(z)\colon z\in\states\},
\end{align*}
which implies that $\underline{T}\underline{S}$ satisfies~\ref{LT:bounded_min} as well. 
Similarly, $\underline{T}\underline{S}$ satisfies~\ref{LT:super_additive} because
\begin{align*}
[\underline{T}\underline{S}(f+g)](x)
\geq
[\underline{T}(\underline{S}f+\underline{S}g)](x)
\geq
[\underline{T}\underline{S}f](x)
+
[\underline{T}\underline{S}g](x),
\end{align*}
where the first inequality follows from~\ref{LT:monotonicity} and the fact that $\underline{S}$ satisfies~\ref{LT:super_additive}, and where the second inequality follows from the fact that $\underline{T}$ satisfies~\ref{LT:super_additive}.
Finally, since $\underline{T}$ and $\underline{S}$ both satisfy~\ref{LT:homo}, it follows that $\underline{T}\underline{S}$ also satisfies~\ref{LT:homo}, because
\begin{align*}
[\underline{T}\underline{S}(\lambda f)](x)
=
[\underline{T}(\lambda\underline{S}f)](x)
=
\lambda[\underline{T}\underline{S}f](x).
\end{align*}
We conclude that $\underline{T}\underline{S}$ satisfies~\ref{LT:bounded_min}-\ref{LT:homo}, and therefore, because of Definition~\ref{def:low_trans}, it is a lower transition operator.
\end{proof}

We require the following two results for the proof of Proposition~\ref{lemma:completemetricspace}.

\begin{lemma}\cite[Proposition~1]{DeBock:2016}\label{lemma:lowertrans_if_pointwise_limit}
Consider any sequence $\{\lt_i\}_{i\in\nats}$ of lower transition operators such that $\lt f=\lim_{i\to\infty}\lt_i f$ for all $f\in\gamblesX$. Then $\lt$ is a lower transition operator.
\end{lemma}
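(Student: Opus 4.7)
The plan is to verify the three defining axioms \ref{LT:bounded_min}, \ref{LT:super_additive}, \ref{LT:homo} of Definition~\ref{def:coh_low_trans} for the candidate operator $\lt$, by taking pointwise limits of the corresponding inequalities/equalities for each $\lt_i$. The hypothesis already guarantees that $\lt f$ is well-defined as an element of $\gamblesX$ for every $f\in\gamblesX$, since the pointwise limit exists (and is automatically finite-valued because the sequence $\{[\lt_i f](x)\}_{i\in\nats}$ is bounded for each $x$---each $\lt_i f$ lies between $\min f$ and $\max f$ by \ref{LT:bounded_min} applied to $\lt_i$ and to the conjugate bound derivable from \ref{LT:bounded_min} via $-f$). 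So the real content is just passing to the limit in each axiom.

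For \ref{LT:bounded_min}, fix $f\in\gamblesX$ and $x\in\states$. Each $\lt_i$ satisfies $[\lt_i f](x)\geq\min\{f(y)\colon y\in\states\}$, and since the right-hand side does not depend on $i$, taking the limit $i\to\infty$ yields $[\lt f](x)\geq\min\{f(y)\colon y\in\states\}$. For \ref{LT:super_additive}, fix $f,g\in\gamblesX$ and $x\in\states$; the inequality $[\lt_i(f+g)](x)\geq[\lt_i f](x)+[\lt_i g](x)$ holds for every $i$, and since all three pointwise limits exist by hypothesis, taking $i\to\infty$ on both sides preserves the inequality and gives $[\lt(f+g)](x)\geq[\lt f](x)+[\lt g](x)$. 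For \ref{LT:homo}, fix $f\in\gamblesX$, $\lambda\in\realsnonneg$, and $x\in\states$; from $[\lt_i(\lambda f)](x)=\lambda[\lt_i f](x)$ and the continuity of scalar multiplication, the limit as $i\to\infty$ gives $[\lt(\lambda f)](x)=\lambda[\lt f](x)$.

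Since $x\in\states$ and $f,g\in\gamblesX$, $\lambda\in\realsnonneg$ were arbitrary, $\lt$ satisfies \ref{LT:bounded_min}--\ref{LT:homo} and is therefore a lower transition operator by Definition~\ref{def:coh_low_trans}. The hardest conceptual point---if one can even call it hard---is merely noting that the pointwise-limit hypothesis suffices (no uniformity required), because each axiom is preserved under pointwise limits: \ref{LT:bounded_min} and \ref{LT:super_additive} are closed inequalities, and \ref{LT:homo} is a linear identity. No subtleties arise because $\gamblesX$ is finite-dimensional (as $\states$ is finite), so pointwise convergence coincides with norm convergence, but we do not even need that observation for the argument above.
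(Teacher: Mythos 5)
Your proof is correct. The paper does not actually prove this lemma itself---it imports it verbatim from Reference~\cite{DeBock:2016}---but your argument is the standard one that the cited result rests on: each of \ref{LT:bounded_min}--\ref{LT:homo} is a closed inequality or a linear identity in the values $[\lt_i f](x)$, so all three pass to the pointwise limit, and the existence of that limit is already part of the hypothesis. The only quibble is the parenthetical claiming the upper bound $[\lt_i f](x)\leq\max f$ follows from \ref{LT:bounded_min} applied to $-f$ alone; one also needs \ref{LT:super_additive} and \ref{LT:homo} (via $\lt_i(f+(-f))=\lt_i 0=0$) to transfer that bound to $\lt_i f$---but as you note, this remark is not needed anyway, since finiteness of $\lt f$ is immediate from the assumed convergence.
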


\begin{lemma}\cite[Proposition~2]{DeBock:2016}\label{lemma:lowertrans_limit_iff_pointwise_limit}
Let $\lt$ be a lower transition operator, and consider any sequence $\{\lt_i\}_{i\in\nats}$ of lower transition operators. Then, $\lt=\lim_{i\to\infty}\lt_i$ if and only if $\lt f=\lim_{i\to\infty}\lt_i f$ for all $f\in\gamblesX$.
\end{lemma}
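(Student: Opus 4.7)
The plan is to take an arbitrary Cauchy sequence $\{\lt_i\}_{i\in\nats}$ in $(\underline{\mathbb{T}},d)$ and produce a lower transition operator $\lt$ to which it converges. The two cited lemmas immediately preceding the statement (the fact that the pointwise limit of lower transition operators is a lower transition operator, and the equivalence between norm convergence and pointwise convergence for lower transition operators) do most of the work, so the main task is to build a reasonable candidate for $\lt$ via pointwise limits.

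First I would verify that the operator-norm differences $\norm{\lt_i - \lt_j}$ are well defined by observing that $\lt_i - \lt_j$ is non-negatively homogeneous (this is immediate from \ref{LT:homo} applied to each operator), so that the norm in Equation~\eqref{eq:operatornorm} applies and, using \ref{LT:norm_at_most_one} together with the triangle inequality, is in fact finite. Next, for each fixed $f\in\gamblesX$, I would use property \ref{N:normAf} applied to the non-negatively homogeneous operator $\lt_i-\lt_j$ to obtain the bound
\begin{equation*}
\norm{\lt_i f - \lt_j f} = \norm{(\lt_i - \lt_j)f} \leq \norm{\lt_i - \lt_j}\norm{f}.
\end{equation*}
Since $\{\lt_i\}_{i\in\nats}$ is Cauchy with respect to $d$, the right-hand side tends to $0$ as $i,j\to\infty$, so $\{\lt_i f\}_{i\in\nats}$ is a Cauchy sequence in $\gamblesX$.

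Second, since $\gamblesX$ is a finite-dimensional real vector space equipped with the supremum norm (and is therefore complete), the sequence $\{\lt_i f\}_{i\in\nats}$ converges to a limit in $\gamblesX$, which I denote $\lt f$. Doing this for every $f\in\gamblesX$ defines a map $\lt\colon\gamblesX\to\gamblesX$ satisfying $\lt f = \lim_{i\to\infty}\lt_i f$ pointwise. By Lemma~\ref{lemma:lowertrans_if_pointwise_limit}, the pointwise limit of lower transition operators is itself a lower transition operator, so $\lt\in\underline{\mathbb{T}}$.

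Finally, with this candidate in hand, Lemma~\ref{lemma:lowertrans_limit_iff_pointwise_limit} translates the established pointwise convergence $\lt f = \lim_{i\to\infty}\lt_i f$ into operator-norm convergence $\lt = \lim_{i\to\infty}\lt_i$, which is exactly convergence with respect to the metric $d$. This yields the desired limit in $\underline{\mathbb{T}}$ and therefore completeness. I do not foresee a substantive obstacle here: the only subtle point is the legitimacy of the operator-norm bound on $\lt_i - \lt_j$, which is handled by the brief non-negative-homogeneity check above; the heavy lifting has already been done in the two cited lemmas.
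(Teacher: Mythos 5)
Your proposal does not prove the stated lemma. What you have written is a proof of the \emph{completeness} of the metric space $(\underline{\mathbb{T}},d)$---that is, of Proposition~\ref{lemma:completemetricspace}---and it follows essentially the same route as the paper's proof of that proposition: bound $\norm{\lt_i f-\lt_j f}$ by $\norm{\lt_i-\lt_j}\norm{f}$ using \ref{N:normAf}, extract a pointwise limit by completeness of $\gamblesX$, invoke Lemma~\ref{lemma:lowertrans_if_pointwise_limit} to see that the limit is a lower transition operator, and then upgrade pointwise convergence to norm convergence. The difficulty is that the statement you were asked to prove is precisely that upgrading step: the equivalence between $\lt=\lim_{i\to\infty}\lt_i$ and $\lt f=\lim_{i\to\infty}\lt_i f$ for all $f\in\gamblesX$. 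Your final paragraph invokes Lemma~\ref{lemma:lowertrans_limit_iff_pointwise_limit}---the very statement under consideration---as a black box, so read as a proof of that statement your argument is circular, and read literally it establishes a different result.

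As for the missing content: the ``only if'' direction is immediate from \ref{N:normAf}, since $\norm{\lt f-\lt_i f}\leq\norm{\lt-\lt_i}\norm{f}$. The substantive direction is that pointwise convergence implies convergence in the operator norm, i.e.\ that $\sup\{\norm{\lt_i f-\lt f}\colon f\in\gamblesX,\,\norm{f}=1\}\to0$; pointwise convergence alone does not yield a bound that is uniform over the unit sphere. The standard argument---and the one behind \cite[Proposition~2]{DeBock:2016}, from which the paper imports this lemma without reproving it---combines two facts: every lower transition operator is non-expansive, in the sense that $\norm{\lt g-\lt h}\leq\norm{g-h}$ for all $g,h\in\gamblesX$ (a consequence of \ref{LT:super_additive}, \ref{LT:monotonicity} and \ref{LT:constantadditivity}, or of \ref{LT:differencenorm}), and the unit ball of $\gamblesX$ is compact because $\states$ is finite. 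One covers the unit sphere by finitely many balls of radius $\nicefrac{\epsilon}{3}$ centred at $f_1,\dots,f_m$, chooses $n$ so large that $\norm{\lt_i f_k-\lt f_k}\leq\nicefrac{\epsilon}{3}$ for all $i\geq n$ and all $k\in\{1,\dots,m\}$, and concludes via the triangle inequality and non-expansiveness that $\norm{\lt_i f-\lt f}\leq\epsilon$ for every $f$ with $\norm{f}=1$. Some such equicontinuity-plus-compactness step is indispensable here and is entirely absent from your proposal.
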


\lemmacompletemetricspace*
\begin{proof}
Consider any sequence $\{\lt_i\}_{i\in\nats}$ of lower transition operators that is Cauchy with respect to the operator norm $\norm{\cdot}$. We will prove that $\{\lt_i\}_{i\in\nats}$ converges to a limit $\lt\colon\gamblesX\to\gamblesX$ that is itself a lower transition operator.

Consider any $f\in\gamblesX$ and $x\in\states$. For any $k,\ell\in\nats$, \eqref{N:normAf} then implies that
\begin{equation*}
\abs{[\lt_k f](x)-[\lt_\ell f](x)}
\leq\norm{\lt_k f-\lt_\ell f}
=\norm{(\lt_k-\lt_\ell)f}
\leq\norm{\lt_k-\lt_\ell}\norm{f}.
\end{equation*}
Therefore, and because $\{\lt_i\}_{i\in\nats}$ is Cauchy with respect to the norm $\norm{\cdot}$, it follows that $\{[\lt_i f](x)\}_{i\in\nats}$ is Cauchy with respect to the norm $\abs{\cdot}$. Hence, since $\reals$ is (well known to be) complete with respect to the topology that is induced by $\abs{\cdot}$, we find that $\{[\lt_i f](x)\}_{i\in\nats}$ converges to a limit in $\reals$, which we will denote by $[\lt f](x)$. Let $\lt f$ be the unique function in $\gamblesX$ that has $[\lt f](x)$, $x\in\states$, as its components. Then clearly, $\lt f=\lim_{i\to+\infty}\lt_if$. 

Let $\lt\colon\gamblesX\to\gamblesX$ be the unique operator that maps any $f\in\gamblesX$ to $\lt f$. It then follows from Lemma~\ref{lemma:lowertrans_if_pointwise_limit} that $\lt$ is a lower transition operator. Therefore, and because we already know that $\lt f=\lim_{i\to+\infty}\lt_i f$ for all $f\in\gamblesX$, it now follows from Lemma~\ref{lemma:lowertrans_limit_iff_pointwise_limit} that $\lim_{i\to+\infty}\lt_i=\lt$.
\end{proof}

\proplowerenvelopeislowertrans*
\begin{proof}
Consider any $Q\in\rateset$. It then follows from Definition~\ref{def:rate_matrix} that the matrix $Q$, when regarded as a map from $\gamblesX$ to $\gamblesX$, satisfies \ref{LR:constantzero}--\ref{LR:homo}. Since each of these properties is preserved under taking lower envelopes, it follows that $\lrate$ satisfies \ref{LR:constantzero}--\ref{LR:homo}, which means that $\lrate$ is a lower transition rate operator.
\end{proof}

\propdominatingnonemptybounded*
\begin{proof}
We will start by showing that, for every $f\in\gamblesX$, there is some $\smash{Q\in\rateset_{\lrate}}$ such that $\lrate f=Qf$. To this end, fix any $f\in\gamblesX$. Now choose $\Delta>0$ small enough such that $\smash{0\leq\Delta\norm{\lrate}\leq 1}$ (this always possible because of property~\ref{LR:normlratefinite}) and define $\lt\coloneqq I+\Delta\lrate$. Since $\lrate$ is a lower transition rate operator, it then follows from Proposition~\ref{lemma:normQsmallenough} that $\lt$ is a lower transition operator. For any $x\in\states$, we now consider the operator $\lt_x\colon\gamblesX\to\reals$, as defined by Equation~\eqref{eq:lowerprevisionfromlt}, which is a coherent lower prevision. Because of \cite[Theorem~3.3.3(b)]{Walley:1991vk}, this implies the existence of an expectation operator $\mathbb{E}_x$ on $\gamblesX$---Reference~\cite{Walley:1991vk} calls this a linear prevision on $\gamblesX$---such that $\mathbb{E}_xg\geq\lt_xg$ for all $g\in\gamblesX$ and $\mathbb{E}_xf=\lt_xf$. Let $P_x$ be the unique probability mass function that corresponds to $\mathbb{E}_x$. For all $x,y\in\states$, we now let $T(x,y)\coloneqq P_x(y)\coloneqq \mathbb{E}_x(\ind{y})$. Then $T$ is clearly a transition matrix. Furthermore, for every $x\in\states$ and $g\in\gamblesX$, we have that $(Tg)(x)=\mathbb{E}_xg$. Hence, it follows that $Tg\geq\lt g$ for all $g\in\gamblesX$ and that $Tf=\lt f$. Now let $Q\coloneqq\nicefrac{1}{\Delta}(T-I)$, which, because of Proposition~\ref{prop:rate_from_stochastic_matrix}, is a rate matrix. Since $Tf=\lt f$, it then follows that
\begin{equation*}
Qf=\frac{1}{\Delta}(Tf-f)=\frac{1}{\Delta}(\lt f-f)=\lrate f.
\end{equation*}
Similarly, since $Tg\geq\lt g$ for all $g\in\gamblesX$, it follows that $Qg\geq\lrate g$, or equivalently, since $Q$ is a rate matrix, that $Q\in\rateset_{\lrate}$. Since $f$ was arbitrary, this proves that, for all $f\in\gamblesX$, there is some $Q\in\rateset_{\lrate}$ such that $Qf=\lrate f$. Since $\gamblesX$ is non-empty, this clearly also implies that $\rateset_{\lrate}$ is non-empty.

We end this proof by showing that $\rateset_{\lrate}$ is bounded. Consider any $x\in\states$. Then for all $Q\in\rateset_{\lrate}$, we have that $Q(x,x)=[Q\ind{x}](x)\geq[\lrate\ind{x}](x)$, which implies that
\begin{equation*}
\inf\left\{Q(x,x)\colon Q\in\rateset_{\lrate}\right\}\geq[\lrate\ind{x}](x)>-\infty.
\end{equation*}
Since $x\in\states$ is arbitrary, Proposition~\ref{prop:alternativedefforbounded} now guarantees that $\rateset_{\lrate}$ is bounded. 
\end{proof}

\propdominatingproperties*
\begin{proof}
We start by showing that $\rateset_{\lrate}$ is closed, or equivalently, that for any converging sequence $\{Q_i\}_{i\in\nats}$ in $\rateset_{\lrate}$, the limit $Q\coloneqq\lim_{i\to+\infty}Q_i$ is again an element of $\rateset_{\lrate}$. Since $\{Q_i\}_{i\in\nats}$ belongs to the bounded set of rate matrices $\rateset_{\lrate}$---see Proposition~\ref{prop:dominating_nonempty_bounded}---we know that the limit $Q$ is a real-valued matrix, and therefore, since~\ref{def:Q:sumzero} and~\ref{def:Q:nonnegoffdiagonal} are clearly both preserved under taking limits, $Q$ is a rate matrix. Now, assume \emph{ex absurdo} that $Q\notin\rateset_{\lrate}$. Then, by Equation~\eqref{eq:dominatingratematrices}, there is some $f\in\gamblesX$ and some $x\in\states$ such that $\smash{\left[Qf\right](x) < \left[\lrate f\right](x)}$, and therefore some $\epsilon\in\realspos$ such that $[Qf](x) + \epsilon < [\lrate f](x)$. Hence, since $\lim_{i\to+\infty}Q_i=Q$, there is some $i^*\in\nats$ such that $[Q_{i^*}f](x) <[Qf](x) + \epsilon<[\lrate f](x)$. Since $Q_{i^*}\in\rateset_{\lrate}$, this is a contradiction. Therefore, $Q\in\rateset_{\lrate}$, and because the converging sequence $\{Q_i\}_{i\in\nats}$ was arbitrary, this proves that $\rateset_{\lrate}$ is closed.

Next, we show that $\rateset_{\lrate}$ is convex, or equivalently, that for any two rate matrices $Q_1,Q_2\in\rateset_{\lrate}$, and any $\lambda\in[0,1]$, the matrix $Q_\lambda\coloneqq\lambda Q_1 + (1-\lambda)Q_2$ is again an element of $\rateset_{\lrate}$. It is easily verified from Definition~\ref{def:rate_matrix} that $Q_\lambda$ is a rate matrix. Furthermore, for any $f\in\gamblesX$, we find that
\begin{equation*}
Q_\lambda f=\lambda Q_1f+(1-\lambda)Q_2f\geq\lambda \lrate f+(1-\lambda)\lrate f=\lrate f,
\end{equation*}
where the inequality holds because $Q_1$ and $Q_2$ belong to $\rateset_{\lrate}$. Hence, it follows from Equation~\eqref{eq:dominatingratematrices} that $Q_\lambda\in\rateset_{\lrate}$.

We finally show that $\rateset_{\lrate}$ has separately specified rows. For all $x\in\states$, let $\rateset_x\coloneqq\{Q(x,\cdot)\,:\,Q\in\rateset_{\lrate}\}$. Consider now any rate matrix $Q$ such that, for all $x\in\states$, $Q(x,\cdot)\in\rateset_x$ and assume \emph{ex absurdo} that $\smash{Q\notin\rateset_{\lrate}}$. Equation~\eqref{eq:dominatingratematrices} then implies the existence of some $f\in\gamblesX$ and $x\in\states$ such that $\smash{\left[Qf\right](x) < \left[\lrate f\right](x)}$. Since $Q(x,\cdot)\in\rateset_x$, this in turn implies that there is some $Q'\in\rateset_{\lrate}$ such that $\left[Q'f\right](x) < \left[\lrate f\right](x)$, a contradiction. Hence we find that $Q\in\rateset_{\lrate}$.
\end{proof}

\begin{lemma}\label{lemma:rows_nonempty_bounded_closed_convex}
Let $\rateset$ be a non-empty, bounded, closed and convex set of rate matrices. Then, for all $x\in\states$, $\rateset_x\coloneqq\{Q(x,\cdot)\,:\,Q\in\rateset\}$ is a non-empty, bounded, closed, and convex subset of $\gamblesX$.
\end{lemma}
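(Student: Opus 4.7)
The plan is to verify the four properties in turn, with the first three being essentially immediate and the closedness requiring a Bolzano--Weierstra{\ss} extraction argument.

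First, non-emptiness is trivial: since $\rateset\neq\emptyset$, pick any $Q\in\rateset$, and then $Q(x,\cdot)\in\rateset_x$. Boundedness of $\rateset_x$ follows from boundedness of $\rateset$ together with Equation~\eqref{eq:normofmatrix}: for any $Q\in\rateset$, the row $Q(x,\cdot)$, regarded as a vector in $\gamblesX$, satisfies $\norm{Q(x,\cdot)} = \sum_{y\in\states}\abs{Q(x,y)}\leq\norm{Q}\leq\norm{\rateset}<+\infty$, so $\sup\{\norm{r}\colon r\in\rateset_x\}<+\infty$. Convexity follows directly from the convexity of $\rateset$: given $r_1=Q_1(x,\cdot)$ and $r_2=Q_2(x,\cdot)$ in $\rateset_x$ and $\lambda\in[0,1]$, the matrix $Q_\lambda\coloneqq\lambda Q_1+(1-\lambda)Q_2$ lies in $\rateset$, and its $x$-row equals $\lambda r_1+(1-\lambda)r_2$, which therefore belongs to $\rateset_x$.

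The main obstacle---mild, but genuinely requiring an argument---is closedness, because a convergent sequence of $x$-rows $\{Q_i(x,\cdot)\}_{i\in\nats}\to r$ does not come with a convergent sequence of the underlying matrices $\{Q_i\}_{i\in\nats}$. I would handle this as follows. Since each $Q_i\in\rateset$ and $\rateset$ is bounded, the sequence $\{Q_i\}_{i\in\nats}$ lives inside a bounded subset of the finite-dimensional space $\reals^{\abs{\states}\times\abs{\states}}$. By Bolzano--Weierstra{\ss}, it has a convergent subsequence $\{Q_{i_k}\}_{k\in\nats}\to Q^*$ for some real-valued matrix $Q^*$. Because $\rateset$ is closed, $Q^*\in\rateset$. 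On the other hand, taking $x$-rows is a continuous (in fact linear) projection, so $Q_{i_k}(x,\cdot)\to Q^*(x,\cdot)$; but $\{Q_{i_k}(x,\cdot)\}_{k\in\nats}$ is a subsequence of $\{Q_i(x,\cdot)\}_{i\in\nats}$, which converges to $r$, so its limit must also be $r$. Hence $Q^*(x,\cdot)=r$, and since $Q^*\in\rateset$, we conclude $r\in\rateset_x$.

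Combining the four items above yields the lemma. No separate verification that $r$ or $Q^*$ are themselves rate matrices/rate-matrix rows is needed beyond membership in $\rateset$, since $\rateset\subseteq\mathcal{R}$ and all relevant defining inequalities of Definition~\ref{def:rate_matrix} are preserved pointwise under limits, which is precisely what the closedness of $\rateset$ encodes.
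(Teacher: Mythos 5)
Your proof is correct and follows essentially the same route as the paper: non-emptiness, boundedness and convexity are dispatched trivially from the corresponding properties of $\rateset$, and closedness is obtained by extracting a convergent subsequence of the underlying matrices via Bolzano--Weierstra{\ss} (equivalently, compactness of the closed and bounded set $\rateset$), using closedness of $\rateset$ and continuity of the row projection to identify the limit row. (Only a cosmetic remark: with the paper's maximum norm on $\gamblesX$, $\norm{Q(x,\cdot)}$ is $\max_{y}\abs{Q(x,y)}$ rather than the row sum, but the bound $\norm{Q(x,\cdot)}\leq\norm{Q}\leq\norm{\rateset}$ holds either way.)
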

\begin{proof}
Fix any $x\in\states$. The non-emptiness, boundedness and convexity of $\rateset_x$ then follows trivially from the fact that $\rateset$ is non-empty, bounded and convex. It remains to show that $\rateset_x$ is closed. Consider therefore any sequence $\{Q_i\}_{i\in\nats}$ in $\rateset$ such that $\{Q_i(x,\cdot)\}_{i\in\nats}$ converges to a limit $Q_{\infty}(x,\cdot)$. We need to prove that $Q_{\infty}(x,\cdot)\in\rateset_x$.

Since $\{Q_i\}_{i\in\nats}$ belongs to the---closed and bounded and therefore---compact set $\rateset$, it has a convergent subsequence $\{Q_{i_k}\}_{k\in\nats}$ whose limit $Q^*$ belongs to $\rateset$. Since $\{Q_i(x,\cdot)\}_{i\in\nats}$ converges to $Q_{\infty}(x,\cdot)$, it follows that $Q^*(x,\cdot)=Q_{\infty}(x,\cdot)$, which, since $Q^*\in\rateset$, implies that $Q_{\infty}(x,\cdot)\in\rateset_x$.
\end{proof}

\propdominatinguniquecharacterization*
\begin{proof}
Since $\rateset$ has $\lrate$ as its lower envelope, it follows from Equation~\eqref{eq:dominatingratematrices} that $\rateset\subseteq\rateset_{\lrate}$. Assume \emph{ex absurdo} that $\rateset\subset\rateset_{\lrate}$, or equivalently, that there is some $Q\in\rateset_{\lrate}$ such that $Q\notin\rateset$. Consider now any such $Q$.

Because $\rateset$ has separately specified rows, it follows from $Q\notin\rateset$ that there is some $x\in\states$ such that $Q(x,\cdot)\notin\rateset_x\coloneqq\{Q'(x,\cdot)\,:\,Q'\in\rateset\}$. Furthermore, since $\rateset$ is non-empty, bounded, closed and convex, it follows from Lemma~\ref{lemma:rows_nonempty_bounded_closed_convex} that $\rateset_x$ is a non-empty, bounded, closed and convex subset of $\gamblesX$, and therefore, since $\gamblesX$ is a normed linear space and $Q(x,\cdot)\notin\rateset_x$, it follows from the separating hyperplane theorem~\cite[Chapter 14, Corollary 25]{Royden:2010vn} that there is a linear operator $\psi\colon\gamblesX\to\reals$ such that
\begin{equation}\label{eq:prop:dominating_unique_characterization:C}
\psi(Q(x,\cdot))<\inf\{\psi(q)\colon q\in\rateset_x\}=\colon C.
\end{equation}
Now let $f\in\gamblesX$ be defined by $f(y)\coloneqq\psi(\ind{y})$ for all $y\in\states$. Then for any $\epsilon>0$, since $\rateset$ has $\lrate$ as its lower envelope, and because of Equation~\eqref{eq:correspondinglowertrans}, there is some $Q^*\in\rateset$ such that $[Q^*f](x)\leq[\lrate f](x)+\epsilon$, and therefore also
\begin{equation*}
C
\leq\psi(Q^*(x,\cdot))
=\sum_{y\in\states}Q^*(x,y)\psi(\ind{y})
=\sum_{y\in\states}Q^*(x,y)f(y)
=[Q^* f](x)\leq[\lrate f](x)+\epsilon.
\end{equation*}
Since $\epsilon>0$ is arbitrary, this implies that $C\leq[\lrate f](x)$ and, by combining this with Equation~\eqref{eq:prop:dominating_unique_characterization:C}, it follows that
\begin{equation*}
[Qf](x)
=\sum_{y\in\states}Q(x,y)f(y)
=\sum_{y\in\states}Q(x,y)\psi(\ind{y})
=\psi(Q(x,\cdot))
<C\leq[\lrate f](x).
\end{equation*}
This implies that $Qf\not\geq\lrate f$, and therefore, that $Q\notin\rateset_{\lrate}$, a contradiction.
\end{proof}

In order to prove the norm inequality stated by Proposition~\ref{prop:differencebetweenu}, we first give a number of norm inequalities that are more conveniently proved separately. These inequalities are stated by Lemmas~\ref{lemma:recursive_lower_trans}-\ref{lemma:differencebetweennested} and Corollary \ref{corol:differencebetweennestedintermsofu} below.

We start with the following result, which states that the distance between two composed lower transition operators $\lt_1\lt_2\cdots\lt_n$ and $\underline{S}_1\underline{S}_2\cdots\underline{S}_n$ is bounded from above by the sum of the distances $\norm{\lt_i - \underline{S}_i}$ between their components. Note that this result is a generalized version of Lemma~\ref{lemma:recursive}, which until this point has remained unproven. To see that the result below indeed implies Lemma~\ref{lemma:recursive}, simply recall from Section~\ref{subsec:lowertrans_rate} that every transition matrix $T$ is also a lower transition operator.

\begin{lemma}\label{lemma:recursive_lower_trans}
Consider two finite sequences $\underline{T}_1,\ldots,\underline{T}_n$ and $\underline{S}_1,\ldots,\underline{S}_n$ of lower transition operators. Then
\begin{equation}\label{eq:lemma_recursive_inequality}
\norm{\prod_{i=1}^n\underline{T}_i - \prod_{i=1}^n\underline{S}_i} \leq \sum_{i=1}^n \norm{\underline{T}_i - \underline{S}_i}.
\end{equation}
\end{lemma}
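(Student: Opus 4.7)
The plan is to prove this by induction on $n$, using a standard telescoping decomposition together with the fact that lower transition operators are non-negatively homogeneous and have operator norm at most one.

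For the base case $n=1$, the inequality is just an equality, so nothing to prove. For the inductive step, assume the inequality holds for $n-1$ and write $\underline{R} \coloneqq \prod_{i=2}^n \underline{T}_i$ and $\underline{R}' \coloneqq \prod_{i=2}^n \underline{S}_i$. Proposition~\ref{lemma:compositioncoherence} guarantees that $\underline{R}$ and $\underline{R}'$ are themselves lower transition operators, hence they are non-negatively homogeneous and satisfy $\norm{\underline{R}},\,\norm{\underline{R}'}\leq 1$ by \ref{LT:norm_at_most_one}. Now insert the middle term $\underline{T}_1\underline{R}'$ and apply the triangle inequality (property N3 from Proposition~\ref{prop:norm_properties}):
\begin{equation*}
\norm{\underline{T}_1\underline{R}-\underline{S}_1\underline{R}'}
\leq \norm{\underline{T}_1\underline{R}-\underline{T}_1\underline{R}'}+\norm{\underline{T}_1\underline{R}'-\underline{S}_1\underline{R}'}.
\end{equation*}

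The first term is controlled by property~\ref{LT:differencenorm} applied to the lower transition operator $\underline{T}_1$ together with the non-negatively homogeneous operators $\underline{R}$ and $\underline{R}'$, giving $\norm{\underline{T}_1\underline{R}-\underline{T}_1\underline{R}'}\leq\norm{\underline{R}-\underline{R}'}$, which by the induction hypothesis is bounded by $\sum_{i=2}^n\norm{\underline{T}_i-\underline{S}_i}$. The second term is handled by noticing that the difference $\underline{T}_1-\underline{S}_1$ of two non-negatively homogeneous operators is again non-negatively homogeneous, so property~\ref{N:normAB} applies to yield $\norm{(\underline{T}_1-\underline{S}_1)\underline{R}'}\leq\norm{\underline{T}_1-\underline{S}_1}\,\norm{\underline{R}'}\leq\norm{\underline{T}_1-\underline{S}_1}$, using $\norm{\underline{R}'}\leq 1$. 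Adding the two bounds closes the induction.

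There is no real obstacle here; the only subtlety worth being careful about is that lower transition operators are not linear, so one cannot naively ``factor out'' $\underline{T}_1$ in the first telescoped difference. This is precisely where property~\ref{LT:differencenorm} is essential: it plays the role of Lipschitz-type control that linearity would otherwise provide. Once that is in place, the argument is a verbatim adaptation of the classical matrix-product telescoping proof (Lemma~\ref{lemma:recursive}), now at the level of non-negatively homogeneous operators.
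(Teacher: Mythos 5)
Your proof is correct and follows essentially the same route as the paper's: induction with a telescoping insertion, controlling one term via \ref{LT:differencenorm} and the other via \ref{N:normAB} together with \ref{LT:norm_at_most_one}, with Proposition~\ref{lemma:compositioncoherence} guaranteeing the composites are lower transition operators. The only difference is cosmetic: you split off the first factor $\underline{T}_1$ versus $\underline{S}_1$, whereas the paper splits off the last factor $\underline{T}_n$ versus $\underline{S}_n$, a mirror image of the same argument.
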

\begin{proof}
We provide a proof by induction. Clearly, Equation~\eqref{eq:lemma_recursive_inequality} holds for $n=1$. Suppose that it holds for $n-1$. We show that it then also holds for $n$.
\begin{align*}
\norm{\prod_{i=1}^n\underline{T}_i - \prod_{i=1}^n\underline{S}_i} &= \norm{\prod_{i=1}^n\underline{T}_i - \left(\prod_{i=1}^{n-1}\underline{T}_i\right)\underline{S}_n + \left(\prod_{i=1}^{n-1}\underline{T}_i\right)\underline{S}_n - \prod_{i=1}^n\underline{S}_i} \\
 &\leq \norm{\prod_{i=1}^n\underline{T}_i - \left(\prod_{i=1}^{n-1}\underline{T}_i\right)\underline{S}_n} + \norm{\left(\prod_{i=1}^{n-1}\underline{T}_i\right)\underline{S}_n - \prod_{i=1}^n\underline{S}_i} \\
 &= \norm{\left(\prod_{i=1}^{n-1}\underline{T}_i\right)\underline{T}_n - \left(\prod_{i=1}^{n-1}\underline{T}_i\right)\underline{S}_n} + \norm{\left(\prod_{i=1}^{n-1}\underline{T}_i - \prod_{i=1}^{n-1}\underline{S}_i\right)\underline{S}_n} \\
 &\leq \norm{\underline{T}_n - \underline{S}_n} + \norm{\prod_{i=1}^{n-1}\underline{T}_i - \prod_{i=1}^{n-1}\underline{S}_i}\norm{\underline{S}_n} \\
 &\leq \norm{\underline{T}_n - \underline{S}_n} + \norm{\prod_{i=1}^{n-1}\underline{T}_i - \prod_{i=1}^{n-1}\underline{S}_i} \\
 &\leq \norm{\underline{T}_n - \underline{S}_n} + \sum_{i=1}^{n-1}\norm{\underline{T}_i - \underline{S}_i} = \sum_{i=1}^{n}\norm{\underline{T}_i - \underline{S}_i}\,.
\end{align*}
Here, in the second inequality, we applied Proposition~\ref{lemma:compositioncoherence} and properties~\ref{N:normAB} and~\ref{LT:differencenorm}. In the third inequality, we used property~\ref{LT:norm_at_most_one}. In the final inequality, we used the induction hypothesis.
\end{proof}

\begin{lemma}\label{lemma:justthelinearpart}
Consider a lower transition rate operator $\lrate$ and, for all $i\in\{1,\ldots,n\}$, some $\Delta_i\geq0$ such that $\Delta_i\norm{\lrate}\leq1$. Let $\Delta\coloneqq\sum_{i=1}^n\Delta_i$. Then
\begin{equation*}
\norm{\prod_{i=1}^n(I+\Delta_i\lrate)-(I+\Delta\lrate)}\leq\Delta^2\norm{\lrate}^2.
\end{equation*}
\end{lemma}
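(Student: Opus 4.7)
The plan is to proceed by induction on $n$. The base case $n=1$ is immediate since then $\Delta = \Delta_1$ and the left-hand side is $0$. For the inductive step, let $\Delta' \coloneqq \sum_{i=1}^{n-1}\Delta_i$, so $\Delta = \Delta' + \Delta_n$, and use the telescoping decomposition
\begin{align*}
\prod_{i=1}^n(I+\Delta_i\lrate)-(I+\Delta\lrate)
&= \Bigl[\prod_{i=1}^{n-1}(I+\Delta_i\lrate)(I+\Delta_n\lrate)-(I+\Delta'\lrate)(I+\Delta_n\lrate)\Bigr]\\
&\quad{}+\Bigl[(I+\Delta'\lrate)(I+\Delta_n\lrate)-(I+\Delta\lrate)\Bigr].
\end{align*}
The first bracket will be handled by the induction hypothesis and the second by direct computation, and then the triangle inequality gives the claim.

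For the first bracket, set $A \coloneqq \prod_{i=1}^{n-1}(I+\Delta_i\lrate)$ and $B \coloneqq I+\Delta'\lrate$. Both are non-negatively homogeneous, hence so is $A-B$. Since every factor $I+\Delta_i\lrate$ is a lower transition operator by Proposition~\ref{lemma:normQsmallenough} and their composition is again one by Proposition~\ref{lemma:compositioncoherence}, property~\ref{LT:norm_at_most_one} yields $\norm{(I+\Delta_n\lrate)f}\leq\norm{f}$ for all $f\in\gamblesX$. Combined with the NNH of $A-B$ (which gives the inequality $\norm{(A-B)g}\leq\norm{A-B}\norm{g}$), this yields $\norm{(A-B)(I+\Delta_n\lrate)}\leq\norm{A-B}$, and the induction hypothesis applied to $A-B$ gives $\norm{A-B}\leq(\Delta')^2\norm{\lrate}^2$. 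Note that this step does \emph{not} require $B$ itself to be a lower transition operator, which is important because we are not assuming $\Delta'\norm{\lrate}\leq 1$.

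For the second bracket, applying both sides to an arbitrary $f\in\gamblesX$ and exploiting that $\lrate$ is non-negatively homogeneous (\ref{LR:homo}) shows, after cancellation of the terms $f$, $\Delta_n\lrate f$ and $\Delta'\lrate f$, that
\begin{equation*}
\bigl[(I+\Delta'\lrate)(I+\Delta_n\lrate)-(I+\Delta\lrate)\bigr]f
= \Delta'\bigl[\lrate(I+\Delta_n\lrate)-\lrate I\bigr]f.
\end{equation*}
Applying property~\ref{LR:differenceofnorm} with the non-negatively homogeneous operators $I+\Delta_n\lrate$ and $I$ gives $\norm{\lrate(I+\Delta_n\lrate)-\lrate I}\leq 2\norm{\lrate}\norm{\Delta_n\lrate}=2\Delta_n\norm{\lrate}^2$, so the second bracket has norm at most $2\Delta'\Delta_n\norm{\lrate}^2$.

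Summing the two bounds,
\begin{equation*}
\norm{\prod_{i=1}^n(I+\Delta_i\lrate)-(I+\Delta\lrate)}
\leq\bigl((\Delta')^2+2\Delta'\Delta_n\bigr)\norm{\lrate}^2
=\bigl(\Delta^2-\Delta_n^2\bigr)\norm{\lrate}^2
\leq\Delta^2\norm{\lrate}^2,
\end{equation*}
which closes the induction. The only subtle point is the first bracket: one must avoid the temptation to invoke Lemma~\ref{lemma:recursive_lower_trans} directly, since $I+\Delta'\lrate$ need not be a lower transition operator, and instead argue via operator-norm bounds on the NNH difference $A-B$ composed with a lower transition operator.
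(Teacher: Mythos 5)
Your proof is correct. It has the same inductive skeleton as the paper's argument---peel off one factor, apply the triangle inequality, use the induction hypothesis on the remaining product, control the cross term with~\ref{LR:differenceofnorm}, and finish with the arithmetic $(\Delta')^2+2\Delta'\Delta_n\leq\Delta^2$ (with $\Delta'\coloneqq\sum_{i=1}^{n-1}\Delta_i$)---but your decomposition is mirrored, and this changes the supporting facts in a non-trivial way. The paper splits off the \emph{first} factor, so that $(I+\Delta_1\lrate)$ acts on the outside and splits additively into $I$ plus $\Delta_1\lrate$; the price is that its cross term contains the whole tail product $\prod_{i=2}^n(I+\Delta_i\lrate)$, whose distance to $I$ is bounded through Lemma~\ref{lemma:recursive_lower_trans}. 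You split off the \emph{last} factor, which makes the cross term a single-step difference---so Lemma~\ref{lemma:recursive_lower_trans} is not needed and only $\norm{(I+\Delta_n\lrate)-I}=\Delta_n\norm{\lrate}$ enters---but it places the induction-hypothesis difference $A-B$ in composition on the right with $(I+\Delta_n\lrate)$. You handle that correctly via Proposition~\ref{lemma:normQsmallenough}, \ref{LT:norm_at_most_one} and \ref{N:normAf} (non-negative homogeneity of $A-B$ is all that is needed), and you are right that one cannot shortcut this with Lemma~\ref{lemma:recursive_lower_trans}, since $I+\Delta'\lrate$ need not be a lower transition operator: the statement only assumes $\Delta_i\norm{\lrate}\leq1$ for the individual increments, not for their sum. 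Both routes stay strictly within those hypotheses; yours is slightly more self-contained (no appeal to Lemma~\ref{lemma:recursive_lower_trans}), while the paper's is slightly shorter because the additive split of the leftmost factor avoids the composed-difference step entirely.
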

\begin{proof}
We provide a proof by induction. For $n=1$, the result is trivial. So consider the case $n\geq2$ and assume that the result is true for $n-1$. 

For all $i\in\{2,\dots,n\}$, since $\Delta_i\norm{\lrate}\leq1$, it follows from Proposition~\ref{lemma:normQsmallenough} that $I$ and $(I+\Delta_i\lrate)$ are lower transition operators. Therefore,
\begin{multline*}
\norm{\prod_{i=1}^n(I+\Delta_i\lrate)-(I+\Delta\lrate)}\\
\begin{aligned}
&=\norm{\prod_{i=2}^n(I+\Delta_i\lrate)+\Delta_1\lrate\prod_{i=2}^n(I+\Delta_i\lrate)-(I+\sum_{i=2}^n\Delta_i\lrate)-\Delta_1\lrate}\\
&\leq\norm{\prod_{i=2}^n(I+\Delta_i\lrate)-(I+\sum_{i=2}^n\Delta_i\lrate)}+\norm{\Delta_1\lrate\prod_{i=2}^n(I+\Delta_i\lrate)-\Delta_1\lrate}\\
&\leq(\sum_{i=2}^n\Delta_i)^2\norm{\lrate}^2
+2\Delta_1\norm{\lrate}
\norm{\prod_{i=2}^n(I+\Delta_i\lrate)-I}\\
&\leq(\sum_{i=2}^n\Delta_i)^2\norm{\lrate}^2
+2\Delta_1\norm{\lrate}
\sum_{i=2}^n
\norm{(I+\Delta_i\lrate)-I}\\
&=(\sum_{i=2}^n\Delta_i)^2\norm{\lrate}^2
+\Big(2\Delta_1
\sum_{i=2}^n
\Delta_i\Big)\norm{\lrate}^2
\leq\Big(
\Delta_1+\sum_{i=2}^n\Delta_i
\Big)^2\norm{\lrate}^2
=\Delta^2\norm{\lrate}^2,
\end{aligned}
\end{multline*}\\[3pt]
where the second inequality follows from the induction hypothesis and property~\ref{LR:differenceofnorm}, and the third inequality follows from Lemma~\ref{lemma:recursive_lower_trans}.
\end{proof}

\begin{lemma}\label{lemma:differencebetweennested}
For any $k\in\{1,\dots,n\}$, consider a sequence $\Delta_{k,i}\geq 0$, $i\in\{1,\dots,n_k\}$, and let $\Delta_k\coloneqq\sum_{i=1}^{n_k}\Delta_{k,i}$. Let $C\coloneqq\sum_{k=1}^n\Delta_k$ and let $\delta\coloneqq\max_{k=1}^n\Delta_k$. Then for any lower transition rate operator $\lrate$ such that $\delta\norm{\lrate}\leq1$:
\begin{equation*}
\norm{\prod_{k=1}^n\left(\prod_{i=1}^{n_k}(I+\Delta_{k,i}\lrate)\right)
-
\prod_{k=1}^n(I+\Delta_k\lrate)
}
\leq\delta C\norm{\lrate}^2.
\end{equation*}
\end{lemma}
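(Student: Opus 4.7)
The plan is to reduce this to a straightforward application of Lemma~\ref{lemma:recursive_lower_trans} (the ``telescoping'' bound for products of lower transition operators), combined with Lemma~\ref{lemma:justthelinearpart} (which compares a product of linear approximants with a single one of combined step size).

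First I would verify that all the operators appearing in both products are lower transition operators, so that Lemma~\ref{lemma:recursive_lower_trans} is applicable. For every $k\in\{1,\dots,n\}$ and $i\in\{1,\dots,n_k\}$, we have $\Delta_{k,i}\leq\Delta_k\leq\delta$, hence $\Delta_{k,i}\norm{\lrate}\leq1$ and $\Delta_k\norm{\lrate}\leq1$. By Proposition~\ref{lemma:normQsmallenough}, each $(I+\Delta_{k,i}\lrate)$ and each $(I+\Delta_k\lrate)$ is a lower transition operator, and by Proposition~\ref{lemma:compositioncoherence}, so is each finite composition $\prod_{i=1}^{n_k}(I+\Delta_{k,i}\lrate)$.

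Next I would apply Lemma~\ref{lemma:recursive_lower_trans} to the two outer products of length $n$, pairing the $k$-th factor $\underline{T}_k\coloneqq\prod_{i=1}^{n_k}(I+\Delta_{k,i}\lrate)$ with $\underline{S}_k\coloneqq(I+\Delta_k\lrate)$. This yields
\begin{equation*}
\norm{\prod_{k=1}^n\left(\prod_{i=1}^{n_k}(I+\Delta_{k,i}\lrate)\right)-\prod_{k=1}^n(I+\Delta_k\lrate)}\leq\sum_{k=1}^n\norm{\prod_{i=1}^{n_k}(I+\Delta_{k,i}\lrate)-(I+\Delta_k\lrate)}.
\end{equation*}
Since $\Delta_k=\sum_{i=1}^{n_k}\Delta_{k,i}$ and $\Delta_{k,i}\norm{\lrate}\leq1$ for every $i$, Lemma~\ref{lemma:justthelinearpart} bounds the $k$-th summand by $\Delta_k^2\norm{\lrate}^2$.

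Finally, using $\Delta_k\leq\delta$ to replace one factor of $\Delta_k$ by $\delta$ in each summand, and $\sum_{k=1}^n\Delta_k=C$, we obtain
\begin{equation*}
\sum_{k=1}^n\Delta_k^2\norm{\lrate}^2\leq\delta\norm{\lrate}^2\sum_{k=1}^n\Delta_k=\delta C\norm{\lrate}^2,
\end{equation*}
which gives the desired inequality. There is no real obstacle here; the only thing to be careful about is ensuring the step-size condition $\delta\norm{\lrate}\leq 1$ is enough to guarantee that \emph{all} individual factors (including the aggregated ones $(I+\Delta_k\lrate)$) are lower transition operators, so that Lemmas~\ref{lemma:recursive_lower_trans} and~\ref{lemma:justthelinearpart} may be invoked without further ado.
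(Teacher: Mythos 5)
Your proof is correct and follows essentially the same route as the paper's own argument: verify that all factors are lower transition operators via Propositions~\ref{lemma:normQsmallenough} and~\ref{lemma:compositioncoherence}, apply Lemma~\ref{lemma:recursive_lower_trans} to the two length-$n$ products, bound each summand by $\Delta_k^2\norm{\lrate}^2$ using Lemma~\ref{lemma:justthelinearpart}, and conclude with $\Delta_k\leq\delta$ and $\sum_{k=1}^n\Delta_k=C$. No gaps.
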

\begin{proof}
For any $k\in\{1,\dots,n\}$, we know that $\Delta_{k,i}\norm{\lrate}\leq\Delta_k\norm{\lrate}\leq\delta\norm{\lrate}\leq1$ for all $i\in\{1,\dots,n_k\}$, and therefore, it follows from Propositions~\ref{lemma:normQsmallenough} and~\ref{lemma:compositioncoherence} that $\prod_{i=1}^{n_k}(I+\Delta_{k,i}\lrate)$ and $(I+\Delta_k\lrate)$ are lower transition operators. Hence,
\begin{align*}
\norm{\prod_{k=1}^n\left(\prod_{i=1}^{n_k}(I+\Delta_{k,i}\lrate)\right)
-
\prod_{k=1}^n(I+\Delta_k\lrate)
}
&\leq
\sum_{k=1}^n
\norm{
	\left(\prod_{i=1}^{n_k}(I+\Delta_{k,i}\lrate)\right)
	-
	(I+\Delta_k\lrate)
}\\
&\leq
\sum_{k=1}^n
\Delta_k^2\norm{\lrate}^2
\leq
\sum_{k=1}^n
\delta\Delta_k\norm{\lrate}^2
=\delta C\norm{\lrate}^2,
\end{align*}
where the first inequality follows from Lemma~\ref{lemma:recursive_lower_trans} and the second inequality follows from Lemma~\ref{lemma:justthelinearpart}.
\end{proof}

\begin{corollary}\label{corol:differencebetweennestedintermsofu}
Let $\lrate$ be a lower transition rate operator. Consider any $t,s\in\realsnonneg$ such that $t\leq s$ and any $u\in\mathcal{U}_{[t,s]}$ such that $\sigma(u)\norm{\lrate}\leq 1$. Then for all $u'\in\mathcal{U}_{[t,s]}$ such that $u\subseteq u'$:
\begin{equation*}
\norm{\Phi_u-\Phi_{u'}}\leq\sigma(u)(s-t)\norm{\lrate}^2.\vspace{5pt}
\end{equation*}
\end{corollary}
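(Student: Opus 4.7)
The plan is to reduce this corollary directly to Lemma~\ref{lemma:differencebetweennested} by choosing the indexed sequences appropriately so that the fine partition $u'$ is expressed as a refinement of $u$.

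First, I would write $u = t_0, t_1, \ldots, t_n$ with $\Delta_k \coloneqq t_k - t_{k-1}$ for $k \in \{1,\ldots,n\}$. Since $u \subseteq u'$ and both are partitions of $[t,s]$, every time point of $u'$ lies in some subinterval $[t_{k-1}, t_k]$ of $u$. So for each $k$ I can write the portion of $u'$ lying in $[t_{k-1}, t_k]$ as $t_{k-1} = r_{k,0} < r_{k,1} < \cdots < r_{k,n_k} = t_k$, and set $\Delta_{k,i} \coloneqq r_{k,i} - r_{k,i-1}$. By construction, $\sum_{i=1}^{n_k} \Delta_{k,i} = \Delta_k$, matching the notation of Lemma~\ref{lemma:differencebetweennested}.

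With this bookkeeping, from the definition of $\Phi_u$ in Equation~\eqref{eq:aux_lower_trans} we have
\begin{equation*}
\Phi_u = \prod_{k=1}^n (I + \Delta_k \lrate)
\quad\text{and}\quad
\Phi_{u'} = \prod_{k=1}^n \left( \prod_{i=1}^{n_k} (I + \Delta_{k,i} \lrate) \right),
\end{equation*}
where in the expression for $\Phi_{u'}$ I have just grouped its factors according to which subinterval of $u$ they belong to (the order within the global product is preserved because the $r_{k,i}$ are increasing across and within the groups).

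Now I would check the hypothesis of Lemma~\ref{lemma:differencebetweennested}: with the identifications $C = \sum_{k=1}^n \Delta_k = s - t$ and $\delta = \max_{k=1}^n \Delta_k = \sigma(u)$, we have $\delta \norm{\lrate} = \sigma(u) \norm{\lrate} \leq 1$ by assumption. Applying the lemma yields
\begin{equation*}
\norm{\Phi_{u'} - \Phi_u} \leq \delta C \norm{\lrate}^2 = \sigma(u)(s-t)\norm{\lrate}^2,
\end{equation*}
which is the desired bound. There is no real obstacle here: the only thing to be careful about is the notational alignment between the double-indexed partition of Lemma~\ref{lemma:differencebetweennested} and the single-indexed refinement $u'$ of $u$, which is handled by the grouping above.
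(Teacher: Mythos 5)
Your proposal is correct and follows essentially the same route as the paper's own proof: group the points of $u'$ by the subintervals of $u$, identify $\delta=\sigma(u)$ and $C=s-t$, and invoke Lemma~\ref{lemma:differencebetweennested}. The only omission is the degenerate case $s=t$ (where $u=u'=\{t\}$ and both operators reduce to $I$), which the paper dispatches with one sentence and which your argument covers trivially anyway.
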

\begin{proof}
This result is trivial if $s=t$, because then $u=u'=\{t\}=\{s\}$. Hence, without loss of generality, we assume that $s>t$, which implies that $u=(t_0,\dots,t_n)$, with $n\in\nats$, $t_0=t$ and $t_n=s$.
Since $u\subseteq u'$, we know that, for all $k\in\{1,\dots,n\}$, there is some sequence $\Delta_{k,i}> 0$, $i\in\{1,\dots,n_k\}$, with $n_k\in\nats$ and $\Delta_k=\sum_{i=1}^{n_k}\Delta_{k,i}\leq\sigma(u)$, such that $\sum_{k=1}^n\Delta_k=s-t$,
\begin{equation*}
\Phi_{u'}\coloneqq\prod_{k=1}^n\left(\prod_{i=1}^{n_k}(I+\Delta_{k,i}\lrate)\right)
\text{~~and~~}
\Phi_{u}\coloneqq\prod_{k=1}^n(I+\Delta_{k}\lrate).
\end{equation*}
Because of Lemma~\ref{lemma:differencebetweennested}, this implies that $\norm{\Phi_{u'}-\Phi_u}\leq\sigma(u)(s-t)\norm{\lrate}^2$. 
\end{proof}

\propdifferencebetweenu*
\begin{proof}
Consider any $u'\in\mathcal{U}_{[t,s]}$ that is finer than $u$ and $u^*$, meaning that the timepoints it consists of contain the timepoints in $u$ and the timepoints in $u^*$. For example, let $u'$ be the ordered union of the timepoints in $u$ and $u^*$. Corollary~\ref{corol:differencebetweennestedintermsofu} then implies that $\smash{\norm{\Phi_{u}-\Phi_{u'}}\leq\delta C\norm{\lrate}^2}$ and $\smash{\norm{\Phi_{u^*}-\Phi_{u'}}\leq\delta C\norm{\lrate}^2}$, and therefore, it follows that
\begin{equation*}
\norm{\Phi_{u}-\Phi_{u^*}}
\leq
\norm{\Phi_{u}-\Phi_{u'}}
+
\norm{\Phi_{u'}-\Phi_{u^*}}
\leq2\delta C\norm{\lrate}^2.\vspace{-10pt}
\end{equation*}
\end{proof}

\corolcauchy*
\begin{proof}
By definition of a Cauchy sequence, we need to show that
\begin{equation*}
(\forall \epsilon>0)(\exists n\in\nats)(\forall i,j\geq n)
\norm{\Phi_{u_i}-\Phi_{u_j}}<\epsilon.
\end{equation*}
Therefore, fix any $\epsilon\in\realspos$, and choose $\delta\in\realspos$ such that $\smash{2\delta(s-t)\norm{\lrate}^2<\epsilon}$ and $\delta\norm{\lrate}\leq 1$ [this is clearly always possible]. Because $\lim_{i\to\infty}\sigma(u_i)=0$, there is some $n\in\nats$ such that, for all $i\geq n$, $\sigma(u_i)\leq\delta$. Consider any $i,j\geq n$. It then follows from Proposition~\ref{prop:differencebetweenu} that
\begin{equation*}
\norm{\Phi_{u_i}-\Phi_{u_j}} \leq 2\delta(s-t)\norm{\lrate}^2 < \epsilon\,.
\end{equation*}
\end{proof}

\corollimitexistsandiscoherent*
\begin{proof}
Since $\lim_{i\to\infty}\sigma(u_i)=0$, and due to Propositions~\ref{lemma:normQsmallenough} and~\ref{lemma:compositioncoherence} and property~\ref{LR:normlratefinite}, there is some index $n$ such that the sequence $\Phi_{u_n},\Phi_{u_{n+1}},\ldots$ consists of lower transition operators. Due to Corollary~\ref{corol:cauchy}, this sequence is Cauchy and therefore, because of Proposition~\ref{lemma:completemetricspace}, this sequence has a limit that is also a lower transition operator. Since the limit starting from $n$ and the limit starting from $1$ are identical (initial elements do not influence the limit), we find that the sequence $\left\{\Phi_{u_i}\right\}_{i\in\nats}$ has a limit, and that this limit is a lower transition operator.
\end{proof}

\theoconvergencelowerbound*
\begin{proof}
Let $\{u_i\}_{i\in\nats}$ be any sequence in $\mathcal{U}_{[t,s]}$ such that $\lim_{i\to\infty}\sigma(u_i)=0$. Because of Corollary~\ref{corol:limitexistsandiscoherent}, the sequence $\{\Phi_{u_i}\}_{i\in\nats}$ then converges to a lower transition operator, which we denote by $\lt$. 

Let $C\coloneqq s-t$, fix any $\epsilon>0$, and choose any $\delta>0$ such that $\smash{4\delta C\norm{\lrate}^2<\epsilon}$ and $\delta\norm{\lrate}\leq1$ [this is clearly always possible].
Since $\lim_{i\to+\infty}\Phi_{u_i}=\lt$ and $\lim_{i\to\infty}\sigma(u_i)=0$, there is some $i^*\in\nats$ such that
\begin{equation}\label{eq:theo:convergencelowerbound:proof}
\sigma(u_{i^*})\leq\delta\text{ and }\norm{\lt - \Phi_{u_{i^*}}}\leq\frac{\epsilon}{2}.
\end{equation}
Consider now any $u\in\mathcal{U}_{[t,s]}$ such that $\sigma(u)\leq\delta$. Then
\begin{equation*}
\norm{\lt - \Phi_u}\leq\norm{\lt-\Phi_{u_{i^*}}}
+\norm{\Phi_{u_{i^*}}-\Phi_u}
\leq\frac{\epsilon}{2}+2\delta C\norm{\lrate}^2\leq\epsilon,
\end{equation*}
where the second inequality follows from Equation~\eqref{eq:theo:convergencelowerbound:proof} and Proposition~\ref{prop:differencebetweenu}.

It remains to show that $\lt$ is unique. Therefore, let $\lt'$ be any lower transition operator that satisfies Equation~\eqref{eq:theo:convergencelowerbound}. Then clearly, for any $\epsilon>0$, there is some $u\in\mathcal{U}_{[t,s]}$ such that $\norm{\lt-\Phi_u}\leq\epsilon$ and $\norm{\lt'-\Phi_u}\leq\epsilon$, and therefore also $\norm{\lt-\lt'}\leq2\epsilon$. Since $\epsilon>0$ is arbitrary, this implies that $\norm{\lt-\lt'}=0$, or equivalently, that $\lt=\lt'$. Because this holds for every $\lt'$ that satisfies Equation~\eqref{eq:theo:convergencelowerbound}, it follows that $\lt$ is the unique operator satisfying this equation.
\end{proof}

We following lemma restates the norm inequality given by Corollary~\ref{corol:differencebetweennestedintermsofu}, so that it can be used with lower transition operators $L_t^s$. In effect, it therefore provides a bound on how well we can approximate the operator $L_t^s$ using an operator $\Phi_u$ constructed using a finite partition $u\in\mathcal{U}_{[t,s]}$ of the interval $[t,s]$. In particular, this approximation improves as we take $u$ to be increasingly finer, that is, as we decrease $\sigma(u)$.

\begin{lemma}\label{lemma:limitboundonL}
Let $\lrate$ be a lower transition rate operator. Then for any $t,s\in\realsnonneg$ such that $t\leq s$ and any $u\in\mathcal{U}_{[t,s]}$ such that $\sigma(u)\norm{\lrate}\leq 1$:
\begin{equation*}
\norm{L_t^s-\Phi_{u}}\leq\sigma(u)(s-t)\norm{\lrate}^2.\vspace{5pt}
\end{equation*}
\end{lemma}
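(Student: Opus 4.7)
The plan is to deduce this bound from two ingredients already at our disposal: Corollary~\ref{corol:differencebetweennestedintermsofu}, which bounds $\norm{\Phi_u - \Phi_{u'}}$ whenever $u'$ is a refinement of $u$, and Theorem~\ref{theo:convergencelowerbound}, which tells us that $\Phi_{u'}$ can be made arbitrarily close to $L_t^s$ by choosing $u'$ with $\sigma(u')$ small enough. The strategy is therefore to insert a refinement $u'$ of $u$ as an intermediate point, apply the triangle inequality, and then push $\sigma(u')$ to zero.

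Concretely, I would fix $\epsilon > 0$ and use Theorem~\ref{theo:convergencelowerbound} to obtain some $\delta > 0$ such that $\norm{L_t^s - \Phi_{u'}} \leq \epsilon$ for every $u' \in \mathcal{U}_{[t,s]}$ with $\sigma(u') \leq \delta$. Without loss of generality we may assume $\delta \leq \sigma(u)$, so that any such $u'$ can be chosen to contain $u$ as a subset (simply take $u'$ to be the ordered union of $u$ with any fine enough partition of $[t,s]$). For such a $u'$, since $u \subseteq u'$ and $\sigma(u)\norm{\lrate} \leq 1$, Corollary~\ref{corol:differencebetweennestedintermsofu} gives $\norm{\Phi_u - \Phi_{u'}} \leq \sigma(u)(s-t)\norm{\lrate}^2$. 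The triangle inequality then yields
\begin{equation*}
\norm{L_t^s - \Phi_u} \leq \norm{L_t^s - \Phi_{u'}} + \norm{\Phi_{u'} - \Phi_u} \leq \epsilon + \sigma(u)(s-t)\norm{\lrate}^2.
\end{equation*}

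Since $\epsilon > 0$ was arbitrary, letting $\epsilon \to 0$ delivers the desired bound $\norm{L_t^s - \Phi_u} \leq \sigma(u)(s-t)\norm{\lrate}^2$. I do not anticipate any real obstacle here: the only mildly delicate point is ensuring that the intermediate partition $u'$ can simultaneously be chosen as a refinement of $u$ and fine enough to invoke Theorem~\ref{theo:convergencelowerbound}, but this is handled by taking $u'$ to be the ordered union of $u$ with an auxiliary sufficiently fine partition of $[t,s]$, which trivially has $\sigma(u') \leq \delta$ and contains $u$.
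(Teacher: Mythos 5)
Your proof is correct and follows essentially the same route as the paper: the paper likewise fixes $\epsilon>0$, picks (via Definition~\ref{def:low_trans}, i.e.\ Theorem~\ref{theo:convergencelowerbound}) a refinement $u'\supseteq u$ with $\norm{L_t^s-\Phi_{u'}}\leq\epsilon$, applies Corollary~\ref{corol:differencebetweennestedintermsofu} and the triangle inequality, and lets $\epsilon$ be arbitrary. Your extra care about choosing $u'$ as the ordered union of $u$ with a fine auxiliary partition is exactly the right (and only) subtlety, and the ``without loss of generality $\delta\leq\sigma(u)$'' step is harmless but unnecessary, since adding the points of $u$ to a partition of mesh at most $\delta$ cannot increase the mesh.
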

\begin{proof}
Fix any $\epsilon>0$. Because of Definition~\ref{def:low_trans}, there is some $u'\in\mathcal{U}_{[t,s]}$ such that $u\subseteq u'$ and $\norm{L_t^s-\Phi_{u'}}\leq\epsilon$. By combining this with Corollary~\ref{corol:differencebetweennestedintermsofu}, it follows that
\begin{equation*}
\norm{L_t^s-\Phi_{u}}
\leq
\norm{L_t^s-\Phi_{u'}}
+
\norm{\Phi_{u'}-\Phi_{u}}
\leq\epsilon+
\sigma(u)(s-t)\norm{\lrate}^2.
\end{equation*}
Since $\epsilon>0$ is arbitrary, the result is now immediate.
\end{proof}

\proplowertranssystemissystem*
\begin{proof}
Consider any $t\in\realsnonneg$. Then $\mathcal{U}_{[t,t]}$ contains only a single degenerate sequence of time points $u$, which consists of the single time point $t$, and for this sequence $u$, we trivially have that $\Phi_u=I$ and $\sigma(u)=0$. Therefore, it follows from Definition~\ref{def:low_trans} that $L_t^t=I$.

Consider now any $t,r,s,\in\realsnonneg$ such that $t\leq r\leq s$. It remains to show that $L_t^s=L_t^rL_r^s$. If $t=r$ or $r=s$, this follows trivially from the first part of this proof. Hence, without loss of generality, we may assume that $t<r<s$.

Fix any $\delta>0$ such that $\delta\norm{\lrate}\leq1$. Since $r>t$, there is some $0<\Delta_1<\delta$ and $n_1\in\nats$ such that $\Delta_1 n_1=r-t$. Similarly, since $s>r$, there is some $0<\Delta_2<\delta$ and $n_2\in\nats$ such that $\Delta_2 n_2=s-r$. Furthermore, because of Propositions~\ref{lemma:normQsmallenough} and~\ref{lemma:compositioncoherence}, $(I+\Delta_1\lrate)^{n_1}$ and $(I+\Delta_1\lrate)^{n_1}$ are lower transition operators. Hence, we find that
\begin{align*}
&\norm{L_t^s-L_t^rL_r^s}
\leq
\norm{L_t^s-(I+\Delta_1\lrate)^{n_1}(I+\Delta_2\lrate)^{n_2}}
+
\norm{(I+\Delta_1\lrate)^{n_1}(I+\Delta_2\lrate)^{n_2}-L_t^rL_r^s}\\
&~~~~~~~~\leq
\norm{L_t^s-(I+\Delta_1\lrate)^{n_1}(I+\Delta_2\lrate)^{n_2}}
+
\norm{(I+\Delta_1\lrate)^{n_1}-L_t^r}
+
\norm{(I+\Delta_2\lrate)^{n_2}-L_r^s}\\
&~~~~~~~~\leq
\max\{\Delta_1,\Delta_2\}(s-t)\norm{\lrate}^2
+
\Delta_1(r-t)\norm{\lrate}^2
+
\Delta_2(s-r)\norm{\lrate}^2\\
&~~~~~~~~\leq
\delta(s-t)\norm{\lrate}^2
+
\delta(r-t)\norm{\lrate}^2
+
\delta(s-r)\norm{\lrate}^2
=
2\delta(s-t)\norm{\lrate}^2,
\end{align*}
where the second inequality follows from Lemma~\ref{lemma:recursive_lower_trans} and the third inequality follows from Lemma~\ref{lemma:limitboundonL}. Since $\delta>0$ can be chosen arbitrarily small, this implies that $\norm{L_t^s-L_t^rL_r^s}=0$, or equivalently, that $L_t^s=L_t^rL_r^s$.
\end{proof}

\proplowertransitionishomogeneous*
\begin{proof}
Consider any $t,s\in\realsnonneg$ such that $t\leq s$, any $\Delta\in\realsnonneg$, and any sequence $\{u_i\}_{i\in\nats}$ in $\mathcal{U}_{[t,s]}$ such that $\lim_{i\to\infty}\sigma(u_i)=0$. For any $i\in\nats$, with $u_i=t_0,\ldots,t_n$, we now define the sequence $u_i^*=(t_0+\Delta),(t_1+\Delta),\ldots,(t_n+\Delta)$. Then clearly, $\lim_{i\to\infty}\sigma(u_i^*)=0$ and, for all $i\in\nats$, $u_i^*\in\mathcal{U}_{[t+\Delta,s+\Delta]}$ and, due to Equation~\eqref{eq:aux_lower_trans}, $\Phi_{u_i}=\Phi_{u_i^*}$.
We now have that
\begin{equation*}
L_t^s=\lim_{i\to\infty}\Phi_{u_i}=\lim_{i\to\infty}\Phi_{u_i^*}=L_{t+\Delta}^{s+\Delta},
\end{equation*}
where the first and last equality follow from Theorem~\ref{theo:convergencelowerbound}.
\end{proof}

The following lemma provides a bound on how well the operator $L_t^{t+\Delta}$ can be approximated using the much simpler operator $(I+\Delta\lrate)$. Note that this lemma states the general version of Lemma~\ref{lemma:linearpartofexponential}, which until this point has remained unproven. To see that the result below indeed implies Lemma~\ref{lemma:linearpartofexponential}, simply recall from Section~\ref{sec:connections_rate} that any rate matrix $Q$ is also a lower transition rate operator. Since it follows from our comments in Section~\ref{sec:properties_lower_trans} that the lower transition operator $L_t^{t+\Delta}$ corresponding to such a $\lrate=Q$ satisfies $L_t^{t+\Delta}=e^{Q\Delta}$, Lemma~\ref{lemma:linearpartofexponential} clearly follows from the following result.

\begin{lemma}\label{lemma:quadraticboundonL}
Let $\lrate$ be a lower transition rate operator. Then for any $t,\Delta\in\realsnonneg$:
\begin{equation*}
\norm{L_t^{t+\Delta}-(I+\Delta\lrate)}\leq\Delta^2\norm{\lrate}^2.\vspace{5pt}
\end{equation*}
\end{lemma}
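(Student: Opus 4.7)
The plan is to derive the desired inequality as a limit, by splitting the difference $L_t^{t+\Delta} - (I + \Delta\lrate)$ via the triangle inequality over an intermediate auxiliary operator $\Phi_u$ for a suitably fine partition $u \in \mathcal{U}_{[t,t+\Delta]}$, and then invoking two already-established quantitative bounds: one controlling the distance from $\Phi_u$ to $L_t^{t+\Delta}$, the other controlling the distance from $\Phi_u$ to $(I + \Delta\lrate)$.

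Concretely, the case $\Delta = 0$ is immediate since $L_t^t = I = I + 0\cdot\lrate$ by Proposition~\ref{prop:lower_trans_system_is_system}, so assume $\Delta > 0$. If $\norm{\lrate} = 0$ then $\lrate = 0$ (so $\Phi_u = I$ for every partition, hence $L_t^{t+\Delta}=I$) and the bound holds trivially, so assume $\norm{\lrate} > 0$ as well. Fix any $u \in \mathcal{U}_{[t,t+\Delta]}$ with $\sigma(u)\norm{\lrate} \leq 1$; writing $u = t_0,\ldots,t_n$ with $t_0 = t$, $t_n = t+\Delta$, and $\Delta_i = t_i - t_{i-1}$, each $\Delta_i \leq \sigma(u)$ satisfies $\Delta_i\norm{\lrate} \leq 1$, and $\sum_{i=1}^n \Delta_i = \Delta$. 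The triangle inequality gives
\begin{equation*}
\norm{L_t^{t+\Delta} - (I + \Delta\lrate)}
\leq \norm{L_t^{t+\Delta} - \Phi_u} + \norm{\Phi_u - (I + \Delta\lrate)}.
\end{equation*}

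The first term on the right is bounded by Lemma~\ref{lemma:limitboundonL}, which yields $\norm{L_t^{t+\Delta} - \Phi_u} \leq \sigma(u)\Delta\norm{\lrate}^2$. The second term is bounded directly by Lemma~\ref{lemma:justthelinearpart}, which (in view of the conditions on the $\Delta_i$) yields $\norm{\Phi_u - (I + \Delta\lrate)} \leq \Delta^2\norm{\lrate}^2$. Combining, we obtain
\begin{equation*}
\norm{L_t^{t+\Delta} - (I + \Delta\lrate)} \leq \sigma(u)\Delta\norm{\lrate}^2 + \Delta^2\norm{\lrate}^2.
\end{equation*}

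To finish, we pick a sequence of partitions $u_k \in \mathcal{U}_{[t,t+\Delta]}$ with $\sigma(u_k) \to 0^+$ (for instance uniform partitions into $k$ equal subintervals for all $k$ large enough that $\sigma(u_k)\norm{\lrate} \leq 1$). The left-hand side does not depend on $u$, so passing to the limit in $k$ sends the $\sigma(u_k)\Delta\norm{\lrate}^2$ term to zero and yields $\norm{L_t^{t+\Delta} - (I + \Delta\lrate)} \leq \Delta^2\norm{\lrate}^2$, as claimed. The only mild subtlety is ensuring a fine enough partition exists and that the constraint $\Delta_i\norm{\lrate} \leq 1$ transfers cleanly into the scope of Lemma~\ref{lemma:justthelinearpart}, but this is automatic from $\Delta_i \leq \sigma(u)$ and the choice $\sigma(u)\norm{\lrate} \leq 1$. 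No real obstacle arises; the result is essentially a bookkeeping synthesis of the two preceding approximation lemmas.
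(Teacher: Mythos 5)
Your proof is correct and follows essentially the same route as the paper: split $\norm{L_t^{t+\Delta}-(I+\Delta\lrate)}$ via the triangle inequality over $\Phi_u$ and bound $\norm{\Phi_u-(I+\Delta\lrate)}$ by Lemma~\ref{lemma:justthelinearpart}. The only cosmetic difference is that the paper makes the first term at most an arbitrary $\epsilon$ directly from Definition~\ref{def:low_trans}, whereas you bound it by $\sigma(u)\Delta\norm{\lrate}^2$ via Lemma~\ref{lemma:limitboundonL} and let $\sigma(u)\to0$; both handle the vanishing term in the same spirit.
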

\begin{proof}
Fix any $\epsilon>0$. Because of Definition~\ref{def:low_trans}, there is some $\smash{u\in\mathcal{U}_{[t,t+\Delta]}}$ such that $\sigma(u)\norm{\lrate}\leq1$ and $\norm{L_t^s-\Phi_{u}}\leq\epsilon$. By combining this with Lemma~\ref{lemma:justthelinearpart}, it follows that
\begin{equation*}
\norm{L_t^s-(I+\Delta\lrate)}
\leq
\norm{L_t^s-\Phi_{u}}
+
\norm{\Phi_{u}-(I+\Delta\lrate)}
\leq\epsilon+
\Delta^2\norm{\lrate}^2.
\end{equation*}
Since $\epsilon>0$ is arbitrary, the result is now immediate.
\end{proof}

The final lemma of this section states how well we can approximate the operator $L_t^{t+\Delta}$ using the identity matrix $I$. Put differently, since we know from Proposition~\ref{prop:lower_trans_system_is_system} that $L_t^t=I$, this tells us how quickly $L_t^{t+\Delta}$ changes as we increase $\Delta$. This result is a generalised version of Lemma~\ref{lemma:linearboundonexponential}, which has so far remained unproven. The reason why Lemma~\ref{lemma:linearboundonexponential} is a special case is again because for $\lrate=Q$, we have that $L_t^{t+\Delta}=e^{Q\Delta}$.

\begin{lemma}\label{lemma:linearboundonL}
Let $\lrate$ be a lower transition rate operator. Then for any $t,\Delta\in\realsnonneg$:
\begin{equation*}
\norm{L_t^{t+\Delta}-I}\leq\Delta\norm{\lrate}.\vspace{5pt}
\end{equation*}
\end{lemma}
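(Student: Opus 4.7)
The plan is to bound $\norm{L_t^{t+\Delta}-I}$ by approximating $L_t^{t+\Delta}$ with a product of simple lower transition operators of the form $(I+(\Delta/n)\lrate)$, and then exploiting the telescoping recursive bound in Lemma~\ref{lemma:recursive_lower_trans} with the trivial choice of ``comparison'' operators $\underline{S}_i = I$.

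First, I would choose any $n\in\nats$ large enough that $(\Delta/n)\norm{\lrate}\le 1$, and consider the uniform partition $u_n\in\mathcal{U}_{[t,t+\Delta]}$ with step size $\Delta/n$, so that $\Phi_{u_n} = \prod_{i=1}^n (I+(\Delta/n)\lrate)$. Proposition~\ref{lemma:normQsmallenough} ensures that each factor $(I+(\Delta/n)\lrate)$ is a lower transition operator, and $I$ is trivially a lower transition operator as well, so Lemma~\ref{lemma:recursive_lower_trans} applies with $\underline{T}_i \coloneqq (I+(\Delta/n)\lrate)$ and $\underline{S}_i\coloneqq I$ for all $i\in\{1,\dots,n\}$, yielding
\begin{equation*}
\norm{\Phi_{u_n} - I} \;=\; \norm{\prod_{i=1}^n (I+(\Delta/n)\lrate) - \prod_{i=1}^n I} \;\leq\; \sum_{i=1}^n \norm{(\Delta/n)\lrate} \;=\; \Delta\norm{\lrate},
\end{equation*}
where the last equality uses property~\ref{N:homogeneous}.

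Next, I would take the limit $n\to\infty$. Since $\sigma(u_n)=\Delta/n\to 0$, Theorem~\ref{theo:convergencelowerbound} (or equivalently Lemma~\ref{lemma:limitboundonL}) guarantees that $\Phi_{u_n}\to L_t^{t+\Delta}$ in the operator norm. Combining this with the uniform bound above via the triangle inequality, for any $\epsilon>0$ there is some $n$ with $\norm{L_t^{t+\Delta}-\Phi_{u_n}}\le\epsilon$, whence $\norm{L_t^{t+\Delta}-I} \le \epsilon + \Delta\norm{\lrate}$. Letting $\epsilon\to 0$ gives the desired inequality.

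I do not foresee a real obstacle here; the whole argument is essentially a one-line computation once one recognises that the natural ``baseline'' against which to compare the product $\Phi_{u_n}$ is the trivial product $\prod_{i=1}^n I = I$, rather than the linearisation $(I+\Delta\lrate)$ used in Lemma~\ref{lemma:quadraticboundonL}. The latter would only yield the weaker bound $\Delta^2\norm{\lrate}^2 + \Delta\norm{\lrate}$ via the triangle inequality, so it is important to apply Lemma~\ref{lemma:recursive_lower_trans} directly to the full product rather than route through the linear approximation.
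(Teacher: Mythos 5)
Your proof is correct, and no circularity is introduced: Proposition~\ref{lemma:normQsmallenough}, Lemma~\ref{lemma:recursive_lower_trans} and Theorem~\ref{theo:convergencelowerbound} (or Lemma~\ref{lemma:limitboundonL}) are all established independently of this lemma. Your route is genuinely different in its decomposition from the one the paper takes, although both rest on the same telescoping bound and a limit in $n$. The paper first rewrites $L_t^{t+\Delta}=(L_0^{\Delta/n})^n$ using the semi-group and homogeneity properties of $\underline{\mathcal{T}}_{\lrate}$ (Propositions~\ref{prop:lower_trans_system_is_system} and~\ref{prop:lower_transition_is_homogeneous}), applies Lemma~\ref{lemma:recursive_lower_trans} to get $n\norm{L_0^{\Delta/n}-I}$, and then controls each small-time factor by inserting the linearisation $(I+\tfrac{\Delta}{n}\lrate)$ and invoking the quadratic bound of Lemma~\ref{lemma:quadraticboundonL}; this leaves a residual term $\tfrac{1}{n}\Delta^2\norm{\lrate}^2$ that is killed by letting $n\to\infty$. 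You instead compare the defining approximation $\Phi_{u_n}=\prod_{i=1}^n(I+\tfrac{\Delta}{n}\lrate)$ directly with $\prod_{i=1}^n I=I$, which gives the clean bound $\Delta\norm{\lrate}$ exactly at every finite $n$, and the only error you must dispose of is $\norm{L_t^{t+\Delta}-\Phi_{u_n}}$, which vanishes by the very definition of $L_t^s$ as the limit of the $\Phi_u$. What your approach buys is economy: it bypasses the semi-group and homogeneity propositions and the quadratic bound entirely, staying closer to Definition~\ref{def:low_trans}; what the paper's approach buys is that it works purely with properties of the family $\underline{\mathcal{T}}_{\lrate}$ itself, at the cost of the extra vanishing term. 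The only cosmetic remark is the degenerate case $\Delta=0$, where the uniform partition collapses to the single point $t$ and $\Phi_{u}=I$; the claim is then trivial since $L_t^t=I$, so your argument degenerates harmlessly, but a one-line remark would make this explicit.
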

\begin{proof}
Fix any $n\in\nats$ and let $\Delta_n\coloneqq\nicefrac{\Delta}{n}$. Propositions~\ref{prop:lower_trans_system_is_system} and~\ref{prop:lower_transition_is_homogeneous} then imply that
\begin{equation*}
L_t^{t+\Delta}
=L_0^{\Delta}
=L_0^{n\Delta_n}
=L_0^{\Delta_n}
L_{\Delta_n}^{2\Delta_n}
\cdots
L_{(n-2)\Delta_n}^{(n-1)\Delta_n}
L_{(n-1)\Delta_n}^{n\Delta_n}
=L_0^{\Delta_n}
L_0^{\Delta_n}
\cdots
L_0^{\Delta_n}
L_0^{\Delta_n}
=(L_0^{\Delta_n})^n,
\end{equation*}
and therefore, it follows from Lemma~\ref{lemma:recursive_lower_trans} that
\begin{equation*}
\norm{L_t^{t+\Delta}-I}
=
\norm{(L_0^{\Delta_n})^n-I^n}
\leq n\norm{L_0^{\Delta_n}-I}
\leq n\norm{L_0^{\Delta_n}-(I+\Delta_n\lrate)}+n\Delta_n\norm{\lrate},
\end{equation*}
which, when combined with Lemma~\ref{lemma:quadraticboundonL}, implies that
\begin{equation*}
\norm{L_t^{t+\Delta}-I}
\leq
n\Delta_n^2\norm{\lrate}^2+n\Delta_n\norm{\lrate}
=\frac{1}{n}\Delta^2\norm{\lrate}^2+\Delta\norm{\lrate}.
\end{equation*}
Since $n\in\nats$ is arbitrary, the result is now immediate.
\end{proof}

\proplowertransitionhasderiv*
\begin{proof}
We first prove that $\frac{\partial}{\partial t}L_t^s=-\lrate L_t^s$, or equivalently, that
\begin{equation}\label{eq:lower_deriv_backward}
(\forall\epsilon>0)\,
(\exists\delta>0)\,
(\forall\Delta\,:\,0<\lvert\Delta\rvert <\delta,\,0\leq t+\Delta\leq s)~
\norm{\frac{L_{t+\Delta}^s-L_t^s}{\Delta}+\lrate L_t^s}<\epsilon.
\end{equation}
Fix any $\epsilon\in\realspos$, and choose any $\delta>0$ such that $3\delta\norm{\lrate}^2<\epsilon$. Consider now any $\Delta\in\reals$ such that $0<\abs{\Delta}<\delta$ and $0\leq t+\Delta\leq s$.
We will show that the inequality in Equation~\eqref{eq:lower_deriv_backward} holds.
Let $t'\coloneqq\max\{t,t+\Delta\}$. We then find that
\begin{align*}
\norm{L_{t+\Delta}^s-L_t^s+\Delta\lrate L_t^s}
=\norm{L_{t'}^s-L_{t'-\abs{\Delta}}^s+\abs{\Delta}\lrate L_{t}^s}
=\norm{L_{t'}^s-L_{t'-\abs{\Delta}}^{t'}L_{t'}^s+\abs{\Delta}\lrate L_{t}^{t'}L_{t'}^s},
\end{align*}
where the last equality follows from Proposition~\ref{prop:lower_trans_system_is_system} and because $t\leq t'\leq s$. Therefore, and because of \ref{N:normAB} and~\ref{LT:norm_at_most_one}, we find that
\begin{align*}
\norm{L_{t+\Delta}^s-L_t^s+\Delta\lrate L_t^s}
\leq\norm{I-L_{t'-\abs{\Delta}}^{t'}+\abs{\Delta}\lrate L_{t}^{t'}}\norm{L_{t'}^s}
\leq\norm{I-L_{t'-\abs{\Delta}}^{t'}+\abs{\Delta}\lrate L_{t}^{t'}},
\end{align*}
which implies that
\begin{align*}
\norm{L_{t+\Delta}^s-L_t^s+\Delta\lrate L_t^s}
&\leq\norm{I+\abs{\Delta}\lrate-L_{t'-\abs{\Delta}}^{t'}}
+
\abs{\Delta}\norm{\lrate L_{t}^{t'}-\lrate}\\
&\leq
\Delta^2\norm{\lrate}^2+\abs{\Delta}2\norm{\lrate}\norm{L_{t}^{t'}-I}\leq\Delta^2\norm{\lrate}^2+2\abs{\Delta}(t'-t)\norm{\lrate}^2,
\end{align*}
using Lemma~\ref{lemma:quadraticboundonL} and~\ref{LR:differenceofnorm} for the second inequality and Lemma~\ref{lemma:linearboundonL} for the third inequality. Hence, since $0\leq(t'-t)\leq\abs{\Delta}$, we find that $\smash{\norm{L_{t+\Delta}^s-L_t^s+\Delta\lrate L_t^s}\leq3\Delta^2\norm{\lrate}^2}$, which implies that indeed, as required,
\begin{equation*}
\norm{\frac{L_{t+\Delta}^s-L_t^s}{\Delta}+\lrate L_t^s}
=\frac{1}{\abs{\Delta}}\norm{L_{t+\Delta}^s-L_t^s+\Delta\lrate L_t^s}
\leq3\abs{\Delta}\norm{\lrate}^2
\leq3\delta\norm{\lrate}^2
<\epsilon.
\end{equation*}

Next, we prove that $\frac{\partial}{\partial s}L_t^s=\lrate L_t^s$, or equivalently, that
\begin{equation}\label{eq:lower_deriv_forward}
(\forall\epsilon>0)\,
(\exists\delta>0)\,
(\forall\Delta\,:\,0<\lvert\Delta\rvert<\delta,\,t\leq s+\Delta)~
\norm{\frac{L_{t}^{s+\Delta}-L_t^s}{\Delta}-\lrate\lbound_t^s }<\epsilon.
\end{equation}
Fix any $\epsilon>0$ and $\alpha>0$ and let $t^*\coloneqq t+\alpha$ and $s^*\coloneqq s+\alpha$. It then follows from Equation~\eqref{eq:lower_deriv_backward} that there is some $\delta^*>0$ such that
\begin{equation}\label{eq:lower_deriv_backward:star}
(\forall\Delta^*\,:\,0<\lvert\Delta^*\rvert <\delta^*,\,0\leq t^*+\Delta^*\leq s^*)~
\norm{\frac{L_{t^*+\Delta^*}^{s^*}-L_{t^*}^{s^*}}{\Delta^*}+\lrate L_{t^*}^{s^*}}<\epsilon.
\end{equation}
Let $\delta\coloneqq\min\{\delta^*,\alpha\}$. Consider now any $\Delta\in\reals$ such that $0<\abs{\Delta}<\delta$ and $t\leq s+\Delta$. We will prove that the inequality in Equation~\eqref{eq:lower_deriv_forward} holds. Let $\Delta^*\coloneqq-\Delta$. We then have that $0<\abs{\Delta^*}<\delta\leq\delta^*$ and that $t^*+\Delta^*=t+\alpha-\Delta\geq t+\alpha-\delta\geq t\geq 0$ and $t^*+\Delta^*=t+\alpha-\Delta\leq s+\Delta+\alpha-\Delta=s^*$, and therefore, we find that indeed, as required,
\begin{align*}
\norm{\frac{L_{t}^{s+\Delta}-L_t^s}{\Delta}-\lrate\lbound_t^s}
= \norm{\frac{L_{t^*+\Delta^*}^{s^*}-L_{t^*}^{s^*}}{-\Delta^*}-\lrate\lbound_{t^*}^{s^*}}
= \norm{\frac{L_{t^*+\Delta^*}^{s^*}-L_{t^*}^{s^*}}{\Delta^*}+\lrate\lbound_{t^*}^{s^*}}<\epsilon,
\end{align*}
where the first equality follows from Proposition~\ref{prop:lower_transition_is_homogeneous} and the final inequality follows from Equation~\eqref{eq:lower_deriv_backward:star}.
\end{proof}

\section{Proofs and Lemmas for the results in Section~\ref{sec:connections}}

Before giving the proof of Proposition~\ref{theorem:nonmarkov_single_var_lower_bounded}, we first prove some crucial steps of the proof separately, in the lemmas below.

First of all, recall from Proposition~\ref{prop:outerderivativebehaveslikelimit} that, for any $\epsilon\in\realspos$, there is some $\delta>0$ such that, for all $0<\Delta<\delta$, the transition matrix $\smash{T_{t,x_u}^{t+\Delta}}$ of a fixed stochastic process $\smash{P\in\wprocesses_{\rateset,\mathcal{M}}}$ can be approximated by $(I+\Delta Q)$ with an error of at most $\Delta \epsilon$, using a rate matrix $Q\in\overline{\partial}T_{t,x_u}^t\subseteq \mathcal{Q}$. Quite similarly, the following lemma states that, for any given time interval $[t,s]$, there exists a finite partition $u\in\mathcal{U}_{[t,s]}$, $u=t_0,\ldots,t_n$, such that the transition matrices $\smash{T_{t_i,x_u}^{t_{i+1}}}$ can all be approximated by $(I+\Delta_{i+1}Q_{i+1})$, for some $Q_{i+1}\in\rateset$ and with $\Delta_{i+1}=t_{i+1}-t_i$. 

The reason that this result does not follow trivially from Proposition~\ref{prop:outerderivativebehaveslikelimit} is because the $\delta$---and hence also $\Delta$---in Proposition~\ref{prop:outerderivativebehaveslikelimit} depends on the particular time point that is considered. For this reason, the intuitive idea of using Proposition~\ref{prop:outerderivativebehaveslikelimit} to first find some $\Delta_0$ and $Q_0$ such that $T_{t_0,x_u}^{t_1}=T_{t_0,x_u}^{t_0+\Delta}$ can be approximated by $I+\Delta_1Q_1$, and to then continue in this way to find some $\Delta_2$ and $Q_2$, and then some $\Delta_3$ and $Q_3$, and so on, is not feasible, because this process may continue indefinitely if $\sum_{i=1}^\infty\Delta_i$ is finite. In order to make this work, we need some kind of guarantee that it suffices to consider a finite number of $\Delta_i$, and this is exactly what the following lemma establishes.

\begin{lemma}\label{lemma:bound_on_linear_approx_partition}
Consider any $P\in\wprocesses_{\rateset,\,\mathcal{M}}$, any $0\leq t<s$, any $u\in\mathcal{U}_{<t}$ and any $x_u\in\states_u$. Then for all $\epsilon>0$ and $\delta>0$, there is some $v\in\mathcal{U}_{[t,s]}$ such that $\sigma(v)<\delta$ and, for all $i\in\{0,\dots,n-1\}$:
\begin{equation*}
(\exists Q\in\rateset)
~
\norm{
T^{t_{i+1}}_{t_i,\,x_u}-(I+\Delta_{i+1}Q)
}<\Delta_{i+1}\epsilon
\end{equation*}
\end{lemma}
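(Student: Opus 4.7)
The plan is to use a supremum/continuation argument of the kind familiar from real analysis. Fix $\epsilon>0$ and $\delta>0$. Since $P\in\wprocesses_{\rateset,\mathcal{M}}$ is consistent with $\rateset$, both $\overline{\partial}_{+}T^r_{r,x_u}$ (for $r\in[t,s)$) and $\overline{\partial}_{-}T^r_{r,x_u}$ (for $r\in(t,s]$) are contained in $\rateset$. Applied with history $x_u$, Proposition~\ref{prop:outerderivativebehaveslikelimit} then supplies, at each such $r$, a positive $\delta^{+}_r$ (respectively $\delta^{-}_r$) such that for every $\Delta\in(0,\delta^{+}_r)$ there exists $Q\in\rateset$ with $\norm{T^{r+\Delta}_{r,x_u}-(I+\Delta Q)}<\Delta\epsilon$, and analogously for the left-sided version.

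Define $R\subseteq[t,s]$ to be the set of those $r$ for which some $v\in\mathcal{U}_{[t,r]}$ satisfies $\sigma(v)<\delta$ together with the linearization bound on each of its subintervals. Clearly $t\in R$ (the degenerate partition $v=t$ has no subintervals to check), so $R\neq\emptyset$; let $r^{*}\coloneqq\sup R$.

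First I would show that $r^{*}\in R$. If $r^{*}=t$ this is trivial, so assume $r^{*}>t\geq 0$ and invoke the left-sided version of Proposition~\ref{prop:outerderivativebehaveslikelimit} at $r^{*}$ to obtain $\delta^{-}_{r^{*}}>0$; shrink it so that $\delta^{-}_{r^{*}}<\delta$. By definition of supremum, pick $r\in R$ with $r^{*}-\delta^{-}_{r^{*}}<r\leq r^{*}$. Appending $r^{*}$ to a partition witnessing $r\in R$ produces a partition of $[t,r^{*}]$ whose new final subinterval has length less than $\delta^{-}_{r^{*}}<\delta$, and whose linearization bound is guaranteed by the choice of $\delta^{-}_{r^{*}}$; hence $r^{*}\in R$. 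Next I would rule out $r^{*}<s$ by contradiction: applying the right-sided version of Proposition~\ref{prop:outerderivativebehaveslikelimit} at $r^{*}$ yields $\delta^{+}_{r^{*}}>0$, and appending any $r^{*}+\Delta$ with $\Delta\in(0,\min\{\delta^{+}_{r^{*}},\delta,s-r^{*}\})$ to a partition witnessing $r^{*}\in R$ produces a strictly larger element of $R$. Therefore $r^{*}=s$, and the partition witnessing $s\in R$ is the desired $v$.

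The main obstacle is conceptual rather than technical: Proposition~\ref{prop:outerderivativebehaveslikelimit} delivers only a \emph{local} guarantee whose scale $\delta^{\pm}_r$ may shrink to zero as $r$ varies over $[t,s]$, so a naive greedy extension starting at $t$ and always using the right-sided derivative at the current endpoint could stall before reaching $s$. The point of the supremum argument is precisely to use the left-sided derivative at $r^{*}$ to close $R$, and only then the right-sided derivative at $r^{*}$ to push past $r^{*}$; this two-sided play bootstraps the local estimates into a global partition of the whole interval $[t,s]$.
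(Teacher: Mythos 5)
Your argument is correct, and it reaches the goal by a genuinely different route than the paper. Both proofs rest on the same local input---Proposition~\ref{prop:outerderivativebehaveslikelimit} together with consistency (Definition~\ref{def:consistent_process}), which puts the approximating rate matrices inside $\rateset$---but they globalise it differently. The paper shrinks the interval to $[t+\delta^*,s-\delta^*]$, covers it by the open intervals $(r-\delta_r,r+\delta_r)$, extracts a \emph{minimal} finite subcover via Heine--Borel, proves two ordering properties of that subcover, inserts intermediate points $q_k$ in the overlaps, and finally patches in the endpoints $t$ and $s$ by hand using the right-sided estimate at $t$ and the left-sided estimate at $s$. You instead run a continuity-induction (supremum) argument on the set $R$ of reachable right endpoints: the left-sided estimate at $r^*$ shows $R$ contains its supremum, and the right-sided estimate at $r^*$ pushes past it, so $r^*=s$; finiteness of the resulting partition is automatic because membership in $R$ is always witnessed by a finite sequence. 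This avoids the minimal-subcover combinatorics entirely and is shorter and arguably more transparent, while the paper's compactness construction produces the whole partition explicitly in one pass; your closing remark correctly identifies why a naive greedy right-extension from $t$ could stall, which is exactly the difficulty both mechanisms are designed to overcome. One presentational nit: when you pick $r\in R$ with $r^*-\delta^-_{r^*}<r\leq r^*$, you should separate the trivial case $r=r^*$ (then $r^*\in R$ outright) from $r<r^*$ (then append $r^*$), since appending $r^*$ to a partition of $[t,r^*]$ would be degenerate; this is a one-line fix and does not affect the validity of the argument.
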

\begin{proof}
Fix any $\epsilon>0$ and $\delta>0$. It then follows from Proposition~\ref{prop:outerderivativebehaveslikelimit} and Definition~\ref{def:consistent_process} that there is some $0<\delta^*<\min\{\delta,\nicefrac{1}{2}(s-t)\}$ such that, for all $0<\Delta<\delta^*$:
\begin{equation}\label{eq:epsilonboundsforboundsinlemma}
(\exists Q\in\rateset)
\norm{\frac{1}{\Delta}
(T^{t+\Delta}_{t,\,x_u}-I)-Q}<\epsilon
\text{~~and~~}
(\exists Q\in\rateset)
\norm{\frac{1}{\Delta}
(T^{s}_{s-\Delta,\,x_u}-I)-Q}<\epsilon.
\end{equation}
Let $t^*\coloneqq t+\delta^*$ and $s^*\coloneqq s-\delta^*$. Then clearly, $t<t^*<s^*<s$.
For any $r\in[t^*,s^*]$, it follows from Proposition~\ref{prop:outerderivativebehaveslikelimit} and Definition~\ref{def:consistent_process} that there is some $0<\delta_r<\delta^*$ such that, for all $0<\Delta<\delta_r$:
\begin{equation}\label{eq:epsilonboundsforlemma}
(\exists Q\in\rateset)
\norm{\frac{1}{\Delta}
(T^{r+\Delta}_{r,\,x_u}-I)-Q}<\epsilon
\text{~~and~~}
(\exists Q\in\rateset)
\norm{\frac{1}{\Delta}
(T^{r}_{r-\Delta,\,x_u}-I)-Q}<\epsilon.
\end{equation}
Let $U_r\coloneqq(r-\delta_r,r+\delta_r)$. Then the set $C\coloneqq\{U_r\colon r\in[t^*,s^*]\}$ is an open cover of $[t^*,s^*]$. By the Heine-Borel theorem, $C$ contains a finite subcover $C^*$ of $[t^*,s^*]$. Without loss of generality, we can take this subcover to be minimal, in the sense that if we remove any of its elements, it is no longer a cover. Let $m$ be the cardinality of $C^*$ and let $r_1<r_2<\dots<r_m$ be the ordered sequence of the midpoints of the intervals in $C^*$.

We will now prove that
\begin{equation}\label{eq:orderingofbounds}
r_i-\delta_{r_i}<r_j-\delta_{r_j}
\text{~~and~~}
r_i+\delta_{r_i}<r_j+\delta_{r_j}
\text{~~for all $1\leq i<j\leq m$.}
\end{equation}
Assume \emph{ex absurdo} that this statement is not true. Then this implies that there are $1\leq i<j\leq m$ such that either $r_i-\delta_{r_i}\geq r_j-\delta_{r_j}$ or $r_i+\delta_{r_i}\geq r_j+\delta_{r_j}$. If $r_i-\delta_{r_i}\geq r_j-\delta_{r_j}$, then since $i<j$ implies that $r_i<r_j$, it follows that $\delta_{r_j}\geq\delta_{r_i}+r_j-r_i>\delta_{r_i}$ and therefore, that $r_j+\delta_{r_j}>r_i+\delta_{r_i}$. 
Hence, we find that $U_{r_i}\subseteq U_{r_j}$. Since $C^*$ was taken to be a minimal cover, this is a contradiction.
Similarly, if $r_i+\delta_{r_i}\geq r_j+\delta_{r_j}$, then since $i<j$ implies that $r_i<r_j$, it follows that $\delta_{r_i}\geq\delta_{r_j}+r_j-r_i>\delta_{r_j}$ and therefore, that $r_i-\delta_{r_i}<r_j-\delta_{r_i}$. Hence, we find that $U_{r_j}\subseteq U_{r_i}$. Since $C^*$ was taken to be a minimal cover, this is again a contradiction. From these two contradictions, it follows that Equation~\eqref{eq:orderingofbounds} is indeed true.

Next, we prove that
\begin{equation}\label{eq:overlapasyouwantit}
r_{k+1}-\delta_{r_{k+1}}<r_k+\delta_{r_k}
\text{~~for all $k\in\{1,\dots,m-1\}$.}
\end{equation}
Assume \emph{ex absurdo} that this statement is not true or, equivalently, that there is some $k\in\{1,\dots,m-1\}$ such that $r_k+\delta_{r_k}\leq r_{k+1}-\delta_{r_{k+1}}$. For all $i\in\{k+1,\dots, m\}$, it then follows from Equation~\eqref{eq:orderingofbounds} that $r_k+\delta_{r_k}\leq r_i-\delta_{r_i}$, which implies that $r_k+\delta_{r_k}\notin U_{r_i}$. Furthermore, for all $i\in\{1,\dots,k\}$, it follows from Equation~\eqref{eq:orderingofbounds} that $r_i+\delta_{r_i}\leq r_k+\delta_{r_k}$, which again implies that $r_k+\delta_{r_k}\notin U_{r_i}$. 
Hence, for all $i\in\{1,\dots,m\}$, we have found that $r_k+\delta_{r_k}\notin U_{r_i}$. 
Since $C^*$ is a cover of $[t^*,s^*]$, this implies that $r_k+\delta_{r_k}\notin[t^*,s^*]$, which, since $r_k\in[t^*,s^*]$, implies that $r_k+\delta_{r_k}>s^*$. Hence, since we know from Equation~\eqref{eq:orderingofbounds} that $r_k-\delta_{r_k}<r_m-\delta_{r_m}$, it follows that $U_{r_m}\cap[t^*,s^*]\subseteq U_{r_k}\cap[t^*,s^*]$. This contradicts the fact that $C^*$ was taken to be a minimal cover, and therefore, Equation~\eqref{eq:overlapasyouwantit} must indeed be true.

For all $k\in\{1,\dots,m-1\}$, we now define $q_k\coloneqq\nicefrac{1}{2}(r_k+\delta_{r_k}+r_{k+1}-\delta_{r_{k+1}})$.
Using Equation~\eqref{eq:orderingofbounds}, it then follows that
\begin{equation*}
q_k<\frac{r_{k+1}+\delta_{r_{k+1}}+r_{k+1}-\delta_{r_{k+1}}}{2}=r_{k+1}
\text{~~and~~}
q_k>\frac{r_{k}+\delta_{r_{k}}+r_{k}-\delta_{r_{k}}}{2}=r_{k},
\end{equation*}
and Equation~\eqref{eq:overlapasyouwantit} trivially implies that $r_{k+1}-\delta_{r_{k+1}}<q_k<r_k+\delta_{r_k}$. Hence,
\begin{equation*}
r_k<q_k<r_k+\delta_{r_k}
\text{~~and~~}
r_{k+1}-\delta_{r_{k+1}}<q_k<r_{k+1}.
\end{equation*}
Due to Equation~\eqref{eq:epsilonboundsforlemma}, and with $\Delta^*_k\coloneqq q_k-r_k$ and $\Delta^{**}_k\coloneqq r_{k+1}-q_k$, this implies that
\begin{equation}\label{eq:epsilonboundsforqk}
(\exists Q\in\rateset)
\norm{\frac{1}{\Delta^*_k}
(T^{q_k}_{r_k,\,x_u}-I)-Q}<\epsilon
\text{~~and~~}
(\exists Q\in\rateset)
\norm{\frac{1}{\Delta^{**}_k}
(T^{r_{k+1}}_{q_k,\,x_u}-I)-Q}<\epsilon.
\end{equation}

For all $k\in\{1,\dots,m\}$, we now let $t_{2k}\coloneqq r_k$ and, for all $k\in\{1,\dots,m-1\}$, we let $t_{2k+1}\coloneqq q_k$. For the resulting sequence $t_2<t_3<\dots<t_{2m-1}<t_{2m}$, it then follows from Equation~\eqref{eq:epsilonboundsforqk} and~\ref{N:homogeneous} that, for all $i\in\{2,\dots,2m-1\}$:
\begin{equation}\label{eq:opencoverproofresult} 
(\exists Q\in\rateset)
~
\norm{
T^{t_{i+1}}_{t_i,\,x_u}-(I+\Delta_{i+1}Q)
}<\Delta_{i+1}\epsilon,
\end{equation}
with $\Delta_{i+1}\coloneqq t_{i+1}-t_i<\delta$.  

Next, since $C^*$ is a minimal cover, and because of Equation~\eqref{eq:orderingofbounds}, we know that $r_1-\delta_{r_1}<t^*\leq r_1=t_2$ and, since $\delta_{r_1}<\delta^*$, we also know that $r_1-\delta_{r_1}>t$. Therefore, it follows that there is some $t_1\in\reals$ such that $t<r_1-\delta_{r_1}<t_1<t^*\leq r_1$. If we now let $t_0\coloneqq t$, then $\Delta_1\coloneqq t_1-t_0<\delta^*$ and $\Delta_2\coloneqq t_2-t_1=r_1-t_1<\delta_{r_1}$, and therefore, it follows from Equations~\eqref{eq:epsilonboundsforboundsinlemma} and~\eqref{eq:epsilonboundsforlemma} and~\ref{N:homogeneous} that Equation~\eqref{eq:opencoverproofresult} is also true for $i=0$ and $i=1$.

Finally, again since $C^*$ is a minimal cover and because of Equation~\eqref{eq:orderingofbounds}, we know that $t_{2m}=r_m\leq s^*<r_m+\delta_{r_m}$ and, since $\delta_{r_m}<\delta^*$, we also know that $r_m+\delta_{r_m}<s$. Therefore, it follows that there is some $t_{2m+1}\in\reals$ such that $t_{2m}=r_m\leq s^*<t_{2m+1}<r_m+\delta_{r_m}<s$. If we now let $t_{2m+2}\coloneqq s$, then $\Delta_{2m+2}\coloneqq t_{2m+2}-t_{2m+1}<\delta^*$ and $\Delta_{2m+1}\coloneqq t_{2m+1}-t_{2m}=t_{2m+1}-r_m<\delta_{r_m}$, and therefore, it follows from Equations~\eqref{eq:epsilonboundsforboundsinlemma} and~\eqref{eq:epsilonboundsforlemma} and~\ref{N:homogeneous} that Equation~\eqref{eq:opencoverproofresult} is also true for $i=2m$ and $i=2m+1$.

Hence, we conclude that Equation~\eqref{eq:opencoverproofresult} holds for all $i\in\{0,1,\dots,2m,2m+1\}$. The result now follows by letting $n\coloneqq2m+2$.
\end{proof}

The following lemma provides a decomposition property for the expectation operator $\mathbb{E}_P$ that corresponds to a stochastic processes $P\in\processes$. For notational convenience, we express it in terms of transition matrices $T_{t,x_u}^s$, using Remark~\ref{remark:expectationT}. This result will end up being useful in the proof of Proposition~\ref{theorem:nonmarkov_single_var_lower_bounded}.

Roughly speaking, the result establishes that the (history-dependent) transition matrix $T_{t,x_u}^s$ of a stochastic process $P\in\processes$ can be decomposed into a number of other transition matrices corresponding to this $P$, such that---crucially---these individual transition matrices do not depend on the exact way that the decomposition was performed---that is, the other transition matrices in the decomposition---despite $P$ not necessarily being a Markov process.

\begin{lemma}\label{lemma:weirddecomposition}
Consider any $P\in\wprocesses$, any $t,s\in\realsnonneg$ such that $t<s$, any $u\in\mathcal{U}_{<t}$ and $x_u\in\states_u$ and any sequence $t=t_0<t_1<\cdots<t_n=s$, with $n\in\nats$.
Then for any $f\in\gamblesX$ and $x_t\in\states$:
\begin{equation*}
[T_{t,\,x_u}^sf](x_t)
=\Big[T_{t_0,\,x_u}^{t_1}\Bigg(\prod_{i=2}^{n}T_{t_{i-1},\,x_{u\cup\{t\}}}^{t_{i}}\Bigg)f\Big](x_t)
\end{equation*}
\end{lemma}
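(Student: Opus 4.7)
The plan is to prove the lemma by unrolling the matrix product on the right-hand side into an iterated sum and then collapsing that sum using the chain rule and finite additivity of the underlying full conditional probability. By Corollary~\ref{corol:processiffrestriction}, $P$ is (the restriction of) a full conditional probability, so in particular it satisfies~\ref{def:coh_prob_3} and~\ref{def:coh_prob_6} whenever the conditioning events below are non-empty; this non-emptiness is guaranteed by Equation~\eqref{eq:path_exists_for_finite_points}, since all of our conditioning events specify values at only finitely many time points.

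First I would unwrap the right-hand side. The key observation is that $x_{u\cup\{t\}}$, evaluated at the outer argument $x_t$, encodes precisely the joint condition $X_u=x_u$ and $X_t=x_t$, which is the same history for every factor $i\geq 2$. Using the definition of a history-dependent transition matrix and the row/column conventions of matrix multiplication, the right-hand side becomes
\begin{equation*}
\sum_{x_{t_1},\dots,x_{t_n}}
P(X_{t_1}=x_{t_1}\vert X_t=x_t,X_u=x_u)
\left(\prod_{i=2}^{n}
P(X_{t_i}=x_{t_i}\vert X_{t_{i-1}}=x_{t_{i-1}},X_t=x_t,X_u=x_u)\right)
f(x_{t_n}),
\end{equation*}
where the factor at position $i\geq2$ is the $(x_{t_{i-1}},x_{t_i})$-entry of $T_{t_{i-1},x_{u\cup\{t\}}}^{t_i}$ and the factor at position $1$ is the $(x_t,x_{t_1})$-entry of $T_{t_0,x_u}^{t_1}$.

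The main step is a telescoping argument: I will prove by induction on $k\in\{1,\dots,n\}$ that
\begin{equation*}
\sum_{x_{t_1},\dots,x_{t_{k-1}}}
P(X_{t_1}=x_{t_1}\vert X_t=x_t,X_u=x_u)
\prod_{i=2}^{k}
P(X_{t_i}=x_{t_i}\vert X_{t_{i-1}}=x_{t_{i-1}},X_t=x_t,X_u=x_u)
= P(X_{t_k}=x_{t_k}\vert X_t=x_t,X_u=x_u).
\end{equation*}
The case $k=1$ is tautological. For the inductive step, the factor at position $k$ depends only on $x_{t_{k-1}}$, $x_t$, $x_u$, so it can be pulled outside the sums over $x_{t_1},\dots,x_{t_{k-2}}$; applying the induction hypothesis collapses those sums into $P(X_{t_{k-1}}=x_{t_{k-1}}\vert X_t=x_t,X_u=x_u)$. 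The remaining product with $P(X_{t_k}=x_{t_k}\vert X_{t_{k-1}}=x_{t_{k-1}},X_t=x_t,X_u=x_u)$ is recognised as $P(X_{t_{k-1}}=x_{t_{k-1}},X_{t_k}=x_{t_k}\vert X_t=x_t,X_u=x_u)$ by~\ref{def:coh_prob_6}, after which summing over $x_{t_{k-1}}$ and invoking~\ref{def:coh_prob_3} yields $P(X_{t_k}=x_{t_k}\vert X_t=x_t,X_u=x_u)$, as required.

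Taking $k=n$ in this identity and substituting back reduces the right-hand side to $\sum_{x_{t_n}} P(X_{t_n}=x_{t_n}\vert X_t=x_t,X_u=x_u)f(x_{t_n})$, which by Definition~\ref{def:trans_matrix} (together with $t_n=s$) is exactly $[T_{t,x_u}^s f](x_t)$, the left-hand side. I do not anticipate any real obstacle: the result is essentially a careful bookkeeping exercise, and the only thing that might look surprising at first glance is that each factor for $i\geq 2$ conditions only on $X_{t_{i-1}}$ (and on $X_t,X_u$), not on all of $X_{t_1},\dots,X_{t_{i-1}}$ as a standard chain-rule expansion would. The telescoping identity above shows why this weaker conditioning is nonetheless sufficient: the earlier intermediate variables are simply marginalised out pairwise at each step, and this marginalisation is valid precisely because the later factors do not depend on them.
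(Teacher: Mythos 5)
Your proof is correct and follows essentially the same route as the paper: both arguments reduce to repeatedly marginalising out the intermediate states via the chain rule \ref{def:coh_prob_6} and finite additivity \ref{def:coh_prob_3} (justified through Corollary~\ref{corol:processiffrestriction}), exploiting that each factor for $i\geq2$ conditions only on $X_{t_{i-1}}$ together with the fixed history $(x_t,x_u)$. The only difference is presentational — you unroll the operator product into explicit sums and telescope by induction on $k$, whereas the paper runs the induction on $n$ at the level of transition operators, splitting off the first two factors with the law of iterated expectation — so no substantive gap.
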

\begin{proof}
We provide a proof by induction. For $n=1$, the result holds trivially. So consider now any $n>1$ and assume that the result is true for $n-1$.
For any $g\in\gamblesX$, Remark~\ref{remark:expectationT} then implies that
\begin{align*}
[T_{t_0,\,x_u}^{t_2}g](x_t)
&=\mathbb{E}_P[g(X_{t_2})\vert\,X_{t}=x_t,X_u=x_u]\\
&=\sum_{x_{t_1}\in\states}\mathbb{E}_P[g(X_{t_2})\vert\,X_{t_1}=x_{t_1},X_{t}=x_t,X_u=x_u]P(X_{t_1}=x_{t_1}\vert\,X_{t}=x_t,X_u=x_u)\\[-8pt]
&\quad\quad\quad\quad\quad=\sum_{x_{t_1}\in\states}
[T_{t_1,\,x_{u\cup\{t\}}}^{t_{2}}g](x_{t_1})P(X_{t_1}=x_{t_1}\vert\,X_{t}=x_t,X_u=x_u)\\[-6pt]
&\quad\quad\quad\quad\quad\quad\quad\quad\quad\quad=[T_{t_0,\,x_u}^{t_1}
T_{t_1,\,x_{u\cup\{t\}}}^{t_{2}}g](x_t),
\end{align*}
and therefore, it follows that
\begin{align*}
[T_{t,\,x_u}^sf](x_t)
&=\Big[T_{t_0,\,x_u}^{t_2}\Bigg(\prod_{i=3}^{n}T_{t_{i-1},\,x_{u\cup\{t\}}}^{t_{i}}\Bigg)f\Big](x_t)
\\
 &= \Big[T_{t_0,\,x_u}^{t_1}
T_{t_1,\,x_{u\cup\{t\}}}^{t_{2}}
\Bigg(\prod_{i=3}^{n}T_{t_{i-1},\,x_{u\cup\{t\}}}^{t_{i}}\Bigg)f\Big](x_t) =\Big[T_{t_0,\,x_u}^{t_1}\Bigg(\prod_{i=2}^{n}T_{t_{i-1},\,x_{u\cup\{t\}}}^{t_{i}}\Bigg)f\Big](x_t),
\end{align*}
using the induction hypothesis for the first equality.
\end{proof}

The final two results that we require have to do with the connection between $\rateset$ and $\lrate$. Due to the definition of the lower transition rate operator $\lrate$ corresponding to a given $\rateset$---see Equation~\eqref{eq:correspondinglowertrans}---it holds for all $\Delta\in\realsnonneg$, all $Q\in\rateset$, and all $f\in\gamblesX$ that $(I+\Delta Q)f \geq (I+\Delta\lrate)f$. Lemma~\ref{lemma:productofQsdominatesproductoflrates} establishes that this inequality extends to compositions of such operators $(I+\Delta Q)$ and $(I+\Delta\lrate)$, provided that we impose an upper bound on $\Delta$. In orde to prove this Lemma~\ref{lemma:productofQsdominatesproductoflrates}, we also require Lemma~\ref{lemma:normQboundedbynormLQ}, which states that $\norm{Q}$ is bounded above by $\norm{\lrate}$.

\begin{lemma}\label{lemma:normQboundedbynormLQ}
Consider a non-empty bounded set $\rateset$ of rate matrices and let $\lrate$ be the corresponding lower transition rate operator. Then for all $Q\in\rateset$, we have that $\norm{Q}\leq\norm{\lrate}$.
\end{lemma}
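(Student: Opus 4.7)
The plan is to exploit the fact that every $Q\in\rateset$ dominates $\lrate$ pointwise (by the very definition of the lower envelope in Equation~\eqref{eq:correspondinglowertrans}), together with the conjugate version of this inequality. Concretely, for any $f\in\gamblesX$ and $x\in\states$, we have on the one hand $[Qf](x)\geq[\lrate f](x)$, and on the other hand, applying the same inequality to $-f$ and rearranging,
\begin{equation*}
[Qf](x)=-[Q(-f)](x)\leq -[\lrate(-f)](x).
\end{equation*}
Thus $[Qf](x)$ is sandwiched between $[\lrate f](x)$ and $-[\lrate(-f)](x)$.

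Next I would turn these two-sided pointwise bounds into a bound on $\norm{Qf}$. Since $\lrate$ is non-negatively homogeneous by \ref{LR:homo}, property \ref{N:normAf} gives $\norm{\lrate g}\leq\norm{\lrate}\norm{g}$ for every $g\in\gamblesX$; in particular, applied to $g\in\{f,-f\}$ and combined with the trivial bound $\abs{[\lrate g](x)}\leq\norm{\lrate g}$, this yields
\begin{equation*}
-\norm{\lrate}\norm{f}\leq [\lrate f](x)\leq [Qf](x)\leq -[\lrate(-f)](x)\leq \norm{\lrate}\norm{f}.
\end{equation*}
Hence $\abs{[Qf](x)}\leq\norm{\lrate}\norm{f}$ for every $x\in\states$, so $\norm{Qf}\leq\norm{\lrate}\norm{f}$.

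Finally, taking the supremum of $\norm{Qf}$ over all $f\in\gamblesX$ with $\norm{f}=1$ and invoking the definition of the operator norm in Equation~\eqref{eq:operatornorm} gives $\norm{Q}\leq\norm{\lrate}$, as required. There is no serious obstacle here; the argument is a short chain of standard manipulations, and the only thing to keep an eye on is that \ref{N:normAf} is invoked legitimately, which it is because $\lrate$ satisfies \ref{LR:homo}.
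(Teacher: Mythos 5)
Your proof is correct and follows essentially the same route as the paper: both sandwich $Qf$ between $\lrate f$ and $-\lrate(-f)$ using the lower-envelope property and the linearity of $Q$, bound both sides by $\norm{\lrate}\norm{f}$ (you via \ref{N:normAf}, the paper directly from the definition of the operator norm for unit-norm $f$), and then take the supremum over $\norm{f}=1$. No gaps.
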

\begin{proof}
Consider any $f\in\gamblesX$ such that $\norm{f}=1$. It then follows from Equation~\eqref{eq:correspondinglowertrans} that $Qf\geq\lrate f$ and, due to the linearity of $Q$, also that $Qf=-Q(-f)\leq-\lrate(-f)$. Hence, we find that $\lrate f\leq Q f\leq-\lrate(-f)$, which implies that
\begin{equation}\label{eq:lemma:normQboundedbynormLQ}
\norm{Qf}\leq\max\{\norm{\lrate f},\norm{-\lrate(-f)}\}=\max\{\norm{\lrate f},\norm{\lrate(-f)}\}.
\end{equation}
Since $\norm{f}=1$, it follows from Equation~\eqref{eq:operatornorm} that $\norm{\lrate f}\leq\norm{\lrate}$, and similarly, since $\norm{-f}=\norm{f}=1$, we also find that $\norm{\lrate(-f)}\leq\norm{\lrate}$. By combining this with Equation~\eqref{eq:lemma:normQboundedbynormLQ}, we find that $\norm{Qf}\leq\norm{\lrate}$, and since this is true for every $f\in\gamblesX$ such that $\norm{f}=1$, it now follows from Equation~\eqref{eq:operatornorm} that $\norm{Q}\leq\norm{\lrate}$.
\end{proof}

\begin{lemma}\label{lemma:productofQsdominatesproductoflrates}
Consider a non-empty bounded set $\rateset$ of rate matrices, let $\smash{\lrate}$ be the corresponding lower transition rate operator, and consider any $\delta\in\realspos$ such that $\delta\norm{\lrate}\leq1$. Now fix any $n\in\nats$ and, for all $i\in\{1,\dots,n\}$, consider some $0\leq\Delta_i\leq\delta$ and $Q_i\in\rateset$. Then for any $f\in\gamblesX$:
\begin{equation*}
\prod_{i=1}^n(I+\Delta_iQ_i)f
\geq
\prod_{i=1}^n(I+\Delta_i\lrate)f.
\end{equation*}
\end{lemma}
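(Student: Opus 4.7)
The plan is to prove the inequality by induction on $n$, peeling off the leftmost factor at each step. The base case $n=1$ reduces to showing that $(I+\Delta_1 Q_1)f \geq (I+\Delta_1\lrate)f$, which is immediate: since $Q_1\in\rateset$, the defining Equation~\eqref{eq:correspondinglowertrans} of the lower envelope gives $Q_1 f \geq \lrate f$, and multiplying by $\Delta_1\geq 0$ and adding $f$ to both sides preserves the inequality.

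For the inductive step, suppose the statement holds for $n-1$, and set
\begin{equation*}
g\coloneqq\prod_{i=2}^n(I+\Delta_iQ_i)f
\quad\text{and}\quad
h\coloneqq\prod_{i=2}^n(I+\Delta_i\lrate)f,
\end{equation*}
so that the induction hypothesis yields $g\geq h$ pointwise. The goal becomes showing that $(I+\Delta_1 Q_1)g\geq(I+\Delta_1\lrate)h$, which I plan to split into two separate inequalities chained through the intermediate quantity $(I+\Delta_1 Q_1)h$.

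For the first half, $(I+\Delta_1 Q_1)g\geq(I+\Delta_1 Q_1)h$, I would invoke Lemma~\ref{lemma:normQboundedbynormLQ} to get $\norm{Q_1}\leq\norm{\lrate}$, so that $\Delta_1\norm{Q_1}\leq\delta\norm{\lrate}\leq 1$. Proposition~\ref{prop:stochastic_from_rate_matrix} then guarantees that $(I+\Delta_1 Q_1)$ is a transition matrix. Since transition matrices are (linear and) have non-negative entries with rows summing to one, they are monotone on $\gamblesX$, so $g\geq h$ implies $(I+\Delta_1 Q_1)g\geq(I+\Delta_1 Q_1)h$. For the second half, $(I+\Delta_1 Q_1)h\geq(I+\Delta_1\lrate)h$, I would again invoke Equation~\eqref{eq:correspondinglowertrans} to get $Q_1 h\geq\lrate h$, and then multiply by $\Delta_1\geq 0$ and add $h$ to both sides. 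Concatenating these two inequalities closes the induction.

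The argument is essentially mechanical once the intermediate $(I+\Delta_1 Q_1)h$ is inserted; I don't foresee any real obstacle. The one subtlety worth flagging in the write-up is the reason $(I+\Delta_1 Q_1)$ preserves pointwise order — it is precisely the requirement $\delta\norm{\lrate}\leq 1$ in the hypothesis, combined with the norm bound from Lemma~\ref{lemma:normQboundedbynormLQ}, that lets us apply Proposition~\ref{prop:stochastic_from_rate_matrix} uniformly to every $Q_i\in\rateset$.
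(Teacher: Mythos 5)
Your proposal is correct and follows essentially the same route as the paper's own proof: induction on $n$, using Lemma~\ref{lemma:normQboundedbynormLQ} together with Proposition~\ref{prop:stochastic_from_rate_matrix} to see that $I+\Delta_1Q_1$ is a transition matrix (hence monotone), and then chaining the induction hypothesis with the lower-envelope inequality from Equation~\eqref{eq:correspondinglowertrans}. The only cosmetic difference is that the paper cites monotonicity via property~\ref{LT:monotonicity} of lower transition operators, whereas you argue it directly from row-stochasticity, which amounts to the same thing.
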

\begin{proof}
We provide a proof by induction. For $n=1$, the result follows trivially from Equation~\eqref{eq:correspondinglowertrans}. Consider now any $n>1$ and assume that the result is true for $n-1$. Since Lemma~\ref{lemma:normQboundedbynormLQ} implies that $\Delta_1\norm{Q_1}\leq\Delta_1\norm{\lrate}\leq\delta\norm{\lrate}\leq1$, it then follows from Proposition~\ref{prop:stochastic_from_rate_matrix} that $I+\Delta_1Q_1$ is a transition matrix, and therefore, as noted in Section~\ref{subsec:lowertrans_rate}, also a lower transition matrix, which therefore satisfies~\ref{LT:monotonicity}. We now find that
\begin{equation*}
\prod_{i=1}^n(I+\Delta_iQ_i)f
\geq
(I+\Delta_1Q_1)\prod_{i=2}^n(I+\Delta_i\lrate)f
\geq
\prod_{i=1}^n(I+\Delta_i\lrate)f,
\end{equation*}
where the first inequality follows from the induction hypothesis and~\ref{LT:monotonicity}, and the second inequality follows from Equation~\eqref{eq:correspondinglowertrans}.
\end{proof}

\theoremnonmarkovsinglevarlowerbounded*
\begin{proof}
This result is trivial if $t=s$. Hence, without loss of generality, we may assume that $t<s$. Fix any $\epsilon>0$ and let $C\coloneqq(s-t)$. Choose any $\epsilon_1>0$ such that $\epsilon_1\norm{f}<\nicefrac{\epsilon}{2}$ and any $\epsilon_2>0$ such that $\epsilon_2 C\norm{f}<\nicefrac{\epsilon}{2}$.

Due to Theorem~\ref{theo:convergencelowerbound}, there is some $\delta\in\realspos$ such that $\delta\norm{\lrate}\leq1$ and
\begin{equation}\label{eq:theorem:nonmarkov_single_var_lower_bounded}
(\forall v\in\mathcal{U}_{[t,s]}\,:\,\sigma(v)\leq\delta) \norm{L_{t}^s - \Phi_v} \leq \epsilon_1,
\end{equation}
with $\Phi_v$ as in Equation~\eqref{eq:aux_lower_trans}.
Since $\smash{P\in\wprocesses_\rateset}$, it now follows from Proposition~\ref{prop:outerderivativebehaveslikelimit} that there is some $0<\Delta_1<\min\{\delta,C\}$ such that
\begin{equation*}
(\exists Q_1\in\overline{\partial}_+T_{t,\,x_u}^{t}\subseteq\rateset)~
\norm{T_{t_0,\,x_u}^{t_1} - (I+\Delta_1 Q_1)}
=
\norm{T_{t,\,x_u}^{t+\Delta_1} - (I+\Delta_1 Q_1)} < \Delta_1\epsilon_2,
\end{equation*}
with $t_0\coloneqq t$ and $t_1\coloneqq t+\Delta_1$.
Furthermore, since $P\in\wprocesses_\rateset$, and because $\Delta_1<C$ implies that $t_1t+\Delta_1<s$, it follows from Lemma~\ref{lemma:bound_on_linear_approx_partition} that there is some $v\in\mathcal{U}_{[t_1,s]}$ such that $\sigma(v)<\delta$, with $v=t_1,\ldots,t_n$ and $t_n=s$, and such that for all $i\in\{2,\ldots,n\}$, with $\Delta_i\coloneqq t_i-t_{i-1}$:
\begin{equation*}
(\exists Q_i\in\rateset)\norm{T_{t_{i-1},x_{u\cup \{t\}}}^{t_{i}} - (I+\Delta_{i}Q_i)} < \Delta_{i}\epsilon_2.\vspace{5pt}
\end{equation*}
Since $\Delta_1<\delta$ and, for all $i\in\{2,\dots,n\}$, $\Delta_i\leq\sigma(v)<\delta$, we know that, for all $i\in\{1,\dots,n\}$, $\Delta_i<\delta$ and therefore also, using Lemma~\ref{lemma:normQboundedbynormLQ}, that $\Delta_i\norm{Q_i}\leq\delta\norm{\lrate}\leq1$.
Therefore, we find that
\begin{multline*}
\abs{[T_{t,\,x_u}^sf](x_t)
-\left[\left(\prod_{i=1}^n(I+\Delta_iQ_i)\right)f\right](x_t)}\\
\begin{aligned}
&=\abs{\Big[T_{t_0,\,x_u}^{t_1}\Bigg(\prod_{i=2}^{n}T_{t_{i-1},\,x_{u\cup\{t\}}}^{t_{i}}\Bigg)f\Big](x_t)
-\left[\left(\prod_{i=1}^n(I+\Delta_iQ_i)\right)f\right](x_t)}\\
&\leq\norm{T_{t_0,\,x_u}^{t_1}\Bigg(\prod_{i=2}^{n}T_{t_{i-1},\,x_{u\cup\{t\}}}^{t_{i}}\Bigg)
-\prod_{i=1}^n(I+\Delta_iQ_i)}\norm{f}\\
&\leq
\norm{T_{t_0,\,x_u}^{t_1}-(I+\Delta_1Q_1)}\norm{f}
+\sum_{i=2}^{n}
\norm{T_{t_{i-1},\,x_{u\cup\{t\}}}^{t_{i}}-(I+\Delta_iQ_i)}\norm{f}\\
&<\sum_{i=1}^n\Delta_i\epsilon_2\norm{f}=C\epsilon_2\norm{f}<\frac{\epsilon}{2},
\end{aligned}
\end{multline*}
where the equality follows from Lemma~\ref{lemma:weirddecomposition}, the first inequality follows from the properties of $\norm{\cdot}$, and the second inequality follows from Lemma~\ref{lemma:recursive} and Proposition~\ref{prop:stochastic_from_rate_matrix} and, we also find that
\begin{align*}
\abs{[L_t^sf](x_t)-\left[\left(\prod_{i=1}^n(I+\Delta_i\lrate)\right)f\right](x_t)}
&\leq
\norm{L_t^s-\prod_{i=1}^n(I+\Delta_i\lrate)}\norm{f}\leq\epsilon_1\norm{f}<\frac{\epsilon}{2},
\end{align*}
using Equation~\eqref{eq:theorem:nonmarkov_single_var_lower_bounded} to establish the second inequality. Hence, Lemma~\ref{lemma:productofQsdominatesproductoflrates} implies that
\begin{multline*}
[L_{t}^s f](x_t)
<\left[\left(\prod_{i=1}^n(I+\Delta_i\lrate)\right)f\right](x_t)+\frac{\epsilon}{2}\\
\leq
\left[\left(\prod_{i=1}^n(I+\Delta_i Q_i)\right)f\right](x_t)+\frac{\epsilon}{2}
<[T_{t,\,x_u}^sf](x_t)+\epsilon.
\end{multline*}\\[-0pt]
Since $\epsilon>0$ was arbitrary, this allows us to infer that $[L_{t}^s f](x_t)\leq [T_{t,\,x_u}^sf](x_t)$. The result now follows because $[T_{t,x_u}^sf](x_t)=\mathbb{E}_P[f(X_s)\vert X_t=x_t,X_u=x_u]$, as noted in Remark~\ref{remark:expectationT}.
\end{proof}

The following lemma is required for the proof of Proposition~\ref{theorem:lower_markov_bound_is_tight}. Recall from the definition of the lower envelope $\lrate$ of a given set $\rateset$ of rate matrices, that for any $f\in\gamblesX$, and $x\in\states$ and any $\epsilon>0$, there is some $Q\in\rateset$ such that $[Qf](x)<[\lrate f](x)+\epsilon$. In other words, $[\lrate f](x)$ can be approximated arbitrarily closely using the elements of $\rateset$. The following lemma establishes that the \emph{entire} function $\lrate f$ can be approximated arbitrarily closely by a single $Q\in\rateset$, whenever $\rateset$ has separately specified rows.

\begin{lemma}\label{lemma:rateset_has_arginf}
Let $\rateset$ be an arbitrary non-empty bounded set of rate matrices that has separately specified rows, with corresponding lower transition rate operator $\lrate$. Then for any $f\in\gamblesX$ and $\epsilon\in\realspos$, there exists a $Q\in\rateset$ such that
\begin{equation*}
\norm{\lrate f - Qf} < \epsilon\,.
\end{equation*}
\end{lemma}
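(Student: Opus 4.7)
The plan is to exploit the separately-specified-rows structure of $\rateset$ to build a single $Q\in\rateset$ row-by-row, where each row is chosen to nearly attain the infimum that defines $\lrate f$ at the corresponding state.

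First, I would unpack the lower envelope pointwise. For each $x\in\states$, observe that $[Qf](x)=\sum_{y\in\states}Q(x,y)f(y)$ depends only on the row $Q(x,\cdot)$. Hence, setting $\rateset_x\coloneqq\{Q(x,\cdot)\colon Q\in\rateset\}$ as in Definition~\ref{def:separatelyspecifiedrows}, Equation~\eqref{eq:correspondinglowertrans} gives
\begin{equation*}
[\lrate f](x)=\inf\{[Qf](x)\colon Q\in\rateset\}=\inf\left\{\sum_{y\in\states}q(y)f(y)\colon q\in\rateset_x\right\}.
\end{equation*}

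Next, I would invoke the definition of infimum locally at each $x$: for every $x\in\states$, pick some $q_x\in\rateset_x$ such that $\sum_{y\in\states}q_x(y)f(y)<[\lrate f](x)+\epsilon$. Since $\states$ is finite, this is only finitely many choices.

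Then I would assemble a rate matrix $Q$ by setting $Q(x,\cdot)\coloneqq q_x$ for each $x\in\states$. Because $\rateset$ has separately specified rows, $Q\in\rateset$. By construction, for every $x\in\states$,
\begin{equation*}
[\lrate f](x)\leq[Qf](x)=\sum_{y\in\states}q_x(y)f(y)<[\lrate f](x)+\epsilon,
\end{equation*}
where the first inequality uses that $Q\in\rateset$ together with Equation~\eqref{eq:correspondinglowertrans}. Therefore $\lvert[Qf](x)-[\lrate f](x)\rvert<\epsilon$ for all $x\in\states$, which by the definition of $\norm{\cdot}$ yields $\norm{\lrate f-Qf}<\epsilon$.

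There is really no obstacle here beyond recognising that the separately-specified-rows hypothesis is exactly what allows the pointwise-in-$x$ near-optimisers $q_x$ to be combined into a single element of $\rateset$; without this property one would only obtain, for each $x$, a matrix $Q^{(x)}\in\rateset$ whose $x$-row is close to optimal, with no guarantee that a single matrix works simultaneously for all $x$.
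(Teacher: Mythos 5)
Your proof is correct and is exactly the argument the paper has in mind: its own proof simply declares the result ``immediate from the definition of the lower envelope and the fact that $\rateset$ has separately specified rows'', and your row-by-row construction of a single near-optimal $Q$ is the spelled-out version of that observation. Nothing to add.
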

\begin{proof}
This is immediate from the definition of the lower envelope of $\rateset$, as given by Equation~\eqref{eq:correspondinglowertrans}, and the fact that $\rateset$ has separately specified rows.
\end{proof}

\theoremlowermarkovboundistight*
\begin{proof}
Fix any $t,s\in\realsnonneg$ such that $t\leq s$, any $f\in\gamblesX$ and any $\epsilon\in\realspos$. If $t=s$, the result is trivial. Hence, without loss of generality, we may assume that $t<s$. Let $C\coloneqq (s-t)$, choose any $\epsilon^*>0$ such that $\epsilon^*C<\epsilon$, choose any $\epsilon_1,\epsilon_2,\epsilon_3>0$ such that $\epsilon_1+\epsilon_2+\epsilon_3<\epsilon^*$, choose any $\delta>0$ such that $\smash{\delta\norm{\rateset}^2\norm{f}<\epsilon_3}$ and $\smash{\delta\norm{\lrate}^2\norm{f}<\epsilon_1}$ (this is always possible because of~\ref{LR:normlratefinite}), and consider any $u\in\mathcal{U}_{[t,s]}$ such that $\sigma(u)<\delta$, with $u=t_0,\ldots,t_n$ and $n\in\nats$. 

Fix any $i\in\{1,\dots,n\}$ and let $g_i\coloneqq L_{t_i}^{t_n}f$. It then follows from Lemma~\ref{lemma:rateset_has_arginf} that there is some $Q_i\in\rateset$ such that $\norm{\lrate g_i-Q_i g_i}<\epsilon_2$ and, due to~\ref{N:normAf} and~\ref{LT:norm_at_most_one}, we also know that $\norm{g_i}=\norm{L_{t_i}^{t_n}f}\leq\norm{L_{t_i}^{t_n}}\norm{f}\leq\norm{f}$.
Hence, we find that 
\begin{align}
&\norm{L_{t_{i-1}}^{t_i}g_i - e^{Q_i\Delta_i}g_i}\notag\\
&\quad\quad\quad\leq \norm{L_{t_{i-1}}^{t_i}g_i - \left[I+\Delta_i\lrate\right]g_i} 
+\norm{\Delta_i\lrate g_i-\Delta_iQ_ig_i}
+ \norm{\left[I+\Delta_iQ_i\right]g_i - e^{Q_i\Delta_i}g_i} \notag\\
&\quad\quad\quad\leq \norm{L_{t_{i-1}}^{t_i} - \left[I+\Delta_i\lrate\right]}\norm{g_i} 
+\Delta_i\norm{\lrate g_i-Q_ig_i}
+ \norm{I+\Delta_iQ_i - e^{Q_i\Delta_i}}\norm{g_i} \notag\\
&\quad\quad\quad< \norm{L_{t_{i-1}}^{t_i} - \left[I+\Delta_i\lrate\right]}\norm{f} 
+\Delta_i\epsilon_2
+ \norm{I+\Delta_iQ_i - e^{Q_i\Delta_i}}\norm{f} \notag\\
&\quad\quad\quad\leq
\Delta_i^2\norm{\lrate}^2\norm{f}
+\Delta_i\epsilon_2
+
\Delta_i^2\norm{Q_i}^2\norm{f}\notag\\
&\quad\quad\quad\leq
\Delta_i(
\delta\norm{\lrate}^2\norm{f}
+\epsilon_2
+
\delta\norm{\rateset}^2\norm{f})
<\Delta_i(\epsilon_1+\epsilon_2+\epsilon_3)<\Delta_i\epsilon^*,\label{eq:theorem:lower_markov_bound_is_tight:localbound}
\end{align}\\[-8pt]
where the second inequality holds because of~\ref{N:normAf}, the third inequality holds because $\norm{g_i}\leq\norm{f}$ and $\norm{\lrate g_i-Q_i g_i}<\epsilon_2$, the fourth inequality holds because of Lemmas~\ref{lemma:quadraticboundonL} and~\ref{lemma:linearpartofexponential} and where the fifth inequality holds because $\Delta_i\leq\sigma(u)<\delta$.

Let $Q_0$ and $Q_{n+1}$ be two arbitrary elements of $\rateset$ and, for all $i\in\{0,\ldots,n+1\}$, let $\mathcal{T}_{Q_i}$ denote the transition matrix system corresponding to $Q_i$, as in Definition~\ref{def:systemfromQ}. Then, by Proposition~\ref{prop:nonhomogeneous_in_process_set}, there is some $P\in\wmprocesses_{\rateset,\,\mathcal{M}}$ with transition matrix system $\mathcal{T}_P$, such that
\begin{equation*}
\mathcal{T}_P = \mathcal{T}_{Q_0}^{[0,t_0]}\otimes \mathcal{T}_{Q_1}^{[t_0,t_1]}\otimes \cdots \otimes \mathcal{T}_{Q_n}^{[t_{n-1},t_n]} \otimes \mathcal{T}_{Q_n}^{[t_{n},\infty)}\,.
\end{equation*}

Due to Equation~\eqref{eq:theorem:lower_markov_bound_is_tight:localbound}, we know that the transition matrices of this process $P$ satisfy
\begin{equation}\label{eq:theorem:lower_markov_bound_is_tight:localbounds}
(\forall i\in\{1,\ldots,n\})~
\norm{L_{t_{i-1}}^{t_i}g_i - T_{t_{i-1}}^{t_i}g_i}= \norm{L_{t_{i-1}}^{t_i}g_i - e^{Q_i\Delta_i}g_i}<\Delta_i\epsilon^*.
\end{equation}
Furthermore, we also know that
\begin{align*}
\norm{L_{t_0}^{t_n}g_n - T_{t_0}^{t_n}g_n}
&=\norm{L_{t_0}^{t_{n-1}}L_{t_{n-1}}^{t_n}g_n - T_{t_0}^{t_{n-1}}L_{t_{n-1}}^{t_n}g_n+T_{t_0}^{t_{n-1}}(L_{t_{n-1}}^{t_n}g_n - T_{t_{n-1}}^{t_n}g_n)} \\ 
 &\leq \norm{L_{t_0}^{t_{n-1}}L_{t_{n-1}}^{t_n}g_n - T_{t_0}^{t_{n-1}}L_{t_{n-1}}^{t_n}g_n} + \norm{T_{t_0}^{t_{n-1}}}\norm{L_{t_{n-1}}^{t_n}g_n - T_{t_{n-1}}^{t_n}g_n} \\
 &\leq \norm{L_{t_0}^{t_{n-1}}g_{n-1} - T_{t_0}^{t_{n-1}}g_{n-1}} + \norm{L_{t_{n-1}}^{t_n}g_n - T_{t_{n-1}}^{t_n}g_n},
\end{align*}
using Proposition~\ref{prop:lower_trans_system_is_system} and Equation~\eqref{eq:markovintermsofmatrices} for the first inequality, and~\ref{LT:norm_at_most_one} for the second one.
Similarly, we also find that
\begin{equation*}
\norm{L_{t_0}^{t_{n-1}}g_{n-1} - T_{t_0}^{t_{n-1}}g_{n-1}}
\leq \norm{L_{t_0}^{t_{n-2}}g_{n-2} - T_{t_0}^{t_{n-2}}g_{n-2}} + \norm{L_{t_{n-2}}^{t_{n-1}}g_{n-1} - T_{t_{n-2}}^{t_{n-1}}g_{n-1}}.
\end{equation*}
By continuing in this way---applying induction---we eventually find that
\begin{equation*}
\norm{L_t^sf - T_t^sf}=\norm{L_{t_0}^{t_n}g_n - T_{t_0}^{t_n}g_n} 
\leq \sum_{i=1}^{n} \norm{L_{t_{i-1}}^{t_i}g_i - T_{t_{i-1}}^{t_i}g_i}\leq\sum_{i=1}^n\Delta_i\epsilon^*=C\epsilon^*<\epsilon,
\end{equation*}
using Equation~\eqref{eq:theorem:lower_markov_bound_is_tight:localbounds} to establish the second inequality. The result now follows directly from Remark~\ref{remark:expectationT}, which states that $[T_t^sf](x_t)=\mathbb{E}_P[f(X_s)\vert X_t=x_t]$ for all $x_t\in\states$.
\end{proof}

\corloweroperatorisinfimum*
\begin{proof}
Fix any $\epsilon>0$.
It then follows from Proposition~\ref{theorem:lower_markov_bound_is_tight} that there is some $P\in\wmprocesses_{\rateset,\,\mathcal{M}}$ such that $\mathbb{E}_P[f(X_s)\vert\,X_t=x_t] < [L_t^sf](x_t)+\epsilon$, which implies that
\begin{align*}
\underline{\mathbb{E}}^{\mathrm{WM}}_{\rateset,\,\mathcal{M}}\left[f(X_s)\,\vert\,X_t=x_t,X_u=x_u\right]
&\leq
\mathbb{E}_P[f(X_s)\vert\,X_t=x_t,X_u=x_u]\\
&=\mathbb{E}_P[f(X_s)\vert\,X_t=x_t]
< [L_t^sf](x_t)+\epsilon,
\end{align*}
using Equation~\eqref{eq:lowerexp3} for the first inequality and the Markov property of $P$ for the equality. Since $\epsilon>0$ is arbitrary, it follows that  $\underline{\mathbb{E}}^{\mathrm{WM}}_{\rateset,\,\mathcal{M}}\left[f(X_s)\,\vert\,X_t=x_t,X_u=x_u\right]\leq[L_t^sf](x_t)$.
This implies the result because we also have that
\begin{align*}
\left[L_t^sf\right](x_t)
&\leq\inf\left\{\mathbb{E}_P[f(X_s)\,\vert\,X_t=x_t,X_u=x_u]\colon P\in\wprocesses_{\rateset,\,\mathcal{M}}\right\}\\
&=\underline{\mathbb{E}}^{\mathrm{W}}_{\,\rateset,\,\mathcal{M}}[f(X_s)\,\vert\,X_t=x_t,X_u=x_u] 
\leq
\underline{\mathbb{E}}^{\mathrm{WM}}_{\,\rateset,\,\mathcal{M}}[f(X_s)\,\vert\,X_t=x_t,X_u=x_u],
\end{align*}
where the first inequality follows from Proposition~\ref{theorem:nonmarkov_single_var_lower_bounded} and the last inequality follows from Proposition~\ref{prop:lower_exp_markov_bounded_by_nonmarkov}.
\end{proof}

\theodominatingrateprocessesmaxset*
\begin{proof}
Clearly, it suffices to prove that for any $P\in\processes$ that is not well-behaved or not consistent with $\rateset_{\lrate}$, there are $t,s\in\realsnonneg$ with $t\leq s$, $u\in\mathcal{U}_{<t}$, $x_u\in\states_u$, $x_t\in\states$ and $f\in\gamblesX$ such that
\begin{equation}\label{eq:theo:dominating_rate_processes_max_set:toprove}
\mathbb{E}_P[f(X_s)\,\vert\,X_t=x_t,X_u=x_u] < [L_t^sf](x_t).\vspace{5pt}
\end{equation}

We start with the case that $P$ is not well-behaved. Fix any $\epsilon>0$ and let $C\coloneqq\norm{\lrate}+\epsilon$. It then follows from Proposition~\ref{prop:stochasticprocess:simpleproperties} that there are $t,s\in\realsnonneg$ with $t< s$, $u\in\mathcal{U}_{<t}$ and $x_u\in\states_u$ such that $\nicefrac{1}{\Delta}\norm{T_{t,\,x_u}^s-I}>C$ and $\smash{\Delta\norm{\lrate}^2<\epsilon}$, with $\Delta\coloneqq s-t>0$. Let $Q\coloneqq\nicefrac{1}{\Delta}(T_{t,\,x_u}^s-I)$. Since $\norm{Q}>C$, it then follows that there is some $f'\in\gamblesX$ such that $\norm{f'}=1$ and $\norm{Qf'}>C$, which in turn implies that there is some $x_t\in\states$ such that $\abs{[Qf'](x_t)}>C$. If $[Qf'](x_t)<0$, we let $f\coloneqq f'$, and if $[Qf'](x_t)>0$, we let $f\coloneqq -f'$. Clearly, this implies that $\norm{f}=1$ and $[Qf](x_t)<-C$. From $\norm{f}=1$, it furthermore follows that $[\lrate f](x_t)\geq-\norm{\lrate f}\geq-\norm{\lrate}$, and therefore, we we find that $[Qf](x_t)<-\norm{\lrate}-\epsilon\leq[\lrate f](x_t)-\epsilon$, which implies that $[(I+\Delta Q)f](x_t)\leq [(I+\Delta\lrate)f](x_t)-\Delta\epsilon$. Hence, since we also know that
\begin{align*}
\abs{[L_t^sf](x_t)-[(I+\Delta\lrate)f](x_t)}
\leq
\norm{L_t^s-(I+\Delta\lrate)}\leq\Delta^2\norm{\lrate}^2<\Delta\epsilon,
\end{align*}
where we use $\norm{f}=1$ for the first inequality and Lemma~\ref{lemma:quadraticboundonL} for the second inequality, it follows that
\begin{align*}
[T_{t,\,x_u}^sf](x_t)
=[(I+\Delta Q)f](x_t)
\leq[(I+\Delta \lrate)f](x_t)-\Delta\epsilon
<[L_t^sf](x_t),
\end{align*}
which, because of Remark~\ref{remark:expectationT}, implies that Equation~\eqref{eq:theo:dominating_rate_processes_max_set:toprove} holds.

Next, we consider the case that $P$ is well behaved, but not consistent with $\rateset_{\lrate}$. In that case, it follows from Definition~\ref{def:consistent_process} that there are $t^*\in\realsnonneg$, $u\in\mathcal{U}_{<t}$ and $x_u\in\states_u$ such that $\smash{\overline{\partial}T_{t^*,\,x_u}^{t^*} \not\subseteq \rateset_{\lrate}}$. Since we know that $\smash{\overline{\partial}T_{t^*,\,x_u}^{t^*}}$ is a non-empty set of rate matrices because of Proposition~\ref{prop:boundednon-emptyandclosed}, this implies the existence of a rate matrix $Q^*\in\smash{\overline{\partial}}T_{t^*,x_u}^{t^*}$ such that $Q^*\notin\rateset_{\lrate}$. Furthermore, since $Q^*\notin\rateset_{\lrate}$, Equation~\eqref{eq:dominatingratematrices} implies that there are $f'\in\gamblesX$ and $x_t\in\states$ such that $[Q^*f'](x_t)<[\lrate f'](x_t)$. Clearly, this implies that $f'\neq0$, and therefore, that $\norm{f'}>0$. If we now let $f\coloneqq\nicefrac{1}{\norm{f'}}f'$, then $\norm{f}=1$, and furthermore, because of the linearity of $Q^*$ and the non-negative homogeneity of $\lrate$, it follows that $[Q^*f](x_t)=\nicefrac{1}{\norm{f'}}[Q^*f'](x_t)<\nicefrac{1}{\norm{f'}}[\lrate f'](x_t)=[\lrate f](x_t)$. Consider now any $\epsilon>0$ such that $\smash{[Q^*f](x_t)\leq[\lrate f](x_t)-2\epsilon}$ (this is now clearly possible). Since $\smash{Q^*\in\smash{\overline{\partial}}T_{t^*,\,x_u}^{t^*}}$, it then follows from Definition~\ref{def:outerpartialderivatives} that there are $t,s\in\realsnonneg$ such that $u<t<s$, $\norm{\nicefrac{1}{\Delta}(T_{t,\,x_u}^s-I)-Q^*}\leq\epsilon$ and $\smash{\Delta\norm{\lrate}^2<\epsilon}$, with $\Delta\coloneqq s-t>0$. Let $Q\coloneqq\nicefrac{1}{\Delta}(T_{t,\,x_u}^s-I)$. Since $\norm{Q-Q^*}\leq\epsilon$ and $\norm{f}=1$, it then follows that $[Qf](x_t)\leq[Q^*f](x_t)+\epsilon\leq[\lrate f](x_t)-\epsilon$. Hence, using the exact same argument as in the first part of this proof, we find that Equation~\eqref{eq:theo:dominating_rate_processes_max_set:toprove} holds.
\end{proof}

\propapproximationerrorbound*
\begin{proof}
Define $h\coloneqq f-\min f-\nicefrac{1}{2}\norm{f}_\mathrm{v}$. Then
\begin{equation*}
\max h
=\max f-\min f-\nicefrac{1}{2}\norm{f}_\mathrm{v}
=\norm{f}_\mathrm{v}-\nicefrac{1}{2}\norm{f}_\mathrm{v}
=\nicefrac{1}{2}\norm{f}_\mathrm{v}
\end{equation*}
and
\begin{equation*}
\min h
=\min f-\min f-\nicefrac{1}{2}\norm{f}_\mathrm{v}
=-\nicefrac{1}{2}\norm{f}_\mathrm{v},
\end{equation*}
and therefore, 
\begin{equation*}
\norm{h}
\coloneqq
\max\{\abs{h(x)}\colon x\in\states\}
=\nicefrac{1}{2}\norm{f}_\mathrm{v}.
\end{equation*}
Now let $u\in\mathcal{U}_{[t,s]}$ be such that $u=t_0,t_1,\ldots,t_n$, where, for all $i\in\{0,1,\ldots,n\}$, $t_i\coloneqq t + i\Delta$. Since $\Delta=\nicefrac{(s-t)}{n}$, we then have that $t_0=t$, $t_n=s$, $\sigma(u)=\Delta$ and $\Phi_u= \prod_{i=1}^n(I+\Delta\lrate)$.
Furthermore, since $n\geq (s-t)\norm{\lrate}$, we also know that $\Delta\norm{\lrate}=\nicefrac{(s-t)}{n}\norm{\lrate}\leq 1$. Hence, we find that
\begin{align*}
\norm{L_t^sh - \prod_{i=1}^n(I+\Delta\lrate)h} = \norm{L_t^sh - \Phi_uh} &\leq \norm{L_t^s - \Phi_u}\norm{h} \\[-8pt]
 &\leq \sigma(u)(s-t)\norm{\lrate}^2\norm{h} = \Delta(s-t)\norm{\lrate}^2\frac{\norm{f}_\mathrm{v}}{2}\\
 &= 
\frac{\epsilon}{n}\frac{1}{2\epsilon}(s-t)^2\norm{\lrate}^2\norm{f}_\mathrm{v}
 \leq \epsilon.
\end{align*}
where the first inequality follows from Property~\ref{N:normAf}, the second inequality follows from Lemma~\ref{lemma:limitboundonL} and the final inequality follows from our lower bound on $n$. The result is now immediate because $L_t^s$ and $\prod_{i=1}^n(I+\Delta\lrate)$ are both lower transition operators---for the latter, this follows from Propositions~\ref{lemma:normQsmallenough} and~\ref{lemma:compositioncoherence}---which implies that
\begin{equation*}
\norm{L_t^sh - \prod_{i=1}^n(I+\Delta\lrate)h}
=
\norm{L_t^sf - \prod_{i=1}^n(I+\Delta\lrate)f}
\end{equation*}
because of~\ref{LT:constantadditivity}.
\end{proof}

\section{Proofs for the results in Section~\ref{sec:funcs_multi_time_points}}

\corinfworksforsinglefuturevar*
\begin{proof}
Because of Equations~\eqref{eq:fixxu} and~\eqref{eq:applyLtolargerfunctions}, this result is an immediate consequence of Corollary~\ref{cor:lower_operator_is_infimum}. 
\end{proof}

\corcompositionlowertrans*
\begin{proof}
As explained in the main text of Section~\ref{sec:decomposition}, this result is an immediate consequence of Theorem~\ref{theorem:decomposition_multivar} and Corollary~\ref{cor:inf_works_for_single_future_var}.
\end{proof}

\propcomputeinitial*
\begin{proof}
Fix $f\in\gamblesX$ and $\epsilon>0$. It then follows from Equation~\eqref{eq:lowerexp3} that there is a stochastic process $\smash{P\in\wprocesses_{\rateset,\mathcal{M}}}$ such that $\mathbb{E}_P[f(X_0)]\leq\underline{\mathbb{E}}^\mathrm{W}_{\rateset,\mathcal{M}}[f(X_0)]+\epsilon$. Furthermore, since $P\in\wprocesses_{\rateset,\mathcal{M}}$, it follows from Definitions~\ref{def:consistent_process_initialdistribution} and~\ref{def:process_sets} that $P(X_0)\in\mathcal{M}$, and therefore, that
\begin{equation*}
\mathbb{E}_P[f(X_0)]=\sum_{x\in\states}P(X_0=x)f(x)\geq\underline{\mathbb{E}}_{\mathcal{M}}[f].
\end{equation*}
Because of Equation~\eqref{eq:initiallowerexp}, we also know that there is a probability mass function $p\in\mathcal{M}$ such that $\sum_{x\in\states}p(x)f(x)\leq\underline{\mathbb{E}}_{\mathcal{M}}[f]+\epsilon$. Consider now any $Q\in\rateset$. It then follows from Corollary~\ref{cor:rate_has_unique_homogen_markov_process} that there is a unique well-behaved homogeneous Markov chain $P'\in\mprocesses$, with transition matrix $Q$, and with $P'(X_0=x)=p(x)$ for all $x\in\states$. Since $P'$ clearly belongs to $\whmprocesses_{\rateset,\mathcal{M}}$, this implies that
\begin{equation*}
\sum_{x\in\states}p(x)f(x)
=\sum_{x\in\states}P'(X_0=x)f(x)
=\mathbb{E}_{P'}[f(X_0)]
\geq\underline{\mathbb{E}}_{\rateset,\mathcal{M}}^{\mathrm{W
HM}}[f(X_0)].
\end{equation*}
Hence, we conclude that
\begin{equation*}
\underline{\mathbb{E}}^\mathrm{W}_{\rateset,\mathcal{M}}[f(X_0)]+\epsilon\geq\mathbb{E}_P[f(X_0)]
\geq
\underline{\mathbb{E}}_{\mathcal{M}}[f]
\geq\sum_{x\in\states}p(x)f(x)-\epsilon
\geq\underline{\mathbb{E}}_{\rateset,\mathcal{M}}^{\mathrm{W
HM}}[f(X_0)]-\epsilon.
\end{equation*}
Since $\epsilon>0$ is arbitrary, this implies that $
\underline{\mathbb{E}}^\mathrm{W}_{\rateset,\mathcal{M}}[f(X_0)]
\geq\underline{\mathbb{E}}_{\mathcal{M}}[f]\geq\underline{\mathbb{E}}_{\rateset,\mathcal{M}}^{\mathrm{W
HM}}[f(X_0)]$, and therefore, the result follows from Proposition~\ref{prop:lower_exp_markov_bounded_by_nonmarkov}.
\end{proof}

\propcomputemarginal*
\begin{proof}
Fix $s\in\realsnonneg$, $f\in\gamblesX$ and $\epsilon>0$. 
It then follows from Equation~\eqref{eq:lowerexp3} that there is some $\smash{P\in\wprocesses_{\rateset,\mathcal{M}}}$ such that $\mathbb{E}_P[f(X_s)]\leq\underline{\mathbb{E}}^\mathrm{W}_{\rateset,\mathcal{M}}[f(X_s)]+\epsilon$. Since $\smash{P\in\wprocesses_{\rateset,\mathcal{M}}}$, Equation~\eqref{eq:lowerexp3} now implies that, for all $x\in\states$,
\begin{equation}\label{eq:prop:computemarginal}
\mathbb{E}_P[f(X_s)\vert X_0=x]
\geq
\underline{\mathbb{E}}^\mathrm{W}_{\rateset,\mathcal{M}}[f(X_s)\vert X_0=x]
=[L_0^sf](x),
\end{equation}
where the last equality follows from Corollary~\ref{cor:lower_operator_is_infimum}. Hence, we find that
\begin{align*}
\underline{\mathbb{E}}^\mathrm{W}_{\rateset,\mathcal{M}}[f(X_s)]+\epsilon
\geq
\mathbb{E}_P[f(X_s)]
&=\mathbb{E}_P[\mathbb{E}_P[f(X_s)\vert X_0]]\notag\\
&\geq\mathbb{E}_P[[L_0^sf](X_0)]
\geq
\underline{\mathbb{E}}^\mathrm{W}_{\rateset,\mathcal{M}}[[L_0^sf](X_0)]
=\underline{\mathbb{E}}_{\mathcal{M}}[L_0^sf],
\end{align*}
where the first equality follows from Equation~\eqref{eq:lawofiteratedexpectation}, the second inequality follows from Equation~\eqref{eq:prop:computemarginal}, the third inequality follows from $P\in\wprocesses_{\rateset,\mathcal{M}}$ and Equation~\eqref{eq:lowerexp3}, and the last equality follows from Proposition~\ref{prop:computeinitial}.
Next, because of Equation~\eqref{eq:initiallowerexp}, we know that there is a probability mass function $p\in\mathcal{M}$ such that $\sum_{x\in\states}p(x)[L_0^sf](x)\leq\underline{\mathbb{E}}_{\mathcal{M}}[L_0^sf]+\epsilon$. Let $\mathcal{M}^*\coloneqq\{p\}$. Proposition~\ref{theorem:lower_markov_bound_is_tight} then implies that there is a well-behaved Markov chain $P^*\in\wmprocesses_{\rateset,\mathcal{M}^*}\subseteq\wmprocesses_{\rateset,\mathcal{M}}$ such that
\begin{equation*}
\mathbb{E}_{P^*}[f(X_s)\,\vert\,X_0]\leq[\lbound_0^sf](X_0)+\epsilon.
\end{equation*}
It now follows from the definition of $\underline{\mathbb{E}}^\mathrm{WM}_{\rateset,\mathcal{M}}$ that
\begin{equation*}
\underline{\mathbb{E}}^\mathrm{WM}_{\rateset,\mathcal{M}}[f(X_s)]
\leq
\mathbb{E}_{P^*}[f(X_s)]
=\mathbb{E}_{P^*}[\mathbb{E}_{P^*}[f(X_s)\vert X_0]]
\leq\mathbb{E}_{P^*}[[L_0^sf](X_0)]+\epsilon,
\end{equation*}
using Equation~\eqref{eq:lawofiteratedexpectation} for the equality. Furthermore, since $\smash{P^*\in\wmprocesses_{\rateset,\mathcal{M}^*}}$ implies that $P^*(X_0=x)=p(x)$ for all $x\in\states$, we also know that
\begin{equation*}
\mathbb{E}_{P^*}[[L_0^sf](X_0)]
=
\sum_{x\in\states}P^*(X_0=x)[L_0^sf](x)
=\sum_{x\in\states}p(x)[L_0^sf](x)
\leq\underline{\mathbb{E}}_{\mathcal{M}}[L_0^sf]+\epsilon.
\end{equation*}
Combining all of the above inequalities, we find that
\begin{equation*}
\underline{\mathbb{E}}^\mathrm{WM}_{\rateset,\mathcal{M}}[f(X_s)]-2\epsilon
\leq
\mathbb{E}_{P^*}[[L_0^sf](X_0)]-\epsilon\leq\underline{\mathbb{E}}_{\mathcal{M}}[L_0^sf]\leq\underline{\mathbb{E}}^\mathrm{W}_{\rateset,\mathcal{M}}[f(X_s)]+\epsilon.
\end{equation*}
Since $\epsilon>0$ is arbitrary, this implies that $\underline{\mathbb{E}}^\mathrm{WM}_{\rateset,\mathcal{M}}[f(X_s)]\leq\underline{\mathbb{E}}_{\mathcal{M}}[L_0^sf]\leq\underline{\mathbb{E}}^\mathrm{W}_{\rateset,\mathcal{M}}[f(X_s)]$. The result now follows from Proposition~\ref{prop:lower_exp_markov_bounded_by_nonmarkov}.
\end{proof}

\propcomputeunconditional*
\begin{proof}
Consider any $u=t_0,\dots,t_n$ in $\mathcal{U}_{\emptyset}$ such that $t_0=0$ and any $f\in\gambles(\states_u)$. Let $\hat{f}\in\gamblesX$ be defined by
\begin{equation}\label{eq:prop:computeunconditional}
\hat{f}(x)\coloneqq
\underline{\mathbb{E}}_{\rateset,\,\mathcal{M}}^\mathrm{W}[f(X_u)\,\vert\,X_0=x]
=\left[L_{t_0}^{t_1}L_{t_1}^{t_2}\cdots L_{t_{n-1}}^{t_n}f\right](x)
~\text{ for all $x\in\states$},
\end{equation}
using Corollary~\ref{cor:composition_lower_trans} to establish the last equality.
It then follows from Theorem~\ref{theorem:decomposition_multivar} and Proposition~\ref{prop:computeinitial} that
\begin{equation*}
\underline{\mathbb{E}}_{\rateset,\,\mathcal{M}}^\mathrm{W}[f(X_u)] = \underline{\mathbb{E}}_{\rateset,\,\mathcal{M}}^\mathrm{W}[\underline{\mathbb{E}}_{\rateset,\,\mathcal{M}}^\mathrm{W}[f(X_u)\,\vert\,X_0]]
=\underline{\mathbb{E}}_{\rateset,\,\mathcal{M}}^\mathrm{W}[\hat{f}(X_0)]
=\underline{\mathbb{E}}_{\mathcal{M}}[\hat{f}].
\end{equation*}
Combined with Equation~\eqref{eq:prop:computeunconditional}, this implies the result.
\end{proof}

\section{A gambling interpretation for coherence}\label{app:coherence}

This appendix aims to provide a basic exposition of the gambling interpretation for coherent conditional probabilities. A more extensive discussion, which also provides some historic context, can be found in, among others, References~\cite{regazzini1985finitely,williams1975,Williams:2007eu, Vicig:2007gs,berti1991coherent, berti2002coherent}.

Basically, the idea is to interpret $P$ as a set of gambles on the actual---but unknown---value of $X$ in $\Omega$, which some bettor is willing to either buy or sell, and to impose a rationality criterion on this set of gambles.

Concretely, for every pair $(A,C)\in\mathcal{C}$, $P(A\vert C)$ is interpreted as a bettor's fair price for a ticket that yields a reward of one currency unit to its holder if the event $A$ occurs, conditional on the fact that $C$ happens. In other words, the bettor is willing to either sell or buy such ticket at this price, provided that she will be refunded should event $C$ not happen. Furthermore, it also assumed that the bettor's utility is linear, which implies that she is willing to vary the stakes of her bets arbitrarily.

Suppose for example that the actual value of $X$ ends up being $\omega$. For each ticket that the bettor sold, she has then received $P(A\vert C)$ currency units in advance, but after the value of $X$ is revealed, she loses one currency unit if $A$ has happened, that is, she loses $\ind{A}(\omega)$ currency unit. Because all of this is conditional on $C$ happening, her net profit is $\ind{C}(\omega)(P(A\vert C) - \ind{A}(\omega))$, with negative profit being loss. Note that if $C$ does not happen, that is, if $\ind{C}(\omega)=0$, she neither gains nor loses anything. Since we also allow for arbitrary stakes, we conclude that for any $\lambda\in\realsnonneg$, the bettor is willing to accept the uncertain net profit $\lambda \ind{C}(\omega)(P(A\vert C) - \ind{A}(\omega))$.

Similarly, for each ticket that that the bettor buys, she first has to pay $P(A\vert C)$ to buy the ticket, but will then receive one unit of currency if $A$ happens. Her profit is then $\ind{A}(\omega) - P(A\vert C)$ per ticket. However, she only receives this profit if event $C$ also came to pass, and otherwise gets refunded. Hence, if we again take into account that the stake can be chosen arbitrarily, we find that for any $\lambda\in\realsnonneg$ the bettor is willing to accept the uncertain net profit $\lambda\ind{C}(\omega)(\ind{A}(\omega) - P(A\vert C))$. 

By combining the arguments for selling and buying, we conclude from the above that for any $\lambda\in\reals$, the bettor is willing to accept a bet in which she receives the uncertain net profit $\lambda\ind{C}(\omega)(P(A\vert C)-\ind{A}(\omega))$, with negative profit being loss.

The final assumption is now that the bettor is willing to combine any finite number of such transactions. That is, if we consider any $n\in\nats$ and, for every $i\in\{1,\ldots,n\}$, some $\lambda_i\in\reals$ and $(A_i,C_i)\in\mathcal{C}$, then the bettor is willing to accept a bet in which her net profit is equal to
\begin{equation*}
\sum_{i=1}^n\lambda_i\ind{C_i}(\omega)\left(P(A_i\vert C_i) - \ind{A_i}(\omega)\right)\,.
\end{equation*}
The coherence of $P$ is now equivalent to requiring that any such bet \emph{avoids sure loss}, in the sense that there exists at least one ``non-trivial'' outcome $\omega$ for which her total profit is non-negative, the trivial case being when none of the events $C_i$ happen---$\omega\notin\cup_{i=1}^n C_i$---because she then gets refunded completely. 

\section{Proofs of the examples}\label{app:example_proofs}

\begin{proof}[Proof of Example~\ref{exmp:limit_trans_mat_system}]
We will show that the sequence $\{\mathcal{T}_i\}_{i\in\nats_0}$ defined in Equation~\eqref{eq:def:sequenceinexample3} is Cauchy, or in other words, that
\begin{equation}\label{eq:cauchyexample}
(\forall\epsilon\in\realspos)\,(\exists n_\epsilon\in\nats)\,(\forall k,\ell > n_\epsilon)~d(\mathcal{T}_{k},\mathcal{T}_\ell)<\epsilon.
\end{equation}
In order to prove this, the first step is to notice that for any $i\in\nats$, the difference between $\mathcal{T}_{i}$ and $\mathcal{T}_{i-1}$ is essentially situated on the interval $[0,\delta_i]$. It should therefore be intuitively clear that $d(\mathcal{T}_i,\mathcal{T}_{i-1})$ is proportional to $\delta_i$. 

In fact, it holds that $d(\mathcal{T}_i,\mathcal{T}_{i-1})\leq\delta_i\norm{Q_i-Q_{i-1}}$; we will start by proving this inequality.
So, fix any $i\in\nats$. 
Fix any $t,s\in\realsnonneg$ such that $t\leq s$, and let $\presuper{i}T_t^s$ and $\presuper{i-1}T_t^s$ be the transition matrices that correspond to $\mathcal{T}_i$ and $\mathcal{T}_{i-1}$, respectively. 
We now consider three cases. The first case is $t\geq\delta_i$. It then follows from Equation~\eqref{eq:def:sequenceinexample3} that $\norm{\presuper{i}T_t^s-\presuper{i-1}T_t^s}=0$. The second case is $s\leq\delta_i$. It then follows from Equation~\eqref{eq:def:sequenceinexample3} and Lemma~\ref{lemma:differencebetweenexponentials} that
\begin{equation*}
\norm{\presuper{i}T_t^s-\presuper{i-1}T_t^s}
=\norm{e^{Q_i(s-t)}-e^{Q_{i-1}(s-t)}}
\leq (s-t)\norm{Q_i-Q_{i-1}}\leq\delta_i\norm{Q_i-Q_{i-1}}.
\end{equation*}
The third case is $t\leq\delta_i\leq s$. We then find that
\begin{align*}
\norm{\presuper{i}T_t^s-\presuper{i-1}T_t^s}
=\norm{\presuper{i}T_t^{\delta_i}\presuper{i}T_{\delta_i}^s-\presuper{i-1}T_t^{\delta_i}\presuper{i-1}T_{\delta_i}^s}
&\leq
\norm{\presuper{i}T_t^{\delta_i}-\presuper{i-1}T_t^{\delta_i}}
+
\norm{\presuper{i}T_{\delta_i}^s-\presuper{i-1}T_{\delta_i}^s}\\
&=\norm{\presuper{i}T_t^{\delta_i}-\presuper{i-1}T_t^{\delta_i}}
\leq\delta_i\norm{Q_i-Q_{i-1}},
\end{align*}
where the first inequality follows from Lemma~\ref{lemma:recursive}, the second equality follows from the first case above, and the last inequality follows from the second case above. Hence, in all three cases, we find that $\norm{\presuper{i}T_t^s-\presuper{i-1}T_t^s}\leq\delta_i\norm{Q_i-Q_{i-1}}$. Since this inequality holds for any $t,s\in\realsnonneg$ such that $s\geq t$, it now follows from Equation~\eqref{eq:trans_mat_system_metric} that $d(\mathcal{T}_i,\mathcal{T}_{i-1})\leq\delta_i\norm{Q_i-Q_{i-1}}$.

Using this inequality, and because $d$ is a metric, Equation~\eqref{eq:cauchyexample} can now easily be proven. It suffices to choose $n_\epsilon$ in such a way that $c2^{-n_\epsilon}<\epsilon$. Indeed, in that case, for any $k,\ell>n_\epsilon$, if we assume---without loss of generality---that $k\leq\ell$, it follows that
 \begin{equation*}
d(\mathcal{T}_{k},\mathcal{T}_\ell)
\leq\sum_{i=k+1}^{\ell}d(\mathcal{T}_{i-1},\mathcal{T}_i)
\leq\sum_{i=k+1}^{\ell}\delta_i\norm{Q_i-Q_{i-1}}
\leq c\sum_{i=k+1}^{\ell}\delta_i
 \end{equation*}
and therefore, since we also know that
\begin{equation*}
\sum_{i=k+1}^{\ell}\delta_i
\leq
\sum_{i=k+1}^{+\infty}\delta_i
=\sum_{i=k+1}^{+\infty}2^{-i}
=2^{-k},
\end{equation*}
it follows that $\smash{d(\mathcal{T}_{k},\mathcal{T}_\ell)\leq c 2^{-k}\leq c 2^{-n_\epsilon}}<\epsilon$, as required.

As a side result, we also obtain a similar bound on the distance between $\mathcal{T}_k$ and $\mathcal{T}$. For any $\ell\geq k$, we know that
\begin{equation*}
d(\mathcal{T}_k,\mathcal{T})
\leq d(\mathcal{T}_k,\mathcal{T}_\ell)+d(\mathcal{T}_\ell,\mathcal{T})\leq c 2^{-k}+d(\mathcal{T}_\ell,\mathcal{T}),
\end{equation*}
and therefore, since $\mathcal{T}=\lim_{\ell\to+\infty}\mathcal{T}_\ell$, it follows that $d(\mathcal{T}_k,\mathcal{T})\leq c 2^{-k}$.
\end{proof}

\begin{proof}[Proof of Example~\ref{exmp:limit_trans_mat_system_matrices}]
We start by proving that Equation~\eqref{eq:ex4:def} gives us the transition matrix from $0$ to $t$ that corresponds to the transition matrix system $\mathcal{T}$. To this end, we first establish some properties. 
Consider any $t\in(0,1]$ and let $j$ be the unique element of $\nats_0$ such that $\delta_{j+1}< t\leq\delta_j$. If $j$ is odd, it follows from Equations~\eqref{eq:ex4:phi1} and~\eqref{eq:ex4:phi2} that
\begin{equation*}
\varphi_1(t)=t-\nicefrac{2}{3}\delta_{j+1}=(t-\delta_{j+1})+\nicefrac{1}{3}\delta_{j+1}=(t-\delta_{j+1})+\varphi_1(\delta_{j+1})
\end{equation*}
and $\varphi_2(t)=\nicefrac{2}{3}\delta_{j+1}=\varphi_2(\delta_{j+1})$. Similarly, if $j$ is even, it follows that $\varphi_1(t)=\varphi_1(\delta_{j+1})$ and $\varphi_2(t)=\varphi_2(\delta_{j+1})+(t-\delta_{j+1})$. Hence, in both cases, we have that
\begin{equation*}
Q_1\varphi_1(t)+Q_2\varphi_2(t)=Q_1\varphi_1(\delta_{j+1})+Q_2\varphi_2(\delta_{j+1})+Q_j(t-\delta_{j+1}).
\end{equation*}
Therefore, and since $Q_1$ and $Q_2$ commute, it now follows from Equation~\eqref{eq:ex4:def} that
\vspace{3pt}
\begin{align}\label{eq:ex4:recursestar}
T_0^{t}
=e^{Q_1\varphi_1(t)+Q_2\varphi_2(t)}
&=e^{Q_1\varphi_1(\delta_{j+1})+Q_2\varphi_2(\delta_{j+1})+Q_j(t-\delta_{j+1})}\notag\\
&=e^{Q_1\varphi_1(\delta_{j+1})+Q_2\varphi_2(\delta_{j+1})}e^{Q_j(t-\delta_{j+1})}
=T_0^{\delta_{j+1}}e^{Q_j(t-\delta_{j+1})}.
\end{align}
For large enough $k\in\nats$, a similar statement holds for the transition matrix $\presuper{k}T_0^{t}$ that corresponds to $\mathcal{T}_k$. In particular, Equation~\eqref{eq:def:sequenceinexample3} implies that
\begin{equation}\label{eq:ex4:recursek}
\presuper{k}T_0^{t}
=\presuper{k}T_0^{\delta_{j+1}}e^{Q_j(t-\delta_{j+1})}
~~\text{ for all $k\geq j$}
\end{equation}
Hence, for all $j\in\nats_0$, by choosing $t=\delta_j$, and because $\delta_j-\delta_{j+1}=\delta_{j+1}$, it follows that
\begin{equation}\label{eq:ex4:recursekandstar}
T_0^{\delta_j}
=T_0^{\delta_{j+1}}e^{Q_j\delta_{j+1}}
~\text{ and }~
\presuper{k}T_0^{\delta_j}
=\presuper{k}T_0^{\delta_{j+1}}e^{Q_j\delta_{j+1}}
~\text{ for all $k\geq j$.}
\vspace{3pt}
\end{equation}
Finally, for all $k\in\nats_0$, it follows from Equations~\eqref{eq:ex4:def}--\eqref{eq:ex4:phi2} and~\eqref{eq:def:sequenceinexample3} that
\begin{equation}\label{eq:ex4:finalstep}
T_0^{\delta_{k}}
=e^{Q_k\nicefrac{2}{3}\delta_k+Q_{k+1}\nicefrac{1}{3}\delta_k}
~\text{ and }~
\presuper{k}T_0^{\delta_{k}}
=e^{Q_k\delta_k}
~\text{ for all $k\in\nats_0$}.
\end{equation}

Using these properties, proving that Equation~\eqref{eq:ex4:def} gives us the transition matrix from $0$ to $t$ that corresponds to the transition matrix system $\mathcal{T}$, is now relatively easy. Consider any $t\in(0,1]$, let $i$ be the unique element of $\nats_0$ such that $\delta_{i+1}< t\leq\delta_i$, and fix any $k\geq i$. Then on the one hand, we find that
\vspace{5pt}
\begin{align*}
\norm{T_0^t-\presuper{k}T_0^t}
&=
\norm{T_0^{\delta_{i+1}}e^{Q_i(t-\delta_{i+1})}-\presuper{k}T_0^{\delta_{i+1}}e^{Q_i(t-\delta_{i+1})}}\\
&\leq
\norm{T_0^{\delta_{i+1}}-\presuper{k}T_0^{\delta_{i+1}}}
+
\norm{e^{Q_i(t-\delta_{i+1})}-e^{Q_i(t-\delta_{i+1})}}\\
&=
\norm{T_0^{\delta_{i+1}}-\presuper{k}T_0^{\delta_{i+1}}}
=
\norm{T_0^{\delta_{i+2}}e^{Q_{i+1}\delta_{i+2}}-\presuper{k}T_0^{\delta_{i+2}}e^{Q_{i+1}\delta_{i+2}}}\\
&\quad\quad\quad\quad\quad\quad\quad\quad~\,\,\leq
\norm{T_0^{\delta_{i+2}}-\presuper{k}T_0^{\delta_{i+2}}}
+
\norm{e^{Q_{i+1}\delta_{i+2}}-e^{Q_{i+1}\delta_{i+2}}}\\
&\quad\quad\quad\quad\quad\quad\quad\quad~\,\,=
\norm{T_0^{\delta_{i+2}}-\presuper{k}T_0^{\delta_{i+2}}}
\leq\dots\leq
\norm{T_0^{\delta_{k}}-\presuper{k}T_0^{\delta_{k}}},\\[-10pt]
\end{align*}
where the first equality follows from Equations~\eqref{eq:ex4:recursestar} and~\eqref{eq:ex4:recursek}, the first inequality follows from Lemma~\ref{lemma:recursive} in Appendix~\ref{app:systems}, the third equality follows from Equation~\eqref{eq:ex4:recursekandstar}, the second equality is again due to Lemma~\ref{lemma:recursive}, and the remaining steps consist in repeating the last steps over and over again.
On the other hand, we also know that
\vspace{5pt}
\begin{align*}
\norm{T_0^{\delta_{k}}-\presuper{k}T_0^{\delta_{k}}}
&=
\norm{e^{Q_k\nicefrac{2}{3}\delta_k+Q_{k+1}\nicefrac{1}{3}\delta_k}
-e^{Q_k\delta_k}}\\
&=
\norm{e^{Q_k\nicefrac{2}{3}\delta_k}e^{Q_{k+1}\nicefrac{1}{3}\delta_k}
-e^{Q_k\nicefrac{2}{3}\delta_k}e^{Q_k\nicefrac{1}{3}\delta_k}}\\
&\leq
\norm{e^{Q_k\nicefrac{2}{3}\delta_k}
-e^{Q_k\nicefrac{2}{3}\delta_k}}
+
\norm{e^{Q_{k+1}\nicefrac{1}{3}\delta_k}
-e^{Q_k\nicefrac{1}{3}\delta_k}}\\
&=\norm{e^{Q_{k+1}\nicefrac{1}{3}\delta_k}
-e^{Q_k\nicefrac{1}{3}\delta_k}}
\leq\delta_k\norm{Q_k-Q_{k+1}}
=\delta_k\norm{Q_1-Q_2},\\[-10pt]
\end{align*}
where the first equality follows from Equation~\eqref{eq:ex4:recursekandstar}, the second equality holds because $Q_1$ and $Q_2$ commute, and the two inequalities follow from Lemmas~\ref{lemma:recursive} and~\ref{lemma:differencebetweenexponentials} in Appendix~\ref{app:systems}. Hence, we find that $\norm{T_0^t-\presuper{k}T_0^t}\leq\delta_k\norm{Q_1-Q_2}$. Since this is true for any $k\geq i$, it follows that $\lim_{k\to+\infty}\presuper{k}T_0^t=T_0^t$. Therefore, because $\mathcal{T}\coloneqq \lim_{i\to\infty}\mathcal{T}_i$, we can conclude that $T_0^t$ indeed corresponds to $\mathcal{T}$.

We end this proof by showing that the transition matrix system $\mathcal{T}$ is well-behaved. Let $M\coloneqq\max\{\norm{Q_1},\norm{Q_2}\}$. We will prove that
\begin{equation*}
\frac{1}{\Delta}\norm{T_t^s-I}\leq M
~~\text{for all $t,s,\Delta\in\reals_{\geq0}$ such that $s=t+\Delta$.}
\end{equation*}
According to Definition~\ref{def:well_behaved_trans_mat_system}, this clearly implies that $\mathcal{T}$ is well-behaved.

So fix any $t,s,\Delta\in\reals_{\geq0}$ such that $s=t+\Delta$ and consider any $\epsilon>0$. Then since $\lim_{i\to+\infty}\presuper{i}T_t^s=T_t^s$, there is some $j\in\nats$ such that $\norm{T_t^s-\presuper{j}T_t^s}\leq\epsilon$. Furthermore, since $Q_1$ and $Q_2$ commute, it follows from Equation~\eqref{eq:def:sequenceinexample3} that there are $\Delta_1,\Delta_2\in\reals_{\geq0}$ such that $\Delta_1+\Delta_2=\Delta$ and $\presuper{j}T_t^s=e^{Q_1\Delta_1}e^{Q_2\Delta_2}$. Therefore, we find that
\begin{align*}
\norm{T_t^s-I}
\leq
\norm{T_t^s-\presuper{j}T_t^s}
+
\norm{e^{Q_1\Delta_1}e^{Q_2\Delta_2}-I}
&\leq
\epsilon
+
\norm{e^{Q_1\Delta_1}-I}
+\norm{e^{Q_2\Delta_2}-I}\\
&\leq
\epsilon+
\Delta_1\norm{Q_1}
+
\Delta_2\norm{Q_2}
\leq\Delta M,
\end{align*}
where the second and third inequalities follow from Lemma~\ref{lemma:recursive} and~\ref{lemma:linearboundonexponential} in Appendix~\ref{app:systems}, respectively.
\end{proof}

\begin{proof}[Proof of Example~\ref{exmp:notwellbehavedMarkov}]
We will prove that the transition matrix system $\mathcal{T}$ from Example~\ref{exmp:notwellbehavedMarkov} is not well-behaved.
In order to do this, we first fix any $n\in\nats_0$ and any $\Delta\in(0,\delta_n]$, and we let $i\geq n$ be the unique element of $\nats_0$ such that $\delta_{i+1}<\Delta\leq\delta_i$. It then follows from Equation~\eqref{eq:def:sequenceinexample3} that $\presuper{i}T_0^{\Delta}=e^{Q_i\Delta}$, and therefore, we find that
\begin{equation*}
\norm{\Delta Q_i}
\leq
\norm{e^{Q_i\Delta}-(I+\Delta Q_i)}
+
\norm{T_0^\Delta-\presuper{i}T_0^\Delta}
+
\norm{T_0^\Delta-I}
\leq\Delta^2\norm{Q_i}^2+2^{-i}+\norm{T_0^\Delta-I},
\end{equation*}
where the first inequality holds because $\norm{\cdot}$ is a norm, and where the second inequality holds because of Lemma~\ref{lemma:linearpartofexponential} and because---as proved at the end of the proof of Example~\ref{exmp:limit_trans_mat_system}---$d(\mathcal{T}_i,\mathcal{T})\leq c2^{-i}=2^{-i}$. Hence, since $\norm{Q_i}=\norm{iQ}=i\norm{Q}=i$, we find that
\begin{equation*}
\frac{1}{\Delta}\norm{T_0^\Delta-I}
\geq
\norm{Q_i}-\Delta\norm{Q_i}^2-\frac{1}{\Delta}2^{-i}
=
i-\Delta i^2-\frac{1}{\Delta}2^{-i}
\geq
i-\delta_i i^2-\frac{1}{\delta_{i+1}}2^{-i}
\end{equation*}
and therefore, because $\delta_i=2^{-i}$, $\delta_{i+1}=2^{-i-1}$, $2^i\geq i$ and $i\geq n$, it follows that
\begin{equation}\label{eq:exmp:notwellbehavedMarkov}
\frac{1}{\Delta}\norm{T_0^\Delta-I}
\geq
i-2^{-i}i^2-2
\geq
i-\frac{1}{2}i-2
=\frac{i}{2}-2
\geq\frac{n}{2}-2.
\end{equation}
Since $\Delta\in(0,\delta_n]$ is arbitrary, this inequality immediately implies that
\begin{equation*}
\limsup_{\Delta\to 0^{+}}\frac{1}{\Delta}\norm{T_{0}^{\Delta}-I}\geq\frac{n}{2}-2.
\end{equation*}
and therefore, since this is true for every $n\in\nats_0$, we infer from Definition~\ref{def:well_behaved_trans_mat_system} that $\mathcal{T}$ is not well-behaved, because the definition clearly fails for $t=0$.
\end{proof}

\begin{proof}[Proof of Example~\ref{exmp:well-behaved-no-deriv}]
We will prove that Equation~\eqref{eq:exmp:well-behaved-no-deriv} indeed holds. So fix any $\lambda\in[\nicefrac{1}{3},\nicefrac{2}{3}]$ and consider the sequence $\{\Delta_i\}_{i\in\nats_0}\to0^+$ whose elements are defined by \mbox{$\Delta_i\coloneqq\nicefrac{(2\delta_{2i+1})}{(3\lambda)}$}. For all $i\in\nats_0$, we then find that
\begin{multline*}
\varphi_1(\Delta_i)Q_1+\varphi_2(\Delta_i)Q_2
=\frac{2}{3}\delta_{2i+1}Q_1+(\Delta_i-\frac{2}{3}\delta_{2i+1})Q_2\\
=\lambda\Delta_i Q_1+(1-\lambda)\Delta_i Q_2
=Q_\lambda \Delta_i,
\end{multline*}
where the first equality follows from Equations~\eqref{eq:ex4:phi1} and~\eqref{eq:ex4:phi2}---because $\nicefrac{1}{(3\lambda)}\in[\nicefrac{1}{2},1]$ implies that $\delta_{2i+1}\leq\Delta_i\leq\delta_{2i}$.
Hence, for all $i\in\nats_0$, Equation~\eqref{eq:ex4:def} now tells us that $T_0^{\Delta_i}=e^{Q_\lambda\Delta_i}$.
Therefore, and because $\{\Delta_{i}\}_{i\in\nats_0}\to0^+$, we find that indeed, as required,
\begin{equation}\label{eq:exmp:well-behaved-no-deriv2}
\lim_{i\to+\infty}
\frac{1}{\Delta_i}
(T^{\Delta_i}_{0}-I)
=
\lim_{i\to+\infty}
\frac{1}{\Delta_{i}}
(e^{Q_\lambda\Delta_{i}}-I)
={\frac{d}{dt}e^{Q_\lambda t}}\big\vert_{t=0}
=Q_\lambda,
\end{equation}
where we use Lemma~\ref{lemma:deriv_exponential_trans} to establish the last equality. 
\end{proof}

\begin{proof}[Proof of Example~\ref{exmp:outerderivative}]
Showing that $\{Q_\lambda\colon \lambda\in[\nicefrac{1}{3},\nicefrac{2}{3}]\}$ is a subset of $\smash{\overline{\partial}}_+T_0^0$ was, essentially, already done in Example~\ref{exmp:well-behaved-no-deriv}, because for every $\lambda\in[\nicefrac{1}{3},\nicefrac{2}{3}]$, it follows from Equation~\eqref{eq:exmp:well-behaved-no-deriv2} and Definition~\ref{def:direc_partial_deriv} that $Q_\lambda\in\smash{\overline{\partial}}_+T_0^0$. Therefore, we only need to show that $\smash{\overline{\partial}}_+T_0^0$ is a subset of $\{Q_\lambda\colon \lambda\in[\nicefrac{1}{3},\nicefrac{2}{3}]\}$, or equivalently, we need to show that for every $Q\in\smash{\overline{\partial}}_+T_0^0$, there is some $\lambda\in[\nicefrac{1}{3},\nicefrac{2}{3}]$ such that $Q=Q_\lambda$.

So consider any $Q\in\overline{\partial}_+T_0^0$. Definition~\ref{def:outerpartialderivatives} then implies the existence of a sequence $\{\Delta_i\}_{i\in\nats_0}\to 0^+$ such that 
\begin{equation}\label{eq:exmp:outerderivative}
\lim_{i\to+\infty}\nicefrac{1}{\Delta_i}(T_0^{\Delta_i}-I)=Q.
\end{equation}

The first step of the proof is to observe that, for every $t\in(0,1]$, $\varphi_1(t)+\varphi_2(t)=t$ and $\nicefrac{1}{3}t\leq\varphi_1(t)\leq\nicefrac{2}{3}t$; we leave this as an exercise. Given this observation, it follows that for all $t\in(0,1]$, there is some $\lambda_t\in[\nicefrac{1}{3},\nicefrac{2}{3}]$ such that $\varphi_1(t)=\lambda_t t$ and $\varphi_2(t)=(1-\lambda_t)t$. Hence, in particular, for every $i\in\nats$, there is some $\lambda_i\in[\nicefrac{1}{3},\nicefrac{2}{3}]$ such that $\varphi_1(\Delta_i)=\lambda_i \Delta_i$ and $\varphi_2(\Delta_i)=(1-\lambda_i)\Delta_i$ and therefore also, due to Equation~\eqref{eq:ex4:def}, $\smash{T_0^{\Delta_i}=e^{Q_{\lambda_i}\Delta_i}}$.
Furthermore, because of the Bolzano-Weierstrass theorem, the sequence $\{\lambda_i\}_{i\in\nats_0}$ contains a convergent subsequence $\{\lambda_{i_k}\}_{k\in\nats_0}$ whose limit $\lambda\coloneqq\lim_{k\to+\infty}\lambda_{i_k}$ clearly belongs to $[\nicefrac{1}{3},\nicefrac{2}{3}]$. 

In the remainder of this proof, we will show that $Q=Q_\lambda$. To this end, let us fix any $\epsilon>0$ and prove that $\norm{Q-Q_\lambda}<\epsilon$. First of all, since $\lambda=\lim_{k\to+\infty}\lambda_{i_k}$, there is some $n_1\in\nats_0$ such that, for all $k\geq n_1$, $\abs{\lambda-\lambda_{i_k}}\norm{Q_1-Q_2}<\nicefrac{\epsilon}{3}$ and therefore also
\begin{equation}\label{eq:exmp:outerderivative1}
\norm{Q_\lambda-Q_{\lambda_{i_k}}}
=\norm{(\lambda-\lambda_{i_k})(Q_1-Q_2)}
=\abs{\lambda-\lambda_{i_k}}\norm{Q_1-Q_2}\leq\frac{\epsilon}{3}.
\end{equation}
Secondly, Equation~\eqref{eq:exmp:outerderivative} implies that there is some $n_2\in\nats_0$ such that
\begin{equation}\label{eq:exmp:outerderivative2}
\norm{\nicefrac{1}{\Delta_{i_k}}(T_0^{\Delta_{i_k}}-I)-Q}<\frac{\epsilon}{3}
~\text{ for all $k\geq n_2$}.
\end{equation}
Thirdly, Lemma~\ref{lemma:deriv_exponential_trans} implies that there is some $n_3\in\nats_0$ such that
\begin{equation}\label{eq:exmp:outerderivative3}
\norm{\nicefrac{1}{\Delta_{i_k}}(e^{Q_{\lambda_{i_k}}\Delta_{i_k}}-I)-Q_{\lambda_{i_k}}} < \frac{\epsilon}{3}
~\text{ for all $k\geq n_3$.}
\end{equation}
Consider now any $k\geq\max\{n_1,n_2,n_3\}$. Then since $T_0^{\Delta_{i_k}}=e^{Q_{\lambda_{i_k}}\Delta_{i_k}}$, it follows from Equations~\eqref{eq:exmp:outerderivative1}--\eqref{eq:exmp:outerderivative3} that
\begin{align*}
\norm{Q - Q_{\lambda}} \leq 
\norm{Q-\nicefrac{1}{\Delta_{i_k}}(T_0^{\Delta_{i_k}}-I)}
+
\norm{\nicefrac{1}{\Delta_{i_k}}(e^{Q_{\lambda_{i_k}}\Delta_{i_k}}-I)-Q_{\lambda_{i_k}}}
+
\norm{Q_{\lambda_{i_k}}-Q_\lambda}
\leq\epsilon.
\end{align*}
Since this is true for every $\epsilon\in\realspos$, it follows that $\norm{Q-Q_{\lambda}}=0$, and therefore also, that $Q=Q_\lambda$, as desired.
\end{proof}

\begin{proof}[Proof of Example~\ref{exmp:emptyouterderivative}]
We will prove that the right-sided outer partial derivative $\smash{\overline{\partial}}_+T_0^0$ is empty. To this end, assume \emph{ex absurdo} that it is not empty, and consider any $Q\in\smash{\overline{\partial}}_+T_0^0$. It then follows from Equation~\eqref{eq:rightouterderivative} that there is a sequence $\{\Delta_i\}_{i\in\nats}\to0^+$ such that $\smash{\lim_{i\to+\infty}\nicefrac{1}{\Delta_i}
(T^{\Delta_i}_{0}-I)
=Q}$. Consider now any $\epsilon>0$, any $n\in\nats_0$ such that $n\geq4+2\norm{Q}+2\epsilon$, let $\delta_n\coloneqq 2^{-n}$ as in Example~\ref{exmp:notwellbehavedMarkov}, and consider any $i^*\in\nats$ such that, for all $i\geq i^*$, $\Delta_i\in(0,\delta_n)$---such an $i^*$ always exists because $\{\Delta_i\}_{i\in\nats}\to0^+$. For all $i\geq i^*$, using Equation~\eqref{eq:exmp:notwellbehavedMarkov} from the proof of Example~\ref{exmp:notwellbehavedMarkov}, we then find that
\begin{equation*}
\norm{\frac{1}{\Delta_i}(T_0^{\Delta_i}-I)-Q}
\geq
\norm{\frac{1}{\Delta_i}(T_0^{\Delta_i}-I)}-\norm{Q}
\geq
\frac{n}{2}-2-\norm{Q}\geq\epsilon,
\end{equation*}
which, since $\epsilon>0$, contradicts the fact that $\smash{\lim_{i\to+\infty}\nicefrac{1}{\Delta_i}
(T^{\Delta_i}_{0}-I)
=Q}$. Hence, our assumption must be wrong, and it follows that $\smash{\overline{\partial}}_+T_0^0$ is indeed empty.
\end{proof}

\begin{proof}[Proof of Example~\ref{example:different_sets_same_lower_rate}]
Let $\lrate_1$ and $\lrate_2$ be the lower transition rate operators that correspond to $\rateset_1$ and $\rateset_2$, respectively, 
and consider any $f\in\gamblesX$. Equation~\eqref{eq:correspondinglowertrans} then implies that $\lrate_1f\leq Af$ and $\lrate_1f\leq Bf$, which in turn implies that $\lrate_1f\leq Cf$. Therefore, by applying Equation~\eqref{eq:correspondinglowertrans} once more, we find that $\lrate_1 f\leq\lrate_2 f$. Hence, since Equation~\eqref{eq:correspondinglowertrans} also trivially implies that $\lrate_2 f\leq\lrate_1 f$, we find that $\lrate_1 f=\lrate_2 f$. Since this is true for any $f\in\gamblesX$, we conclude that $\lrate_1=\lrate_2$.
\end{proof}

\begin{proof}[Proof of Example~\ref{ex:dominatingset}]
We will prove that $\lrate$ is also the lower transition rate operator corresponding to $\rateset^*$. First of all, for any $Q\in\rateset^*$, $f\in\gamblesX$ and $x\in\states$, it follows from Equations~\eqref{eq:ex:dominatingset:globaldef} and~\eqref{eq:ex:dominatingset:localdef} that there is some $\lambda\in[0,1]$ such that
\begin{equation*}
[Qf](x)=\lambda[Af](x)+(1-\lambda)[Bf](x)
\geq\lambda[\lrate f](x)+(1-\lambda)[\lrate f](x)
=[\lrate f](x),
\end{equation*}
where the inequality holds because $\lrate$ is the lower envelope of $\rateset_1$. Since this is true for all $x\in\states$, we find that $Qf\geq\lrate f$. Since this is true for all $f\in\gamblesX$ and all $Q\in\rateset^*$, it follows that $\rateset^*\subseteq\rateset_{\lrate}$.

Let now $\lrate^*$ be the lower transition rate operator that corresponds to $\rateset^*$ and consider any $f\in\gamblesX$. Then, because $\rateset^*\subseteq\rateset_{\lrate}$, we find that $\lrate^*f\geq\lrate f$. Similarly, since $\rateset_1$ is clearly a subset of $\rateset^*$, we find that $\lrate^*f\leq\lrate_1f$. Since $\lrate_1=\lrate$, it follows that $\lrate f\leq \lrate^*f\leq \lrate f$, and because this holds for any $f\in\gamblesX$, we conclude that $\lrate^*=\lrate$.
\end{proof}

\end{document}